\documentclass{amsart}
\usepackage{amssymb}
\usepackage{amsthm}
\usepackage[all,cmtip]{xy}
\input diagxy
\usepackage[linktocpage=true,backref]{hyperref}
\usepackage[margin=1in]{geometry}
\usepackage{todonotes}
\usepackage{color}
\usepackage{tikz-cd}
\usepackage{comment}
\usetikzlibrary{matrix,intersections}
\pgfdeclarelayer{fg}
\pgfdeclarelayer{crossing over}
\pgfsetlayers{main,crossing over,fg}
\newtheorem{theorem}{Theorem}[section]
\newtheorem{prop}[theorem]{Proposition}
\newtheorem{corollary}[theorem]{Corollary}
\newtheorem{lemma}[theorem]{Lemma}
\newtheorem{notation}[theorem]{Notation}
\newtheorem{conjecture}[theorem]{Conjecture}
\newtheorem{assu}[theorem]{Assumption}

\newtheorem{definition}[theorem]{Definition}
\newtheorem{example}[theorem]{Example}

\numberwithin{equation}{section}

\theoremstyle{remark}
\newtheorem{remark}[theorem]{Remark}

\newcommand{\op}{\oplus}

\newcommand{\Lc}{\mathscr{L}}

\newcommand{\Cb}{\mathbb{C}}

\newcommand{\Rb}{\mathbb{R}}

\newcommand{\Det}{\T{Det}}

\newcommand{\Ker}{\T{Ker}}
\newcommand{\Coker}{\T{Coker}}


\theoremstyle{plain}

\theoremstyle{plain}

\usepackage{amscd}


\numberwithin{equation}{section}


\newcommand{\alpheqn}[1][\relax]{
     \refstepcounter{equation}
     \if#1\relax \relax
       \else \label{#1}
     \fi  
     \setcounter{saveeqn}{\value{equation}}%
    \setcounter{equation}{0}%
    \renewcommand{\theequation}{\thealphequation}}
\newcommand{\reseteqn}{\setcounter{equation}{\value{saveeqn}}%
     \renewcommand{\theequation}{\thearabicequation}}


\providecommand{\mathscr}{\mathcal} 
\IfFileExists{mathrsfs.sty}{
\usepackage{mathrsfs}}         
   {
     \IfFileExists{eucal.sty}{
        \usepackage[mathscr]{eucal}} 
   {
   }
}


\newcommand{\cd}{\cdot}

\newcommand{\clc}{\cdot\ldots\cdot}
\newcommand{\ot}{\otimes}

\newcommand{\hot}{\widehat \otimes}

\newcommand{\bop}{\bigoplus}

\newcommand{\olo}{\otimes\ldots\otimes}
\newcommand{\plp}{+ \ldots +}

\newcommand{\we}{\wedge}
\newcommand{\wlw}{\wedge\ldots\wedge}

\newcommand{\ci}{\circ}
\newcommand{\cilci}{\circ \ldots \circ}
\newcommand{\ti}{\times}
\newcommand{\nn}{\mathbb{N}}
\newcommand{\zz}{\mathbb{Z}}

\newcommand{\rr}{\mathbb{R}}
\newcommand{\cc}{\mathbb{C}}

\newcommand{\al}{\alpha}
\newcommand{\be}{\beta}
\newcommand{\ga}{\gamma}
\newcommand{\Ga}{\Gamma}
\newcommand{\de}{\delta}
\newcommand{\De}{\Delta}
\newcommand{\ep}{\varepsilon}

\newcommand{\io}{\iota}
\newcommand{\ka}{\kappa}
\newcommand{\la}{\lambda}
\newcommand{\La}{\Lambda}

\newcommand{\om}{\omega}
\newcommand{\Om}{\Omega}
\newcommand{\si}{\sigma}
\newcommand{\Si}{\Sigma}
\newcommand{\te}{\theta}
\newcommand{\Te}{\Theta}
\newcommand{\ze}{\zeta}
\newcommand{\pa}{\partial}
\newcommand{\da}{\dagger}

\newcommand{\C}[1]{\mathcal{#1}}
\newcommand{\G}[1]{\mathfrak{#1}}
\newcommand{\T}[1]{\textup{#1}}

\newcommand{\E}[1]{\emph{#1}}
\newcommand{\B}[1]{\mathbb{#1}}

\newcommand{\fork}[2]{\left\{ \begin{array}{#1} #2 \end{array} \right.} 

\newcommand{\ma}[2]{\left(\begin{array}{#1} #2 \end{array} \right)}

\newcommand{\su}{\subseteq}

\newcommand{\q}{\qquad}
\newcommand{\qq}{\qquad \qquad}
\newcommand{\qqq}{\qquad \qquad \qquad}
\newcommand{\qqqq}{\qquad \qquad \qquad \qquad}
\newcommand{\wit}{\widetilde}
\newcommand{\wih}{\widehat}

\newcommand{\inn}[1]{\langle #1 \rangle}
\newcommand{\binn}[1]{\big\langle #1 \big\rangle}

\newcommand{\sem}{\setminus}

\newcommand{\blu}{\color{blue}}


\newcommand\sL{\mathscr{L}}
\newcommand\sM{\mathscr{M}}

\makeatletter
\@namedef{subjclassname@2020}{%
  \textup{2020} Mathematics Subject Classification}
\makeatother

\title{A Higher Kac-Moody Extension for Two-Dimensional Gauge Groups}
\author{Jens Kaad, Ryszard Nest and Jesse Wolfson}

\address{Department of Mathematics and Computer Science, University of Southern Denmark,
	Campusvej 55, DK-5230 Odense M, Denmark}
\email{kaad@imada.sdu.dk}
\address{Department of Mathematics, University of Copenhagen, Universitetsparken
	5, 2100 Copenhagen, Denmark}
\email{rnest@math.ku.dk}
\address{ Department of Mathematics, University of California-Irvine, Rowland Hall 340H, Irvine, CA
92697, USA}
\email{wolfson@uci.edu}

\subjclass[2020]{18N10, 20J06, 22E67, 47A53; 18F25, 19K56, 46L80, 58B34}

\keywords{Higher category theory, Group cocycles, Picard categories, Loop groups, Algebraic K-theory, Multiplicative character, Fredholm determinants, Perturbation isomorphisms, Torsion isomorphisms} 

\thanks{The first author gratefully acknowledge the financial support from the Independent Research Fund Denmark through grant no. 9040-00107B and 7014-00145B. The second author would like to acknowledge the support of the Danish National Research Foundation through the Center for symmetry and Deformations (SYM). The third author is partially supported by NSF grants DMS-1400349, DMS-1811846 and DMS-1944862.}

\begin{document}

\begin{abstract}
Let $\Gamma$ be a finite dimensional Lie group and consider the smooth double loop group, i.e. the Fr\'echet Lie group of smooth maps from the 2-torus to $\Gamma$. For a finite dimensional Hilbert space $V$, let $H$ denote the Hilbert space of vector valued $L^2$-functions on the 2-torus. The purpose of this paper is to construct a higher central extension of the smooth double loop group from the representation of the smooth double loop group on $H$ induced by a smooth action of $\Gamma$ on $V$. This higher central extension comes from an action of the smooth double loop group on a 2-category and yields a group cohomology class of degree 3 on the smooth double loop group. We show by a concrete computation that this group cohomology class is non-trivial in general. We relate our higher central extension to the Kac-Moody extension of the smooth single loop group as a higher dimensional analogue of the latter. More generally, given a group $G$ acting on a bipolarised Hilbert space, we apply higher category theory to construct a group cohomology class of degree 3 on $G$. As a second motivating example, we use these ideas to introduce a higher central extension of the group of invertible smooth functions on the noncommutative 2-torus.
\end{abstract}


\maketitle
\tableofcontents
\section{Introduction}

\subsection{Higher central extensions}
Let $\Gamma$ be a finite dimensional Lie group. The {\em smooth double loop group of $\Gamma$} is the Fr\'echet Lie group $C^\infty(\B T^2,\Gamma)$ of smooth maps from the $2$-torus to $\Gamma$. The purpose of this paper is to construct higher central extensions of the smooth double loop group by the group $\cc^*$ of invertible complex numbers. These higher central extensions yield classes in the third group cohomology of $C^\infty(\B T^2,\Gamma)$ (viewed as a discrete group) and we show that these classes are non-trivial in general.

The motivation for the constructions in this paper comes from the central extension of a group $G$ acting on a polarised Hilbert space $\C H$. For a representation $\rho \colon G \to GL(\C H)$ and an orthogonal projection $P \colon \C H \to \C H$ we define the bounded idempotent 
\[
P_g := \rho(g) P \rho(g^{-1}) \q \T{for all } g \in G \, .
\]
We recall that $P$ is a {\em polarisation} of $\rho \colon G \to GL(\C H)$ when the difference $P_g - P \colon \C H \to \C H$ is a Hilbert-Schmidt operator for all $g\in G$. The data of a polarisation of a representation determines a central extension of the group in question by the group $\cc^*$ of invertible complex numbers:
\[
1\to \cc^*\to E \to G\to 1 \, .
\]
Suppose for example that the finite dimensional Lie group $\Gamma$ acts smoothly on a finite dimensional Hilbert space $V$ and let $\C H := L^2(\B T)\otimes V$ denote the Hilbert space of vector valued $L^2$-functions on the $1$-torus. The smooth single loop group $G := C^\infty(\B T,\Gamma)$ is then represented on $\C H$ and the orthogonal projection $P$ onto the vector valued Hardy space $H^2(\B T)\otimes V \subseteq L^2(\B T)\otimes V$ gives a polarisation of this representation. The resulting central extension is the Kac-Moody extension of $C^\infty(\B T,\Gamma)$, and plays a central role in conformal field theory and quantum field theory. 

The two-dimensional analogue of a polarisation is given by the following notion of a bipolarisation:

\begin{definition}
Let $\rho\colon G\to GL(\C H)$ be a representation of a group on a separable Hilbert space. A {\em bipolarisation} of $\rho$ consists of a pair of orthogonal projections $P,Q\in \sL(\C H)$ such that the two operators 
\[
[P_g, Q_u] \, \, \, \mbox{and} \, \, \, \,
(P_g - P_h)(Q_u - Q_v) \colon \C H \to \C H
\]
are of trace class for all $u,v,g,h\in G$.
\end{definition}

The main application of our constructions is the following result (see Theorem \ref{t:grpcocyc}):

\begin{theorem}
Let $\rho\colon G\to GL(\C H)$ be a representation of a group on a separable Hilbert space. A bipolarisation of $\rho$ determines a higher central extension of $G$ by the group $\cc^*$, i.e. a crossed sequence
\[
1\to \cc^* \to N \to E \to G\to 1 \, .
\]
This extension yields a group cohomology class in $H^3(G,\cc^*)$ which is non-trivial in general.
\end{theorem}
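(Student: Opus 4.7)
The plan is to mimic one categorical level higher the classical construction that produces a Kac--Moody type central extension from a polarisation of $\rho \colon G \to GL(\C H)$. In that setting one builds a Picard groupoid of determinant line / perturbation isomorphism torsors attached to the distinguished projection $P$, the Hilbert--Schmidt condition on $P_g - P$ promotes $\rho$ to a $G$-action on this Picard groupoid, and the isotropy of a distinguished object is precisely the central extension $1 \to \cc^* \to E \to G \to 1$. For a bipolarisation I would build a Picard 2-groupoid $\sP(P,Q)$ acted on by $G$ and read off the crossed sequence from the 2-isotropy of a distinguished object.

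Concretely, I would take the objects of $\sP(P,Q)$ to be projection-pairs $(P',Q')$ compatible with $(P,Q)$ in the iterated trace class / Hilbert--Schmidt sense dictated by the bipolarisation axioms, 1-morphisms to be the perturbation isomorphism lines supplied by the one-dimensional theory for the change $P' \rightsquigarrow P''$ relative to a fixed $Q'$, and 2-morphisms to be scalars in $\cc^*$ arising from second-order torsion isomorphisms built from the trace class operators $[P_g,Q_u]$ and $(P_g-P_h)(Q_u-Q_v)$. The two trace class conditions in the definition of a bipolarisation are exactly what is needed for these iterated determinants to be defined and for the pentagon / hexagon coherences to hold, so that $\rho$ lifts to a strict action of $G$ on $\sP(P,Q)$ under which the distinguished object $(P,Q)$ is fixed up to coherent 1- and 2-isomorphism.

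A pointed action of a discrete group on a small Picard 2-groupoid yields in a standard functorial way a crossed sequence
\[
1 \to \cc^* \to N \to E \to G \to 1,
\]
in which $E$ records 1-automorphisms of the distinguished object lifting $g \in G$, $N$ records 2-automorphisms witnessing coherence of compositions in $E$, and the leftmost map is the inclusion of the 2-automorphism group of the identity 1-morphism. Via the Yoneda/Retakh interpretation of $\OO{Ext}^3$, any such crossed sequence represents a class in $H^3(G,\cc^*)$, and this is the class produced by the theorem.

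For the non-triviality statement I would specialise to the motivating example $G = C^\infty(\B T^2, \Gamma)$ acting on $L^2(\B T^2)\otimes V$, with $P,Q$ the two Hardy projections coming from the two circle factors of the 2-torus, and exhibit a discrete 3-cycle on which the class evaluates non-trivially; the natural strategy is to restrict to a rank-two abelian subgroup built from characters of the two circle factors and compare the resulting restricted class with a cup product of two Kac--Moody-type 1-cocycles coming from single loop subgroups. The principal obstacle is the 2-categorical coherence: showing that the second-order torsion isomorphisms governed by the trace class conditions satisfy the higher pentagon relations required to promote $\rho$ to an honest action of $G$ on $\sP(P,Q)$, so that the induced 4-term sequence is a genuine crossed sequence. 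Once this coherence is in place, both the extraction of the class and the non-triviality computation reduce to explicit manipulations with Fredholm determinants and cup products of group cocycles.
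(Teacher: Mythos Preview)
Your architecture---build a $2$-categorical object with a $G$-action and read off a crossed sequence / $3$-cocycle---matches the paper's, but the specific construction you propose has a genuine gap at the very first step.  In the bipolarised setting the differences $P_g - P_h$ are \emph{not} Hilbert--Schmidt (only the products $(P_g-P_h)(Q_u-Q_v)$ are trace class), so there is no Fredholm operator $P_h P_g \colon P_g\C H \to P_h\C H$ and hence no ``perturbation isomorphism line for the change $P' \rightsquigarrow P''$ relative to a fixed $Q'$'' in the sense you intend.  The paper's substitute is to compress: one shows that $P_g Q_u P_g$ is a trace-class perturbation of a genuine idempotent on $P_g\C H$ (Lemma~\ref{l:idempo} and Corollary~\ref{c:traper}), packages all such idempotents into representations $\pi_\la \colon R_G \to \sL(\C H)$ of a free idempotent ring, and then builds explicit Fredholm operators $F(\la,\mu)(p,q)$ from these compressed data (Notation~\ref{n:invfred}).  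The $2$-morphisms are the $\zz$-graded determinant lines of these operators, not scalars; the composition is a carefully arranged mixture of torsion and perturbation isomorphisms whose associativity (Theorem~\ref{t:associativity}) is the technical heart of the paper.  Because the lines are graded, horizontal composition cannot be made canonical, and the paper instead constructs \emph{coproduct} functors $\De_z$ (Definition~\ref{d:hopf}) which are strictly coassociative; the resulting $3$-cocycle lands naturally in $\cc^*/\{\pm 1\}$ rather than $\cc^*$ (Proposition~\ref{p:cocycle3}), and one squares to get the $\cc^*$-valued class.

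Your non-triviality strategy also does not work as written: Kac--Moody extensions produce $2$-cocycles, not $1$-cocycles, and a cup product of two such would land in degree $4$.  The paper instead makes a direct computation on the double loop group $C^\infty(\B T^2)^*$: one evaluates the pairing of the constructed class with the explicit Steinberg-type $3$-cycle $\{z_1,z_2,\la\}$ and finds the value $\la$ (Corollary~\ref{c:nontriv}).  This computation (Section~\ref{s:nontriv}) requires tracking the coproduct, the group action (including the change-of-base-point isomorphism), and the composition on concrete determinant lines indexed by monomials; no cup-product shortcut is available.
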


Bipolarisations arise frequently in nature:

\begin{example}\label{ex:douloogrp}
Let $\Gamma$ be a Lie group acting smoothly on a finite dimensional Hilbert space $V$. The smooth double loop group $G := C^\infty(\B T^2,\Ga)$ can then be represented on the Hilbert space $\C H := L^2(\B T^2) \ot V$ of vector valued $L^2$-functions on the $2$-torus. Using the unitary isomorphism $L^2(\B T^2) \cong L^2(\B T) \hot L^2(\B T)$ we obtain two closed subspaces $( H^2(\B T) \hot L^2(\B T) ) \ot V \su \C H$ and $( L^2(\B T) \hot H^2(\B T) ) \ot V \su \C H$ and the corresponding orthogonal projections $P$ and $Q$ yield a bipolarisation of our representation of the smooth double loop group. In particular, we obtain an associated class in group cohomology $[c] \in H^3\big( C^\infty(\B T^2,\Ga),\cc^*\big)$ corresponding to a higher central extension of the smooth double loop group. We show by an explicit computation that this class is non-trivial in general (see Section \ref{s:nontriv}).
\end{example}

One of the advantages of our framework is that it easily adapts to a noncommutative setting:

\begin{example}
Let $\theta\in\Rb$ be an irrational number, and let $G := C^\infty(\B T_\theta^2)^*$ denote the group of smooth units of the noncommutative $2$-torus. Using the crossed product description of the noncommutative $2$-torus we may represent the group $G$ on the Hilbert space $\C H := L^2(\B T) \hot \ell^2(\zz)$. The Hilbert space $\C H$ contains the closed subspaces $H^2(\B T) \hot \ell^2(\zz)$ and $L^2(\B T) \hot \ell^2(\nn_0)$ and the corresponding orthogonal projections $P$ and $Q$ yield a bipolarisation of our representation of the smooth units of the noncommutative $2$-torus. In particular, we obtain an associated class in group cohomology $[c] \in H^3\big( C^\infty(\B T_\theta^2)^*,\cc^*\big)$ corresponding to a higher central extension of the smooth units of the noncommutative $2$-torus.
\end{example}

The group cohomology classes constructed in the present text can detect information about the singular homology (with integer coefficients) of the classifying space $BG^\delta$ where the superscript ``$\delta$'' signifies that the group $G$ is equipped with the discrete topology. Indeed, one may identify the singular homology of $BG^\delta$ with the group homology of the group $G$. In the above examples, the group $G$ can as well be considered as a topological group in a natural way (e.g. using that $C^\infty(\B T^2)$ is a Fr\'echet space) and we therefore also have the classifying space $BG^{\T{top}}$, where we are taking this topology into account. Explicit computations show that the origin of our group cohomology classes is ``not topological'' in the sense that they do not in general arise from singular cohomology classes in $H^\ast(BG^{\T{top}},\cc^*)$ via pullback along the comparison map $BG^{\delta} \to BG^{\T{top}}$.

In the case where the group $G = GL(R)$ agrees with the general linear group over a unital ring $R$ we obtain a class in group cohomology $[c] \in H^3( GL(R),\cc^*)$ from a bipolarisation of a representation of $GL(R)$ on a separable Hilbert space. We may use this class to obtain numerical information about the third algebraic $K$-group of $R$ by means of the Hurewicz homomorphism $K_3^{\T{alg}}(R) \to H_3(GL(R),\zz)$ and the pairing between group cohomology and group homology $H^3(GL(R),\cc^*) \ti H_3(GL(R),\zz) \to \cc^*$. The above examples fit in this context with the unital ring $R$ being either the smooth functions on the $2$-torus $C^\infty(\B T^2)$ or the smooth functions on the noncommutative $2$-torus $C^\infty(\B T_\theta^2)$. In particular, we obtain numerical invariants $K_3^{\T{alg}}(C^\infty(\B T^2)) \to \cc^*$ and $K_3^{\T{alg}}(C^\infty(\B T_\theta^2)) \to \cc^*$ and in Section~\ref{s:nontriv} we present concrete computations showing that the first of these invariants is in fact non-trivial.
\medskip

From the algebraic point of view, one may consider the formal double loops into $\cc$, namely the field $\cc((t))((s))$ (which is also a $2$-Tate space). The automorphism group of $\cc((t))((s))$ is an interesting object and in the papers \cite{ArKr:GTS,FrZh:GRD}, the construction of a group $3$-cocycle on this automorphism group is carried out. This group $3$-cocycle arises from the explicit description of $2$-category theoretic data and the corresponding character on the algebraic $K$-theory of the field $\cc((t))((s))$ is a particular case of the characters described in \cite{BrGrWo:GCR,OsZh:TCS,GoOs:HCL}. 
%

The difference between the present work and the above constructions of group $3$-cocycles can (to some extent) be clarified by considering the difference between the \emph{formal} double loops into $\cc$, $\cc((t))((s))$, and the \emph{smooth} double loops into $\cc$, $C^\infty(\B T^2)$. This passage from the formal setting to the smooth setting is the cause of a whole range of analytic problems which we take up in this paper. At the level of linear operators we are no longer working with infinite matrices subject to vanishing conditions on the entries but with bounded operators on Hilbert spaces. Similarly, we are systematically replacing finite rank operators with trace class operators on Hilbert spaces and their corresponding analytically defined Fredholm determinants. As a consequence, there seems to be no correct notion of lattices in our context: finite dimensionality conditions on (quotient) subspaces are not adequate for dealing with the presence of infinitely many eigenvalues subject to decay conditions. Lattices in $2$-Tate spaces form a core ingredient in the approach developed in \cite{ArKr:GTS,FrZh:GRD}. 

We introduce here an entirely new machinery which allows us to build group actions on $2$-categories from representations of groups on Hilbert spaces. Our ideas are related to the program of A. Connes in noncommutative geometry and in particular to the notion of finitely summable Fredholm modules and their link to analytic $K$-homology, Kasparov's $KK$-theory and index pairings, \cite{Con:NDG,Con:NCG,HiRo:AKH,Kas:OFE}. The two idempotents appearing in our definition of a bipolarisation are thought of as yielding two classes in $KK$-theory such that their Kasparov product yield a $3$-summable Fredholm module. The associated index pairings with the $K$-theory classes represented by elements in the group $G$ then yield actual Fredholm operators and their determinants provide us with the correct definition of the $2$-morphisms in our $2$-category. 

One may thus view the present paper as an attempt to align certain index theoretic constructions pertaining to noncommutative geometry with the framework of higher category theory. 
\medskip

We now explain the basic category theoretic ideas involved in the present approach to higher central extensions.

\subsection{Groups acting on higher categories and group cocycles}
Our construction of higher group cocycles is based on obstruction theory and mathematical physics. Namely, to produce an $(n+1)$-cocycle on a group, one constructs an $n$-category $\G C$ with a group $G$ acting on $\mathfrak{C}$. Provided that the $n$-category is sufficiently connected, this data yields an $(n+1)$-cocycle on the group $G$ with values in the automorphism group of an invertible $(n-1)$-morphism. 
%

The case of a category and an associated $2$-cocycle is fairly standard and will be described in detail in Section \ref{s:2coc}. For readers who are less familiar with notions of higher categories, let us explain the case of a group $G$ acting on a (sufficiently connected) $2$-category $\G C$. 

We denote the composition of $1$-morphisms and the horizontal composition of $2$-morphisms by $\ci_1$ and we denote the vertical composition of $2$-morphisms by $\ci_2$. The recipe is now as follows:
\begin{enumerate}
\item Choose an object $x$ in $\G C$.
\item Choose a $1$-isomorphism $\al_g \colon x \to g(x)$ for each element $g \in G$.
\item Choose a $2$-isomorphism $\be_{g,h} \colon g(\al_h) \ci_1 \al_g \to \al_{gh}$ for each pair of elements $g,h \in G$.
\end{enumerate}
For each triple of elements $g,h,k \in G$ we then have two different $2$-isomorphisms relating the $1$-morphisms $(gh)(\al_k) \ci_1 g(\al_h)\ci_1 \al_g$ and $\al_{ghk} \colon x \to (ghk)(x)$. These two $2$-isomorphisms are given by
\[
\be_{gh,k} \ci_2 \big( \T{id}_{(gh)(\al_k)} \ci_1 \be_{g,h}\big) \, \, \T{and} \, \, \, 
\be_{g,hk} \ci_2 \big(g(\be_{h,k}) \ci_1 \T{id}_{\al_g}\big) \colon (gh)(\al_k) \ci_1 g(\al_h)\ci_1 \al_g \to \al_{ghk} \, .
\]
The quotient of these $2$-isomorphisms yields an automorphism of the $1$-isomorphism $\al_{ghk} \colon x \to (ghk)(x)$ and this is exactly the value of our $3$-cocycle on the triple $(g,h,k)$ of group elements.

The structure which arises naturally in the context of bipolarisations of group representations is slightly different from a weak $2$-category. From our point of view there is no canonical way of defining the horizontal composition of $2$-morphisms so we are instead replacing this operation with a family of ``coproduct'' functors. These coproduct functors allow us to decompose any $2$-morphism $\be \colon \la \to \mu$ along a specified object $z$. In our context, these coproduct functors are in fact equivalences of $1$-categories and a choice of inverses (up to natural isomorphisms) provides us with a ``twisted'' weak $2$-category. The twisting arises from our systematic use of \emph{graded} tensor products of categories which, in turn, is dictated by the concrete analytic examples which we are investigating.  While we are unable to formulate a more precise statement at present, note that this coproduct structure is reminiscent of some of the challenges of Segal's approach to conformal field theory \cite{Seg:CFT}, where it is straightforward to cut a Riemann surface along a separating curve, but much more delicate to try to glue two Riemann surfaces with boundary together.

In this paper we provide a detailed description of group $3$-cocycles in the context of groups acting on coproduct categories. In particular, we explicitly verify the cocycle property and that the corresponding class in group cohomology is independent of various choices, see Section \ref{s:polar}. We record that the twisted nature of the associated weak $2$-category implies that our group $3$-cocycle more naturally takes values in $\cc^*/\{\pm 1\}$, but we may of course square it and obtain a group $3$-cocycle with values in $\cc^*$ as indicated earlier on in the introduction.

\subsubsection{Polarisations, categories and extensions}
The main work done in this paper concerns the construction of a twisted weak $2$-category with a $G$-action, given the data of a bipolarisation of a representation $\rho\colon G\to GL(\C H)$. As motivation, we now describe the $1$-category with a $G$-action associated to a polarisation of $\rho$. It is instructive to keep in mind the analogy between the present analytic constructions and the algebraic constructions carried out in \cite{ArCoKa:IRR,KaPe:SWI}. In fact, one may think of all the various idempotents $gPg^{-1}$, $g \in G$, as having ``commensurable'' images. Strictly speaking this is of course only the case when all the differences $P - gPg^{-1}$ are of finite rank and not just Hilbert-Schmidt. For pairs of group elements $g,h \in G$, the symbol $\big( \T{Im}(hPh^{-1}), \T{Im}(gPg^{-1})\big)$ (which is in general not well-defined in our setting) is then replaced by the (graded) determinant of the Fredholm operator $hPh^{-1} \cd gPg^{-1}$. The analogy with determinantal theories for lattices in $1$-Tate spaces as introduced in \cite{Kap:SSP} should now be apparent as well, see also \cite{FrZh:GRD} for further comments on this relationship.

We start out by investigating the universal setting for the ``one-dimensional story'':

Let $P \colon \C H \to \C H$ be an orthogonal projection with infinite dimensional kernel and cokernel. Consider the universal polarised representation, i.e. the {\em restricted general linear group}
\[
GL_{\T{res}}(\C H) := \big\{g\in GL(\C H)\mid P - gPg^{-1} \T{ is Hilbert-Schmidt} \big\} .
\]
Clearly, if $P$ determines a polarisation of $\rho \colon G \to GL(\C H)$ then the representation $\rho$ factors through a group homomorphism $\rho \colon G \to GL_{\T{res}}(\C H)$ and this links the universal setting to the particular setting. 

For any $g\in GL_{\T{res}}(\C H)$, we define the bounded idempotent $P_g :=gPg^{-1}$. For any two elements $g$ and $h$ of $GL_{\T{res}}(\C H)$, it then follows from Atkinson's theorem that the composition
\[
P_hP_g \colon P_g \C H \to P_h \C H
\]
is a Fredholm operator. Determinants of Fredholm operators play a key role in this paper, and in Section \ref{s:torsfred} and Section \ref{s:pert} we elaborate on the main constructions needed to describe the composition in the category here below. Notably, the condition that $P - P_g$ be Hilbert-Schmidt for each $g \in GL_{\T{res}}(\C H)$ and not just compact turns out to be important, since the composition uses the Carey-Pincus perturbation isomorphism as introduced in \cite{CaPi:PV}.

\begin{definition}\label{cat:polarisation}
Let $\G C_{\T{res}}$ be the following category with an action of $GL_{\T{res}}(\C H)$:
\begin{enumerate}
\item The objects in $\G C_{\T{res}}$ are given by the set of elements in the restricted general linear group $GL_{\T{res}}(\C H)$;
\item Given two objects $g,h \in GL_{\T{res}}(\C H)$, the morphisms from $g$ to $h$ are given by the graded determinant line 
\[
\Det (P_hP_g) = \Big( \Lambda^{\T{top}}\Ker(P_h P_g) \ot \Lambda^{\T{top}}\Coker(P_hP_g)^* , \T{Index}(P_h P_g) \Big) \, ;
\]
\item The restricted general linear group $GL_{\T{res}}(\C H)$ acts on $\G C_{\T{res}}$ by left multiplication on objects and by conjugation on morphisms.
\end{enumerate}
The composition of morphisms in $\G C_{\T{res}}$ is given by a combination of torsion isomorphisms and perturbation isomorphisms in the context of determinants of Fredholm operators (see Section~\ref{subsection:2-cocycle} for details). 
\end{definition}

The group $2$-cocycle on $GL_{\T{res}}(\C H)$ with values in $\cc^*$ constructed using this structure (see Section \ref{s:2coc}) appears in many places under different names. Its infinitesimal version is often called the ``Japanese cocycle'', while its global version plays a prominent role in conformal field theory. One of the reasons for its importance is that it is responsible for the central extensions of loop groups. First of all, the group $2$-cocycle on $GL_{\T{res}}(\C H)$ yields a central extension
\[
1 \to \cc^* \to E \to GL_{\T{res}}(\C H) \to 1  \, .
\]
Now let $\Gamma$ be a Lie group acting smoothly on a finite dimensional Hilbert space $V$ (not necessarily by unitary operators). As described earlier on, we then have a polarisation of the action of the smooth single loop group $G := C^\infty(\B T,\Gamma)$ on the Hilbert space $\C H := L^2(\mathbb{T},V)$. In particular, this polarisation yields a group homomorphism $\rho \colon C^\infty(\B T, \Gamma) \to GL_{\T{res}}(\C H)$. The central extension of the loop group $C^\infty(\B T,\Gamma)$ appearing in conformal field theory is the pull back along $\rho$ of the above central extension of $GL_{\T{res}}(\C H)$, \cite[Chapter 6]{PrSe:LG}. 

Another avatar of the group $2$-cocycle on $GL_{\T{res}}(\C H)$ turns out to be the low-dimensional version of the multiplicative character of Connes and Karoubi, \cite{CoKa:CMF}. To explain the link to Connes and Karoubi, remark that the orthogonal projection $P \colon \C H \to \C H$ yields a $2$-summable Fredholm module $\C F_{\T{uni}} = (\C M^1, \C H, 2 P - 1)$ where the unital $*$-algebra in question is defined by 
\[
\C M^1 := \big\{ T \in \sL(\C H) \mid [P,T] \T{ is Hilbert-Schmidt} \big\} \, .
\]
The restricted general linear group $GL_{\T{res}}(\C H)$ is exactly the group of invertible elements in $\C M^1$. In the one-dimensional case, the universal multiplicative character is a numerical invariant of the second algebraic $K$-group denoted by $M(\C F_{\T{uni}}) \colon K^{\T{alg}}_2(\C M^1) \to \cc^*$. The following holds:

\begin{theorem}\label{intro:two cocycle}
Let $[c_{\T{res}}] \in H^2(GL(\C M^1),\cc^*)$ denote the group $2$-cocycle associated to the group action of $GL(\C M^1)$ on a stabilised version of $\G C_{\T{res}}$. Then the diagram
\[
\xymatrix{
K^{\T{alg}}_2(\C M^1)\ar[rr]_h \ar[d]_{M(\C F_{\T{uni}})} && H_2(GL(\C M^1),\zz)\ar[dll]^{\, \, \, \, \inn{ [c_{\T{res}}], \cd}} \\
\cc^* && 
}
\]
commutes, where $h$ is the Hurewicz homomorphism and $\inn{ [c_{\T{res}}], \cd}$ comes from the pairing between group cohomology and group homology.
\end{theorem}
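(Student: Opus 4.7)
The plan is to recognise both $M(\C F_{\T{uni}})$ and $\inn{[c_{\T{res}}], h(\cd)}$ as the classifying homomorphism $K_2^{\T{alg}}(\C M^1) \to \cc^*$ of a single central extension of the elementary subgroup $E(\C M^1) \su GL(\C M^1)$ by $\cc^*$, and then to invoke the universal characterisation $K_2^{\T{alg}}(\C M^1) \cong H_2(E(\C M^1),\zz)$ to deduce that the two maps agree.

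For the pairing side, the $2$-cocycle $c_{\T{res}}$ determines a central extension $\wit{GL}(\C M^1) \to GL(\C M^1)$ by $\cc^*$, and restriction to the elementary subgroup yields a central extension $1 \to \cc^* \to \wit{E}(\C M^1) \to E(\C M^1) \to 1$ whose classifying map $H_2(E(\C M^1),\zz) \to \cc^*$ is precisely $\inn{[c_{\T{res}}], \cd}$ composed with the canonical identification $K_2^{\T{alg}}(\C M^1) \cong H_2(E(\C M^1),\zz)$. For the Connes-Karoubi side, I would follow \cite{CoKa:CMF}: for the $2$-summable Fredholm module $\C F_{\T{uni}}$, the multiplicative character $M(\C F_{\T{uni}})$ is itself the classifying homomorphism of a central extension of $E(\C M^1)$ by $\cc^*$, obtained by taking Fredholm determinants of the multiplicativity obstruction when lifting elementary matrices through the pair of ideals $\C L^1 \su \C M^1$.

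The key step is therefore constructing an explicit isomorphism of central extensions of $E(\C M^1)$ between these two candidates. I would exhibit a lift $E(\C M^1) \to \wit{GL}(\C M^1)$ sending a generator $e_{ij}(a)$ to the canonical morphism in $\G C_{\T{res}}$ from $P$ to $P_{e_{ij}(a)}$, and show that its multiplicativity defect equals the Fredholm determinant appearing in the Connes-Karoubi construction. The main obstacle will be the bookkeeping of the composition law in $\G C_{\T{res}}$, which combines graded torsion isomorphisms of determinant lines with the Carey-Pincus perturbation isomorphism of \cite{CaPi:PV}: one must verify that the composite determinant reproduces the Connes-Karoubi formula on Steinberg symbols $\{u,v\}$ for commuting invertibles $u,v \in \C M^1$. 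Once this agreement is checked on a generating family, naturality of both constructions and the universal property of $K_2^{\T{alg}}$ propagate the equality to all of $K_2^{\T{alg}}(\C M^1)$, giving commutativity of the diagram.
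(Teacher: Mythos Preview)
Your high-level strategy matches the paper's: both sides of the diagram are classifying maps of central extensions, and the task is to identify the extensions. The paper (Theorem~\ref{t:two cocycle and mcc}) invokes \cite[Theorem~5.6]{CoKa:CMF} to present $M(\C F_{\T{uni}})$ via the Pressley--Segal central extension of $GL^0_{\T{res}}(\C H)$, then shows \emph{pointwise} equality of the two $2$-cocycles: for each $g\in GL^0_{\T{res}}(\C H)$ it chooses $\al_g=\G P\big(g(PgP+F_g)^{-1},\,P_g P\big)(1)$ (where $F_g$ is a finite-rank correction making $PgP+F_g$ invertible) and computes, using ``perturbation commutes with torsion'' (Theorem~\ref{t:percom}), that
\[
c_{\T{res}}(g,h)=\det\big((Pgh P+F_{gh})(PhP+F_h)^{-1}(PgP+F_g)^{-1}\big)=m_{\T{uni}}(g,h)
\]
for all $g,h$. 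No generators of $K_2$ enter; the identification of extensions is done at the level of $Z^2$.

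Your proposal has a genuine gap in the last step. Checking the two invariants on Steinberg symbols $\{u,v\}$ for commuting $u,v\in(\C M^1)^*$ does not determine a homomorphism out of $K_2^{\T{alg}}(\C M^1)$: for a noncommutative ring such symbols do not generate $K_2$ (Matsumoto's theorem fails), so agreement on that family does not propagate. What you actually need is agreement of the two classes in $H^2\big(E(\C M^1),\cc^*\big)$ (equivalently $H^2\big(GL^0(\C M^1),\cc^*\big)$ restricted), and for that you must compare the cocycles on \emph{all} pairs $(g,h)$, not on a subset of $K_2$-classes. Your ``lift $e_{ij}(a)\mapsto$ a morphism in $\G C_{\T{res}}$'' is also conflating two things: the objects of $\wit{GL}$ are pairs (group element, nonzero vector in a determinant line), not morphisms, and the multiplicativity defect of a set-theoretic section is precisely the cocycle $c_{\T{res}}(g,h)$ on arbitrary $g,h$. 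The fix is exactly what the paper does: choose the $\al_g$ globally via the perturbation isomorphism and compute $c_{\T{res}}(g,h)$ directly, then recognise the answer as the Fredholm determinant from the Connes--Karoubi/Pressley--Segal extension.
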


Let us briefly return to the particular case where the Hilbert space $\C H$ agrees with $L^2(\mathbb{T})$ and the orthogonal projection $P$ is the projection onto Hardy space. The smooth functions on the $1$-torus acts on $L^2(\mathbb{T})$ as multiplication operators and this action factors through $\C M^1$. Applying the functoriality of algebraic $K$-theory and composing with the universal multiplicative character, we obtain a numerical invariant $M(\C F_{\B T}) \colon K_2^{\T{alg}}(C^\infty(\B T)) \to \cc^*$. This numerical invariant extends the Tate tame symbol of pairs of meromorphic functions on a Riemann surface, see \cite{Del:SM,CaPi:JTH,Kaa:CSA}. In fact, for any pair of invertible smooth functions $u,v \colon \B T \to \cc$ we may form the Steinberg symbol $\{u,v\} \in K_2^{\T{alg}}(C^\infty(\B T))$ and the value $M(\C F_{\B T})( \{u,v\}) \in \cc^*$ is given explicitly by the formula
\[
\exp\big( \frac{1}{2\pi i} \int_{[0,1]} p^*( \frac{\log(u)}{v} d v)  \big) \cd v(1)^{-w(u)} ,
\]   
where $w(u) \in \zz$ denotes the winding number and $p \colon [0,1] \to \B T$ is defined by $p(t) = e^{2 \pi i t}$.

\subsubsection{$2$-categories associated to representations of a ring}\label{s:2category}
To extend the above to a higher dimensional setting, we proceed as follows. Let $R$ be a unital ring and let $I \su R$ be an ideal.

Suppose that $\{ \pi_\la\}_{\la \in \La}$ is a non-empty family of (not necessarily unital) representations of the ring $R$ as bounded operators on a separable Hilbert space $\C H$ satisfying the following conditions:

\begin{assu}\label{a:assurep}\mbox{}
\begin{enumerate}
\item for all $x\in R$ and pairs $\la,\mu \in \La$ the difference
\[
\pi_\la(x)\pi_\mu(1)-\pi_\la(1)\pi_\mu(x)
\]
is a trace class operator;
\item for all $i\in I$ and triples $\la,\mu,\nu \in \La$ the difference
\[
\pi_\la(i)\pi_\mu(1)\pi_\nu(1)-\pi_\la(i)\pi_\nu(1)
\]
is a trace class operator.
\end{enumerate}
\end{assu}

Let us for a moment fix two indices $\la,\mu \in \La$ and explain the link between the above assumptions and relative analytic $K$-homology. The fact that our conditions really involve triples of indices has to do with the existence of the composition in the categories which we are going to explain in a little while. For the moment we define the $\zz/2\zz$-graded separable Hilbert space
\[
\C H(\la,\mu) := \pi_\la(1) \C H \op \pi_\mu(1) \C H \, , 
\]
where the first component is even and the second component is odd. We equip this Hilbert space with the even representation
\[
\pi(\la,\mu) := \pi_\la \op \pi_\mu \colon R \to \C L\big( \pi_\la(1) \C H \op \pi_\mu(1) \C H \big) \, ,
\]
where we consider the unital ring $R$ to be trivially graded. We then define the odd bounded operator 
\[
F(\la,\mu) := \ma{cc}{0 & \pi_\la(1) \pi_\mu(1) \\ \pi_\mu(1) \pi_\la(1) & 0} 
\colon \pi_\la(1) \C H \op \pi_\mu(1) \C H \to \pi_\la(1) \C H \op \pi_\mu(1) \C H \, .
\]
With these definitions we record that our conditions $(1)$ and $(2)$ mean that the operators
\[
[ F(\la,\mu) , \pi(\la,\mu)(x) ] \q \T{and} \q \pi(\la,\mu)(i)\big( F(\la,\mu)^2 - \T{id}_{\C H(\la,\mu)} \big)
\]
are of trace class for all $x \in R$ and all $i \in I$. Comparing with \cite[Definition 8.5.1]{HiRo:AKH} we are then justified in viewing the triple
\[
\big(\pi(\la,\mu), \C H(\la,\mu), F(\la,\mu) \big)
\]
as an even relative Fredholm module with respect to the ideal $I$ inside the ring $R$. This even relative Fredholm module is then subject to the extra summability/dimensionality constraint imposed by replacing the compact operators on the Hilbert space $\C H(\la,\mu)$ by the trace class operators, see \cite{Con:NDG,Con:NCG} for more details on these matters. Remark also that $F(\la,\mu)$ becomes self-adjoint if we impose the extra condition that $\pi_\la(1)$ and $\pi_\mu(1)$ be orthogonal projections. This condition is however too restrictive for our present investigations (e.g. we are working with the whole group of invertible elements in $C^\infty(\B T^2)$ and not just the smooth functions with values in the circle).

Let us now also fix two idempotents $p,q \in R$ and assume that their difference $p-q$ belongs to the ideal $I$. This data then yields a class in the even relative $K$-theory with respect to the ideal $I$ inside the ring $R$. 

We recall that there is an index pairing between even relative $K$-theory and even relative $K$-homology with values in the group of integers. Thus, from our current data we know how to produce an integer
\[
\inn{ (p,q) , (\la,\mu) } \in \zz \, .
\]
This integer does in fact arise as the index of an explicit Fredholm operator acting between Hilbert spaces and we denote this Fredholm operator by
\[
F(\la,\mu)(p,q) \colon \pi_\la(p) \C H \op \pi_\mu(q) \C H \to \pi_\la(q) \C H \op \pi_\mu(p) \C H \, . 
\]
In our construction of a (twisted and weak) $2$-category we are not merely interested in this integer but instead in the graded determinant line associated to the Fredholm operator $F(\la,\mu)(p,q)$ so that the degree agrees with the corresponding index. Indeed, this graded determinant line agrees with the $2$-morphisms between the $1$-morphisms $\la, \mu$ (the source being $\la$ and the target $\mu$) acting between the objects $p$ and $q$ (here with source $p$ and target $q$). 

Still fixing the pair of idempotents $p,q \in R$ with difference in the ideal $I$, we obtain a $1$-category $\G L(p,q)$ defined as follows: 

\begin{enumerate}
\item The objects in $\G L(p,q)$ are elements of the index set $\La$;
\item Given two objects $\la,\mu \in \La$, the morphisms from $\la$ to $\mu$ are given by the graded determinant line 
\[
\Det \big(F(\la,\mu)(p,q)\big) \, .
\]
\end{enumerate}
As it happens for the category $\G C_{\T{res}}$ the composition in $\G L(p,q)$ is again given by a combination of torsion isomorphisms and perturbation isomorphisms (see Section~\ref{s:category} for details). The definition of this composition and the proof of associativity is however considerably more involved due to the complicated nature of the Fredholm operators $F(\la,\mu)(p,q)$ (see Section~\ref{s:proofcat}).

In order to incorporate group actions we suppose that our discrete group $G$ acts on the Hilbert space $\C H$, on the ring $R$ (with the ideal $I$ as an invariant subset) and on the index set $\La$. Imposing the equivariance condition
\[
g( \pi_\la(x) ) = \pi_{g(\la)}(g(x)) \q g \in G \, , \, \, \la \in \La \, , \, \, x \in R
\]
we then obtain isomorphisms of categories
\[
g \colon \G L(p,q) \to \G L(g(p), g(q)) \q g \in G 
\]
which are compatible with the group laws.

It turns out to be necessary to replace the collection of categories $\G L$ with a slightly different collection of categories which we label $\G H$. The definition of $\G H$ depends on the choice of a basepoint meaning that we choose an idempotent $p_0 \in R$. This choice of basepoint has to be compatible with the group action in the sense that $g(p_0) - p_0 \in I$. The categories appearing in the collection $\G H$ are then going to be the categories of morphisms between the objects in the associated twisted weak $2$-category. The objects are defined as follows
\[
\T{Obj}(\G H) := \big\{ p \in R \mid p \T{ idempotent and } p - p_0 \in I \big\}
\]
and for each pair of objects $p,q \in \T{Obj}(\G H)$ we define the category
\[
\G H(p,q) := \G L(p,p_0) \ot \G L(q,p_0)^\da \, ,
\]
where we apply a graded tensor product of categories and where the superscript ``$\da$'' means dual category.

The next natural step to take for constructing a twisted weak $2$-category with an action of $G$ out of our data would be to specify composition functors
\[
\G H(p,r) \ot \G H(r,q) \to \G H(p,q) \, ,
\]
where the twisting signifies that we are applying a graded tensor product of our categories and not just the cartesian product (which would correspond to the standard definition of a weak $2$-category). In our framework it turns out to be more convenient to construct coproduct functors instead of composition functors meaning that we instead construct coassociative functors
\[
\De_r \colon \G H(p,q) \to \G H(p,r) \ot \G H(r,q)
\]
for every object $r \in \T{Obj}(\G H)$. Contrary to the composition functors, which only satisfy twisted pentagon identities up to natural isomorphisms, our coproduct functors are in fact strictly coassociative. We refer to the collection of categories $\G H$ endowed with the collection of coproduct functors $\De$ as a \emph{coproduct category}.

The choice of a basepoint $p_0$ implies that the coproduct category $(\G H,\De)$ does not at first sight admit an action of the group $G$ (unless $p_0$ happens to be a fixed point for the group action on $R$). We prove in the present text that the coproduct category $(\G H,\De)$ is independent of the choice of basepoint up to isomorphism of coproduct categories. More specifically we show that for any alternative idempotent $p_0' \in R$ with $p_0 - p_0' \in I$ we have an isomorphism of coproduct categories
\[
\G B(p_0,p_0') \colon (\G H,\De) \to (\G H', \De')
\]
which we refer to as the \emph{change-of-base-point isomorphism} (here $(\G H',\De')$ denotes the coproduct category constructed using the idempotent $p_0'$ as the base point).

The main result of the present paper can now be stated as follows:

\begin{theorem}\label{t:intromain} The pair $(\G H,\De)$ can be given the structure of a coproduct category with an action of the group $G$ and this data satisfies the conditions for the construction of a group $3$-cocycle on $G$ with values in $\cc^*$. Moreover, an explicit computation for the case where $G$ equals the smooth double loop group $C^\infty(\B T^2)^*$ shows that this group $3$-cocycle is non-trivial in general (see Section~\ref{s:nontriv}).
\end{theorem}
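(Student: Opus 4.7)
The plan is to establish the theorem in three stages corresponding to its three assertions: first, to build the coproduct category structure together with the $G$-action on $(\G H, \De)$; second, to verify the axioms required for the higher-categorical recipe from the introduction to yield a group $3$-cocycle whose cohomology class is well-defined; and third, to exhibit an explicit $K$-theory class on which the resulting class evaluates non-trivially when $G$ is the smooth double loop group.

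For the first stage I would construct the coproduct functors
\[
\De_r \colon \G H(p,q) \to \G H(p,r) \ot \G H(r,q)
\]
directly from the relative Fredholm modules of Section \ref{s:2category}. Because $\G H(p,q) = \G L(p,p_0) \ot \G L(q,p_0)^\da$, the coproduct amounts to inserting a ``cut'' through the idempotent $r$ into each tensor factor, relating the Fredholm operator $F(\la,\mu)(p,q)$ to the pair $F(\la,\nu)(p,r)$, $F(\nu,\mu)(r,q)$ via the torsion and perturbation isomorphisms of Sections \ref{s:torsfred} and \ref{s:pert}. Strict coassociativity should drop out because insertions at different idempotents $r, r'$ genuinely commute at the level of the underlying Fredholm operators and their determinant lines. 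To install the $G$-action I combine the naive transport (which sends $\G H(p,q)$ to the coproduct category $\G H'(g(p), g(q))$ built over the displaced base point $g(p_0)$) with the change-of-base-point isomorphism $\G B(g(p_0), p_0)$; compatibility of this composite with $\De$ and with the group law $g \ci h = (gh)$ reduces to naturality and essential uniqueness of the change-of-base-point isomorphisms.

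The second stage follows the higher-categorical template recalled in the introduction. Fix a base object $p_0$, choose for every $g \in G$ a $1$-isomorphism $\al_g \in \G H(p_0, g(p_0))$, and for every ordered pair $(g,h)$ a $2$-isomorphism $\be_{g,h}$ relating $\al_{gh}$ with the image of $(\al_g, g(\al_h))$ under an inverse to $\De_{g(p_0)}$; the twisting is encoded precisely by the need to pass through such inverses rather than through a strict composition. Applied to a triple $(g,h,k)$ this yields two $2$-isomorphisms comparing the decomposition of $\al_{ghk}$ through $g(p_0)$ and $gh(p_0)$; their ratio is an automorphism of $\al_{ghk}$, an element of $\cc^*/\{\pm 1\}$ whose square I would take as the cocycle $c$. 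The $3$-cocycle identity on a quadruple $(g,h,k,l)$ translates into a compatibility between the five decompositions of $\al_{ghkl}$ and reduces to coassociativity of $\De$ together with $G$-equivariance of $\be$. Independence of the cohomology class from $\al_g$, $\be_{g,h}$, and the base point $p_0$ is a standard obstruction-theoretic argument, with the last using the change-of-base-point isomorphism. I expect the main technical obstacle here to be the systematic book-keeping of signs arising from the graded tensor product; this is precisely why the raw cocycle is valued in $\cc^*/\{\pm 1\}$ and must be squared to land in $\cc^*$.

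For the non-triviality in the case $G = C^\infty(\B T^2)^*$ I would use the pairing $H^3(G, \cc^*) \ti H_3(G, \zz) \to \cc^*$ together with the Hurewicz map $K_3^{\T{alg}}(C^\infty(\B T^2)) \to H_3(G, \zz)$. Construct three commuting invertible smooth functions $u, v, w$ on $\B T^2$ with controlled winding data (for instance the two coordinate characters on $\B T^2$ together with a carefully chosen third function), form the Steinberg symbol $\{u,v,w\} \in K_3^{\T{alg}}(C^\infty(\B T^2))$, and compute the pairing of $[c]$ with its Hurewicz image. Using Theorem \ref{intro:two cocycle} as the one-dimensional template, this pairing should reduce to an explicit iterated Fredholm determinant generalising the formula with $\exp\bigl( \tfrac{1}{2\pi i} \int p^*(\tfrac{\log u}{v} dv) \bigr) \cd v(1)^{-w(u)}$. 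The main obstacle, and the one I expect to absorb most of the work, is reducing this Fredholm-determinant expression to a closed-form number for a specific triple $(u,v,w)$ and verifying that the resulting value cannot be absorbed into the $\{\pm 1\}$ ambiguity or some accidental root of unity; this is the content of Section~\ref{s:nontriv}.
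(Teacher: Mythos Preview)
Your three-stage decomposition matches the paper's architecture, and your description of the $G$-action as naive transport composed with the change-of-base-point isomorphism $\G B(g(p_0),p_0)$ is exactly right, as is the second stage and the squaring to pass from $\cc^*/\{\pm 1\}$ to $\cc^*$. However, there are two substantive gaps.

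First, you underestimate where the technical weight lies in Stage~1. Your description of $\De_r$ as ``inserting a cut through the idempotent $r$'' relating $F(\la,\mu)(p,q)$ to a pair $F(\la,\nu)(p,r)$, $F(\nu,\mu)(r,q)$ introduces a spurious intermediate index $\nu$; in the paper the coproduct is simply $\la \mapsto \la \ot \la$ on objects and on morphisms inserts the duality $\varphi \colon (\cc,0) \to \G L_r^\da(\la,\mu) \ot \G L_r(\la,\mu)$, with the indices unchanged. More importantly, coassociativity of $\De$ is indeed easy, but that is not the hard part. The real work is (i) proving that the composition in $\G L_p$ is associative (Theorem~\ref{t:associativity}), which occupies Section~\ref{s:proofcat} and requires a separate, asymmetric argument for each triangle of the associativity diagram, and (ii) proving that the change-of-base-point isomorphism is compatible with \emph{composition}, not just with $\De$ (Proposition~\ref{p:func}). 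The latter occupies Section~\ref{s:propchange} and hinges on Lemma~\ref{l:crux}, which shows that a specific Fredholm determinant built from eight $\Om$-operators equals one; this is described in the paper as the technical core, and nothing in your outline anticipates it. Your phrase ``reduces to naturality and essential uniqueness'' conceals exactly the step that absorbs most of the analytic effort.

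Second, your plan for non-triviality would not work as stated. You propose to evaluate the cocycle via an iterated Fredholm determinant generalising the one-dimensional Tate tame symbol formula; but that two-dimensional integral formula is precisely Conjecture~\ref{intro:3-Beilinson} in the paper, and it is \emph{not} proven there. The paper's actual computation (Section~\ref{s:nontriv}) is far more concrete and avoids any such formula: for invertible monomials $g = \mu z_1^{n_1} z_2^{n_2}$ one writes down explicit non-zero vectors $\om_{n,m,t,s}$ in the relevant determinant lines, chooses $a_g$ and $\be_{g,h}$ in terms of them, and computes the coproduct, the group action (including the change-of-base-point), and the composition by hand on these explicit bases. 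The outcome (Proposition~\ref{p:compmono}) is $c(g,h,k) = \mu^{m_1 l_2}$ up to sign, so that on the triple $(z_1,z_2,\la)$ only $c(\la,z_1,z_2) = \la$ survives and the pairing with the group homology class $\{z_1,z_2,\la\}$ gives $[\la]$. No $K$-theory, no Hurewicz map, and no integral formula is invoked.
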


Our principal example of a setting where the above theorem applies is as follows. Suppose that we are given a bipolarisation $(P,Q)$ of a representation $\rho \colon G \to GL(\C H)$. Let $R :=\inn{q_u ; u\in G}$ be the unital ring which is freely generated by idempotents $\{q_u\}_{u\in G}$ and let $I$ be the augmentation ideal defined as the kernel of the unital homomorphism $R \to R$ sending all $q_u$ to $q_e$ (with $e \in G$ being the neutral element). The fact that we are considering a bipolarisation allows us to show that every bounded operator of the form $P_g Q_u P_g$ is a trace class perturbation of an idempotent on the Hilbert space $P_g \C H$. In this fashion we obtain families of representations of $R$ parametrised by $g \in G$ and these families satisfy our two main conditions $(1)$ and $(2)$ detailed out under Assumption \ref{a:assurep}. With a little bit of extra care, we may also equip all our data with an action of $G$ and Theorem \ref{t:intromain} thereby applies and produces a group $3$-cocycle on the group $G$.


\begin{remark}
In distinction to the one-dimensional case, it is not clear if there exists a universal bipolarised representation, or equivalently a unique maximal group of bounded invertible operators on which the group $3$-cocycle is defined. 
\end{remark}


\subsection{Relation to the Tate tame symbol and the multiplicative character}
As remarked above, the first application of the results in this paper is to the algebra of smooth functions on the two-torus, $C^\infty (\mathbb{T}^2)$. The Hilbert space is $L^2(\mathbb{T}^2)$ and, given $\xi=\sum_{(m,n)\in \mathbb{Z}^2}\mu_{(m,n)} z_1^mz_2^n\in L^2(\mathbb{T}^2)$, we define the orthogonal projections $P,Q \in \sL\big( L^2(\mathbb{T}^2) \big)$ by
\begin{equation}\label{eq:bipolarisation of two-torus}
P\xi=\sum_{m\geq 0}\sum_{n\in \mathbb{Z}} \mu_{(m,n)} z_1^mz_2^n \, \, \mbox{ and } \,\,\,
Q\xi=\sum_{m\in \mathbb{Z}}\sum_{n\geq 0}\mu_{(m,n)} z_1^mz_2^n \, .
\end{equation}
The group $G := C^\infty(\B T^2)^*$ can be taken to be the group of smooth invertible functions on the $2$-torus acting by multiplication on $L^2(\B T^2)$. The corresponding $3$-cocycle $[c] \in H^3(C^\infty(\B T^2)^*,\cc^*)$ is non-trivial. In fact, for a constant function $\lambda\in \mathbb{C}^*$ we shall see that
\[
\inn{ [c], \{ z_1,z_2,\lambda \} } = \lambda^2 \, ,
\]
where $\{ z_1, z_2, \lambda \} \in H_3( C^\infty(\B T^2)^*, \zz)$ is the class of the group $3$-cycle
\[
(z_1, z_2, \la) - (z_1, \la, z_2) + (\la, z_1, z_2) - (\la, z_2, z_1) + (z_2,\la, z_1) - (z_2, z_1, \la)
\in \zz[ G^3 ] \, .
\]
This explicit computation of the pairing between group cohomology and group homology, the analogy with the one-dimensional setting as well as the situation for the formal double loop group $\cc((t))((s))$ lead us to the conjecture here below. The integral formula appearing should be compared with \cite[Theorem 2.7]{BrMc:MRL}, where this kind of integral formula is related to the product structure in Deligne cohomology, \cite{Bei:HRV,EsVi:DBC}:
	
\begin{conjecture}\label{intro:3-Beilinson}
 Suppose that $f$, $g$ and $h$ are smooth invertible functions on $\mathbb{T}^2$. Then
\[
\begin{split}
\inn{ [c], \{f,g,h\}} & =\exp \left( \frac{2}{(2\pi i)^2} \cd \int_{[0,1]^2} p^*\big(  \frac{\log(f)}{g \cd h} dg \we dh \big) \right) \\
& \qq \cd \exp\left( -\frac{w_1(f)}{\pi i} \cd \int_{ \{1\} \ti [0,1] } p^*\big( \frac{\log(g)}{h} dh \big)
+ \frac{w_2(f)}{\pi i} \cd \int_{ [0,1] \ti \{1\}} p^*\big( \frac{\log(g)}{h} dh  \big) \right) \\
& \qqq \cd h(1,1)^{ 2 w_1(f) w_2(g) - 2 w_1(g) w_2(f)} \, ,
\end{split}
\]
where $p \colon  [0,1]^2 \to \B T^2$ is the smooth map $p(t,s) := (\exp(2 \pi i \cd t), \exp(2 \pi i \cd s))$ and where $w_1(f) \, , \, \, w_2(f) \in \zz$ are the winding numbers of the smooth invertible functions on the circle $z \mapsto f(z,1)$ and $z \mapsto f(1,z)$, respectively.
\end{conjecture}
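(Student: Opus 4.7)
The plan is to reduce the conjecture to a combination of index-theoretic computations for monomials and a Bott-type transgression formula for functions homotopic to the constant $1$. First I would exploit multiplicativity: both sides of the conjectural formula are multiplicative in each slot, the left-hand side because $[c]$ is paired with the Steinberg-type $3$-cycle $\{f,g,h\} \in H_3\big(C^\infty(\B T^2)^*,\zz\big)$, which is itself multiplicative modulo boundaries thanks to the usual bilinearity relations for symbols, and the right-hand side by direct inspection of the integrand and the winding number correction terms. Thus it suffices to check the formula on a distinguished generating family.

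Using $\pi_0\big(C^\infty(\B T^2)^*\big) \cong \zz^2$ together with the existence of smooth logarithms for null-homotopic invertible functions, every $f \in C^\infty(\B T^2)^*$ admits a factorisation
\[
f = f(1,1) \cd z_1^{w_1(f)} z_2^{w_2(f)} \cd \exp(a_f) \, ,
\]
for some $a_f \in C^\infty(\B T^2)$ with $a_f(1,1) = 0$. After this reduction the problem splits into three cases. For the purely combinatorial triples built from $z_1, z_2$ and constants $\la,\mu,\nu \in \cc^*$, the formula predicts explicit powers of the constants whose exponents match those computed directly in Section \ref{s:nontriv}; triples involving two or more monomials cancel under the antisymmetrisation producing $\{f,g,h\}$, since the bulk integrand $\log(f) \cd dg \we dh/(gh)$ is exact when $g,h$ are monomials and the boundary/corner winding number contributions combine antisymmetrically.

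The heart of the argument is the case of triples of exponentials $f=\exp(a)$, $g=\exp(b)$, $h=\exp(c)$ with $a,b,c \in C^\infty(\B T^2)$, for which the winding numbers vanish and the conjecture reduces to
\[
\inn{[c], \{e^a, e^b, e^c\}} = \exp\left( \frac{2}{(2\pi i)^2} \int_{[0,1]^2} p^*(a \, db \we dc) \right) \, .
\]
Here I would unwind the construction of Section \ref{s:category}: the antisymmetrised sum $\sum_\sigma \T{sgn}(\sigma) \cd c(f_{\sigma(1)},f_{\sigma(2)},f_{\sigma(3)})$ records the discrepancy between the two composite $2$-isomorphisms in the $3$-cocycle recipe, and each $2$-isomorphism is encoded by a graded determinant of a Fredholm operator $F(\la,\mu)(p,q)$ built from the bipolarisation $(P,Q)$ of \eqref{eq:bipolarisation of two-torus} and multiplication operators by $f,g,h$. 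For exponentials these operators are trace-class perturbations of the identity, their logarithms are well defined, and the relevant determinants can be expanded via $\log\det = \T{Tr}\log$. Evaluating the resulting trace in the Fourier basis using the explicit action of $P$ and $Q$ on $z_1^m z_2^n$ then produces the double integral, in direct analogy with the Helton--Howe / Carey--Pincus computation of determinants of multiplicative commutators that underlies the one-dimensional Tate tame symbol.

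Finally, combining the two special cases via the cocycle identity gives the general formula. The boundary integrals on $\{1\}\ti[0,1]$ and $[0,1]\ti\{1\}$ arise from the interaction between a monomial factor $z_i^{w_i(f)}$ and an exponential factor $\exp(a_g)$ after integration by parts on the torus viewed as the quotient of the square by its boundary identifications, while the corner term $h(1,1)^{2w_1(f)w_2(g) - 2w_1(g)w_2(f)}$ is read off from the combinatorial case. The principal obstacle is the exponential case: relating the Fredholm determinant of $F(\la,\mu)(p,q)$ for multiplication by $\exp(a),\exp(b),\exp(c)$ to the Chern--Simons-type double integral requires a two-variable Szeg\H{o} asymptotic analysis of Toeplitz-like operators controlling the perturbation isomorphisms of Section \ref{s:pert}, and this is the new analytical input not present in the one-dimensional calculation of \cite{CaPi:JTH}.
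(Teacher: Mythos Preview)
The statement you are attempting to prove is a \emph{conjecture} in the paper, not a theorem: the authors do not supply a proof. Their evidence consists of the single computation in Section~\ref{s:nontriv}, namely $\inn{[c],\{z_1,z_2,\la\}}=[\la]$ in $\cc^*/\{\pm1\}$, together with the analogy with the one-dimensional tame symbol and the formal double loop group. So there is no ``paper's own proof'' to compare against; you are sketching an approach to an open problem.

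As an outline your strategy is sensible, but several steps are more delicate than indicated. First, Section~\ref{s:nontriv} establishes only the value on $\{z_1,z_2,\la\}$; it does not treat the other monomial/constant configurations (e.g.\ $\{\la,\mu,\nu\}$ or $\{z_1,\la,\mu\}$), and your claim that ``triples involving two or more monomials cancel'' is ambiguous and not literally true---$\{z_1,z_2,\la\}$ has two monomials and contributes nontrivially. Second, the multiplicativity of the right-hand side is not immediate: the pullback $p^*\log(f)$ depends on a choice of branch, and $\log(f_1f_2)\neq\log(f_1)+\log(f_2)$ in general on $[0,1]^2$; the mismatch is exactly what feeds the boundary and corner correction terms, so the bookkeeping here must be done carefully rather than asserted ``by direct inspection''. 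Third, note that the paper's cocycle takes values only in $\cc^*/\{\pm1\}$ (Proposition~\ref{p:cocycle3}), so any matching with the explicit formula must be made modulo sign. Finally, you yourself identify the real obstruction: the exponential case requires a two-variable Szeg\H{o}-type analysis of the Fredholm determinants arising from the perturbation isomorphisms, and this is genuinely new analytic input. Until that step is carried out, the proposal remains a plausible programme rather than a proof.
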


While this is not the language we adopt in this paper, much of our construction of the twisted weak $2$-category $\G H$ can be understood as an attempt to ``categorify'' constructions relating to the $K$-theory of operator algebras. Let $n \in \nn$ be fixed. Suppose that $\C A$ is an algebra over $\cc$, $\pi\colon \C A\to \sL(\C H)$ a representation of $\C A$ by bounded operators on a separable infinite dimensional Hilbert space $\C H$ and $F\in \sL(\C H)$ is a self-adjoint unitary satisfying that
\[
[F,\pi (a)]\in \mathscr{L}^n(\C H)  \q \forall a \in \C A \, ,
\]
where $\mathscr{L}^n(\C H)$ denotes the $n^{\T{th}}$-Schatten ideal. If $n$ is odd, we assume that $\C H$ is $\mathbb{Z}/2\mathbb{Z}$ graded, $F$ is odd and $\pi(a)$ is even for all $a \in \C A$. In this case, we denote the grading operator by $\ga \colon \C H \to \C H$. We will moreover assume that the spectral subspaces for $F$ (resp. $\ga$) are infinite dimensional when $n$ is even (resp. odd).
	
The Connes-Chern character of the $n$-summable Fredholm module $\C F := (\C A,\C H,F)$ is the class in cyclic cohomology $\T{Ch}(\C F) \in HC^{n-1}(\C A)$ given by the cyclic cocycle
\begin{equation}\label{eq:chern character}
\T{Ch}(\C F)(a_0,\ldots,a_{n-1}) = \frac{1}{2}\T{Tr}(\ga^n F[F,\pi(a_0)][F,\pi(a_1)]\ldots[F,\pi(a_{n-1})]) \q
a_0,\ldots,a_{n-1} \in \C A \, ,
\end{equation}
where $\ga^n := \T{id}_{\C H}$ for $n$ even and where $\T{Tr} \colon \mathscr{L}^1(\C H) \to \cc$ is the operator trace, \cite{Con:NDG,Con:NCG}.
	
The pattern for secondary invariants of algebraic $K$-theory is as follows. Let $\C F_{\T{uni}} := \big( \mathcal{M}^{n-1}, \C H, F_{\T{uni}} \big)$ be the universal $n$-summable Fredholm module. Then the topological $K$-theory is given by $K_n^{\T{top}}(\C M^{n-1})=0$, $K^{\T{top}}_{n+1}(\C M^{n-1})=\mathbb{Z}$ and we have a commuting diagram with exact rows:
\[
\xymatrix{
& \mathbb{Z}\ar[r]\ar@{=}[dd] & K^{\T{rel}}_{n}({\mathcal{M}^{n-1}})\ar[d]^{\T{Ch}^{\T{rel}}}\ar[r]&K_n^{\T{alg}}({\mathcal{M}^{n-1}})\ar[r]
\ar[dd]^{\T{M}(\C F_{\T{uni}})}\ar[r] &0  \\
& & HC_{n-1}({\mathcal{M}}^{n-1}) \ar[d]^{\T{Ch}(\C F_{\T{uni}})} & & \\
&\mathbb{Z}\ar[r]_{(2 \pi i)^{\lceil \frac{n}{2} \rceil}}&\mathbb{C}\ar[r]&\mathbb{C}/(2\pi i)^{\lceil \frac{n}{2}\rceil}\mathbb{Z}
}
\]
The right-most arrow is the universal Connes-Karoubi \emph{multiplicative character} and the Chern character $\T{Ch}^{\T{rel}}$ is the \emph{relative Chern character} which is defined on the relative $K$-theory of the Banach algebra $\C M^{n-1}$, \cite{Kar:HCK}. The cyclic homology group appearing is constructed using the projective tensor product of Banach algebras.

Any $n$-summable Fredholm module $\C F := (\C A,\C H,F)$ over $\C A$ yields an algebra homomorphism
\[
\pi\colon \C A\to \mathcal{M}^{n-1} \, \, \T{with} \q \pi^*( \C F_{\T{uni}}) = \C F
\]
(at least when the relevant spectral subspaces are infinite dimensional) and hence, using the functoriality of algebraic $K$-theory, produces the group homomorphism called the Connes-Karoubi \emph{multiplicative character}, \cite{CoKa:CMF}:
\[
\T{M}( \C F) \colon K^{\T{alg}}_n(\C A) \to \cc/ (2 \pi i)^{\lceil \frac{n}{2} \rceil} \zz \, .
\]
On the other hand, by the definition of the algebraic $K$-group $K_*^{\T{alg}}(\C A)$, we have the Hurewicz homomorphism
\[
K^{\T{alg}}_n(\C A)\to H_n(GL(\C A),\mathbb{Z})
\]
and the natural question is whether the multiplicative character of an $n$-summable Fredholm module factorises via group homology, i.e. whether the dotted arrow below exists making the diagram commute:
\[
\xymatrix{
K^{\T{alg}}_n(\C A)\ar[rr]\ar[d]_{\T{M}(\C F)}&& H_n(GL(\C A),\mathbb{Z})\ar@{-->}[dll]\\
\mathbb{C}/(2 \pi i)^{\lceil \frac{n}{2} \rceil} \zz && \, .
}
\]
	
In the case of the $2$-torus we obtain a $3$-summable Fredholm 
\[
\C F_{\B T^2} = \big( C^\infty(\B T^2 ), L^2(\B T^2) \op L^2(\B T^2), F_{\B T^2} \big)
\]
where the self-adjoint unitary operator $F_{\B T^2}$ is obtained by taking the phase of the Dirac operator
\[
D := \ma{cc}{0 & i \pa/\pa \te_1 - \pa/ \pa \te_2 \\ i \pa / \pa \te_1 + \pa/\pa \te_2 & 0 } \colon C^\infty(\B T^2,\cc^2) \to C^\infty(\B T^2,\cc^2) \, .
\]
Comparing with Conjecture \ref{intro:3-Beilinson} and considering the explicit computations of the multiplicative character carried out in \cite{Kaa:CMC} lead us to the following:
	
\begin{conjecture}\label{intro:3-Connes-Karoubi}
Consider the $3$-summable Fredholm module $\C F_{\B T^2}$ and let $[c] \in H^3\big( GL(C^\infty(\mathbb{T}^2)),\cc^* \big)$ denote the class of the group $3$-cocycle on the group of invertible matrices over $C^\infty(\mathbb{T}^2)$ constructed using the bipolarisation from \eqref{eq:bipolarisation of two-torus}. Then the diagram
\[
\xymatrix{
K^{\T{alg}}_3\big(C^\infty(\mathbb{T}^2)\big)\ar[r]\ar[d]_{\frac{1}{\pi i} \cd M(\C F_{\B T^2})} & H_3\big(GL(C^\infty(\mathbb{T}^2)),\mathbb{Z}\big) \ar[d]^{\inn{[c],\cdot}}\\
\cc/ (2 \pi i) \zz \ar[r]_{\exp} & \mathbb{C}^*
}
\]
commutes.
\end{conjecture}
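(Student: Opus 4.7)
The plan is to verify the conjectural diagram by first proving the companion Conjecture \ref{intro:3-Beilinson} and then matching the resulting formula against an independent explicit computation of the Connes-Karoubi multiplicative character on Steinberg symbols. Both $\exp \ci (\frac{1}{\pi i} \cd M(\C F_{\B T^2}))$ and $\inn{[c],h(\cd)}$ are group homomorphisms out of $K_3^{\T{alg}}(C^\infty(\B T^2))$, so it suffices to check agreement on a sufficiently large family of classes, which I propose to be the Steinberg symbols $\{f,g,h\}$ of triples of invertible smooth functions, extended to all of $K_3^{\T{alg}}$ by a continuity and matrix-stability argument.

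The first step is to prove Conjecture \ref{intro:3-Beilinson}. Given a Steinberg symbol $\{f,g,h\}$, its image under the Hurewicz map is an explicit $3$-cycle in the bar complex of $GL(C^\infty(\B T^2))$, and pairing it with $[c]$ reduces, via the definition of the cocycle in Section~\ref{s:polar}, to the evaluation of a composition of torsion and perturbation isomorphisms in the coproduct category $(\G H,\De)$ associated to the bipolarisation $(P,Q)$ of \eqref{eq:bipolarisation of two-torus}. The Fredholm determinants appearing in this composition can be computed using the Carey-Pincus perturbation formula together with a local index computation for $(P,Q)$, and the resulting expression should collapse to the integral of $\frac{\log(f)}{g \cd h} dg \we dh$ over the fundamental domain together with the boundary and winding-number corrections of the conjecture. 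This is a direct $3$-dimensional analogue of the computation of the Tate tame symbol carried out in \cite{Kaa:CSA}.

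The second step is to compute $M(\C F_{\B T^2})(\{f,g,h\})$ independently by the methods of \cite{Kaa:CMC}. The Connes-Chern character of $\C F_{\B T^2}$ is, up to a universal constant determined by the local index formula for the flat Dirac operator on $\B T^2$, the fundamental cyclic $2$-cocycle $(a_0,a_1,a_2)\mapsto \frac{1}{2\pi i} \int_{\B T^2} a_0 \, d a_1 \we d a_2$. Inserting this cocycle into the Connes-Karoubi construction recalled in Subsection~\ref{s:2category}, together with an explicit lift of the Steinberg symbol to relative $K$-theory, should produce a bulk integral term identical to the one in Conjecture \ref{intro:3-Beilinson} after multiplication by $\frac{1}{\pi i}$, plus boundary contributions controlled by the winding numbers $w_1(f),w_2(f)$. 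The identity $\inn{[c],\{z_1,z_2,\la\}} = \la^2$ established in Section~\ref{s:nontriv} pins down the overall normalisation: on the multiplicative character side the same symbol evaluates to $2\log(\la)$ modulo $(2\pi i)\zz$, whose exponential is indeed $\la^2$.

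The main obstacle is twofold. Analytically, the bookkeeping of signs and of the $\{\pm 1\}$-ambiguity built into the graded determinant construction, combined with the identification of the Carey-Pincus torsion-perturbation composite with a Deligne-type integral, requires delicate control of the trace class properties of $[P_g,Q_u]$ and $(P_g-P_h)(Q_u-Q_v)$ throughout the composition. Algebraically, it is not known that Steinberg symbols generate a subgroup of $K_3^{\T{alg}}(C^\infty(\B T^2))$ large enough to separate $\cc^*$-valued homomorphisms; the proposed remedy is to combine continuity of both sides in the Fr\'echet topology on $C^\infty(\B T^2)$, reducing from smooth functions to Laurent polynomials where Milnor $K$-theoretic techniques apply, with a matrix-stabilisation argument allowing one to pass from $K_3^{\T{Milnor}}$ to $K_3^{\T{alg}}$. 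Settling these two points is precisely where the bulk of the work in a full proof is expected to lie.
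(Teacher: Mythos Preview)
The statement you are attempting to prove is explicitly labelled a \emph{conjecture} in the paper; there is no proof in the paper to compare your proposal against. What you have written is therefore not a proof but a research programme for attacking an open problem, and you yourself correctly flag the two places where the programme is incomplete.

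Both obstacles you name are genuine and, as things stand, fatal to calling this a proof. First, the reduction to Steinberg symbols is not justified: it is well known that the image of Milnor $K$-theory in Quillen $K_3$ is in general a proper subgroup, and there is no reason to expect that $\cc^*$-valued homomorphisms are determined by their restriction to this subgroup for a ring as large as $C^\infty(\B T^2)$. Your proposed remedy via continuity and matrix-stabilisation is a sketch, not an argument; in particular, passing from Laurent polynomials to smooth functions requires controlling the behaviour of $K_3^{\T{alg}}$ under dense inclusions of Fr\'echet algebras, which is a hard and largely open question. Second, the ``first step'' assumes Conjecture~\ref{intro:3-Beilinson}, which is itself unproven in the paper; the passage from the abstract torsion-and-perturbation description of the $3$-cocycle to the explicit Deligne-type integral is exactly the content of that conjecture, and asserting that ``the resulting expression should collapse'' to the desired formula is not a proof. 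The single normalisation check against $\{z_1,z_2,\la\}$ from Section~\ref{s:nontriv} is consistent with the conjecture but far from establishing it.
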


We emphasise that the spectral triple $(C^\infty(\B T^2), L^2(\B T^2) \op L^2(\B T^2), D)$ coming from the Dirac operator on the $2$-torus (and hence also the associated Fredholm module $\C F_{\B T^2}$) is deeply related to the bipolarisation defined in \eqref{eq:bipolarisation of two-torus}. Indeed, the two orthogonal projections $P$ and $Q \colon L^2(\B T^2) \to L^2(\B T^2)$ are exactly the spectral projections onto the positive part of the spectrum of the two partial differential operators $i \pa/\pa \te_1$ and $i \pa/\pa \te_2$ (both acting as unbounded self-adjoint operators on the Hilbert space $L^2(\B T^2)$). In fact, one may regard the two partial differential operators $i \pa/\pa \te_1$ and $i \pa/\pa \te_2$ as providing a factorisation of the spectral triple $(C^\infty(\B T^2), L^2(\B T^2) \op L^2(\B T^2), D)$ and this point of view is compatible with the unbounded Kasparov product, see \cite{BaJu:TKO,Mes:UKC,KaLe:SFU}.

%

\subsection{The structure of the paper}
Since the determinants of Fredholm operators play a major role in our constructions, we collect all the necessary definitions and results in Section \ref{s:tors}-\ref{s:stab}. The definitions and main properties of determinant functors and their associated torsion isomorphisms are collected in Section \ref{s:tors} and Section \ref{s:torsfred}. In Section \ref{s:pert}, we recall the construction of the Carey-Pincus perturbation isomorphism in the context of trace class perturbations of Fredholm operators. In Section \ref{s:pert}, we also prove a fundamental relationship between torsion isomorphisms and perturbation isomorphisms. In Section \ref{s:stab}, we consider a particular kind of quasi-isomorphism, namely those obtained by stabilising Fredholm operators with an invertible operator.

In Section \ref{s:2coc}, we review the construction of group $2$-cocycles associated to group actions on categories. In particular, we apply this framework to obtain a group $2$-cocycle on the restricted general linear group associated to a polarisation of a Hilbert space $\C H$. In Section \ref{s:mulK2}, we then show that our group $2$-cocycle on the restricted linear group recovers the low-dimensional Connes-Karoubi multiplicative character on the second algebraic $K$-group, and hence that our group $2$-cocycle corresponds to the usual central extension of the restricted general linear group.

In Section \ref{s:polar}, we start our investigation of the two-dimensional setting, and provide an account of the category theoretic framework which we apply to exhibit group $3$-cocycles. In particular, we provide the main definitions relating to coproduct categories, and explain how group actions give rise to group $3$-cocycles in this particular context.

In Section \ref{s:category}-\ref{s:group}, we carry out the main constructions of the present paper. We thus explain how to apply representation theoretic data to obtain a coproduct category with an action of a group, and hence a group $3$-cocycle on the group in question. In these sections, we only present the main constructions and state the main theorems; the actual proofs are provided later on in Section \ref{s:proofcat} and Section \ref{s:propchange}. In this respect, we would like to highlight the computation of the Fredholm determinant in Lemma \ref{l:crux}, since this result can be viewed as the main reason for the functoriality of the change-of-base-point isomorphism. 

In Section \ref{s:bipolar} and \ref{s:nontriv}, we apply our main theorem to the setting of bipolarisations of group representations and we show by an explicit computation that our group $3$-cocycle is non-trivial in the case of the smooth double loop group $C^\infty(\B T^2)^*$, thus in the context of the bipolarisation coming from the spectral projections $P$ and $Q \colon L^2(\B T^2) \to L^2(\B T^2)$.

\subsection{Acknowledgements} 
The first author would like to thank Ulrich Bunke for the many very nice conversations on secondary invariants, and to thank Andreas Thom and the Technische Universit\"at Dresden for hospitality in the autumn of 2021 during the final stages of the writing of this paper.

The second author would like to thank Alexander Gorokhovsky and Boris Tsygan for many helpful discussions about various aspects of this paper. 

The third author thanks Oliver Braunling, Ezra Getzler, Michael Groechenig, Mikhail Kapranov and Boris Tsygan for helpful conversations, and thanks the organizers of the 2016 IBS-CGP workshop on Homotopical Methods in Quantum Field Theory for the opportunity to present an early version of this work.

\section{Determinants of $\zz/2\zz$-graded vector spaces}\label{s:tors}
We start out by reviewing some fundamental constructions relating to determinants of finite dimensional vector spaces. These constructions can be expanded and generalised in various directions leading to notions of determinant functors in different category theoretic settings, see e.g. \cite{KnMu:PMS,Knu:DEC,Del:DC,Bre:DFT,MuToWi:DFK}. Notice that we work in a $\zz/2\zz$-graded context instead of the more common $\zz$-graded context, where the objects arise as the cohomology groups of a bounded cochain complex, see also \cite{Kaa:JSC,KaNe:CHS} for more details.

\begin{notation} Let $\mathscr{L}$ be a complex line, i.e. a one-dimensional complex vector space. Given a non-zero vector $w\in \mathscr{L}$, we use $w^*$ to denote the vector in the dual line $\mathscr{L}^*= \T{Hom}(\mathscr{L},\Cb)$ given by $w^*(w)=1$.
\end{notation}

\begin{notation}\label{n:picard}
We let $\G{Pic}$ denote the Picard category of $\zz$-graded complex lines. The objects are thus pairs $(\sL,n)$ where $\sL$ is a complex line and $n$ is an integer. Moreover, the set of morphisms from $(\sL,n)$ to $(\sM,m)$ is the empty set for $n \neq m$ and for $n = m$, they are exactly the vector space isomorphisms from $\sL$ to $\sM$. We recall that the commutativity constraint in our Picard category is given by
\[
\epsilon \colon (\sL,n) \ot (\sM,m) \to (\sM,m) \ot (\sL,n) \q \epsilon(s \ot t) := (-1)^{n \cd m} \cd t \ot s \, .
\]
\end{notation}

\begin{notation}
The exterior algebra of a finite dimensional complex vector space $V$ is denoted by $\La(V)$. For a homogeneous element $v \in \La(V)$, we let
\[
\ep(v) \in \nn \cup \{0\}
\]
denote its degree. For a linear map $T \colon V \to W$ between finite dimensional vector spaces, we use the same notation
\[
T \colon \La(V) \to \La(W) \q T(v_1 \we v_2 \wlw v_k) := T(v_1) \we T(v_2) \wlw T(v_k)
\]
for the induced algebra homomorphism between the exterior algebras.
\end{notation}

\begin{definition}\label{d:torz2}
Given a finite dimensional complex vector space $V,$ its \emph{determinant} is the $\zz$-graded complex line
\[
\Det(V) := \big( \Lambda^{\T{top}}(V) , \dim(V) \big) \, .
\]
For a finite dimensional $\zz/2\zz$-graded vector space $V = V_+ \op V_-$, its \emph{determinant} is the $\zz$-graded complex line
\[
\Det(V) := \Det(V_+) \ot \Det(V_-)^* = \big( \Lambda^{\T{top}}(V_+) \ot \Lambda^{\T{top}}(V_-)^*, \dim(V_+) - \dim(V_-) \big) \, .
\]
We specify that 
\[
\Det(\{0\}) = ( \cc, 0) \, .
\] 
An even isomorphism $\Si = \ma{cc}{\Si_+ & 0 \\ 0 & \Si_-} \colon V \to W$ of $\zz/2\zz$-graded vector spaces induces an isomorphism of the corresponding determinant lines $\Det(\Si) \colon \Det(V) \to \Det(W)$ given by
\[
 v_+ \ot v_-^*\mapsto \Si_+(v_+) \ot (\Si_-(v_-) )^* \, .
\]
We will occasionally use the notation $|V|$ (resp. $|\Sigma|$) for $\Det(V)$ (resp. $\Det (\Sigma)$).
\end{definition}


\begin{definition}
An \E{exact triangle} of finite dimensional $\zz/2\zz$-graded vector spaces is an exact sequence $\De$ of the form
\[
\xymatrix{
            U \ar[rr]^{i} & & V  \ar[dl]^{q} \\
            & W \ar[ul]^{\pa} &         }
\]
where $i$ and $q$ are {\it even} and $\pa$ is an \emph{odd} linear map. Splitting the $\zz/2\zz$-graded vector spaces explicitly into the even and odd direct summands, such an exact triangle is given by the following six term exact sequence of finite dimensional vector spaces
\[
\xymatrix{ U_+ \ar[r]^{i_+} & V_+ \ar[r]^{q_+} & W_+ \ar[d]^{\pa_+} \\
W_-\ar[u]^{\pa_-} & V_- \ar[l]^{q_-} & U_- \ar[l]^{i_-} \, . }
\]
\end{definition}

For each exact triangle $\De$ of finite dimensional $\zz/2\zz$-graded vector spaces, we have the \emph{torsion isomorphism} of determinant lines
\[
\Det(\De) \colon \Det(V) \to \Det(U) \ot \Det(W) \, .
\]
Let us formulate its construction as a definition.

\begin{definition}\label{def:torsion} Let
\[
\De\colon\xymatrix{U\ar[r]^i&V\ar[r]^q&W\ar[r]^\pa& U
}
\]
be an exact triangle of finite dimensional $\zz/2\zz$-graded vector spaces. 

Choose homogeneous elements
\begin{equation}\label{eq:homogen}
\begin{split}
& u_+ \in \La^{\dim( \T{Im}(i_+))}(U_+) \q v_+ \in \La^{\dim(\T{Im}(q_+))}(V_+) \q w_+ \in \La^{\dim(\T{Im}(\pa_+))}(W_+) \\
& u_- \in \La^{\dim(\T{Im}(i_-))}(U_-) \q v_- \in \La^{\dim(\T{Im}(q_-))}(V_-) \q w_- \in \La^{\dim(\T{Im}(\pa_-))}(W_-)
\end{split}
\end{equation}
such that the following wedge products yield non-trivial elements:
\[
\begin{split}
& i_+(u_+) \we v_+ \in \Det(V_+) \q i_-(u_-) \we v_- \in \Det(V_-) \\
& \pa_-(w_-) \we u_+ \in \Det(U_+) \q \pa_+(w_+) \we u_- \in \Det(U_-) \\
& q_+(v_+) \we w_+ \in \Det(W_+) \q q_-(v_-) \we w_- \in \Det(W_-) \, .
\end{split}
\]

The \emph{torsion isomorphism}
\[
\Det(\De) \colon \Det(V) \to \Det(U) \ot \Det(W)
\]
is then defined by
\[
\begin{split}
& i_+(u_+) \we v_+ \ot ( i_-(u_-) \we v_- )^* \\
& \qq \mapsto (-1)^{\ep(\De)} \cd \pa_-(w_-) \we u_+ \ot (\pa_+(w_+) \we u_-)^* \ot q_+(v_+) \we w_+ \ot ( q_-(v_-) \we w_- )^* \, ,
\end{split}
\]
where the sign is determined by the exponent
\[
\ep(\De) := \dim( \T{Im}(q_+)) \cd \dim(U_+) + \dim( \T{Im}(i_-)) \cd \dim(W_+) + \dim( \T{Im}(\pa_-)) \cd \dim(V_-) \in \nn \cup \{0\} \, .
%
\]
\end{definition}

We record the following result (cf. \cite[Lemma 2.1.3]{Kaa:JSC}):

\begin{prop}
The torsion isomorphism $\Det(\De)$ is independent of the choices of elements in the exterior algebras used in Definition \ref{def:torsion}. 
\end{prop}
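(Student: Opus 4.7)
The plan is to verify well-definedness in two stages: first under scalar rescalings of each of the six homogeneous elements, and then under modifications that lie in the kernel of the relevant wedge-product maps. The key structural input is the exactness of $\Delta$, which allows a local decomposition of each of the six exterior-power elements into a ``relevant'' part and an ``irrelevant'' part.

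First I would observe that, by exactness, for each sign $\pm$ one has
\[
\dim \T{Im}(i_\pm) + \dim \T{Im}(\pa_\pm) = \dim U_\pm, \quad
\dim \T{Im}(i_\pm) + \dim \T{Im}(q_\pm) = \dim V_\pm, \quad
\dim \T{Im}(q_\pm) + \dim \T{Im}(\pa_\pm) = \dim W_\pm,
\]
and $\ker(i_\pm) = \T{Im}(\pa_\mp)$, $\ker(q_\pm) = \T{Im}(i_\pm)$, $\ker(\pa_\pm) = \T{Im}(q_\pm)$. Choose (non-canonically) vector-space splittings
\[
U_\pm = \T{Im}(\pa_\mp) \oplus \tilde U_\pm, \qquad
V_\pm = \T{Im}(i_\pm) \oplus \tilde V_\pm, \qquad
W_\pm = \T{Im}(q_\pm) \oplus \tilde W_\pm,
\]
so that $i_\pm$, $q_\pm$, $\pa_\pm$ restrict to linear isomorphisms on $\tilde U_\pm$, $\tilde V_\pm$, $\tilde W_\pm$ respectively. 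Note that $\tilde U_\pm, \tilde V_\pm, \tilde W_\pm$ have dimensions equal to the degrees of $u_\pm, v_\pm, w_\pm$, so each of $\La^{\T{top}}(\tilde U_\pm)$, $\La^{\T{top}}(\tilde V_\pm)$, $\La^{\T{top}}(\tilde W_\pm)$ is one-dimensional.

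Next I would decompose each homogeneous element along the splitting. For example, $u_+ \in \La^{\dim \tilde U_+}(U_+)$ decomposes as $u_+ = u_+^{(0)} + r_+$, where $u_+^{(0)} \in \La^{\T{top}}(\tilde U_+)$ and $r_+$ is a sum of terms with at least one factor in $\T{Im}(\pa_-)$. Because $i_+$ annihilates $\T{Im}(\pa_-)$, one has $i_+(u_+) = i_+(u_+^{(0)})$, so the non-triviality condition $i_+(u_+)\we v_+ \neq 0$ is equivalent to $u_+^{(0)} \neq 0$. Dually, because $\pa_-(w_-)$ spans $\La^{\T{top}}(\T{Im}(\pa_-))$, the wedge $\pa_-(w_-)\we r_+$ vanishes (it would require more than $\dim\T{Im}(\pa_-)$ factors in $\T{Im}(\pa_-)$), so $\pa_-(w_-)\we u_+ = \pa_-(w_-)\we u_+^{(0)}$. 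Thus both the left- and right-hand sides of the defining formula depend on $u_+$ only through the scalar $u_+^{(0)}$ in the one-dimensional space $\La^{\T{top}}(\tilde U_+)$. Carrying out the same analysis for the other five elements reduces the problem to checking that a rescaling $u_+^{(0)}\mapsto c\,u_+^{(0)}$ (and similarly for the others) multiplies both sides of the formula by the same scalar.

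Finally, I would verify the scalar compatibility element by element. Each of the six elements appears in exactly two of the eight wedge expressions in Definition \ref{def:torsion}, once on the ``input'' side and once on the ``output'' side of the formula. For instance, $u_+$ appears in $i_+(u_+)\we v_+$ on the left and in $\pa_-(w_-)\we u_+$ on the right, and rescaling $u_+$ by $c$ scales both by $c$; dually $u_-$ contributes its scalar to the dual side where it appears with a star, and the scalings cancel in the same way. A routine check shows all six rescalings are matched, and since the sign $(-1)^{\ep(\De)}$ depends only on dimensions, it is unaffected. Combining this with the reduction above, the torsion isomorphism is independent of all choices.

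The only delicate point — and the step I would be most careful about — is the bookkeeping that ensures the ``noise terms'' $r_\pm$ really are killed by every wedge in which they appear; this requires systematically using that $\pa_\pm(w_\pm)$, $i_\pm(u_\pm)$, $q_\pm(v_\pm)$ are top-degree elements of the corresponding image subspaces, so any additional factor lying in that same image forces the wedge to vanish. Once this verification is done for each of the six elements, independence follows.
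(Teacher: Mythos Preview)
Your argument is correct. The reduction via splittings $U_\pm = \T{Im}(\pa_\mp) \oplus \tilde U_\pm$ (and similarly for $V_\pm, W_\pm$) works exactly as you describe: the ``noise'' terms are killed because each of $i_\pm(u_\pm)$, $q_\pm(v_\pm)$, $\pa_\pm(w_\pm)$ is already a top-degree element of the relevant image subspace, so wedging with any further factor from that subspace vanishes; dually, applying $i_\pm, q_\pm, \pa_\pm$ to a term with a factor in the kernel gives zero. The scalar bookkeeping is straightforward once you note that $w_+$ and $w_-$ appear only on the right-hand side, each once starred and once unstarred, so their rescalings cancel internally; the other four elements each appear once on each side with the same variance.

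The paper itself does not give a proof but simply records the result with a reference to \cite[Lemma 2.1.3]{Kaa:JSC}, so there is nothing to compare your argument against within the paper. Your direct verification is a perfectly good self-contained substitute for that citation.
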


The next theorem summarises the main properties of our constructions. The first two (naturality and commutativity) follow directly from the definitions.  The proof of associativity is a (much longer) exercise in finite dimensional linear algebra. We refer the interested reader to \cite[Section 4]{Kaa:JSC}, and specifically to Remark 4.3.5 and Theorem 4.3.4 of {\em loc. cit.}. 

\begin{theorem}\label{t:torprop} The torsion isomorphisms satisfy the following properties:
\begin{enumerate}
\item {\bf Naturality.} Let
\[
\xymatrix{ U \ar[r] \ar[d]^a & V \ar[r] \ar[d]^b & W \ar[r] \ar[d]^c & U \ar[d]^a \\
U' \ar[r] & V' \ar[r] & W' \ar[r] & U' }
\]
be a commutative diagram of finite dimensional $\zz/2\zz$-graded vector spaces,
where the rows $\De$ and $\De'$ are exact triangles and the columns are even isomorphisms.

Then the diagram
\[
\xymatrix{ \Det(V) \ar[rr]^>>>>>>>>>>{\Det(\De)} \ar[d]_{\Det(b)} && \Det(U) \ot \Det(W) \ar[d]^{\Det(a) \ot \Det(c)} \\
\Det(V') \ar[rr]^>>>>>>>>>>{\Det(\De')} && \Det(U') \ot \Det(W') }
\]
is commutative.
\item {\bf Commutativity.} Associated to two finite dimensional $\zz/2\zz$-graded vector spaces $U = U_+ \op U_-$ and $V = V_+ \op V_-,$ we can construct two exact triangles
\[
\De_1 \colon \xymatrix{ U \ar[rr]^{\ma{c}{\T{id} \\ 0}} &&  U \op W \ar[rr]^{\ma{cc}{0 & \T{id}}} && W \ar[r]^{0} & U }
\]
 and
\[
\De_2 \colon \xymatrix{ W \ar[rr]^{\ma{c}{0 \\ \T{id}}} && U \op W \ar[rr]^{\ma{cc}{\T{id} & 0}} && U \ar[r]^{0} &  W } \, .
\]

Then the diagram
\[
\xymatrix{ \Det(U \op W) \ar[rr]^{\Det(\De_1)} \ar[rrd]_{\Det(\De_2) \, \, \, } && \Det(U) \ot \Det(W) \ar[d]^{\epsilon} \\
&& \Det(W) \ot \Det(U) }
\]
is commutative, where $\epsilon$ denotes the commutativity constraint from Notation \ref{n:picard}.
\item {\bf Associativity.} Let
\[
\xymatrix{
U \ar[r]^{i} \ar@{=}[d] & V \ar[r]^{q} \ar[d] & X  \ar[r] \ar[d] & U \ar@{=}[d] \\
U \ar[r] & W \ar[r] \ar[d] & Z \ar[r]^{d} \ar[d]^{\pi} & U \\
& Y \ar@{=}[r] \ar[d]_\pa & Y \ar[d] &  \\
& V \ar[r]^{q} & X & 
}
\]
be a commutative diagram of finite dimensional $\zz/2\zz$-graded vector spaces, where:
\begin{enumerate}
\item the two first rows, $\De_1$ and $\De_2$, and columns two and three, $\Ga_2$ and $\Ga_3$, are exact triangles;
\item the diagram
\[
\xymatrix{
Z \ar[r]^{d} \ar[d]_{\pi} & U \ar[d]^{i}\\
Y \ar[r]^{\pa} & V }
\]
is commutative.
\end{enumerate}
Then the diagram
\begin{equation}\label{eq:assotors}
\xymatrix{
\Det(W) \ar[rr]^{\Det(\De_2)} \ar[d]_{\Det(\Ga_2)} && \Det(U) \ot \Det(Z) \ar[d]^{\T{id} \ot \Det(\Ga_3)} \\
\Det(V) \ot \Det(Y) \ar[rr]^>>>>>>>>>>{\Det(\De_1) \ot \T{id}} && \Det(U) \ot \Det(X) \ot \Det(Y)
}
\end{equation}
is commutative.
\end{enumerate}
\end{theorem}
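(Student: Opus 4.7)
The plan is to handle the three parts sequentially, using the explicit formula from Definition \ref{def:torsion} and tracking elements through diagrams. For all three, the strategy is to pick homogeneous elements $u_\pm, v_\pm, w_\pm$ as in \eqref{eq:homogen} once, compute both sides of the diagram applied to a generator of $\Det(V)$ built from these elements, and then compare. The main issue throughout is bookkeeping of the Koszul signs hidden in the exponent $\epsilon(\Delta)$ and in the $\zz$-grading of $\G{Pic}$.

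For \textbf{naturality}, I first choose homogeneous elements $u_\pm, v_\pm, w_\pm$ for $\Delta$ as in Definition~\ref{def:torsion}. Since $a, b, c$ are even isomorphisms and the big diagram commutes, the elements $a(u_\pm), b(v_\pm), c(w_\pm)$ satisfy the corresponding nondegeneracy conditions for $\Delta'$ and can be used to compute $\Det(\Delta')$. Because $a, b, c$ preserve dimensions of kernels and images, the sign $\epsilon(\Delta')$ coincides with $\epsilon(\Delta)$. Applying both compositions in the square to $i_+(u_+)\wedge v_+\otimes(i_-(u_-)\wedge v_-)^*$ then yields the same element, since $a, b, c$ commute with taking wedge products.

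For \textbf{commutativity}, in the two exact triangles the connecting map is zero, so all relevant elements can be taken to be tops of exterior powers of $U_\pm, W_\pm$, and $v_\pm = 1$. A direct substitution into the formula of Definition~\ref{def:torsion} gives $\Det(\Delta_1)(u_+\wedge w_+\otimes(u_-\wedge w_-)^*) = u_+\otimes u_-^*\otimes w_+\otimes w_-^*$ up to a Koszul sign, and similarly for $\Delta_2$; comparing the two signs with the commutativity constraint $\epsilon$ of $\G{Pic}$ (which flips by $(-1)^{nm}$ on objects of degrees $n$ and $m$) gives the equality.

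For \textbf{associativity}, which is the main obstacle, I would choose a coherent family of homogeneous elements simultaneously adapted to all four exact triangles $\Delta_1, \Delta_2, \Gamma_2, \Gamma_3$. Concretely, working separately in the even and odd parts, I would use the commutativity of the central small square to pick bases so that images of the various maps align: for instance, generators of $\T{Im}(i), \T{Im}(\pi), \T{Im}(d)$ are all compatible, and tops of exterior powers of $U, V, W, X, Y, Z$ factor as wedges along these images. With such adapted elements in hand, both compositions in \eqref{eq:assotors} can be written out explicitly as signed tensor products of wedges, and the content of the theorem reduces to the equality of two exponents, each a sum of products of dimensions of images of the structure maps. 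The hard part, and the place where the longer calculation of \cite{Kaa:JSC} enters, is showing these two exponents agree modulo $2$; this is a combinatorial identity among the dimensions $\dim(\T{Im}(f_\pm))$ that follows from repeatedly applying exactness of the various rows and columns (so that, e.g., $\dim(\T{Im}(q_+)) = \dim V_+ - \dim(\T{Im}(i_+))$), and grouping terms. Once the sign identity is verified, the underlying map on wedge products matches on the nose since the same elementary tensors appear on both sides.
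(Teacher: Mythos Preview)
Your proposal is correct and matches the paper's approach: the paper itself does not give a proof in the text, stating only that naturality and commutativity ``follow directly from the definitions'' and referring the associativity to \cite[Section 4]{Kaa:JSC} (specifically Remark~4.3.5 and Theorem~4.3.4). Your sketch---choosing compatible homogeneous elements as in Definition~\ref{def:torsion}, tracking them through the diagrams, and for associativity reducing to a combinatorial sign identity verified via the exactness relations---is precisely the strategy that such a direct computation takes, and you correctly identify the associativity sign check as the substantial part that is deferred to \cite{Kaa:JSC}.
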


\section{The torsion isomorphism of Fredholm operators}\label{s:torsfred}
The determinants and torsion isomorphisms described in the previous section in the context of finite dimensional $\zz/2\zz$-graded vector spaces can be lifted to the analytic setting of Fredholm operators acting between separable Hilbert spaces.

\begin{definition}
Let $\C H$ and $\C K$ be separable Hilbert spaces. The \E{determinant line} of a Fredholm operator $T \colon \C H\to \C K$ is the graded line
\[
|T| :=\big( \Lambda^{\T{top}}\Ker(T)\otimes \Lambda^{\T{top}}\Coker(T)^*, \dim( \Ker (T)) - \dim( \Coker(T)) \big) \, .
\]
Thus, upon defining the finite dimensional $\zz/2\zz$-graded vector space $I(T) := \Ker(T) \oplus \Coker(T)$ we obtain that $|T| = \Det(I(T))$.
\end{definition}

\begin{definition}\label{d:torfre}
For two composable Fredholm operators $T$ and $S$, the associated \E{torsion isomorphism}
\[
\mathfrak{T} \colon |T|\otimes |S|\to |ST|
\]
is the torsion isomorphism $\Det( \De(S,T))^{-1}$ associated to the six term exact sequence $\De(S,T)$:
\[
\xymatrix{
\Ker(T) \ar[r]^{\iota} & \Ker(ST) \ar[r]^{T} & \Ker(S) \ar[d]^{\pa} \\ 
\Coker(S) \ar[u]^{0} & \Coker(ST) \ar[l]_{\pi} & \Coker(T) \ar[l]_{S} \, , }
\]
where $\iota$ is the inclusion, $\pi$ the quotient map and $\pa$ is given by the composition $\Ker(S)\to \C H\to \C H/\T{Im}(T)$.
\end{definition}

We specify that the six term exact sequence in the above definition comes from the exact triangle
\[
\De(S,T) \colon 
\xymatrix{ I(T) \ar[rr]^{\ma{cc}{\io & 0 \\ 0 & S}} && I(ST) \ar[rr]^{\ma{cc}{T & 0 \\ 0 & \pi}} && I(S) \ar[rr]^{\ma{cc}{0 & 0 \\ \pa & 0}} && I(T) } 
\]
of finite dimensional $\zz/2\zz$-graded vector spaces.


\begin{prop}\label{p:assotors}
Given three composable Fredholm operators $S$, $T$ and $R$, the following diagram commutes:
\[
\xymatrix{
|S|\otimes |T|\otimes |R| \ar[rr]^{\mathfrak{T} \ot \T{id}} \ar[d]_{\T{id} \ot \mathfrak{T}} && |TS|\otimes |R| \ar[d]^{\mathfrak{T}} \\
|S|\otimes |RT| \ar[rr]^{\G T} && |RTS| \, . } 
\]
\end{prop}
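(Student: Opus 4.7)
The plan is to reduce Proposition~\ref{p:assotors} to the associativity of torsion isomorphisms for exact triangles of $\mathbb{Z}/2\mathbb{Z}$-graded vector spaces, i.e.\ Theorem~\ref{t:torprop}(3). Since each $\mathfrak{T}$ is, by Definition~\ref{d:torfre}, the inverse of the determinant isomorphism of a six-term exact sequence, inverting all four sides of the square in the statement transforms it into
\[
\xymatrix{
|RTS| \ar[rr]^{\Det(\Delta(RT,S))} \ar[d]_{\Det(\Delta(R,TS))} && |S| \otimes |RT| \ar[d]^{\T{id} \otimes \Det(\Delta(R,T))} \\
|TS| \otimes |R| \ar[rr]^{\Det(\Delta(T,S)) \otimes \T{id}} && |S| \otimes |T| \otimes |R|,
}
\]
which is precisely the shape of the associativity diagram~\eqref{eq:assotors}. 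The task then reduces to producing a commutative diagram of $\mathbb{Z}/2\mathbb{Z}$-graded vector spaces of the form required by Theorem~\ref{t:torprop}(3), with the four triangles $\Delta(T,S)$, $\Delta(RT,S)$, $\Delta(R,TS)$ and $\Delta(R,T)$ appearing as its two rows and middle two columns.

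Setting $U = I(S)$, $V = I(TS)$, $W = I(RTS)$, $X = I(T)$, $Y = I(R)$, $Z = I(RT)$ will make the top row $\Delta(T,S)$ (the triangle of $TS = T\circ S$), the second row $\Delta(RT,S)$ (the triangle of $RTS = (RT) \circ S$), the second column $\Delta(R,TS)$ (the triangle of $RTS = R \circ (TS)$) and the third column $\Delta(R,T)$. Exactness of each of these is built into Definition~\ref{d:torfre}. Commutativity of every square of the big diagram built entirely from even maps is immediate: on the kernel side each even map is either an inclusion of a kernel inside a common Hilbert space or the map induced by one of $S$, $T$, $R$, $TS$ or $RT$, and the two paths agree because $(RT)\circ S = R \circ (TS) = RTS$ as operators; on the cokernel side the same identity of operators forces the induced quotient maps to agree.

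The one non-formal verification is the commutativity of the small square
\[
\xymatrix{
I(RT) \ar[r]^{d} \ar[d]_{\pi} & I(S) \ar[d]^{i}\\
I(R) \ar[r]^{\pa} & I(TS) }
\]
of Theorem~\ref{t:torprop}(3), where $d$ and $\partial$ are the boundary maps of $\Delta(RT,S)$ and $\Delta(R,TS)$. Decomposing along the $\mathbb{Z}/2\mathbb{Z}$-grading, both compositions vanish on the $\Coker(RT)$ summand, because the odd-to-even components of $d$ and $\partial$ are zero by construction. On the $\Ker(RT)$ summand, unwinding the description in Definition~\ref{d:torfre} of each boundary map as ``include into the ambient Hilbert space and then project to the relevant cokernel'' shows that both $i \circ d$ and $\partial \circ \pi$ send $x$ to the class $[T(x)] \in \Coker(TS)$. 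I expect this bookkeeping to be the main, though not conceptually subtle, obstacle; once it and the other, easier, commutations are checked, applying Theorem~\ref{t:torprop}(3) and inverting yields Proposition~\ref{p:assotors}.
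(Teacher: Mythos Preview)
Your proposal is correct and follows exactly the approach the paper intends: the paper's proof is the single sentence ``This is an easy consequence of the associativity property of the torsion isomorphism explained in Theorem~\ref{t:torprop},'' and you have spelled out precisely how to instantiate Theorem~\ref{t:torprop}(3) with $U=I(S)$, $V=I(TS)$, $W=I(RTS)$, $X=I(T)$, $Y=I(R)$, $Z=I(RT)$, including the verification of the boundary square. Your identification of the four triangles with $\Delta(T,S)$, $\Delta(RT,S)$, $\Delta(R,TS)$, $\Delta(R,T)$ and your check that both paths around the odd square send $x\in\Ker(RT)$ to $[T(x)]\in\Coker(TS)$ are exactly the bookkeeping the paper is leaving implicit.
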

\begin{proof}
This is an easy consequence of the associativity property of the torsion isomorphism explained in Theorem \ref{t:torprop}.
\end{proof}

\begin{example}\label{ex:LR}
Suppose that $T \colon \C H_1 \to \C H_2$ is Fredholm and that $\Si_1 \colon \C G \to \C H_1$ and $\Si_2 \colon \C H_2 \to \C K$ are invertible. After identifying $|T| \ot |\Si_2|$ and $|\Si_1| \ot |T|$ with $|T|$, the torsion isomorphisms associated to the compositions get the following explicit description.
\begin{equation}\label{LR}
\begin{split}
& L(\Sigma_2) \colon |T| \to |\Sigma_2 T| \quad u_+ \otimes u_-^* \mapsto u_+ \otimes (\Sigma_2 u_-)^* \quad \mbox{and} \\
& R(\Sigma_1) \colon |T| \to | T \Sigma_1| \quad u_+ \otimes u_-^* \mapsto (\Sigma_1^{-1} u_+) \otimes u_-^* \, ,
\end{split}
\end{equation}
where $u_+ \in \Det( \T{Ker}(T))$ and $u_- \in \Det(\T{Coker}(T))$ are non-trivial vectors.
\end{example}

\begin{definition}
Let $T \colon \C H \to \C G$ and $T' \colon \C H' \to \C G'$ be Fredholm operators and let $\phi \colon \C H \to \C H'$ and $\psi \colon \C G \to \C G'$ be bounded operators. We say that the pair $(\phi,\psi)$ is a \E{quasi-isomorphism} from $T$ to $T'$ when $\psi \ci T = T' \ci \phi \colon \C H \to \C G'$ and when the induced linear maps
\[
\phi \colon \T{Ker}(T) \to \T{Ker}(T') \q \mbox{and} \q \psi \colon \T{Coker}(T) \to \T{Coker}(T')
\]
are isomorphisms. We let $|\phi,\psi| \colon |T| \to |T'|$ denote the isomorphism of graded lines induced by a quasi-isomorphism $(\phi,\psi)$.
\end{definition}

The next proposition explains the relationship between torsion isomorphisms and quasi-isomorphisms.

\begin{prop}\label{p:torquis}
Let $T \colon \C H \to \C G$, $S \colon \C G \to \C K$, $T' \colon \C H' \to \C G'$ and $S' \colon \C G' \to \C K'$ be Fredholm operators and suppose that $(\phi,\psi)$ and $(\psi,\tau)$ are quasi-isomorphisms from $T$ to $T'$ and from $S$ to $S'$, respectively. Then $(\phi,\tau)$ is a quasi-isomorphism from $ST$ to $S'T'$ and the following diagram commutes:
\[
\xymatrix{
|T|\otimes |S|\ar[r]^>>>>>>{\frak{T}}\ar[d]_{|\phi, \psi| \ot |\psi, \tau|} &|ST|\ar[d]^{|\phi,\tau|}\\
|T^{'}|\otimes |S^{'}|\ar[r]^>>>>>>{\frak{T}}& |S^{'}T^{'}| \, .
}
\]
\end{prop}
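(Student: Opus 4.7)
The plan is to reduce the whole statement to the naturality property of torsion isomorphisms (Theorem~\ref{t:torprop}(1)) applied to the exact triangle $\Delta(S,T)$ of Definition~\ref{d:torfre}. First I would verify the three conditions that make $(\phi,\tau)$ a quasi-isomorphism from $ST$ to $S'T'$. The compositional identity is immediate:
\[
\tau \circ S \circ T = S' \circ \psi \circ T = S' \circ T' \circ \phi,
\]
using the two hypotheses in sequence. For the kernel and cokernel statements I would observe that the given quasi-isomorphisms assemble into an even morphism of exact triangles
\[
\xymatrix{
I(T) \ar[r] \ar[d]_{|\phi,\psi|} & I(ST) \ar[r] \ar[d]_{a} & I(S) \ar[r] \ar[d]_{|\psi,\tau|} & I(T) \ar[d]_{|\phi,\psi|} \\
I(T') \ar[r] & I(S'T') \ar[r] & I(S') \ar[r] & I(T')
}
\]
of finite-dimensional $\zz/2\zz$-graded vector spaces, where the middle column $a$ is built from the restrictions $\phi\colon \Ker(ST) \to \Ker(S'T')$ and $\tau\colon \Coker(ST) \to \Coker(S'T')$ that we wish to show are isomorphisms. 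The outer columns are isomorphisms by hypothesis, so an application of the five lemma in each parity forces $a$ to be an isomorphism as well; splitting $a$ into its even and odd components gives the desired conclusion.

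The bulk of the verification is checking that the diagram above genuinely commutes. The two squares not involving the connecting map are immediate from $\psi T = T'\phi$ and $\tau S = S'\psi$ (the maps $\iota, \pi$ of Definition~\ref{d:torfre} are just inclusions and projections and the restrictions of $\phi,\psi,\tau$ respect them). The only mildly delicate square is the one involving the connecting morphism $\pa\colon \Ker(S)\to \Coker(T)$, which is defined as the restriction of the quotient map $\C G \to \C G/\T{Im}(T)$. Here commutativity follows from the fact that $\psi(\T{Im}(T)) \subseteq \T{Im}(T')$ (a direct consequence of $\psi T = T'\phi$), so the ambient map $\psi\colon \C G \to \C G'$ descends to the map $|\psi,\tau|$ on cokernels and restricts to $|\psi,\tau|$ on the kernels of $S,S'$, and the compatibility of restriction and quotient is tautological.

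With the morphism of exact triangles in hand, the naturality property of torsion isomorphisms stated in Theorem~\ref{t:torprop}(1) gives the commutative square
\[
\xymatrix{
\Det(I(ST)) \ar[rr]^{\Det(\Delta(S,T))} \ar[d]_{\Det(a)} && \Det(I(T)) \ot \Det(I(S)) \ar[d]^{\Det(|\phi,\psi|) \ot \Det(|\psi,\tau|)} \\
\Det(I(S'T')) \ar[rr]^{\Det(\Delta(S',T'))} && \Det(I(T')) \ot \Det(I(S'))
}
\]
of graded lines. Inverting the horizontal arrows (which are precisely $\mathfrak{T}^{-1}$ by Definition~\ref{d:torfre}) and unwinding the identifications $|T|=\Det(I(T))$ and $|\phi,\psi|=\Det(|\phi,\psi|)$, etc., yields the commutative diagram in the statement. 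The main point that requires any care is the commutativity of the square involving $\pa$, which as indicated is actually automatic; once that is noted, the proposition follows purely formally from naturality of torsion.
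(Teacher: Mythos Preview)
Your proof is correct and follows exactly the approach the paper sketches in one line: the five lemma applied to the morphism between the exact triangles $\Delta(S,T)$ and $\Delta(S',T')$, followed by naturality of torsion (Theorem~\ref{t:torprop}(1)). One small notational slip: in your morphism-of-triangles diagram the vertical arrows should be the even isomorphisms of $\zz/2\zz$-graded vector spaces (e.g.\ $\phi\oplus\psi\colon I(T)\to I(T')$), not the graded-line maps $|\phi,\psi|$, since the latter are by definition already their determinants.
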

\begin{proof}
The claim follows from the five lemma, the definition of the torsion isomorphism and from its naturality, i.e. from Theorem \ref{t:torprop} $(1)$. 
\end{proof}

%

\section{The perturbation isomorphism}\label{s:pert}
Given a trace class operator $\delta \colon \C H \to \C H$ on a separable Hilbert space $\C H$, the following Fredholm determinant
\begin{equation}\label{GK}
\det(\T{id}_{\C H}+\delta)= 1 + \sum_{n=1}^\infty \T{Tr}(\Lambda^n \delta) \in \cc
\end{equation}
is well defined and is a multiplicative generalisation of the determinant of a finite matrix, see \cite[Chapter 3]{Sim:TIA}. For any pair of invertible bounded operators $T_1, T_2 \colon \C H \to \C G$ with trace class difference, one may use the Fredholm determinant to define the invertible complex number $\det(T_2 T_1^{-1})$. The perturbation isomorphism, invented by Carey and Pincus in \cite{CaPi:PV}, provides an extension of this assignment to the setting of Fredholm operators with trace class difference. For more information on these matters we also refer to the paper \cite{KaNe:CHS}.

\begin{theorem}\label{t:pervec}\cite[Theorem 11 and Theorem 12]{CaPi:PV}
Let $\C H$ and $\C G$ be separable Hilbert spaces. There is a universal construction which, to any pair of Fredholm operators $T_1,T_2 \colon \C H \to \C G$ with trace class difference associates an isomorphism
\[
\G P(T_1,T_2) \colon |T_1|\to |T_2|
\]
such that the following cocycle conditions hold:
\begin{enumerate}
\item $\G P(T_2,T_3) \ci \G P(T_1,T_2) = \G P(T_1,T_3)$;
\item $\G P(T_1,T_2)=\G P(T_2,T_1)^{-1}$.
\end{enumerate}
We refer to $\G P(T_1,T_2)$ as the \emph{perturbation isomorphism} and we sometimes apply the notation
\[
\mathfrak{P} := \G P(T_1,T_2) \colon |T_1|\to |T_2| \, .
\]
\end{theorem}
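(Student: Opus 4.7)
My plan is a two-stage construction: first define $\G P(T_1, T_2)$ when both operators are invertible using the Fredholm determinant \eqref{GK}, then reduce the general Fredholm case to the invertible one by finite-rank stabilisation, using the torsion isomorphisms of Section~\ref{s:torsfred} as the bridge. For the invertible case, observe that if $T_1, T_2 \colon \C H \to \C G$ are both invertible with trace class difference, then $T_2 T_1^{-1} = \T{id}_{\C G} + (T_2 - T_1) T_1^{-1}$ is an identity-plus-trace-class operator, so $\det(T_2 T_1^{-1}) \in \cc^*$ is well-defined. Using the canonical identifications $|T_1| = |T_2| = (\cc, 0)$ available for invertible operators, I would set $\G P(T_1, T_2)$ to be multiplication by $\det(T_2 T_1^{-1})$. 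Both cocycle conditions in this case reduce to the standard Fredholm determinant identities $\det(AB) = \det(A)\det(B)$ and $\det(A^{-1}) = \det(A)^{-1}$ for identity-plus-trace-class operators.

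For general Fredholm $T_1, T_2$ I would first choose a finite-rank $F \colon \C H \to \C G$ making both $T_1 + F$ and $T_2 + F$ invertible. Such an $F$ exists: start with a finite-rank $F_1$ making $T_1 + F_1$ invertible (induced by any isomorphism $\T{Ker}(T_1) \to \C G / \T{Im}(T_1)$), note that $T_2 + F_1$ is then Fredholm of index zero as a trace class perturbation of an invertible operator, and add a further small generic finite-rank correction $F_2$ to make $T_2 + F_1 + F_2$ invertible while preserving invertibility of $T_1 + F_1 + F_2$. Since $(T_2 + F) - (T_1 + F) = T_2 - T_1$ is still trace class, the first paragraph defines $\G P(T_1 + F, T_2 + F)$. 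Factoring $F = g \ci f$ through a finite-dimensional space $W$ (with $f \colon \C H \to W$ and $g \colon W \to \C G$), I would use the torsion isomorphism of Definition~\ref{d:torfre} applied to the composition factorisation of $T_i + F$ through $\C H \op W$, together with Proposition~\ref{p:torquis}, to construct canonical identifications $\rho_F \colon |T_i| \to |T_i + F|$, and then define
\[
\G P(T_1, T_2) := \rho_F^{-1} \ci \G P(T_1 + F, T_2 + F) \ci \rho_F.
\]

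The hard part will be proving well-definedness --- independence of the choice of $F$ and of the factorisation $F = g \ci f$ --- together with preservation of the cocycle conditions. Independence of the factorisation follows from the naturality and associativity of torsion isomorphisms in Theorem~\ref{t:torprop}. Independence of $F$ is the main technical point: given another admissible $F'$, I would compare the two candidates via a common refinement (passing through $F + (F' - F)$) and reduce the desired equality to multiplicativity of Fredholm determinants combined with Proposition~\ref{p:torquis}. The cocycle identities for general Fredholm operators then reduce to the invertible case after selecting an $F$ that simultaneously stabilises all three operators. I expect the main obstacle to be sign bookkeeping from the $\zz/2\zz$-grading --- specifically tracking the sign factor $\ep(\De)$ of Definition~\ref{def:torsion} through each torsion isomorphism used in defining $\rho_F$ --- which is precisely the subtlety that distinguishes the Carey-Pincus construction from the naive invertible-case definition.
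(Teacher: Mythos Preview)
The paper does not prove Theorem~\ref{t:pervec}; it is quoted from Carey--Pincus \cite{CaPi:PV}. What the paper does provide is an explicit description of the construction in the sequence of Examples~\ref{r:finrank}--\ref{ex:general}, and your proposal should be measured against that.

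Your invertible case matches Example~\ref{ex:invpert} exactly. But your reduction step contains a genuine gap: you propose to choose a finite-rank $F \colon \C H \to \C G$ making both $T_1 + F$ and $T_2 + F$ invertible. A finite-rank perturbation does not change the Fredholm index, so $T_i + F$ can be invertible only when $\T{Index}(T_i) = 0$. Since $T_1$ and $T_2$ differ by a trace class (hence compact) operator they share a common index, but that index need not vanish. Your construction as written handles only the index-zero case and says nothing about general $T_1, T_2$.

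The paper's description resolves this by a three-layer reduction rather than a single jump to invertibility: first the finite-rank case (Example~\ref{r:finrank}, via restriction to a common finite-codimension subspace $V \subseteq \Ker(\de_T)$ and torsion along the inclusion $\iota_V$), then the index-zero case (Example~\ref{ex:indzero}, using separate finite-rank corrections $F_1, F_2$ and the Fredholm determinant of $(T_2 + F_2)(T_1 + F_1)^{-1}$), and finally the general case (Example~\ref{ex:general}, by stabilising with the zero map $0_{m,n} \colon \cc^n \to \cc^m$ where $m - n = \T{Index}(T_i)$, which forces the index to zero without leaving the category of Fredholm operators). Your proposal is missing this last index-correction step. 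Your $\rho_F$ construction via a factorisation $F = g \circ f$ through a finite-dimensional $W$ is also less transparent than the paper's route through $\Ker(F)$; the latter immediately gives $T_1 \iota_V = T_2 \iota_V$ and lets torsion do the work without further bookkeeping.
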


The following series of examples contain an explicit description of the perturbation isomorphism in various cases which will be useful later on.

\begin{example}\label{r:finrank}{\bf Finite rank case.} Suppose that $T_1$ and $T_2 \colon \C H \to \C G$ are two Fredholm operators such that $T_1 - T_2$ is of \emph{finite rank}. Let $V\subseteq \C H$ be a closed subspace of finite codimension such that $T_1|_V = T_2|_V = T \colon V \to \C G$. Let $\iota_V$ denote the inclusion $V\hookrightarrow \C H$. For $i = 1,2$, the torsion isomorphism from Definition \ref{d:torfre} then provides the isomorphism
\[
\xymatrix{
|T|= |T_i \ci \iota_V| \ar[rrr]^{\Det(\De(T_i,\io_V))} &&& |\io_V| \otimes |T_i| } \, .
\]
The perturbation isomorphism $\G P(T_1,T_2) \colon |T_1| \to |T_2|$ is then the unique isomorphism making the diagram 
\[
\xymatrix{ 
& & |T| \ar[drr]^{\, \, \, \, \, \, \, \,\Det(\De(T_2,\io_V))} \ar[dll]_{\Det(\De(T_1,\io_V)) \, \, \, \, \, \, \, \,} & & \\
|\io_V| \ot |T_1| \ar[rrrr]^{\T{id} \ot \G P(T_1,T_2)} &&&& |\io_V| \ot |T_2| }
\]
commute. 
\end{example}

\begin{example}\label{ex:invpert}{\bf Invertible case.} Suppose that $T_1$ and $T_2$ are both invertible, but that $T_1 - T_2$ is of trace class. Then
\[
|T_1|= |T_2| = ( \mathbb{C} , 0)
\]
and, as an automorphism $\mathbb{C}\to \mathbb{C}$, we have that
\begin{equation}\label{eq:perturbation 3}
\G P(T_1,T_2)(\lambda) = \det(T_2T_1^{-1}) \cd \lambda  \, .
\end{equation}
\end{example}

\begin{example}\label{ex:indzero}{\bf Index zero case.} Suppose that $T_1$ and $T_2 \colon \C H \to \C G$ are of index zero and that $T_1 - T_2$ is of trace class. Then the perturbation isomorphism $\G P(T_1,T_2)$ has the following description. For $i = 1,2$, choose a bounded finite rank operator $F_i \colon \C H \to \C G$ such that $T_i + F_i$ is invertible and such that $\Ker(F_i) \subseteq \C H$ is a vector space complement of $\Ker( T_i) \subseteq \C H$. Then each $F_i$ induces an isomorphism $F_i \colon \Ker(T_i) \to \Coker(T_i)$ and the invertible operator
\[
\Sigma := (T_2+ F_2)(T_1 + F_1)^{-1} \colon \C G \to \C G
\]
is of determinant class. Given basis vectors $t\in  \Lambda^{\T{top}}\Ker(T_1)$ and $s\in  \Lambda^{\T{top}}\Ker(T_2)$, the perturbation isomorphism is then given by
\begin{equation}
\label{eq:perturbation 2}
\G P(T_1,T_2) \colon t\otimes (F_1 t)^*\mapsto \det(\Sigma) \cd s \otimes (F_2 s)^* \, .
\end{equation}
\end{example}

\begin{example}\label{ex:general}{\bf The general case.} In full generality, the perturbation isomorphism can be described as follows. Choose $n,m \in \nn_0$ such that $m - n = \T{Index}(T_2) = \T{Index}(T_1)$ and let $0_{m,n} \colon \cc^n \to \cc^m$ be the trivial linear map. Then the Fredholm operators $T_1 \op 0_{m,n}$ and $T_2 \op 0_{m,n} \colon \C H \op \cc^n \to \C G \op \cc^m$ are of index zero and
\[
\G P(T_1,T_2) \ot \T{id} \colon |T_1| \ot |0_{m,n}| \to |T_2| \ot |0_{m,n}|
\]
agrees with the composition
\begin{equation}\label{eq:perturbation 1}
\xymatrix{ 
|T_1| \ot |0_{m,n}| \cong |T_1 \op 0_{m,n}| \ar[rrrr]^{\G P(T_1 \op 0_{m,n}, T_2 \op 0_{m,n})} &&&& | T_2 \op 0_{m,n}| \cong |T_2| \ot |0_{m,n}|  \, ,
} 
\end{equation}
where the first and the third isomorphism are given by the torsion isomorphisms associated to the six term exact sequences
\[
\xymatrix{
\T{Ker}(T_i) \ar[rr]^{\ma{c}{\T{id} \\ 0}} && \T{Ker}(T_i) \op \cc^m \ar[rr]^{\ma{cc}{0 & \T{id}}} && \cc^m \ar[d]^{0} \\ 
\cc^n \ar[u]^{0} && \T{Coker}(T_i) \op \cc^n \ar[ll]_{\ma{cc}{0 & \T{id}}} && \T{Coker}(T_i) \ar[ll]_{{\ma{c}{\T{id} \\ 0}}} }
\]
for $i = 1,2$.
\end{example}

\begin{remark}
Choose a parametrix $R \colon \C G \to \C H$ for $T_1$ such that $RT_1 - \T{id}_{\C H}$ and $T_1 R - \T{id}_{\C G}$ are of trace class. It follows from Theorem \ref{t:percom}, which we will prove in Subsection \ref{ss:percomtor}, that the perturbation $\G P(T_1,T_2) \colon |T_1| \to |T_2|$ agrees with the unique isomorphism making the following diagram commutative
\[
\xymatrix{
|T_1| \ot |R| \ar[r]^{\mathfrak{T}} \ar[d]_{\G P(T_1,T_2) \ot \T{id}} & |RT_1| \ar[d]^{\G P(RT_1,RT_2)} \\
|T_2| \ot |R| \ar[r]^{\mathfrak{T}} &  |RT_2| }
\]
commutative. Here both $RT_1$ and $RT_2 \colon \C H \to \C H$ have index $0$. In particular, $\G P(T_1,T_2)$ as described above is independent of the choice of parametrix $R \colon \C G \to \C H$.
\end{remark}

%

\subsection{Perturbation commutes with torsion}\label{ss:percomtor}
Our goal in this subsection is to prove a fundamental relationship between torsion isomorphisms and perturbation isomorphisms. This result can be found as Theorem \ref{t:percom} and will be applied throughout this text. A similar result was proved in \cite[Theorem 5.1]{KaNe:CHS} for finite rank perturbations of mapping cone Fredholm complexes. Since our sign conventions are slightly different from the conventions used in \cite{KaNe:CHS} and since we need to work with trace class perturbations, we provide a full proof for the case where the Fredholm complexes in question are the mapping cones of single Fredholm operators, i.e. of the form $0\to \C H \to^T \C G\to 0$. 

\begin{lemma}\label{l:finrank}
Let $\C H, \C G$ and $\C K$ be separable Hilbert spaces, let $T \colon \C H \to \C G$ and $S \colon \C G \to \C K$ be Fredholm operators and let $\de_T \colon \C H \to \C G$ be a finite rank operator. Then the following square commutes:
    \begin{equation}\label{eq:perttorfin}
        \xymatrix{
            |T|\otimes |S| \ar[r]^{\mathfrak{T}}\ar[d]_{\mathfrak{P} \otimes \T{id}} & |ST| \ar[d]^{\mathfrak{P}} \\
            |T + \de_T|\otimes |S| \ar[r]^>>>>>{\mathfrak{T}} & |S(T+ \de_T)| \, .
        }
    \end{equation}
\end{lemma}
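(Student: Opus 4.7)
The idea is to reduce to the characterisation of $\mathfrak{P}$ for finite rank perturbations given in Example \ref{r:finrank} and then diagram-chase using the associativity of torsion (Proposition \ref{p:assotors}).

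Since $\delta_T$ is of finite rank, $V := \Ker(\delta_T) \subseteq \C H$ is closed and of finite codimension. Let $\iota_V \colon V \hookrightarrow \C H$ denote the inclusion and set $T' := T + \delta_T$; then $T\iota_V = T'\iota_V$, and hence also $ST\iota_V = ST'\iota_V$. Inverting the $\Det(\De)$-form of Example \ref{r:finrank} recasts it as the assertion that $\G P(T, T')$ and $\G P(ST, ST')$ are the unique isomorphisms satisfying
\[
\mathfrak{T}_{T, \iota_V} = \mathfrak{T}_{T', \iota_V} \circ \big(\T{id} \otimes \G P(T, T')\big) \quad \text{and} \quad \mathfrak{T}_{ST, \iota_V} = \mathfrak{T}_{ST', \iota_V} \circ \big(\T{id} \otimes \G P(ST, ST')\big),
\]
where the subscript on $\mathfrak{T}$ records which composition of Fredholm operators is being taken.

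Proposition \ref{p:assotors} applied to the triples $(\iota_V, T, S)$ and $(\iota_V, T', S)$ gives two commuting squares equating the iterated torsions from $|\iota_V| \otimes |T| \otimes |S|$ (resp.\ $|\iota_V| \otimes |T'| \otimes |S|$) to $|ST\iota_V| = |ST'\iota_V|$. Chasing the common composite $\mathfrak{T}_{ST, \iota_V} \circ (\T{id} \otimes \mathfrak{T}_{S, T}) = \mathfrak{T}_{S, T\iota_V} \circ (\mathfrak{T}_{T, \iota_V} \otimes \T{id})$ and substituting the two displayed identities yields, on the one hand,
\[
\mathfrak{T}_{ST', \iota_V} \circ \big(\T{id} \otimes \G P(ST, ST')\big) \circ \big(\T{id} \otimes \mathfrak{T}_{S, T}\big),
\]
and on the other, after invoking the $(\iota_V, T', S)$-associativity on the alternate path,
\[
\mathfrak{T}_{ST', \iota_V} \circ \big(\T{id} \otimes \mathfrak{T}_{S, T'}\big) \circ \big(\T{id} \otimes \G P(T, T') \otimes \T{id}\big).
\]
Cancelling the common invertible prefactor $\mathfrak{T}_{ST', \iota_V}$ and the outer $\T{id}_{|\iota_V|} \otimes (-)$ yields
\[
\G P(ST, ST') \circ \mathfrak{T}_{S, T} = \mathfrak{T}_{S, T'} \circ \big(\G P(T, T') \otimes \T{id}\big),
\]
which is precisely the commutativity of the square \eqref{eq:perttorfin}.

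The only real obstacle is organisational: one must keep careful track of the tensor factors and correctly translate Example \ref{r:finrank} (phrased via $\Det(\De)$) into the statement about torsion isomorphisms displayed above. The substance of the argument is supplied entirely by associativity of torsion and by the defining property of the perturbation isomorphism in the finite rank case.
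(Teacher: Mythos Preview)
Your proof is correct and follows essentially the same approach as the paper: choose a finite-codimension subspace $V$ on which $T$ and $T+\delta_T$ agree, use the characterisation of $\mathfrak{P}$ from Example~\ref{r:finrank} in terms of torsion against $\iota_V$, and then apply the associativity of torsion (Proposition~\ref{p:assotors}) to the two triples $(\iota_V, T, S)$ and $(\iota_V, T', S)$. The paper's proof is more terse---it simply tensors the original square with $|\iota_V|$ and asserts that the resulting diagram commutes by these two ingredients---but the underlying diagram chase is exactly the one you have spelled out.
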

\begin{proof}
Choose a closed subspace $V \subseteq \C H$ of finite codimension such that $V \subseteq \Ker(\de_T)$ and let $\io_V \colon V \to \C H$ denote the inclusion. The diagram in \eqref{eq:perttorfin} then commutes if and only if the diagram
\[
\xymatrix{
            |\io_V| \ot |T|\otimes |S| \ar[r]^{\T{id} \ot \mathfrak{T}} \ar[d]_{\T{id} \ot \mathfrak{P} \otimes \T{id}} &
|\io_V| \ot |ST| \ar[d]^{\T{id} \ot \mathfrak{P}} \\
            |\io_V| \ot |T + \de_T|\otimes |S| \ar[r]^>>>>>{\T{id} \ot \mathfrak{T}} & |\io_V| \ot |S(T+ \de_T)|
        }
\]
commutes. To see that this latter diagram commutes, it suffices to use the description of the perturbation isomorphism given in Example \ref{r:finrank} and the associativity of the torsion isomorphisms described in Proposition \ref{p:assotors}.
\end{proof}

\begin{lemma}\label{l:indzero}
Let $\C H, \C G$ and $\C K$ be separable Hilbert spaces, let $T \colon \C H \to \C G$ and $S \colon \C G \to \C K$ be Fredholm operators of index $0$ and let $\de_T \colon \C H \to \C G$ and $\de_S \colon \C G \to \C K$ be trace class operators. Then the following square commutes:
\begin{equation}\label{eq:indzero}
        \xymatrix{
            |T|\otimes |S| \ar[r]^{\mathfrak{T}} \ar[d]_{\mathfrak{P} \otimes \mathfrak{P}} & |ST| \ar[d]^{\mathfrak{P}} \\
            |T + \de_T|\otimes |S + \de_S| \ar[r]^>>>>>{\mathfrak{T}} & \big| (S + \de_S)(T+ \de_T)\big| \, .
        }
\end{equation}
\end{lemma}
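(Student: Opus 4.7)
The plan is to reduce the trace class case to the situation where all four operators $T, S, T + \de_T, S + \de_S$ are invertible, where the square collapses to the multiplicativity of the Fredholm determinant. The reductions are driven by Lemma~\ref{l:finrank} together with a symmetric version of it.

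First, I would establish an analogue of Lemma~\ref{l:finrank} with the finite rank perturbation applied to $S$ rather than to $T$, namely that for finite rank $\de_S \colon \C G \to \C K$ the square
\[
\xymatrix{
|T| \ot |S| \ar[r]^{\mathfrak{T}} \ar[d]_{\T{id} \ot \mathfrak{P}} & |ST| \ar[d]^{\mathfrak{P}} \\
|T| \ot |S + \de_S| \ar[r]^>>>>>{\mathfrak{T}} & |(S + \de_S) T|
}
\]
commutes. The argument mirrors the proof of Lemma~\ref{l:finrank}: instead of restricting to a closed subspace $V \su \C H$ of finite codimension contained in $\Ker(\de_T)$, one uses the quotient map $\pi \colon \C K \onto \C K / W$ where $W \su \C K$ has finite codimension and contains $\T{Im}(\de_S)$, so that $\pi \ci S = \pi \ci (S + \de_S)$. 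The uniqueness characterisation of the perturbation isomorphism in Example~\ref{r:finrank} together with the associativity of the torsion isomorphism (Proposition~\ref{p:assotors}) then yields the result.

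Since $T$ and $S$ are Fredholm of index $0$, I would then choose finite rank operators $F_T, F_T' \colon \C H \to \C G$ and $F_S, F_S' \colon \C G \to \C K$ such that $T + F_T$, $T + \de_T + F_T'$, $S + F_S$ and $S + \de_S + F_S'$ are all invertible. By the cocycle property of Theorem~\ref{t:pervec}, one factors
\[
\G P(T, T + \de_T) = \G P(T + \de_T + F_T', T + \de_T) \ci \G P(T + F_T, T + \de_T + F_T') \ci \G P(T, T + F_T),
\]
and analogously for $\G P(S, S + \de_S)$ and for $\G P(ST, (S + \de_S)(T + \de_T))$. The outer perturbations in each factorisation are finite rank, while the inner one is a trace class perturbation between invertible operators. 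Stacking the resulting rectangles and applying Lemma~\ref{l:finrank} for the $T$-sided finite rank perturbations together with its symmetric version for the $S$-sided finite rank perturbations reduces commutativity of \eqref{eq:indzero} to the case where all four operators $T, S, T + \de_T, S + \de_S$ are invertible.

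In that invertible case, every determinant line in \eqref{eq:indzero} is canonically $(\cc, 0)$ and the two torsion isomorphisms act as the identity on $\cc$. By Example~\ref{ex:invpert}, the three perturbation isomorphisms act by multiplication by $\det((T + \de_T) T^{-1})$, $\det((S + \de_S) S^{-1})$ and $\det((S + \de_S)(T + \de_T)(ST)^{-1})$. Writing $T + \de_T = T(I + T^{-1} \de_T)$ and $S + \de_S = (I + \de_S S^{-1}) S$, one computes $(S + \de_S)(T + \de_T)(ST)^{-1} = (I + \de_S S^{-1})(I + S \de_T T^{-1} S^{-1})$, and the commutativity of \eqref{eq:indzero} then reduces to the multiplicativity of the Fredholm determinant combined with its invariance under similarity. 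The main obstacle is the first step, the symmetric version of Lemma~\ref{l:finrank}, where one must carefully track the graded signs in the torsion isomorphism when working with a quotient projection rather than with an inclusion.
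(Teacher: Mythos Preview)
Your strategy is sound but differs from the paper's. The paper does \emph{not} reduce to the case where all four operators are invertible; it only uses Lemma~\ref{l:finrank} (and transitivity) to arrange that $T$ and $T+\de_T$ are invertible, and then works directly with $S$ and $S+\de_S$ as index-zero Fredholm operators. With $T$ invertible the torsion isomorphism is just $R(T)$ from Example~\ref{ex:LR}, and the perturbation isomorphism $\G P(S,S+\de_S)$ is given by the explicit index-zero formula in Example~\ref{ex:indzero}; the paper then writes out both sides of~\eqref{eq:indzero} explicitly and reduces to the same determinant identity you found. This bypasses any need for the $S$-sided analogue of Lemma~\ref{l:finrank}.

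Your route requires that symmetric analogue, and there is a slip in your formulation: for $\pi$ to be Fredholm you need $W\su\C K$ to be \emph{finite-dimensional} (containing $\T{Im}(\de_S)$), not of finite codimension. With that correction the dual argument is plausible, but note that the paper only states the characterisation of the finite-rank perturbation isomorphism via precomposition with an inclusion (Example~\ref{r:finrank}); the dual characterisation via postcomposition with a quotient map is not recorded, so you would need to establish it separately. The paper's approach buys you exactly this: by making only the $T$-side invertible, the $S$-side perturbation can be handled by the already-available explicit formula rather than by a new structural lemma.
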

\begin{proof}
Since both $T$ and $T + \de_T \colon \C H \to \C G$ are of index zero, we may find finite rank operators $F_T$ and $F_{T + \de_T} \colon \C H \to \C G$ such that $T + F_T$ and $T + \de_T + F_{T+ \de_T} \colon \C H \to \C G$ are invertible. Using Lemma \ref{l:finrank} and the transitivity of the perturbation isomorphism from Theorem \ref{t:pervec}, we may thus assume that both $T$ and $T + \de_T \colon \C H \to \C G$ are invertible.

Since $S$ and $S + \de_S \colon \C G \to \C K$ are of index zero, we may choose finite rank operators $F_S$ and $F_{S + \de_S} \colon \C G \to \C K$ such that $S + F_S$ and $S + \de_S + F_{S + \de_S} \colon \C G \to \C K$ are invertible and such that $\Ker(F_S)$ and $\Ker(F_{S + \de_S})$ are vector space complements of $\Ker(S)$ and $\Ker(S + \de_S)$, respectively. Since $T$ and $T + \de_T \colon \C H \to \C G$ are invertible, it then follows that $ST + F_S T \colon \C H \to \C K$ and $(S + \de_S)(T + \de_T) + F_{S + \de_S} (T + \de_T) \colon \C H \to \C K$ are invertible. Moreover, $\Ker(F_S T)$ is a vector space complement to $\Ker(ST)$, and $\Ker( F_{S + \de_S}(T + \de_T))$ is a vector space complement to $\Ker((S + \de_S)(T + \de_T))$. We now use the description of the perturbation isomorphism given in Example \ref{ex:invpert} and in Example \ref{ex:indzero}, together with the description of the torsion isomorphism given in Example \ref{ex:LR}. The commutativity of the diagram in \eqref{eq:indzero} then amounts to showing that the two isomorphisms $\T{LHS}$ and $\T{RHS} \colon |S| \to | (S + \de_S)(T+ \de_T) |$ given by
\[
\begin{split}
& \T{LHS} \colon s \ot (F_S s)^* \mapsto
\det\big( (T + \de_T) T^{-1}\big) \cd \det\big( (S + \de_S + F_{S + \de_S})(S + F_S)^{-1}\big) \\
& \qqq \qqq \cd \big( (T + \de_T)^{-1}(t) \ot (F_{S + \de_S} t)^* \big) \q \T{and} \\
& \T{RHS}\colon s \ot (F_S s)^* \mapsto \det\big( (S + \de_S + F_{S + \de_S})(T + \de_T)  ((S + F_S )T)^{-1} \big) \\
& \qqq \qqq \cd \big( r \ot \big( F_{S + \de_S}(T + \de_T) r \big)^* \big)
\end{split}
\]
agree, where $s$, $t$ and $r$ are basis vectors for $\Det(\Ker(S))$, $\Det( \Ker(S + \de_S))$ and $\Det(\Ker( (S + \de_S) (T + \de_T)) )$, respectively. Choosing $r := (T + \de_T)^{-1}(t)$, we only need to verify that the two Fredholm determinants
\[
\begin{split}
& \det\big( (T + \de_T) T^{-1}\big) \cd \det\big( (S + \de_S + F_{S + \de_S})(S + F_S)^{-1}\big) \q \T{and} \\
& \det\big( (S + \de_S + F_{S + \de_S})(T + \de_T)  ((S + F_S )T)^{-1} \big)
\end{split}
\]
coincide. But this follows immediately from basic properties of the Fredholm determinant: indeed, it suffices to use that the Fredholm determinant is multiplicative and that it is invariant under conjugation by invertible bounded operators (see e.g. \cite[Chapter 3]{Sim:TIA}).
\end{proof}

\begin{theorem}[{\bf Perturbation commutes with torsion}]\label{t:percom}
Let $\C H, \C G$ and $\C K$ be separable Hilbert spaces, let $T \colon \C H \to \C G$ and $S \colon \C G \to \C K$ be Fredholm operators and let $\de_T \colon \C H \to \C G$ and $\de_S \colon \C G \to \C K$ be trace class operators. Then the following square commutes:
    \begin{equation}\label{perttor}
        \xymatrix{
            |T|\otimes |S| \ar[r]^{\mathfrak{T}} \ar[d]_{\mathfrak{P} \otimes \mathfrak{P}} & |ST| \ar[d]^{\mathfrak{P}} \\
            |T + \de_T|\otimes |S + \de_S| \ar[r]^>>>>>{\mathfrak{T}} & | (S + \de_S)(T+ \de_T)| \, .
        }
    \end{equation}
\end{theorem}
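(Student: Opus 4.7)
The plan is to reduce the general case to the index-zero case already handled by Lemma~\ref{l:indzero} via stabilisation with trivial linear maps, exactly in the spirit of Example~\ref{ex:general}. Choose non-negative integers $a,b,c$ with $b - a = \T{Index}(T)$ and $c - b = \T{Index}(S)$; such a choice exists by taking $a$ large enough. Define
\[
T' := T \op 0_{b,a} \colon \C H \op \cc^a \to \C G \op \cc^b, \qquad S' := S \op 0_{c,b} \colon \C G \op \cc^b \to \C K \op \cc^c,
\]
as well as $T' + \de_T' := (T + \de_T) \op 0_{b,a}$ and $S' + \de_S' := (S + \de_S) \op 0_{c,b}$, where $\de_T' := \de_T \op 0$ and $\de_S' := \de_S \op 0$ are still trace class. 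All four operators $T'$, $S'$, $T'+\de_T'$, $S'+\de_S'$ are then Fredholm of index $0$, and their compositions satisfy $S'T' = ST \op 0_{c,a}$ and $(S'+\de_S')(T'+\de_T') = (S+\de_S)(T+\de_T) \op 0_{c,a}$.

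Applying Lemma~\ref{l:indzero} to the pair $(T', S')$ with perturbations $(\de_T', \de_S')$ gives a commutative square
\[
\xymatrix{
|T'| \ot |S'| \ar[r]^{\G T} \ar[d]_{\G P \ot \G P} & |S'T'| \ar[d]^{\G P} \\
|T' + \de_T'| \ot |S' + \de_S'| \ar[r]^>>>>>{\G T} & |(S' + \de_S')(T' + \de_T')|
}
\]
at the level of the stabilised operators. It remains to transport this square back to the unstabilised setting. For this I would invoke Example~\ref{ex:general}, according to which the perturbation isomorphisms for $T_1, T_2$ (respectively $S_1, S_2$ and $S_1 T_1, S_2 T_2$) agree, after tensoring with the identity on the determinant of the auxiliary trivial map, with the perturbation isomorphisms for the corresponding stabilisations, conjugated by the canonical torsion isomorphisms
\[
|T| \ot |0_{b,a}| \cong |T'|, \qquad |S| \ot |0_{c,b}| \cong |S'|, \qquad |ST| \ot |0_{c,a}| \cong |S'T'|,
\]
and their perturbed counterparts.

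The compatibility required to finish is that, after these identifications, the torsion isomorphism $\G T \colon |T'| \ot |S'| \to |S'T'|$ decomposes as $\G T_{S,T} \ot \G T_{0_{c,b}, 0_{b,a}}$, up to signs dictated by the graded commutativity constraint $\ep$ which reshuffles the tensor factors $|0_{b,a}|$ and $|S|$. This is a purely diagrammatic fact: it follows by applying the naturality, graded commutativity, and associativity properties of torsion (Theorem~\ref{t:torprop}(1)--(3) and Proposition~\ref{p:assotors}) to the evident commutative diagram in which the summands $\C H \op \cc^a \to \C G \op \cc^b \to \C K \op \cc^c$ split off their finite-dimensional pieces. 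Because $\G T_{0_{c,b}, 0_{b,a}}$ is the same on both sides of the square, it can be cancelled, and what remains is precisely the commutative square~\eqref{perttor} for $(T,S)$ and $(\de_T,\de_S)$.

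The main obstacle is the last bookkeeping step, namely the verification that the torsion isomorphism for the composition of stabilised operators factors cleanly as the tensor product of the original torsion and the torsion on the auxiliary finite-dimensional trivial-map block, with the correct signs. This is where the Koszul signs from $\ep$ in Notation~\ref{n:picard} must be tracked; once the resulting formal identity among torsion isomorphisms is established by repeated application of Theorem~\ref{t:torprop}, the general statement of Theorem~\ref{t:percom} reduces to Lemma~\ref{l:indzero}.
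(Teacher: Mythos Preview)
Your approach is essentially the same as the paper's: both reduce to Lemma~\ref{l:indzero} by stabilising $T$ and $S$ with trivial maps $0_{b,a}$ and $0_{c,b}$ so as to land in index zero, then transport the resulting commuting square back via the identifications of Example~\ref{ex:general}. The paper also flags the compatibility of torsion with the direct-sum decomposition as a diagram that ``can then be verified'', which is exactly the bookkeeping step you isolate as the main obstacle.
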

\begin{proof}
Choose integers $n,m,k \geq 0$ such that $\T{Index}(T) = m - n$ and $\T{Index}(S) = k - m$. Recall that $0_{m,n} \colon \cc^n \to \cc^m$ and $0_{k,m} \colon \cc^m \to \cc^k$ denote the trivial maps. It can then be verified that the following diagram commutes:
\[
\xymatrix{
|T| \ot |S| \ot |0_{m,n}| \ot |0_{k,m}| \ar[rr]^{\G T \ot \G T} \ar[d] && |ST| \ot | 0_{k,n}| \ar[d] \\
| T \op 0_{m,n}| \ot |S \op 0_{k,m}| \ar[rr]^{\G T} && |ST \op 0_{k,n}| \, .
} 
\]
Notice here that the horizontal maps are torsion isomorphisms of Fredholm operators, the right vertical is the torsion isomorphism coming from the six term exact sequence in Example \ref{ex:general}, and the left vertical map is the composition of the commutativity constraint from Notation \ref{n:picard} and the torsion isomorphisms coming from the six term exact sequences in Example \ref{ex:general}. There is of course a similar commutative diagram when $T$ is replaced by $T + \de_T$ and $S$ is replaced by $S + \de_S$. Since $T \op 0_{m,n}$ and $S \op 0_{k,m}$ both have index zero, Lemma \ref{l:indzero} implies that the following diagram commutes:
\[
\xymatrix{
            | T \op 0_{m,n}| \ot | S \op 0_{k,m}| \ar[r]^{\mathfrak{T}} \ar[d]_{\mathfrak{P} \otimes \mathfrak{P}} & |ST \op 0_{k,n}| \ar[d]^{\mathfrak{P}} \\
            |(T + \de_T) \op 0_{m,n}| \ot |(S + \de_S) \op 0_{k,m}| \ar[r]^>>>>>{\mathfrak{T}} & | (S + \de_S)(T+ \de_T) \op 0_{k,n} | \, .
        }
\]
A combination of these observations, together with the description of the perturbation isomorphism from Example \ref{ex:general}, yields the commutativity of the diagram in \eqref{perttor}.
\end{proof}

\section{The stabilisation isomorphism}\label{s:stab}
In the more advanced part of this paper (when we investigate the two-dimensional setting), we shall often need an operation which allows us to change the domain and codomain of a Fredholm operator by adding an invertible operator. We call this operation {\em stabilisation}, and introduce and investigate it in this section. 

\begin{definition}\label{def:stabilisation} Let $\C H$ and $\C G$ be separable Hilbert spaces, and let $p \colon \C H \to \C H$ and $q \colon \C G \to \C G$ be idempotent operators. Suppose that $T \colon p \C H \to q \C G$ is a Fredholm operator, and that $\Ga \colon (1 - p) \C H \to (1 - q) \C G$ is an invertible operator. Then $T + \Ga \colon \C H \to \C G$ is a Fredholm operator, and the \emph{stabilisation isomorphism} is the isomorphism of graded lines
\[
\mathfrak{S} \colon |T|\to |T+ \Ga|
\]
induced by the quasi-isomorphism $(\io_p,\io_q)$ given by the inclusions $p\C H \su \C H$ and $q \C G \su \C G$.
\end{definition}


The next proposition describes the relationship between the torsion isomorphism and the stabilisation isomorphism. This result is a consequence of Proposition \ref{p:torquis}.

\begin{prop}[{\bf Torsion commutes with stabilisation}]\label{p:torsta}
Let $\C H$, $\C G$ and $\C K$ be separable Hilbert spaces, and let $e \colon \C H \to \C H$, $p \colon \C G \to \C G$ and $q \colon \C K \to \C K$ be idempotent operators. Suppose that $T \colon e\C H \to p \C G$ and $S \colon p \C G \to q \C K$ are Fredholm operators, and that $\Ga \colon (1 - e)\C H \to (1 - f) \C G$ and $\Te \colon (1-f)\C G \to (1 - q)\C K$ are invertible operators. Then the following diagram is commutative:
\[
\xymatrix{
|T|\otimes |S| \ar[r]^>>>>>>>>>>>>>{\frak{T}}\ar[d]_{\G S \ot \G S} &|S T|\ar[d]^{\G S}\\
|T + \Ga |\otimes |S + \Te| \ar[r]^>>>>>>{\frak{T}}& |S T + \Te \Ga| \, .
}
\]
\end{prop}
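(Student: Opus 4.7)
The plan is to deduce this proposition as a direct consequence of Proposition \ref{p:torquis} (torsion commutes with quasi-isomorphisms), since the stabilisation isomorphism is defined in Definition \ref{def:stabilisation} as the isomorphism $|\phi,\psi|$ associated to a specific quasi-isomorphism, namely the inclusions of the idempotent cut-down subspaces.

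First I would unpack the Fredholm data. With respect to the direct sum decompositions $\mathcal{H} = e\mathcal{H} \oplus (1-e)\mathcal{H}$, $\mathcal{G} = p\mathcal{G} \oplus (1-p)\mathcal{G}$ and $\mathcal{K} = q\mathcal{K} \oplus (1-q)\mathcal{K}$ (reading the proposition with the evident convention that $f = p$), the stabilised operators are block diagonal:
\[
T + \Gamma = \begin{pmatrix} T & 0 \\ 0 & \Gamma \end{pmatrix}, \qquad S + \Theta = \begin{pmatrix} S & 0 \\ 0 & \Theta \end{pmatrix},
\]
so their composition is $(S+\Theta)(T+\Gamma) = ST + \Theta\Gamma$, again block diagonal. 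Since $\Gamma$, $\Theta$, and $\Theta\Gamma$ are invertible, the inclusions $\iota_e$, $\iota_p$, $\iota_q$ induce isomorphisms on kernels and cokernels in each of the three cases: $(\iota_e,\iota_p)$ is a quasi-isomorphism from $T$ to $T+\Gamma$, $(\iota_p,\iota_q)$ from $S$ to $S+\Theta$, and $(\iota_e,\iota_q)$ from $ST$ to $(S+\Theta)(T+\Gamma)$. By Definition \ref{def:stabilisation}, the induced isomorphisms $|\iota_e,\iota_p|$, $|\iota_p,\iota_q|$, $|\iota_e,\iota_q|$ are exactly the three stabilisation isomorphisms $\mathfrak{S}$ appearing in the diagram.

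Next I would verify the hypothesis of Proposition \ref{p:torquis}, namely the two composition identities
\[
\iota_p \circ T = (T+\Gamma) \circ \iota_e \colon e\mathcal{H} \to \mathcal{G}, \qquad \iota_q \circ S = (S+\Theta) \circ \iota_p \colon p\mathcal{G} \to \mathcal{K},
\]
which are immediate from the block-diagonal form above, since the $(1-e)\mathcal{H}$ (resp. $(1-p)\mathcal{G}$) summand contributes nothing when precomposing with $\iota_e$ (resp. $\iota_p$). With these identities in place, Proposition \ref{p:torquis} applies to the pair of quasi-isomorphisms $(\iota_e,\iota_p)$ and $(\iota_p,\iota_q)$ and yields precisely the commutative square asserted by the proposition.

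There is no serious obstacle here: the entire content is the verification that the inclusions assemble into a pair of composable quasi-isomorphisms, after which Proposition \ref{p:torquis} delivers the conclusion. The only subtlety is bookkeeping with the block decomposition to confirm that kernels and cokernels of the stabilised operators are literally the images of the kernels and cokernels of the unstabilised operators under the inclusions—a point that follows immediately from the invertibility of $\Gamma$, $\Theta$ and $\Theta\Gamma$.
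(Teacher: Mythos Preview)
Your proposal is correct and matches the paper's approach exactly: the paper states (without further detail) that the result ``is a consequence of Proposition~\ref{p:torquis}'', and your argument spells out precisely how, identifying the stabilisation isomorphisms with the quasi-isomorphisms $(\iota_e,\iota_p)$, $(\iota_p,\iota_q)$ and verifying the composability hypothesis of Proposition~\ref{p:torquis}. Your observation that the $f$ in the statement must be read as $p$ is also correct.
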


We proceed by explaining how the perturbation isomorphism and the stabilisation isomorphism relate to one another.

\begin{prop}[{\bf Perturbation commutes with stabilisation}]\label{p:persta} Let $\C H$ and $\C G$ be two separable Hilbert spaces, and let $p \colon \C H \to \C H$ and $q \colon \C G \to \C G$ be idempotent operators. Suppose that $T$ and $S \colon p \C H \to q \C G$ are Fredholm operator such that $T - S$ is of trace class and that $\Ga \colon (1 - p ) \C H \to (1 - q) \C H$ is an invertible operator. Then the following diagram is commutative:
\[
\xymatrix{
|T| \ar[r]^>>>>>>>>>{\frak{P}}\ar[d]_{\G S} &|S|\ar[d]^{\G S}\\
|T + \Ga| \ar[r]_>>>>>>{\frak{P}}& |S + \Ga| \, .
}
\]
\end{prop}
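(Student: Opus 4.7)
The plan is to verify commutativity by reducing to the simplest case of the perturbation isomorphism, exploiting its explicit descriptions in Examples \ref{r:finrank}--\ref{ex:general}. The fundamental observation is that the inclusions $\io_p \colon p\C H \hookrightarrow \C H$ and $\io_q \colon q\C G \hookrightarrow \C G$ restrict to isomorphisms $\Ker(T) \to \Ker(T+\Ga)$ and $\Coker(T) \to \Coker(T+\Ga)$ (and likewise for $S$) because $\Ga$ is invertible on the complementary subspaces. Hence the stabilisation isomorphism $\G S$ merely reinterprets basis vectors of the various kernel and cokernel determinant lines without any scaling.

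First I would settle the case where $T$ and $S$ are both invertible. Then $T + \Ga$ and $S + \Ga$ are also invertible, all four determinant lines canonically become $(\cc,0)$, and both stabilisation isomorphisms become the identity. By \eqref{eq:perturbation 3} the two horizontal maps act as multiplication by $\det(S T^{-1})$ and $\det((S+\Ga)(T+\Ga)^{-1})$ respectively. Since $T + \Ga$ is block diagonal relative to the decompositions $\C H = p\C H \oplus (1-p)\C H$ and $\C G = q\C G \oplus (1-q)\C G$, one has $(T+\Ga)^{-1} = T^{-1} \oplus \Ga^{-1}$ and therefore $(S+\Ga)(T+\Ga)^{-1} = ST^{-1} \oplus \T{id}_{(1-q)\C G}$, so the two Fredholm determinants coincide.

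Next I would handle the case where $T$ and $S$ have index zero. Following Example \ref{ex:indzero}, choose finite rank operators $F_T, F_S \colon p\C H \to q\C G$ rendering $T + F_T$ and $S + F_S$ invertible with $\Ker(F_T), \Ker(F_S)$ complementing $\Ker(T), \Ker(S)$. Extending $F_T, F_S$ by zero on $(1-p)\C H$ produces finite rank operators $\tilde F_T, \tilde F_S \colon \C H \to \C G$ with the analogous properties relative to $T + \Ga$ and $S + \Ga$; in particular $\Ker(\tilde F_T) = \Ker(F_T) \oplus (1-p)\C H$ complements $\Ker(T+\Ga)$. Plugging into formula \eqref{eq:perturbation 2} in both rows, the required equality reduces to
\[
\det\big( (S+F_S)(T+F_T)^{-1}\big) = \det\big( (S + \Ga + \tilde F_S)(T + \Ga + \tilde F_T)^{-1}\big),
\]
which follows from the invertible case applied to these stabilised invertible operators. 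The basis vectors for $\Ker(T)$ and its complement transport under $\io_p$ to basis vectors for $\Ker(T+\Ga)$ and its complement, and similarly for cokernels, so the diagram commutes on the nose.

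Finally, the general Fredholm case is reduced to the index zero case via Example \ref{ex:general}: one adds trivial auxiliary maps $0_{m,n}$ on finite dimensional spaces to make the operators have index zero, which does not interact with the stabilisation by $\Ga$ on $(1-p)\C H \to (1-q)\C G$. The main obstacle is purely bookkeeping---keeping careful track of how basis vectors, kernel complements and Fredholm determinants transport between the stabilised and unstabilised settings---but each step follows immediately from the explicit formulas established in Section \ref{s:pert}.
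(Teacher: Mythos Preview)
Your proposal is correct and follows essentially the same approach as the paper: first verify the index zero case using the explicit formula of Example~\ref{ex:indzero}, then reduce the general case via Example~\ref{ex:general}. Your additional step of isolating the invertible case (Example~\ref{ex:invpert}) as a lemma for the index zero verification is more explicit than the paper's terse proof, but it is exactly the computation one has to do to make Example~\ref{ex:indzero} work here.
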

\begin{proof}
Upon applying Example \ref{ex:indzero}, the claimed identity is first verified in the case where the index of $T$ (and hence also the index of $S$) is equal to $0$. Then, by applying Example \ref{ex:general}, the general claim is verified by choosing $n,m \in \nn \cup \{0\}$ with $m - n = \T{Index}(T) = \T{Index}(S)$ and reducing to the index zero case by means of the trivial operator $0_{m,n} \colon \cc^n \to \cc^m$.
\end{proof}

We end this section with a result which elaborates on the commutativity property of the torsion isomorphism (Theorem \ref{t:torprop} $(2)$) in the context of Fredholm operators.

\begin{prop}\label{p:torsign}
Let $\C H$ be a separable Hilbert space and let $e,f,p,q \colon \C H \to \C H$ be idempotent operators such that $ep=pe = eq = q e = 0$ and $fp = p f = fq = q f = 0$. Suppose that $T \colon e \C H \to f \C H$ and $S \colon p \C H \to q \C H$ are Fredholm operators. Then the following diagram commutes:
\begin{equation}\label{eq:torsign}
\xymatrix{
|T| \ot |S| \ar[d]^{\G S \ot \G S} \ar[rr]_{\epsilon} && |S| \ot |T| \ar[d]_{\G S \ot \G S} \\
| T + p | \ot | S + f| \ar[dr]^{\G T} && |S + e| \ot |T + q | \ar[dl]_{\G T} \\
& \big| T + S \big| \, . & 
}
\end{equation}
\end{prop}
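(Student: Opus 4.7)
The plan is to reduce both paths of the hexagon to the commutativity property of the torsion isomorphism (Theorem \ref{t:torprop}(2)), by recognising the two diagonal arrows as torsions of two different factorisations of the single operator $T + S$.

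First I would use the orthogonality hypotheses $ep=pe=eq=qe=0$ and $fp=pf=fq=qf=0$ to identify $(e+p)\C H = e\C H \op p\C H$ and $(f+q)\C H = f\C H \op q\C H$. Under these decompositions a direct computation gives
\[
(S+f)(T+p) \;=\; T + S \;=\; (T+q)(S+e) \colon (e+p)\C H \to (f+q)\C H \, ,
\]
so both diagonal arrows of \eqref{eq:torsign} land in the same graded line $|T + S|$. Moreover $\T{Ker}(T + S) = \T{Ker}(T) \op \T{Ker}(S)$ and $\T{Coker}(T+S) = \T{Coker}(T) \op \T{Coker}(S)$, and the four stabilised operators satisfy $I(T+p) = I(T) = I(T+q)$ and $I(S+e) = I(S) = I(S+f)$ canonically. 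Unpacking Definition \ref{def:stabilisation}, the four stabilisation isomorphisms appearing on the two vertical arrows of the hexagon become the canonical embeddings of $I(T)$ and $I(S)$ as the summands of $I(T) \op I(S) = I(T+S)$.

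The crucial step is to match the two torsion six term exact sequences with the pair $\De_1, \De_2$ of Theorem \ref{t:torprop}(2), applied to $U := I(T)$ and $W := I(S)$. Unpacking Definition \ref{d:torfre} under the identifications above, the exact triangle $\De(S+f, T+p)$ becomes
\[
\xymatrix{ I(T) \ar[r]^>>>>>{\ma{c}{\T{id} \\ 0}} & I(T) \op I(S) \ar[r]^>>>>>{\ma{cc}{0 & \T{id}}} & I(S) \ar[r]^0 & I(T) } \, ,
\]
which is exactly $\De_1$ of Theorem \ref{t:torprop}(2). The essential input here is that the connecting map $\pa \colon \T{Ker}(S+f) \to \T{Coker}(T+p)$ vanishes: indeed $\T{Ker}(S+f) \su p \C H$ and $p\C H \su \T{Im}(T+p)$ because $(T+p)(u) = u$ for every $u \in p\C H$. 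The remaining even and odd components of the other two maps reduce to the canonical inclusion and projection under $I(T+S) = I(T) \op I(S)$. A symmetric argument, in which the connecting map vanishes because $\T{Ker}(T+q) \su e\C H \su \T{Im}(S+e)$, shows that $\De(T+q, S+e)$ reduces to the exact triangle $\De_2$ of Theorem \ref{t:torprop}(2) (with $U = I(T)$ and $W = I(S)$ swapped).

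Theorem \ref{t:torprop}(2) then yields $\Det(\De_2) = \epsilon \ci \Det(\De_1)$. Passing to the inverse torsions, which are exactly the two diagonal arrows of the hexagon, and precomposing with the vertical stabilisation isomorphisms, produces precisely the commutativity of \eqref{eq:torsign}. The main obstacle is the bookkeeping in the third paragraph: one must simultaneously verify the vanishing of both connecting maps, check that the remaining maps really do reduce to the canonical inclusions and projections of the direct sum, and carefully track the $\zz/2\zz$-gradings, so that the two torsion exact triangles coincide with $\De_1$ and $\De_2$ on the nose. Once this identification is in place, the commutativity of the hexagon follows formally from Theorem \ref{t:torprop}(2).
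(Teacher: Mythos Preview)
Your proposal is correct and follows essentially the same approach as the paper's proof: identify the two six term exact sequences $\De(S+f,T+p)$ and $\De(T+q,S+e)$ with the triangles $\De_1$ and $\De_2$ of Theorem~\ref{t:torprop}(2) for $U=I(T)$, $W=I(S)$, and conclude via the commutativity property. The paper's proof is terser and also invokes naturality (Theorem~\ref{t:torprop}(1)) alongside (2), but your more explicit unpacking of the stabilisations as identities on kernels and cokernels makes that step implicit; your verification that both connecting maps vanish is exactly the content of the identification the paper asserts.
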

\begin{proof}
We remark that $T + p \colon (e + p) \C H \to (f + p) \C H$, $T + q \colon (e + q) \C H \to (f + q) \C H$, $S + f \colon (p + f) \C H \to (q + f) \C H$ and $S + e \colon (p + e) \C H \to (q + e) \C H$ are Fredholm operators satisfying that $(S + f)(T + p) = (T + q)(S + e) = T + S \colon (e + p) \C H \to (f + q)\C H$ so that the isomorphisms appearing in \eqref{eq:torsign} make sense. The result then follows by definition of the torsion isomorphisms and stabilisation isomorphisms appearing together with the properties $(1)$ and $(2)$ from Theorem \ref{t:torprop}. Indeed, one may identify the exact triangles $\De_{S + f,T + p}$ and $\De_{T + q,S + e}$ (yielding the torsion isomorphisms in question) with the exact triangles $\De_1$ and $\De_2$ relating to the direct sum decomposition $I(T) \op I(S)$ in Theorem \ref{t:torprop} $(2)$.
\end{proof}

\section{The group $2$-cocycle associated to a group action on a category}\label{s:2coc}
We shall now see how the data of a group acting \emph{strictly} on a category (subject to a few extra conditions) gives rise to a group $2$-cocycle. The constructions of this section should be compared with the constructions by Brylinski given in \cite{Bry:CRL} (see also \cite{Kap:ALT} and \cite{Str:AOS}). The reader might at this point think that having a strict action is too restrictive a condition, but we shall see in Section \ref{s:mulK2} that this condition suffices to give a description of the Connes-Karoubi multiplicative character on the second algebraic $K$-group. 

\begin{assu}\label{a:onecat} Assume that we are given a commutative unital ring $R$, a group $G$ and a category $\G C$ with at least one object satisfying the following conditions:
\begin{enumerate}
\item For any pair of objects $x,y$ in $\G C$, the morphisms $\T{Mor}(x,y)$ form a central bimodule over $R$, and the composition
\[
\T{Mor}(x,y) \ti \T{Mor}(y,z) \to \T{Mor}(x,z)
\]
induces a bimodule homomorphism
\[
\T{Mor}(x,y) \ot_R \T{Mor}(y,z) \to \T{Mor}(x,z) \, ;
\]
\item The set of isomorphisms between any two objects is non-empty;
\item For any object $x$ in $\G C$, the map $r \mapsto r \cd \T{id}_x$ is an isomorphism of bimodules over $R$:
\[
\io_x \colon R \to \T{Mor}(x,x) \, ;
\]
\item The group $G$ acts strictly on $\G{C}$, and the associated functors $g \colon \G C \to \G C$, $g \in G$, induce isomorphisms of bimodules $g \colon \T{Mor}(x,y) \to \T{Mor}(g(x),g(y))$ whenever $x,y$ are objects in $\G C$.
\end{enumerate}
\end{assu}

We let $R^*$ denote the abelian group of invertible elements in $R$.

\begin{definition}\label{def:one cocycle} Choose an object $x$ in $\G C$ and choose an isomorphism
\[
\alpha_g \colon x \to g(x)
\]
for every $g \in G$. Define the group $2$-cochain $c \colon \zz[ G^2 ] \to R^*$ by
\[
c(g_1,g_2) := \iota_x^{-1}\big( \alpha_{g_1}^{-1} \ci g_1(\alpha_{g_2}^{-1})\circ \alpha_{g_1g_2} \big) \q g_1, g_2 \in G \, .
\]
\end{definition}

\noindent One can visualise $c(g_1,g_2)$ as the difference between the two isomorphisms from $x$ to $(g_1g_2)(x)$ in the simplex
\[
\xymatrix{&g_1(x)\ar[dr]^{g_1(\alpha_{g_2})}&\\
x\ar[ru]^{\alpha_{g_1}}\ar[rr]_{\alpha_{g_1g_2}}&&(g_1g_2)(x)
}
\]

\begin{lemma}
The group $2$-cochain $c \colon \zz[ G^2 ] \to R^*$ is a group $2$-cocycle, and its class in group cohomology $[c] \in H^2(G,R^*)$ is independent of the choices made in Definition \ref{def:one cocycle}.
\end{lemma}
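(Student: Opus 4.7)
The plan is to translate the definition of $c(g_1,g_2)$ into the identity
\[
\alpha_{g_1g_2} \;=\; c(g_1,g_2)\cdot \bigl(g_1(\alpha_{g_2})\circ \alpha_{g_1}\bigr) \qquad\text{in } \T{Mor}(x,(g_1g_2)(x)),
\]
which follows by moving $\iota_x^{-1}(\,\cdot\,)\cdot\T{id}_x$ to the other side and using the $R$-bilinearity of composition (Assumption~\ref{a:onecat}(1)). From here, the cocycle property for $c$ will be deduced by computing $\alpha_{g_1g_2g_3}$ in two different ways, using the decomposition according to $(g_1g_2,g_3)$ on one hand and $(g_1,g_2g_3)$ on the other. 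The first gives
\[
\alpha_{g_1g_2g_3} \;=\; c(g_1g_2,g_3)\,c(g_1,g_2)\cdot (g_1g_2)(\alpha_{g_3})\circ g_1(\alpha_{g_2})\circ \alpha_{g_1},
\]
where I have used that $G$ acts \emph{strictly}, so $g_1\circ g_2=g_1g_2$ as functors and $(g_1g_2)(\alpha_{g_3})=g_1(g_2(\alpha_{g_3}))$. The second gives
\[
\alpha_{g_1g_2g_3} \;=\; c(g_1,g_2g_3)\cdot g_1\bigl(c(g_2,g_3)\cdot g_2(\alpha_{g_3})\circ\alpha_{g_2}\bigr)\circ\alpha_{g_1},
\]
and here I will crucially invoke that the functor $g_1$ induces a \emph{bimodule} isomorphism on morphism spaces (Assumption~\ref{a:onecat}(4)), together with centrality of the $R$-action, to pull $c(g_2,g_3)$ past $g_1$ unchanged, obtaining $c(g_1,g_2g_3)\,c(g_2,g_3)\cdot(g_1g_2)(\alpha_{g_3})\circ g_1(\alpha_{g_2})\circ\alpha_{g_1}$. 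Equating the two expressions and cancelling the common isomorphism (injectivity of the $R$-module map $R\to\T{Mor}(x,(g_1g_2g_3)(x))$, $r\mapsto r\cdot(\,\cdot\,)$, which holds because $\io_x$ is an isomorphism and the morphism in question is invertible) yields
\[
c(g_1g_2,g_3)\,c(g_1,g_2) \;=\; c(g_1,g_2g_3)\,c(g_2,g_3),
\]
which is precisely the $2$-cocycle identity (with trivial $G$-action on $R^*$, since $g(\io_x(r))=\io_{g(x)}(r)$ by the same centrality argument).

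For independence of choices, I will treat two cases. First, fix the object $x$ and replace each $\alpha_g$ by $\alpha'_g=\lambda_g\cdot\alpha_g$ for some $\lambda_g\in R^*$; such a $\lambda_g$ exists uniquely by Assumption~\ref{a:onecat}(3) applied to the automorphism $\alpha'_g\circ\alpha_g^{-1}$ of $g(x)$. Substituting into the formula for $c'(g_1,g_2)$ and again using that $g_1$ preserves the $R$-action yields $c'(g_1,g_2)=\lambda_{g_1}^{-1}\lambda_{g_2}^{-1}\lambda_{g_1g_2}\cdot c(g_1,g_2)$, so $c'$ and $c$ differ by the coboundary of the $1$-cochain $g\mapsto\lambda_g^{-1}$.

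Second, suppose we change the object to $y$ with chosen isomorphisms $\beta_g\colon y\to g(y)$. By Assumption~\ref{a:onecat}(2), pick an isomorphism $\phi\colon x\to y$, and define $\tilde\alpha_g:=g(\phi)^{-1}\circ\beta_g\circ\phi\colon x\to g(x)$. A direct computation, in which the $g(\phi)$ and $g(\phi)^{-1}$ telescope away, shows
\[
\tilde\alpha_{g_1}^{-1}\circ g_1(\tilde\alpha_{g_2}^{-1})\circ \tilde\alpha_{g_1g_2}
\;=\; \phi^{-1}\circ\bigl(\beta_{g_1}^{-1}\circ g_1(\beta_{g_2}^{-1})\circ \beta_{g_1g_2}\bigr)\circ\phi,
\]
and since the middle factor equals $c_{\beta}(g_1,g_2)\cdot\T{id}_y$, conjugation by $\phi$ turns it into $c_{\beta}(g_1,g_2)\cdot\T{id}_x$. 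Hence the cocycle built from $(\tilde\alpha_g)$ equals the one built from $(\beta_g)$, reducing the problem to Case~1. The main thing to be careful about throughout is the interplay between the $R$-bimodule structure and the $G$-action; once the compatibility $g(r\cdot\phi)=r\cdot g(\phi)$ is in place, the computations are essentially formal manipulations with associative compositions.
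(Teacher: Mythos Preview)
Your proof is correct and follows essentially the same approach as the paper's. The paper dispatches the cocycle identity as ``a straightforward computation'' and for independence writes down directly the single $1$-cochain $g\mapsto \iota_x^{-1}\bigl(\alpha_g^{-1}\circ g(\phi^{-1})\circ \beta_g\circ \phi\bigr)$; your two-step treatment (first vary $\alpha_g$ with $x$ fixed, then transport along $\phi$) is a clean decomposition of the same argument, and when you combine Case~2 with Case~1 the resulting $\lambda_g=\iota_x^{-1}(\alpha_g^{-1}\circ\tilde\alpha_g)$ is exactly the paper's $1$-cochain.
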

\begin{proof}
The fact that $c \colon \zz[ G^2 ] \to R^*$ is a group $2$-cocycle can be verified by a straightforward computation. In order to show that the corresponding class in group cohomology is independent of the choices made, we let $y$ be an alternative object in $\G C$ and $\be_g \colon y \to g(y)$ be an alternative choice of isomorphism for each $g \in G$. Choosing an isomorphism $\phi \colon x \to y,$ we then obtain that the quotient of the two $2$-cochains involved can be expressed as the coboundary of the $1$-cochain given by $g \mapsto \io_x^{-1}\big( \al_g^{-1} \ci g(\phi^{-1}) \ci \be_g \ci \phi\big)$.
\end{proof}

\subsection{The group $2$-cocycle on the restricted general linear group}\label{subsection:2-cocycle}
Let $P \colon \C H \to \C H$ be an orthogonal projection acting on a separable Hilbert space $\C H$. Let $\sL^2(\C H) \subseteq \sL(\C H)$ denote the ideal of Hilbert-Schmidt operators inside the bounded operators $\sL(\C H)$. The restricted general linear group, $GL_{\T{res}}(\C H)$, is defined as the group of bounded invertible linear transformations $u$ of $\C H$ satisfying
\[
P - u P u^{-1}\in \sL^2(\C H)  \, .
\]

We are now going to construct a category $\G C_{\T{res}}$ satisfying the conditions of Assumption \ref{a:onecat}, for the unital commutative ring $R := \cc$ and the  restricted general linear group $G := GL_{\T{res}}(\C H)$. In particular, we obtain a class
\[
[c_{\T{res}}] \in H^2\big( GL_{\T{res}}(\C H), \cc^*\big)
\]
in the second group cohomology of the restricted general linear group. 

For an element $u \in GL_{\T{res}}(\C H)$, we apply the notation
\[
P_u := u P u^{-1}
\]
for the idempotent operator on $\C H$ obtained by conjugating the orthogonal projection $P$ by $u$.

We begin by recording a few standard lemmas:

\begin{lemma}\label{l:tripledif}
For every triple of elements $u,v,w \in GL_{\T{res}}(\C H)$, the difference
\[
P_w P_v P_u - P_w P_u \colon \T{Im}(P_u) \to \T{Im}(P_w)
\]
is of trace class.
\end{lemma}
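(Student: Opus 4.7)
The plan is to exploit the idempotency of $P_u$ to rewrite the difference and then analyze the resulting operator factor by factor. Using $P_u^2 = P_u$ we have
\[
P_w P_v P_u - P_w P_u \;=\; P_w\bigl(P_v P_u - P_u P_u\bigr) \;=\; P_w (P_v - P_u)\, P_u,
\]
and $P_v - P_u = (P_v - P) - (P_u - P)$ is Hilbert--Schmidt because each of $P_v - P$ and $P_u - P$ is, by the definition of $GL_{\T{res}}(\C H)$.

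The next step is to expand everything in terms of the Hilbert--Schmidt perturbations $A := P_v - P$, $B := P_u - P$, $C := P_w - P$. Writing $P_w = P + C$ and $P_u = P + B$ yields
\[
P_w (A - B) P_u \;=\; (P + C)(A - B)(P + B),
\]
which I would expand into four summands. Three of them are visibly trace class: $(P+C)(A-B)B$, $C(A-B)P$, and indeed any product containing two Hilbert--Schmidt factors is trace class after multiplication by a bounded operator. The only dangerous term is the ``compression'' $P(A-B)P = PAP - PBP$, which looks only Hilbert--Schmidt at first glance.

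The main obstacle, therefore, is to verify the following subclaim: for any bounded idempotent $e$ with $L := e - P$ Hilbert--Schmidt, the compression $PLP$ is trace class. I would prove this by exploiting the algebraic identity coming from $e^2 = e$: expanding $(P + L)^2 = P + L$ and using $P^2 = P$ yields
\[
PL + LP + L^2 \;=\; L.
\]
Multiplying on the left by $P$ gives $PLP = -PL^2$ (and on the right, $PLP = -L^2 P$). Since $L$ is Hilbert--Schmidt, $L^2$ is trace class, and hence so is $PLP$. Applied with $L = A$ and $L = B$, this shows $PAP$ and $PBP$, and therefore $P(A-B)P$, are trace class.

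Combining these observations, $P_w P_v P_u - P_w P_u$ is trace class as an operator $\C H \to \C H$. Since this operator factors through $\T{Im}(P_w)$ on the left and, by $P_u^2 = P_u$, is unchanged when precomposed with the inclusion $\T{Im}(P_u) \hookrightarrow \C H$, its restriction to $\T{Im}(P_u) \to \T{Im}(P_w)$ is trace class as well, which gives the statement.
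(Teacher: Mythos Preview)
Your proof is correct. Both arguments begin with the same first step, $P_wP_vP_u - P_wP_u = P_w(P_v - P_u)P_u$, but then diverge. The paper rewrites this directly as $w[P,w^{-1}v][P,v^{-1}u]Pu^{-1}$, exploiting the group structure and the identity $P[P,x]P = 0$ to exhibit a product of two Hilbert--Schmidt commutators in one line. Your route instead expands in the perturbations $A,B,C$ and isolates the only problematic term $P(A-B)P$, which you handle via the idempotent identity $PLP = -PL^2$ for $L = e - P$. Your argument is a bit longer and does not use the group elements $u,v,w$ beyond forming $P_u,P_v,P_w$, whereas the paper's approach makes the two Hilbert--Schmidt factors visible immediately; on the other hand, your subclaim about $PLP$ is a clean standalone fact that applies to any bounded idempotent $e$ with $e-P$ Hilbert--Schmidt, not just conjugates of $P$.
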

\begin{proof}
This follows from the identities
\[
P_w P_v P_u - P_w P_u = P_w (P_v - P_u) P_u
= P_w v [P, v^{-1} u] u^{-1} P_u
= w [P,w^{-1} v] [ P, v^{-1} u] P u^{-1}
\]
and the fact that the product of two Hilbert-Schmidt operators is a trace class operator.
\end{proof}

\begin{corollary}\label{l:fredone}
For every pair of elements $u,v \in GL_{\T{res}}(\C H)$, the bounded operator
\[
P_v P_u \colon \T{Im}(P_u) \to \T{Im}(P_v)
\]
is a Fredholm operator.
\end{corollary}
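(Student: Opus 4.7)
The plan is to invoke Atkinson's theorem: a bounded operator between Hilbert spaces is Fredholm if and only if it admits a two-sided parametrix modulo the compact operators. The natural candidate parametrix for $P_v P_u \colon \T{Im}(P_u) \to \T{Im}(P_v)$ is the ``reversed'' composition $P_u P_v \colon \T{Im}(P_v) \to \T{Im}(P_u)$, and the preceding Lemma \ref{l:tripledif} is tailor-made to control the resulting error terms.

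First I would compute both composites using that $P_u$ and $P_v$ are idempotent. The identities $P_v^2 = P_v$ and $P_u^2 = P_u$ yield
\[
(P_u P_v)(P_v P_u) = P_u P_v P_u \colon \T{Im}(P_u) \to \T{Im}(P_u) \q \T{and} \q (P_v P_u)(P_u P_v) = P_v P_u P_v \colon \T{Im}(P_v) \to \T{Im}(P_v) \, .
\]
Since $P_u$ acts as the identity on $\T{Im}(P_u)$ and $P_v$ acts as the identity on $\T{Im}(P_v)$, showing that these two composites are compact perturbations of the relevant identity operators reduces to showing that $P_u P_v P_u - P_u$ and $P_v P_u P_v - P_v$ are compact operators on $\T{Im}(P_u)$ and $\T{Im}(P_v)$, respectively.

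At this point both statements fall out of Lemma \ref{l:tripledif}: applying it to the triple $(u,v,u)$ shows that $P_u P_v P_u - P_u P_u = P_u P_v P_u - P_u$ is trace class as an operator $\T{Im}(P_u) \to \T{Im}(P_u)$, and applying it to the triple $(v,u,v)$ shows that $P_v P_u P_v - P_v$ is trace class as an operator $\T{Im}(P_v) \to \T{Im}(P_v)$. Trace class operators are in particular compact, so Atkinson's theorem yields the conclusion. I do not expect any real obstacle here; the corollary is essentially a direct specialisation of the preceding lemma with $w$ chosen to match one of $u$ or $v$.
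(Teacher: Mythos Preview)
Your proof is correct and matches the paper's approach exactly: the paper's one-line argument states that $P_u P_v$ is a parametrix by Lemma~\ref{l:tripledif}, and you have simply spelled out the two applications of that lemma (to the triples $(u,v,u)$ and $(v,u,v)$) and the invocation of Atkinson's theorem that this entails.
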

\begin{proof}
By Lemma \ref{l:tripledif} we have that $P_u P_v \colon \T{Im}(P_v) \to \T{Im}(P_u)$ is a parametrix.
\end{proof}

\begin{definition}
We define a category $\G C_{\T{res}}$ by the following criteria:
\begin{enumerate}
\item The objects of $\G C_{\T{res}}$ are the elements in the restricted general linear group;
\item For any pair of objects $u,v \in GL_{\T{res}}(\C H)$, the set of morphisms from $u$ to $v$ is given by the graded line
\[
\T{Mor}(u,v) := |P_v P_u| \, ,
\]
where $P_v P_u \colon \T{Im}(P_u) \to \T{Im}(P_v)$ is the Fredholm operator from Lemma \ref{l:fredone}.
\item For any triple of objects $u,v,w \in GL_{\T{res}}(\C H)$, the composition of morphisms is given by the isomorphism
\[
\xymatrix{
|P_v P_u| \ot |P_w P_v| \ar[r]^{\G T} & |P_w P_v P_u| \ar[r]^{\G P} & |P_w P_u| \, ,
}
\]
obtained from composing the torsion isomorphism of Fredholm operators and the perturbation isomorphism from Section \ref{s:torsfred} and Section \ref{s:pert}.
\end{enumerate}
\end{definition}

We continue by defining the action of $GL_{\T{res}}(\C H)$:

\begin{definition}
For each $g \in GL_{\T{res}}(\C H)$, we define a functor $g \colon \G C_{\T{res}} \to \G C_{\T{res}}$ by the following:
\begin{enumerate}
\item On the objects in $\G C$, the functor $g$ is given by left multiplication with the group element $g \in GL_{\T{res}}(\C H)$;
\item For any pair of objects $u,v \in GL_{\T{res}}(\C H)$, the isomorphism
\[
g \colon \T{Mor}(u,v) \to \T{Mor}(g \cd u, g \cd v)
\]
is given by the torsion isomorphism
\[
L(g) R(g^{-1}) \colon |P_v P_u| \to | g P_v P_u g^{-1}| = | P_{gv} P_{gu}|
\]
from Example \ref{ex:LR}.
\end{enumerate}
\end{definition}

Remark that the above isomorphism $L(g) R(g^{-1}) \colon |P_v P_u| \to | P_{gv} P_{gu}|$ can equally well be described as the quasi-isomorphism $|g,g| \colon |P_v P_u| \to | P_{gv} P_{gu}|$ induced by the bounded operators $g \colon P_u \C H \to P_{gu} \C H$ and $g \colon P_v \C H \to P_{gv} \C H$.

The rest of this section is devoted to proving that the above definition indeed provides us with a category equipped with a strict action of the restricted general linear group satisfying the conditions of Assumption \ref{a:onecat}.

\begin{prop}\label{prop:assoc}
$\G C_{\T{res}}$ is a category satisfying Assumption \ref{a:onecat} $(1)$, $(2)$ and $(3)$.
\end{prop}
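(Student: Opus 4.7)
The plan is to verify in turn that $\G C_{\T{res}}$ is a category with associative composition and identities, and that it satisfies conditions $(1)$--$(3)$ of Assumption \ref{a:onecat}. Condition $(1)$ is essentially formal: each morphism space $|P_v P_u|$ is a one-dimensional graded complex line on which $\cc$ acts centrally by scalar multiplication, and since both the torsion and perturbation isomorphisms are $\cc$-linear, the composition automatically factors through $|P_v P_u| \ot_\cc |P_w P_v|$.

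The heart of the proof is associativity, which is the step I expect to be the main obstacle. Fix four objects $u,v,w,x$. The strategy is to route both parenthesisations $(fg)h$ and $f(gh)$, both living in $|P_x P_u|$, through the ``fully expanded'' determinant line $|P_x P_w P_v P_u|$. For $(fg)h$, after the initial application of $\G T \ot 1$ one must commute the following $\G P \ot 1$ past the outer $\G T$. Theorem \ref{t:percom} does exactly this, applied with $\de_S = 0$ and with $\de_T = P_w P_u - P_w P_v P_u$, which is trace class by Lemma \ref{l:tripledif}. The net effect is to expose an intermediate torsion map landing in $|P_x P_w P_v P_u|$ followed by two successive perturbation isomorphisms; these collapse to a single $\G P \colon |P_x P_w P_v P_u| \to |P_x P_u|$ by the cocycle identity of Theorem \ref{t:pervec}~$(1)$. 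The $f(gh)$ side is handled symmetrically, using $\de_T = 0$ and $\de_S = P_x P_v - P_x P_w P_v$ (again trace class by Lemma \ref{l:tripledif}) to commute $1 \ot \G P$ past the outer $\G T$. Finally, Proposition \ref{p:assotors} identifies the two iterated torsion maps $(\G T \ot 1) \ci \G T$ and $(1 \ot \G T) \ci \G T$ into $|P_x P_w P_v P_u|$, so the two parenthesisations of the triple composition agree.

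For identity morphisms, the point is that $P_u^2 = P_u$ makes $P_u P_u$ equal to the identity on $P_u \C H$, so $|P_u P_u|$ carries a canonical identification with $(\cc,0)$; define $\T{id}_u$ to correspond to $1 \in \cc$. The unit laws then reduce to two elementary facts already established: the torsion isomorphism for composition with an invertible operator is the explicit map of Example \ref{ex:LR}, which specialises to the identity when the invertible operator is itself the identity; and the perturbation isomorphism from an operator to itself is trivial by Theorem \ref{t:pervec}. Condition $(3)$ follows immediately, since $\io_u \colon \cc \to |P_u P_u| \cong \cc$ is nothing but the identity map.

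For condition $(2)$, Lemma \ref{l:tripledif} applied with $w = u$ (together with $P_u P_u = \T{id}$ on $\T{Im}(P_u)$) shows that $P_u P_v P_u \colon P_u \C H \to P_u \C H$ is a trace class perturbation of the identity, hence Fredholm of index zero; additivity of the Fredholm index then gives $\T{Index}(P_v P_u) + \T{Index}(P_u P_v) = 0$. Thus the composition pairing $|P_v P_u| \ot |P_u P_v| \to |P_u P_u|$ is a non-degenerate bilinear map of one-dimensional graded lines in matching degree zero. Fix any non-zero $f \in |P_v P_u|$ and let $g \in |P_u P_v|$ be the unique element with $g \ci f = \T{id}_u$. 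Associativity gives $(f \ci g) \ci f = f \ci (g \ci f) = f = \T{id}_v \ci f$, and the bijectivity of the map $h \mapsto h \ci f$, $|P_v P_v| \to |P_v P_u|$ (again from non-degeneracy of the relevant composition pairing), forces $f \ci g = \T{id}_v$. Hence any non-zero $f$ is an isomorphism, and the set of isomorphisms between any two objects is non-empty.
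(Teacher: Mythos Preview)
Your proof is correct and takes essentially the same approach as the paper. For associativity you invoke exactly the three ingredients the paper uses---associativity of torsion (Proposition~\ref{p:assotors}), the cocycle property of perturbation (Theorem~\ref{t:pervec}), and the fact that perturbation commutes with torsion (Theorem~\ref{t:percom})---arranged in the same way; the paper packages this as a $3\times 3$ diagram whose four squares commute by precisely these results, while you describe the same route in prose. The paper declares conditions~(1)--(3) and unitality to be trivial and omits them entirely; your additional arguments for the unit laws and for the existence of isomorphisms are correct elaborations of what the paper leaves implicit.
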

\begin{proof}
The only non-trivial claim is that the composition is associative. For every four objects $u,v,w,x\in GL_{\T{res}}(\C H)$, it suffices to show that every square in the following diagram commutes:
    \begin{equation*}
        \xymatrix{
            |P_vP_u|\otimes |P_wP_v| \otimes |P_xP_w| \ar[r]^>>>>>>{\T{id} \ot \G T} \ar[d]_{\G T \ot \T{id}} & |P_vP_u|\otimes |P_xP_wP_v|\ar[r]^{\T{id} \ot \G P} \ar[d]_{\G T} & |P_vP_u| \otimes |P_xP_v| \ar[d]_{\G T} \\
            |P_wP_vP_u| \otimes  |P_xP_w| \ar[r]^{\G T} \ar[d]_{\G P \ot \T{id}} & |P_xP_wP_vP_u| \ar[r]^{\G P} \ar[d]_{\G P} & |P_xP_vP_u| \ar[d]_{\G P} \\
            |P_wP_v| \otimes |P_xP_w| \ar[r]_{\G T} & |P_xP_wP_v| \ar[r]_{\G P} & |P_xP_u| \, .
        }
    \end{equation*}
The upper left square commutes by the associativity of the torsion isomorphism from Proposition \ref{p:assotors}. The lower right square commutes by the cocycle property of the perturbation isomorphism from Theorem \ref{t:pervec}. The lower left and upper right squares commute since perturbation commutes with torsion, see Theorem \ref{t:percom}.
\end{proof}

\begin{prop}\label{prop:1action}
The group $GL_{\T{res}}(\C H)$ acts strictly on the category $\G C_{\T{res}}$.
\end{prop}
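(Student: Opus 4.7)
The plan is to verify three things: each $g$ is a functor $\G C_{\T{res}} \to \G C_{\T{res}}$; the identity $e \in GL_{\T{res}}(\C H)$ acts as the identity functor; and the composition $g_1g_2$ acts as $g_1 \ci g_2$. The action on objects is left multiplication in a group, so strictness on objects is immediate. On morphisms the strictness assertions $|e,e| = \T{id}$ and $|g_1 g_2, g_1 g_2| = |g_1,g_1|\ci |g_2,g_2|$ follow directly from the fact that the bounded isomorphism $g_1 g_2 \colon P_u \C H \to P_{g_1g_2 u}\C H$ factors through $P_{g_2 u}\C H$ as $g_1 \ci g_2$, together with the functoriality of the quasi-isomorphism construction.

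The substance is showing that each individual $g$ is a functor, which requires preservation of identity morphisms and preservation of composition. For the identities: under the identification $\T{Mor}(u,u) = |P_u P_u| = |P_u| = (\cc,0)$ the element $1 \in \cc$ is the identity morphism (as can be read off from Proposition \ref{prop:assoc} using that $P_u \colon P_u \C H \to P_u \C H$ is literally the identity), and $|g,g|$ acts as the identity on $(\cc,0)$ since the quasi-isomorphism is induced by the invertible operator $g$ applied to trivial kernels and cokernels. Preservation of composition amounts to commutativity of
\[
\xymatrix{
|P_v P_u| \ot |P_w P_v| \ar[r]^>>>>>>{\G T} \ar[d]_{|g,g| \ot |g,g|} & |P_w P_v P_u| \ar[r]^{\G P} \ar[d]^{|g,g|} & |P_w P_u| \ar[d]^{|g,g|} \\
|P_{gv} P_{gu}| \ot |P_{gw} P_{gv}| \ar[r]^>>>>>>{\G T} & |P_{gw} P_{gv} P_{gu}| \ar[r]^{\G P} & |P_{gw} P_{gu}|
}
\]
for every triple of objects $u,v,w$. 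The left square commutes by direct application of Proposition \ref{p:torquis}: the pairs $(g,g)\colon P_v P_u \to P_{gv} P_{gu}$ and $(g,g)\colon P_w P_v \to P_{gw} P_{gv}$ are quasi-isomorphisms whose composition yields the quasi-isomorphism $(g,g)\colon P_w P_v P_u \to P_{gw} P_{gv} P_{gu}$.

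The main obstacle is commutativity of the right square, which asserts that the perturbation isomorphism commutes with conjugation by the invertible operator $g$. I would establish this by the standard reduction cascade: Example \ref{ex:general} reduces the claim to Fredholm operators of index zero; Example \ref{ex:indzero} together with a finite-rank correction further reduces it to the case of invertible operators, where $\G P(T_1, T_2)$ is explicit multiplication by the Fredholm determinant $\det(T_2 T_1^{-1})$ (Example \ref{ex:invpert}). In this last case the claim collapses to the conjugation invariance of the Fredholm determinant,
\[
\det\big(g T_2 g^{-1} \cd (g T_1 g^{-1})^{-1}\big) = \det\big(g T_2 T_1^{-1} g^{-1}\big) = \det(T_2 T_1^{-1}),
\]
while the torsion-type maps $L(g)$ and $R(g^{-1})$ composing to $|g,g|$ interact with the stabilising direct sums $0_{m,n}$ and the chosen finite-rank correctors $F_i$ in a way governed by Proposition \ref{p:torquis}, so the reductions carry the scalar through unchanged. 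An alternative, slightly cleaner route is to invoke the uniqueness part of Theorem \ref{t:pervec}: the assignment $(T_1, T_2) \mapsto |g,g|^{-1} \ci \G P(g T_1 g^{-1}, g T_2 g^{-1}) \ci |g,g|$ satisfies the same defining cocycle conditions as $\G P(T_1, T_2)$ and agrees with it in the invertible case, hence must agree throughout.
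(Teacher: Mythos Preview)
Your proposal is correct. The strictness checks and the left square match the paper exactly (both invoke Proposition~\ref{p:torquis}). The difference lies in the right square. You argue by the reduction cascade of Examples~\ref{ex:general}, \ref{ex:indzero}, \ref{ex:invpert}, terminating in conjugation invariance of the Fredholm determinant; this works, but it amounts to reproving a special case of Theorem~\ref{t:percom} by hand. The paper's route is shorter: since $L(g)$ and $R(g^{-1})$ are themselves torsion isomorphisms (Example~\ref{ex:LR}), the right square is literally an instance of Theorem~\ref{t:percom} with $S=g$ (resp.\ $g^{-1}$) and $\de_S=0$. Recognising this saves the entire cascade.

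One caution on your alternative route: Theorem~\ref{t:pervec} contains no uniqueness clause. The cocycle conditions together with agreement on invertibles do not, on their own, force agreement everywhere, so the sentence ``hence must agree throughout'' is not justified by what is stated there. If you wanted to make that argument rigorous you would have to trace through the explicit construction in Examples~\ref{r:finrank}--\ref{ex:general}, at which point you are back to your first route.
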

\begin{proof}
The only non-trivial assertion is that the group action on morphisms respects the composition of morphisms. Thus, let $g \in GL_{\T{res}}(\C H)$. By the definition of the composition and the group action, it suffices to check that we have a commuting diagram
    \begin{equation*}
      \xymatrix{
        |P_vP_u|\otimes |P_wP_v| \ar[r]^>>>>>>{\G T} \ar[d]_{L(g)R(g^{-1})\otimes L(g)R(g^{-1})} & |P_wP_vP_u| \ar[d]_{L(g) R(g^{-1})} \ar[r]^{\G P} & |P_wP_u|\ar[d]^{L(g) R(g^{-1})}\\
        |P_{gv}P_{gu}|\otimes |P_{gw}P_{gv}| \ar[r]_>>>>{\G T} & |P_{gw}P_{gv}P_{gu}| \ar[r]_{\G P} & |P_{gw}P_{gu}| \, .
      }
    \end{equation*}
But the commutativity of the first square follows from Proposition \ref{p:torquis}, and the commutativity of the second square follows since perturbation commutes with torsion, see Theorem \ref{t:percom}.
\end{proof}

\begin{corollary}\label{def:canonical two-cocycle}
The formula from Definition \ref{def:one cocycle} defines a class in the second group cohomology of the restricted general linear group, $[c_{\T{res}}] \in H^2(GL_{\T{res}}(\C H),\cc^*)$.
\end{corollary}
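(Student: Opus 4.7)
My plan is to deduce the corollary directly from the general machinery of Section \ref{s:2coc}: it suffices to check that the category $\G C_{\T{res}}$ together with the left-multiplication action of $GL_{\T{res}}(\C H)$ satisfies Assumption \ref{a:onecat} with $R = \cc$ and $G = GL_{\T{res}}(\C H)$. Once this is verified, the $2$-cochain from Definition \ref{def:one cocycle} together with the lemma that follows it automatically produces a well-defined class in $H^2(GL_{\T{res}}(\C H),\cc^*)$, so only the bookkeeping of the axioms remains.

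Most of the required structure is already in place. Proposition \ref{prop:assoc} gives that $\G C_{\T{res}}$ is a category meeting conditions (1), (2) and (3), and Proposition \ref{prop:1action} shows that $GL_{\T{res}}(\C H)$ acts strictly, handling condition (4). To be explicit: each morphism space $\T{Mor}(u,v) = |P_v P_u|$ is a one-dimensional graded $\cc$-line (the determinant line of a Fredholm operator by Corollary \ref{l:fredone}), hence a free central $\cc$-bimodule of rank one; composition is a composite of torsion and perturbation isomorphisms, which are $\cc$-linear, so it descends to the tensor product over $\cc$. For (3), the Fredholm operator $P_u P_u \colon P_u \C H \to P_u \C H$ is the identity, so $|P_u P_u| = (\cc,0)$ and $\io_u \colon \cc \to |P_u P_u|$ defined by $r \mapsto r \cd 1$ is an isomorphism of graded lines. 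For (2), non-emptiness of the set of isomorphisms follows from the fact that the composition pairing $|P_u P_v| \ot |P_v P_u| \to |P_u P_v P_u| \to |P_u P_u|$ is itself a composite of two isomorphisms (using that $P_u P_v P_u - P_u$ is of trace class by Lemma \ref{l:tripledif}), so it is non-degenerate and every non-zero element of the one-dimensional line $|P_v P_u|$ is an isomorphism.

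With Assumption \ref{a:onecat} verified, I pick an object $u_0 \in GL_{\T{res}}(\C H)$ and an isomorphism $\alpha_g \colon u_0 \to g(u_0)$ for each $g \in G$, and set
\[
c_{\T{res}}(g_1,g_2) := \io_{u_0}^{-1}\big( \alpha_{g_1}^{-1} \ci g_1(\alpha_{g_2}^{-1}) \ci \alpha_{g_1 g_2}\big) \in \cc^* \, ,
\]
as in Definition \ref{def:one cocycle}. The lemma following Definition \ref{def:one cocycle} then delivers both the cocycle identity and independence of the resulting cohomology class from the choices of basepoint $u_0$ and of isomorphisms $\{\alpha_g\}_{g\in G}$, giving the sought-after class $[c_{\T{res}}] \in H^2(GL_{\T{res}}(\C H),\cc^*)$. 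I do not expect any real obstacle beyond the non-degeneracy check for (2); that this check works is essentially the content of the two key analytic inputs already proved in Section \ref{s:torsfred} and Section \ref{s:pert}, namely that torsion and perturbation isomorphisms compose associatively and compatibly.
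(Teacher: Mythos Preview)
Your proposal is correct and takes essentially the same approach as the paper: the corollary is stated without proof immediately after Propositions~\ref{prop:assoc} and~\ref{prop:1action}, precisely because those two propositions verify Assumption~\ref{a:onecat}, after which the lemma following Definition~\ref{def:one cocycle} applies directly. Your write-up simply spells out the routine verifications (in particular the non-degeneracy argument for condition~(2)) that the paper leaves implicit.
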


\section{Comparison with the multiplicative character}\label{s:mulK2}
Let us fix an orthogonal projection $P \colon \C H \to \C H$ on the separable Hilbert space $\C H$ with infinite dimensional kernel and image. 

Define the unital subalgebra $\C M^1 \su \sL(\C H)$ by
\[
\C M^1 := \big\{ x \in \sL(\C H) \mid [P,x] \T{ is Hilbert-Schmidt} \big\} \, .
\]
Notice that $\C M^1$ becomes a unital Banach algebra when equipped with the norm $\| x \| := \| x \|_\infty + \| [P,x] \|_2$, where $\| \cd \|_\infty \colon \sL(\C H) \to [0,\infty)$ is the operator norm and $\| \cd \|_2 \colon \sL^2(\C H) \to [0,\infty)$ is the Hilbert-Schmidt norm. We remark that the group of invertible elements $(\C M^1)^*$ is exactly the restricted general linear group
\[
(\C M^1)^* = GL_{\T{res}}(\C H) \, .
\]
The universal $2$-summable Fredholm module is given by the triple $\C F_{\T{uni}} := (\C M^1, \C H, 2P - 1)$. 

In order to describe the Connes-Karoubi multiplicative character
\[
M(\C F_{\T{uni}}) \colon K_2^{\T{alg}}(\C M^1) \to \cc^*
\]
using our category theoretic approach, we need to construct a polarisation for the general linear group $GL(\C M^1)$ consisting of invertible matrices with entries in $\C M^1$. To this end, we represent $GL(\C M^1)$ on the infinite Hilbert space direct sum $\C H^\infty := \op_{j = 1}^\infty \C H$ by defining the representation
\[
GL_n(\C M^1) \to GL(\C H^\infty) \q g( \sum_{j = 1}^\infty \xi_j  \cd e_j) := \sum_{i,j = 1}^n g_{ij}(\xi_j)\cd e_i  + \sum_{i = n + 1}^\infty \xi_i \cd e_i .
\]
for each $n \in \nn$ and verifying that these representations are compatible with the direct limit structure of $GL(\C M^1) = \displaystyle\lim_{n \to \infty} GL_n(\C M^1)$. The corresponding representation $GL(\C M^1) \to GL(\C H^\infty)$ is then polarised by the infinite direct sum of orthogonal projections
\[
P^\infty \colon \C H^\infty \to \C H^\infty \q P^\infty( \sum_{j = 1}^\infty \xi_j  \cd e_j ) := \sum_{j = 1}^\infty P(\xi_j) \cd e_j .
\]
By a slight abuse of notation, we let $[c_{\T{res}}] \in H^2(GL(\C M^1),\cc^*)$ denote the class in group cohomology coming from the action of $GL(\C M^1)$ on the category $\G C_{\T{res}}$ associated with the orthogonal projection $P^\infty \colon \C H^\infty \to \C H^\infty$.

By a result of Connes and Karoubi, \cite[Theorem 5.6]{CoKa:CMF}, we may compute the multiplicative character using a central extension of the connected component of the identity, $GL^0(\C M^1) \subseteq GL(\C M^1)$:
\begin{equation}\label{eq:central}
1 \to \cc^* \to  \Ga \to GL^0(\C M^1) \to 1 \, .
\end{equation}
This central extension is described in detail by Pressley and Segal in \cite[Chapter 6]{PrSe:LG} and arises from a combination of the Fredholm determinant and the exact sequence of Banach algebras
\[
0 \to \sL^1(P\C H) \to \C T^1 \to \C M^1 \to  0 \, .
\]
Recall here that the unital Banach algebra $\C T^1 \subseteq \C M^1 \ti \sL( P\C H)$ is defined by
\[
\C T^1 := \big\{ (x,y) \mid x \in \C M^1 \, , \, \, y \in \sL(P\C H) \, , \, \, PxP - y \in \sL^1(P\C H) \big\}
\]
and equipped with the norm $\| (x,y)\| := \| x \| + \| P x P - y \|_1$, where $\| \cd \|_1 \colon \sL^1(P \C H) \to [0,\infty)$ is the trace norm defined on all trace class operators.

\begin{theorem}\label{t:two cocycle and mcc}
The following diagram is commutative:
\[
\xymatrix{
K^{\T{alg}}_2(\C M^1)\ar[rr]^h \ar[d]_{M(\C F_{\T{uni}})} && H_2(GL(\C M^1),\zz)\ar[dll]^{\, \, \, \, \inn{ [c_{\T{res}}], \cd}} \\
\cc^* && 
}
\]
where $h$ is the Hurewicz homomorphism and $\inn{ [c_{\T{res}}], \cd}$ comes from the pairing between group cohomology and group homology.
\end{theorem}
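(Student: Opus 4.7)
The essential content of the statement is the identification of two different constructions of a class in $H^2(GL(\C M^1),\cc^*)$: on one side, the group $2$-cocycle $c_{\T{res}}$ arising from the action of $GL(\C M^1)$ on the category $\G C_{\T{res}}$; on the other, the class of the Pressley--Segal central extension $\Ga \to GL^0(\C M^1)$ discussed just above the theorem, which represents the multiplicative character $M(\C F_{\T{uni}})$ by Connes--Karoubi \cite[Theorem 5.6]{CoKa:CMF}. Given that identification, the commutativity of the diagram reduces to the standard fact that, for a central extension $1 \to \cc^* \to E \to G \to 1$ with class $[c] \in H^2(G,\cc^*)$, the composition $K_2^{\T{alg}}(R) \xrightarrow{h} H_2(GL(R),\zz) \xrightarrow{\inn{[c],\cd}} \cc^*$ coincides with the boundary map in algebraic $K$-theory associated to the pullback of the extension along $GL(R) \to G$ --- and this boundary map, applied to the Pressley--Segal extension of $GL(\C M^1)$, is by definition the Connes--Karoubi multiplicative character. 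Thus the bulk of the proof consists of the cohomological identification of $[c_{\T{res}}]$ with the class of \eqref{eq:central}.

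\textbf{Computing $c_{\T{res}}$ explicitly.} Take $1 \in GL_{\T{res}}(\C H^\infty)$ as the base object in $\G C_{\T{res}}$, so that $P_1 = P^\infty$. For $g$ in the connected component $GL^0(\C M^1)$, an isomorphism $\al_g \colon 1 \to g$ is by definition a non-zero vector in the graded line $|P_g P^\infty|$, where $P_g P^\infty \colon P^\infty \C H^\infty \to P_g \C H^\infty$ is Fredholm of index zero (since $g$ lies in the connected component). A lift $\wit g \in \Ga$ in the Pressley--Segal extension specifies precisely such a trivialisation: writing $\wit g = (g, y_g)$ with $y_g \in \sL(P^\infty \C H^\infty)$ invertible and $P^\infty g P^\infty - y_g$ of trace class, the element $\wit g$ determines a canonical basis vector $\al_g \in |P_g P^\infty|$ by comparing $P_g P^\infty$ with the invertible operator $g y_g \colon P^\infty \C H^\infty \to P_g \C H^\infty$ via the perturbation isomorphism $\G P$ of Section \ref{s:pert}. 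Plugging these choices into Definition \ref{def:one cocycle} and unwinding the composition in $\G C_{\T{res}}$ (given by $\G T$ followed by $\G P$), the cocycle $c_{\T{res}}(g,h)$ becomes, after cancellation, the Fredholm determinant of an explicit trace class perturbation of the identity on $P^\infty \C H^\infty$ expressed in terms of $y_g, g(y_h), y_{gh}$ --- which is exactly the Pressley--Segal cocycle representing \eqref{eq:central}. The key technical lemma here is Theorem \ref{t:percom} (perturbation commutes with torsion), which is what allows the various torsion isomorphisms appearing in the composition to be collapsed to a single Fredholm determinant.

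\textbf{Assembling the argument and the main obstacle.} Having produced a group cohomology equality $[c_{\T{res}}] = [c_{\T{PS}}] \in H^2(GL^0(\C M^1),\cc^*)$, one transports it to $H^2(GL(\C M^1),\cc^*)$ by remarking that the Hurewicz image of $K_2^{\T{alg}}(\C M^1)$ in $H_2(GL(\C M^1),\zz)$ lies in the image of $H_2(GL^0(\C M^1),\zz)$ (since $K_2$ is generated by commutators of elements homotopic to the identity in the stable range) so the comparison on the connected component suffices. Finally, invoking \cite[Theorem 5.6]{CoKa:CMF} --- which computes $M(\C F_{\T{uni}})$ as the boundary map $K_2^{\T{alg}}(\C M^1) \to \cc^*$ associated to the Pressley--Segal extension, i.e.\ as the pairing $\inn{[c_{\T{PS}}],h(\cd)}$ --- the diagram in the statement commutes. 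The main obstacle is the explicit cocycle-level comparison in the previous paragraph: the composition law in $\G C_{\T{res}}$ combines torsion and perturbation isomorphisms in a slightly intricate way, and one must verify carefully that the resulting scalar, once evaluated on a triple built from the lifts $\wit g, \wit h, \wit{gh}$, agrees on the nose with the Fredholm determinant appearing in the Pressley--Segal construction; all of the necessary functoriality is supplied by Theorem \ref{t:torprop}, Proposition \ref{p:assotors} and Theorem \ref{t:percom}, but organising the signs and compositions transparently is where the real work lies.
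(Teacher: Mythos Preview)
Your proposal follows essentially the same route as the paper's proof: reduce to the connected component $GL^0$, choose the object $1$, trivialise $|P_g P|$ for each $g$ via a perturbation isomorphism against an invertible operator coming from a Pressley--Segal lift, then use Theorem~\ref{t:percom} to collapse the composition $\G P \ci \G T$ into a single Fredholm determinant that one recognises as the Pressley--Segal cocycle; finally one invokes \cite[Theorem~5.6]{CoKa:CMF}.

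One small slip to correct: the invertible operator you want to compare $P_g P^\infty$ with is $g\,y_g^{-1}$, not $g\,y_g$. Indeed $P_g P = g (P g^{-1} P)$ on $P\C H$, and $P g^{-1} P - y_g^{-1}$ is trace class (by Lemma~\ref{l:tripledif}, since $(P g^{-1} P)y_g - P$ and $y_g (P g^{-1} P) - P$ are trace class), whereas $P g^{-1} P - y_g$ has no reason to be. The paper makes the analogous choice $\al_g := \G P\big( g(PgP+F_g)^{-1},\, P_g P\big)(1_\cc)$ with $F_g$ a finite-rank correction making $PgP + F_g$ invertible; your $y_g$ plays the role of $PgP + F_g$, and everything else in your outline then goes through exactly as in the paper.
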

\begin{proof}
By the results of Connes and Karoubi mentioned just before the statement of this theorem and by the fact that $K_2^{\T{alg}}(\C M^1)$ is isomorphic to the second group homology of the elementary matrices $E(\C M^1)$ (see for example \cite[Theorem 5.2.7]{Ros:AKA}), it suffices to show that the restriction of $[c_{\T{res}}] \in H^2(GL(\C M^1),\cc^*)$ to $H^2( GL^0(\C M^1),\cc^*)$ agrees with the group cohomology class provided by the central extension in \eqref{eq:central}. Moreover, since the passage from $\C M^1$ to the $(n \ti n)$-matrices $M_n(\C M^1)$ can be described by passing from the polarised Hilbert space $P\C H \subseteq \C H$ to the polarised Hilbert space $P^{\op n} \C H^{\op n} \subseteq \C H^{\op n}$ we may restrict our attention to the group of invertible elements in $\C M^1$. That is, to the restricted general linear group $GL_{\T{res}}(\C H)$.

An element $g \in GL_{\T{res}}(\C H)$ lies in the connected component of the identity, $GL_{\T{res}}^0(\C H)$, precisely when the Fredholm operator $P g P \colon P\C H \to P\C H$ has index zero. For each such $g$ we may thus choose a bounded finite rank operator $F_g \colon P\C H \to P\C H$ such that $P g P + F_g \colon \C H \to \C H$ is invertible. The group $2$-cocycle coming from the central extension in \eqref{eq:central} is then given by
\[
m_{\T{uni}}(g,h) := \det\big(  (P gh P + F_{gh})( P h P + F_h)^{-1} (P g P + F_g)^{-1} \big) \q g,h \in GL_{\T{res}}^0(\C H) \, .
\]

Let us now turn to the description of the restriction of $[c_{\T{res}}]$ to $H^2( GL_{\T{res}}^0(\C H), \cc^*)$. We start by choosing the unit $1 \in GL_{\T{res}}^0(\C H)$ as our object. For each $g \in GL_{\T{res}}^0(\C H)$ we have the perturbation isomorphism
\[
\G P( P_g \cd P, g (P g P + F_g)^{-1}) \colon | P_g \cd P| \to | g ( P g P + F_g)^{-1} | = (\cc,0)
\]
and we may thus choose the isomorphism
\[
\al_g := \G P\big( g (P g P + F_g)^{-1}, P_g \cd P\big)(1_{\cc}) \in \T{Mor}( 1, g) = | P_g \cd P | \, .
\]
Using that perturbation commutes with torsion, Theorem \ref{t:percom}, it can then be verified that the inverse is given by
\[
\al_g^{-1} = \G P\big( (P g P + F_g) g^{-1}, P \cd P_g\big)(1_\cc) \in | P \cd P_g| = \T{Mor}( g, 1) \, .
\]
For a pair of elements $g,h \in GL_{\T{res}}^0(\C H)$, the evaluation $c_{\T{res}}(g,h) \in \cc^*$ is therefore given by the image of
\[
\al_{gh} \ot g( \al_h^{-1}) \ot \al_g^{-1} \in | P_{gh}\cd P| \ot | g \cd P P_h \cd g^{-1}| \ot | P \cd P_g|
\]
under the composition
\[
\xymatrix{
| P_{gh} \cd  P| \ot | g \cd P P_h \cd g^{-1}| \ot | P \cd P_g| \ar[r]^>>>>>>{\G T} & 
| P P_g P_{gh} P| \ar[r]^>>>>{\G P} & |P| = (\cc,0) } \, .
\]
We record that $g(\al_h^{-1}) \in |g \cd P P_h \cd g^{-1}| = | P_g P_{gh}|$ agrees with the value
\[
\G P\big( g (P h P + F_h) h^{-1} g^{-1}, g \cd P P_h \cd g^{-1} \big)(1_\cc) 
\]
as can be verified by using the argument in the proof of Proposition \ref{prop:1action}. However, using that perturbation commutes with torsion, Theorem \ref{t:percom}, together with the transitivity property of the perturbation isomorphism, Theorem \ref{t:pervec}, we obtain that the diagram
\[
\xymatrix{
\big| gh (P gh P + F_{gh})^{-1}\big| \ot \big| g (P h P + F_h) h^{-1} g^{-1} \big| \ot \big| (PgP + F_g) g^{-1} \big|
\ar[d]^{\G T} \ar[r]^>>>>{\G P} & |P_{gh} \cd P| \ot |P_g \cd P_{gh}| \ot | P \cd P_g| \ar[d]^{\G T} \\
\big| (P g P + F_g) ( P h P + F_h) (P gh P + F_{gh})^{-1} \big| \ar[r]^>>>>>>>>>>>>>>>>{\G P} \ar[d]^{\G P}  & \big| P \cd P_g \cd P_{gh} \cd P \big| \ar[dl]^{\G P} \\
|P| & 
}
\]
is commutative. This entails that $c_{\T{res}}(g,h) \in \cc^*$ is given by the image of the unit $1_{\cc} \in \cc$ under the perturbation isomorphism
\[
\G P \colon \big| (P g P + F_g) ( P h P + F_h) (P gh P + F_{gh})^{-1} \big| \to |P| = (\cc,0) \, .
\]
But by Example \ref{ex:invpert}, this is exactly the Fredholm determinant
\[
\det\big( (P gh P + F_{gh}) (P h P + F_h)^{-1} ( P g P + F_g)^{-1}\big) \, .
\]
This shows that $c_{\T{res}}(g,h) = m_{\T{uni}}(g,h)$ for all $g,h \in GL_{\T{res}}^0(\C H)$ and the theorem is proved.
\end{proof}

We record the following corollary:

\begin{corollary}\label{c:two cocycle and mcc}
Let $\C F = (\C A, \C H, F)$ be a unital $2$-summable Fredholm module such that the orthogonal projection $P := (F +1)/2$ has infinite dimensional image and kernel. Let $\pi \colon \C A \to \C M^1$ denote an algebra homomorphism such that $\C F$ agrees with the pull back of the universal $2$-summable Fredholm module along $\pi$. Then the diagram
\[
\xymatrix{
K_2^{\T{alg}}(\C A) \ar[rr]^{h} \ar[d]^{M(\C F)} && H_2( GL(\C A),\zz ) \ar[dll]^{\inn{\pi^* [c_{\T{res}}], \cd}} \\
\cc^* & & }
\]
is commutative, where $\pi^*[c_{\T{res}}]$ denotes the pull back of the group cohomology class $[c_{\T{res}}]$ along the induced group homomorphism $\pi \colon GL(\C A) \to GL(\C M^1)$.
\end{corollary}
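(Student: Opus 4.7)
The plan is to deduce this from Theorem \ref{t:two cocycle and mcc} by invoking three naturality properties of the ingredients appearing in the diagram, one for each of the three arrows.

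First, I would use the functoriality of algebraic $K$-theory and the naturality of the Hurewicz homomorphism to obtain the commuting square
\[
\xymatrix{
K_2^{\T{alg}}(\C A) \ar[r]^h \ar[d]_{\pi_*} & H_2(GL(\C A),\zz) \ar[d]^{\pi_*} \\
K_2^{\T{alg}}(\C M^1) \ar[r]^h & H_2(GL(\C M^1),\zz) \, ,
}
\]
where $\pi_*$ denotes the induced map on $K_2^{\T{alg}}$ (respectively on group homology via $\pi \colon GL(\C A) \to GL(\C M^1)$). Second, the assumption that $\C F$ is the pullback of the universal $2$-summable Fredholm module $\C F_{\T{uni}}$ along $\pi$ gives naturality of the Connes--Karoubi multiplicative character, namely
\[
M(\C F) = M(\C F_{\T{uni}}) \circ \pi_* \colon K_2^{\T{alg}}(\C A) \to \cc^* \, .
\]
Third, by the standard adjunction between pullback in group cohomology and pushforward in group homology, the pairing satisfies
\[
\inn{ \pi^*[c_{\T{res}}], z } = \inn{ [c_{\T{res}}], \pi_*(z) } \q \T{for all } z \in H_2(GL(\C A),\zz) \, .
\]

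With these three ingredients in hand, the corollary is immediate: for any class $x \in K_2^{\T{alg}}(\C A)$ we compute
\[
\inn{ \pi^* [c_{\T{res}}], h(x) } = \inn{ [c_{\T{res}}], \pi_*(h(x)) } = \inn{ [c_{\T{res}}], h(\pi_*(x)) } = M(\C F_{\T{uni}})(\pi_*(x)) = M(\C F)(x) \, ,
\]
where the first equality is the adjunction, the second is the Hurewicz naturality square, the third is Theorem \ref{t:two cocycle and mcc} applied to $\pi_*(x) \in K_2^{\T{alg}}(\C M^1)$, and the fourth is the naturality of the multiplicative character. There is no significant obstacle here; the only point worth double-checking is the naturality of $M(\cd)$ under pullback of Fredholm modules, but this is built into the definition of the universal multiplicative character as recalled in the introduction (the universal multiplicative character is defined precisely so that any $n$-summable Fredholm module factors through $\C F_{\T{uni}}$ and $M(\C F)$ is obtained by composing $M(\C F_{\T{uni}})$ with the induced map on algebraic $K$-theory).
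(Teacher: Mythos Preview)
Your argument is correct and is exactly the standard naturality argument one would write down; the paper itself gives no proof for this corollary, presenting it as an immediate consequence of Theorem~\ref{t:two cocycle and mcc}, so your proposal simply fills in the routine details that the paper omits.
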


\section{The group $3$-cocycle associated to a group action on a coproduct category}\label{s:polar}
In this section we start our discussion of the two-dimensional situation and we shall explain how group actions on categories endowed with some extra structure give rise to group $3$-cocycles. The idea of constructing group cocycles from groups acting on higher categories appears in many places, see for example \cite{Str:AOS, JoSt:BTC, BrSp:GCF, BaLa:HDA, FrZh:GRD}. We are here taking a slightly different route and develop the notion of a group acting on a ``coproduct category'', since this is the structure which appears naturally in our analytic applications. Coproduct categories are related to weak $2$-categories, but our systematic use of graded tensor products makes this relationship less straightforward. The extra sign appearing in the composition of graded tensor products of morphisms also has consequences for our construction of group $3$-cocycles: we only obtain well-defined group $3$-cohomology classes with values in the quotient group $\cc^*/\{\pm 1\}$. 
%

\subsection{Coproduct categories}\label{ss:hopf}
%

We are going to consider categories where the morphisms between two objects are vectors in a $\zz$-graded complex line.

\begin{definition}\label{d:gralines}
A \emph{category of $\zz$-graded complex lines} is a category $\G C$ with at least one object such that
\begin{enumerate}
\item The set of morphisms $\T{Mor}(a,b)$ is a $\zz$-graded complex line whenever $a,b$ are objects in $\G C$;
\item The composition of morphisms $\ci \colon \T{Mor}(b,c) \ti \T{Mor}(a,b) \to \T{Mor}(a,c)$ induces an isomorphism of $\zz$-graded complex lines:
\[
M \colon \T{Mor}(a,b) \ot \T{Mor}(b,c) \to \T{Mor}(a,c) \, ;
\]
\item For each object $a$ in $\G H(x,y)$, the $\zz$-graded complex line $(\cc,0)$ is isomorphic to $\T{Mor}(a,a)$ via the map $\la \mapsto \la \cd \T{id}_a$. In particular, $\T{Mor}(a,a)$ has degree $0 \in \zz$.
\end{enumerate}
For a morphism $\al \in \T{Mor}(a,b) = (\sL,m)$ we write $\ep(\al) := m \in \zz$ for the \emph{degree}. 
\end{definition}
%

\begin{lemma}\label{l:connected}
If $\G C$ is a category of $\zz$-graded complex lines, then any two objects $a,b \in \G C$ are isomorphic.
\end{lemma}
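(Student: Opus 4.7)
The plan is to build an isomorphism $\alpha\colon a \to b$ by starting with an arbitrary nonzero element of $\T{Mor}(a,b)$ and rescaling a chosen partner to make the compositions strictly equal to the identity morphisms. Concretely, first I would pick any nonzero vectors $\alpha \in \T{Mor}(a,b)$ and $\beta \in \T{Mor}(b,a)$; these exist because each hom-space is a one-dimensional complex vector space by Definition \ref{d:gralines}(1). Since the composition isomorphism $M \colon \T{Mor}(a,b) \ot \T{Mor}(b,a) \to \T{Mor}(a,a)$ in Definition \ref{d:gralines}(2) carries nonzero vectors to nonzero vectors, the composite $\beta \ci \alpha \in \T{Mor}(a,a)$ is nonzero; by Definition \ref{d:gralines}(3) it must equal $\la \cd \T{id}_a$ for some $\la \in \cc^*$. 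Symmetrically, $\alpha \ci \beta = \mu \cd \T{id}_b$ for some $\mu \in \cc^*$.

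Next I would use associativity of the composition to match the two scalars. Computing the triple composite $\alpha \ci \beta \ci \alpha \in \T{Mor}(a,b)$ in two ways gives
\[
\mu \cd \alpha = (\alpha \ci \beta) \ci \alpha = \alpha \ci (\beta \ci \alpha) = \la \cd \alpha,
\]
so $\la = \mu$ (using that $\alpha \neq 0$). Replacing $\beta$ by $\la^{-1} \cd \beta \in \T{Mor}(b,a)$, we then have both $\beta \ci \alpha = \T{id}_a$ and $\alpha \ci \beta = \T{id}_b$, exhibiting $\alpha$ as an isomorphism from $a$ to $b$.

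The degree bookkeeping goes through automatically: since $M$ is a morphism of $\zz$-graded lines and lands in $\T{Mor}(a,a) = (\cc,0)$, we get $\ep(\alpha) + \ep(\beta) = 0$, hence $\ep(\beta) = -\ep(\alpha)$, which is equally consistent with $\alpha \ci \beta \in \T{Mor}(b,b) = (\cc,0)$. There is no real obstacle here; the only point requiring a moment's attention is verifying that $\la = \mu$ so that a single rescaling of $\beta$ simultaneously trivialises both compositions, and this is handled by the associativity argument above.
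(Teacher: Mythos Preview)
Your proof is correct and follows essentially the same approach as the paper: pick nonzero morphisms in both directions, use that composition is an isomorphism of $\zz$-graded lines to see the composites are nonzero multiples of the identities, and then invoke associativity of the triple composite to pin down the scalar. The paper streamlines slightly by choosing $\alpha,\beta$ from the outset so that $\beta\ci\alpha=\T{id}_a$ (absorbing your rescaling step), and then leaves the associativity computation you wrote out as ``it follows easily''.
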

\begin{proof}
Let $a,b \in \G C$. Since $\ci \colon \T{Mor}(a,b) \ot \T{Mor}(b,a) \to \T{Mor}(a,a)$ is an isomorphism of $\zz$-graded complex lines we may choose $\al \in \T{Mor}(a,b)$ and $\be \in \T{Mor}(b,a)$ such that $\be \ci \al = \T{id}_a$. Since the map $\la \mapsto \la \cd \T{id}_b$ is an isomorphism of $\zz$-graded complex lines (and the composition is associative and linear) it follows easily that $\al \ci \be = \T{id}_b$ as well.
\end{proof}

Let $\G C$ and $\G D$ be categories of $\zz$-graded complex lines. The \emph{graded tensor product}
\[
\G C \ot \G D
\]
is also a category of $\zz$-graded complex lines. The objects in this category are given by pairs of objects $(a,b)$, where $a \in \G C$ and $b \in \G D$. It is convenient already at this point to introduce the notation
\[
a \ot b := (a,b) \in \T{Obj}( \G C \ot \G D) \, .
\]
The morphisms $\T{Mor}( a \ot b , a' \ot b' \big)$ are given by the $\zz$-graded complex line:
\[
\T{Mor}\big( a \ot b , a' \ot b' \big) := \T{Mor}(a,a') \ot \T{Mor}(b,b') 
\]
A morphism from $a \ot b$ to $a' \ot b'$ is thus given by a tensor product
\[
\al \ot \be \colon a \ot b \to a' \ot b' \, ,
\]
where the morphism $\al \colon a \to a'$ has degree $\ep(\al) \in \zz$ and $\be \colon b \to b'$ has degree $\ep(\be) \in \zz$. 
The unit $\T{id}_{a \ot b} \colon a \ot b \to a \ot b$ is the tensor product of the units $\T{id}_a \in \T{Mor}(a,a)$ and $\T{id}_b \in \T{Mor}(b,b)$. The composition of morphisms is given by the formula
\[
M\big( (\al \ot \be) \ot (\al' \ot \be') \big) := (-1)^{\ep(\be) \cd \ep(\al')} \cd M(\al \ot \al') \ot M(\be \ot \be') .
\]

For an extra category $\G E$ of $\zz$-graded complex lines we may identify the graded tensor products
\[
(\G C \ot \G D) \ot \G E \q \T{and} \q \G C \ot (\G D \ot \G E)
\]
via an obvious isomorphism of categories, which we shall from now on tacitly suppress.

Let $\G C$ and $\G D$ be categories of $\zz$-graded complex lines. A \emph{linear functor} $F$ from $\G C$ to $\G D$ is a functor $F \colon \G C \to \G D$ such that the induced map
\[
F \colon \T{Mor}(a,b) \to \T{Mor}(F(a),F(b)) \q a,b \in \G C
\]
is a \emph{linear isomorphism} of $\zz$-graded complex lines.

\begin{definition}
Let $X$ be a non-empty set, and let $\G H(x,y)$ be a category of $\zz$-graded complex lines for every $x,y \in X$. We say that the collection $\big\{ \G H(x,y) \big\}_{x,y \in X}$ is a \emph{coproduct category} when it is equipped with linear functors
\[
\De_z \colon \mathfrak{H}(x,y) \to \mathfrak{H}(x,z) \ot \mathfrak{H}(z,y) \q x,y,z \in X
\]
referred to as \emph{coproducts} such that the coassociativity relation
\[
(\T{id} \ot \De_z) \De_w = (\De_w \ot \T{id}) \De_z 
\colon \mathfrak{H}(x,y) \to \mathfrak{H}(x,w) \ot \mathfrak{H}(w,z) \ot \mathfrak{H}(z,y) \, .
\]
holds for all elements $x,y,z,w \in X$.
\end{definition}


A {\em morphism} of coproduct categories $\Phi\colon\mathfrak{H}\to\mathfrak{H'}$ consists of a map $\varphi \colon X \to X'$ of the underlying sets and for each $x,y \in X$, a linear functor $\varphi \colon \G H(x, y) \to \G H'(\varphi(x),\varphi(y))$ such that the diagram
\[
\xymatrix{
\G H(x,y) \ar[r]^{\De_z} \ar[d]^{\varphi} & \G H(x,z) \ot \G H(z,y) \ar[d]^{\varphi \ot \varphi} \\
\G H'(\varphi(x),\varphi(y)) \ar[r]^<<<<{\De'_{\varphi(z)}} & \G H'(\varphi(x),\varphi(z)) \ot \G H'(\varphi(z),\varphi(y)) 
}
\]
commutes for all $x,y,z \in X$. The composition of morphisms of coproduct categories is carried out in the obvious way (composing the underlying maps between sets and the linear functors). 

An action of a group $G$ on a coproduct category $\G H$ is a group homomorphism
\[
G \to \T{Aut}( \G H) \, .
\]
\medskip

We now discuss the link between coproduct categories and ``twisted'' weak $2$-categories. The twisting comes from our use of \emph{graded} tensor products. We refer to \cite{Ben:ITB} for generalities regarding weak $2$-categories (bicategories). 

Suppose that $(\G H,\De,X)$ is a coproduct category. We start out by ``inverting'' the coproducts 
\[
\De_z \colon \G H(x,y) \to \G H(x,z) \ot \G H(z,y) \, .
\]

\begin{lemma}
For each triple of elements $x,y,z \in X$, it holds that the coproduct 
\[
\De_z \colon \G H(x,y) \to \G H(x,z) \ot \G H(z,y)
\]
is an equivalence of categories.
\end{lemma}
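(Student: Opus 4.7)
My plan is to verify the two standard criteria for an equivalence of categories: that $\De_z$ is fully faithful and essentially surjective on objects. Both criteria fall out quickly from the hypotheses, so the argument is more a matter of unpacking definitions and invoking Lemma~\ref{l:connected} than of doing real work.

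First, full faithfulness is immediate: by the definition of a coproduct category, each $\De_z$ is a \emph{linear functor}, meaning the induced map $\De_z \colon \T{Mor}(a,b) \to \T{Mor}(\De_z(a),\De_z(b))$ is a linear isomorphism of $\zz$-graded complex lines for every pair of objects $a, b \in \G H(x,y)$. A linear isomorphism is in particular a bijection, which is exactly the full-faithfulness condition.

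For essential surjectivity, the key observation is that the connectedness provided by Lemma~\ref{l:connected} promotes to the graded tensor product. Given any two objects $a \ot b$ and $a' \ot b'$ in $\G H(x,z) \ot \G H(z,y)$, Lemma~\ref{l:connected} applied in each factor yields isomorphisms $\al \colon a \to a'$ in $\G H(x,z)$ and $\be \colon b \to b'$ in $\G H(z,y)$, and then $\al \ot \be$ is an isomorphism from $a \ot b$ to $a' \ot b'$ in the product category. Since $\G H(x,y)$ is required to contain at least one object $c$, the image $\De_z(c)$ provides a witness to the nonemptiness of the target, and every other object there is isomorphic to it; hence every object is isomorphic to something in the image of $\De_z$.

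I do not anticipate any real obstruction. The two bookkeeping points to watch are that $\G H(x,z)$ and $\G H(z,y)$ must be nonempty for the graded tensor product even to make sense (which is built into Definition~\ref{d:gralines}), and that the coassociativity of $\De$ plays no role in this particular lemma --- that structure will be exploited later to assemble pseudo-inverses of the various $\De_z$ in a coherent way, when turning the coproduct category into the twisted weak $2$-category promised in the surrounding discussion.
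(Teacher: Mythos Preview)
Your proof is correct and follows essentially the same route as the paper: full faithfulness is immediate from the definition of a linear functor, and essential surjectivity follows from Lemma~\ref{l:connected} applied in each factor of the graded tensor product. Your version simply spells out a bit more detail (tensoring the factorwise isomorphisms, noting the nonemptiness of $\G H(x,y)$) than the paper's terser argument.
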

\begin{proof}
Let $x,y,z \in X$ be given. By definition, the functor $\De_z \colon \G H(x,y) \to \G H(x,z) \ot \G H(z,y)$ is fully faithful so we only need to argue that $\De_z$ is essentially surjective. This does in fact hold since any two objects $a \ot b$ and $a'\ot b'$ are isomorphic in $\G H(x,z) \ot \G H(z,y)$ as can be seen by applying Lemma \ref{l:connected}.
\end{proof}

Using the above lemma, for each $x,y,z \in X$, we may choose a functor $\ast \colon \G H(x,z) \ot \G H(z,y) \to \G H(x,y)$ together with natural isomorphisms $\xi \colon \ast \De_z \to \T{id}_{\G H(x,y)}$ and $\eta \colon \De_z \ast \to \T{id}_{\G H(x,z) \ot \G H(z,y)}$. By construction we have that $\ast$ is linear in the sense that the induced map 
\[
\ast \colon \T{Mor}\big( a \ot b, a' \ot b' \big) \to \T{Mor}\big(\ast(a \ot b), \ast(a' \ot b') \big)
\]
is linear at the level of the underlying complex lines. It need however not be the case that $\ast$ preserves the degree.

We now define our ``twisted'' weak $2$-category $\G H(2)$:

\begin{enumerate}
\item The objects in $\G H(2)$ are the elements in the set $X$; 
\item The $1$-morphisms from an object $x$ to an object $y$ are the objects in the category $\G H(x,y)$ and the composition of $1$-morphisms $a \in \G H(x,z)$ and $b \in \G H(z,y)$ is defined by $b\ci_1 a := \ast(a \ot b)$;
\item The $2$-morphisms from a $1$-morphism $a \in \G H(x,y)$ to a $1$-morphism $a' \in \G H(x,y)$ are the vectors in the $\zz$-graded complex line $\T{Mor}(a,a')$. The vertical composition of $2$-morphisms $\al \colon a \to a'$ and $\be \colon a' \to a''$ with $a,a',a'' \in \G H(x,y)$ is given by the composition in the category $\G H(x,y)$. The horizontal composition of $2$-morphisms $\al \colon a \to a'$ and $\be \colon b \to b'$ with $a,a' \in \G H(x,z)$ and $b,b' \in \G H(z,y)$ is defined by $\be \ci_1 \al := \ast(\al \ot \be) \colon b \ci_1 a \to b' \ci_1 a'$.
\end{enumerate}

The extra adjective ``twisted'' primarily signifies that the assignment
\[
\ci_1 \colon \G H(z,y) \ti \G H(x,z) \to \G H(x,y)
\]
defined on objects by $(b,a) \mapsto b \ci_1 a$ and on morphisms by $(\be,\al) \mapsto \be \ci_1 \al$ is \emph{not} a functor because of a sign defect. Indeed, for $2$-morphisms $\be \colon b \to b'$ and $\be' \colon b' \to b''$ in $\G H(z,y)$ and $2$-morphisms $\al \colon a \to a'$ and $\al' \colon a' \to a''$ in $\G H(x,y)$ we instead have that
\[
\begin{split}
(\be' \ci_2 \be) \ci_1 (\al' \ci_2 \al) 
& = \ast( (\al' \ci_2 \al) \ot (\be' \ci_2 \be) )
= \ast( (\al' \ot \be') \ci_2 (\al \ot \be) ) \cd (-1)^{\ep(\be) \cd \ep(\al')} \\
& = (\be' \ci_1 \al') \ci_2 (\be \ci_1 \al) \cd (-1)^{\ep(\be) \cd \ep(\al')} .
\end{split}
\]
One may define unitors and associators for our $2$-category theoretic data but these operations are only satisfying twisted versions of the usual naturality, pentagon and triangle identities, meaning that these identities are only satisfied up to specified signs. For objects $a \in \G H(x,z)$, $b \in \G H(z,w)$ and $c \in \G H(w,y)$ one may for example define the associator
\[
A_{a,b,c} \colon c \ci_1 (b \ci_1 a) \to (c \ci_1 b) \ci_1 a 
\]
as the unique isomorphism making the diagram here below commute:
\[
\xymatrix{
(\De_z \ot \T{id} ) \De_w( c \ci_1 (b \ci_1 a) ) \ar[dd]^{(\De_z \ot \T{id})\De_w(A_{a,b,c})} \ar[rrr]^>>>>>>>>>>>>>>>{(\De_z \ot \T{id})(\eta_{ (b \ci_1 a) \ot c})} 
&&& \De_z(b \ci_1 a) \ot c \ar[rd]^{\, \, \, \eta_{a \ot b} \ot \T{id}_c} & \\
&&&& a \ot b \ot c \\
(\T{id} \ot \De_w) \De_z( (c \ci_1 b) \ci_1 a ) \ar[rrr]^>>>>>>>>>>>>>>>{(\T{id} \ot \De_w)(\eta_{ a \ot (c \ci_1 b)})} &&& a \ot \De_w(c \ci_1 b)
\ar[ru]_{\, \, \, \T{id}_a \ot \eta_{b \ot c}} 
}
\]
Notice in this respect how the coassociativity of our coproduct functors is applied in order to make sense of the above diagram. 

We are not currently aware of a standardised treatment of weak $2$-categories which are twisted in the above sense and believe that this is a subject for further investigations. In the present text we work in the context of coproduct categories since these appear naturally in our example driven approach. Moreover, our coproduct categories arise without making any auxiliary choices and as a consequence all our identities are strict instead of the usual ``up to natural isomorphisms'' familiar from a $2$-category theoretic framework.

\subsection{$3$-cocycles}
Suppose that $(\G H,\De,X)$ is a coproduct category and that $G \to \T{Aut}(\G H)$ is a group action. We shall now see how to construct a group $3$-cocycle from this data. Because of our systematic use of graded tensor products, this group $3$-cocycle takes values in the quotient group $\cc^*/\{ \pm 1\}$. 

\begin{definition}\label{d:3cocyc}
Choose an element $x \in X$.

For each $g \in G$, choose an object $a_g \in \mathfrak{H}(x,gx)$ and for each pair of elements $g,h \in G$, choose an isomorphism
\[
\be_{g,h} \colon \De_{gx}( a_{gh} ) \to a_g \ot g(a_h) \, .
\]

For each triple of elements $g,h,k \in G$, define the automorphism
\[
\ga(g,h,k) \colon a_g \ot g(a_h) \ot (gh)(a_k) \to a_g \ot g(a_h) \ot (gh)(a_k) \, ,
\]
in the category $\G H(x,gx) \ot \G H(gx, gh x) \ot \G H(ghx, ghk x)$, such that the diagram here below commutes:
\[
\xymatrix{
& \De_{gx}( a_{gh}) \ot (gh)(a_k) \ar[rrr]^{\be_{g,h} \ot \T{id}} &&& a_g \ot g(a_h) \ot (gh)(a_k) \ar[dd]^{\ga(g,h,k)} \\
(\De_{gx} \ot \T{id}) \De_{ghx}(a_{ghk}) \ar[ru]^{(\De_{gx} \ot \T{id})(\be_{gh,k}) \q } \ar[rd]_{(\T{id} \ot \De_{ghx})(\be_{g,hk}) \q } &&&& \\ 
& a_g \ot \De_{ghx}( g (a_{hk}) ) \ar[rrr]^{\T{id} \ot (g \ot g)(\be_{h,k})} &&& a_g \ot g(a_h) \ot (gh)(a_k) 
}
\]
We define the \emph{group $3$-cochain} $c \colon \zz[G^3] \to \cc^*$ by
\[
c(g,h,k) \cd \T{id}_{a_g \ot g(a_h) \ot (gh)(a_k)} := \ga(g,h,k) \q g,h,k \in G \, .
\]
\end{definition}

\begin{prop}\label{p:cocycle3}
The group $3$-cochain $c \colon \zz[G^3] \to \cc^*$ satisfies the following twisted group $3$-cocycle relation:
\begin{equation}\label{eq:cocycle}
(-1)^{\ep(\be_{g,h}) \ep(\be_{k,l})} \cd c(gh,k,l) \cd c(g,h,kl) = c(h,k,l) \cd c(g,hk,l) \cd c(g,h,k) 
\end{equation}
for all $g,h,k,l \in G$. 
\end{prop}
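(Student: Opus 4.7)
The plan is a pentagon chase involving the five parenthesizations of $ghkl$. For each binary-tree parenthesization, the composition of three $\be$-isomorphisms (with appropriate coproducts and group actions inserted) yields an isomorphism from the fully expanded iterated coproduct $\mathcal{X} := (\De_{gx} \ot \De_{ghkx}) \De_{ghx}(a_{ghkl})$ (which is well-defined by coassociativity of $\De$) to $T := a_g \ot g(a_h) \ot (gh)(a_k) \ot (ghk)(a_l)$. Label these isomorphisms $\sigma_1, \sigma_2, \sigma_3, \sigma_4, \sigma_5$ for the parenthesizations $((gh)k)l$, $(g(hk))l$, $g((hk)l)$, $g(h(kl))$, $(gh)(kl)$ respectively.

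Next I would observe that adjacent parenthesizations in the pentagon differ by a single rotation of brackets involving three consecutive factors, and each such comparison is precisely an instance of the triangle of Definition \ref{d:3cocyc}. Four of the five comparisons yield directly
\[
\sigma_2 = c(g,h,k)\, \sigma_1, \q \sigma_3 = c(g,hk,l)\, \sigma_2, \q \sigma_4 = c(h,k,l)\, \sigma_3, \q \sigma_4 = c(g,h,kl)\, \sigma_5^{(\mathrm{L})}, \q \sigma_5^{(\mathrm{R})} = c(gh,k,l)\, \sigma_1,
\]
where $\sigma_5^{(\mathrm{L})}$ (resp.\ $\sigma_5^{(\mathrm{R})}$) denotes the realisation of the parallel action of $\be_{g,h}$ on the first tensor factor and $(gh)(\be_{k,l})$ on the second factor as ``left factor first, then right'' (resp.\ ``right first, then left''). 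The third identity uses that $c$ takes values in $\cc^*$ and that the $G$-action on the category is linear, so the $g$-translate of the $(h,k,l)$-triangle scalar is still $c(h,k,l)$.

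The twist enters only when relating $\sigma_5^{(\mathrm{L})}$ and $\sigma_5^{(\mathrm{R})}$. Applying the composition rule $M((\al_1 \ot \be_1) \ot (\al_2 \ot \be_2)) = (-1)^{\ep(\be_1)\ep(\al_2)} M(\al_1 \ot \al_2) \ot M(\be_1 \ot \be_2)$ of the graded tensor product of categories from the paragraph below Definition \ref{d:gralines} yields at once
\[
\sigma_5^{(\mathrm{R})} = (-1)^{\ep(\be_{g,h})\ep(\be_{k,l})}\, \sigma_5^{(\mathrm{L})}.
\]
Equating the two expressions for $\sigma_4$ (one by composing along $\sigma_1 \to \sigma_2 \to \sigma_3 \to \sigma_4$, the other along $\sigma_1 \to \sigma_5^{(\mathrm{R})} \to \sigma_5^{(\mathrm{L})} \to \sigma_4$) and cancelling the isomorphism $\sigma_1$ produces exactly the stated twisted cocycle identity.

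The main obstacle is careful bookkeeping of the Koszul sign: one must verify that the four ``nested'' edges of the pentagon are Koszul-trivial (because in each of them one of the two morphisms being tensored is the identity, so at least one of $\ep(\be_1), \ep(\al_2)$ vanishes in the composition rule above), while the single non-trivial sign appears exactly once on the $\sigma_5^{(\mathrm{L})} \leftrightarrow \sigma_5^{(\mathrm{R})}$ edge with the right exponent. Coassociativity of the coproducts and $G$-equivariance of the $\be$-isomorphisms then guarantee that the intermediate objects match across each pentagon edge, so that each edge reduces cleanly to a single application of Definition \ref{d:3cocyc}.
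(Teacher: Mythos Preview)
Your argument is correct and is essentially the same pentagon computation the paper carries out, but packaged more cleanly. The paper chooses to write out, for each of the five instances of $\gamma(\cdot,\cdot,\cdot)$, the corresponding automorphism of $a_g \ot g(a_h) \ot (gh)(a_k) \ot (ghk)(a_l)$ explicitly (Equations~\eqref{eq:leftcocyc} and~\eqref{eq:rightcocyc}) and then verifies by inspection that the two long compositions agree up to $(-1)^{\ep(\be_{g,h})\ep(\be_{k,l})}$. Your version replaces this explicit expansion by five isomorphisms $\sigma_i \colon \mathcal{X} \to T$ and compares adjacent ones along pentagon edges; this is the same calculation, just with the cancellations organised in advance rather than performed at the end.

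Two small points to tighten. First, ``$G$-equivariance of the $\be$-isomorphisms'' is not quite the right phrase: the $\be_{g,h}$ are arbitrary choices and carry no equivariance. What you use at the $(h,k,l)$-edge is rather that each $\rho(g)$ is a \emph{linear} automorphism of the coproduct category, so it preserves scalars and commutes with the coproducts; this is what makes the $g$-translate of $\gamma(h,k,l)$ still equal to $c(h,k,l)\cdot\T{id}$. Second, at the $(g,hk,l)$-edge the $\gamma(g,hk,l)$-triangle lives in a three-fold tensor and must be transported to the four-fold tensor via $\T{id}\ot\De_{ghx}\ot\T{id}$; your ``Koszul-trivial'' claim for that edge therefore also uses that $\De_{ghx}$ is a degree-preserving linear functor, so that $\T{id}\ot\De_{ghx}\ot\T{id}$ respects the graded composition law. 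Both points are routine, but worth stating since the proof is otherwise only a sketch.
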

\begin{proof}
Let $g,h,k,l \in G$ be given.

We start by moving each of the automorphisms $\ga(gh,k,l)$, $\ga(g,h,kl)$, $\ga(h,k,l)$, $\ga(g,hk,l)$ and $\ga(g,h,k)$ so that they all become automorphisms of the object $a_g \ot g(a_h) \ot (gh)(a_k) \ot (ghk)(a_l)$ in the category 
\[
\G H(x,gx) \ot \G H(gx,ghx) \ot \G H(ghx, ghkx) \ot \G H(ghkx,ghklx) \, .
\]
We notice that this operation does not change the associated elements $c(gh,k,l)$, $c(g,h,kl)$, $c(h,k,l)$, $c(g,hk,l)$ and $c(g,h,k)$ in $\cc^*$. Indeed, for the left hand side of \eqref{eq:cocycle}, we consider the following two automorphisms:
\[
\begin{split}
& (\T{id} \ot \T{id} \ot (gh \ot gh)(\be_{k,l})) \ci (\T{id} \ot \T{id} \ot \De_{ghk x})(\ga(g,h,kl)) \ci (\T{id} \ot \T{id} \ot (gh \ot gh)(\be_{k,l}^{-1})) \\
& \q = (\T{id} \ot \T{id} \ot (gh \ot gh)(\be_{k,l})) \ci
(\T{id} \ot \T{id} \ot \De_{ghkx})\big( (\T{id} \ot (g \ot g)(\be_{h,kl})) \ci (\T{id} \ot \De_{ghx})(\be_{g,hkl})\big) \\
& \qq \ci (\De_{gx} \ot \De_{ghkx})(\be_{gh,kl}^{-1}) \ci (\be_{g,h}^{-1} \ot \T{id} \ot \T{id}) \ci (\T{id} \ot \T{id} \ot (gh \ot gh)(\be_{k,l}^{-1})) \\
& \qqq \colon a_g \ot g(a_h) \ot (gh)(a_k) \ot (ghk)(a_l) \to a_g \ot g(a_h) \ot (gh)(a_k) \ot (ghk)(a_l)
\end{split}
\]
and
\[
\begin{split}
& ( \be_{g,h} \ot \T{id} \ot \T{id} ) \ci (\De_{gx} \ot \T{id} \ot \T{id})( \ga(gh,k,l)) \ci (\be_{g,h}^{-1} \ot \T{id} \ot \T{id}) \\
& \q = ( \be_{g,h} \ot \T{id} \ot \T{id} )
\ci (\T{id} \ot \T{id} \ot (gh \ot gh)(\be_{k,l})) \ci (\De_{gx} \ot \De_{ghk x})(\be_{gh,kl}) \\
& \qq \ci (\De_{gx} \ot \T{id} \ot \T{id})\big( (\De_{ghx} \ot \T{id})(\be_{ghk,l}^{-1}) \ci (\be_{gh,k}^{-1} \ot \T{id})\big)
\ci ( \be_{g,h}^{-1} \ot \T{id} \ot \T{id} ) \\
& \qqq \colon a_g \ot g(a_h) \ot (gh)(a_k) \ot (ghk)(a_l) \to a_g \ot g(a_h) \ot (gh)(a_k) \ot (ghk)(a_l) \, .
\end{split}
\]
We record that their composition is given by the automorphism
\begin{equation}\label{eq:leftcocyc}
\begin{split}
& (\T{id} \ot \T{id} \ot (gh \ot gh)(\be_{k,l})) \ci
(\T{id} \ot \T{id} \ot \De_{ghkx})\big( (\T{id} \ot (g \ot g)(\be_{h,kl})) \ci (\T{id} \ot \De_{ghx})(\be_{g,hkl})\big) \\
& \q \ci (\De_{gx} \ot \T{id} \ot \T{id})\big( (\De_{ghx} \ot \T{id})(\be_{ghk,l}^{-1}) \ci (\be_{gh,k}^{-1} \ot \T{id})\big)
\ci ( \be_{g,h}^{-1} \ot \T{id} \ot \T{id} ) \cd (-1)^{\ep(\be_{g,h}) \cd \ep(\be_{k,l})} \, ,
\end{split}
\end{equation}
where the extra sign $(-1)^{\ep(\be_{g,h}) \cd \ep(\be_{k,l})}$ comes from the definition of the composition in the graded tensor product of categories
\[
\G H(x,gx) \ot \G H(gx,ghx) \ot \G H(ghx,ghk x) \ot \G H(ghkx,ghklx) \, .
\]

For the right hand side of \eqref{eq:cocycle}, we consider the following three automorphisms:
\[
\begin{split}
& ( \T{id} \ot g \ot g \ot g)(\T{id} \ot \ga(h,k,l)) \\
& \q = (\T{id} \ot \T{id} \ot (gh \ot gh)(\be_{k,l})) \ci
(\T{id} \ot \T{id} \ot \De_{ghkx})(\T{id} \ot (g \ot g)(\be_{h,kl})) \\
& \qq \ci ( \T{id} \ot \De_{ghx} \ot \T{id})( \T{id} \ot (g \ot g)(\be_{hk,l}^{-1}))
\ci ( \T{id} \ot (g \ot g)(\be_{h,k}^{-1}) \ot \T{id}) \\
& \qqq \colon a_g \ot g(a_h) \ot (gh)(a_k) \ot (ghk)(a_l) \to a_g \ot g(a_h) \ot (gh)(a_k) \ot (ghk)(a_l)
\end{split}
\]
and
\[
\begin{split}
& ( \T{id} \ot (g \ot g)(\be_{h,k}) \ot \T{id}) \ci (\T{id} \ot \De_{ghx} \ot \T{id})(\ga(g,hk,l)) \ci
( \T{id} \ot (g \ot g)(\be_{h,k}^{-1}) \ot \T{id}) \\
& \q = ( \T{id} \ot (g \ot g)(\be_{h,k}) \ot \T{id}) \ci
( \T{id} \ot \De_{ghx} \ot \T{id})( \T{id} \ot (g \ot g)(\be_{hk,l})) \\
& \qq \ci (\T{id} \ot \T{id} \ot \De_{ghkx})(\T{id} \ot \De_{ghx})(\be_{g,hkl}) \\
& \qqq \ci (\De_{gx} \ot \T{id} \ot \T{id}) (\De_{ghx} \ot \T{id})(\be_{ghk,l}^{-1}) \\
& \qqqq \ci ( \T{id} \ot \De_{ghx} \ot \T{id} )( \be_{g,hk}^{-1} \ot \T{id})
\ci ( \T{id} \ot (g \ot g)(\be_{h,k}^{-1}) \ot \T{id}) \\
& \qqq \colon a_g \ot g(a_h) \ot (gh)(a_k) \ot (ghk)(a_l) \to a_g \ot g(a_h) \ot (gh)(a_k) \ot (ghk)(a_l)
\end{split}
\]
and
\[
\begin{split}
& \ga(g,h,k) \ot \T{id} \\
& \q = ( \T{id} \ot (g \ot g)(\be_{h,k}) \ot \T{id}) \ci ( \T{id} \ot \De_{ghx} \ot \T{id} )( \be_{g,hk} \ot \T{id}) \\
& \qq \ci (\De_{gx} \ot \T{id} \ot \T{id})(\be_{gh,k}^{-1} \ot \T{id})
\ci (\be_{g,h}^{-1} \ot \T{id} \ot \T{id}) \\
& \qqq \colon a_g \ot g(a_h) \ot (gh)(a_k) \ot (ghk)(a_l) \to a_g \ot g(a_h) \ot (gh)(a_k) \ot (ghk)(a_l) \, .
\end{split}
\]
We record that their composition is given by
\begin{equation}\label{eq:rightcocyc}
\begin{split}
& (\T{id} \ot \T{id} \ot (gh \ot gh)(\be_{k,l})) \ci (\T{id} \ot \T{id} \ot \De_{ghkx})(\T{id} \ot (g \ot g)(\be_{h,kl})) \\
& \q \ci (\T{id} \ot \T{id} \ot \De_{ghkx})(\T{id} \ot \De_{ghx})(\be_{g,hkl}) \ci (\De_{gx} \ot \T{id} \ot \T{id}) (\De_{ghx} \ot \T{id})(\be_{ghk,l}^{-1}) \\
& \qq \ci (\De_{gx} \ot \T{id} \ot \T{id})(\be_{gh,k}^{-1} \ot \T{id})
\ci (\be_{g,h}^{-1} \ot \T{id} \ot \T{id}) \, .
\end{split}
\end{equation}
Since the two automorphisms of $a_g \ot g(a_h) \ot (gh)(a_k) \ot (ghk)(a_l)$ given in \eqref{eq:leftcocyc} and \eqref{eq:rightcocyc} agree up to the sign $(-1)^{\ep(\be_{g,h}) \cd \ep(\be_{k,l})}$, we have proved the proposition.
\end{proof}

As a consequence of the above proposition, we obtain a group $3$-cocycle after passing to the quotient of $\cc^*$ by the subgroup $\{\pm 1\} \su \cc^*$. We denote the associated class in group cohomology by
\[
[c] \in H^3\big(G, \cc^*/\{\pm 1\} \big) .
\]

For any finite number of elements $x_1,x_2,\ldots,x_n \in X$, we introduce the notation
\[
\begin{split}
\De_{x_1,x_2,\ldots,x_n} & := ( \De_{x_1} \ot \T{id}^{\ot (n-1)} ) \ci (\De_{x_2} \ot \T{id}^{\ot (n-2)}) \cilci \De_{x_n} \\
& \q \colon \G H(x,y) \to \G H(x,x_1) \ot \G H(x_1,x_2) \ot \ldots \ot \G H(x_n,y) \q x,y \in X \, .
\end{split}
\]

\begin{lemma}\label{l:classindep}
The class $[c]\in H^3\big(G,\cc^*/\{\pm 1\} \big)$ is independent of the choices made in Definition \ref{d:3cocyc}.
\end{lemma}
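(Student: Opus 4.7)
The plan is to verify invariance separately under each of the three levels of choice made in Definition \ref{d:3cocyc}: (i) the isomorphisms $\be_{g,h}$ for fixed objects $a_g$ and basepoint $x$; (ii) the objects $a_g$ for fixed $x$; and (iii) the basepoint $x$ itself. Each step shows that the resulting $3$-cocycle changes by a coboundary, possibly up to the subgroup $\{\pm 1\} \su \cc^*$.

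For step (i), I observe that since $\De_{gx}(a_{gh})$ and $a_g \ot g(a_h)$ are fixed objects in the same category, the set of isomorphisms between them of any fixed degree forms a $\cc^*$-torsor. Hence any alternative choice $\be'_{g,h}$ equals $\la_{g,h} \cd \be_{g,h}$ for some $\la_{g,h} \in \cc^*$. Tracking this scalar through the defining equation for $\ga$ yields
\[
c'(g,h,k) = \la_{h,k} \cd \la_{g,hk} \cd \la_{g,h}^{-1} \cd \la_{gh,k}^{-1} \cd c(g,h,k),
\]
which exhibits $c'/c$ as the coboundary of the group $2$-cochain $\la \colon \zz[G^2] \to \cc^*$.

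For step (ii), given alternative objects $a'_g \in \G H(x, gx)$, Lemma \ref{l:connected} provides isomorphisms $\phi_g \colon a_g \to a'_g$. I would transport the chosen isomorphisms via the formula $\be'_{g,h} := (\phi_g \ot g(\phi_h)) \ci \be_{g,h} \ci \De_{gx}(\phi_{gh})^{-1}$, using that $g$ is a morphism of coproduct categories so that $\De_{ghx} \ci g = (g \ot g) \ci \De_{hx}$. A short diagram chase then shows that $\ga'(g,h,k)$ equals $\ga(g,h,k)$ conjugated by the isomorphism $\phi_g \ot g(\phi_h) \ot (gh)(\phi_k)$; since $\ga(g,h,k)$ is a scalar multiple of the identity, the conjugation preserves the scalar, so $c'(g,h,k) = c(g,h,k)$ exactly. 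Combined with step (i), any alternative pair $(a'_g, \be'_{g,h})$ at basepoint $x$ produces a cohomologous $3$-cocycle.

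For step (iii), given an alternative basepoint $x' \in X$, I fix an object $\om \in \G H(x,x')$, which exists by Lemma \ref{l:connected}. Using that the coproduct functors are equivalences of categories (as observed in the preceding discussion of the associated twisted weak $2$-category), I would construct, for each $g \in G$, an object $\tilde a_g \in \G H(x, gx)$ equipped with a canonical isomorphism between $\De_{x', gx'}(\tilde a_g)$ and $\om \ot a'_g \ot \tau_g$, where $\tau_g \in \G H(gx', gx)$ is a quasi-inverse to $g(\om) \in \G H(gx, gx')$. Coassociativity of $\De$ and the compatibility of the $G$-action with coproducts then induce a system $\tilde \be_{g,h}$ of isomorphisms at basepoint $x$, and a diagram chase shows that the associated cocycle $\tilde c$ agrees with $c'$ modulo $\{\pm 1\}$; the quotient by $\{\pm 1\}$ is forced by the signs that appear when graded $2$-morphisms are passed past each other in the tensor product. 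Combining with steps (i) and (ii) completes the argument. The principal obstacle is this last step: constructing the transport across basepoints and verifying its compatibility with the coproduct structure requires careful bookkeeping of the sign defects coming from the graded tensor products, and it is precisely these signs which necessitate working modulo $\{\pm 1\}$.
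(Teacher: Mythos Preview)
Your three-step decomposition is a natural strategy and differs from the paper's approach, which handles all choices at once via a single family of intertwiners $\sigma_g \colon \zeta \ot \De_{gx}(a'_g) \to \De_{x'}(a_g) \ot g(\zeta)$ (for a fixed $\zeta \in \G H(x,x')$) and an explicit $2$-cochain $d$ built from them. Step (i) is correct.

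Step (ii), however, contains an error. Your claim that $\ga'(g,h,k)$ equals $\ga(g,h,k)$ conjugated by $\Psi := \phi_g \ot g(\phi_h) \ot (gh)(\phi_k)$ is false in general, because the sign twist in the composition law of the graded tensor product intervenes. Concretely, when you expand $(\be'_{g,h} \ot \T{id}) \ci (\De_{gx}\ot\T{id})(\be'_{gh,k})$ in terms of the unprimed data, you must commute $(gh)(\phi_k)$ (on the third factor) past $\be_{g,h}$, $\phi_g \ot g(\phi_h)$ and $\De_{gx}(\phi_{gh}^{-1})$ (on the first two), and each such exchange contributes a sign. A parallel sign arises when commuting $\phi_g$ past $(g \ot g)(\be_{h,k})$ on the other side of the defining diagram. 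The outcome is
\[
c'(g,h,k) \; = \; (-1)^{\ep(\phi_k)\,\ep(\be'_{g,h}) + \ep(\phi_g)\,\ep(\be_{h,k})} \cd c(g,h,k) \, ,
\]
which is exactly the shape of sign appearing in the paper's unified argument. So $c' = c$ holds only modulo $\{\pm 1\}$, not exactly; the sign defects you anticipate in step (iii) are already present in step (ii).

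Step (iii) is where the substance lies, and your sketch is not yet a proof. Inverting the coproduct functors (choosing quasi-inverses and $\tau_g$) introduces auxiliary coherence data whose compatibility with the $G$-action and with coassociativity must be checked, and you have not indicated how to construct the system $\tilde\be_{g,h}$ or why the resulting cocycles agree. The paper avoids inverting $\De$ altogether: it works with the iterated coproducts $\De_{x',gx,gx',\ldots}$ applied to $a_{g_1\cdots g_n}$, builds composite intertwiners $\sigma_{g_1,\ldots,g_n}$ relating the primed and unprimed data in these larger tensor products, defines the $2$-cochain $d(g,h)$ as an explicit automorphism there, and then verifies the coboundary relation $\partial d = c/c'$ (up to signs in $\{\pm 1\}$) by representing each of $d(h,k)$, $d(gh,k)$, $d(g,hk)$, $d(g,h)$, $c(g,h,k)$, $c'(g,h,k)$ as automorphisms of a common object and comparing directly. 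This is more laborious than your outline suggests, but it is uniform across all three levels of choice and never requires inverting a coproduct.
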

\begin{proof}
Let $x' \in X$ be an alternative element, let $a_g' \in \G H(x', gx')$, $g \in G$, be alternative objects and let $\be_{g,h}' \colon \De_{g x'}(a_{gh}') \to a_g' \ot g(a_h')$, $g,h \in G$ be alternative isomorphisms. We let $c' \colon \zz[G^3] \to \cc^*$ denote the alternative $3$-cochain and we will show that $c'$ agrees with $c \colon \zz[G^3] \to \cc^*$ up to signs and a coboundary.

We start by choosing a set of intertwiners in the following way: Choose an object $\ze \in \G H(x,x')$. For each $g \in G$, $\ze \ot \De_{gx}(a_g')$ and $\De_{x'}(a_g) \ot g(\ze)$ are then both objects in the category $\G H(x,x') \ot \G H(x',gx) \ot \G H(gx,gx')$ and we may thus choose an isomorphism
\[
\si_g \colon \ze \ot \De_{gx}(a_g') \to \De_{x'}(a_g) \ot g(\ze) \, .
\]
For any finite number of group elements $g_1,g_2,\ldots,g_n \in G$, we then have the intertwining isomorphism
\[
\begin{split}
\si_{g_1,g_2,\ldots,g_n}
& \colon \ze \ot \De_{g_1 x}(a_{g_1}') \ot g_1^{\ot 2} \De_{g_2 x} (a_{g_2}') \ot \ldots \ot (g_1 \clc g_{n-1} )^{\ot 2} \De_{g_n x} (a_{g_n}') \\
& \q \to \De_{x'}(a_{g_1}) \ot g_1^{\ot 2} \De_{x'}(a_{g_2}) \olo (g_1 \clc g_{n-1})^{\ot 2}\De_{x'}(a_{g_n}) \ot (g_1 g_2 \clc g_n)(\ze) \, ,
\end{split}
\]
defined as the composition
\[
\begin{split}
\si_{g_1,g_2,\ldots,g_n} & := \big( \T{id}^{\ot 2(n-1)} \ot (g_1 g_2 \clc g_{n-1})^{\ot 3} ( \si_{g_n}) \big) \\
& \qq \cilci \big(\T{id}^{\ot 2} \ot g_1^{\ot 3} (\si_{g_2}) \ot \T{id}^{\ot 2(n-2)}\big) \ci \big(\si_{g_1} \ot \T{id}^{\ot 2(n-1)}\big) \, .
\end{split}
\]

For each $g,h \in G$, we define the automorphism
\[
\de(g,h) \colon \De_{x',gx,gx'}(a_{gh}) \ot (gh)(\ze) \to \De_{x',gx,gx'}(a_{gh}) \ot (gh)(\ze)
\]
as the composition of isomorphisms:
\[
\begin{split}
\De_{x',gx,gx'}(a_{gh}) \ot (gh)(\ze) & \to^{(\De_{x'} \ot \De_{gx'})(\be_{g,h}) \ot \T{id}}
\De_{x'}(a_g) \ot \De_{gx'} g( a_h) \ot (gh)(\ze) \\
& \to^{\si_{g,h}^{-1}}
\ze \ot \De_{gx}(a_g') \ot g^{\ot 2}\De_{hx}(a_h') \\
& \to^{\T{id} \ot (\De_{gx} \ot \De_{ghx})(\be_{g,h}')^{-1}}
\ze \ot \De_{gx,gx',ghx}(a_{gh}') \\
& \to^{(\T{id} \ot \De_{gx,gx'} \ot \T{id})(\si_{gh})}
\De_{x',gx,gx'}(a_{gh}) \ot (gh)(\ze) \, .
\end{split}
\]
We identify $\de(g,h)$ with the number $d(g,h) \in \cc^*$ and thereby obtain a $2$-cochain
\[
d \colon \zz[G^2] \to \cc^* \, .
\]
We claim that the coboundary of $d$ agrees with the quotient of $c$ and $c'$, thus that
\[
d(h,k) \cd d(gh,k)^{-1} \cd d(g,hk) \cd d(g,h)^{-1} = c(g,h,k) \cd c'(g,h,k)^{-1} \, ,
\]
for all $g,h,k \in G$.

Thus, let $g,h,k \in G$ be given. We are going to compare automorphisms of the object
\[
\De_{x'}(a_g) \ot \De_{gx'} g(a_h) \ot \De_{gh x'} (gh)(a_k) \ot (ghk)(\ze) \, .
\]
We thus start by representing the numbers $c(g,h,k)$ and $c'(g,h,k)^{-1}$ as automorphisms of this object. The number $c(g,h,k) \in \cc^*$ can be represented by the automorphism
\[
\begin{split}
& (\De_{x'} \ot \De_{gx'} \ot \De_{ghx'})\big(\ga(g,h,k) \big) \ot \T{id} \\
& \q = \big( \T{id}^{\ot 2} \ot g^{\ot 4} (\De_{x'} \ot \De_{hx'})(\be_{h,k}) \ot \T{id} \big)
\ci \big( (\De_{x'} \ot \De_{gx',ghx,ghx'})(\be_{g,hk}) \ot \T{id} \big) \\
& \qq \ci \big( (\De_{x',gx,gx'} \ot \De_{ghx'} )(\be_{gh,k}^{-1}) \ot \T{id}\big)
\ci \big( (\De_{x'} \ot \De_{gx'})(\be_{g,h}^{-1}) \ot \T{id}^{\ot 3} \big) \, ,
\end{split}
\]
whereas the number $c'(g,h,k)^{-1} \in \cc^*$ can be represented by the automorphism
\[
\begin{split}
& \si_{g,h,k} \ci \big( \T{id} \ot (\De_{gx} \ot \De_{ghx} \ot \De_{ghk x})(\ga'(g,h,k)^{-1}) \big) \ci \si_{g,h,k}^{-1} \\
& \q = \si_{g,h,k} \ci
\big( \T{id} \ot (\De_{gx} \ot \De_{ghx})(\be_{g,h}') \ot \T{id}^{\ot 2} \big)
\ci \big( \T{id} \ot (\De_{gx,gx',ghx} \ot \De_{ghkx} )(\be_{gh,k}') \big) \\
& \qq \ci \big( \T{id} \ot (\De_{gx} \ot \De_{ghx,ghx',ghkx})(\be_{g,hk}')^{-1} \big)
\ci \big( \T{id}^{\ot 3} \ot g^{\ot 4} (\De_{hx} \ot \De_{hkx})(\be_{h,k}')^{-1} \big)
\ci \si_{g,h,k}^{-1} \, .
\end{split}
\]

In order to compare the quotient $\frac{ c(g,h,k) }{c'(g,h,k)}$ with the quotient $\frac{d(h,k) \cd d(g,hk)}{d(gh,k) \cd d(g,h)}$, we now represent the numbers $d(g,h)^{-1}$, $d(gh,k)^{-1}$ and $d(g,hk)$, $d(h,k)$ as automorphisms of convenient objects. We represent $d(g,h)^{-1} \in \cc^*$ as the automorphism of the object
\[
( \De_{x', gx, gx'} \ot \De_{ghx'} )\big(a_{gh} \ot (gh)(a_k) \big) \ot (ghk)(\ze) \, ,
\]
given by
\[
\begin{split}
& \big( \T{id}^{\ot 4} \ot (gh)^{\ot 3} (\si_k) \big) \ci (\de(g,h)^{-1} \ot \T{id}^{\ot 2} ) \ci \big( \T{id}^{\ot 4} \ot (gh)^{\ot 3} (\si_k^{-1}) \big) \\
& \q = (-1)^{\ep(\si_k) \cd \ep(\be_{g,h})} \cd \big( (\De_{x'} \ot \De_{g x'})(\be_{g,h}^{-1}) \ot \T{id}^{\ot 3}\big) 
\ci \big( \T{id}^{\ot 4} \ot (gh)^{\ot 3} (\si_k) \big) \ci ( \si_{g,h} \ot \T{id}^{\ot 2} ) \\
& \qq \ci \big( \T{id} \ot (\De_{gx} \ot \De_{ghx})(\be'_{g,h}) \ot \T{id}^{\ot 2} \big)
\ci \big( ( \T{id} \ot \De_{gx,gx'} \ot \T{id})(\si_{gh}^{-1}) \ot \T{id}^{\ot 2} \big) \\
& \qqq \ci \big( \T{id}^{\ot 4} \ot (gh)^{\ot 3} (\si_k^{-1}) \big) \\
& \q = (-1)^{\ep(\si_k) \cd \ep(\be_{g,h})} \cd \big( (\De_{x'} \ot \De_{g x'})(\be_{g,h}^{-1}) \ot \T{id}^{\ot 3}\big) 
\ci \si_{g,h,k} \\
& \qq \ci \big( \T{id} \ot (\De_{gx} \ot \De_{ghx})(\be'_{g,h}) \ot \T{id}^{\ot 2} \big)
\ci ( \T{id} \ot \De_{gx,gx'} \ot \T{id}^{\ot 3} )(\si_{gh,k}^{-1}) \, .
\end{split}
\]
We represent $d(gh,k)^{-1} \in \cc^*$ as the automorphism of the object
\[
\De_{x',gx,gx',ghx,ghx'}(a_{ghk}) \ot (ghk)(\ze) \, ,
\]
given by
\[
\begin{split}
& ( \T{id} \ot \De_{gx,gx'} \ot \T{id}^{\ot 3})( \de(gh,k)^{-1}) \\
& \q = \big( ( \De_{x', gx, gx'} \ot \De_{gh x'})(\be_{gh,k}^{-1}) \ot \T{id} \big)
\ci ( \T{id} \ot \De_{gx,gx'} \ot \T{id}^{\ot 3} )(\si_{gh,k}) \\
& \qq \ci \big( \T{id} \ot (\De_{gx,gx',ghx} \ot \De_{ghk x})( \be_{gh,k}') \big)
\ci ( \T{id} \ot \De_{gx,gx',ghx, ghx'} \ot \T{id}) (\si_{ghk}^{-1}) \, .
\end{split}
\]
Similarly, we represent the number $d(g,hk) \in \cc^*$ by the automorphism of the object
\[
\De_{x',gx,gx',ghx,ghx'}(a_{ghk}) \ot (ghk)(\ze) \, ,
\]
given by
\[
\begin{split}
& (\T{id}^{\ot 3} \ot \De_{ghx,ghx'} \ot \T{id})( \de(g,hk) ) \\
& \q = ( \T{id} \ot \De_{gx,gx',ghx, ghx'} \ot \T{id}) (\si_{ghk})
\ci \big( \T{id} \ot ( \De_{gx} \ot \De_{ghx, ghx',ghkx})(\be'_{g,hk})^{-1} \big) \\
& \qq \ci (\T{id}^{\ot 3} \ot \De_{ghx,ghx'} \ot \T{id} )(\si_{g,hk}^{-1})
\ci \big( ( \De_{x'} \ot \De_{gx', ghx, ghx'})(\be_{g,hk}) \ot \T{id} \big) \, .
\end{split}
\]
We finally represent the number $d(h,k) \in \cc^*$ by the automorphism of the object
\[
\De_{x'}(a_g) \ot \De_{gx', ghx, ghx'} g(a_{hk}) \ot (ghk)(\ze) \, ,
\]
determined by
\[
\begin{split}
& \T{id}^{\ot 2} \ot g^{\ot 5} \de(h,k)  \\
& \q = \big( \T{id}^{\ot 2} \ot ( \T{id} \ot \De_{ghx, gh x'} \ot \T{id}) g^{\ot 3} \si_{hk} \big)
\ci \big( \T{id}^{\ot 3} \ot (\De_{ghx} \ot \De_{ghk x})g^{\ot 2} (\be'_{h,k})^{-1} \big) \\
& \qq \ci \big( \T{id}^{\ot 2} \ot g^{\ot 5} \si_{h,k}^{-1} \big)
\ci \big( \T{id}^{\ot 2} \ot g^{\ot 4} (\De_{x'} \ot \De_{hx'})(\be_{h,k}) \ot \T{id} \big) \, .
\end{split}
\]

With these identifications, we may represent the product $d(gh,k)^{-1} \cd d(g,h)^{-1} \in \cc^*$ by the automorphism
\[
\begin{split}
& (-1)^{\ep(\si_k) \cd \ep(\be_{g,h})} \cd \big( ( \De_{x', gx, gx'} \ot \De_{gh x'})(\be_{gh,k}^{-1}) \ot \T{id} \big) 
\ci \big( (\De_{x'} \ot \De_{g x'})(\be_{g,h}^{-1}) \ot \T{id}^{\ot 3}\big) 
\ci \si_{g,h,k} \\
& \q \ci \big( \T{id} \ot (\De_{gx} \ot \De_{ghx})(\be'_{g,h}) \ot \T{id}^{\ot 2} \big)
\ci \big( \T{id} \ot (\De_{gx,gx',ghx} \ot \De_{ghk x})( \be_{gh,k}') \big) \\
& \qq \ci ( \T{id} \ot \De_{gx,gx',ghx, ghx'} \ot \T{id}) (\si_{ghk}^{-1})
\end{split}
\]
of the object 
\[
\De_{x',gx,gx',ghx,ghx'}(a_{ghk}) \ot (ghk)(\ze) \, .
\]
Furthermore, we may represent the product $d(g,hk) \cd d(h,k) \in \cc^*$ by the automorphism
\[
\begin{split}
& \big( ( \De_{x'} \ot \De_{gx', ghx, ghx'})(\be_{g,hk}) \ot \T{id} \big) 
\ci ( \T{id} \ot \De_{gx,gx',ghx, ghx'} \ot \T{id}) (\si_{ghk}) \\
& \q \ci \big( \T{id} \ot ( \De_{gx} \ot \De_{ghx, ghx',ghkx})(\be'_{g,hk})^{-1} \big) 
\ci ( \si_g^{-1} \ot \T{id}^{\ot 4} )
\ci \big( \T{id}^{\ot 3} \ot (\De_{ghx} \ot \De_{ghk x})g^{\ot 2} (\be'_{h,k})^{-1} \big) \\
& \qq \ci \big( \T{id}^{\ot 2} \ot g^{\ot 5} \si_{h,k}^{-1} \big)
\ci \big( \T{id}^{\ot 2} \ot g^{\ot 4} (\De_{x'} \ot \De_{hx'})(\be_{h,k}) \ot \T{id} \big) \\
& \q = (-1)^{\ep(\si_g) \cd \ep(\be_{h,k}')} \cd \big( ( \De_{x'} \ot \De_{gx', ghx, ghx'})(\be_{g,hk}) \ot \T{id} \big) 
\ci ( \T{id} \ot \De_{gx,gx',ghx, ghx'} \ot \T{id}) (\si_{ghk}) \\
& \qq \ci \big( \T{id} \ot ( \De_{gx} \ot \De_{ghx, ghx',ghkx})(\be'_{g,hk})^{-1} \big)
\ci \big( \T{id}^{\ot 3} \ot (\De_{ghx} \ot \De_{ghk x})g^{\ot 2} (\be'_{h,k})^{-1} \big) \\
& \qqq \ci \si_{g,h,k}^{-1} 
\ci \big( \T{id}^{\ot 2} \ot g^{\ot 4} (\De_{x'} \ot \De_{hx'})(\be_{h,k}) \ot \T{id} \big) 
\end{split} 
\]
of the object
\[
\De_{x'}(a_g) \ot \De_{gx', ghx, ghx'} g(a_{hk}) \ot (ghk)(\ze) \, .
\]

Combining these observations, we may represent the quotient $\frac{c(g,h,k)}{c'(g,h,k)} \in \cc^*$ by the automorphism
\[
\begin{split}
& \big( \T{id}^{\ot 2} \ot g^{\ot 4} (\De_{x'} \ot \De_{hx'})(\be_{h,k}) \ot \T{id} \big)
\ci \big( (\De_{x'} \ot \De_{gx',ghx,ghx'})(\be_{g,hk}) \ot \T{id} \big) \\
& \qq \ci  (-1)^{\ep(\si_k) \cd \ep(\be_{g,h})} \cd d(gh,k)^{-1} d(g,h)^{-1} \cd (\T{id} \ot \De_{gx,gx',ghx,ghx'} \ot \T{id})(\si_{ghk}) \\
& \qq \ci \big( \T{id} \ot (\De_{gx} \ot \De_{ghx,ghx',ghkx})(\be_{g,hk}')^{-1} \big)
\ci \big( \T{id}^{\ot 3} \ot g^{\ot 4} (\De_{hx} \ot \De_{hkx})(\be_{h,k}')^{-1} \big)
\ci \si_{g,h,k}^{-1} \, ,
\end{split}
\]
acting on the object
\[
\De_{x'}(a_g) \ot \De_{gx'} g(a_h) \ot \De_{gh x'} (gh)(a_k) \ot (ghk)(\ze) \, .
\]
But this latter automorphism can now be seen to represent the quotient 
\[
(-1)^{\ep(\si_k) \cd \ep(\be_{g,h}) + \ep(\si_g) \cd \ep(\be_{h,k}')} \cd \frac{d(h,k) \cd d(g,hk)}{d(g,h) \cd d(gh,k)} \in \cc^* \, .
\]
This proves the lemma.
\end{proof}

\begin{lemma}\label{l:cocycconj}
Let $\Phi\colon \G H_1\to\G H_2$ be a morphism of coproduct categories. Let $G$ be a group and let 
\[
\rho_i\colon G\to \T{Aut}(\G H_i)
\]
be a group homomorphism for $i=1,2$. Assume that for all $g\in G$, $\rho_2(g)\Phi=\Phi\rho_1(g)$. Then
\[
[c_1]=[c_2]\in H^3\big(G,\cc^*/\{\pm 1\} \big) \, .
\]
\end{lemma}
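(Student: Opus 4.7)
The plan is to show that $c_1$ and $c_2$ can be chosen to literally agree as group $3$-cochains, after which the result follows from the independence statement in Lemma \ref{l:classindep}. Write $X_i$ for the set of objects of $\G H_i$ and $\varphi\colon X_1\to X_2$ for the underlying map of $\Phi$.

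Concretely, I would start on the $\G H_1$-side: pick an object $x\in X_1$, and for each $g\in G$ choose an object $a_g\in \G H_1(x,\rho_1(g)x)$ together with isomorphisms $\be_{g,h}\colon \De_{\rho_1(g)x}(a_{gh})\to a_g\ot \rho_1(g)(a_h)$. This produces a $3$-cochain $c_1\colon \zz[G^3]\to\cc^*$ as in Definition \ref{d:3cocyc}. On the $\G H_2$-side, take the basepoint $\Phi(x)\in X_2$, together with the objects $a'_g := \Phi(a_g)$. By hypothesis $\rho_2(g)\Phi = \Phi\rho_1(g)$, so $\varphi(\rho_1(g)x)=\rho_2(g)\varphi(x)$ and $a'_g\in \G H_2(\Phi(x),\rho_2(g)\Phi(x))$. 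Since $\Phi$ is a morphism of coproduct categories, it intertwines the coproduct functors, i.e. $(\Phi\ot\Phi)\De_{\rho_1(g)x}=\De_{\rho_2(g)\Phi(x)}\Phi$. Hence $\be'_{g,h}:=\Phi(\be_{g,h})$ is an isomorphism $\De_{\rho_2(g)\Phi(x)}(a'_{gh})\to a'_g\ot \rho_2(g)(a'_h)$, and is an admissible choice of data for constructing a $3$-cochain $c_2'$ for $\G H_2$.

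The key point is that the automorphism $\ga_2(g,h,k)$ built from $(a'_g,\be'_{g,h})$ is literally the $\Phi$-image of the automorphism $\ga_1(g,h,k)$ built from $(a_g,\be_{g,h})$. This is because $\Phi$ intertwines every ingredient appearing in the commutative hexagon of Definition \ref{d:3cocyc}: it intertwines the coproducts $\De_z$, it intertwines the group actions (by the equivariance hypothesis applied to each $\rho_1(g)$), and it is a linear functor on each morphism category (so it takes compositions and tensor products of morphisms to the corresponding ones in $\G H_2$, including the graded signs that enter the definition of $\ga$). Applying $\Phi$ to $\ga_1(g,h,k)=c_1(g,h,k)\cd \T{id}$ gives $\ga_2(g,h,k)=c_1(g,h,k)\cd \T{id}$, so as group $3$-cochains $c_2'=c_1$.

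By Lemma \ref{l:classindep} the class $[c_2']$ of the $3$-cocycle constructed from the choices $(\Phi(x),a'_g,\be'_{g,h})$ equals $[c_2]\in H^3(G,\cc^*/\{\pm 1\})$, and by the preceding equality $[c_2']=[c_1]$. Hence $[c_1]=[c_2]$. The only non-trivial step is the identity $\ga_2(g,h,k)=\Phi(\ga_1(g,h,k))$, which reduces to tracking that $\Phi$ preserves each of: the coproduct functors, the graded tensor products (including the sign in the composition law), the group-action functors, and identity morphisms; each of these is immediate from the definitions of a morphism of coproduct categories and of an equivariant such morphism.
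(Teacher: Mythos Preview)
Your proposal is correct and follows essentially the same approach as the paper: transport the choices $(x,a_g,\be_{g,h})$ through $\Phi$ to obtain admissible choices on the $\G H_2$-side, observe that $\Phi$ carries $\ga_1(g,h,k)$ to $\ga_2(g,h,k)$ because it intertwines all the structure (coproducts, group action, composition, identities, degrees), and conclude via Lemma~\ref{l:classindep}. The paper's proof is slightly terser in that it directly calls the transported cochain $c_2$ rather than naming it $c_2'$ and then invoking independence, but the argument is the same.
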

\begin{proof}
This is a consequence of Lemma \ref{l:classindep}. Let $X_i$ be the set underlying $\G H_i$ for $i = 1,2$. Choose $x\in X_1$. For each $g\in G$, choose an object $a_g\in \G H_1(x,gx)$ and for each $g,h\in G$ choose an isomorphism
\[
\be_{g,h}\colon \Delta_{gx}(a_{gh})\to a_g\ot g(a_h) \, .
\]
These choices then yield the group $3$-cochain $c_1 \colon \zz[G^3] \to \cc^*$. 

Applying $\Phi$, we obtain $x':=\varphi(x)\in X_2$. Because $\Phi$ is $G$-equivariant, for each $g\in G$, $a'_g:=\varphi(a_g)\in\G H_2(x',gx')$.  Moreover, because $\Phi$ is a morphism of coproduct categories, for each $g,h\in G$, we obtain an isomorphism
\[
\be'_{g,h}:=\varphi(\be_{g,h})\colon\Delta_{gx'}(a'_{gh})\to a'_g\ot g(a'_h) \, .
\]
These choices then yield the group $3$-cochain $c_2 \colon \zz[G^3] \to \cc^*$.

For all $g,h,k\in G$, our choices determine automorphisms
\begin{align*}
\gamma(g,h,k)&\colon a_g\ot g(a_h)\ot gh(a_k)\to a_g\ot g(a_h)\ot gh(a_k)\\
\gamma'(g,h,k)&\colon a'_g\ot g(a'_h)\ot gh(a'_k)\to a'_g\ot g(a'_h)\ot gh(a'_k)
\end{align*}
as in Definition \ref{d:3cocyc}. In particular, we have that
\[
c_1(g,h,k) \cd \T{id}_{a_g\ot g(a_h)\ot gh(a_k)} = \ga(g,h,k) \, \, \T{ and } \, \, \,
c_2(g,h,k) \cd \T{id}_{a_g'\ot g(a_h')\ot gh(a_k')} = \ga'(g,h,k) \, .
\]
Further, because $\Phi$ is a $G$-equivariant morphism of coproduct categories,
\[
\varphi(\gamma(g,h,k))=\gamma'(g,h,k)\colon a'_g\ot g(a'_h)\ot gh(a'_k)\to a'_g\ot g(a'_h)\ot gh(a'_k) \, .
\]
Finally, because $\Phi$ is a morphism of categories, it is unital and we obtain that $c_1(g,h,k) = c_2(g,h,k)$. This proves the lemma.
\end{proof}





\section{Categories associated to representations of a ring}\label{s:category}
Throughout this section we fix a separable Hilbert space $\C H$, a unital ring $R$ and a two-sided ideal $I$ in $R$. We recall that $\sL^1(\C H) \subseteq \sL(\C H)$ denotes the ideal of trace class operators inside the bounded operators on the Hilbert space $\C H$. The unit in $R$ is denoted by $1$.

We work under the following assumption:

\begin{assu}\label{a:rep}
Suppose that we are given a family
\[
\mathfrak{Rep} = \{ \pi_\la \}_{\la \in \La},
\]
indexed by a \emph{non-empty} set $\Lambda$, of (not necessarily unital) representations of the unital ring $R$ as bounded operators on $\C H$ satisfying the following conditions:
\begin{enumerate}
\item for all $x\in R$ and pairs $\la,\mu \in \La$,
\[
\pi_\la(x)\pi_\mu(1)-\pi_\la(1)\pi_\mu(x)\in \sL^1(\C H) \, ;
\]
\item for all $i\in I$ and triples $\la,\mu,\nu \in \La$,
\[
\pi_\la(i)\pi_\mu(1)\pi_\nu(1)-\pi_\la(i)\pi_\nu(1) \in \sL^1(\C H) \, .
\]
\end{enumerate}
\end{assu}

Under Assumption \ref{a:rep} and given a fixed idempotent $p_0 \in R$ serving as a ``base point'', we shall in this section see how to build two categories $\G L_p$ and $\G L_p^\da$, whenever $p \in R$ is an idempotent with $p - p_0 \in I$. The category $\G L_p^\da$ will play the role of a dual to the category $\G L_p$.
\medskip

We start by introducing some notation:

\begin{notation}\label{n:invfred}
For $\la,\mu \in \La$ we define the following bounded operators:
\begin{enumerate}
\item For an idempotent $p \in R$, define
\[
\begin{split}
\Om(\la,\mu)(p) & := \ma{cc}{ \pi_\la(p) - \pi_\la(p) \pi_\mu(p) \pi_\la(p) & \pi_\la(p) \pi_\mu(p) \\
2 \pi_\mu(p)\pi_\la(p) - ( \pi_\mu(p)\pi_\la(p) )^2 & \pi_\mu(p) \pi_\la(p) \pi_\mu(p) - \pi_\mu(p)
} \\
& \q \colon \pi_\la(p)\C H \op \pi_\mu(p)\C H \to \pi_\la(p)\C H \op \pi_\mu(p)\C H \, .
\end{split}
\]
We sometimes consider $\Om(\la,\mu)(p)$ as a bounded operator on $\C H \op \C H$ by putting it equal to zero on the range of the idempotent $\T{id} - \pi_\la(p) \op \pi_\mu(p)$.
\item For idempotents $p,q \in R$ with $p - q \in I$, define
\[
\begin{split}
F(\la,\mu)(p,q) & := \big( \pi_\la(q) \op \pi_\mu(p) \big) \Om(\la,\mu)(1) \big( \pi_\la(p) \op \pi_\mu(q) \big) \\
& \q \colon \pi_\la(p) \C H \op \pi_\mu(q) \C H \to \pi_\la(q) \C H \op \pi_\mu(p) \C H \, .
\end{split}
\]
\end{enumerate}
\end{notation}

\begin{notation}\label{n:bigfred}
For $n \geq 3$, a tuple of indices $\la = (\la_1,\la_2,\ldots,\la_n)$ in $\La$ and $1 \leq i < j \leq n$ we apply the following notation:
\begin{enumerate}
\item For every tuple of idempotents $p = (p_1,p_2,\ldots,p_n)$ in $R$ with mutual differences in $I$ and with $p_i = p_j := q,$ we define 
\[
\Om^{ij}(\la)(p) := \Om(\la_i,\la_j)(q) + \sum_{k \neq i,j} \pi_{\la_k}(p_k) \colon \bop_{k = 1}^n \pi_{\la_k}( p_k ) \C H 
\to \bop_{k = 1}^n \pi_{\la_k}( p_k ) \C H \, .
\]
\item For every tuple of idempotents $p = (p_1,p_2,\ldots,p_n)$ in $R$ with mutual differences in $I,$ we define 
\[
F^{ij}(\la)(p) := F(\la_i,\la_j)(p_i,p_j) + \sum_{k \neq i,j} \pi_{\la_k}(p_k)
\colon \bop_{k = 1}^n \pi_{\la_k}(p_k) \C H \to \bop_{k = 1}^n \pi_{\la_k}(p_{\tau(k)}) \C H \, , 
\]
where $\tau \colon \{1,2,\ldots,n\} \to \{1,2,\ldots,n\}$ is the transposition which interchanges $i$ and $j$. 
\end{enumerate}
\end{notation}


In the following lemmas we present a few consequences of our assumptions, in particular we shall see that the bounded operators $F(\la,\mu)(p,q) \colon \pi_\la(p)\C H \op \pi_\mu(q) \C H \to \pi_\la(q)\C H \op \pi_\mu(p) \C H$ are in fact Fredholm operators.

\begin{lemma}\label{l:commut}
Suppose that $\la,\mu \in \La,$ and that $x,y \in R$ are two elements with $x - y \in I$. Then the difference
\[
\big( \pi_\la(y) \op \pi_\mu(x) \big) \Om(\la,\mu)(1)
- \Om(\la,\mu)(1) \big( \pi_\la(x) \op \pi_\mu(y) \big)
\]
is a trace class operator on $\C H \op \C H$.
\end{lemma}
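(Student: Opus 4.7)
The plan is to evaluate the given $2\times 2$ block operator on $\C H \op \C H$ entry by entry and show that each of the four blocks is of trace class, using conditions (1) and (2) of Assumption \ref{a:rep}. Throughout I would freely exploit that $\pi_\la,\pi_\mu$ are ring homomorphisms, so $\pi_\la(1)\pi_\la(z) = \pi_\la(z) = \pi_\la(z)\pi_\la(1)$ for all $z \in R$ and likewise for $\pi_\mu$; I would write $i := x - y \in I$ and work systematically modulo the trace class ideal $\sL^1(\C H)$.

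For the off-diagonal blocks the argument is short. The $(1,2)$ block simplifies to $\pi_\la(y)\pi_\mu(1) - \pi_\la(1)\pi_\mu(y)$, which is trace class by condition (1). The $(2,1)$ block expands to
\[
2\pi_\mu(x)\pi_\la(1) - \pi_\mu(x)\pi_\la(1)\pi_\mu(1)\pi_\la(1) - 2\pi_\mu(1)\pi_\la(x) + \pi_\mu(1)\pi_\la(1)\pi_\mu(1)\pi_\la(x),
\]
and repeated application of condition (1) to move the argument $x$ rightward through each of the two strings collapses this combination to zero modulo $\sL^1$. Observe that the hypothesis $x - y \in I$ plays no role for the off-diagonal entries.

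The diagonal blocks are more delicate. A direct expansion reduces the $(1,1)$ block to $-\pi_\la(i) - \pi_\la(y)\pi_\mu(1)\pi_\la(1) + \pi_\la(1)\pi_\mu(1)\pi_\la(x)$. Two successive applications of condition (1) yield the congruence $\pi_\la(y)\pi_\mu(1)\pi_\la(1) \equiv \pi_\la(1)\pi_\mu(1)\pi_\la(y) \pmod{\sL^1}$, after which the $(1,1)$ block becomes congruent to $\pi_\la(1)\pi_\mu(1)\pi_\la(i) - \pi_\la(i)$. A symmetric computation, with $\la$ and $\mu$ interchanged, reduces the $(2,2)$ block to $\pi_\mu(1)\pi_\la(1)\pi_\mu(i) - \pi_\mu(i)$.

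The main obstacle is therefore to establish the single identity $\pi_\la(1)\pi_\mu(1)\pi_\la(i) \equiv \pi_\la(i) \pmod{\sL^1}$ for every $i \in I$; the analogous identity for the $(2,2)$ block will follow by swapping $\la$ and $\mu$. I would first invoke condition (2) with $\nu = \la$, which gives $\pi_\la(i)\pi_\mu(1)\pi_\la(1) \equiv \pi_\la(i) \pmod{\sL^1}$. Then two successive applications of condition (1)---first $\pi_\la(i)\pi_\mu(1) \equiv \pi_\la(1)\pi_\mu(i) \pmod{\sL^1}$, and then $\pi_\mu(i)\pi_\la(1) \equiv \pi_\mu(1)\pi_\la(i) \pmod{\sL^1}$ (with $\la$ and $\mu$ interchanged)---transport the factors $\pi_\mu(1)$ and $\pi_\la(1)$ from the right of $\pi_\la(i)$ to the left, yielding $\pi_\la(i)\pi_\mu(1)\pi_\la(1) \equiv \pi_\la(1)\pi_\mu(1)\pi_\la(i) \pmod{\sL^1}$. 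Combining the two congruences produces the required identity. The subtle point is that condition (2) alone only records ``commensurability'' on one side of $\pi_\la(i)$, and one genuinely needs condition (1) in tandem to transport this commensurability to the opposite side.
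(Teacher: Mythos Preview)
Your proof is correct and follows essentially the same approach as the paper: expand the $2\times 2$ block difference explicitly and verify each entry lies in $\sL^1(\C H)$, with the off-diagonal entries handled by Assumption~\ref{a:rep}(1) alone and the diagonal entries by (1) and (2) together. You have simply spelled out in detail the reductions that the paper leaves as a one-line remark.
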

\begin{proof}
We compute that
\[
\begin{split}
& \big( \pi_\la(y) \op \pi_\mu(x) \big) \Om(\la,\mu)(1)
- \Om(\la,\mu)(1) \big( \pi_\la(x) \op \pi_\mu(y) \big) \\
& \q = \ma{cc}{\pi_\la(y) - \pi_\la(y) \pi_\mu(1) \pi_\la(1)  & \pi_\la(y) \pi_\mu(1) \\
2 \pi_\mu(x)\pi_\la(1) - \pi_\mu(x)\pi_\la(1) \pi_\mu(1) \pi_\la(1) & \pi_\mu(x) \pi_\la(1) \pi_\mu(1) - \pi_\mu(x) } \\
& \qq - \ma{cc}{\pi_\la(x) - \pi_\la(1) \pi_\mu(1) \pi_\la(x) & \pi_\la(1) \pi_\mu(y) \\
2 \pi_\mu(1) \pi_\la(x) - \pi_\mu(1) \pi_\la(1) \pi_\mu(1) \pi_\la(x) & \pi_\mu(1) \pi_\la(1) \pi_\mu(y) - \pi_\mu(y) } \, ,
\end{split}
\]
and notice that the off diagonal terms lie in $\sL^1(\C H)$ by Assumption \ref{a:rep} $(1)$ and the diagonal terms lie in $\sL^1(\C H)$ by Assumption \ref{a:rep} $(1)$ and $(2)$. 
\end{proof}

\begin{lemma}\label{l:reduction}
Suppose that $\la,\mu \in \La,$ and that $p \in R$ is an idempotent. Then
\[
F(\la,\mu)(p,p) - \Om(\la,\mu)(p) \in \sL^1(\pi_\la(p)\C H \op \pi_\mu(p) \C H)
\]
and
\[
\big( \Om(\la,\mu)(p) \big)^2 =  \pi_\la(p) \op \pi_\mu(p) \, ,
\]
hence $\Om(\la,\mu)(p)$ is invertible on the range of the idempotent $\pi_\la(p)\oplus \pi_\mu(p)$.
\end{lemma}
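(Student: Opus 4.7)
I would set $A := \pi_\la(p)$, $B := \pi_\mu(p)$, $P := \pi_\la(1)$, $Q := \pi_\mu(1)$, noting that $A^2 = A$, $B^2 = B$, and the absorption relations $AP = PA = A$, $BQ = QB = B$ hold (since $\pi_\la(p) = \pi_\la(p\cdot 1) = \pi_\la(p)\pi_\la(1)$, and symmetrically for $\mu$). For the second claim $(\Om(\la,\mu)(p))^2 = \pi_\la(p) \op \pi_\mu(p)$ I would simply carry out the $2 \times 2$ block multiplication of
\[
\Om(\la,\mu)(p) = \ma{cc}{A - ABA & AB \\ 2BA - (BA)^2 & BAB - B}
\]
with itself. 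Using only $A^2 = A$ and $B^2 = B$, the four entries collapse to $A$, $0$, $0$, $B$ respectively, with numerous intermediate terms cancelling pairwise. Invertibility of $\Om(\la,\mu)(p)$ on $\pi_\la(p)\C H \op \pi_\mu(p)\C H$ is then immediate, since $\pi_\la(p) \op \pi_\mu(p)$ is the identity on this subspace and $\Om(\la,\mu)(p)$ is its own inverse there.

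For the trace class assertion I would expand $F(\la,\mu)(p,p) = (A \op B)\,\Om(\la,\mu)(1)\,(A \op B)$ block by block, simplifying using $AP = A$ and $QB = B$. This produces
\[
F(\la,\mu)(p,p) = \ma{cc}{A - AQA & AB \\ 2BA - BPQA & BPB - B},
\]
which agrees with $\Om(\la,\mu)(p)$ exactly in the $(1,2)$-entry. The three remaining discrepancies are $ABA - AQA$, $(BA)^2 - BPQA$, and $BPB - BAB$, and the task is to place each of them in $\sL^1$.

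The key inputs are two instances of Assumption \ref{a:rep}(1) with $x = p$, once in the original order and once with $\la$ and $\mu$ interchanged, which give $AQ - PB \in \sL^1$ and $BP - QA \in \sL^1$. Combined with $QB = B$ and $PA = A$, and using that $\sL^1$ is a two-sided ideal in $\sL(\C H)$, these yield the two basic congruences
\[
AB = AQB \equiv PB \pmod{\sL^1} \q \T{and} \q BA = BPA \equiv QA \pmod{\sL^1}.
\]
Multiplying these on either side by further bounded operators gives $ABA \equiv AQA$, $BAB \equiv BPB$, and $(BA)^2 = B(AB)A \equiv BPBA \equiv BPQA$ modulo $\sL^1$, which are exactly the three identities needed. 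The only real obstacle is bookkeeping with the four non-commuting idempotents $A, B, P, Q$; once the congruences $AB \equiv PB$ and $BA \equiv QA$ are in hand everything else is routine. I note in passing that neither Assumption \ref{a:rep}(2) nor the base point $p_0$ enters this argument, since the idempotency of $p$ alone suffices for trace-class control of $AQ - PB$.
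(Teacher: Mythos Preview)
Your argument is correct. The trace-class half is essentially the paper's proof: you and the paper both write out $F(\la,\mu)(p,p)$ explicitly and reduce the three off-diagonal discrepancies using Assumption~\ref{a:rep}(1) with $x=p$; your congruences $AB \equiv PB$ and $BA \equiv QA$ are just a repackaging of the paper's observation that $\pi_\la(p)\pi_\mu(1) - \pi_\la(p)\pi_\mu(p) \in \sL^1$.

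For $\Om(\la,\mu)(p)^2 = \pi_\la(p)\op\pi_\mu(p)$ you take a different route. You multiply the $2\times 2$ block matrix by itself and watch the terms cancel, which works but is opaque. The paper instead exhibits two factorisations of $\Om(\la,\mu)(p)$ into products of three triangular matrices (with $T := \pi_\la(p)\pi_\mu(p)$ and $T^\da := \pi_\mu(p)\pi_\la(p)$ on the off-diagonals), arranged so that the second factorisation is visibly the inverse of the first; this makes $\Om = \Om^{-1}$ transparent without any cancellation bookkeeping. Your approach is more elementary; the paper's buys structural insight and avoids the tedium of tracking a dozen monomials in $A$ and $B$.
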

\begin{proof}
Using Assumption \ref{a:rep} and the fact that $p \in R$ is an idempotent, we see that the difference $\pi_\la(p) \pi_\mu(1) - \pi_\la(p) \pi_\mu(p)$ is of trace class (and similarly for $\la$ and $\mu$ reversed). The first statement of the lemma now follows by noticing that
\[
F(\la,\mu)(p,p) = \ma{cc}{ \pi_\la(p) - \pi_\la(p) \pi_\mu(1) \pi_\la(p) & \pi_\la(p) \pi_\mu(p) \\
2 \pi_\mu(p)\pi_\la(p) - \pi_\mu(p)\pi_\la(1) \pi_\mu(1) \pi_\la(p) & \pi_\mu(p) \pi_\la(1) \pi_\mu(p) - \pi_\mu(p)
} \, .
\]
The second statement can be proved by setting $T := \pi_\la(p) \pi_\mu(p)$ and $T^\da := \pi_\mu(p) \pi_\la(p)$ and verifying that
\[
\begin{split}
& \Om(\la,\mu)(p) = \ma{cc}{\pi_\la(p) - T T^\da & T \\ 2 T^\da - T^\da T T^\da & T^\da T - \pi_\mu(p)} \\
& \q = \ma{cc}{\pi_\la(p) &  0 \\ T^\da & - \pi_\mu(p)} \cd \ma{cc}{\pi_\la(p) & T \\ 0 & \pi_\mu(p)} \cd \ma{cc}{\pi_\la(p) & 0 \\ - T^\da & \pi_\mu(p)} \\
& \q = \ma{cc}{\pi_\la(p) &  0 \\ T^\da & \pi_\mu(p)} \cd \ma{cc}{\pi_\la(p) & -T \\ 0 & \pi_\mu(p)} \cd \ma{cc}{\pi_\la(p) & 0 \\  T^\da & -\pi_\mu(p)}
 \, . \qedhere
\end{split}
\]
\end{proof}

One may also consider the case where $\la = \mu \in \La,$ and $p,q \in R$ are two idempotents with $p - q \in I$. Then
\[
\Om( \la, \la)(1) = \ma{cc}{0 & \pi_\la(1) \\ \pi_\la(1) & 0}
\]
and the bounded operator
\[
F(\la,\la)(p,q) = \ma{cc}{ 0 & \pi_\la(q) \\ \pi_\la(p) & 0} \colon \pi_\la(p)\C H \op \pi_\la(q) \C H \to 
\pi_\la(q)\C H \op \pi_\la(p) \C H
\]
is therefore invertible.

\begin{lemma}\label{l:Fredholm}
Suppose that $\la,\mu \in \La,$ and that $p,q \in R$ are idempotents with $p - q \in I$. Then
\[
F(\la,\mu)(p,q) \colon \pi_\la(p)\C H \op \pi_\mu(q)\C H \to \pi_\la(q)\C H \op \pi_\mu(p)\C H
\]
is a Fredholm operator with parametrix $F(\la,\mu)(q,p) \colon \pi_\la(q)\C H \op \pi_\mu(p)\C H \to \pi_\la(p)\C H \op \pi_\mu(q)\C H$. In fact we have that
\begin{equation}\label{eq:paramet}
F(\la,\mu)(q,p) F(\la,\mu)(p,q) - \pi_\la(p) \op \pi_\mu(q) \in \sL^1( \pi_\la(p) \C H \op \pi_\mu(q) \C H) \, .
\end{equation}
\end{lemma}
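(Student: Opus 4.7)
The plan is to expand the composition $F(\la,\mu)(q,p) F(\la,\mu)(p,q)$ directly from Definition \ref{n:invfred}(2) and reduce it to the idempotent $\pi_\la(p) \op \pi_\mu(q)$ modulo trace class, using Lemma \ref{l:commut} to commute the idempotents past $\Om(\la,\mu)(1)$ and Lemma \ref{l:reduction} to square out $\Om(\la,\mu)(1)$.

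By definition,
\[
F(\la,\mu)(q,p) F(\la,\mu)(p,q) = \big(\pi_\la(p) \op \pi_\mu(q)\big) \Om(\la,\mu)(1) \big(\pi_\la(q) \op \pi_\mu(p)\big)^2 \Om(\la,\mu)(1) \big(\pi_\la(p) \op \pi_\mu(q)\big).
\]
Since $\pi_\la(q) \op \pi_\mu(p)$ is an idempotent, the squared middle factor collapses. Now I would apply Lemma \ref{l:commut} with $x := q$ and $y := p$ (legitimate since $p - q = -(q-p) \in I$) to obtain the trace class identity
\[
\Om(\la,\mu)(1)\big(\pi_\la(q) \op \pi_\mu(p)\big) \equiv \big(\pi_\la(p) \op \pi_\mu(q)\big)\Om(\la,\mu)(1) \pmod{\sL^1}.
\]
Substituting this into the expression and absorbing the trace class error (the outer bounded factors preserve $\sL^1$), the composition reduces to
\[
\big(\pi_\la(p) \op \pi_\mu(q)\big) \Om(\la,\mu)(1)^2 \big(\pi_\la(p) \op \pi_\mu(q)\big) \pmod{\sL^1}.
\]
By Lemma \ref{l:reduction} applied to the unit $1 \in R$, we have $\Om(\la,\mu)(1)^2 = \pi_\la(1) \op \pi_\mu(1)$, and since each $\pi_\la$ is a (not necessarily unital) ring homomorphism we have $\pi_\la(1)\pi_\la(p) = \pi_\la(p)$ and $\pi_\mu(1)\pi_\mu(q) = \pi_\mu(q)$, so the remaining product equals exactly $\pi_\la(p) \op \pi_\mu(q)$. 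This proves \eqref{eq:paramet}.

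The symmetric statement $F(\la,\mu)(p,q) F(\la,\mu)(q,p) - \pi_\la(q) \op \pi_\mu(p) \in \sL^1\big(\pi_\la(q) \C H \op \pi_\mu(p) \C H\big)$ follows by interchanging the roles of $p$ and $q$ in the above argument. Together, these two relations show that $F(\la,\mu)(q,p)$ is a two-sided parametrix for $F(\la,\mu)(p,q)$ modulo trace class, hence modulo compact operators, so by Atkinson's theorem $F(\la,\mu)(p,q)$ is Fredholm. The only place one needs to be slightly careful is in verifying that Lemma \ref{l:commut} applies on all of $\C H \op \C H$ (as stated there) and that the extension-by-zero convention of Notation \ref{n:invfred}(1) is compatible with pre- and post-composing by the idempotents $\pi_\la(p) \op \pi_\mu(q)$ and $\pi_\la(q) \op \pi_\mu(p)$; this is immediate from the definitions.
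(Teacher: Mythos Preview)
Your proof is correct and follows exactly the route the paper indicates: the paper's own proof simply says the inclusion ``follows in a straightforward way from Lemma~\ref{l:commut} and Lemma~\ref{l:reduction}'' together with Atkinson's theorem, and your argument is a faithful unpacking of precisely that computation.
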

\begin{proof}
Since we may interchange the roles of the idempotents $p$ and $q$, it suffices by Atkinson's theorem to prove the inclusion in \eqref{eq:paramet}. But this inclusion follows in a straightforward way from Lemma \ref{l:commut} and Lemma \ref{l:reduction}. 
\end{proof}

\subsection{The category $\G L_p$ and its dual $\G L_p^\da$}\label{ss:catdual} We continue working under Assumption \ref{a:rep}, and on top of that we now fix an idempotent $p_0 \in R$ which will play the role of a base point. We shall moreover fix an idempotent $p \in R$ with $p - p_0 \in I$. 
\medskip

We construct a category of $\zz$-graded complex lines $\G L_p$ as follows (see Definition \ref{d:gralines}):
\begin{enumerate}
\item the non-empty set of objects of $\mathfrak{L}_p$ is the index set $\La$ for the family of representations $\G{Rep} = \{ \pi_\la \}_{\la \in \La},$
\item the morphisms from $\la$ to $\mu$ are the elements in the graded determinant line of the Fredholm operator $F(\la,\mu)(p,p_0)$, thus
\[
\G{L}_p(\la,\mu) := \big| F(\la,\mu)(p,p_0) \big| \, ,
\]
\item for each $\la \in \La$, the unit $\T{id}_\la \colon \la \to \la$ is the element
\[
1 \in (\cc,0) = \left| \ma{cc}{0 & \pi_\la(p_0) \\ \pi_\la(p) & 0} \right| = \G{L}_p(\la,\la) \, .
\]
\end{enumerate}
The composition of morphisms will be given below in Definition \ref{def:comp}.
\medskip

We also construct a category of $\zz$-graded complex lines $\G L_p^\da$ having the same objects as $\G L_p$ and with morphisms from $\la$ to $\mu$ given by the graded determinant line
\[
\G{L}_p^\da(\la,\mu) := \big| F(\la,\mu)(p_0,p) \big| \, .
\]
The unit $\T{id}_\la \colon \la \to \la$ is the multiplicative unit in $\cc$,
\[
1 \in (\cc,0) = \left| \ma{cc}{0 & \pi_\la(p) \\ \pi_\la(p_0) & 0} \right| = \G L_p^\da(\la,\la) \, .
\]
The composition of morphisms in the category $\G L_p^\da$ is given below in Definition \ref{def:dualcomp}.
\medskip

For each pair of elements $(\la,\mu)$ in the index set $\La$, there are isomorphisms of $\zz$-graded lines
\[
(\Cb,0) \to^{\varphi} \G L_p^\da(\la,\mu) \otimes \G L_p(\la,\mu)\q \T{and} \q
\G L_p(\la,\mu)\otimes \G L_p^\da(\la,\mu) \to^{\psi} (\cc,0) \, ,
\]
defined as follows: Using Lemma \ref{l:Fredholm} we define $\varphi$ as the composition
\begin{equation}\label{eq:dualI}
\begin{split}
        \varphi \colon (\Cb,0) = |\pi_\la(p_0)\op\pi_\mu(p)|
& \to^{\mathfrak{P}} |F(\la,\mu)(p,p_0)\cd F(\la,\mu)(p_0,p)| \\
& \to^{\mathfrak{T}^{-1}} |F(\la,\mu)(p_0,p)|\ot |F(\la,\mu)(p,p_0)| \, ,
\end{split}
\end{equation}
and similarly, we define $\psi$ as the composition
    \begin{equation}\label{eq:dualII}
\begin{split}
        \psi\colon |F(\la,\mu)(p,p_0)| \ot |F(\la,\mu)(p_0,p)|
& \to^{\mathfrak{T}} | F(\la,\mu)(p_0,p) \cd F(\la,\mu)(p,p_0)| \\
& \to^{\mathfrak{P}} | \pi_\la(p) \op \pi_\mu(p_0)| = (\cc,0) \, .
\end{split}
    \end{equation}
We shall sometimes write $\varphi(\la,\mu) \colon (\cc,0) \to \G L_p^\da(\la,\mu) \ot \G L_p(\la,\mu)$ and $\psi(\la,\mu) \colon \G L_p(\la,\mu) \ot \G L_p^\da(\la,\mu) \to (\cc,0)$ instead of $\varphi$ and $\psi$ when we want to be more specific about the objects involved.


\begin{lemma}\label{l:inverse}
Let $(\la,\mu)$ be a pair of elements in $\La$. It holds that $\G L_p^\da(\la,\mu)$ is right dual to $\G L_p(\la,\mu)$ in the sense that the following two diagrams
\[
\xymatrix{
\G L_p(\la,\mu) \ar[r]^>>>>>{\T{id} \ot \varphi} \ar[dr]_{\T{id}} & \G L_p(\la,\mu) \ot \G L_p^\da(\la,\mu) \ot \G L_p(\la,\mu) \ar[d]^{\psi \ot \T{id}} \\
& \G L_p(\la,\mu)
} \q \mbox{and} \q
\xymatrix{
\G L_p^\da(\la,\mu) \ar[r]^>>>>>{\varphi \ot \T{id} } \ar[dr]_{\T{id}} & \G L_p^\da(\la,\mu) \ot \G L_p(\la,\mu) \ot \G L_p^\da(\la,\mu) \ar[d]^{\T{id} \ot \psi} \\
& \G L_p^\da(\la,\mu)
}
\]
commute.
\end{lemma}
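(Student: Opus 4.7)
The plan is to reduce both triangle identities to an interplay between the associativity of the torsion isomorphism (Proposition \ref{p:assotors}), the compatibility of torsion and perturbation (Theorem \ref{t:percom}), and the cocycle property of the perturbation isomorphism (Theorem \ref{t:pervec}). Abbreviate $F_1 := F(\la,\mu)(p,p_0)$ and $F_2 := F(\la,\mu)(p_0,p)$, and write $G_1 := \pi_\la(p_0) \op \pi_\mu(p)$ and $G_2 := \pi_\la(p) \op \pi_\mu(p_0)$, viewed as the identity operators on $\pi_\la(p_0) \C H \op \pi_\mu(p) \C H$ and $\pi_\la(p) \C H \op \pi_\mu(p_0) \C H$ respectively. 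By Lemma \ref{l:Fredholm} we have $F_1 F_2 - G_1, F_2 F_1 - G_2 \in \sL^1$, and hence $F_1 F_2 F_1 - F_1 = (F_1 F_2 - G_1) F_1 \in \sL^1$ and $F_2 F_1 F_2 - F_2 \in \sL^1$ by symmetry; the strict identities $G_1 F_1 = F_1 = F_1 G_2$ and $G_2 F_2 = F_2 = F_2 G_1$ hold as well. Since $|G_1| = |G_2| = (\cc,0)$, the torsion isomorphisms of the form $\G T_{F_i, G_j}$ and $\G T_{G_j, F_i}$ reduce to the canonical identifications of $|F_i| \ot (\cc,0)$ and $(\cc,0) \ot |F_i|$ with $|F_i|$.

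Unwinding the definitions of $\varphi$ and $\psi$, the first snake identity amounts to showing that the composition
\[
(\G P \ot \T{id}) \ci (\G T \ot \T{id}) \ci (\T{id} \ot \G T^{-1}) \ci (\T{id} \ot \G P) \colon |F_1| \ot |G_1| \to |G_2| \ot |F_1|
\]
coincides with the canonical swap of tensor factors. Applying Proposition \ref{p:assotors} to the triple $(F_1, F_2, F_1)$ (whose total composition is $F_1 F_2 F_1$), I can rewrite the central block $(\G T \ot \T{id}) \ci (\T{id} \ot \G T^{-1})$ as $\G T_{F_2 F_1, F_1}^{-1} \ci \G T_{F_1, F_1 F_2}$. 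Then I will invoke Theorem \ref{t:percom} twice: first, with $T = F_1$ unperturbed and $S = G_1$ perturbed to $F_1 F_2$, obtaining $\G T_{F_1, F_1 F_2} \ci (\T{id} \ot \G P) = \G P \ci \G T_{F_1, G_1}$ with $\G P \colon |F_1| \to |F_1 F_2 F_1|$; and second, with $T = G_2$ perturbed to $F_2 F_1$ and $S = F_1$ unperturbed, obtaining $(\G P \ot \T{id}) \ci \G T_{F_2 F_1, F_1}^{-1} = \G T_{G_2, F_1}^{-1} \ci \G P$ with $\G P \colon |F_1 F_2 F_1| \to |F_1|$. The two resulting perturbation isomorphisms appear in sequence and cancel by Theorem \ref{t:pervec}, leaving only the canonical composition $\G T_{G_2, F_1}^{-1} \ci \G T_{F_1, G_1}$, which is the identity after the implicit identifications.

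The second snake identity follows from an entirely analogous argument, interchanging the roles of $(F_1, G_1)$ and $(F_2, G_2)$ and using the triple $(F_2, F_1, F_2)$ together with the perturbation $F_2 F_1 F_2 - F_2 \in \sL^1$. The main point to watch out for throughout both calculations is the bookkeeping of source and target subspaces: the operators $F_1$ and $F_2$ interchange $\pi_\la(p) \op \pi_\mu(p_0)$ and $\pi_\la(p_0) \op \pi_\mu(p)$, so at each step one must verify that the intended compositions $G_i F_j$ and perturbations between $F_i F_j$ and $G_k$ really are well defined as Fredholm operators between matching Hilbert spaces. These compatibilities are precisely what is guaranteed by the source/target identifications recorded in the first paragraph and by Lemma \ref{l:Fredholm}.
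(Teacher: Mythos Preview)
Your proof is correct and follows essentially the same approach as the paper's own argument: both use the associativity of torsion (Proposition~\ref{p:assotors}) on the triple $(F_1,F_2,F_1)$, invoke Theorem~\ref{t:percom} twice to pass the perturbations $G_1 \leadsto F_1F_2$ and $F_2F_1 \leadsto G_2$ through the torsion isomorphisms, and then cancel the resulting pair of perturbations $|F_1| \to |F_1F_2F_1| \to |F_1|$ via Theorem~\ref{t:pervec}. The paper presents this as a single commutative diagram (two triangles and a square) whose diagonal composite is the identity, whereas you write the same chain of isomorphisms out linearly; the mathematical content is identical.
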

\begin{proof}
We only verify that $(\psi \otimes \T{id})\circ (\T{id} \otimes \varphi)=\T{id}$; the proof in the remaining case is similar.

We suppress the pair $(\la,\mu)$ from the notation and consider the following diagram
    \begin{equation*}
        \xymatrix{
            |F(p,p_0)| \ar[rr]^>>>>>>>>>{\T{id}\otimes\mathfrak{P}} \ar[drr]_{\mathfrak{P}} && |F(p,p_0)|\ot |F(p,p_0)\cd F(p_0,p)| \ar[rr]^{\T{id}\otimes \mathfrak{T}^{-1}} \ar[d]^{\mathfrak{T}} &&|F(p,p_0)|\ot |F(p_0,p)|\ot |F(p,p_0)| \ar[d]^{\mathfrak{T}\ot \T{id}}\\
            && |F(p,p_0)\cd F(p_0,p)\cd F(p,p_0)| \ar[rr]^{\mathfrak{T}^{-1}} \ar[drr]_{\mathfrak{P}} && |F(p_0,p)\cd F(p,p_0)|\ot |F(p,p_0)| \ar[d]^{\mathfrak{P}\ot \T{id}} \\
            &&&& |F(p,p_0)|
        }
    \end{equation*}
    The upper and lower triangles commute because torsion commutes with perturbation, see Theorem \ref{t:percom}. The square commutes by the associativity of torsion, see Proposition \ref{p:assotors}. The composition of the diagonal maps is equal to the identity on $\G L_p(\la,\mu)$ by the associativity of perturbation, see Theorem \ref{t:pervec} $(1)$. The composition of the upper horizontal maps with the right vertical maps is $(\psi\otimes \T{id})\circ(\T{id} \otimes \varphi)$ by definition.
\end{proof}

\subsection{The composition of morphisms}\label{ss:comp}
We continue working under Assumption \ref{a:rep} and we will moreover fix two idempotents $p$ and $p_0 \in R$ with $p - p_0 \in I$. The idempotent $p_0$ will serve as a base point, whereas the idempotent $p$ labels the tentative categories of $\zz$-graded complex lines $\G L_p$ and $\G L_p^\da$. For the remainder of this section, we define the composition of morphisms in $\G L_p$ and $\G L_p^\da$, see Definition \ref{def:comp} and Definition \ref{def:dualcomp}, and state the required properties. The main proofs are given in Section \ref{s:proofcat}.

Throughout this subsection $(\la,\mu,\nu)$ is a fixed triple of elements in the index set $\La$ and this triple will therefore often be suppressed from the notation. We define the Fredholm operators 
\[
\begin{split}
F^{12} & := F(\la,\mu)(p,p_0) + \pi_\nu(p_0) \\ 
& \q \colon \pi_\la(p) \C H \op \pi_\mu(p_0) \C H \op \pi_\nu(p_0) \C H 
\to \pi_\la(p_0) \C H \op \pi_\mu(p) \C H \op \pi_\nu(p_0) \C H  \\
F_\da^{12} & := F(\la,\mu)(p_0,p) + \pi_\nu(p_0) \\
& \q \colon \pi_\la(p_0) \C H \op \pi_\mu(p) \C H \op \pi_\nu(p_0) \C H 
\to \pi_\la(p) \C H \op \pi_\mu(p_0) \C H \op \pi_\nu(p_0) \C H 
\end{split}
\]
and similarly for the remaining cases, $F^{23} := F(\mu,\nu)(p,p_0) + \pi_\la(p_0)$, $F^{23}_\da := F(\mu,\nu)(p_0,p) + \pi_\la(p_0)$ and $F^{13} := F(\la,\nu)(p,p_0) + \pi_\mu(p_0)$, $F^{13}_\da := F(\la,\nu)(p_0,p) + \pi_\mu(p_0)$. Comparing with Notation \ref{n:bigfred}, we thus have that $F^{12} = F^{12}(\la,\mu,\nu)(p,p_0,p_0)$, $F^{12}_\da = F^{12}(\la,\mu,\nu)(p_0,p,p_0)$ and so on.

\begin{lemma}\label{l:1}
For any $i\in I,$ it holds that
\[
( \pi_\la(i)\oplus 0\oplus 0)\big(\Omega^{13}(1)\Omega^{23}(1)\Omega^{12}(1) - \pi_\la(1) \op \pi_\mu(1) \op \pi_\nu(1)\big)
\in \sL^1\big( \pi_\la(1)\C H \op \pi_\mu(1)\C H \op \pi_\nu(1) \C H \big) \, .
\]
\end{lemma}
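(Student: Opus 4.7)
The plan is to isolate the first row of the matrix product, since multiplication on the left by $\pi_\la(i) \op 0 \op 0$ annihilates the other two rows. We then compare with the first row of $(\pi_\la(i) \op 0 \op 0)(\pi_\la(1)\op \pi_\mu(1) \op \pi_\nu(1))$, which is simply $(\pi_\la(i), 0, 0)$ using $\pi_\la(i)\pi_\la(1) = \pi_\la(i)$. Thus the lemma reduces to showing that the first row of $\Omega^{13}(1)\Omega^{23}(1)\Omega^{12}(1)$, after left multiplication by $\pi_\la(i)$, is trace-class close to $(\pi_\la(i),0,0)$.

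The key technical step is the following \emph{collapse rule}: for any $i \in I$, any $\la \in \La$, and any finite sequence $\mu_1,\ldots,\mu_n \in \La$ with $n \geq 1$,
\[
\pi_\la(i)\,\pi_{\mu_1}(1)\,\pi_{\mu_2}(1)\cdots\pi_{\mu_n}(1) \; - \; \pi_\la(i)\,\pi_{\mu_n}(1) \; \in \; \sL^1(\C H).
\]
I would prove this by induction on $n$. The base case $n=2$ is precisely Assumption \ref{a:rep}(2). For the inductive step, Assumption \ref{a:rep}(1) applied with $x=i$ gives $\pi_\la(i)\pi_{\mu_1}(1) \equiv \pi_\la(1)\pi_{\mu_1}(i) \pmod{\sL^1}$; the inductive hypothesis (applied with the index $\mu_1$ in place of $\la$) then reduces $\pi_{\mu_1}(i)\pi_{\mu_2}(1)\cdots\pi_{\mu_n}(1)$ to $\pi_{\mu_1}(i)\pi_{\mu_n}(1)$ modulo $\sL^1$; and one final use of Assumption \ref{a:rep}(1) in reverse together with Assumption \ref{a:rep}(2) brings the expression to the desired form $\pi_\la(i)\pi_{\mu_n}(1)$.

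Armed with this collapse rule, the proof proceeds by multiplying $(\pi_\la(i)\op 0 \op 0)$ on the right by $\Omega^{13}(1)$, then $\Omega^{23}(1)$, then $\Omega^{12}(1)$, reducing modulo $\sL^1$ at each stage. Writing $L,M,N$ for $\pi_\la(1),\pi_\mu(1),\pi_\nu(1)$ respectively, after the first multiplication only the first row survives and reduces to $(0,\,0,\,\pi_\la(i)N)$: the entry $\pi_\la(i)(L-LNL)$ is trace class because the collapse rule gives $\pi_\la(i)LNL \equiv \pi_\la(i)L = \pi_\la(i)$, while $\pi_\la(i)LN \equiv \pi_\la(i)N$. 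After multiplication by $\Omega^{23}(1)$ only its third row contributes, and the collapse rule (combined with the integer coefficients $2,-1$ appearing in $\Omega$) reduces the first row of the product to $(0,\,\pi_\la(i)M,\,0)$. The final multiplication by $\Omega^{12}(1)$ picks out its second row and, again via the collapse rule, produces $(\pi_\la(i),\,0,\,0)$. This matches the first row of $(\pi_\la(i)\op 0 \op 0)(L \op M \op N)$, which proves the lemma.

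The main obstacle is the induction establishing the collapse rule; once this is in place, the matrix computation proceeds mechanically, each step reducing via the collapse rule to small cancellations of the form $2k-k=k$ or $k-k=0$ among the integer coefficients appearing in the entries of $\Omega$.
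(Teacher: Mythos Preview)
Your proof is correct and follows essentially the same route as the paper's. The only organisational difference is that you package the repeated use of Assumptions~\ref{a:rep}(1) and~(2) into a single ``collapse rule'' $\pi_\la(i)\pi_{\mu_1}(1)\cdots\pi_{\mu_n}(1)\sim_1\pi_\la(i)\pi_{\mu_n}(1)$ and then simplify each matrix entry with it, whereas the paper instead uses Assumption~\ref{a:rep}(1) at each stage to push the element $i$ forward to the next representation, reducing $(\pi_\la(i)\op 0\op 0)\Om^{13}(1)$, $(0\op 0\op\pi_\nu(i))\Om^{23}(1)$, $(0\op\pi_\mu(i)\op 0)\Om^{12}(1)$ each to a single off-diagonal entry modulo $\sL^1$; the three resulting rank-one matrices then multiply to $\pi_\la(1)\pi_\nu(1)\pi_\mu(1)\pi_\la(i)\op 0\op 0\sim_1\pi_\la(i)\op 0\op 0$. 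The two calculations are the same up to this bookkeeping choice.
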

\begin{proof}
We compute modulo the trace ideal inside the bounded operators, using the notation $\sim_1$ for the corresponding equivalence relation. Recalling the definitions from Notation \ref{n:invfred} and Notation \ref{n:bigfred} and applying Assumption \ref{a:rep}, we obtain that
\[
\begin{split}
& ( \pi_\la(i) \op 0 \op 0 ) \cd \Om^{13}(1) \sim_1 \ma{ccc}{0 & 0 & \pi_\la(1) \pi_\nu(1) \\ 0 & 0 & 0 \\ 0 & 0 & 0}
\cd ( 0 \op 0 \op \pi_\nu(i) ) \\
& ( 0 \op 0 \op \pi_\nu(i) ) \cd \Om^{23}(1) \sim_1 \ma{ccc}{0 & 0 & 0 \\ 0 & 0 & 0 \\ 0 & \pi_\nu(1) \pi_\mu(1) & 0}
\cd (0 \op \pi_\mu(i) \op 0)
\q \mbox{and} \\
& (0 \op \pi_\mu(i) \op 0) \cd \Om^{12}(1) \sim_1 \ma{ccc}{0 & 0 & 0 \\ \pi_\mu(1) \pi_\la(1) & 0 & 0 \\ 0 & 0 & 0 } \cd (\pi_\la(i) \op 0 \op 0 ) \, .
\end{split}
\]
The result of the lemma then follows from the computation
\[
\begin{split}
& \ma{ccc}{0 & 0 & \pi_\la(1) \pi_\nu(1) \\ 0 & 0 & 0 \\ 0 & 0 & 0}
\cd \ma{ccc}{0 & 0 & 0 \\ 0 & 0 & 0 \\ 0 & \pi_\nu(1) \pi_\mu(1) & 0}
\cd \ma{ccc}{0 & 0 & 0 \\ \pi_\mu(1) \pi_\la(1) & 0 & 0 \\ 0 & 0 & 0 } \cd (\pi_\la(i) \op 0 \op 0 ) \\
& \q = \pi_\la(1) \pi_\nu(1) \pi_\mu(1) \pi_\la(i) \op 0 \op 0
\sim_1 \pi_\la(i) \op 0 \op 0 \, . \qedhere
\end{split}
\]
\end{proof}

\begin{lemma}\label{l:pertcomp}
The following two bounded operators on $\pi_\la(1)\C H \op \pi_\mu(1)\C H \op \pi_\nu(1)\C H$,
\[
\begin{split}
& F^{13}_\da F^{23} F^{12} + \pi_\la(1 - p)\oplus \pi_\mu(1 - p_0)\oplus \pi_\nu(1 -p_0) \q \mbox{and} \\
& \Om^{13}(p_0) \Om^{23}(p_0) \Om^{12}(p_0) + \pi_\la(1 - p_0)\oplus \pi_\mu(1 - p_0)\oplus \pi_\nu(1 - p_0) \, ,
\end{split}
\]
agree modulo the trace ideal $\sL^1(\pi_\la(1)\C H \op \pi_\mu(1)\C H \op \pi_\nu(1)\C H)$.
\end{lemma}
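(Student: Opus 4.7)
The plan is to reduce both sides of the claim modulo the trace ideal to a common expression built from the ``swap'' operators $\Om^{ij}(1)$ acting on the full Hilbert space $\pi_\la(1)\C H \op \pi_\mu(1)\C H \op \pi_\nu(1)\C H$. Introduce the idempotents $P := \pi_\la(p) \op \pi_\mu(p_0) \op \pi_\nu(p_0)$, $Q := \pi_\la(p_0) \op \pi_\mu(p) \op \pi_\nu(p_0)$, $R := \pi_\la(p_0) \op \pi_\mu(p_0) \op \pi_\nu(p)$ and $P_0 := \pi_\la(p_0) \op \pi_\mu(p_0) \op \pi_\nu(p_0)$.

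First I would rewrite $F^{13}_\da F^{23} F^{12}$ using Lemma \ref{l:commut}. Applied with $x = p$, $y = p_0$ to the factorisation
\[
F(\la,\mu)(p,p_0) = (\pi_\la(p_0) \op \pi_\mu(p))\Om(\la,\mu)(1)(\pi_\la(p) \op \pi_\mu(p_0))
\]
and absorbing the idempotency of the right-most factor, one obtains $F(\la,\mu)(p,p_0) \sim_1 \Om(\la,\mu)(1)(\pi_\la(p) \op \pi_\mu(p_0))$. The analogous reductions for $F(\mu,\nu)(p,p_0)$ and $F(\la,\nu)(p_0,p)$ give
\[
F^{12} \sim_1 \Om^{12}(1)P, \q F^{23} \sim_1 \Om^{23}(1)Q, \q F^{13}_\da \sim_1 \Om^{13}(1)R.
\]
A further application of Lemma \ref{l:commut} commutes $Q$ past $\Om^{12}(1)$ and $R$ past $\Om^{23}(1)$; using $P^2 = P$ and $Q^2 = Q$ to absorb the resulting idempotents yields
\[
F^{13}_\da F^{23} F^{12} \sim_1 \Om^{13}(1)\Om^{23}(1)\Om^{12}(1) \cdot P.
\]
Applying the same reduction with $p$ replaced everywhere by $p_0$, together with Lemma \ref{l:reduction} (which gives $F(\cdot,\cdot)(p_0,p_0) \sim_1 \Om(\cdot,\cdot)(p_0)$), one likewise obtains
\[
\Om^{13}(p_0)\Om^{23}(p_0)\Om^{12}(p_0) \sim_1 \Om^{13}(1)\Om^{23}(1)\Om^{12}(1) \cdot P_0.
\]

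The crux of the argument is a right-multiplication analogue of Lemma \ref{l:1}: for every $i \in I$,
\[
\big(\Om^{13}(1)\Om^{23}(1)\Om^{12}(1) - \pi_\la(1) \op \pi_\mu(1) \op \pi_\nu(1)\big)(\pi_\la(i) \op 0 \op 0) \in \sL^1.
\]
I expect this to be the main obstacle, since the adjoint of $\Om^{ij}(1)$ is not of the form $\Om^{ij}(1)$ and so this cannot be deduced formally from Lemma \ref{l:1}. The proof is instead by a direct computation mirroring the one for Lemma \ref{l:1}: tracking the unique nonzero column of $\pi_\la(i) \op 0 \op 0$, I would successively verify that $\Om^{12}(1)(\pi_\la(i) \op 0 \op 0)$ reduces modulo $\sL^1$ to the block operator with a single $(2,1)$-entry equal to $\pi_\mu(1)\pi_\la(i)$, that applying $\Om^{23}(1)$ transports this to a $(3,1)$-entry equivalent to $\pi_\nu(i)\pi_\la(1)$, and that applying $\Om^{13}(1)$ finally produces a $(1,1)$-entry equivalent to $\pi_\la(i)$. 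Each reduction uses only Assumptions \ref{a:rep}(1) and (2), in particular the derived identity $\pi_X(1)\pi_Y(1)\pi_X(i) \sim_1 \pi_X(i)$.

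For the final assembly I decompose $P = P_0 + (\pi_\la(p - p_0) \op 0 \op 0)$. The right-handed analogue of Lemma \ref{l:1} (applied with $i := p - p_0 \in I$), combined with the second displayed reduction above, yields
\[
\Om^{13}(1)\Om^{23}(1)\Om^{12}(1) \cdot P \sim_1 \Om^{13}(p_0)\Om^{23}(p_0)\Om^{12}(p_0) + \pi_\la(p - p_0) \op 0 \op 0.
\]
Combining with the identity $F^{13}_\da F^{23} F^{12} \sim_1 \Om^{13}(1)\Om^{23}(1)\Om^{12}(1) \cdot P$ from the first step and adding $\pi_\la(1 - p) \op \pi_\mu(1 - p_0) \op \pi_\nu(1 - p_0)$ to both sides, the terms $\pi_\la(p - p_0)$ and $\pi_\la(1 - p)$ combine to $\pi_\la(1 - p_0)$, which matches the complementary summand on the right-hand side of the claim.
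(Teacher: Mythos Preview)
Your argument is correct, but it is a mirror image of the paper's proof and this mirroring creates unnecessary work. The paper uses Lemma~\ref{l:commut} to push the idempotents to the \emph{left} rather than to the right, arriving at
\[
F^{13}_\da F^{23} F^{12} \;\sim_1\; \big(\pi_\la(p)\oplus \pi_\mu(p_0)\oplus \pi_\nu(p_0)\big)\cdot \Om^{13}(1)\Om^{23}(1)\Om^{12}(1)
\]
and similarly with $p$ replaced by $p_0$. The passage from $p$ to $p_0$ then follows directly from Lemma~\ref{l:1} as stated, since that lemma is formulated for left multiplication by $\pi_\la(i)\oplus 0\oplus 0$. By contrast, you push the idempotents to the right and then need a right-multiplication analogue of Lemma~\ref{l:1}, which you correctly identify as the main obstacle and establish by a separate matrix computation. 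That computation is fine, but it is redundant: Lemma~\ref{l:commut} is symmetric enough to let you collapse to the left just as easily as to the right, and doing so lets you invoke Lemma~\ref{l:1} off the shelf. The paper's route is therefore shorter by exactly the cost of your right-handed lemma.
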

\begin{proof} 
According to Notation \ref{n:bigfred} (and the conventions explained in the beginning of this subsection), we clarify that the composition of Fredholm operators $F^{13}_\da F^{23} F^{12}$ makes sense and yields a bounded endomorphism of the Hilbert space $\pi_\la(p) \C H \op \pi_\mu(p_0) \C H \op \pi_\nu(p_0) \C H$. Similarly, we record that $\Om^{13}(p_0) \Om^{23}(p_0) \Om^{12}(p_0)$ yields an invertible bounded endomorphism of the Hilbert space $\pi_\la(p_0) \C H \op \pi_\mu(p_0) \C H \op \pi_\nu(p_0) \C H$.

Using Lemma \ref{l:commut}, Lemma \ref{l:reduction} and Lemma \ref{l:1}, we obtain the result from the following computation modulo $\sL^1\big( \pi_\la(1)\C H \op \pi_\mu(1)\C H \op \pi_\nu(1) \C H \big)$:
\[
\begin{split}
& F^{13}_\da F^{23} F^{12} -
\pi_\la(p)\oplus \pi_\mu(p_0)\oplus \pi_\nu(p_0) \\
& \q \sim_1
\big(\pi_\la(p)\oplus \pi_\mu(p_0)\oplus \pi_\nu(p_0)\big) \cd
\Big(\Omega^{13}(1) \Omega^{23}(1) \Omega^{12}(1)
- \pi_\la(1) \op \pi_\mu(1) \op \pi_\nu(1) \Big) \\
& \q \sim_1
\big(\pi_\la(p_0)\oplus \pi_\mu(p_0)\oplus \pi_\nu(p_0)\big) \cd
\Big(\Omega^{13}(1) \Omega^{23}(1) \Omega^{12}(1)
- \pi_\la(1) \op \pi_\mu(1) \op \pi_\nu(1) \Big) \\
& \q \sim_1
\Omega^{13}(p_0) \Omega^{23}(p_0) \Omega^{12}(p_0)
- \pi_\la(p_0)\oplus \pi_\mu(p_0)\oplus \pi_\nu(p_0) \, . \qedhere
\end{split}
\]
\end{proof}

Using Lemma \ref{l:pertcomp}, we may define an isomorphism of $\zz$-graded lines:
\begin{equation}\label{eq:comp}
\begin{split}
\mu_p & \colon \big|F(\la,\mu)(p,p_0) \big| \ot \big|F(\mu,\nu)(p,p_0) \big| \ot \big| F(\la,\nu)(p_0,p) \big| 
\to^{\G S} \big| F^{12} \big| \ot \big| F^{23} \big| \ot \big| F^{13}_\da \big| \\
& \q \to^{\G T} \big| F^{13}_\da F^{23} F^{12}\big| 
\to^{\G S} \big| F^{13}_\da  F^{23} F^{12} + \pi_\la(1 - p) \op \pi_\mu(1 - p_0) \op \pi_\nu(1 - p_0) \big| \\
& \q \to^{\G P}
\big|\Om^{13}(p_0) \cd \Om^{23}(p_0) \cd \Om^{12}(p_0) + \pi_\la(1 - p_0) \op \pi_\mu(1 - p_0) \op \pi_\nu(1 - p_0) \big|= (\Cb,0) \, .
\end{split}
\end{equation}

We use the above trivialisation to define the composition of morphisms in $\G L_p$:

\begin{definition}\label{def:comp} For $\la,\mu,\nu \in \La$ the isomorphism of $\zz$-graded lines
\[
\mathfrak{M}_p \colon \mathfrak{L}_p(\la,\mu) \otimes \mathfrak{L}_p(\mu,\nu)\to \mathfrak{L}_p(\la,\nu) 
\]
is defined as the composition
\[
\begin{split}
& \big|F(\la,\mu)(p,p_0) \big| \ot \big|F(\mu,\nu)(p,p_0) \big| \\
& \q \to^{\T{id} \ot \T{id} \ot \varphi} \big|F(\la,\mu)(p,p_0) \big| \ot \big|F(\mu,\nu)(p,p_0) \big|
\ot \big| F(\la,\nu)(p_0,p) \big| \ot \big| F(\la,\nu)(p,p_0) \big| \\
& \q \to^{\mu_p \ot \T{id}} \big| F(\la,\nu)(p,p_0) \big| \, .
\end{split}
\]
\end{definition}

Using Lemma \ref{l:pertcomp} together with Lemma \ref{l:reduction} and Lemma \ref{l:Fredholm}, we may also define the following isomorphism of $\zz$-graded lines:
\begin{equation}\label{eq:dualcomp}
\begin{split}
\mu_p^\da & \colon \big|F(\la,\nu)(p,p_0) \big| \ot \big|F(\mu,\nu)(p_0,p) \big| \ot \big| F(\la,\mu)(p_0,p) \big|
\to^{\G S} \big| F^{13} \big| \ot \big| F^{23}_\da \big| \ot \big| F^{12}_\da \big| \\
& \q \to^{\G T} \big| F^{12}_\da F^{23}_\da F^{13}\big| 
\to^{\G S} \big| F^{12}_\da  F^{23}_\da F^{13} + \pi_\la(1 - p) \op \pi_\mu(1 - p_0) \op \pi_\nu(1 - p_0) \big| \\
& \q \to^{\G P}
\big|\Om^{12}(p_0) \cd \Om^{23}(p_0) \cd \Om^{13}(p_0) + \pi_\la(1 - p_0) \op \pi_\mu(1 - p_0) \op \pi_\nu(1- p_0) \big|= (\Cb,0) \, .
\end{split}
\end{equation}

We now define the composition of morphisms in $\G L_p^\da$:

\begin{definition}\label{def:dualcomp} For $\la,\mu,\nu \in \La$ the isomorphism of $\zz$-graded complex lines
\[
\mathfrak{M}_p^\da \colon  \mathfrak{L}^\da_p(\mu,\nu) \ot \mathfrak{L}^\da_p(\la,\mu) \to \mathfrak{L}^\da_p(\la,\nu)
\]
is defined as the composition
\[
\begin{split}
& \big|F(\mu,\nu)(p_0,p) \big| \ot \big|F(\la,\mu)(p_0,p) \big| \\
& \q \to^{\varphi \ot \T{id} \ot \T{id}} \big|F(\la,\nu)(p_0,p) \big| \ot \big|F(\la,\nu)(p,p_0) \big|
\ot \big| F(\mu,\nu)(p_0,p) \big| \ot \big| F(\la,\mu)(p_0,p) \big| \\
& \q \to^{\T{id} \ot \mu_p^\da} \big| F(\la,\nu)(p_0,p) \big| \, .
\end{split}
\]
\end{definition}

The next result explains the relationship between the compositions in the $\zz$-graded categories $\G L_p$ and $\G L_p^\da$ and we present a full proof later on in Section \ref{s:proofcat}.

\begin{prop}\label{p:dualcomp}
For each $\la,\mu,\nu \in \La$, the following diagram of isomorphisms of $\zz$-graded lines is commutative:
\[
            \xymatrix{
                \G L_p(\la,\mu) \ot \G L_p(\mu,\nu) \ot \G L_p^\da(\mu,\nu) \ot \G L_p^\da(\la,\mu) \ar[rr]^>>>>>>>>{\T{id}\ot \psi \ot \T{id}} \ar[d]_{\G M_p \ot \G M_p^\da} 
&& \G L_p(\la,\mu) \ot \G L_p^\da(\la,\mu) \ar[d]^{\psi} \\
\G L_p(\la,\nu) \ot \G L_p^\da(\la,\nu) \ar[rr]_{\psi} && (\cc,0)
            }
\]
\end{prop}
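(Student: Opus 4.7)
The plan is to unfold both sides of the diagram in terms of the elementary operations (torsion, perturbation, stabilisation) and reduce the proposition to an equality of complex numbers that can be verified by direct computation, using the commutativity results established earlier in the paper.

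First I would use the duality relations of Lemma \ref{l:inverse} to simplify the left-hand side. Choosing basis vectors $e \in \G L_p(\la,\nu)$ and $e^\da \in \G L_p^\da(\la,\nu)$ with $\varphi(\la,\nu)(1) = e^\da \otimes e$, Lemma \ref{l:inverse} forces $\psi(\la,\nu)(e \otimes e^\da) = 1$. Unfolding the definitions of $\G M_p$ and $\G M_p^\da$, one finds that for $s \in \G L_p(\la,\mu)$, $t \in \G L_p(\mu,\nu)$, $u \in \G L_p^\da(\mu,\nu)$, $v \in \G L_p^\da(\la,\mu)$,
\[
\psi(\la,\nu) \circ (\G M_p \otimes \G M_p^\da)(s \otimes t \otimes u \otimes v) = \mu_p(s \otimes t \otimes e^\da) \cdot \mu_p^\da(e \otimes u \otimes v),
\]
whereas the right-hand side evaluates to $\psi(\la,\mu)(s \otimes v) \cdot \psi(\mu,\nu)(t \otimes u)$. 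The proposition therefore reduces to the scalar identity
\[
\mu_p(s \otimes t \otimes e^\da) \cdot \mu_p^\da(e \otimes u \otimes v) = \psi(\la,\mu)(s \otimes v) \cdot \psi(\mu,\nu)(t \otimes u).
\]

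The next step is to expand each of the four trivialisations on both sides using the explicit formulas \eqref{eq:comp}, \eqref{eq:dualcomp}, \eqref{eq:dualI} and \eqref{eq:dualII}. Each side becomes a composition of torsion, stabilisation and perturbation isomorphisms applied to a tensor product of determinant lines of some subset of the six Fredholm operators $F^{12}, F^{23}, F^{13}_\da, F^{13}, F^{23}_\da, F^{12}_\da$. The left-hand side gathers all six into a single computation involving the product $F^{13}_\da F^{23} F^{12}$ and $F^{12}_\da F^{23}_\da F^{13}$, which are paired via the trivialisations against invertible stabilisations of $\Omega^{13}(p_0)\Omega^{23}(p_0)\Omega^{12}(p_0)$ and $\Omega^{12}(p_0)\Omega^{23}(p_0)\Omega^{13}(p_0)$ respectively. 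The right-hand side gathers them in pairs and involves the products $F(\la,\mu)(p_0,p)F(\la,\mu)(p,p_0)$ and $F(\mu,\nu)(p_0,p)F(\mu,\nu)(p,p_0)$, paired against the stabilisers arising from Lemma \ref{l:Fredholm}.

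The core computation is then to reshuffle one expression into the other. I would first use associativity of torsion (Proposition \ref{p:assotors}) to re-bracket the torsion products, then Theorem \ref{t:percom} (perturbation commutes with torsion) and Proposition \ref{p:torsta} (torsion commutes with stabilisation) to interchange the order in which torsion, stabilisation and perturbation are applied. The transitivity property of perturbation, Theorem \ref{t:pervec}$(1)$, combined with Proposition \ref{p:persta}, is then used to combine the perturbations so that both sides are rewritten as a single perturbation isomorphism applied to the determinant line of a common large product of Fredholm operators. Equality of the two resulting perturbation isomorphisms follows from the multiplicativity of the Fredholm determinant (Example \ref{ex:invpert}) applied to the invertible bounded operators that arise after stabilisation, using that $\Omega^{13}(p_0)\Omega^{23}(p_0)\Omega^{12}(p_0)$ and $\Omega^{12}(p_0)\Omega^{23}(p_0)\Omega^{13}(p_0)$ are inverse to each other on the appropriate range subspace by Lemma \ref{l:reduction}.

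The main obstacle will be bookkeeping: there are six Fredholm operators, three pairs of stabilisations, and multiple torsion, perturbation, and stabilisation isomorphisms, each contributing a potential sign coming from the $\zz$-grading on the determinant lines. Careful tracking of the degrees (via the indices of $F^{ij}$ and $F^{ij}_\da$) and systematic use of the commutativity constraint $\epsilon$ from Notation \ref{n:picard} will be required to ensure the signs on both sides agree.
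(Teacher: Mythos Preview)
Your proposal is correct and follows essentially the same route as the paper's proof. Both begin by using Lemma~\ref{l:inverse} to reduce the proposition to comparing the trivialisation $\mu_p \ot \mu_p^\da$ with the composition $\psi \ci (\T{id} \ot \psi \ot \T{id}) \ci (\T{id}^{\ot 2} \ot \varphi^{-1} \ot \T{id}^{\ot 2})$, and both then unfold everything in terms of the stabilised operators $F^{12}, F^{23}, F^{13}_\da, F^{13}, F^{23}_\da, F^{12}_\da$ and reshuffle using associativity of torsion together with the commutation results of Theorem~\ref{t:percom}, Proposition~\ref{p:torsta} and Proposition~\ref{p:persta}.

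The only notable difference is in the endgame. You aim to push both sides all the way down to an explicit Fredholm determinant and then invoke multiplicativity (Example~\ref{ex:invpert}). The paper instead stops one step earlier: it packages the final comparison as a single commutative diagram relating $|F^{123}| \ot |F^{123}_\da|$ to $|(\Om^{123}(p_0))^{-1} + e_0| \ot |\Om^{123}(p_0) + e_0|$ via torsion, stabilisation and perturbation, and observes that this diagram commutes directly by Theorem~\ref{t:percom}, Proposition~\ref{p:torsta} and Proposition~\ref{p:persta}. The algebraic input is the same --- that $\Om^{12}(p_0)\Om^{23}(p_0)\Om^{13}(p_0)$ is the inverse of $\Om^{13}(p_0)\Om^{23}(p_0)\Om^{12}(p_0)$ --- but the paper's formulation avoids the explicit determinant computation. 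Your worry about signs from the commutativity constraint $\epsilon$ is unwarranted: no reordering of tensor factors occurs anywhere in the argument, so no $\epsilon$'s enter.
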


The next two theorems, which we also prove in Section \ref{s:proofcat}, show that $\G L_p$ is indeed a category of $\zz$-graded complex lines (see Definition \ref{d:gralines}). Similar theorems can be stated and proved for $\G L_p^\da$, in fact these unitality and associativity theorems can be verified using the duality relation in Proposition \ref{p:dualcomp} together with the corresponding theorems for $\G L_p$.

\begin{theorem}\label{thm:unitality} Suppose that the conditions in Assumption \ref{a:rep} are satisfied and that $p_0 \in R$ is a fixed idempotent. For any idempotent $p \in R$ with $p - p_0 \in I$, the pair $(\mathfrak{L}_p,\mathfrak{M}_p)$ satisfies the unitality condition, i.e. the following diagrams are commutative:
        \[
            \xymatrix{
                \mathfrak{L}_p(\la,\mu) \ar[rr]^{\T{id}\otimes \T{id}_\mu} \ar[drr]_{\T{id}} & & \mathfrak{L}_p(\la,\mu)\otimes\mathfrak{L}_p(\mu,\mu)\ar[d]^{\mathfrak{M}_p} & &\mathfrak{L}_p(\la,\mu) \ar[rr]^{\T{id}_\la \otimes \T{id}} \ar[drr]_{\T{id}} && \mathfrak{L}_p(\la,\la)\otimes\mathfrak{L}_p(\la,\mu) \ar[d]^{\mathfrak{M}_p}\\
                && \mathfrak{L}_p(\la,\mu) & & && \mathfrak{L}_p(\la,\mu)
            }
        \]
\end{theorem}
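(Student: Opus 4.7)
The plan is to verify the right unit law; the left unit law is proved by a symmetric argument. Fix $\la, \mu \in \La$. By Lemma \ref{l:inverse} we have
$\T{id}_{\G L_p(\la,\mu)} = (\psi(\la,\mu) \ot \T{id}) \ci (\T{id} \ot \varphi(\la,\mu))$.
Unfolding Definition \ref{def:comp} with $\nu = \mu$, the right unit law reduces to the identity
\[
\mu_p(\la,\mu,\mu)(\al \ot \T{id}_\mu \ot \de) \;=\; \psi(\la,\mu)(\al \ot \de)
\]
for all $\al \in \G L_p(\la,\mu)$ and $\de \in \G L_p^\da(\la,\mu)$, where $\T{id}_\mu = 1 \in (\cc,0) = \G L_p(\mu,\mu)$.

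The first key observation is that when $\nu = \mu$ the Fredholm operator $F(\mu,\mu)(p,p_0) = \ma{cc}{0 & \pi_\mu(p_0) \\ \pi_\mu(p) & 0}$ is invertible, and hence both $F^{23}$ and $\Om^{23}(p_0)$ appearing in \eqref{eq:comp} are invertible on their respective domains. In particular $|F^{23}| = (\cc,0)$, and the stabilization $\G S \colon |F(\mu,\mu)(p,p_0)| \to |F^{23}|$ sends $1 \mapsto 1$. Thus, after tensoring with $\T{id}_\mu$, the torsion isomorphism $\G T \colon |F^{12}| \ot |F^{23}| \ot |F^{13}_\da| \to |F^{13}_\da F^{23} F^{12}|$ reduces, by the associativity of torsion (Proposition \ref{p:assotors}) together with the formula for torsion with an invertible operator (Example \ref{ex:LR}), to an $L$--$R$ type isomorphism coming from the invertibility of the middle factor.

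Next, I transport all stabilization and perturbation operations through this reduced torsion using Theorem \ref{t:percom}, Proposition \ref{p:torsta} and Proposition \ref{p:persta}. After these rewrites, $\mu_p(\la,\mu,\mu)(\al \ot \T{id}_\mu \ot \de)$ is expressed as the composition
\[
\xymatrix{
|F(\la,\mu)(p,p_0)| \ot |F(\la,\mu)(p_0,p)| \ar[r]^-{\G T} & |F(\la,\mu)(p_0,p) \cd F(\la,\mu)(p,p_0)| \ar[r]^-{\G P} & (\cc,0),
}
\]
evaluated on $\al \ot \de$, multiplied by a single scalar Fredholm determinant arising from the passage between $F^{23}$ and $\Om^{23}(p_0)$ via the invertible case of the perturbation isomorphism (Example \ref{ex:invpert}). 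A direct computation with the block structure of $\Om(\mu,\mu)(p_0) \cd F(\mu,\mu)(p,p_0)^{-1}$ shows this determinant equals $1$. The remaining composite is precisely $\psi(\la,\mu)$ as defined in \eqref{eq:dualII}, which closes the argument.

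The main obstacle is the bookkeeping of signs and graded identifications: one has to check that the rearrangements via the cited propositions do not introduce parasitic signs from the $\zz$-graded commutativity constraint of Notation \ref{n:picard}. This is harmless in the unit case because $F(\mu,\mu)(p,p_0)$ is invertible and thus has index $0$, so its determinant line sits in degree $0$; similarly $\psi(\la,\mu)$ lands in degree $0$. All signs encountered in the associativity of torsion (Proposition \ref{p:assotors}) therefore collapse, and the comparison goes through without additional factors.
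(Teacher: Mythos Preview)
Your reduction via Lemma~\ref{l:inverse} to the identity $\mu_p(\la,\mu,\mu)(\al\ot\T{id}_\mu\ot\de)=\psi(\la,\mu)(\al\ot\de)$ is correct, and the observation that $F^{23}=F(\mu,\mu)(p,p_0)\oplus\pi_\la(p_0)$ is invertible when $\nu=\mu$ is a good one. However, the heart of your argument --- the claim that after rearranging torsion, stabilisation and perturbation the composite becomes $\psi(\la,\mu)$ times a single Fredholm determinant of ``$\Om(\mu,\mu)(p_0)\cdot F(\mu,\mu)(p,p_0)^{-1}$'' --- has a genuine gap. As written this quotient does not even make sense: $\Om(\mu,\mu)(p_0)$ acts on $\pi_\mu(p_0)\C H^{\oplus 2}$, while $F(\mu,\mu)(p,p_0)$ maps $\pi_\mu(p)\C H\oplus\pi_\mu(p_0)\C H$ to $\pi_\mu(p_0)\C H\oplus\pi_\mu(p)\C H$, so there is no direct perturbation isomorphism between them. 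More substantively, the final perturbation in \eqref{eq:comp} compares the full triple product $F^{13}_\da F^{23}F^{12}+e$ with $\Om^{13}(p_0)\Om^{23}(p_0)\Om^{12}(p_0)+e_0$, and there is no reason the ``23-factor'' decouples cleanly into a separate scalar; the $\Om^{12}$ and $\Om^{13}$ factors also differ from their $F$-counterparts modulo trace class. The bookkeeping you describe as routine is actually the whole content of the argument, and you have not carried it out.

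The paper takes an entirely different route that avoids this computation. Since $\G L_p(\la,\mu)$ is a one-dimensional graded line, the endomorphism $x\mapsto\G M_p(x\ot\T{id}_\mu)$ is multiplication by some scalar, so it suffices to show it is idempotent. By the associativity established in Theorem~\ref{t:associativity}, idempotency follows once $\G M_p(\T{id}_\mu\ot\T{id}_\mu)=\T{id}_\mu$. In this fully degenerate case $\la=\mu=\nu$ one checks by direct matrix computation that $F(\la,\la)F_\da(\la,\la)=\pi_\la(p_0)\oplus\pi_\la(p)$ exactly, and that $F^{13}_\da F^{23}F^{12}+e$ equals $\Om^{13}(p_0)\Om^{23}(p_0)\Om^{12}(p_0)+e_0$ exactly (both are the identity), so all perturbation isomorphisms are literally the identity and no determinant needs to be evaluated. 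Your direct approach could in principle be completed, but it trades the paper's appeal to associativity for a non-trivial determinant computation that you have not supplied.
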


\begin{theorem}\label{t:associativity}
Suppose that the conditions in Assumption \ref{a:rep} are satisfied and that $p_0 \in R$ is a fixed idempotent. For any idempotent $p \in R$ with $p - p_0 \in I$, the pair $(\mathfrak{L}_p,\mathfrak{M}_p)$ satisfies the associativity condition, i.e. the following diagram is commutative:
\[
\xymatrix{
&\mathfrak{L}_p(\la,\mu)\otimes \mathfrak{L}_p(\mu,\nu)\otimes \mathfrak{L}_p(\nu,\tau)\ar[dl]_{\mathfrak{M}_p\otimes \T{id}}\ar[dr]^{\T{id}\otimes \mathfrak{M}_p}  &\\
\mathfrak{L}_p(\la,\nu)\otimes \mathfrak{L}_p(\nu,\tau)\ar[dr]_{\mathfrak{M}_p}&&\mathfrak{L}_p(\la,\mu)\otimes \mathfrak{L}_p(\mu,\tau)\ar[dl]^{\mathfrak{M}_p}\\
&\mathfrak{L}_p(\la,\tau)&
}
\]
\end{theorem}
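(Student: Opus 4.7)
The composition $\G M_p$ in $\G L_p$ is built from two ingredients: the trivialisation $\mu_p$ of \eqref{eq:comp} attached to each triple of representations, together with the unit $\varphi$ from \eqref{eq:dualI}. My plan is first to use Lemma \ref{l:inverse} (duality between $\G L_p$ and $\G L_p^\da$) to replace the two associativity compositions with a pair of isomorphisms
\[
|F(\la,\mu)(p,p_0)|\ot|F(\mu,\nu)(p,p_0)|\ot|F(\nu,\tau)(p,p_0)|\ot|F(\la,\tau)(p_0,p)|\to(\cc,0),
\]
obtained by inserting $\varphi(\la,\tau)$ and unwinding the definitions. Associativity of $\G M_p$ then reduces to the statement that these two isomorphisms agree.

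Next, I would introduce a four-object analogue $\mu_p^{(4)}(\la,\mu,\nu,\tau)$ of $\mu_p$: stabilise the four Fredholm operators $F^{12}$, $F^{23}$, $F^{34}$, $F^{14}_\da$ so that they act on the common Hilbert space $\bop_{i=1}^4\pi_{\la_i}(\cd)\C H$, assemble them into the composite $F^{14}_\da F^{34}F^{23}F^{12}$ by iterated torsion, stabilise again to the full Hilbert space $\C H^{\op 4}$, and apply a perturbation isomorphism to the graded determinant line of the natural extension of $\Om^{14}(p_0)^{-1}\Om^{34}(p_0)\Om^{23}(p_0)\Om^{12}(p_0)$. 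The key analytic input is a four-index version of Lemma \ref{l:pertcomp}: one shows, by the same trace-ideal manipulations as in Lemma \ref{l:1} together with Assumption \ref{a:rep}, that $F^{14}_\da F^{34}F^{23}F^{12}$ is a trace-class perturbation of this invertible operator, which equals the identity on its support by Lemma \ref{l:reduction}.

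I would then check that the two reduced isomorphisms of the first paragraph both agree with $\mu_p^{(4)}(\la,\mu,\nu,\tau)$. One grouping of the four factors corresponds to ``first multiply $(12)$ with $(23)$, then with $(34)$'', the other to ``first multiply $(23)$ with $(34)$, then with $(12)$''. Each grouping inserts an intermediate dual pair via $\varphi$ in order to factor $\mu_p^{(4)}$ through the corresponding ternary $\mu_p$'s. The required equality then follows, step by step, from the compatibilities developed in Sections \ref{s:tors}--\ref{s:stab}: associativity of torsion (Proposition \ref{p:assotors}), the cocycle property of perturbation (Theorem \ref{t:pervec}), the fact that perturbation commutes with torsion (Theorem \ref{t:percom}), and Propositions \ref{p:torsta} and \ref{p:persta}, which allow one to interchange stabilisations with torsion and with perturbation.

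The main obstacle is bookkeeping. The diagrams involve many parallel Hilbert space direct summands, each Fredholm operator carries two idempotent labels, and at every step one must verify that the hypotheses of Theorem \ref{t:percom} and of Propositions \ref{p:torsta}, \ref{p:persta} are satisfied, which in turn relies on the trace-class relations furnished by Assumption \ref{a:rep}. There is no single clever step in the argument; once the four-index analogue of Lemma \ref{l:pertcomp} is established, the remainder is a sustained diagram chase using only the toolkit of Sections \ref{s:tors}--\ref{s:stab}, and this is the argument to be carried out in detail in Section \ref{s:proofcat}.
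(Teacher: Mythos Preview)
Your overall strategy matches the paper's: introduce a quaternary trivialisation built from $F^{14}_\da F^{34} F^{23} F^{12}$ (Definition~\ref{def:tern}) and show that both iterated compositions factor through it. The four-index analogue of Lemma~\ref{l:pertcomp} you anticipate is exactly what is used, and one of the two triangles (the left one, Proposition~\ref{p:LHSasso}) is indeed settled by the kind of diagram chase you describe.

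Your claim that ``there is no single clever step'' and that the two groupings are handled symmetrically is, however, not correct; the paper explicitly flags the asymmetry. For the right-hand grouping one is reduced (Proposition~\ref{p:RHSassoI}) to comparing two trivialisations of $|F^{12}| \ot |F^{234}| \ot |F^{12}_\da|$: one first applies torsion and then stabilises by $e = \pi_\la(1-p) \op \pi_\mu(1-p_0) \op \cdots$, the other first stabilises the middle factor by $f = \pi_\la(1-p_0) \op \pi_\mu(1-p) \op \cdots$ and perturbs it away before applying $\psi$. Because $F^{12}$ swaps the $p$ and $p_0$ labels in slots $1$ and $2$, these stabilisations sit on genuinely different complementary subspaces, and Theorem~\ref{t:percom} together with Propositions~\ref{p:torsta} and \ref{p:persta} does not let you slide the middle perturbation past the outer torsion. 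The missing idea is Lemma~\ref{l:addcompl}: one constructs an auxiliary Fredholm operator $G(\la,\mu)$ on the complement with $F(\la,\mu) + G(\la,\mu) \sim_1 \Om(\la,\mu)(1)$, corrects it by a finite-rank operator to an invertible $\Ga$, and forms $\wih{F}^{12} := F^{12} + \Ga$ and $\wih{F}^{12}_\da := F^{12}_\da + \Ga^{-1}$. The point is the algebraic identity $\wih{F}^{12}_\da (F^{234} + \wih{f}\,) \wih{F}^{12} = F^{12}_\da F^{234} F^{12} + \wih{e}$, which bridges the two incompatible stabilisations and makes the comparison go through (Lemmas~\ref{l:assoRI}, \ref{l:assohat}, \ref{l:assoRIII}). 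Without this construction the right-hand triangle does not close.
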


\section{{ The coproduct category associated to representations of a ring}}\label{s:hopf}
Throughout this section we fix a unital ring $R$ and an ideal $I$ in $R$. We assume that
\[
\G{Rep} = \{ \pi_\la \}_{\la \in \La}
\]
is a family of (not necessarily unital) algebra homomorphisms from $R$ to the bounded operators on a fixed separable Hilbert space $\C H$. This family of representations is assumed to satisfy Assumption \ref{a:rep} and we fix an idempotent $p_0$ in the unital ring $R$. To ease the notation, we let
\[
\T{Idem}(R) \su R
\]
denote the subset of idempotent elements in $R$. We refer back to Section \ref{s:category} for the definition of the categories of $\zz$-graded complex lines $\G L_p$ and $\G L_q^\da$ appearing here below. The generalities needed regarding graded tensor products and coproduct categories can be found in Subsection \ref{ss:hopf}.

\begin{definition}\label{d:hopf}
We define a coproduct category $(\G H, \Delta, X)$ as follows:
\begin{enumerate}
\item The underlying set $X$ is given by $X := \big\{ p \in \T{Idem}(R) \mid p - p_0 \in I \big\}$.
\item For each $p,q \in X$ we define the category of $\zz$-graded complex lines $\G H(p,q)$ to be the full subcategory of the graded tensor product of 
$\G L_p$ and $\G L_q^\da$ on the subset of objects 
\[
D_\La := \big\{ \la \ot \la \mid \la \in \La \big\} \su \T{Obj}( \G L_p \ot \G L_q^\da) \, .
\]
Thus, we define
\[
\G H(p,q) := \big( \G L_p \ot \G L_q^\da \big) \big|_{D_\La}\, , 
\]
where $(-)|_{D_\La}$ denotes full subcategory on $D_\La$. We identify the set of objects in $\G H(p,q)$ with $\La$ via the isomorphism $\La \to D_\La$ given by $\la \mapsto \la \ot \la$.
\item Given $p,e,q \in X$, the coproduct
\[
\De_e \colon \G H(p,q) \to \G H(p,e) \ot \G H(e,q)
\]
is given by $\la \mapsto \la \ot \la$ on objects and on morphisms it is determined by the isomorphism
\[
\G L_p(\la,\mu) \ot \G L_q^\da(\la,\mu) 
\to^{\T{id} \ot \varphi \ot \T{id}}
\G L_p(\la,\mu) \ot \G L_e^\da(\la,\mu)  \ot \G L_e(\la,\mu) \ot \G L_q^\da(\la,\mu) \, , 
\]
of $\zz$-graded complex lines, where we recall that the isomorphism $\varphi \colon (\cc,0) \to \G L_e^\da(\la,\mu)  \ot \G L_e(\la,\mu)$ was introduced in Equation \eqref{eq:dualI}.
\end{enumerate}
We refer to the fixed idempotent $p_0 \in R$ as the \emph{base point} of the coproduct category $\G H$.
\end{definition}

We remark that our tentative coproduct category $\G H$ depends on the choice of the idempotent $p_0 \in R$. The dependency on this choice of base point will be examined in detail in Section \ref{s:change}.
\medskip

We now prove that Definition \ref{d:hopf} yields a coproduct category in the sense of Subsection \ref{ss:hopf}. For later reference, we state this result as a theorem and note that the proof is a consequence of Lemma \ref{l:category}, Lemma \ref{l:coassoc} and Proposition \ref{p:coprodfun}.

\begin{theorem}\label{t:copcat}
The assignments in Definition \ref{d:hopf} form a coproduct category $\G H$.
\end{theorem}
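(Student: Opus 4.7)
The plan is to verify the three conditions defining a coproduct category, namely that each $\G H(p,q)$ is a category of $\zz$-graded complex lines, that each $\De_e$ is a linear functor, and that the $\De_e$'s satisfy the coassociativity relation.

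First I would check that $\G H(p,q)$ is a category of $\zz$-graded complex lines. By Theorem \ref{thm:unitality} and Theorem \ref{t:associativity} (together with the analogous results for $\G L_q^\da$, which can be derived from the $\G L_q$-case via the duality in Proposition \ref{p:dualcomp} and Lemma \ref{l:inverse}), both $\G L_p$ and $\G L_q^\da$ are themselves categories of $\zz$-graded complex lines. The graded tensor product $\G L_p \ot \G L_q^\da$ inherits this structure in view of the conventions reviewed in Subsection \ref{ss:hopf}, and passing to the full subcategory on the diagonal set $D_\La$ preserves the requirements of Definition \ref{d:gralines}: for any $\la \in \La$, the endomorphism line is $\G L_p(\la,\la) \ot \G L_q^\da(\la,\la) = (\cc,0) \ot (\cc,0) = (\cc,0)$.

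Next I would establish coassociativity. On objects both sides of $(\T{id} \ot \De_f)\De_e$ and $(\De_e \ot \T{id})\De_f$ send $\la$ to $\la \ot \la \ot \la$. On morphisms, unfolding the definitions one sees that both compositions amount to inserting the isomorphisms $\varphi(\la,\mu)(e) \colon (\cc,0) \to \G L_e^\da(\la,\mu) \ot \G L_e(\la,\mu)$ and $\varphi(\la,\mu)(f) \colon (\cc,0) \to \G L_f^\da(\la,\mu) \ot \G L_f(\la,\mu)$ into the expression $\G L_p(\la,\mu) \ot \G L_q^\da(\la,\mu)$ at disjoint positions, producing in both cases the same element of
\[
\G L_p(\la,\mu) \ot \G L_e^\da(\la,\mu) \ot \G L_e(\la,\mu) \ot \G L_f^\da(\la,\mu) \ot \G L_f(\la,\mu) \ot \G L_q^\da(\la,\mu) \, .
\]
Since both insertions occur in degree zero (by Lemma \ref{l:inverse}), no sign issues arise.

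The hardest part, and what I expect to be the main obstacle, is showing that each $\De_e$ is a linear functor. Linearity is immediate from the fact that $\varphi$ is an isomorphism of $\zz$-graded complex lines. Preservation of the unit $\T{id}_\la$ reduces, via Lemma \ref{l:inverse}, to the identity $\psi \ci (\T{id} \ot \varphi) = \T{id}$, which pinpoints the normalisation of $\varphi$. The delicate point is preservation of composition: we must show that for morphisms $\al \in \G H(p,q)(\la,\mu)$ and $\be \in \G H(p,q)(\mu,\nu)$,
\[
\De_e( \G M_{p,q}( \al \ot \be) ) = \G M_{p,e}^{\ot 2} \big( \De_e(\al) \ot \De_e(\be) \big)
\]
after reordering the factors appropriately in the graded tensor product. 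Writing $\al = \al_1 \ot \al_2$ and $\be = \be_1 \ot \be_2$ in the tensor product decomposition and expanding both sides, the identity amounts to comparing an insertion of $\varphi(e)$ after applying $\G M_p \ot \G M_q^\da$ with two independent insertions of $\varphi(e)$ followed by $\G M_p \ot \G M_e^\da \ot \G M_e \ot \G M_q^\da$. The two expressions differ by the automorphism $(\psi \ot \T{id})(\T{id} \ot \varphi)$ inserted in the middle $\G L_e^\da \ot \G L_e$-pair, and this automorphism is the identity precisely by Proposition \ref{p:dualcomp} applied at the base point $e$, together with Lemma \ref{l:inverse}. The residual sign arising from the graded tensor product structure vanishes because all the $\varphi$-insertions live in degree zero. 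Assembling these three verifications yields the theorem.
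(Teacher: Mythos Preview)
Your three-step decomposition (each $\G H(p,q)$ is a category of $\zz$-graded complex lines, coassociativity, functoriality of $\De_e$) is exactly how the paper organises the proof, via Lemma \ref{l:category}, Lemma \ref{l:coassoc} and Proposition \ref{p:coprodfun}. The first two steps match the paper.

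For functoriality, both you and the paper reduce to the identity
\[
\G M_e^\da\big(\varphi_-(\mu,\nu) \ot \varphi_-(\la,\mu)\big) \ot \G M_e\big(\varphi_+(\la,\mu) \ot \varphi_+(\mu,\nu)\big) \;=\; \varphi(\la,\nu)(1)
\]
(isolated in the paper as Lemma \ref{l:trivmult2}). Your description of the mechanism is not accurate, however: the two sides of the functoriality equation do not ``differ by the automorphism $(\psi \ot \T{id})(\T{id} \ot \varphi)$'' inserted somewhere. The discrepancy lives entirely in the middle $\G L_e^\da \ot \G L_e$ slot and is the action of $\G M_e^\da \ot \G M_e$ on the pair of $\varphi$-insertions; the zigzag identity of Lemma \ref{l:inverse} alone does not collapse this. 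That said, the ingredients you list \emph{do} suffice: evaluate the commuting square of Proposition \ref{p:dualcomp} (with $e$ in place of $p$) on $\varphi_+(\la,\mu) \ot \varphi_+(\mu,\nu) \ot \varphi_-(\mu,\nu) \ot \varphi_-(\la,\mu)$, and use Lemma \ref{l:inverse} to see that each $\psi(\varphi_+ \ot \varphi_-)$ equals $1$; this gives $\psi(\la,\nu)\big(\G M_e(\ldots) \ot \G M_e^\da(\ldots)\big) = 1$, which in a $\zz$-graded line forces the displayed identity. The paper takes a different path and proves Lemma \ref{l:trivmult2} via a direct torsion/perturbation/stabilisation computation (Lemma \ref{l:trivmult1}), so it does not invoke Proposition \ref{p:dualcomp} here. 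Your route is shorter but forward-references a result only established in Section \ref{s:proofcat}.

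Your unitality argument is also off: preservation of $\T{id}_\la$ amounts to $\varphi(\la,\la)(1) = \T{id}_\la \ot \T{id}_\la$ in $\G L_e^\da(\la,\la) \ot \G L_e(\la,\la) \cong (\cc,0)$, which the paper verifies directly from the definition of $\varphi$ in \eqref{eq:dualI}. It does not reduce to the zigzag identity $(\psi \ot \T{id})(\T{id} \ot \varphi) = \T{id}$.
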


We start by showing that $\G H(p,q)$ is indeed a category for every $p,q \in X$:

\begin{lemma}\label{l:category}
Let $p,q \in X$. Then $\G H(p,q)$ is a category of $\zz$-graded complex lines.
\end{lemma}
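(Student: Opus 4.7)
The proof is essentially formal once the constituent categories $\G L_p$ and $\G L_q^\da$ are recognised as categories of $\zz$-graded complex lines in the sense of Definition \ref{d:gralines}. My plan is to carry out three steps, after which the conclusion follows by inheritance.

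First, I would verify that $\G L_p$ satisfies the three conditions of Definition \ref{d:gralines}: the morphisms are by construction graded determinant lines of Fredholm operators, hence $\zz$-graded complex lines; the composition $\G M_p$ is an isomorphism of such lines by Definition \ref{def:comp}; associativity is precisely Theorem \ref{t:associativity} and unitality is Theorem \ref{thm:unitality}. For $\G L_q^\da$ the same conditions hold: the Hom-objects are again graded determinant lines, the composition $\G M_q^\da$ is an isomorphism by Definition \ref{def:dualcomp}, and the associativity and unitality for $\G L_q^\da$ follow from the corresponding properties of $\G L_q$ combined with the duality $(\varphi,\psi)$ of \eqref{eq:dualI}, \eqref{eq:dualII}, Lemma \ref{l:inverse} and Proposition \ref{p:dualcomp}.

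Next I would check that the graded tensor product $\G L_p \ot \G L_q^\da$ is again a category of $\zz$-graded complex lines. On morphisms this is immediate, since a tensor product of two $\zz$-graded complex lines is a $\zz$-graded complex line. The composition
\[
M\bigl((\al\ot\be)\ot(\al'\ot\be')\bigr) := (-1)^{\ep(\be)\cd \ep(\al')}\cd M(\al\ot\al')\ot M(\be\ot\be')
\]
is still an isomorphism of graded lines; associativity reduces to the associativity in each factor together with the standard Koszul sign bookkeeping, i.e.\ the total signs produced by the two ways of bracketing a triple composition agree because each transposition of adjacent factors is weighted by the same $(-1)^{\ep\cd\ep'}$. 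Unitality is automatic since identities have degree zero and so contribute no signs.

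Finally I would pass to the full subcategory on the diagonal $D_\La = \{\la \ot \la \mid \la \in \La\}$. Full subcategories inherit all three axioms of Definition \ref{d:gralines}: the Hom-lines $\T{Mor}_{\G H(p,q)}(\la,\mu) = \G L_p(\la,\mu)\ot \G L_q^\da(\la,\mu)$ are unchanged $\zz$-graded complex lines; the composition is the restriction of the composition on $\G L_p \ot \G L_q^\da$, hence an isomorphism $\G L_p(\la,\mu)\ot \G L_q^\da(\la,\mu) \ot \G L_p(\mu,\nu) \ot \G L_q^\da(\mu,\nu) \to \G L_p(\la,\nu) \ot \G L_q^\da(\la,\nu)$; and each diagonal object $\la\ot \la$ retains its identity $\T{id}_\la \ot \T{id}_\la \in (\cc,0)\ot (\cc,0) = (\cc,0)$. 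There is no substantive obstacle: the only care required is the tracking of Koszul signs in the tensor product composition, but these cancel properly in the associativity check and are absent at identities, so the verification is mechanical.
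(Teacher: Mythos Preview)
Your proposal is correct and follows essentially the same approach as the paper: invoke Theorem~\ref{thm:unitality} and Theorem~\ref{t:associativity} for $\G L_p$, obtain the analogous results for $\G L_q^\da$ via the duality relation in Proposition~\ref{p:dualcomp}, and then appeal to the generalities on graded tensor products from Subsection~\ref{ss:hopf} (plus restriction to the diagonal full subcategory). The paper's proof is simply the terse version of what you have spelled out.
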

\begin{proof}
The fact that $\G H(p,q)$ is a category of $\zz$-graded complex lines follows from Theorem \ref{thm:unitality} and Theorem \ref{t:associativity} together with the corresponding theorems for the dual category $\G L_q^\da$. See also the generalities regarding graded tensor products from Subsection \ref{ss:hopf}.
\end{proof}

For $p,q \in X$ and $\la,\mu,\nu \in \La$ we let
\[
\G M_{p,q} \colon \G H(p,q)(\la,\mu) \ot \G H(p,q)(\mu,\nu) \to \G H(p,q)(\la,\nu)
\]
denote the isomorphism of $\zz$-graded complex lines coming from the composition in $\G H(p,q)$.

The next lemma has a straightforward proof, which we therefore omit.

\begin{lemma}\label{l:coassoc}
The coproduct operation $\Delta$ of Definition \ref{d:hopf} is coassociative in the sense that
\[
(\T{id} \ot \De_f)\De_e = (\De_f \ot \T{id}) \De_e \colon \G H(p,q) \to \G H(p,e) \ot \G H(e,f) \ot \G H(f,q) \, ,
\]
whenever $p,e,f,q \in X$.
\end{lemma}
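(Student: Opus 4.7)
The plan is to verify the identity by direct unwinding of Definition~\ref{d:hopf}(3). Agreement on objects is immediate, since both composites send $\la \in \La$ to $\la \ot \la \ot \la \in \G H(p,e) \ot \G H(e,f) \ot \G H(f,q)$; the substance of the proof is checking agreement on morphism spaces. I will trace both composites in parallel on a generic morphism $\al \ot \be \in \G L_p(\la,\mu) \ot \G L_q^\da(\la,\mu) = \G H(p,q)(\la,\mu)$, symbolically writing the distinguished elements $\varphi_e(1) = \ga_e \ot \de_e \in \G L_e^\da(\la,\mu) \ot \G L_e(\la,\mu)$ and $\varphi_f(1) = \ga_f \ot \de_f \in \G L_f^\da(\la,\mu) \ot \G L_f(\la,\mu)$.

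For $(\T{id} \ot \De_f)\De_e$: the functor $\De_e$ inserts $\varphi_e(1)$ between $\al$ and $\be$, yielding
\[
(\al \ot \ga_e,\ \de_e \ot \be) \in \G H(p,e)(\la,\mu) \ot \G H(e,q)(\la,\mu);
\]
then $\T{id} \ot \De_f$ inserts $\varphi_f(1)$ into the second slot to produce
\[
(\al \ot \ga_e,\ \de_e \ot \ga_f,\ \de_f \ot \be) \in \G H(p,e)(\la,\mu) \ot \G H(e,f)(\la,\mu) \ot \G H(f,q)(\la,\mu).
\]
For the reverse order $(\De_e \ot \T{id})\De_f$: the functor $\De_f$ inserts $\varphi_f(1)$ between $\al$ and $\be$, yielding $(\al \ot \ga_f,\ \de_f \ot \be) \in \G H(p,f) \ot \G H(f,q)$; then $\De_e \ot \T{id}$ inserts $\varphi_e(1)$ into the first slot to produce the identical triple. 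The two composites therefore coincide by inspection.

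The main (and essentially the only) subtlety to confirm is that no sign discrepancy arises from the graded-tensor-product conventions recalled in Subsection~\ref{ss:hopf}. The commutativity constraint there is activated only when two morphisms are composed across separate tensor factors, via $M((\al\ot\be)\ot(\al'\ot\be')) = (-1)^{\ep(\be)\ep(\al')} M(\al\ot\al') \ot M(\be\ot\be')$. In the computations above, both composites amount to purely \emph{horizontal} insertions of $\varphi_e(1)$ and $\varphi_f(1)$ into disjoint slots of the tensor product, with no reordering of morphism factors past one another, so the commutativity constraint is never invoked on either side.
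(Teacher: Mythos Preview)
Your proof is correct and is precisely the straightforward unwinding that the paper has in mind (the paper omits the proof, calling it ``straightforward''). Note that you have silently corrected a typo in the statement: the right-hand side should read $(\De_e \ot \T{id})\De_f$, as you wrote, not $(\De_f \ot \T{id})\De_e$; this is consistent with the general coassociativity axiom in Subsection~\ref{ss:hopf} and with the stated target $\G H(p,e) \ot \G H(e,f) \ot \G H(f,q)$.
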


The proof that the various coproducts are functorial requires more care. We state the result here, but the proof will occupy the remainder of this section.

\begin{prop}\label{p:coprodfun}
The coproduct $\Delta$ of Definition \ref{d:hopf} is functorial in the sense that
    \begin{equation*}
\De_e \colon \G{H}(p,q)\to \G{H}(p,e) \ot \G{H}(e,q)
    \end{equation*}
is a linear functor, whenever $p,e,q \in X$.
\end{prop}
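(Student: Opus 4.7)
Linearity of $\De_e$ on morphisms is built into Definition \ref{d:hopf}: it is literally the tensor $\T{id} \ot \varphi \ot \T{id}$ of isomorphisms of $\zz$-graded complex lines. The substance of the proposition is therefore functoriality, which splits into preservation of identities and preservation of composition. For identities, when $\la = \mu$ the operators $F(\la,\la)(e,p_0)$ and $F(\la,\la)(p_0,e)$ are invertible by the remark following Lemma \ref{l:reduction}, so $\G L_e(\la,\la)$ and $\G L_e^\da(\la,\la)$ are both canonically $(\cc,0)$ and the perturbation and torsion isomorphisms appearing in \eqref{eq:dualI} reduce to scalar multiplication by $1$. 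A direct unwinding then shows $\varphi(\la,\la)(1) = 1 \ot 1$, which yields $\De_e(\T{id}_\la) = \T{id}_\la \ot \T{id}_\la$.

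For preservation of composition, fix $\la,\mu,\nu \in \La$ and consider $\al = a \ot a' \in \G H(p,q)(\la,\mu)$ and $\be = b \ot b' \in \G H(p,q)(\mu,\nu)$. Unwinding the graded-tensor composition law of Subsection \ref{ss:hopf} on both sides of the required identity $\De_e(\be \ci \al) = \De_e(\be) \ci \De_e(\al)$ (with the right-hand composition taken in $\G H(p,e) \ot \G H(e,q)$), and cancelling the common factors $\G M_p(a \ot b) \in \G L_p(\la,\nu)$ and $\G M_q^\da(b' \ot a') \in \G L_q^\da(\la,\nu)$ that appear on both sides, the problem reduces to the coherence identity
\[
\varphi(\la,\nu)(1) \;=\; (\G M_e^\da \ot \G M_e)\big(\sigma\big(\varphi(\la,\mu)(1) \ot \varphi(\mu,\nu)(1)\big)\big)
\]
in $\G L_e^\da(\la,\nu) \ot \G L_e(\la,\nu)$, where $\sigma$ is the Koszul reshuffling
\[
(v \ot w) \ot (v' \ot w') \;\mapsto\; (-1)^{\ep(v')(\ep(v) + \ep(w))} (v' \ot v) \ot (w \ot w')
\]
dictated by the graded tensor product structure. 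This identity is precisely the expected compatibility between the duality $(\varphi,\psi)$ of Lemma \ref{l:inverse} and the compositions $\G M_e$, $\G M_e^\da$ defined in \eqref{eq:comp} and \eqref{eq:dualcomp}.

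To establish the coherence identity I would expand both sides using the explicit definitions of $\varphi$, $\G M_e$ and $\G M_e^\da$ as compositions of perturbation, torsion and stabilisation isomorphisms of determinant lines of the Fredholm operators $F(\la_i,\la_j)(p_i,p_j)$. The resulting diagram, in which every node is an explicit determinant line, is then chased with the toolkit from Sections \ref{s:torsfred}--\ref{s:stab}: the cocycle property of perturbation (Theorem \ref{t:pervec}), the associativity of torsion (Proposition \ref{p:assotors}), and the three commutativity results stating that perturbation commutes with torsion (Theorem \ref{t:percom}), that perturbation commutes with stabilisation (Proposition \ref{p:persta}), and that torsion commutes with stabilisation (Proposition \ref{p:torsta}). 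The main obstacle is sign-matching: the Koszul signs appearing in $\sigma$ and in the graded tensor product composition on $\G H(p,e) \ot \G H(e,q)$ must cancel precisely against the signs produced when Fredholm operators acting on disjoint summands are transposed past each other, which is exactly the role of Proposition \ref{p:torsign}. The sheer size of the resulting commutative diagram is presumably the reason the full calculation is deferred to Section \ref{s:proofcat}.
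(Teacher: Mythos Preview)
Your reduction is correct and matches the paper exactly: unitality comes from $\varphi(\la,\la)(1) = \T{id}_\la \ot \T{id}_\la$, and compatibility with composition reduces to the coherence identity
\[
\varphi(\la,\nu)(1) \;=\; \G M_e^\da\big(\varphi_-(\mu,\nu) \ot \varphi_-(\la,\mu)\big) \ot \G M_e\big(\varphi_+(\la,\mu) \ot \varphi_+(\mu,\nu)\big),
\]
which is precisely Lemma~\ref{l:trivmult2} in the paper. Two points of divergence are worth noting.

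First, your sign discussion is over-engineered. In your reshuffling $\sigma$ the exponent is $\ep(v')(\ep(v)+\ep(w))$, but $v \ot w = \varphi(\la,\mu)(1)$ lies in the image of a map from $(\cc,0)$, so $\ep(v)+\ep(w)=0$ and the Koszul sign is identically~$1$. The paper observes this in one line (``Since the sign related to the above commutativity constraint is equal to one''), and Proposition~\ref{p:torsign} is not invoked at all in this proof.

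Second, rather than a brute-force diagram chase, the paper establishes the coherence identity via an intermediate result, Lemma~\ref{l:trivmult1}, which shows that the tensor of trivialisations $\mu_p^\da \ot \mu_p$ (the maps underlying $\G M_p^\da$ and $\G M_p$) can be rewritten purely in terms of $\varphi$ and $\psi$ as $\psi \circ (\T{id} \ot \varphi^{-1} \ot \T{id}) \circ (\T{id}^{\ot 2} \ot \varphi^{-1} \ot \T{id}^{\ot 2})$. The proof of that lemma is the one place where the toolkit (Theorem~\ref{t:percom}, Propositions~\ref{p:torsta} and~\ref{p:persta}, associativity of torsion and perturbation) is deployed; once it is in hand, Lemma~\ref{l:trivmult2} follows from Lemma~\ref{l:inverse} in a few lines. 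This factorisation keeps the diagram chase localised and avoids tracking the compositions $\G M_e$, $\G M_e^\da$ and the duality $\varphi$ simultaneously. Finally, the proof is \emph{not} deferred to Section~\ref{s:proofcat}: it is given in full immediately after the statement, with Lemmas~\ref{l:trivmult1} and~\ref{l:trivmult2} as the supporting pillars.
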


To prove Proposition \ref{p:coprodfun}, we need the following lemmas.

\begin{lemma}\label{l:trivmult1}
Let $\la,\mu,\nu \in \La$ and $p \in X$ be given. Then the trivialisation
\begin{equation}\label{eq:trivmul}
\mu_p^\da \ot \mu_p \colon \G L_p(\la,\nu) \ot \G L_p^\da(\mu,\nu) \ot \G L_p^\da(\la,\mu)
\ot \G L_p(\la,\mu) \ot \G L_p(\mu,\nu) \ot \G L_p^\da(\la,\nu) \to (\cc,0)
\end{equation}
agrees with the trivialisation
\begin{equation}\label{eq:trivvar}
\begin{split}
& \psi \circ (\T{id} \ot \varphi^{-1} \ot \T{id}) \circ (\T{id}^{\ot 2} \ot \varphi^{-1} \ot \T{id}^{\ot 2}) \\
& \q \colon \G L_p(\la,\nu) \ot \G L_p^\da(\mu,\nu) \ot \G L_p^\da(\la,\mu)
\ot \G L_p(\la,\mu) \ot \G L_p(\mu,\nu) \ot \G L_p^\da(\la,\nu) \to (\cc,0) \, ,
\end{split}
\end{equation}
where the trivialisations $\mu_p$ and $\mu_p^\da$ are defined in Equation \eqref{eq:comp} and Equation \eqref{eq:dualcomp}, respectively.
\end{lemma}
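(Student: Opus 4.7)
Both sides of the claimed identity are isomorphisms of $\zz$-graded complex lines from the six-fold tensor product
\[
L := \G L_p(\la,\nu) \ot \G L_p^\da(\mu,\nu) \ot \G L_p^\da(\la,\mu) \ot \G L_p(\la,\mu) \ot \G L_p(\mu,\nu) \ot \G L_p^\da(\la,\nu)
\]
to the unit line $(\Cb,0)$. Since each $\G L_p$-factor is paired with its $\G L_p^\da$-counterpart, the total line $L$ lives in degree zero, so both trivialisations are multiplications by complex scalars, and it suffices to show that these two scalars agree.

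The strategy is to expand each side into its atomic constituents using the definitions from equations \eqref{eq:dualI}, \eqref{eq:dualII} for $\varphi, \psi$ and from equations \eqref{eq:comp}, \eqref{eq:dualcomp} for $\mu_p, \mu_p^\da$. The trivialisation $\psi \circ (\T{id} \ot \varphi^{-1} \ot \T{id}) \circ (\T{id}^{\ot 2} \ot \varphi^{-1} \ot \T{id}^{\ot 2})$ unfolds into three independent ``torsion-then-perturbation'' pairings, one for each duality pair $(\G L_p^\da(\la,\mu), \G L_p(\la,\mu))$, $(\G L_p^\da(\mu,\nu), \G L_p(\mu,\nu))$, $(\G L_p(\la,\nu), \G L_p^\da(\la,\nu))$. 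The trivialisation $\mu_p^\da \ot \mu_p$ presents two triple pairings, each assembled as a stabilise--torsion--stabilise--perturb sandwich treating three lines at once. The bridge between the two sides is a diagram chase relying on the coherence theorems of Sections~\ref{s:torsfred} and~\ref{s:stab}: associativity of torsion (Proposition~\ref{p:assotors}) to re-associate torsion steps; perturbation commutes with torsion (Theorem~\ref{t:percom}) and with stabilisation (Proposition~\ref{p:persta}), and torsion commutes with stabilisation (Proposition~\ref{p:torsta}), so that these operations may be permuted past one another; and the cocycle property of the perturbation isomorphism (Theorem~\ref{t:pervec}~(1)) to splice successive perturbations into a single one.

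Concretely, on the $\psi$--$\varphi^{-1}$ side I would first stabilise every Fredholm operator involved so that it acts on the full summand $\C H^{\op 3}$ (with identities on the complementary subspaces); then apply Proposition~\ref{p:torsta} and Proposition~\ref{p:persta} to commute this stabilisation past the torsions and perturbations, use associativity of torsion to merge the three torsions into one six-fold torsion, and use the cocycle property to splice the three perturbations into a single one. This rewrites that trivialisation as ``one combined stabilisation, one combined torsion, one combined perturbation''. The trivialisation $\mu_p^\da \ot \mu_p$ already has this shape after unfolding its stabilise--torsion--stabilise--perturb structure and commuting intermediate stabilisations outward, so that both compositions land in the determinant line of a product of six Fredholm operators on $\C H^{\op 3}$. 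Equality of the two resulting scalars then reduces to a Fredholm-determinant identity of the type made explicit in Example~\ref{ex:invpert}, verified directly from the formulas for $F^{ij}$, $F^{ij}_\da$ and $\Om^{ij}(p_0)$ recorded in Notation~\ref{n:invfred} and Notation~\ref{n:bigfred}. The main obstacle is therefore not a new analytic input but the combinatorial bookkeeping: tracking Hilbert-space decompositions, idempotents, and the signs coming from the $\zz$-grading of the tensor product of categories through the rearrangement.
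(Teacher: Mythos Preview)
Your proposal is correct and follows essentially the same route as the paper: stabilise all operators to act on the triple sum, then shuffle stabilisation, torsion, and perturbation past one another using Propositions~\ref{p:assotors}, \ref{p:torsta}, \ref{p:persta} and Theorems~\ref{t:percom}, \ref{t:pervec} until both trivialisations are written in a common normal form. The only refinement is that the paper's rearrangement makes the two compositions coincide \emph{on the nose} (the same $\G S$--$\G T$--$\G P$ string applied to the same operators), so the residual ``Fredholm-determinant identity'' you anticipate is vacuous and no explicit determinant or sign computation is actually required.
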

\begin{proof}
To ease the notation in this proof we suppress the idempotents $p$ and $p_0$ and put
\[
F(\la,\mu) := F(\la,\mu)(p,p_0) \q \T{and} \q F_\da(\la,\mu) := F(\la,\mu)(p_0,p) \, .
\]
We are sometimes also suppressing the tuple of indices $(\la,\mu,\nu)$ and hence applying the notation from the beginning of Subsection \ref{ss:comp}.

By the definition of $\varphi$ and $\psi$ from Equation \eqref{eq:dualI} and \eqref{eq:dualII}, the isomorphism in Equation \eqref{eq:trivvar} is given by the composition
\[
\begin{split}
&   |F(\la,\nu)|\ot|F_\da(\mu,\nu)|\ot|F_\da(\la,\mu)|\ot|F(\la,\mu)|\ot|F(\mu,\nu)|\ot|F_\da(\la,\nu)| \\
&\q \to^{\T{id}^{\ot 2}\ot(\G P \ci \G T) \ot \T{id}^{\ot 2}}
|F(\la,\nu)|\ot|F_\da(\mu,\nu)| \ot |F(\mu,\nu)|\ot |F_\da(\la,\nu)| \\
& \q \to^{\T{id} \ot (\G P \ci \G T) \ot \T{id}}
|F(\la,\nu)| \ot |F_\da(\la,\nu)|
\to^{\G P \ci \G T} | \pi_\la(p) \op \pi_\nu(p_0)| = (\cc,0) \, .
\end{split}
\]
Without altering this isomorphism, we could have stabilised beforehand (see Proposition \ref{p:torsta} and Proposition \ref{p:persta}), thus achieving the following alternative description of the above isomorphism:
\[
\begin{split}
&   |F(\la,\nu)|\ot|F_\da(\mu,\nu)|\ot|F_\da(\la,\mu)|\ot|F(\la,\mu)|\ot|F(\mu,\nu)|\ot|F_\da(\la,\nu)| \\
& \q \to^{\G S}
|F^{13}|\ot|F_\da^{23}|\ot|F_\da^{12}|\ot|F^{12}|\ot|F^{23}|\ot|F_\da^{13}| \\
&\q \to^{\T{id}^{\ot 2}\ot(\G P \ci \G T) \ot \T{id}^{\ot 2}}
|F^{13}|\ot|F_\da^{23}| \ot |F^{23}|\ot |F_\da^{13}| \\
& \q \to^{\T{id} \ot (\G P \ci \G T) \ot \T{id}}
|F^{13}| \ot |F_\da^{13}|
\to^{\G P \ci \G T} | \pi_\la(p) \op \pi_\mu(p_0) \op \pi_\nu(p_0)| = (\cc,0) \, .
\end{split}
\]
Gathering all the torsion isomorphisms and all the perturbation isomorphisms (using Theorem \ref{t:percom}) and using the associativity of these operations (Theorem \ref{t:torprop} and Theorem \ref{t:pervec}), we then obtain the formula
\[
\begin{split}
&   |F(\la,\nu)|\ot|F_\da(\mu,\nu)|\ot|F_\da(\la,\mu)|\ot|F(\la,\mu)|\ot|F(\mu,\nu)|\ot|F_\da(\la,\nu)| \\
& \q \to^{(\G T \ci \G S) \ot (\G T \ci \G S)}
\big| F_\da^{12} \cd F_\da^{23} \cd F^{13} \big| 
\ot \big| F_\da^{13} \cd F^{23} \cd F^{12} \big| \\
& \q \to^{\G T} \big| F_\da^{13} \cd F^{23} \cd F^{12} \cd  F_\da^{12} \cd F_\da^{23} \cd F^{13} \big| \\
& \q \to^{\G P} | \pi_\la(p) \op \pi_\mu(p_0) \op \pi_\nu(p_0)| = (\cc,0) \, ,
\end{split}
\]
for the isomorphism in Equation \eqref{eq:trivvar}. However, using that stabilisation commutes with torsion and perturbation (Proposition \ref{p:torsta} and Proposition \ref{p:persta}), and that torsion commutes with perturbation (Theorem \ref{t:percom}) we now see that the isomorphism in Equation \eqref{eq:trivvar} agrees with the isomorphism
\[
\begin{split}
& |F(\la,\nu)|\ot|F_\da(\mu,\nu)|\ot|F_\da(\la,\mu)|\ot|F(\la,\mu)|\ot|F(\mu,\nu)|\ot|F_\da(\la,\nu)| \\
& \q \to^{(\G{T} \ci \G S)\ot (\G{T} \ci \G S)} |F_\da^{12} \cd F_\da^{23}\cd F^{13} |\ot
|F_\da^{13}\cd F^{23}\cd F^{12}| \\
& \q \to^{\G S \ot \G S}
|F_\da^{12} \cd F_\da^{23}\cd F^{13} + \pi_\la(1 - p) \op \pi_\mu(1 - p_0) \op \pi_\mu(1 - p_0) | \\
& \qqq \qqq \ot
|F_\da^{13}\cd F^{23}\cd F^{12} + \pi_\la(1 - p) \op \pi_\mu(1 - p_0) \op \pi_\mu(1 - p_0)| \\
& \q \to^{\G P \ot \G P}
\big|\Om^{12}(p_0)\cd\Om^{23}(p_0)\cd \Om^{13}(p_0) + (\pi_\la \op \pi_\mu \op \pi_\nu)(1 - p_0) \big| \\
& \qqq \qqq \ot \big|\Om^{13}(p_0)\cd\Om^{23}(p_0)\cd\Om^{12}(p_0) + (\pi_\la \op \pi_\mu \op \pi_\nu)(1 - p_0) \big| \\
& \q \to^{\G T} \big|  (\pi_\la \op \pi_\mu \op \pi_\nu)(1) \big| = (\cc,0) \, .
\end{split}
\]
But this is exactly the isomorphism in Equation \eqref{eq:trivmul} and the lemma is therefore proved.
\end{proof}

Before stating the next lemma, we recall from Notation \ref{n:picard} that $\epsilon \colon (\sL,n) \ot (\sM,m) \to (\sM,m) \ot (\sL,n)$ denotes the commutativity constraint in the context of $\zz$-graded complex lines.

\begin{lemma}\label{l:trivmult2}
Let $\la,\mu,\nu \in \La$ and let $p \in X$. Then the composition
\begin{equation}\label{eq:trivcomp}
\begin{split}
& \G L^\da_p(\la,\mu) \ot \G L_p(\la,\mu) \ot \G L_p^\da(\mu,\nu) \ot \G L_p(\mu,\nu)
\to^{\epsilon} \G L^\da_p(\mu,\nu) \ot \G L_p^\da(\la,\mu) \ot \G L_p(\la,\mu) \ot \G L_p(\mu,\nu) \\
& \qq \to^{\G{M}^\da_p \ot \G{M}_p} \G L_p^\da(\la,\nu)  \ot \G L_p(\la,\nu)
\end{split}
\end{equation}
agrees with the composition
\[
\xymatrix{
\G L^\da_p(\la,\mu) \ot \G L_p(\la,\mu) \ot \G L_p^\da(\mu,\nu) \ot \G L_p(\mu,\nu)
\ar[rr]^>>>>>>>>{\varphi^{-1} \ot \varphi^{-1}} && (\cc,0) \ar[r]^>>>>>{\varphi} & \G L_p^\da(\la,\nu)  \ot \G L_p(\la,\nu) \, . }
 \]
\end{lemma}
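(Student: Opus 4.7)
The plan is to unfold the definitions of $\G M_p^\da$ and $\G M_p$ in the left hand side of the claim, apply Lemma \ref{l:trivmult1} to identify the resulting occurrence of $\mu_p^\da \ot \mu_p$ with a composition of $\varphi^{-1}$'s and $\psi$, and then invoke the right duality relation of Lemma \ref{l:inverse} to recognise the resulting map as the right hand side.

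I would first observe that both of the $\zz$-graded complex lines $\G L_p^\da(\la,\mu) \ot \G L_p(\la,\mu)$ and $\G L_p^\da(\mu,\nu) \ot \G L_p(\mu,\nu)$ are concentrated in degree zero, since $\varphi$ is a degree-preserving isomorphism from $(\cc,0)$ to each. Consequently the commutativity constraint $\epsilon$ in the statement introduces no sign, and each subsequent graded tensor product of morphisms is strictly sign free, since every morphism involved is a degree-preserving isomorphism of $\zz$-graded lines.

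Unfolding Definitions \ref{def:comp} and \ref{def:dualcomp} then expresses the left hand side as the composition that first applies $\epsilon$ to yield $B := \G L_p^\da(\mu,\nu) \ot \G L_p^\da(\la,\mu) \ot \G L_p(\la,\mu) \ot \G L_p(\mu,\nu)$, then appends a copy of $\varphi \colon (\cc,0) \to \G L_p^\da(\la,\nu) \ot \G L_p(\la,\nu)$ on both ends of $B$ to obtain the eight-fold product
\[
\G L_p^\da(\la,\nu) \ot \G L_p(\la,\nu) \ot \G L_p^\da(\mu,\nu) \ot \G L_p^\da(\la,\mu) \ot \G L_p(\la,\mu) \ot \G L_p(\mu,\nu) \ot \G L_p^\da(\la,\nu) \ot \G L_p(\la,\nu),
\]
and finally applies $\mu_p^\da \ot \mu_p$ on the middle six factors, landing in $\G L_p^\da(\la,\nu) \ot \G L_p(\la,\nu)$. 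These middle six factors are precisely the source of the trivialisation treated by Lemma \ref{l:trivmult1}.

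Applying Lemma \ref{l:trivmult1} substitutes $\mu_p^\da \ot \mu_p$ by $\psi \ci (\T{id} \ot \varphi^{-1} \ot \T{id}) \ci (\T{id}^{\ot 2} \ot \varphi^{-1} \ot \T{id}^{\ot 2})$. This performs three contractions: first pairing the central $\G L_p^\da(\la,\mu) \ot \G L_p(\la,\mu)$ via $\varphi^{-1}$, then pairing the resulting adjacent $\G L_p^\da(\mu,\nu) \ot \G L_p(\mu,\nu)$ via $\varphi^{-1}$, and finally pairing the resulting adjacent $\G L_p(\la,\nu) \ot \G L_p^\da(\la,\nu)$ via $\psi$. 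Tracking $\epsilon$ backwards, the two $\varphi^{-1}$ pairings are precisely those appearing in the right hand side of the claim, acting on positions one, two and three, four of the source respectively. The remaining contraction pairs the $\G L_p(\la,\nu)$ from the left-appended $\varphi$ with the $\G L_p^\da(\la,\nu)$ from the right-appended $\varphi$ via $\psi$; by the right duality relation $(\T{id} \ot \psi) \ci (\varphi \ot \T{id}) = \T{id}$ of Lemma \ref{l:inverse}, this collapses the two insertions of $\varphi$ together with the intermediate $\psi$ to a single insertion of $\varphi \colon (\cc,0) \to \G L_p^\da(\la,\nu) \ot \G L_p(\la,\nu)$. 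The left hand side therefore agrees with $\varphi \ci (\varphi^{-1} \ot \varphi^{-1})$, proving the claim. The main obstacle is bookkeeping the graded tensor product signs arising from the permutation $\epsilon$, which is eliminated uniformly by the degree-zero observation in the first paragraph.
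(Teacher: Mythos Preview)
Your proposal is correct and follows essentially the same route as the paper: unfold the definitions of $\G M_p$ and $\G M_p^\da$ to reach the eight-fold tensor product, apply Lemma \ref{l:trivmult1} to replace $\mu_p^\da \ot \mu_p$ by the three successive contractions, and then invoke the duality relation $(\T{id} \ot \psi) \ci (\varphi \ot \T{id}) = \T{id}$ from Lemma \ref{l:inverse} to collapse the two appended copies of $\varphi$ together with $\psi$ into a single $\varphi$. The paper records the sign observation at the end rather than the start, but your degree-zero argument for the vanishing of the $\epsilon$-sign is exactly the content of the paper's remark that ``the sign related to the above commutativity constraint is equal to one.''
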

\begin{proof}
By Lemma \ref{l:trivmult1} and the definition of the isomorphisms $\G{M}^\da_p$ and $\G{M}_p$, the isomorphism in \eqref{eq:trivcomp} agrees with the isomorphism
\[
  \begin{split}
& \G L^\da_p(\la,\mu) \ot \G L_p(\la,\mu) \ot \G L_p^\da(\mu,\nu) \ot \G L_p(\mu,\nu)
\to^{\epsilon} \G L^\da_p(\mu,\nu) \ot \G L_p^\da(\la,\mu) \ot \G L_p(\la,\mu) \ot \G L_p(\mu,\nu) \\
& \q \to^{\varphi\ot \T{id}^{\ot 4} \ot \varphi}
\G L_p^\da(\la,\nu) \ot \G L_p(\la,\nu) \ot \G L^\da_p(\mu,\nu) \ot \G L_p^\da(\la,\mu)
\ot \G L_p(\la,\mu) \ot \G L_p(\mu,\nu) \ot \G L_p^\da(\la,\nu) \ot \G L_p(\la,\nu) \\
& \q \to^{\T{id} \ot \big(\psi \circ(\T{id} \ot\varphi^{-1}\ot \T{id})\circ(\T{id}^{\ot 2}\ot\varphi^{-1}\ot \T{id}^{\ot 2})\big)\ot \T{id}}
\G L_p^\da(\la,\nu)  \ot \G L_p(\la,\nu) \, .
\end{split}
\]
Since the sign related to the above commutativity constraint is equal to one, we obtain the result of the lemma by an application of the duality relation from Lemma \ref{l:inverse}.
\end{proof}

\begin{proof}[Proof of Proposition \ref{p:coprodfun}]
Let $p$, $q$ and $e$ be elements in $X$. We must show that $\Delta_e \colon \G H(p,q) \to \G H(p,e) \ot \G H(e,q)$ is unital and respects the composition of morphisms. For $\la \in \La$, one may verify that the unitality condition follows from the identity
\[
 \varphi(\la,\la)(1)=\T{id}_\la \ot \T{id}_\la \in 
\G L_e^\da(\la,\la) \ot \G L_e(\la,\la) = \left| \ma{cc}{ 0 & \pi_\la(e) \\ \pi_\la(p_0) & 0}\right|\ot \left| \ma{cc}{ 0 & \pi_\la(p_0)\\ \pi_\la(e) & 0}\right| \, ,
\]
which in turn is a straightforward consequence of the definition of $\varphi \colon (\cc,0) \to \G L_e^\da(\la,\la) \ot \G L_e(\la,\la)$ (see \eqref{eq:dualI}).

To show that $\Delta_e$ respects the composition of morphisms, we let $\la,\mu$ and $\nu$ be indices in $\La$. We need to prove that the following diagram commutes
    \begin{equation}\label{eq:cofunc}
\xymatrix{
\G H(p,q)(\la,\mu) \ot \G H(p,q)(\mu,\nu)
\ar[rr]^>>>>>>>>>>>{\De_e \ot \De_e} \ar[d]_{\G M_{p,q}}
& & \big( \G H(p,e)(\la,\mu) \ot \G H(e,q)(\la,\mu) \big) \ot \big( \G H(p,e)(\mu,\nu) \ot \G H(e,q)(\mu,\nu) \big)
\ar[d]^{\G M_{p,e,q}} \\ 
\G H(p,q)(\la,\nu) \ar[rr]_<<<<<<<<<<<<<<<<<<<<<{\De_e} & & \G H(p,e)(\la,\nu) \ot \G H(e,q)(\la,\nu) \, ,
}
\end{equation}
where the vertical arrow to the right is the isomorphism of $\zz$-graded complex lines induced by the composition in the graded tensor product $\G H(p,e) \ot \G H(e,q)$.
%

Let $s_p \in \G L_p(\la,\mu)$, $s_q \in \G L_q^\da(\la,\mu)$ and $t_p \in \G L_p(\mu,\nu)$, $t_q \in \G L_q^\da(\mu,\nu)$ be given and let $n_p, n_q, m_p$ and $m_q \in \zz$ denote their respective degrees. By definition, the evaluation of the left-hand side of the diagram in \eqref{eq:cofunc} on the morphism $s_p \ot s_q \ot t_p \ot t_q \in \G H(p,q)(\la,\mu) \ot \G H(p,q)(\mu,\nu)$ is given by
    \[
        \big(\De_e \ci \G{M}_{p,q})\big( (s_p \ot s_q) \ot (t_p \ot t_q) \big)
        = (-1)^{n_q \cd ( m_p + m_q)}
        \cd \G{M}_p(s_p \ot t_p) \ot \varphi(\la,\nu)(1) \ot \G{M}_q^\da(t_q \ot s_q) \, .
    \]
On the other hand, let us write
\[
\begin{split}
\varphi(\la,\mu)(1) & = \varphi_-(\la,\mu) \ot \varphi_+(\la,\mu) \in \G L_e^\da(\la,\mu) \ot \G L_e(\la,\mu) \q \T{and} \\
\varphi(\mu,\nu)(1) & = \varphi_-(\mu,\nu) \ot \varphi_+(\mu,\nu) \in \G L_e^\da(\la,\mu) \ot \G L_e(\la,\mu) \, .
\end{split}
\]
The evaluation of the right-hand side of \eqref{eq:cofunc} on the same morphism is then given by
    \[
        \begin{split}
        & \big( \G{M}_{p,e,q} \ci (\De_e \ot \De_e) \big)\big( (s_p \ot s_q) \ot (t_p \ot t_q) \big)\\
& \q =
(-1)^{n_q \cd (m_p + m_q) }
\cd \G{M}_p(s_p \ot t_p) \ot
\G{M}_e^\da(\varphi_-(\mu,\nu) \ot \varphi_-(\la,\mu))  \\
& \qqq \qqq \qqq  \ot \G M_e(\varphi_+(\la,\mu) \ot \varphi_+(\mu,\nu)) \ot \G{M}_q^\da( t_q \ot s_q) \, .
        \end{split}
    \]
But this proves the proposition, since Lemma \ref{l:trivmult2} shows that 
\[
\G{M}_e^\da(\varphi_-(\mu,\nu) \ot \varphi_-(\la,\mu))  \ot \G{M}_e(\varphi_+(\la,\mu) \ot \varphi_+(\mu,\nu)) = \varphi(\la,\nu)(1) \, .
\]
\end{proof}


\section{Change of base point}\label{s:change}
Throughout this section we fix a separable Hilbert space $\C H$, a unital ring $R$ and a two-sided ideal $I$ in $R$.

We assume that $\G{Rep} = \{ \pi_\la\}_{\la \in \La}$ is a family of (not necessarily unital) representations of $R$ as bounded operators on $\C H$, satisfying Assumption \ref{a:rep} with respect to the ideal $I \su R$. 

Consider two idempotents $p_0,p_0'\in R$ with $p_0-p_0'\in I$. In this section, we define a linear isomorphism of coproduct categories
\[
    \G B(p_0,p_0')\colon \G H\to \G H' \, ,
\]
where $\G H$ and $\G H'$ are the coproduct categories constructed using the base points $p_0$ and $p_0'$, respectively (see Definition \ref{d:hopf}). We refer to this isomorphism as the \emph{change-of-base-point isomorphism}.
%

The isomorphism $\G B(p_0,p_0')$ is the identity on the set 
\[
X = \{ p \in \T{Idem}(R) \mid p - p_0 \in I\} = \{ p \in \T{Idem}(R) \mid p - p_0' \in I\} = X' \, .
\]
Given $p,q\in X = X'$, we define the linear functor
\begin{equation*}
    \G B(p_0,p_0')\colon \G H(p,q)\to \G H'(p,q)
\end{equation*}
to be the identity map on objects (thus elements in the index set $\La$), and to be given on morphisms by an isomorphism
\begin{equation*}
    \G B(p_0,p_0')\colon \G H(p,q)(\la,\mu)\to \G H'(p,q)(\la,\mu) \q \la,\mu \in \La
\end{equation*}
of $\zz$-graded complex lines, see Definition \ref{def:change}.

From now on, we fix the two indices $\la,\mu \in \La$ and the two idempotents $p,q \in X = X'$.

We often suppress the tuples of indices $(\la,\mu)$ and $(\la,\mu,\mu)$ and refer the reader to Notation \ref{n:bigfred} for an explanation of the notation applied.

\begin{lemma}\label{l:changepert}
The difference of the two Fredholm operators
\[
\begin{split}
& F^{12}(p_0,q,p) F^{13}(p,q,p_0)  + \ma{ccc}{ 0 & 0 & 0 \\ 0 & 0 & \pi_\mu(1 - p_0) \\ 0 & 0 & 0 } \q \mbox{and} \\
& F^{12}(p_0',q,p) F^{13}(p,q,p_0') + \ma{ccc}{ 0 & 0 & 0 \\ 0 & 0 & \pi_\mu(1 - p_0') \\ 0 & 0 & 0 } \, ,
\end{split}
\]
both acting from the Hilbert space $\pi_\la(p) \C H \op \pi_\mu(q) \C H \op \pi_\mu(1)\C H$ to the Hilbert space $\pi_\la(q) \C H \op \pi_\mu(1)\C H \op \pi_\mu(p)\C H$, is of trace class.
\end{lemma}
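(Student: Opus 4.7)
My approach is to show that both operators appearing in the statement are trace-class equivalent to a common reference operator which is manifestly independent of the choice of base point; the lemma then follows by subtraction.

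First, I would unpack $F^{12}(p_0,q,p)$ and $F^{13}(p,q,p_0)$ using Notation \ref{n:invfred} and Notation \ref{n:bigfred} as bordered versions
\[
F^{13}(p,q,p_0) = \big(\pi_\la(p_0) \op \pi_\mu(q) \op \pi_\mu(p)\big) \cd \Om^{13}(1) \cd \big(\pi_\la(p) \op \pi_\mu(q) \op \pi_\mu(p_0)\big),
\]
and similarly for $F^{12}(p_0,q,p)$. Since the rightmost factor of $F^{12}(p_0,q,p)$ is idempotent and coincides with the leftmost factor of $F^{13}(p,q,p_0)$, the composition collapses to an interior projection $\big(\pi_\la(p_0) \op \pi_\mu(q) \op \pi_\mu(p)\big)$ sandwiched between $\Om^{12}(1)$ and $\Om^{13}(1)$. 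I would then apply Lemma \ref{l:commut} repeatedly to commute this interior projection past the adjacent $\Om(\la,\mu)(1)$ factors, noting that since $p_0 - p, p_0 - q \in I$, each commutator is trace class. This eliminates the interior $p_0$-dependence modulo trace class.

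After this step the composition becomes, modulo trace class, an outer bordered product $P_{p_0} \cd \Om^{12}(1) \Om^{13}(1) \cd Q_{p_0}$, where the outer projections $P_{p_0}, Q_{p_0}$ retain $p_0$-dependence only through $\pi_\mu(p_0)$ in the second codomain slot and the third domain slot. The stabilisation term $\pi_\mu(1 - p_0)$ is engineered precisely so that the sum $\pi_\mu(p_0) + \pi_\mu(1 - p_0) = \pi_\mu(1)$ removes this remaining $p_0$-dependence, producing a reference operator
\[
A \equiv \big(\pi_\la(q) \op \pi_\mu(1) \op \pi_\mu(p)\big) \cd \Om^{12}(1) \Om^{13}(1) \cd \big(\pi_\la(p) \op \pi_\mu(q) \op \pi_\mu(1)\big)
\]
independent of $p_0$. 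Applying the same reasoning with $p_0'$ in place of $p_0$ gives the same reference $A$ modulo trace class, so the difference of the two operators in the statement of the lemma is trace class.

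\textbf{Main obstacle.} The chief subtlety is to verify that every commutator picked up is actually trace class, not merely compact. This requires interlocking use of parts (1) and (2) of Assumption \ref{a:rep}: part (1) controls commutators of bordering projections with the individual blocks of $\Om(\la,\mu)(1)$, while part (2) is needed to control triple products of the form $\pi_\la(i) \pi_\mu(1) \pi_\nu(1)$ which arise during the $3 \times 3$ block matrix multiplication. Ensuring that the $(2,3)$ block correctly absorbs the stabilisation term $\pi_\mu(1 - p_0)$, and that all the residual sign and ordering data conspire to produce trace-class cancellations, will be the most laborious part of the argument.
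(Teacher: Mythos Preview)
Your first step---using Lemma~\ref{l:commut} to commute the interior projection $\pi_\la(p_0)\op\pi_\mu(q)\op\pi_\mu(p)$ past $\Om^{12}(1)$ at the cost of a trace class error, reducing to an outer-bordered product---is correct and is exactly what the paper does.

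The gap is in the second step. The stabilisation term $e_{23}\big(\pi_\mu(1-p_0)\big)$ is a single off-diagonal block; it does \emph{not} complete the outer projections $P_{p_0}=\pi_\la(q)\op\pi_\mu(p_0)\op\pi_\mu(p)$ and $Q_{p_0}=\pi_\la(p)\op\pi_\mu(q)\op\pi_\mu(p_0)$ to $\pi_\mu(1)$ in their respective slots. After your reduction, the second row of the bordered product is $\pi_\mu(p_0)\cdot\big[\Om^{12}(1)\Om^{13}(1)\big]_{2,\ast}$, and the $(2,1)$- and $(2,2)$-entries still carry $p_0$. For your reference operator $A$ to agree with the stabilised operator modulo $\sL^1$, you would need $\pi_\mu(1-p_0)\cdot\big[\Om^{12}(1)\Om^{13}(1)\big]_{2,j}$ to be trace class for $j=1,2$, and condition~(2) of Assumption~\ref{a:rep} would deliver this only if $1-p_0\in I$. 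That inclusion is never assumed: only differences such as $p-p_0$, $q-p_0$ and $p_0-p_0'$ lie in $I$.

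The paper sidesteps this by never attempting a $p_0$-independent reference. Instead it subtracts the two stabilised operators directly: after the same use of Lemma~\ref{l:commut}, the difference becomes
\[
\big(0\op\pi_\mu(p_0-p_0')\op 0\big)\cd\Om^{12}(1)\Om^{13}(1)\;-\;e_{23}\big(\pi_\mu(p_0-p_0')\big),
\]
and now the element $i=p_0-p_0'$ \emph{is} in $I$, so condition~(2) applies. A short block computation (in the style of Lemma~\ref{l:1}) then shows the first term is $\sim_1 e_{23}\big(\pi_\mu(p_0-p_0')\big)$, giving the cancellation. Your plan becomes correct if you replace ``reduce each operator to a common $A$'' by ``take the difference first, then apply condition~(2) with $i=p_0-p_0'$.''
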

\begin{proof}
Using Lemma \ref{l:commut}, we compute modulo trace class operators:
\[
\begin{split}
F^{12}(p_0,q,p) F^{13}(p,q,p_0) \sim_1
( \pi_\la(q) \op \pi_\mu(p_0) \op \pi_\mu(p) ) \cd \Om^{12}(1) \Om^{13}(1) \, .
\end{split}
\]
Since a similar computation holds with $p_0$ replaced by $p_0'$, it suffices to show that the difference
\[
( 0 \op \pi_\mu(p_0 - p_0') \op 0) \cd \Om^{12}(1) \Om^{13}(1)
- \ma{ccc}{ 0 & 0 & 0 \\ 0 & 0 & \pi_\mu(p_0 - p_0') \\ 0 & 0 & 0 }
\]
is of trace class. But this follows from Assumption \ref{a:rep} since $p_0 - p_0' \in I$. Indeed, as in Lemma \ref{l:1} we see that
\[
\begin{split}
& ( 0 \op \pi_\mu(p_0 - p_0') \op 0) \cd \Om^{12}(1) \Om^{13}(1) \\
& \q \sim_1 \ma{ccc}{0 & 0 & 0 \\ \pi_\mu(1) \pi_\la(1) & 0 & 0 \\ 0 & 0 & 0} \ma{ccc}{0 & 0 & \pi_\la(1) \pi_\mu(1) \\ 0 & 0 & 0 \\
0 & 0 & 0} (0 \op 0 \op \pi_\mu(p_0- p_0')) \\
& \q \sim_1 \ma{ccc}{ 0 & 0 & 0 \\ 0 & 0 & \pi_\mu(p_0 - p_0') \\ 0 & 0 & 0 } \, . \qedhere
\end{split}
\]
\end{proof}

It follows from the above Lemma \ref{l:changepert} that the perturbation isomorphism appearing in our definition of the change-of-base-point isomorphism makes sense: 

\begin{definition}\label{def:change} For $\la,\mu \in \La$ and idempotents $p,q \in X = X'$, the isomorphism of $\zz$-graded complex lines
\[
\G B(p_0,p_0') \colon \G H(p,q)(\la,\mu) \to \G H'(p,q)(\la,\mu)
\]
is defined as the composition
\[
\begin{split}
& |F(p,p_0)| \ot | F(p_0,q)|
\to^{\G T \ci \G S}
\big| F^{12}(p_0,q,p) \cd F^{13}(p,q,p_0) \big| \\
& \q \to^{\G S^{-1} \ci \G P \ci \G S}
\big| F^{12}(p_0', q, p) \cd F^{13}(p,q,p_0') \big|
\to^{(\G T \ci \G S)^{-1}}
|F(p,p_0')| \ot | F(p_0',q)| \, .
\end{split}
\]
\end{definition}

Applying the cocycle property of the perturbation isomorphism from Theorem \ref{t:pervec}, we obtain that the change-of-base-point isomorphism satisfies the following properties:

\begin{lemma}\label{l:equivalence}
The change-of-base-point isomorphism is reflexive, skew-symmetric and transitive in the sense that
\begin{enumerate}
\item $\G B(p_0,p_0) = \T{id} \colon \G H(p,q) \to \G H(p,q)$;
\item $\G B(p_0,p_0') = \G B(p_0',p_0)^{-1} \colon \G H(p,q) \to \G H'(p,q)$;
\item $\G B(p_0', p_0'')\ci \G B(p_0,p_0') = \G B(p_0, p_0'') \colon \G H(p,q) \to \G H''(p,q)$,
\end{enumerate}
whenever $p_0,p_0',p_0''$ are idempotents in $R$ which agree modulo the ideal $I \su R$.
\end{lemma}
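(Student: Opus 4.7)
The plan is to observe that all three properties follow by direct inspection of the definition of $\G B(p_0,p_0')$ together with the cocycle properties of the perturbation isomorphism recorded in Theorem \ref{t:pervec}. To see this cleanly, I would first abbreviate $J(p_0) := (\G T \ci \G S)\big|_{p_0} \colon |F(p,p_0)| \ot |F(p_0,q)| \to |F^{12}(p_0,q,p) \cd F^{13}(p,q,p_0)|$ and write $K(p_0,p_0') := \G S^{-1} \ci \G P \ci \G S$ for the middle factor, so that $\G B(p_0,p_0') = J(p_0')^{-1} \ci K(p_0,p_0') \ci J(p_0)$. Here the role of the two outer stabilisations in $K(p_0,p_0')$ is merely to bring the operators $F^{12}(p_0,q,p) F^{13}(p,q,p_0)$ and $F^{12}(p_0',q,p) F^{13}(p,q,p_0')$ onto the common ambient Hilbert space $\pi_\la(1)\C H \op \pi_\mu(1)\C H \op \pi_\mu(1)\C H$, where by Lemma \ref{l:changepert} their stabilisations differ by a trace class operator and the perturbation isomorphism applies.

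For (1), setting $p_0' = p_0$ reduces the middle $\G P$ to $\G P(T,T) = \T{id}$; the two flanking stabilisations then cancel against each other and the outer factors $J(p_0)$ and $J(p_0)^{-1}$ cancel, giving $\T{id}$. For (2), I would invert the whole composition and invoke Theorem \ref{t:pervec} $(2)$ to flip the perturbation, which rearranges the pieces into exactly $\G B(p_0',p_0)$.

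For (3), I would compose $\G B(p_0',p_0'') \ci \G B(p_0,p_0') = J(p_0'')^{-1} \ci K(p_0',p_0'') \ci J(p_0') \ci J(p_0')^{-1} \ci K(p_0,p_0') \ci J(p_0)$; the middle pair $J(p_0') \ci J(p_0')^{-1}$ cancels, and the two adjacent $\G S$'s in the remaining $K(p_0',p_0'') \ci K(p_0,p_0')$ collapse, leaving $\G S^{-1} \ci \G P(\tilde A(p_0'),\tilde A(p_0'')) \ci \G P(\tilde A(p_0),\tilde A(p_0')) \ci \G S$ sandwiched between $J(p_0'')^{-1}$ and $J(p_0)$. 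The cocycle property of Theorem \ref{t:pervec} $(1)$ then collapses the composition of perturbations into $\G P(\tilde A(p_0),\tilde A(p_0''))$, which is precisely the middle factor of $\G B(p_0,p_0'')$.

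The only mild obstacle is verifying that all three stabilised operators $\tilde A(p_0)$, $\tilde A(p_0')$, $\tilde A(p_0'')$ genuinely live on the same graded Hilbert space with mutual trace class differences so that Theorem \ref{t:pervec} may be invoked. This is exactly the content of Lemma \ref{l:changepert} applied pairwise to $(p_0,p_0')$, $(p_0',p_0'')$ and $(p_0,p_0'')$, so there is no new analytic input beyond what has already been established.
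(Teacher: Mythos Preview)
Your proposal is correct and follows essentially the same approach as the paper, which simply states that the lemma is obtained by ``applying the cocycle property of the perturbation isomorphism from Theorem \ref{t:pervec}'' without further elaboration. Your decomposition $\G B(p_0,p_0') = J(p_0')^{-1} \ci K(p_0,p_0') \ci J(p_0)$ with $K(p_0,p_0') = \G S_{p_0'}^{-1} \ci \G P \ci \G S_{p_0}$ is exactly the right way to unpack this, and your observation that the intermediate stabilisations $\G S_{p_0'}\ci\G S_{p_0'}^{-1}$ cancel in the transitivity argument is the only bookkeeping step the paper leaves implicit.
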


It is moreover not hard to see that the change-of-base-point satisfies the unitality condition, thus that it sends units in the category $\G H(p,q)$ to the corresponding units in the category $\G H'(p,q)$ for all $p,q \in X$. Recall in this respect that the unit morphism in $\G H(p,q)(\la,\la)$ is defined by the unit $1 \in \cc$ under the isomorphism
\[
(\cc,0) \cong \Big| \ma{cc}{0 & \pi_\la(p_0) \\ \pi_\la(p) & 0}\Big| \ot \Big| \ma{cc}{0 & \pi_\la(q) \\ \pi_\la(p_0) & 0} \Big| \, .
\]
\medskip

The proofs of the following two results are more involved, and will be given in Subsection \ref{ss:cofunc} and Subsection \ref{ss:func}.

\begin{prop}\label{p:cofunc}
The change-of-base-point isomorphism commutes with the coproducts. Thus, for each $\la,\mu \in \La$ and each $p,e,q \in X = X'$, the following diagram is commutative:
\[
\xymatrix{
\G H(p,q)(\la,\mu) \ar[r]^>>>>>{\De_e} \ar[d]_{\G B(p_0,p_0')} & \G H(p,e)(\la,\mu) \ot \G H(e,q)(\la,\mu) \ar[d]^{\G B(p_0,p_0') \ot \G B(p_0,p_0')} \\
\G H'(p,q)(\la,\mu) \ar[r]_>>>>>{\De'_e} & \G H'(p,e)(\la,\mu) \ot \G H'(e,q)(\la,\mu) \, .
}
\]
\end{prop}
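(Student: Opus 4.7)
The plan is to unfold both compositions in the diagram into the elementary operations of torsion, stabilisation, and perturbation, and then to check their equality using the commutation relations assembled in Sections~\ref{s:torsfred}--\ref{s:stab}. First I would fix $\la,\mu \in \La$ and $p,e,q \in X$ and spell out the two routes from the source $\G H(p,q)(\la,\mu) = |F(\la,\mu)(p,p_0)| \ot |F(\la,\mu)(p_0,q)|$ into the target $|F(\la,\mu)(p,p_0')| \ot |F(\la,\mu)(p_0',e)| \ot |F(\la,\mu)(e,p_0')| \ot |F(\la,\mu)(p_0',q)|$. The upper-right route ($\De_e$ followed by $\G B(p_0,p_0') \ot \G B(p_0,p_0')$) inserts the trivialisation $\varphi(\la,\mu)(1) \in |F(p_0,e)| \ot |F(e,p_0)|$ and then applies the torsion--stabilisation--perturbation composition of Definition~\ref{def:change} independently on the two outer and the two inner tensor factors. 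The lower-left route ($\G B(p_0,p_0')$ followed by $\De'_e$) first applies Definition~\ref{def:change} to the two factors $|F(p,p_0)|$ and $|F(p_0,q)|$ and then inserts $\varphi'(\la,\mu)(1) \in |F(p_0',e)| \ot |F(e,p_0')|$.

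Next I would bring both sides into a common normal form by pushing all torsion and stabilisation isomorphisms to the outside, using that torsion commutes with perturbation (Theorem~\ref{t:percom}), that torsion commutes with stabilisation (Proposition~\ref{p:torsta}), and that perturbation commutes with stabilisation (Proposition~\ref{p:persta}). In this form each route becomes ``torsion--stabilisation, then a single perturbation isomorphism, then inverse stabilisation--torsion'', with the perturbation step acting between two Fredholm operators on the same Hilbert space. An application of the associativity of torsion (Proposition~\ref{p:assotors}) aligns the torsion--stabilisation parts of the two routes, reducing the identity we want to an identity of perturbation isomorphisms. This final identity is an instance of the cocycle property from Theorem~\ref{t:pervec}~(1), applied to a chain of Fredholm operators that agrees modulo trace class with a common stabilised identity on the Hilbert space $\pi_\la(p)\C H \op \pi_\mu(q)\C H$; the required trace class vanishing is proved by the same method as Lemma~\ref{l:changepert} and Lemma~\ref{l:pertcomp}, but now including the two $\varphi$-insertions at $p_0$ and $p_0'$.

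The main obstacle will be the bookkeeping of Fredholm operators and grading signs: the four-factor tensor products carry non-trivial $\zz$-gradings, so every reshuffling through the commutativity constraint $\epsilon$ of Notation~\ref{n:picard} introduces signs that have to cancel; and the perturbation step requires verifying that two explicit four-fold compositions of $F$-operators, one built out of $p_0$ and the other out of $p_0'$, agree modulo trace class with a common reference operator. I expect this trace class comparison to be the most technical step, and I would isolate it as an analogue of Lemma~\ref{l:changepert} adapted to the four-factor setting, in the spirit of Lemma~\ref{l:trivmult1} which played a similar bridging role for Proposition~\ref{p:coprodfun}. Once this preparatory lemma is in place, the remainder is a diagrammatic chase in the Picard category of $\zz$-graded complex lines, using only the associativity of torsion (Proposition~\ref{p:assotors}), the cocycle property of perturbation (Theorem~\ref{t:pervec}), and the duality relation of Proposition~\ref{p:dualcomp}.
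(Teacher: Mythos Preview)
Your approach is essentially the paper's: unfold both routes into torsion, stabilisation, and perturbation, then compare via the commutation relations of Sections~\ref{s:torsfred}--\ref{s:stab}. The execution is, however, lighter than you anticipate. No $\epsilon$-signs arise, since $\De_e = \T{id} \ot \varphi \ot \T{id}$ involves no reshuffling and each $\G B(p_0,p_0')$ acts on two adjacent factors. No new four-factor trace-class lemma is needed, and Proposition~\ref{p:dualcomp} is not used: the paper instead composes with the inverse coproducts $(\De_e)^{-1} = \T{id} \ot \varphi^{-1} \ot \T{id}$ and $(\De_e')^{-1}$, so that in the six-operator composite arising from $(\De_e')^{-1}\ci(\G B\ot\G B)$ the middle block $F^{13}(e,q,p_0',p)\cd F^{13}(p_0',q,e,p)$ is absorbed directly by the existing $(\varphi')^{-1}$ (itself a torsion--perturbation composite), collapsing everything to the two-operator composite already defining $\G B$; this yields $(\De_e')^{-1}\ci(\G B\ot\G B)=\G B\ci\De_e^{-1}$ with no further ingredients.
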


\begin{prop}\label{p:func}
The change-of-base-point isomorphism commutes with the products. Thus, for $\la,\mu,\nu \in \La$ and $p,q \in X = X'$, the following diagram is commutative:
\[
\xymatrix{
\G H(p,q)(\la,\mu) \ot \G H(p,q)(\mu,\nu) \ar[r]^<<<<<{\G M_{p,q}} \ar[d]_{\G B(p_0,p_0') \ot \G B(p_0,p_0')} &
\G H(p,q)(\la,\nu) \ar[d]^{\G B(p_0,p_0')} \\
\G H'(p,q)(\la,\mu) \ot \G H'(p,q)(\mu,\nu) \ar[r]_<<<<<{\G M'_{p,q}} & \G H'(p,q)(\la,\nu) \, .
}
\]
\end{prop}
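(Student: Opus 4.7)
The plan is to reduce the diagram to an identity between perturbation isomorphisms on a single composition of Fredholm operators acting between direct sums indexed by triples $(\la,\mu,\nu)$, and then invoke the transitivity of the perturbation isomorphism (Theorem \ref{t:pervec}) together with the compatibility statements between torsion, stabilisation and perturbation (Theorem \ref{t:percom}, Proposition \ref{p:torsta}, Proposition \ref{p:persta}).

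First I would unfold the composition $\G M_{p,q}$ on $\G H(p,q) = (\G L_p \ot \G L_q^\da)|_{D_\La}$. By definition of the graded tensor product, $\G M_{p,q}$ factors (up to a sign determined by the degrees, which is the same sign on both sides of the square and therefore cancels) as $\G M_p \ot \G M_q^\da$. Using Definition \ref{def:comp} and Definition \ref{def:dualcomp}, the factor $\G M_p$ is itself built from the trivialisation $\mu_p$ (together with the duality morphism $\varphi(\la,\nu)$), and $\mu_p$ is a composition of stabilisation, torsion, and perturbation isomorphisms involving $F^{12}$, $F^{23}$, $F^{13}_\da$. Thus the clockwise route is an equality between an ordered composite built from the pairs $(p,p_0)$ and $(p_0,q)$, and similarly for the counterclockwise route built from $(p,p_0')$ and $(p_0',q)$. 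Gathering all stabilisations to the outside and all torsions together (permissible by Proposition \ref{p:torsta}, Proposition \ref{p:persta} and Proposition \ref{p:assotors}), we reduce both sides of the diagram to a single perturbation isomorphism acting on the determinant line of a sixfold composition of Fredholm operators
\[
F^{13}_\da(\la,\mu,\nu)(p,p_0',p_0') \cd F^{23}(\la,\mu,\nu)(p,p_0',p_0') \cd F^{12}(\la,\mu,\nu)(p,p_0',p_0')
\]
together with a matching copy with the base point $p_0$ replaced by $p_0'$.

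Next I would verify that after these reductions the clockwise and counterclockwise routes are each given by the perturbation isomorphism between the same pair of endpoints, differing only in the intermediate factorisation. The key lemma needed here is an analogue of Lemma \ref{l:pertcomp} and Lemma \ref{l:changepert}, stating that the difference
\[
\Om^{13}(p_0')\,\Om^{23}(p_0')\,\Om^{12}(p_0') \; - \; \Om^{13}(p_0)\,\Om^{23}(p_0)\,\Om^{12}(p_0)
\]
(extended by the appropriate complementary idempotents) is of trace class, which follows from Assumption \ref{a:rep} by the same telescoping trick used in the proof of Lemma \ref{l:1}. Granting this, the cocycle property of the perturbation isomorphism (Theorem \ref{t:pervec}(1)) shows that performing perturbation from $(p,p_0,p_0,p_0)$ directly to $(p,p_0',p_0',p_0')$ agrees with first passing via an intermediate set of idempotents.

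The hard part will be the bookkeeping: one must make sure that the insertion of the duality morphism $\varphi(\la,\nu)$ in the definition of $\G M_p$ interacts correctly with the change-of-base-point isomorphism, since $\varphi(\la,\nu)$ is also defined by a perturbation isomorphism (and depends on $p_0$ through the Fredholm operator $F(\la,\nu)(p_0,p) F(\la,\nu)(p,p_0)$). Concretely, I expect to need an intermediate lemma showing that the diagram
\[
\xymatrix{
(\cc,0) \ar[r]^>>>>{\varphi} \ar@{=}[d] & \G L_p^\da(\la,\nu) \ot \G L_p(\la,\nu) \ar[d]^{\G B(p_0,p_0')\otimes \G B(p_0,p_0')} \\
(\cc,0) \ar[r]_>>>>{\varphi'} & \G L_p'^\da(\la,\nu) \ot \G L_p'(\la,\nu)
}
\]
commutes, which again reduces via Theorem \ref{t:percom} and the transitivity of perturbation to a tautology on a single composition of Fredholm operators. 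Once this compatibility of $\varphi$ with the change-of-base-point is in hand, the two routes in the square of Proposition \ref{p:func} are represented by the same perturbation isomorphism, and the proof concludes. A completely parallel argument handles the dual factor $\G M_q^\da$.
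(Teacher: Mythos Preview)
Your proposal has the right general shape but contains a genuine gap at the crux of the argument. You claim that after collecting stabilisations and torsions, ``the clockwise and counterclockwise routes are each given by the perturbation isomorphism between the same pair of endpoints, differing only in the intermediate factorisation,'' so that Theorem~\ref{t:pervec} finishes the proof. This is not so. The trivialisation $\mu_p$ terminates at the invertible operator $\Om^{13}(p_0)\Om^{23}(p_0)\Om^{12}(p_0)+e_0$, while $\mu_p'$ terminates at $\Om^{13}(p_0')\Om^{23}(p_0')\Om^{12}(p_0')+e_0'$. These are \emph{different} invertible operators, so the two routes do not share an endpoint: their discrepancy is the Fredholm determinant of the quotient, which is not a priori equal to $1$. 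Your proposed ``key lemma'' (that the difference of the two $\Om$-products is of trace class) only says this determinant is defined; it says nothing about its value. The paper isolates exactly this point as Lemma~\ref{l:crux}, and the proof there is not a cocycle argument but a cancellation: one shows that the determinant coming from the $\mu_p$ side and the one from the $\mu_q^\da$ side are inverse to each other by exhibiting them as conjugate under a permutation matrix, so that their product is $1$.

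A second problem is structural. You propose to factor $\G M_{p,q}$ as $\G M_p\ot\G M_q^\da$ and then handle each factor separately against a tensor factor of $\G B(p_0,p_0')$. But $\G B(p_0,p_0')$ is defined on the \emph{pair} $\G L_p(\la,\mu)\ot\G L_q^\da(\la,\mu)$, not on either factor alone (see Definition~\ref{def:change}); in particular your proposed ``intermediate lemma'' about $\varphi$ commuting with $\G B\ot\G B$ is not well-typed as written. This is precisely why the paper introduces the auxiliary \emph{dual} change-of-base-point $\G B^\da$ (Subsection~\ref{ss:dual}), proves the nontrivial Proposition~\ref{p:dagger} relating $\G B^\da\ot\G B$ to $\varphi'\varphi^{-1}\ot\varphi'\varphi^{-1}$, and then assembles everything into a single ternary change-of-base-point $\G M(p_0,p_0')$ (Proposition~\ref{p:mulcha} and Proposition~\ref{p:crux}). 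Your outline would need all of these ingredients, and in particular cannot bypass the determinant computation of Lemma~\ref{l:crux}.
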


We summarise the results of this section into the following statement:

\begin{theorem}\label{t:baseiso}
The change-of-base-point isomorphism $\G B(p_0,p_0') \colon \G H \to \G H'$ is an isomorphism of coproduct categories.
\end{theorem}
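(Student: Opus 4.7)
The plan is to verify each requirement in the definition of an isomorphism of coproduct categories by assembling the lemmas and propositions established earlier in Section \ref{s:change}, so that the proof amounts to bookkeeping rather than new analysis.

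First, I would record that the map is well-defined. Lemma \ref{l:changepert} guarantees that the perturbation isomorphism appearing in Definition \ref{def:change} is defined on the relevant determinant lines, and since each of the constituent maps (stabilisation, torsion and perturbation) is an isomorphism of $\zz$-graded complex lines, so is the composition $\G B(p_0,p_0')\colon \G H(p,q)(\la,\mu)\to \G H'(p,q)(\la,\mu)$.

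Second, I would verify that each assignment $\G B(p_0,p_0')\colon \G H(p,q)\to \G H'(p,q)$ is a linear functor. It is the identity on the common set of objects $\La$ (with $X=X'$), and the induced map on morphisms is a linear isomorphism by the previous step. Preservation of units is the remark immediately following Lemma \ref{l:equivalence}: the unit in $\G H(p,q)(\la,\la)$ corresponds to $1\in\cc$ under a canonical identification of determinant lines of trivially composable invertible operators, and the stabilisation, torsion, and perturbation isomorphisms all act as the identity on $(\cc,0)$ in this degenerate situation. Preservation of composition is exactly Proposition \ref{p:func}.

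Third, the compatibility with the coproduct structures, i.e.\ the identity $(\G B(p_0,p_0')\ot \G B(p_0,p_0'))\ci\De_e = \De'_e\ci\G B(p_0,p_0')$ for every $p,e,q\in X$, is the content of Proposition \ref{p:cofunc}. This is the step which required the most work, and it rested on the fundamental computation of the Fredholm determinant appearing in Lemma \ref{l:crux}.

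Finally, invertibility of $\G B(p_0,p_0')$ as a morphism of coproduct categories follows at once from Lemma \ref{l:equivalence}: the composition $\G B(p_0',p_0)\ci\G B(p_0,p_0')$ equals $\G B(p_0,p_0)=\T{id}$ on $\G H$, and symmetrically $\G B(p_0,p_0')\ci\G B(p_0',p_0)=\T{id}$ on $\G H'$, so $\G B(p_0',p_0)$ serves as the two-sided inverse. The main obstacle in the overall argument has already been overcome in the proofs of Propositions \ref{p:cofunc} and \ref{p:func}; the theorem itself is a packaging statement whose proof amounts to citing the correct results in the correct order.
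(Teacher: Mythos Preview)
Your proof is correct and mirrors the paper's approach: the theorem is a packaging statement assembling Lemma \ref{l:equivalence}, the unitality remark following it, Proposition \ref{p:cofunc}, and Proposition \ref{p:func}. One misattribution worth correcting: Lemma \ref{l:crux} is the technical core of Proposition \ref{p:func} (functoriality with respect to composition), not of Proposition \ref{p:cofunc} (compatibility with coproducts); the paper treats cofunctoriality as the straightforward step and functoriality as the involved one requiring the Fredholm-determinant computation.
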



\section{Group actions and main result}\label{s:group}
We continue working under Assumption \ref{a:rep} and we fix an idempotent $p_0 \in R$. Associated to the separable Hilbert space $\C H$, we have the group $GL(\C H)$ of invertible bounded operators on $\C H,$ and associated to the unital ring $R,$ we have the group of automorphisms $\T{Aut}(R)$. Finally we have the group of permutations $\T{Perm}(\La)$ of the index set $\La$ (indexing the representations of $R$ on the separable Hilbert space $\C H$). On top of Assumption \ref{a:rep} we impose the following:

\begin{assu}\label{a:group}
Suppose that we are given a group $G$ together with three group homomorphisms
\[
\alpha \colon G\to GL(\C H) \q \be \colon G \to \T{Aut}(R) \q
\ga \colon G \to \T{Perm}(\La) \, ,
\]
which satisfy the following conditions:
\begin{enumerate}
\item $\be(g)(i) \in I$ for all $i \in I$ and $g \in G$;
\item $\be(g)(p_0) - p_0 \in I$ for all $g \in G$;
\item $\alpha (g)\pi_\la (r)\alpha(g)^{-1} = \pi_{ \ga(g)(\la)}\big( \be(g)(r)\big)$ for all $r \in R$, $\la \in \La$ and $g \in G$.
\end{enumerate}
\end{assu}

We emphasise that we do not impose any continuity conditions on the group homomorphisms $\al, \be$ and $\ga$.
%

We now define a strict action of $G$ on the coproduct category $\G H_{p_0}$ given in Definition \ref{d:hopf}, where the subscript $p_0$ indicates the base point.

For each $g \in G$, we first define an isomorphism of coproduct categories:
\[
t_g \colon \G H_{p_0} \to \G H_{\be(g)(p_0)} \, .
\]
On the underlying set $X_{p_0} = \{ p \in \T{Idem}(R) \mid p - p_0 \in I \}$ we have the bijection
\begin{equation}\label{eq:xgroup}
t_g \colon X_{p_0} \to X_{\be(g)(p_0)} = X_{p_0} \q p \mapsto \be(g)(p) \, .
\end{equation}
Note that our assumptions on $\be(g) \colon R \to R$ imply that
\[
X_{p_0} = X_{\be(g)(p_0)}
\]
and that this set is invariant under the automorphism $\be(g)$. For each $p,q \in X,$ we have the assignment
\[
t_g \colon \G H_{p_0}(p,q) \to \G H_{\be(g)(p_0)}\big( \be(g)(p),\be(g)(q)\big)
\]
defined as follows: on objects, i.e. on the index set $\La$, we put
\begin{equation}\label{eq:grpobj}
t_g := \ga(g) \colon \La \to \La
\end{equation}
and on morphisms we have the isomorphism of $\zz$-graded complex lines
\begin{equation}\label{eq:grpmor}
\begin{split}
t_g & := \T{Ad}( \al(g) \op \al(g)) \ot \T{Ad}( \al(g) \op \al(g)) \\
& \q \colon \G H_{p_0}(p,q)(\la,\mu) \to \G H_{\be(g)(p_0)}\big(\be(g)(p),\be(g)(q)\big)(\ga(g)(\la),\ga(g)(\mu)) \, ,
\end{split}
\end{equation}
where $\T{Ad}( \al(g) \op \al(g))$ is notation for the isomorphism of $\zz$-graded complex lines given by $\T{Ad}( \al(g) \op \al(g)) := L( \al(g) \op \al(g)) R(\al(g^{-1}) \op \al(g^{-1}))$, see Example \ref{ex:LR}. Note that the equivariance condition from Assumption \ref{a:group} implies that
\[
\ma{cc}{ \al(g) & 0 \\ 0 & \al(g)} F(\la,\mu)(p,p_0) \ma{cc}{\al(g)^{-1} & 0 \\ 0 & \al(g)^{-1}}
= F\big( \ga(g)(\la), \ga(g)(\mu)\big)\big( \be(g)(p), \be(g)(p_0)\big) \, ,
\]
and similarly with $(p,p_0)$ replaced by $(p_0,q)$.

\begin{lemma}\label{l:conjact}
For each $g \in G$, the above assignments $t_g$ defined in Equation \eqref{eq:xgroup}, \eqref{eq:grpobj} and \eqref{eq:grpmor} yield an isomorphism of coproduct categories $t_g \colon \G H_{p_0} \to \G H_{\be(g)(p_0)}$. Moreover, it holds that $t_e = \T{id} \colon \G H_{p_0} \to \G H_{p_0}$ and that the following diagram is commutative for all $g,h \in G$:
\begin{equation}\label{eq:conjact}
\xymatrix{ \G H_{p_0} \ar[r]^{t_g} \ar[dr]_{t_{hg}} & \G H_{\be(g)(p_0)} \ar[d]^{t_h} \\ & \G H_{\be(hg)(p_0)} \, .}
\end{equation}
\end{lemma}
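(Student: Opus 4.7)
The plan is to verify three assertions in sequence: each $t_g$ is a linear functor on the morphism categories, the family is compatible with the coproducts, and the assignment $g \mapsto t_g$ is strictly multiplicative with $t_e = \T{id}$.

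First, Assumption \ref{a:group}(1)--(2) ensure that $\be(g)$ preserves the ideal $I$ and that $\be(g)(p_0) - p_0 \in I$, so $\be(g)$ restricts to a bijection of $X_{p_0}$ onto $X_{\be(g)(p_0)}$ (which equals $X_{p_0}$ as a subset of $\T{Idem}(R)$). The content of Assumption \ref{a:group}(3) is that conjugation by $\al(g)$ carries $\pi_\la(r)$ to $\pi_{\ga(g)(\la)}(\be(g)(r))$; applied entry-by-entry to the matrix defining $\Om(\la,\mu)(1)$ this yields
\[
\ma{cc}{\al(g) & 0 \\ 0 & \al(g)} F(\la,\mu)(p,p_0) \ma{cc}{\al(g)^{-1} & 0 \\ 0 & \al(g)^{-1}} = F(\ga(g)(\la),\ga(g)(\mu))(\be(g)(p),\be(g)(p_0))
\]
and the analogous identity for $F(\la,\mu)(p_0,q)$. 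Consequently the pair $(\al(g) \op \al(g), \al(g) \op \al(g))$ is a quasi-isomorphism of Fredholm operators, so the formula in \eqref{eq:grpmor} is a well-defined isomorphism of $\zz$-graded complex lines.

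The main work is showing that $t_g$ intertwines the composition morphisms $\G M_p$, $\G M_q^\da$ and the coproduct functors $\De_e$. Each of these is built, according to Definition \ref{def:comp}, Definition \ref{def:dualcomp} and Definition \ref{d:hopf}, by composing torsion, perturbation and stabilisation isomorphisms together with the dualities $\varphi$ and $\psi$ of \eqref{eq:dualI}--\eqref{eq:dualII}. For each of these building blocks the naturality property that is needed has already been recorded: torsion is natural under quasi-isomorphisms (Proposition \ref{p:torquis}); stabilisation is itself a quasi-isomorphism (Definition \ref{def:stabilisation}); and the perturbation isomorphism is natural under conjugation by an invertible bounded operator, which follows from the explicit descriptions in Examples \ref{r:finrank}--\ref{ex:general} together with the invariance of the Fredholm determinant \eqref{GK} under conjugation. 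Propagating these naturality statements stepwise through the definitions of $\G M_p$, $\G M_q^\da$, $\varphi$, $\psi$ and $\De_e$ yields the required intertwining identities. The only book-keeping to watch is that conjugation on the graded tensor product of two determinant lines is performed factor by factor; it therefore preserves degrees and is compatible with the sign in the composition law of the graded tensor product.

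Finally, for $g = e$ the three homomorphisms $\al,\be,\ga$ return the respective identities, so $t_e$ is the identity endofunctor of $\G H_{p_0}$. The diagram \eqref{eq:conjact} commutes on objects by $\ga(h)\ga(g) = \ga(hg)$ and $\be(h)\be(g) = \be(hg)$, and on morphisms it reduces to the identity $\T{Ad}(\al(h) \op \al(h)) \ci \T{Ad}(\al(g) \op \al(g)) = \T{Ad}(\al(hg) \op \al(hg))$ of isomorphisms of determinant lines, which follows from $\al(h)\al(g) = \al(hg)$ via the functoriality of the assignment $\Sigma \mapsto \T{Ad}(\Sigma)$ on quasi-isomorphisms (cf.\ Example \ref{ex:LR}). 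Taking $h = g^{-1}$ gives $t_g \ci t_{g^{-1}} = t_e = \T{id}$, so each $t_g$ is an isomorphism of coproduct categories.
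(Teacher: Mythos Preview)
Your proof is correct and follows essentially the same approach as the paper: you verify functoriality and coproduct compatibility by propagating naturality of torsion, perturbation and stabilisation through the definitions, and you handle $t_e = \T{id}$ and the diagram \eqref{eq:conjact} directly from the group-homomorphism properties of $\al,\be,\ga$. The only cosmetic difference is in the citations: the paper treats $\T{Ad}(\al(g)\op\al(g)) = L(\al(g)\op\al(g))R(\al(g)^{-1}\op\al(g)^{-1})$ as a torsion isomorphism and invokes Proposition~\ref{p:assotors} and Theorem~\ref{t:percom} (torsion associative, perturbation commutes with torsion), whereas you treat it as a quasi-isomorphism and invoke Proposition~\ref{p:torquis} together with conjugation-invariance of the Fredholm determinant; both routes are equivalent.
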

\begin{proof}
We suppress the group homomorphisms $\al,\be$ and $\ga$ from the notation. The fact that $t_e = \T{id}$ and that the diagram in Equation \eqref{eq:conjact} commutes is a straightforward consequence of the definition of $t_g$ for $g \in G$. So for each $p,q,e \in X$ and each $g \in G$, we focus on proving that $t_g \colon \G H_{p_0}(p,q) \to \G H_{g(p_0)}(g(p), g(q))$ is a functor and that the following diagram is commutative:
\begin{equation}\label{eq:copconj}
\xymatrix{ \G H_{p_0}(p,q) \ar[rr]^{\De_e} \ar[d]^{t_g} & & \G H_{p_0}(p,e) \ot \G H_{p_0}(e,q) \ar[d]^{t_g \ot t_g} \\
\G H_{g(p_0)}\big(g(p), g(q)\big) \ar[rr]^>>>>>>>>>>{\De_{g(e)}} & & \G H_{g(p_0)}\big( g(p), g(e) \big) \ot \G H_{g(p_0)}\big(g(e),g(q)\big) \, . }
\end{equation}
Let us fix three objects $\la,\mu,\nu \in \La$.  Using the definition of the coproduct from Definition \ref{d:hopf}, we see that verifying the commutativity of the diagram in Equation \eqref{eq:copconj} amounts to showing that the diagram here below is commutative
\begin{equation}\label{eq:duaconj}
\xymatrix{
(\cc,0) \ar[r]^>>>>>>>>>>>>{\varphi} \ar[dr]^{\varphi} & \big| F(\la,\mu)(p_0,e) \big| \ot \big| F(\la,\mu)(e,p_0) \big|
\ar[d]^{\T{Ad}( g \op g) \ot \T{Ad}(g \op g)} \\ 
& \big| F( g(\la), g(\mu))(g(p_0),g(e)) \big| \ot \big| F( g(\la), g(\mu))(g(e),g(p_0)) \big| \, . } 
\end{equation}
This commutativity result in turn follows from the definition of the duality operation $\varphi$ (see Equation \eqref{eq:dualI}) together with the associativity of torsion and the fact that torsion commutes with perturbation (Proposition \ref{p:assotors} and Theorem \ref{t:percom}). We continue by investigating the functoriality of $t_g$. The unitality condition is straightforward to verify, so we focus on showing that $t_g$ is compatible with the compositions. We now argue that the following composition of isomorphisms of $\zz$-graded complex lines 
\begin{equation}\label{eq:mulconj}
\begin{split}
& \big| F(\la,\mu)(p,p_0) \big| \ot \big| F(\mu,\nu)(p,p_0) \big| \ot \big| F(\la,\nu)(p_0,p) \big| \\
& \q \to^{\T{Ad}(g \op g)^{\ot 3}} \big| F(g(\la),g(\mu))(g(p),g(p_0)) \big| \ot \big| F(g(\mu),g(\nu))(g(p),g(p_0)) \big| 
\ot \big| F(g(\la),g(\nu))(g(p_0),g(p)) \big| \\
& \q \to^{\mu_{g(p)}} (\cc,0)
\end{split}
\end{equation}
agrees with the trivialisation
\[
\mu_p \colon \big| F(\la,\mu)(p,p_0) \big| \ot \big| F(\mu,\nu)(p,p_0) \big| \ot \big| F(\la,\nu)(p_0,p) \big| \to (\cc,0) \, .
\]
Recalling the definition of the trivialisation $\mu_p$ (and $\mu_{g(p)}$) from Equation \eqref{eq:comp} we see that the identity between the above two isomorphisms follows since torsion is associative and commutes with both perturbation and stabilisation (Proposition \ref{p:assotors}, Theorem \ref{t:percom} and Proposition \ref{p:torsta}). A similar argument shows that the two trivialisations in the dual case
\[
\mu_{g(p)}^\da \ci \T{Ad}(g \op g)^{\ot 3} \, \T{ and } \, \, \mu_p^\da
\colon \big| F(\la,\nu)(p,p_0) \big| \ot \big| F(\mu,\nu)(p_0,p) \big| \ot \big| F(\la,\mu)(p_0,p) \big| \to (\cc,0)
\]
also agree, see Equation \eqref{eq:dualcomp}. The fact that $t_g$ is compatible with compositions is now a consequence of these observations together with the commutativity of the diagram in Equation \eqref{eq:duaconj}.
\end{proof}

The action of the group $G$ on the coproduct category $\G H_{p_0}$ is then defined by the composition:
\begin{equation}\label{eq:funcgroup}
    \rho(g) \colon \G{H}_{p_0} \to^{t_g}\G H_{\be(g)(p_0)} \to^{\G B(\be(g)(p_0),p_0)}\G H_{p_0} \q \T{for all } g \in G \, ,
\end{equation}
where we recall that $\G B(\be(g)(p_0),p_0)$ denotes the change-of-base-point isomorphism from Section \ref{s:change}. It follows from Lemma \ref{l:conjact} and Theorem \ref{t:baseiso} that $\rho(g)$ is an automorphism of the coproduct category $\G H_{p_0}$ for every $g \in G$.

\begin{lemma}\label{l:grpactcop}
The assignment $\rho \colon G \to \T{Aut}( \G H_{p_0})$ given in Equation \eqref{eq:funcgroup} is a group homomorphism.
\end{lemma}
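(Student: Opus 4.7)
The plan is to verify $\rho(hg)=\rho(h)\ci\rho(g)$ for all $g,h\in G$. Unpacking the definition $\rho(g):=\G B(\be(g)(p_0),p_0)\ci t_g$ and applying Lemma \ref{l:conjact}, which gives $t_{hg}=t_h\ci t_g$, the identity to verify becomes
\[
\G B(\be(hg)(p_0),p_0)\ci t_h\ci t_g=\G B(\be(h)(p_0),p_0)\ci t_h\ci \G B(\be(g)(p_0),p_0)\ci t_g
\]
as automorphisms of $\G H_{p_0}$. Cancelling the isomorphism $t_g$ on the right, and using $\be(hg)(p_0)=\be(h)(\be(g)(p_0))$ together with the transitivity Lemma \ref{l:equivalence}(3) to split the left-hand $\G B$ as $\G B(\be(h)(p_0),p_0)\ci \G B(\be(h)(\be(g)(p_0)),\be(h)(p_0))$, I may then cancel $\G B(\be(h)(p_0),p_0)$ from both sides. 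The whole statement thus collapses to the single equivariance identity
\begin{equation}\label{eq:planequiv}
\G B(\be(h)(\be(g)(p_0)),\be(h)(p_0))\ci t_h=t_h\ci \G B(\be(g)(p_0),p_0).
\end{equation}

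The heart of the proof is therefore to establish, for every $h\in G$ and every pair of idempotents $e,e'\in R$ with $e-e'\in I$, the commutativity of the square
\[
\xymatrix{
\G H_{e}\ar[rr]^{\G B(e,e')}\ar[d]_{t_h} && \G H_{e'}\ar[d]^{t_h}\\
\G H_{\be(h)(e)}\ar[rr]_{\G B(\be(h)(e),\be(h)(e'))} && \G H_{\be(h)(e')}.
}
\]
Both composites send $p\mapsto \be(h)(p)$ on the underlying set $X$ and act as $\ga(h)$ on the common index set of objects $\La$, so only the action on morphisms is substantive. Fix $\la,\mu\in\La$ and $p,q\in X$. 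Both composites on $\G H_{e}(p,q)(\la,\mu)=|F(\la,\mu)(p,e)|\ot|F(\la,\mu)(e,q)|$ are built by composing the stabilisation, torsion and perturbation isomorphisms appearing in Definition \ref{def:change} together with the conjugation factor $\T{Ad}(\al(h)\op\al(h))^{\ot 2}$ coming from \eqref{eq:grpmor}.

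The commutativity then follows from three already-established facts. First, Assumption \ref{a:group}(3) supplies the intertwining
\[
(\al(h)\op\al(h))\,F(\la,\mu)(a,b)\,(\al(h)^{-1}\op\al(h)^{-1})=F(\ga(h)(\la),\ga(h)(\mu))(\be(h)(a),\be(h)(b))
\]
for any idempotents $a,b$ with $a-b\in I$. Second, Proposition \ref{p:torquis} (applied both directly and via Definition \ref{def:stabilisation}, which realises stabilisation as the isomorphism induced by the inclusion quasi-isomorphism) shows that torsion and stabilisation isomorphisms commute with the quasi-isomorphisms induced by conjugation by invertible operators. Third, the perturbation isomorphism commutes with such conjugations because the Fredholm determinant \eqref{GK} is invariant under conjugation by invertible bounded operators, the general case being reduced to the index-zero case via Example \ref{ex:general}. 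The main obstacle will be the bookkeeping required to match up the Fredholm operators on the two sides at each stage of the five-step composition in Definition \ref{def:change}; once the intertwining through $\al(h)\op\al(h)$ is tracked through, commutativity at each stage is automatic. Specialising to $e:=\be(g)(p_0)$ and $e':=p_0$ then yields \eqref{eq:planequiv} and completes the proof.
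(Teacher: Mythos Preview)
Your proposal is correct and follows essentially the same approach as the paper: both reduce the homomorphism property, via Lemma \ref{l:conjact} and the transitivity in Lemma \ref{l:equivalence}, to the single equivariance square $\G B(\be(h)(e),\be(h)(e'))\ci t_h=t_h\ci\G B(e,e')$, and then verify this by tracking conjugation through the torsion, stabilisation and perturbation isomorphisms that constitute $\G B$. The only cosmetic difference is in the citations for why conjugation commutes with perturbation: you invoke Fredholm-determinant conjugation-invariance directly, while the paper exploits that $\T{Ad}(\al(h)\op\al(h))=L(\al(h)\op\al(h))R(\al(h)^{-1}\op\al(h)^{-1})$ is itself a torsion isomorphism (Example \ref{ex:LR}) and then appeals to Theorem \ref{t:percom}.
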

\begin{proof}
Given the properties of the change-of-base-point isomorphism from Lemma \ref{l:equivalence} and the properties of the isomorphisms $t_g \colon \G H_{p_0} \to \G H_{\be(g)(p_0)}$, $g \in G$, established in Lemma \ref{l:conjact}, we only need to show that the following diagram of isomorphisms is commutative:
\begin{equation}\label{eq:baseconj}
\xymatrix{ \G H_{\be(g)(p_0)} \ar[r]^{t_h} \ar[d]_{\G B(\be(g)(p_0),p_0)} & \G H_{\be(hg)(p_0)} \ar[d]^{\G B\big(\be(hg)(p_0), \be(h)(p_0)\big)} \\ \G H_{p_0} \ar[r]_{t_h} & \G H_{\be(h)(p_0)} \, ,}
\end{equation}
for all $g,h \in G$. However, recalling the definition of the change-of-base-point isomorphism from Definition \ref{def:change}, we see that the commutativity of the diagram in Equation \eqref{eq:baseconj} again follows by an application of the associativity of torsion and the commutativity property of torsion with respect to stabilisation and perturbation (Proposition \ref{p:assotors}, Proposition \ref{p:torsta} and Theorem \ref{t:percom}).
\end{proof}

We summarise what we have achieved so far, thus stating the main result of this paper, combining Proposition \ref{p:cocycle3}, Theorem \ref{t:copcat} and Lemma \ref{l:grpactcop}:

\begin{theorem}\label{t:hopfgroup}
Let $R$ be a unital ring equipped with a fixed idempotent element $p_0 \in R$, let $I \su R$ be an ideal, let $G$ be a group and let $\C H$ be a separable Hilbert space. Suppose that we have a non-empty family
\[
\G{Rep} = \{ \pi_\la \}_{\la \in \La}
\]
consisting of (not necessarily unital) ring homomorphisms $\pi_\la \colon R \to \sL(\C H)$, $\la \in \La$, satisfying the conditions of Assumption \ref{a:rep}. Suppose moreover that we have three group homomorphisms
\[
\alpha\colon G\to GL(\C H) \q \be \colon G \to \T{Aut}(R) \q
\ga \colon G \to \T{Perm}(\La)
\]
satisfying the conditions of Assumption \ref{a:group}. Then the constructions presented in Definition \ref{d:hopf} yield a coproduct category $\G H_{p_0}$ and the assignment in Equation \eqref{eq:funcgroup} yields a strict action of the group $G$ on $\G H_{p_0}$. In particular, we have a group cohomology class $[c_{\G H_{p_0}}] \in H^3(G,\cc^*/\{\pm 1\})$ given by the constructions in Definition \ref{d:3cocyc}.
\end{theorem}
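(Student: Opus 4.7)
The plan is to harvest the theorem directly from the structural results already assembled in Sections \ref{s:polar}--\ref{s:group}. The statement is a three-part conjunction (existence of the coproduct category, existence of a strict $G$-action on it, and the resulting $3$-cohomology class), and each part has been reduced in earlier sections to a collection of naturality/commutativity diagrams involving the torsion, perturbation, and stabilisation isomorphisms from Sections \ref{s:tors}--\ref{s:stab}.

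First, I would invoke Theorem \ref{t:copcat} to produce the coproduct category $\mathfrak{H}_{p_0}$. This in turn packages Lemma \ref{l:category} (each $\mathfrak{H}_{p_0}(p,q)$ is a category of $\mathbb{Z}$-graded lines, itself a consequence of Theorem \ref{thm:unitality} and Theorem \ref{t:associativity} together with their duals transported by Proposition \ref{p:dualcomp}), Lemma \ref{l:coassoc} (coassociativity of $\Delta$), and Proposition \ref{p:coprodfun} (functoriality of $\Delta_e$, which hinges on the identification of the two trivialisations in Lemma \ref{l:trivmult1} and Lemma \ref{l:trivmult2} via the duality of Lemma \ref{l:inverse}). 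Assumption \ref{a:rep} is used to ensure that the operators $F(\lambda,\mu)(p,q)$ are Fredholm (Lemma \ref{l:Fredholm}) so that all the determinant lines and perturbation isomorphisms appearing in Definition \ref{d:hopf} are well-defined.

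Next, I would establish the strict $G$-action. For each $g \in G$, Assumption \ref{a:group} provides the ingredients needed to construct the isomorphism $t_g\colon \mathfrak{H}_{p_0}\to \mathfrak{H}_{\beta(g)(p_0)}$ built from conjugation by $\alpha(g)$ on Hilbert spaces, the automorphism $\beta(g)$ of $R$ (which preserves $I$ and sends $p_0$ to an idempotent congruent to $p_0$ mod $I$), and the permutation $\gamma(g)$ of $\Lambda$. Lemma \ref{l:conjact} shows that each $t_g$ is an isomorphism of coproduct categories and that $t_e=\mathrm{id}$, $t_{hg}=t_h\circ t_g$. Composing with the change-of-base-point isomorphism from Theorem \ref{t:baseiso} yields the endomorphism $\rho(g)$ of $\mathfrak{H}_{p_0}$ defined in \eqref{eq:funcgroup}, and Lemma \ref{l:grpactcop} verifies that $\rho\colon G\to \operatorname{Aut}(\mathfrak{H}_{p_0})$ is a group homomorphism. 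The key diagram to check is \eqref{eq:baseconj}, whose commutativity reduces to the interaction of torsion, perturbation, and stabilisation recorded in Proposition \ref{p:assotors}, Proposition \ref{p:torsta}, and Theorem \ref{t:percom}.

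Finally, with $(\mathfrak{H}_{p_0},\Delta)$ a coproduct category carrying a strict action of $G$, I would apply the general machinery of Subsection \ref{ss:hopf}: Definition \ref{d:3cocyc} produces a group $3$-cochain $c\colon \mathbb{Z}[G^3]\to \mathbb{C}^*$, which by Proposition \ref{p:cocycle3} satisfies the twisted cocycle relation and hence descends to a genuine group $3$-cocycle with values in $\mathbb{C}^*/\{\pm 1\}$; Lemma \ref{l:classindep} then ensures that its class $[c_{\mathfrak{H}_{p_0}}]\in H^3(G,\mathbb{C}^*/\{\pm 1\})$ is independent of the auxiliary choices of base object and of isomorphisms $a_g$ and $\beta_{g,h}$. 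Since all three components of the statement have already been reduced to the cited lemmas and propositions, there is no genuinely new obstacle here: the only work is verifying that the hypotheses of the cited results are met by the data under Assumptions \ref{a:rep} and \ref{a:group}, which is immediate by construction.
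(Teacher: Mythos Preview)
Your proposal is correct and matches the paper's own treatment: the paper explicitly presents Theorem~\ref{t:hopfgroup} as a summary ``combining Proposition~\ref{p:cocycle3}, Theorem~\ref{t:copcat} and Lemma~\ref{l:grpactcop}'', with no further argument. You have simply unpacked these references in more detail (tracing back to Lemma~\ref{l:conjact}, Theorem~\ref{t:baseiso}, and Lemma~\ref{l:classindep}), which is entirely in line with the paper's approach.
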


\section{Coproduct categories and group cocycles from bipolarised representations}\label{s:bipolar}
In this section, we provide a general operator theoretic framework, which gives rise to coproduct categories with group actions. In particular, this framework gives rise to group $3$-cocycles. We fix a separable Hilbert space $\C H$ and recall that we have the unital $C^*$-algebra $\sL(\C H)$ of bounded operators on $\C H$, the trace ideal $\sL^1(\C H) \su \sL(\C H)$ and the group of invertible bounded operators $GL(\C H) \su \sL(\C H)$.

\begin{definition}\label{d:bipol}
Let $G$ be a group equipped with a group homomorphism $\al \colon G \to GL(\C H)$.

We say that a pair $(P,Q)$ of idempotents in $\sL(\C H)$ is a \emph{bipolarisation} of the group homomorphism $\al \colon G \to GL(\C H)$, when the following holds for all $u,v,g,h \in G$:
\begin{enumerate}
\item $\big[\al(g)P \al(g^{-1}), \al(u)Q \al(u^{-1})\big] \in \sL^1(\C H)$;
\item $\big(\al(g)P \al(g^{-1})- \al(h)P \al(h^{-1})\big)\big( \al(u)Q \al(u^{-1}) - \al(v)Q \al(v^{-1}) \big) \in \sL^1(\C H)$.
\end{enumerate}
We apply the notation
\[
P_g=\al(g)P\al(g^{-1}) \mbox{ and } Q_u= \al(u)Q\al(u^{-1}) \mbox{ for all } g,u\in G \, .
\]
\end{definition}

We shall in this section see how to construct a coproduct category $\G H(P,Q,\al)$ for any group homomorphism $\al \colon G \to GL(\C H)$ equipped with a bipolarisation $(P,Q)$. Moreover, this coproduct category will carry an action of the group $G,$ and it therefore yields a group $3$-cocycle on $G$ with values in $\cc^*/\{\pm 1\}$. By construction, the cohomology class of this group $3$-cocycle is canonically associated to our data.

We start by giving a different description of a bipolarisation:

\begin{lemma}\label{l:equivbipol}
Let $\al \colon G \to GL(\C H)$ be a group homomorphism and let $P,Q \colon \C H \to \C H$ be bounded idempotents. The pair $(P,Q)$ is a bipolarisation of $\al \colon G \to GL(\C H)$ if and only if
\[
[P,\al(g)][Q,\al(u)] \in \sL^1(\C H) \q \mbox{and} \q [ \al(g)P \al(g^{-1}), Q] \in \sL^1(\C H)
\]
for all $g,u \in G$. 
\end{lemma}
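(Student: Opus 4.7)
The plan is to reduce everything to the single algebraic identity
\[
[P,\alpha(g)] = -(P_g - P)\alpha(g) = (P - P_g)\alpha(g),
\]
together with its $Q$ analogue, and then chase trace class memberships through conjugations by $\alpha(g)$, which is invertible and hence preserves the trace ideal $\sL^1(\C H)$.

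For the forward direction, I would first multiply out
\[
[P,\alpha(g)][Q,\alpha(u)] = (P - P_g)\alpha(g)(Q - Q_u)\alpha(u)
= (P - P_g)(Q_g - Q_{gu})\alpha(gu),
\]
using $\alpha(g) Q = Q_g \alpha(g)$ and $\alpha(g) Q_u = Q_{gu}\alpha(g)$. Condition (2) of Definition \ref{d:bipol} (with $h = e$ and $v = e$ after relabelling) gives $(P - P_g)(Q_g - Q_{gu}) \in \sL^1(\C H)$, and multiplication by the invertible operator $\alpha(gu)$ keeps us inside $\sL^1(\C H)$. The second condition $[P_g, Q] \in \sL^1$ is the special case $u = e$ of condition (1).

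For the converse, the main point is that conjugation by $\alpha(u)$ preserves trace class operators: the identity
\[
\alpha(u^{-1}) [P_g, Q_u] \alpha(u) = [P_{u^{-1}g}, Q]
\]
shows that the assumed hypothesis $[P_h, Q] \in \sL^1$ for all $h$ is actually equivalent to condition (1). For condition (2), I would run the forward computation in reverse to deduce $(P - P_g)(Q_g - Q_w) \in \sL^1$ for all $g, w$ from the hypothesis $[P,\alpha(g)][Q,\alpha(u)] \in \sL^1$, by peeling off the invertible factor $\alpha(gu)$. The only remaining step is a four-term expansion
\[
(P_g - P_h)(Q_u - Q_v) = \big((P - P_h) - (P - P_g)\big)\big((Q_a - Q_v) - (Q_a - Q_u)\big)
\]
for any auxiliary $a \in G$; choosing $a = g$ or $a = h$ on each piece reduces every summand to an expression of the already established form $(P - P_x)(Q_x - Q_y)$, up to subtraction.

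The routine linear algebra of commutators is not the obstacle; the only delicate point to get right is bookkeeping with the indices $g,h,u,v,a$ in the final decomposition, so as to ensure that each of the four resulting summands has matching $P$-index and $Q$-index and thus lands in $\sL^1$ by what has already been proved. Once that decomposition is written down carefully, both implications follow at once.
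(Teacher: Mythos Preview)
Your proposal is correct and follows essentially the same strategy as the paper: both reduce to the conjugation identity $\alpha(u^{-1})[P_g,Q_u]\alpha(u) = [P_{u^{-1}g},Q]$ for condition (1), and an algebraic identity relating $[P,\alpha(g)][Q,\alpha(u)]$ to a product $(P_x - P_y)(Q_z - Q_w)$ up to invertible factors for condition (2).

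The one difference worth noting is that the paper pushes $\alpha(g)$ to the \emph{left} rather than the right, obtaining
\[
(P_g - P)(Q_u - Q) = -\alpha(g)\,[P,\alpha(g^{-1})][Q,\alpha(u)]\,\alpha(u^{-1}),
\]
in which the indices $g$ and $u$ are decoupled. This makes the converse immediate: once $(P_g - P)(Q_u - Q)\in\sL^1(\C H)$ for all $g,u$ independently, the general $(P_g - P_h)(Q_u - Q_v)$ is a sum of four such terms with no matching constraint. Your identity $[P,\alpha(g)][Q,\alpha(u)] = (P - P_g)(Q_g - Q_{gu})\alpha(gu)$ instead produces the coupled form $(P - P_g)(Q_g - Q_w)$, forcing the auxiliary-index bookkeeping you describe. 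That bookkeeping is sound (split the $P$-difference first, then choose $a=h$ and $a=g$ respectively for the two resulting $Q$-splits), but you could sidestep it entirely by adopting the paper's normalization.
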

\begin{proof}
Using that $\al \colon G \to GL(\C H)$ is a group homomorphism and that $\sL^1(\C H) \su \sL(\C H)$ is an ideal our claim becomes a consequence of the identities
\[
\begin{split}
\al(u^{-1})\big[\al(g)P \al(g^{-1}), \al(u)Q \al(u^{-1})\big] \al(u) & = [ \al(u^{-1}g) P \al(g^{-1} u), Q ] \qq \T{and} \\
( \al(g)P \al(g^{-1}) - P )( \al(u) Q \al(u^{-1}) - Q)  & = -\al(g) [P,\al(g^{-1})] [Q,\al(u)]\al(u^{-1}) \, ,
\end{split}
\]
which are valid for all $g,u \in G$.
\end{proof}

In order to proceed, we need the following operator theoretic result, where we recall that $\sL^p(\C H) \su \sL(\C H)$ denotes the $p^{\T{th}}$ Schatten ideal (for $p \in [1,\infty)$):

\begin{lemma}\label{l:idempo}
Let $p \in [1,\infty)$ and suppose that $T \colon \C H \to \C H$ is a bounded operator with $T^2 - T \in \sL^p(\C H)$. Then there exists a bounded idempotent $E \colon \C H \to \C H$ with $T - E \in \sL^p(\C H)$.
\end{lemma}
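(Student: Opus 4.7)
The idea is to apply the holomorphic functional calculus to $T$. Since $T^2 - T \in \sL^p(\C H)$ is compact, the image of $T$ in the Calkin algebra $\sL(\C H)/\C K(\C H)$ is an idempotent, so the essential spectrum of $T$ is contained in $\{0,1\}$. Consequently $\sigma(T) \setminus \{0,1\}$ consists of isolated eigenvalues of finite multiplicity which can accumulate only at $0$ and at $1$. In particular I can pick $r \in (0, 1/2)$ such that the circle $\gamma := \{z \in \cc : |z - 1| = r\}$ is disjoint from $\sigma(T)$ (only countably many $r$ are excluded), and then set $U := \{z : |z-1| < r\}$. Define the Riesz projection
\[
E := \frac{1}{2\pi i} \oint_\gamma (z - T)^{-1} \, dz \in \sL(\C H),
\]
which is a bounded idempotent commuting with $T$ and agrees with $\chi_U(T)$ in the holomorphic functional calculus applied on a neighborhood of $\sigma(T)$ (here $\chi_U$ is the locally constant function equal to $1$ near $\sigma(T) \cap U$ and $0$ near $\sigma(T) \setminus U$).

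Next I would introduce the locally constant choice of ``shift'' needed to move $\sigma(T)$ to $\{0\}$. Concretely, define a function $g$ holomorphic on an open neighborhood of $\sigma(T)$ by
\[
g(z) := z \text{ on a neighborhood of } \sigma(T) \setminus U, \qquad g(z) := z - 1 \text{ on a neighborhood of } \sigma(T) \cap U.
\]
By construction $g(T) = T(1 - E) + (T - 1) E = T - E$, so it suffices to show that $g(T) \in \sL^p(\C H)$.

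The key observation is that $g$ admits a holomorphic factorisation through $z(z-1)$. Indeed, define $h$ on a neighborhood of $\sigma(T)$ by $h(z) := 1/z$ near $\sigma(T) \cap U$ (which is holomorphic there since $0 \notin \overline{U}$) and $h(z) := 1/(z-1)$ near $\sigma(T) \setminus U$ (which is holomorphic there since $1 \in U$). Then on a neighborhood of $\sigma(T)$ we have the identity $g(z) = h(z) \cdot z(z-1)$, so by the holomorphic functional calculus
\[
T - E = g(T) = h(T) \cdot (T^2 - T).
\]
Since $h(T)$ is bounded and $T^2 - T \in \sL^p(\C H)$, and since $\sL^p(\C H)$ is a two-sided ideal in $\sL(\C H)$, this gives $T - E \in \sL^p(\C H)$.

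The only step requiring a bit of care is the spectral picture used to construct the contour $\gamma$; once that is in place the argument is a routine computation in the holomorphic functional calculus, so I do not anticipate a serious obstacle.
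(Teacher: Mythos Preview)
Your argument is correct. The spectral picture is set up exactly as in the paper (essential spectrum in $\{0,1\}$, isolated eigenvalues elsewhere accumulating only at $0$ and $1$, choice of a radius so that the separating circle misses $\sigma(T)$), and the idempotent $E=\chi_U(T)$ you produce coincides with the paper's $E_1$. Where you diverge is in the verification that $T-E\in\sL^p(\C H)$. The paper decomposes $T-E_1=TE_0-(1-T)E_1+T(1-E_0-E_1)$, argues that the last summand has finite rank, and then handles $X:=TE_0$ and $Y:=(1-T)E_1$ via the trick that $\exp(2\pi i X)-1=\sum_{n\geq 1}\frac{(2\pi i)^n}{n!}E_0(T^n-T)$ converges in $\sL^p$, combined with invertibility of $z\mapsto(\exp(2\pi i z)-1)/z$ on $B_{1/2}(0)$. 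Your route is considerably more direct: the single factorisation $z-\chi_U(z)=h(z)\cdot z(z-1)$ in the holomorphic functional calculus gives $T-E=h(T)(T^2-T)$ in one stroke, with $h(T)$ bounded because $0\notin\overline{U}$ and $1\in U$. This avoids the auxiliary projection $E_0$, the finite-rank argument, and the series estimate entirely. The paper's argument has the mild advantage of being more hands-on (one sees explicitly why each piece lies in $\sL^p$), but your factorisation is cleaner and arguably the ``right'' proof.
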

\begin{proof}
We apply the notation
\[
B_r(z) := \{ w \in \cc \mid | w - z| < r \}
\]
for the open ball around $z \in \cc$ with radius $r > 0$. Furthermore, we let $\T{Sp}(T) \su \cc$ denote the spectrum of $T$.

The operator $T^2 - T \in \sL^p(\C H)$ is in particular compact and hence, by the Riesz-Schauder theorem, it has discrete spectrum with $0 \in \cc$ as the only possible limit point. Moreover, by holomorphic functional calculus, it holds that
\[
\T{Sp}(T^2 - T) = \{ z^2 - z \mid z \in \T{Sp}(T) \} ,
\]
and we conclude that the set
\[
\T{Sp}(T) \sem \big( \T{Sp}(T) \cap ( B_{\ep}(0) \cup B_{\ep}(1) ) \big)
\]
is finite for all $\ep \in (0,1)$. 

Let us choose a $\de \in (0,1/2)$ such that
\[
\T{Sp}(T) \cap \{ \de \cd \exp(2 \pi i t) \mid t \in [0,1] \} = \emptyset =
\T{Sp}(T) \cap \{ 1 + \de \cd \exp(2 \pi i t) \mid t \in [0,1] \} \, .
\]
Using the holomorphic functional calculus we then obtain two idempotents
\[
E_0 := \chi_{B_\de(0)}(T) \, , \, \, E_1 := \chi_{B_\de(1)}(T) \in \sL(\C H) \, ,
\]
where $\chi_U \colon \cc \to [0,1]$ refers to the indicator function associated with a subset $U \su \cc$. We claim that
\[
T - E_1  = T E_0 - (1 - T) E_1  + T ( 1 - E_0 - E_1 ) \in \sL^p(\C H) \, .
\]
To see this, we first remark that $T ( 1 - E_0 - E_1 ) \colon \C H \to \C H$ is of finite rank. Indeed, this follows since the compact operator $T^2 - T$ restricts to an invertible operator on the image of the bounded idempotent $1 - E_0 - E_1 \colon \C H \to \C H$ and this image must therefore be finite dimensional. We may thus focus our attention on showing that
\[
X := T E_0 \in \sL^p(\C H) \q \T{and} \q Y := (1 - T) E_1 \in \sL^p(\C H) \, .
\]
To this end we consider the invertible operators $\exp(2 \pi i X)$ and $\exp(2 \pi i Y) \colon \C H \to \C H$. We have that
\[
\begin{split}
\exp(2 \pi i X) - 1
& = \sum_{n = 1}^\infty \frac{(2 \pi i)^n}{n!} E_0 T^n = \sum_{n = 1}^\infty \frac{(2 \pi i)^n}{n!} E_0 (T^n - T) \, .
\end{split}
\]
For every $n \geq 2$ we notice that $T^n - T = (T^2 - T)(1 + \ldots + T^{n-2})$. It follows that $T^n - T  \in \sL^p(\C H)$ and that we have the estimate
\[
\| T^n - T \|_p \leq \| T^2 - T\|_p \cd \big( 1 + \| T \| \plp \| T \|^{n-2} \big)
\]
for the $p$-norm $\| \cd \|_p \colon \sL^p(\C H) \to [0,\infty)$ (here $\| \cd \| \colon \sL(\C H) \to [0,\infty)$ is the operator norm). The sum $\sum_{n = 1}^\infty \frac{(2 \pi i)^n}{n!} E_0 (T^n - T)$ therefore converges absolutely in the $p$-norm and we may thus conclude that $\exp(2 \pi i X) - 1 \in \sL^p(\C H)$ (since $\sL^p(\C H)$ is a Banach space when equipped with the $p$-norm). A similar computation shows that $\exp(2 \pi i Y) - 1 \in \sL^p(\C H)$ as well.

Notice now that the holomorphic function $f \colon z \mapsto (\exp(2 \pi i z) - 1)/z$ is invertible on the open ball $B_{1/2}(0)$. Since both $\T{Sp}(X) \su B_{1/2}(0)$ and $\T{Sp}(Y) \su B_{1/2}(0)$ we must have that $f(X)$ and $f(Y)$ are invertible. But this implies that $X, Y \in \sL^p(\C H)$ since
\[
\exp(2 \pi i X) - 1 = X \cd f(X) \q \T{and} \q \exp(2 \pi i Y) - 1 = Y \cd f(Y) \, . \qedhere
\]
\end{proof}

As a corollary we have the following:

\begin{corollary}\label{c:traper}
Suppose that $E,F \colon \C H \to \C H$ are two bounded idempotents with $[E,F] \in \sL^2(\C H)$. Then there exists a bounded idempotent $E' \colon E \C H \to E \C H$ such that $EFE - E' \in \sL^1(E \C H)$.
\end{corollary}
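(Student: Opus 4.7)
The plan is to reduce the statement to Lemma \ref{l:idempo} applied with $p = 1$. Set $T := EFE$, viewed as a bounded operator $E\C H \to E\C H$ (restricting from $\C H$; note that $T$ preserves $E\C H$ since $E^2 = E$). If we can show that $T^2 - T \in \sL^1(E\C H)$, then Lemma \ref{l:idempo} immediately produces the desired idempotent $E'$ on $E\C H$ with $T - E' \in \sL^1(E\C H)$, which is exactly the conclusion of the corollary.

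The key step is therefore the computation of $T^2 - T$. The claim is the clean identity
\[
T^2 - T = EFEFE - EFE = E[E,F]^2 E.
\]
To verify this, I would expand $[E,F]^2 = (EF - FE)^2 = EFEF - EF^2E - FE^2F + FEFE$, use $E^2 = E$ and $F^2 = F$ to simplify this to $EFEF - EFE - FEF + FEFE$, and then multiply by $E$ on both sides. Each of the four terms contributes $EFEFE$ (for the outer terms $EFEF$, $FEF$, $FEFE$) or $EFE$ (for the middle term $EFE$), and the signs combine to give $EFEFE - EFE$ as required.

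Since $[E,F] \in \sL^2(\C H)$ by assumption, the product $[E,F]^2 \in \sL^1(\C H)$, and hence $E[E,F]^2 E \in \sL^1(\C H)$; its restriction to $E\C H$ lies in $\sL^1(E\C H)$. The main (and essentially only) obstacle is writing down the algebraic identity above correctly; once it is in place, everything else is an invocation of Lemma \ref{l:idempo}. No additional analytic input is needed beyond the ideal property of $\sL^1$ and the fact that the square of a Hilbert–Schmidt operator is trace class.
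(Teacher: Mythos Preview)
Your proof is correct and essentially identical to the paper's: both reduce to Lemma \ref{l:idempo} by showing $(EFE)^2 - EFE = E[E,F]^2E \in \sL^1(E\C H)$. The paper records the identity via the intermediate step $EFEFE - EFE = EF[E,F]E = E[E,F][E,F]E$, while you expand $[E,F]^2$ directly, but the content is the same.
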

\begin{proof}
This follows from Lemma \ref{l:idempo} since
\[
(EFE)^2 - EFE = EFEFE - EFE = EF[E,F]E = E[E,F][E,F]E \in \sL^1(E\C H) \qedhere
\]
\end{proof}


\begin{notation}\label{n:freeidem} For any group $G$, we denote by $R_G$ the free unital ring with one generator $q_u$ for each $u \in G$ subject to the relation $q_u^2 = q_u$. We let $I_G$ denote the two-sided ideal in $R_G$ generated by the subset $\{q_u-q_v \mid u,v\in G\} \su R_G$.
\end{notation}

We emphasise that the unit $1$ in $R_G$ is different from the idempotent $q_e \in R_G$, where $e \in G$ is the neutral element in $G$.

Clearly, the group $G$ acts on $R_G$ via the group homomorphism $\be \colon G \to \T{Aut}(R_G)$ defined by
\begin{equation}\label{eq:autoring}
\be(g) \colon q_u \mapsto q_{gu} \q  \T{for all } g,u \in G \, ,
\end{equation}
and this action is compatible with the ideal $I_G \su R_G$ in the sense that
\[
\be(g)( i ) \in I_G \q \T{and} \q \be(g)(q_u) - q_u \in I_G
\]
for all $i \in I_G$ and all $g,u \in G$.

\begin{definition}\label{d:admrep}
Let $(P,Q)$ be a bipolarisation of a group homomorphism $\al \colon G \to GL(\C H)$. We will say that a family
$\si = \{ \si_g \}_{g \in G}$ of (not necessarily unital) representations of the unital ring $R_G$ as bounded operators on $\C H$ is \emph{admissible} when the relations
\begin{enumerate}
\item $\si_g(1) = P_g$;
\item $\si_g(q_u)-P_gQ_uP_g\in \mathscr{L}^1(\C H)$;
\item $\si_{hg}(q_{hu})=\al(h)\si_g(q_u)\al(h^{-1})$
\end{enumerate}
hold for all $g,h,u \in G$.
\end{definition}

For any bipolarisation $(P,Q)$ of a group representation $\al \colon G \to GL(\C H)$ we define the set
\[
\T{Adm}(P,Q,\al) := \big\{ \si \mid \si = \{ \si_g \}_{g \in G} \T{ is an admissible family of representations of } R_G \T{ on } \C H \big\}
\]
and we will consider the index set defined as the cartesian product:
\begin{equation}\label{eq:index}
\La(P,Q,\al) := G \ti \T{Adm}(P,Q,\al) 
= \{ (h, \si) \mid h \in G \, , \, \, \si \in \T{Adm}(P,Q,\al) \} \, .
\end{equation}
We define an action of $G$ on the index set $\La = \La(P,Q,\al)$ by permutations:
\begin{equation}\label{eq:permindex}
\ga \colon G \to \T{Perm}(\La) \q \ga(k)( h,\si) := (kh, \si) \, .
\end{equation}
For each $\la = (h,\si) \in \La(P,Q,\al)$ we then have the representation $\pi_\la := \si_h \colon R_G \to \sL(\C H)$. This yields the family of representations of $R_G$ as bounded operators on $\C H$:
\[
\G{Rep}(P,Q,\al) := \{ \pi_\la \}_{\la \in \La} \, .
\]

We shall now see that our index set $\La := \La(P,Q,\al)$ is non-empty:

\begin{lemma}\label{l:repres}
Suppose that $(P,Q)$ is a bipolarisation of a group representation $\al \colon G \to GL(\C H)$. Then for each $g \in G$, we may choose a ring homomorphism 
\[
\si_g \colon R_G \to \sL(\C H)
\]
such that the associated family $\si = \{\si_g\}_{g \in G}$ is admissible. In particular, we have an element $(h, \si) \in \La(P,Q,\al)$ for all $h \in G$.
\end{lemma}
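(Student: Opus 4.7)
The plan is to specify the values $\si_g(q_u)$ one pair $(g,u)\in G\times G$ at a time, using Corollary~\ref{c:traper} to manufacture genuine idempotents close (modulo $\sL^1$) to the compressions $P_g Q_u P_g$, and then to organise these choices so that the equivariance condition holds by construction. Once this is done, the ``In particular'' clause is immediate: for any $h\in G$ the pair $(h,\si)$ belongs to $\La(P,Q,\al)$.

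First I would observe that, because $R_G$ is freely generated as a ring by the idempotents $\{q_u\}_{u\in G}$ subject only to $q_u^2=q_u$, a (not necessarily unital) ring homomorphism $\si_g\colon R_G\to\sL(\C H)$ with $\si_g(1)=P_g$ is determined by a choice, for each $u\in G$, of an idempotent $\si_g(q_u)\in\sL(\C H)$ that is subordinate to $P_g$ in the sense that $P_g\si_g(q_u)=\si_g(q_u)P_g=\si_g(q_u)$. So it suffices to produce such a family of idempotents satisfying conditions (2) and (3) of Definition~\ref{d:admrep}.

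Second I would produce the required idempotents ignoring equivariance. By Definition~\ref{d:bipol}(1) (or Lemma~\ref{l:equivbipol}), $[P_g,Q_u]\in\sL^1(\C H)\subset\sL^2(\C H)$, so the operator $P_g Q_u P_g$, viewed as an endomorphism of $P_g\C H$, satisfies $(P_gQ_uP_g)^2-P_gQ_uP_g\in\sL^1(P_g\C H)$. Corollary~\ref{c:traper} then supplies an idempotent $E_{g,u}\in\sL(P_g\C H)$ with $P_gQ_uP_g-E_{g,u}\in\sL^1(\C H)$ (extending by zero outside $P_g\C H$). Any such $E_{g,u}$ is a candidate for $\si_g(q_u)$ satisfying condition (2).

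The main obstacle is condition (3), the $G$-equivariance $\si_{hg}(q_{hu})=\al(h)\si_g(q_u)\al(h)^{-1}$; the idempotents produced by Corollary~\ref{c:traper} are not canonical, so choices made for different pairs $(g,u)$ need not be coherent. I would handle this by exploiting that the diagonal left action of $G$ on $G\times G$ given by $h\cd(g,u):=(hg,hu)$ is free (any $h$ fixing $g$ in the left regular action is trivial). Pick a set of orbit representatives $\{(g_\alpha,u_\alpha)\}_\alpha$; for each $\alpha$ use the previous paragraph to select $\si_{g_\alpha}(q_{u_\alpha}):=E_{g_\alpha,u_\alpha}$; and for an arbitrary pair, writing it uniquely as $(hg_\alpha,hu_\alpha)$, set
\[
\si_{hg_\alpha}(q_{hu_\alpha}):=\al(h)\,\si_{g_\alpha}(q_{u_\alpha})\,\al(h)^{-1}.
\]
Freeness makes this well defined, and conjugation by $\al(h)$ preserves idempotency, the ideal $\sL^1(\C H)$, and the subordination relation relative to $P_{hg_\alpha}=\al(h)P_{g_\alpha}\al(h)^{-1}$. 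Moreover $\al(h)(P_{g_\alpha}Q_{u_\alpha}P_{g_\alpha})\al(h)^{-1}=P_{hg_\alpha}Q_{hu_\alpha}P_{hg_\alpha}$, so condition (2) propagates from the representatives to the whole $G\times G$, while condition (3) is true by construction. Assembling these data gives the admissible family $\si=\{\si_g\}_{g\in G}$, completing the proof.
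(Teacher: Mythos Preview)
Your proof is correct and follows essentially the same strategy as the paper's. The paper simply makes the canonical choice of orbit representatives $\{(e,u):u\in G\}$ for the free diagonal $G$-action on $G\times G$: it picks idempotents $E_u\in\sL(P\C H)$ with $E_u-PQ_uP\in\sL^1$ via Corollary~\ref{c:traper}, and then defines $\si_g(q_u):=\al(g)E_{g^{-1}u}\al(g^{-1})$, which is exactly your construction specialised to this system of representatives.
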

\begin{proof}
Using Corollary \ref{c:traper}, we may choose idempotents $E_u \in \sL( P \C H)$, $u\in G$, satisfying
\[
E_u-PQ_uP\in \mathscr{L}^1(P\C H).
\]
For each $g \in G$, we define the representation $\si_g \colon R_G \to \sL(\C H)$ by setting
\[
\si_{g}(q_{u}) := \al(g) \cd E_{g^{-1}u} \cd \al(g^{-1}) \colon \C H \to \C H \q \T{for all } u \in G
\]
and $\si_g(1) := P_g$. Remark in this respect that $\si_g(1) \cd \si_g(q_u) = \si_g(q_u) = \si_g(q_u) \cd \si_g(1)$ since $P E_{g^{-1} u} = E_{g^{-1} u} = E_{g^{-1} u } P$ for all $u \in G$. We leave it to the reader to verify that the relations in Definition \ref{d:admrep} are satisfied. 
%
\end{proof}



The next theorem is now mainly a consequence of Theorem \ref{t:hopfgroup}:

\begin{theorem}\label{t:grpcocyc}
Suppose that $(P,Q)$ is a bipolarisation of a group homomorphism $\al \colon G \to GL(\C H)$. Then we have an associated group $3$-cohomology class
\[
    [c(P,Q,\alpha)]\in H^3(G,\cc^*/\{\pm 1 \}) \, .
\]
More precisely, our family of representations 
\[
\G{Rep}(P,Q,\al) = \{ \pi_\la\}_{\la \in \La(P,Q,\al)}
\]
of $R_G$ satisfies Assumption \ref{a:rep} with respect to the ideal $I_G \su R_G$. Moreover, the three group homomorphisms
\[
\alpha\colon G \to GL(\C H) \q \be \colon G \to \T{Aut}(R_G) \q
\ga \colon G \to \T{Perm}(\La(P,Q,\al))
\]
satisfy the conditions in Assumption \ref{a:group} with respect to the fixed idempotent $q_e \in R_G$ (and the ideal $I_G \su R_G$). In particular, we obtain a coproduct category $\G H(P,Q,\al)$ equipped with a strict action of the group $G$ and this data yields our group $3$-cohomology class $[c(P,Q,\al)] \in H^3(G,\cc^*/\{\pm 1 \})$ for the group $G$ with values in $\cc^*/\{\pm 1 \}$. 
%
\end{theorem}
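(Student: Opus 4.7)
The plan is to reduce the theorem to an application of Theorem \ref{t:hopfgroup}. The non-emptiness of the index set $\La(P,Q,\al)$ is Lemma \ref{l:repres}. The $G$-action $\be \colon G \to \T{Aut}(R_G)$ defined by $\be(g)(q_u) := q_{gu}$ clearly stabilises the ideal $I_G$ (since the generators $q_u - q_v$ go to generators of $I_G$), and satisfies $\be(g)(q_e) - q_e = q_g - q_e \in I_G$; the permutation action $\ga$ is given in \eqref{eq:permindex}. Granting these, I must verify Assumption \ref{a:rep} for the family $\G{Rep}(P,Q,\al)$ with respect to $I_G$, together with the remaining clause of Assumption \ref{a:group}, namely the equivariance condition (3). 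Once both are checked, Theorem \ref{t:hopfgroup} immediately produces the coproduct category $\G H(P,Q,\al) := \G H_{q_e}$, its strict $G$-action, and the class $[c(P,Q,\al)] \in H^3(G,\cc^*/\{\pm 1\})$.

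The technical heart of the proof is Assumption \ref{a:rep}, and my first step would be to establish a mod-trace-class normal form. For $\la = (h,\si) \in \La$ and a monomial $x = q_{u_1} \cdots q_{u_n} \in R_G$, iterated use of the admissibility relation $\si_h(q_u) - P_h Q_u P_h \in \sL^1(\C H)$ (Definition \ref{d:admrep}(2)) together with $[P_h, Q_u] \in \sL^1(\C H)$ (bipolarisation condition (1), cf. Lemma \ref{l:equivbipol}) collapses the interior $P_h$'s, yielding
\[
\pi_\la(x) \sim_1 P_h \, Q_{u_1} \cdots Q_{u_n} \, P_h,
\]
where $\sim_1$ denotes equality modulo $\sL^1(\C H)$. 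Assumption \ref{a:rep}(1) then reduces, for $\mu = (k,\si')$, to showing that $P_h T_x (P_h - P_k) P_k + P_h [T_x, P_k] P_k$ is trace class, where $T_x := Q_{u_1} \cdots Q_{u_n}$; the second summand is trace class by bipolarisation condition (1), and the first follows by commuting $(P_h - P_k)$ past $T_x$ modulo $\sL^1$ and using $P_h(P_h - P_k)P_k = P_h - P_h P_k$ to cancel. Assumption \ref{a:rep}(2) is analogous, but requires the ideal hypothesis: for $i = a(q_u - q_v) b \in I_G$ the normal form contains a factor $(Q_u - Q_v)$, and the trace-class estimate $(Q_u - Q_v)(P_k - P_l) \in \sL^1$, obtained by combining bipolarisation condition (2) with a commutator from condition (1), closes the argument.

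The verification of condition (3) of Assumption \ref{a:group} is almost immediate: on the generator $q_u$ it is exactly the admissibility axiom $\si_{gh}(q_{gu}) = \al(g)\si_h(q_u)\al(g^{-1})$ of Definition \ref{d:admrep}(3), and it extends multiplicatively and linearly to all of $R_G$ since both sides are non-unital ring homomorphisms in $r$. The main obstacle I anticipate is purely combinatorial rather than analytic: tracking the trace-class residues through the many mod-$\sL^1$ commutations in the normal-form step and the subsequent computations for Assumption \ref{a:rep}(1)--(2), and taking care to invoke the stronger bipolarisation condition (2) exactly where the ideal-theoretic cancellation for (2) of Assumption \ref{a:rep} is needed. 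No new analytic input beyond the two conditions of Definition \ref{d:bipol} enters; everything else is a direct appeal to the machinery of Theorem \ref{t:hopfgroup}.
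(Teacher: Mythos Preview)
Your proposal is correct and follows essentially the same route as the paper: reduce everything to Theorem~\ref{t:hopfgroup}, invoke Lemma~\ref{l:repres} for non-emptiness, and then verify Assumption~\ref{a:rep}(1)--(2) and Assumption~\ref{a:group}(3) by direct computation with the bipolarisation conditions. The only real difference is organisational. The paper observes that the map $x \mapsto \pi_\la(x)\pi_\mu(1) - \pi_\la(1)\pi_\mu(x)$ is a derivation (since $\sL^1$ is an ideal and the $\pi_\la$ are ring homomorphisms), so condition~(1) reduces to the single generator $q_u$; once (1) holds, a similar bootstrap reduces (2) to $q_u - q_v$. Your normal form $\pi_\la(q_{u_1}\cdots q_{u_n}) \sim_1 P_h Q_{u_1}\cdots Q_{u_n} P_h$ achieves the same end more explicitly, at the cost of tracking more commutations. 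Both arguments use exactly the same analytic input (conditions~(1) and~(2) of Definition~\ref{d:bipol}) in exactly the same places.

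One small slip to fix: you write ``$P_h(P_h - P_k)P_k = P_h - P_h P_k$'', but in fact $P_h(P_h - P_k)P_k = P_h P_k - P_h P_k = 0$. What you presumably mean is that after commuting $(P_h - P_k)$ past $T_x$ you land on $P_h(P_h - P_k)T_x P_k = P_h(1-P_k)T_x P_k$, and then one more commutation gives $P_h(1-P_k)T_x P_k = P_h[T_x,P_k]P_k \in \sL^1$; this is what actually closes the argument for the first summand.
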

\begin{proof}
By Theorem \ref{t:hopfgroup} and Lemma \ref{l:repres} we only need to verify condition $(1)$ and $(2)$ in Assumption \ref{a:rep} and condition $(1)$, $(2)$ and $(3)$ in Assumption \ref{a:group}. We have already argued that $\be \colon G \to \T{Aut}(R_G)$ satisfies condition $(1)$ and $(2)$ in Assumption \ref{a:group}. To check the equivariance condition $(3)$, we let $(h,\si) \in \La(P,Q,\al)$ and $k,u \in G$ be given and notice that
\[
\al(k) \pi_{(h,\si)}(q_u) \al(k)^{-1} = \al(k) \si_h(q_u) \al(k)^{-1}
= \si_{kh}(q_{ku}) = \pi_{(kh,\si)}(q_{ku}) = \pi_{\ga(k)(h,\si)}(\be(k)(q_u)) \, ,
\] 
where the second equality sign follows by the admissibility of the family of representations $\si = \{\si_g\}_{g \in G}$. We are thus left with condition $(1)$ and $(2)$ in Assumption \ref{a:rep}. 

Let $(h,\si), (k,\rho)$ and $(l, \tau) \in \La$ be given. Since $\sL^1(\C H) \su \sL(\C H)$ is an ideal (and we are working with ring homomorphisms), it suffices to verify the commutator condition $(1)$ on the generators $q_u \in R_G$, $u \in G$. But for each $u \in G$ we have that
\[
\si_h(q_u) \rho_k(1) = \si_h(q_u) P_k \sim_1 P_h Q_u P_h P_k \sim_1 P_h Q_u P_k \sim_1 P_h P_k Q_u P_k \sim_1 P_h \rho_k(q_u) = \si_h(1) \rho_k(q_u) \, ,
\] 
where the first and fourth equivalence follow by admissibility and the second and third equivalence follow since $(P,Q)$ is a bipolarisation of $\al \colon G \to GL(\C H)$ (see Definition \ref{d:admrep} and Definition \ref{d:bipol}). 

Since we have already verified the commutator condition $(1)$, it suffices to check condition $(2)$ of Assumption \ref{a:rep} for the generators $q_u - q_v \in I_G$, $u,v \in G$.  But for each $u,v \in G$ we have that
\[
\begin{split}
\si_h(q_u - q_v) \rho_k(1) \tau_l(1) & \sim_1 P_h (Q_u - Q_v) P_h P_k P_l \sim_1 P_h (Q_u - Q_v) P_k P_l \\ 
& \sim_1 P_h (Q_u - Q_v) P_h P_l \sim_1 \si_h(q_u - q_v) \tau_l(1) ,
\end{split}
\]
where we are again using admissibility and the fact that $(P,Q)$ is a bipolarisation of $\al \colon G \to GL(\C H)$.
\end{proof}

\subsection{Group $3$-cocycles on double loop groups}
Let $\Ga$ be a Lie group and suppose that $\rho \colon \Ga \to GL(V)$ is a representation of $\Ga$ by \emph{invertible} operators on a finite dimensional complex Hilbert space $V$. We suppose that the representation is smooth in the sense that the map $\Ga \to \cc$, $\rho_{v,w} \colon g \mapsto \inn{v,\rho(g)w}$ is smooth for all $v,w \in V$. We are using the convention that inner products on complex Hilbert spaces are linear in the second variable and conjugate linear in the first variable.

We are interested in the double loop group $G := C^\infty(\B T^2, \Ga)$ consisting of smooth maps from the $2$-torus with values in $\Ga$ (where the group structure is given by point-wise application of the group law in $\Ga$) and we are going to construct a group $3$-cocycle on $G$ associated to the smooth representation $\rho \colon \Ga \to GL(V)$. 

We let $L^2(\B T^2)$ denote the separable Hilbert space of equivalence classes of square integrable functions on the $2$-torus and we put $\C H := L^2(\B T^2) \ot V$. The unital $*$-algebra of smooth functions on the $2$-torus $C^\infty(\B T^2)$ acts on the Hilbert space $L^2(\B T^2)$ as multiplication operators and we denote the corresponding unital $*$-homomorphism by $m \colon C^\infty(\B T^2) \to \sL( L^2(\B T^2))$. Let $\{ e_i\}_{i = 1}^n$ denote an orthonormal basis for $V$ and define the group homomorphism
\begin{equation}\label{eq:bilooprep}
\al_\rho \colon G \to GL(\C H) \q \al_\rho(f)(\xi \ot e_j) := \sum_{i = 1}^n m(\rho_{e_i,e_j} \ci f)(\xi) \ot e_i\, ,
\end{equation}
for all $\xi \in L^2(\B T^2)$, $f \in G$ and $j \in \{1,2,\ldots,n\}$. The group homomorphism $\al \colon G \to GL(\C H)$ is independent of the choice of orthonormal basis $\{e_i\}_{i = 1}^n$ for $V$.

The Hilbert space $L^2(\B T^2)$ has an orthonormal basis $\{ z_1^{n_1} z_2^{n_2} \}_{(n_1,n_2) \in \zz^2}$, where the functions $z_1$ and $z_2 \colon \B T^2 \to \cc$ denote the projections onto the two factors in the Cartesian product $\B T^2 = \B T \ti \B T$ followed by the inclusion $\B T \su \cc$. We may then define two orthogonal projections $P$ and $Q \colon L^2(\B T^2) \to L^2(\B T^2)$ by
\begin{equation}\label{eq:bipolbiloop}
\begin{split}
& P(z_1^{n_1} z_2^{n_2}) := \fork{ccc}{z_1^{n_1} z_2^{n_2}  & \T{for} & n_1 \geq 0 \\ 0 & \T{for} & n_1 < 0 } \q \T{and} \\
& Q(z_1^{n_1} z_2^{n_2}) := \fork{ccc}{z_1^{n_1} z_2^{n_2}  & \T{for} & n_2 \geq 0 \\ 0 & \T{for} & n_2 < 0 \, .}  
\end{split}
\end{equation}
It holds that $PQ = QP$ and the image of the orthogonal projection $PQ \colon L^2(\B T^2) \to L^2(\B T^2)$ is then exactly the Hardy-space for the bi-disc $H^2(\B D^2) \su L^2(\B T^2)$. From the orthogonal projections $P$ and $Q$ we obtain orthogonal projections $P \ot \T{id}_V$ and $Q \ot \T{id}_V$ on the Hilbert space $\C H = L^2(\B T^2) \ot V$.

The main application of our work can now be summarised in the following result:


\begin{theorem}\label{t:bipolbiloop}
The pair of orthogonal projections $(P \ot \T{id}_V,Q \ot \T{id}_V)$ defined in Equation \eqref{eq:bipolbiloop} is a bipolarisation of the group homomorphism $\al_\rho \colon C^\infty(\B T^2,\Ga) \to GL(\C H)$ defined in Equation \eqref{eq:bilooprep}. In particular, we obtain an associated group $3$-cohomology class $\big[ c(P \ot \T{id}_V, Q \ot \T{id}_V,\al_\rho) \big] \in H^3( C^\infty(\B T^2,\Ga), \cc^*/\{\pm 1\})$.
\end{theorem}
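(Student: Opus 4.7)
My plan is to apply Lemma \ref{l:equivbipol}, which reduces the bipolarisation condition to verifying that, for every $f,g\in C^\infty(\B T^2,\Gamma)$,
\[
[P\otimes \T{id}_V,\al_\rho(f)][Q\otimes \T{id}_V,\al_\rho(g)] \in \sL^1(\C H)
\qquad\text{and}\qquad
\bigl[\al_\rho(f)(P\otimes \T{id}_V)\al_\rho(f^{-1}),\,Q\otimes \T{id}_V\bigr] \in \sL^1(\C H).
\]
Since $\al_\rho(f)$ is a finite sum of the form $\sum_{i,j} m(\rho_{e_i,e_j}\ci f)\ot e_{ij}$ with the $\rho_{e_i,e_j}\ci f$ lying in $C^\infty(\B T^2)$ (as smoothness is preserved by the smooth representation $\rho$), it will be enough to check the corresponding trace-class properties for commutators of $P$ and $Q$ with multiplication operators $m(u),m(v)$ with $u,v\in C^\infty(\B T^2)$.

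The core of the argument rests on the tensor product decomposition $L^2(\B T^2)\cong L^2(\B T)\hat\otimes L^2(\B T)$, under which $P=P_H\otimes I$ and $Q=I\otimes P_H$, where $P_H$ is the Hardy projection on $L^2(\B T)$. For a monomial $u(z_1,z_2)=z_1^{m_1}z_2^{m_2}$, one computes directly
\[
[P,m(u)]=[P_H,m_{z^{m_1}}]\otimes m_{z^{m_2}},
\qquad
[Q,m(u)]=m_{z^{m_1}}\otimes [P_H,m_{z^{m_2}}],
\]
and the key observation is that $[P_H,m_{z^k}]$ is a finite rank operator on $L^2(\B T)$ of rank at most $|k|$, with trace norm bounded by $2|k|$. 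Consequently the \emph{mixed} product $[P,m(u)][Q,m(v)]$ for monomials $u=z_1^{k_1}z_2^{k_2}$ and $v=z_1^{m_1}z_2^{m_2}$ reduces to
\[
[P_H,m_{z^{k_1}}]m_{z^{m_1}}\otimes m_{z^{k_2}}[P_H,m_{z^{m_2}}],
\]
which is a tensor product of two finite rank operators and is thus trace class with trace norm bounded by $4|k_1|\cdot |m_2|$. A similar direct computation gives
\[
[Q,[P,m(u)]] = [P_H,m_{z^{m_1}}]\otimes [P_H,m_{z^{m_2}}],
\]
also of trace norm at most $4|m_1||m_2|$.

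For general smooth $u,v\in C^\infty(\B T^2)$, I would expand $u$ and $v$ as Fourier series and use the bilinearity of the commutator. The Schwartz decay of the Fourier coefficients $\hat u,\hat v$ then ensures that the sums
\[
\sum_{k,m\in\zz^2} |\hat u(k_1,k_2)|\,|\hat v(m_1,m_2)|\cdot 4|k_1||m_2|
\quad\text{and}\quad
\sum_{m\in\zz^2} |\hat u(m_1,m_2)|\cdot 4|m_1||m_2|
\]
converge, giving trace class bounds for $[P,m(u)][Q,m(v)]$ and $[Q,[P,m(u)]]$ respectively. To deal with the second condition from Lemma \ref{l:equivbipol}, I would conjugate by $\al_\rho(g)$ and expand
\[
\al_\rho(g)^{-1}[P_g,Q]\al_\rho(g)= [P,\al_\rho(g)^{-1}][Q,\al_\rho(g)]+\al_\rho(g)^{-1}[Q,[P,\al_\rho(g)]],
\]
so that both summands reduce to the previous two cases.

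The only genuine obstacle is routine bookkeeping: tracking the decomposition of $\al_\rho$ as a matrix of multiplication operators, and confirming that the double-commutator identity above is correct. The analytic content is elementary once one exploits the tensor product structure of $P$ and $Q$ and the rapid decay of Fourier coefficients of smooth functions on $\B T^2$.
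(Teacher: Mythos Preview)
Your proposal is correct and follows essentially the same route as the paper: reduce via Lemma~\ref{l:equivbipol} to checking that $[P,m(f)][Q,m(g)]$ and $[[P,m(f)],Q]$ are trace class for scalar smooth $f,g$, then expand in Fourier monomials and use rapid decay. The paper computes the exact trace norm $\|[P,z_1^n][Q,z_2^m]\|_1=|n|\,|m|$ by recognising a finite-rank orthogonal projection, whereas you obtain the (equally sufficient) bound via the tensor factorisation $P=P_H\otimes I$, $Q=I\otimes P_H$; and you spell out the double-commutator identity for $[P_g,Q]$ that the paper leaves implicit in the phrase ``using $PQ=QP$'' --- note that your identity indeed requires $[P,Q]=0$, which holds here but is worth stating.
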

\begin{proof}
Using Lemma \ref{l:equivbipol} and the relation $PQ = QP$ we see that the first part of the theorem follows if we can verify that $[P,m(f)][Q,m(g)] \in \sL^1( L^2(\mathbb{T}^2))$ and $\big[ [P,m(f)] , Q\big] \in \sL^1( L^2(\mathbb{T}^2))$ for all $f,g \in C^\infty(\B T^2)$. Since the functions $f$ and $g$ are both smooth, we know that their Fourier coefficients, respectively $\{ a_{n_1,n_2} \}_{n_1,n_2 \in \zz}$ and $\{ b_{m_1,m_2} \}_{m_1,m_2 \in \zz}$, are rapidly decreasing in the sense that the series 
\[
\sum_{n_1,n_2 \in \zz} |a_{n_1,n_2}| |n_1|^k |n_2|^l \q \T{and} \q \sum_{m_1,m_2 \in \zz} |b_{m_1,m_2}| |m_1|^k |m_2|^l
\]
are convergent for all $k,l \in \nn \cup \{0\}$. We compute as follows: 
\[
\begin{split}
[P,m(f)][Q,m(g)] & = \sum_{n_1,n_2,m_1,m_2 \in \zz} a_{n_1,n_2} b_{m_1,m_2} \cd z_2^{n_2} [P,z_1^{n_1}][Q,z_2^{m_2}] z_1^{m_1} \q \T{and} \\
\big[ [P,m(f)] , Q\big] & = - \sum_{n_1,n_2 \in \zz} a_{n_1,n_2} [P,z_1^{n_1}][Q,z_2^{n_2}] 
\end{split}
\]
so that it now suffices to show that $\big\| [P,z_1^n][Q,z_2^m] \big\|_1 = |n| \cd |m|$ for all $n,m \in \zz$ (where we recall that $\| \cd \|_1 \colon \sL^1( L^2(\B T^2)) \to [0,\infty)$ denotes the trace norm). This identity however follows from the fact that $( P - z_1^{|n|} P z_1^{-|n|} ) (Q - z_2^{|m|}Q z_2^{-|m|})$ is an orthogonal projection of rank $|n| \cd |m|$ (and hence has trace norm equal to $|n| \cd |m|$).

The second part of the theorem is an immediate consequence of the first part and of Theorem \ref{t:grpcocyc}.
\end{proof}

We end this subsection by giving one more example of a bipolarised group homomorphism, but this time in the context of the smooth noncommutative $2$-torus, so that we obtain a group $3$-cocycle on the invertible elements in this unital $*$-algebra. Thus, we fix an irrational number $\te \in \rr \sem \B Q$ and consider the unitary operators $U$ and $V \colon L^2(\B T^2) \to L^2(\B T^2)$ defined by
\[
U( z_1^{n_1} z_2^{n_2} ) := z_1^{n_1 + 1} z_2^{n_2} \q \T{and} \q V(z_1^{n_1} z_2^{n_2}) := \exp(2\pi i n_1 \te) z_1^{n_1} z_2^{n_2 + 1}\, . 
\] 
We notice the relation $U V = \exp(-2\pi i \te) V U$. The smooth noncommutative $2$-torus $C^\infty(\B T^2_\te)$ is then defined as the unital $*$-subalgebra of $\sL(L^2(\B T^2))$ consisting of all bounded operators of the form 
\[
\sum_{n_1,n_2 \in \zz} a_{n_1,n_2} U^{n_1} V^{n_2}  \in \sL(L^2(\B T^2)) \, ,
\]
where the sequence of complex numbers $\{ a_{n_1,n_2} \}_{n_1,n_2 \in \zz}$ is rapidly decreasing, i.e.
\[
\sum_{n_1,n_2 \in \zz} |a_{n_1,n_2}| |n_1|^k |n_2|^l < \infty \, ,
\]
for all $k,l \in \nn \cup \{0\}$. The smooth noncommutative $2$-torus is in fact a unital Fr\'echet $*$-algebra where the corresponding countable family of semi-norms $\{ \| \cd \|_{k,l} \}_{k,l \in \nn \cup \{0\}}$ is given by
\[
\big\| \sum_{n_1,n_2 \in \zz} a_{n_1,n_2} U^{n_1} V^{n_2} \big\|_{k,l} 
:= \| \sum_{n_1,n_2 \in \zz} a_{n_1,n_2} n_1^k n_2^l U^{n_1} V^{n_2} \|_\infty \, .
\]
We let $C^\infty(\B T_\te^2)^*$ denote the group of invertible elements in $C^\infty(\B T_\te^2)$ and define the group homomorphism $\al_\te \colon C^\infty(\B T_\te^2)^* \to GL( L^2(\B T^2))$ induced by the unital inclusion $C^\infty(\B T^2_\te) \su \sL( L^2(\B T^2))$. The proof of the next proposition is then very similar to the proof of Theorem \ref{t:bipolbiloop} and will therefore not be repeated here.

\begin{prop}\label{p:ncbiloop}
The pair of orthogonal projections $(P ,Q )$ defined in Equation \eqref{eq:bipolbiloop} is a bipolarisation of the group homomorphism $\al_\te \colon C^\infty(\B T^2_\te)^* \to GL(L^2(\B T^2))$. In particular, we obtain an associated group $3$-cohomology class $\big[ c(P, Q,\al_\te) \big] \in H^3( C^\infty(\B T^2_\te)^*, \cc^*/\{\pm 1 \})$.
\end{prop}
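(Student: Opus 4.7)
The plan is to verify, via Lemma \ref{l:equivbipol}, the two trace-class conditions $[P,\al_\te(g)][Q,\al_\te(h)]\in\sL^1\big(L^2(\B T^2)\big)$ and $[\al_\te(g) P \al_\te(g^{-1}), Q]\in\sL^1\big(L^2(\B T^2)\big)$ for all $g,h\in C^\infty(\B T^2_\te)^*$, and then invoke Theorem \ref{t:grpcocyc} to obtain the cohomology class. First I would pass to the unitary identification $L^2(\B T^2)\cong L^2(\B T)\hot L^2(\B T)$ with respect to the product basis $\{z_1^{n_1} z_2^{n_2}\}$, under which $P = P_H\ot 1$ and $Q = 1\ot P_H$ with $P_H$ the Hardy projection on $L^2(\B T)$, $U = M_{z_1}\ot 1$ acts only on the first tensor factor, and $V = D\ot M_{z_2}$, where $D$ is the unitary diagonal operator $z_1^n\mapsto\exp(2\pi i n\te) z_1^n$. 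The two projections commute, $[P,Q]=0$, and since $D$ is diagonal in the basis defining $P_H$, the structural identities $[P,V^m] = 0$ and $[Q,U^n] = 0$ hold for all $m,n\in\zz$; moreover $[P,U^n]$ commutes with $Q$ because these operators act on different tensor factors.

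Next I would compute trace-norm bounds on the Weyl generators. Setting $A_n := [P_H, M_{z_1}^n]$ and $B_m := [P_H, M_{z_2}^m]$, which are finite-rank operators on $L^2(\B T)$ of rank $|n|$ and $|m|$ respectively, one has $[P, U^n]= A_n\ot 1$ and $[Q, V^m]=D^m\ot B_m$. Expanding with the help of $[P,V^m]=0=[Q,U^n]$ yields
\[
[P, U^n V^m][Q, U^{n'} V^{m'}] = [P, U^n]\, V^m U^{n'}\, [Q, V^{m'}] = \big(A_n D^m M_{z_1}^{n'} D^{m'}\big) \ot \big(M_{z_2}^m B_{m'}\big),
\]
whose trace norm is bounded by $|n|\cd|m'|$ using the unitarity of the factors $D^\bullet, M_{z_i}^\bullet$, the estimates $\|A_n\|_1=|n|$ and $\|B_{m'}\|_1=|m'|$, and $\|T\ot S\|_1=\|T\|_1\|S\|_1$. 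The analogous computation, relying on the fact that $[P, U^n]$ commutes with $Q$, gives $[[P, U^n V^m], Q]= -A_n D^m\ot B_m$, whose trace norm is at most $|n|\cd|m|$.

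Finally, expanding $a, b\in C^\infty(\B T^2_\te)$ as $a=\sum a_{n,m} U^n V^m$ and $b=\sum b_{n',m'} U^{n'} V^{m'}$ with rapidly decreasing coefficients, the bilinearity of $(a,b)\mapsto [P, a][Q, b]$ and the linearity of $a\mapsto [[P,a],Q]$ combined with the above estimates yield absolute convergence in trace norm,
\[
\|[P, a][Q, b]\|_1 \leq \Big(\sum_{n,m}|a_{n,m}||n|\Big)\Big(\sum_{n',m'}|b_{n',m'}||m'|\Big), \q \|[[P, a], Q]\|_1\leq \sum_{n,m}|a_{n,m}||n||m|,
\]
both finite by rapid decay. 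The second bipolarisation condition for invertible $g$ reduces to these two estimates via the identity $gPg^{-1} = P - [P, g] g^{-1}$, giving $[gPg^{-1}, Q] = -[[P,g],Q]g^{-1} + [P,g][Q,g^{-1}]$ after using $[P,Q]=0$. An application of Lemma \ref{l:equivbipol} then shows that $(P,Q)$ is a bipolarisation, and Theorem \ref{t:grpcocyc} delivers the class in $H^3(C^\infty(\B T^2_\te)^*,\cc^*/\{\pm 1\})$. The only real obstacle is organisational, namely tracking the noncommuting phase $D$ that appears in $V$; placing it in the first tensor factor, where it commutes with $P_H$, makes the whole argument parallel to the proof of Theorem \ref{t:bipolbiloop}.
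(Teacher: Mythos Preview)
Your proof is correct and follows exactly the approach the paper indicates (``very similar to the proof of Theorem~\ref{t:bipolbiloop}''): reduce via Lemma~\ref{l:equivbipol} and $[P,Q]=0$ to the two trace-class conditions $[P,a][Q,b]\in\sL^1$ and $\big[[P,a],Q\big]\in\sL^1$, then expand in Weyl monomials and use rapid decay. Your tensor-product bookkeeping placing the phase $D$ in the first factor (where it commutes with $P_H$) is the natural adaptation to the noncommutative setting, and your explicit identity $[gPg^{-1},Q]=-\big[[P,g],Q\big]g^{-1}+[P,g][Q,g^{-1}]$ makes the reduction step transparent.
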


\section{Non-triviality of group $3$-cocycles on double loop groups}\label{s:nontriv}
We now analyse in more detail the case of the double loop group $G = C^\infty(\mathbb{T}^2)^*$ of invertible smooth functions on the $2$-torus. Let $\al \colon G \to GL(L^2(\B T^2))$ denote the group homomorphism determined by the unital representation $m \colon C^\infty(\B T^2) \to \sL(L^2(\B T^2))$ of $C^\infty(\B T^2)$ as multiplication operators on $\C H := L^2(\B T^2)$. We recall from Theorem \ref{t:bipolbiloop} that the two orthogonal projections $P$ and $Q \colon L^2(\B T^2) \to L^2(\B T^2)$ defined in Equation \eqref{eq:bipolbiloop} yield a bipolarisation of $\al$. We thus have an associated group $3$-cohomology class $[c(P,Q,\al)] \in H^3(G,\cc^*/\{\pm 1 \})$.

For a fixed $\la \in \cc^*$ we consider the class in group $3$-homology $\{ z_1,z_2,\la\} \in H_3(G,\zz)$ coming from the 
group $3$-cycle
\begin{equation}\label{eq:classgrouphom}
(z_1,z_2,\la) - (z_1,\la,z_2) + (\la, z_1,z_2) - (\la,z_2,z_1) + (z_2,\la,z_1) - (z_2,z_1,\la) \in \zz[G^3]
\end{equation}
We are going to compute the pairing between our group $3$-cohomology class $[c(P,Q,\al)]$ and the group $3$-homology class $\{z_1,z_2,\la\}$. This yields the result $\inn{[c(P,Q,\al)], \{z_1,z_2,\la\} } = [\la]$, where $[\cd ] \colon \cc^* \to \cc^*/\{\pm 1 \}$ denotes the quotient map. In particular, we obtain that our group $3$-cohomology class $[c] = [c(P,Q,\al)]$ is non-trivial. The computation of the pairing $\inn{ [c(P,Q,\al)], \{z_1,z_2,\la\} }$ is carried out in several steps and the reader can in these steps see how the various category theoretic constructions appearing in this paper are working in practice. 

We recall from Notation \ref{n:freeidem} that $R_G$ denotes the free unital ring with one generator $q_u$ for every $u \in G$ and that these generators are subject to the relation $q_u^2 = q_u$ so that they become elements in $\T{Idem}(R_G)$. The ideal $I_G \su R_G$ is the smallest two-sided ideal containing the difference $q_u - q_v$ for every $u,v \in G$. According to Theorem \ref{t:hopfgroup}, Theorem \ref{t:grpcocyc} and Definition \ref{d:hopf}, the coproduct category $\G H := \G H(P,Q,\al)$ is based on the underlying set
\[
X := \big\{ p \in \T{Idem}(R_G) \mid p - q_e \in I_G \big\}
\]
together with the choice of basepoint $q_e \in X$. For every pair of elements $p,q \in X$ we have the category $\G H(p,q)$. The objects in $\G H(p,q)$ are elements in the index set
\[
\La := \La(P,Q,\al) := G \ti \T{Adm}(P,Q,\al) ,
\]
where $\T{Adm}(P,Q,\al)$ is the set of admissible families of representations of $R_G$, see Definition \ref{d:admrep}. For every pair of elements 
$(g,\si), (h,\tau) \in \La$ the morphisms from $(g,\si)$ to $(h,\tau)$ are given by the $\zz$-graded complex line
\[
\G H(p,q)\big( (g,\si), (h,\tau) \big) := \G L_p\big( (g,\si), (h,\tau) \big) \ot \G L_q^\da\big( (g,\si), (h,\tau)\big) .
\]
We remind the reader of the notation 
\[
\C H := L^2(\B T^2) \q P_g := \al(g) P \al(g^{-1}) \, \, \T{ and } \, \, \, Q_u := \al(u) Q \al(u^{-1}) \, .
\]
We then recall that the $\zz$-graded complex lines $\G L_p\big( (g,\si), (h,\tau) \big)$ and $\G L_q^\da\big( (g,\si), (h,\tau)\big)$ are respectively the determinants of the two Fredholm operators
\[
\begin{split}
& F\big( (g,\si), (h,\tau) \big)(p,q_e) := 
\big( \si_g(q_e) \op \tau_h(p) \big) \Om( (g,\si), (h,\tau))(1) \big( \si_g(p) \op \tau_h(q_e) \big)
\q \T{and} \\
& F\big( (g,\si), (h,\tau) \big)(q_e,q) :=  
\big( \si_g(q) \op \tau_h(q_e) \big) \Om( (g,\si), (h,\tau))(1) \big( \si_g(q_e) \op \tau_h(q) \big) ,
\end{split}
\]
where the matrix in the middle is given by
\[
\Om( (g,\si), (h,\tau))(1) := \ma{cc}{P_g - P_g P_h P_g & P_g P_h \\ 2 P_h P_g - (P_h P_g)^2 & P_h P_g P_h - P_h} ,
\]
see Notation \ref{n:invfred}. The composition in the category $\G H(p,q)$ is described in detail in Subsection \ref{ss:comp}. For an extra element $f \in X$ we have the coproduct functor
\[
\De_f \colon \G H(p,q) \to \G H(p,f) \ot \G H(f,q) ,
\]
introduced in Definition \ref{d:hopf}. The group action of the group $G$ on the coproduct category $\G H(P,Q,\al)$ is determined by the three group homomorphisms
\[
\al \colon G \to GL( L^2(\B T^2))  \, , \, \, \be \colon G \to \T{Aut}(R_G) \, , \, \, \ga \colon G \to \T{Perm}( \La(P,Q,\al)) ,
\]
where we recall that $\be(k)(q_u) = q_{k \cd u}$ and $\ga(k)( g,\si) = (k \cd g,\si)$. At the level of the underlying set $X$, the automorphism $\rho(k) \colon \G H(P,Q,\al) \to \G H(P,Q,\al)$ is induced by $\be(k) \colon R_G \to R_G$ and for every pair $(p,q) \in X \ti X$ the automorphism $\rho(k) \colon \G H(p,q) \to \G H(\be(k)(p), \be(k)(q))$ is given in Equation \eqref{eq:grpmor} and Equation \eqref{eq:funcgroup}. Notice that this automorphism involves the change-of-base-point isomorphism in an essential way.

In order to find a representative for the class $[c(P,Q,\al)] \in H^3(G,\cc^*/\{\pm 1 \})$ we follow the recipe from Definition \ref{d:3cocyc}. We start by choosing the element $q_e \in X$. For each $g \in G$ we should then choose an object $a_g \in \G H(q_e,q_g)$, so we need to choose $a_g \in \La(P,Q,\al) = G \ti \T{Adm}(P,Q,\al)$. We put $a_g := (e,\si)$, where $\si$ is a specific admissible family of representations, which we now construct following the procedure described in Lemma \ref{l:repres}. For every $u \in G$ we choose an idempotent $E_u \in \sL(P\C H)$ with $E_u - P Q_u P \in \sL^1(P \C H)$ and we arrange that
\[
E_u := P Q_{\xi \cd z_1^{t_1} \cd z_2^{t_2}} P = P Q_{z_2^{t_2}} \in \sL( P \C H) ,
\]
whenever $u = \xi \cd z_1^{t_1} \cd z_2^{t_2}$ is an invertible monomial. This provides us with the non-unital representation $\si_e \colon R_G \to \sL(\C H)$ given by
\[
\si_e \colon q_u \mapsto E_u \q \T{and} \q \si_e(1) := P \, .
\]
For each $k \in G$, we then have the non-unital representation $\si_k \colon R_G \to \sL(\C H)$ given by
\[
\si_k( q_u) := \al(k)( E_{k^{-1} u}) \al(k^{-1}) \q \T{and} \q \si_k(1) := P_k \, .
\]
In particular, it holds that
\begin{equation}\label{eq:sigmaspecial}
\si_k( q_u) = P_{z_1^{l_1}} Q_{z_2^{t_2}} \, ,
\end{equation}
whenever $k = \ka \cd z_1^{l_1} z_2^{l_2}$ and $u = \xi \cd z_1^{t_1} z_2^{t_2}$ are invertible monomials. According to Lemma \ref{l:repres} we then have our admissible family of representations $\si := \{ \si_k \}_{k \in G}$ and hence the objects $a_g := (e,\si) \in \G H(q_e,q_g)$. Remark that our choices entail that the group element $g \in G$ only influences which category the object $a_g = (e,\si)$ belongs to. 

According to Definition \ref{d:3cocyc}, the next step is to choose an isomorphism
\[
\be_{g,h} \colon \De_{q_g}(a_{gh}) \to a_g \ot g(a_h)
\]
for every pair of elements $(g,h) \in G \ti G$. The isomorphism $\be_{g,h}$ belongs to the category $\G H(q_e ,q_g) \ot \G H(q_g, q_{gh})$ and travels from the object $\De_{q_g}(a_g) = (e,\si) \ot (e,\si)$ to the object $a_g \ot g(a_h) = (e,\si) \ot (g,\si)$. We may thus choose $\be_{g,h}$ of the form
\[
\be_{g,h} := \T{id}_{ (e,\si)} \ot b_{g,h} \, ,
\]
where $b_{g,h} \colon (e,\si) \to (g,\si)$ is an isomorphism in the category $\G H(q_g,q_{gh})$ so that $b_{g,h}$ is a non-zero vector in the graded determinant line
\[
\G L_{q_g}\big( (e,\si), (g,\si)\big) \ot \G L_{q_{gh}}^\da\big( (e,\si), (g,\si)\big)
= \big| F\big( (e,\si), (g,\si) \big)(q_g,q_e) \big| \ot \big| F\big( (e,\si), (g,\si) \big)(q_e,q_{gh}) \big| \, .
\]

We now compute the kernel and cokernel of the involved Fredholm operators for specific choices of $g,h \in G$. It will in this respect be convenient to introduce the finite rank orthogonal projection
\[
\Ga_{n,m,t,s} := (P_{z_1^m} - P_{z_1^n})(Q_{z_2^s} - Q_{z_2^t}) \colon \C H \to \C H
\]
defined for $n, m,t,s \in \nn \cup \{0\}$ with $n \geq m$ and $t \geq s$.

\begin{lemma}\label{l:kercoker}
Let $g = \mu \cd z_1^{n_1}  z_2^{n_2}$, $h = \nu \cd z_1^{m_1} z_2^{m_2}$ and $u = \xi \cd z_1^{t_1} z_2^{t_2}, v = \eta \cd z_1^{s_1} z_2^{s_2}$ be invertible monomials with $n_1,m_1,t_2,s_2 \in \nn \cup \{0\}$ and $n_1 \geq m_1$, $t_2 \geq s_2$. It holds that
\[
\begin{split}
& \T{Ker}\Big( F( (h,\si), (g,\si))(q_v,q_u) \Big) = \T{Im}( \Ga_{n_1,m_1,t_2,s_2}) \op \{0\} \\
& \T{Coker}\Big( F((h,\si),(g,\si) )(q_v,q_u) \Big) = \{0\} \q \mbox{and} \\
& \T{Ker}\Big( F( (g,\si), (h,\si))(q_u,q_v) \Big) = \{0\} \op \T{Im}( \Ga_{n_1,m_1,t_2,s_2} ) \\
& \T{Coker}\Big( F((g,\si),(h,\si) )(q_u,q_v) \Big) = \{0\} \, .
\end{split}
\]
Moreover, it holds that 
\[
\begin{split}
& F( (h,\si),(g,\si) )(q_u,q_v) = \big( F((h,\si),(g,\si))(q_v,q_u) \big)^* \q \mbox{and} \\ 
& F((g,\si), (h,\si))(q_v,q_u) = \big( F( (g,\si), (h,\si))(q_u,q_v) \big)^*
\end{split}
\]
and hence that 
\[
\begin{split}
& \T{Ker}\Big( F( (h,\si),(g,\si) )(q_u,q_v) \Big) = \{ 0 \} \\ 
& \T{Coker}\Big( F( (h,\si),(g,\si) )(q_u,q_v) \Big) \cong \T{Im}( \Ga_{n_1,m_1,t_2,s_2} ) \op \{0\} \q \mbox{and} \\
& \T{Ker}\Big( F( (g,\si), (h,\si))(q_v,q_u) \Big) = \{0\} \\
& \T{Coker}\Big( F((g,\si),(h,\si) )(q_v,q_u) \Big) \cong \{0\} \op \T{Im}( \Ga_{n_1,m_1,t_2,s_2}) \, .
\end{split}
\]
\end{lemma}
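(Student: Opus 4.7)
The plan is to reduce the problem to a direct computation involving four commuting orthogonal projections on $\C H = L^2(\B T^2)$. For any invertible monomial $f = \zeta z_1^a z_2^b$, the scalar $\zeta$ and the operator $z_2^b$ commute with $P$, so $P_f = P_{z_1^a}$, and symmetrically $Q_f = Q_{z_2^b}$. Combined with Equation \eqref{eq:sigmaspecial}, this gives $\si_h(q_u) = P_{z_1^{m_1}} Q_{z_2^{t_2}}$ and analogous formulas for the three remaining idempotents $\si_g(q_u), \si_h(q_v), \si_g(q_v)$. Writing $A := P_{z_1^{m_1}}$, $B := P_{z_1^{n_1}}$, $C := Q_{z_2^{t_2}}$, $D := Q_{z_2^{s_2}}$, these four projections commute pairwise since $\{A,B\}$ acts only on the $z_1$-factor and $\{C,D\}$ only on the $z_2$-factor. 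The nesting hypotheses $n_1 \geq m_1 \geq 0$ and $t_2 \geq s_2 \geq 0$ yield $AB = B$ and $CD = C$, so $A-B$ and $D-C$ are orthogonal projections with $(A-B)(D-C) = \Gamma_{n_1,m_1,t_2,s_2}$.

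The second step is to substitute into the block matrix $\Om((h,\si),(g,\si))(1)$ from Notation \ref{n:invfred}. The algebra of the commuting projections collapses the entries to
\[
\Om\big((h,\si),(g,\si)\big)(1) = \ma{cc}{A - B & B \\ B & 0} .
\]
Multiplying by the source and target idempotents and simplifying with $CD = C$ and $AB = B$ produces
\[
F\big((h,\si),(g,\si)\big)(q_v, q_u) = \ma{cc}{(A-B)C & BC \\ BD & 0} \colon AD\C H \op BC\C H \to AC\C H \op BD\C H .
\]
All entries here are self-adjoint products of commuting projections.

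To extract the kernel I would solve the linear system directly. For $x \in AD\C H$ the equation $BDx = 0$ reduces to $Bx = 0$, placing $x$ in $(A-B)D\C H$. The remaining equation $(A-B)Cx + BCy = 0$ then decomposes under the orthogonality $B\C H \perp (A-B)\C H$ to force $y = 0$ and $Cx = 0$, leaving $x \in (A-B)(D-C)\C H = \T{Im}(\Gamma_{n_1,m_1,t_2,s_2})$. The computation for $F((g,\si),(h,\si))(q_u,q_v)$ is completely symmetric and produces $\{0\} \op \T{Im}(\Gamma_{n_1,m_1,t_2,s_2})$; the only change is that the role of the first and second summand is swapped, which is why the kernel sits in the second component.

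Finally, the adjoint identities $F((h,\si),(g,\si))(q_u,q_v) = F((h,\si),(g,\si))(q_v,q_u)^*$ and $F((g,\si),(h,\si))(q_v,q_u) = F((g,\si),(h,\si))(q_u,q_v)^*$ are verified by transposing the explicit block matrices and using that each entry is a self-adjoint product of commuting projections, so that the formal adjoint just interchanges the off-diagonal entries. The cokernel statements then follow at once from the kernel statements for the partner operator. I expect the main difficulty to be purely bookkeeping: tracking which of $A,B$ and which of $C,D$ appears in which corner of each block matrix when the roles of $g,h$ and of $u,v$ are permuted, and verifying that the resulting matrices indeed match under the formal adjoint; the underlying linear algebra, once the four-projection reduction is in place, is elementary.
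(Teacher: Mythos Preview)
Your approach is essentially the same as the paper's: reduce to four commuting orthogonal projections $A,B,C,D$ with $AB=B$ and $CD=C$, compute $\Om((h,\si),(g,\si))(1)=\begin{pmatrix} A-B & B \\ B & 0\end{pmatrix}$, write the Fredholm operator as an explicit $2\times2$ block matrix, and use self-adjointness of the entries to obtain the adjoint identities. The kernel computation you sketch for $F((h,\si),(g,\si))(q_v,q_u)$ is correct and matches the paper's.

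There is, however, a small gap. You compute $\T{Ker}\,F((h,\si),(g,\si))(q_v,q_u)$ and $\T{Ker}\,F((g,\si),(h,\si))(q_u,q_v)$, and then invoke the adjoint identities to deduce ``the cokernel statements.'' But the adjoint relation $\T{Coker}(T)\cong\T{Ker}(T^*)$ runs in only one direction: from $\T{Ker}$ of operators (1) and (2) you obtain $\T{Coker}$ of operators (3) and (4), not the vanishing of $\T{Coker}$ for (1) and (2) themselves (equivalently, not $\T{Ker}=\{0\}$ for (3) and (4)). The paper closes this by observing directly that
\[
\begin{pmatrix}(A-B)C & BC \\ BD & 0\end{pmatrix}\colon AD\,\C H\oplus BC\,\C H\longrightarrow AC\,\C H\oplus BD\,\C H
\]
is surjective. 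In your notation this is immediate: given $(u,v)\in AC\,\C H\oplus BD\,\C H$, the pair $\big(v+(A-B)u,\,Bu\big)\in AD\,\C H\oplus BC\,\C H$ maps to $(u,v)$, using $BD\,\C H\subseteq AD\,\C H$ and $(A-B)C\,\C H\subseteq(A-B)D\,\C H$. Alternatively, one may compute $\T{Ker}\,F((h,\si),(g,\si))(q_u,q_v)$ directly by the same method and find it trivial. Either way, one more line is needed before the adjoint argument finishes the job.
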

\begin{proof}
We will only treat the case where the object $(h,\si)$ appears to the left of the object $(g,\si)$ since the remaining identities follow by similar considerations. 

Since $n_1 \geq m_1 \geq 0$ and thus $P_{z_1^{n_1}} P_{z_1^{m_1}} = P_{z_1^{n_1}} = P_{z_1^{m_1}} P_{z_1^{n_1}}$ we have that
\[
\Om( (h,\si),(g,\si) )(1) = \ma{cc}{P_{z_1^{m_1}} - P_{z_1^{n_1}} & P_{z_1^{n_1}} \\ P_{z_1^{n_1}} & 0 } \, .
\]
In particular $\Om( (h,\si),(g,\si) )(1)$ is self-adjoint. Recall next that $\si_g(q_u) := P_{z_1^{n_1}} Q_{z_2^{t_2}}$ and hence
\[
\begin{split}
& F( (h,\si), (g,\si))(q_v,q_u) = (P_{z_1^{m_1}} Q_{z_2^{t_2}} \oplus P_{z_1^{n_1}} Q_{z_2^{s_2}}) \Om( (h,\si),(g,\si) )(1) (P_{z_1^{m_1}} Q_{z_2^{s_2}} \oplus P_{z_1^{n_1}} Q_{z_2^{t_2}}) \q \T{and} \\
  & F( (h,\si), (g,\si))(q_u,q_v) = (P_{z_1^{m_1}} Q_{z_2^{s_2}} \oplus P_{z_1^{n_1}} Q_{z_2^{t_2}}) \Om( (h,\si),(g,\si) )(1) (P_{z_1^{m_1}} Q_{z_2^{t_2}} \oplus P_{z_1^{n_1}} Q_{z_2^{s_2}}) \, .
\end{split}
\]
Since the idempotents in the above formulae are all self-adjoint we see that 
\[
F( (h,\si), (g,\si))(q_u,q_v) = \big( F( (h,\si), (g,\si))(q_v,q_u)\big)^* \, .
\]
It thus suffices to compute the kernel and cokernel of $F( (h,\si), (g,\si))(q_v,q_u)$.

Since $t_2 \geq s_2 \geq 0$ as well, we obtain that
\[
\begin{split}
F( (h,\si), (g,\si))(q_v,q_u)
& = (P_{z_1^{m_1}} Q_{z_2^{t_2}} \oplus P_{z_1^{n_1}} Q_{z_2^{s_2}}) \ma{cc}{P_{z_1^{m_1}} - P_{z_1^{n_1}} & P_{z_1^{n_1}} \\ P_{z_1^{n_1}} & 0 } 
(P_{z_1^{m_1}} Q_{z_2^{s_2}} \oplus P_{z_1^{n_1}} Q_{z_2^{t_2}})  \\
& = \ma{cc}{ (P_{z_1^{m_1}} - P_{z_1^{n_1}}) Q_{z_2^{t_2}} & P_{z_1^{n_1}} Q_{z_2^{t_2}} \\ P_{z_1^{n_1}} Q_{z_2^{s_2}} & 0} \\
& \qq \colon (P_{z_1^{m_1}}Q_{z_2^{s_2}}) \C H \op (P_{z_1^{n_1}} Q_{z_2^{t_2}}) \C H  \to ( P_{z_1^{m_1}} Q_{z_2^{t_2}} )\C H \oplus ( P_{z_1^{n_1}} Q_{z_2^{s_2}} ) \C H \, .
\end{split}
\]
But this Fredholm operator is surjective and has kernel given by the image of the finite rank projection $(P_{z_1^{m_1}} - P_{z_1^{n_1}})(Q_{z_2^{s_2}} - Q_{z_2^{t_2}}) \op 0 \colon \C H \op \C H \to \C H \op \C H$. The lemma is proved.
\end{proof}

For every $n, m,t,s \in \nn \cup \{0\}$ with $n > m$ and $t > s$, we choose the non-zero vector
\begin{equation}\label{eq:omegavec}
\begin{split}
\om_{n,m,t,s} & := \big( [z_1^m z_2^s] \wlw [z_1^{n - 1} z_2^s] \big) \we \big( [z_1^m z_2^{s+1}] \wlw [z_1^{n - 1} z_2^{s+1}] \big) \\
& \q \wlw \big( [z_1^m z_2^{t - 1}] \wlw [z_1^{n - 1} z_2^{t - 1}] \big) \in \La^{\T{top}}\big( \T{Im}(\Ga_{n,m,t,s} ) \big)
\end{split}
\end{equation}
and for $n,m,t,s \in \nn \cup \{0\}$ with $n = m$ or $t = s$ we choose $\om_{n,m,t,s} = 1 \in \cc = \La^{\T{top}}( \{0\})$. In particular, for invertible monomials $g = \mu \cd z_1^{n_1} z_2^{n_2}$ and $h = \nu \cd z_1^{m_1} z_2^{m_2}$ with $n_1,n_2,m_1,m_2 \in \nn \cup \{0\}$ we choose the isomorphism
\begin{equation}\label{eq:bexpdef}
b_{g,h} := \om_{n_1,0,n_2,0}^* \ot \om_{n_1,0,n_2 + m_2,0} \in \big| F\big( (e,\si), (g,\si) \big)(q_g,q_e) \big| \ot \big| F\big( (e,\si), (g,\si) \big)(q_e,q_{gh}) \big| \, ,
\end{equation}
where the gradings of the two $\zz$-graded complex lines are given by the indices $-n_1 \cd n_2$ and $n_1 \cd (n_2 + m_2)$, respectively.

Whenever $g = \mu \cd z_1^{n_1} z_2^{n_2}, h = \nu \cd z_1^{m_1} z_2^{m_2}$ and $k = \ka \cd z_1^{l_1} z_2^{l_2}$ are invertible monomials with exponents $n_1,n_2,m_1,m_2,l_1,l_2$ in $\nn \cup \{0\}$, we are interested in computing the number $c(g,h,k) \in \cc^*$ coming from the automorphism
\begin{equation}\label{eq:automorph}
(g,\si) \ot (gh,\si) \to^{b_{g,h}^{-1} \ot b_{gh,k}^{-1}} (e,\si) \ot (e,\si) \to^{\De_{q_{gh}}(b_{g,hk})} (g,\si) \ot (g,\si) 
\to^{\T{id} \ot g(b_{h,k})} (g,\si) \ot (gh,\si)
\end{equation}
in the category $\G H(q_g,q_{gh}) \ot \G H(q_{gh},q_{ghk})$, see Definition \ref{d:3cocyc}. In order to carry out this computation we need to have a better understanding of the coproduct, the group action and the composition relating to the coproduct category $\G H(P,Q,\al)$. We are now going to investigate these operations. 

\subsection{The coproduct}
We consider invertible monomials $u = \xi \cd z_1^{t_1}z_2^{t_2}, v = \eta \cd z_1^{s_1}z_2^{s_2}$ and $w = \ze \cd z_1^{r_1} z_2^{r_2}$ as well as 
$g = \mu \cd z_1^{n_1} z_2^{n_2}$ and $h = \nu \cd z_1^{m_1} z_2^{m_2}$. Recall from Definition \ref{d:hopf} that the coproduct functor $\De_{q_w} \colon \G H( q_u,q_v) \to \G H(q_u,q_w) \ot \G H(q_w,q_v)$ is given by 
\[
\De_{q_w} \colon (g,\si) \mapsto (g,\si) \ot (g,\si) \q \T{and} \q
\De_{q_w} \colon \om_+ \ot \om_- \mapsto \om_+ \ot \varphi(1) \ot \om_-
\]
on objects and morphisms, respectively. We specify that 
\[
\om_+ \ot \om_- \in \big| F( (g,\si), (h,\si))(q_u,q_e) \big| \ot \big| F( (g,\si), (h,\si))(q_e,q_v) \big|
\]
and recall that 
\[
\varphi \colon (\cc,0) \to \big| F( (g,\si), (h,\si) )(q_e,q_w)\big| \ot \big|F( (g,\si),(h,\si))(q_w,q_e) \big|
\]
is the duality isomorphism from Equation \eqref{eq:dualI}. Computing the coproduct functor thus really amounts to computing the duality isomorphism:

\begin{lemma}\label{l:dualexam}
Let $g = \mu \cd z_1^{n_1} z_2^{n_2}$, $h = \nu \cd z_1^{m_1} z_2^{m_2}$ and $w = \ze \cd z_1^{r_1} z_2^{r_2}$ be invertible monomials with $n_1 \geq m_1 \geq 0$ and $r_2 \geq 0$. Then the duality isomorphism $\varphi \colon (\cc,0) \to \big| F( (h,\si), (g,\si) )(q_e,q_w)\big| \ot \big|F( (h,\si),(g,\si))(q_w,q_e) \big|$ is given explicitly by
\[
\varphi(1) = \om_{n_1,m_1,r_2,0} \ot \om_{n_1,m_1,r_2,0}^* \, .
\]
Similarly, the duality isomorphism $\varphi \colon (\cc,0) \to \big| F( (g,\si), (h,\si) )(q_e,q_w)\big| \ot \big|F( (g,\si),(h,\si))(q_w,q_e) \big|$ takes the form
\[
\varphi(1) = \om_{n_1,m_1,r_2,0}^* \ot \om_{n_1,m_1,r_2,0} \, .
\]
\end{lemma}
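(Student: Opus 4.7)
By the definition of the duality isomorphism from Equation \eqref{eq:dualI}, $\varphi(1)$ is obtained from the unit of the $\zz$-graded complex line $(\cc,0) = |\pi_{(h,\si)}(q_e) \op \pi_{(g,\si)}(q_w)|$ by first applying the perturbation isomorphism $\G P$ to $|F_2 F_1|$, where we abbreviate $F_1 := F((h,\si),(g,\si))(q_e,q_w)$ and $F_2 := F((h,\si),(g,\si))(q_w,q_e)$, and then applying $\G T^{-1}$ to $|F_1| \ot |F_2|$. My plan is to compute both operators, show the composition is a finite rank perturbation of the identity, and then exploit the fact that the kernels and cokernels of $F_1$ and $F_2$ from Lemma \ref{l:kercoker} make both the perturbation and torsion isomorphisms easy to compute explicitly.

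First I would use Equation \eqref{eq:sigmaspecial} and the commuting projection formulas $\pi_{(h,\si)}(q_u) = P_{z_1^{m_1}} Q_{z_2^{t_2}}$ and $\pi_{(g,\si)}(q_u) = P_{z_1^{n_1}} Q_{z_2^{t_2}}$ to write $F_1$ and $F_2$ as explicit $2\ti 2$ matrices whose entries are products of the commuting projections $P_{z_1^{m_1}}, P_{z_1^{n_1}}, Q, Q_{z_2^{r_2}}$. A direct matrix multiplication then yields
\[
F_2 F_1 = \big( \pi_{(h,\si)}(q_e) \op \pi_{(g,\si)}(q_w) \big) - \big( \Ga_{n_1,m_1,r_2,0} \op 0 \big),
\]
using only the identities $P_{z_1^{n_1}} P_{z_1^{m_1}} = P_{z_1^{n_1}}$ and $Q_{z_2^{r_2}} Q = Q_{z_2^{r_2}}$. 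So $F_2 F_1$ is a finite rank perturbation of the identity by the projection $\Ga_{n_1,m_1,r_2,0} \op 0$.

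Next I would compute $\G P(1) \in |F_2 F_1|$ using Example \ref{ex:indzero}. Taking $F_1' := 0$ and $F_2' := \Ga_{n_1,m_1,r_2,0} \op 0$, both $\T{id}$ and $F_2 F_1 + F_2'$ are invertible, so $\Si := (F_2F_1 + F_2') \T{id}^{-1} = \T{id}$ has determinant $1$. Since $F_2'$ restricts to the identity on $\Ker(F_2F_1) = \T{Im}(\Ga_{n_1,m_1,r_2,0}) \op \{0\}$ and identifies it with $\Coker(F_2F_1)$ under the orthogonal decomposition, Formula \eqref{eq:perturbation 2} gives $\G P(1) = \om_{n_1,m_1,r_2,0} \ot \om_{n_1,m_1,r_2,0}^*$.

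Finally I would compute $\G T \colon |F_1| \ot |F_2| \to |F_2F_1|$ using Definition \ref{d:torfre}. By Lemma \ref{l:kercoker} we have $\Coker(F_1) = \{0\}$ and $\Ker(F_2) = \{0\}$, so the six term exact sequence of Definition \ref{d:torfre} degenerates: the only nontrivial maps are the inclusion $\Ker(F_1) \to \Ker(F_2F_1)$ (an isomorphism, since both equal $\T{Im}(\Ga_{n_1,m_1,r_2,0}) \op \{0\}$) and the quotient map $\Coker(F_2F_1) \to \Coker(F_2)$ (also an isomorphism, since both are identified with $\T{Im}(\Ga_{n_1,m_1,r_2,0}) \op \{0\}$ via the orthogonal complement). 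Unwinding Definition \ref{def:torsion} with these identifications—and noting that the exponent $\ep(\De)$ defined there vanishes because all but two of the six summands in the degenerate triangle are zero—yields $\G T(\om_{n_1,m_1,r_2,0} \ot \om_{n_1,m_1,r_2,0}^*) = \om_{n_1,m_1,r_2,0} \ot \om_{n_1,m_1,r_2,0}^*$ under these canonical identifications. Combining with Step 3 gives the first claimed formula. The second identity, for $\varphi \colon (\cc,0) \to |F((g,\si),(h,\si))(q_e,q_w)| \ot |F((g,\si),(h,\si))(q_w,q_e)|$, follows by a fully parallel computation where the kernel now lies in the second summand, producing the symmetrically ordered tensor factors. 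The main obstacle I anticipate is keeping track of the sign conventions when translating between the six term exact sequence and the explicit torsion formula, but the degeneracy of the sequence in this case makes all such signs trivial.
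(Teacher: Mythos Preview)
Your proof is correct and follows essentially the same route as the paper's: compute $F_2F_1$ explicitly as the identity minus the finite rank projection $\Ga_{n_1,m_1,r_2,0}\op 0$, apply Example~\ref{ex:indzero} to get $\G P(1)=\om\ot\om^*$, and then observe that the six term exact sequence degenerates (since $\Coker F_1=0=\Ker F_2$) so that $\G T^{-1}$ acts as the identity on $\om\ot\om^*$. One small wording slip: it is not that ``all but two of the six summands'' vanish (four of the six spaces are nonzero), but rather that the three maps $q_+,i_-,\partial_-$ entering the sign exponent $\ep(\De)$ are all zero, which is what forces $\ep(\De)=0$.
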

\begin{proof}
We restrict our attention to the first of the two duality isomorphisms since the computation of the second one is similar but easier. We recall from Equation \eqref{eq:dualI} that the duality isomorphism in question is defined as the composition
\begin{equation}\label{eq:dualexam}
\begin{split}
(\cc,0) = \big| \si_h(q_e) \op \si_g(q_w) \big| & \to^{\G P} \big| F( (h,\si),(g,\si))(q_w,q_e) \cd F( (h,\si), (g,\si) )(q_e,q_w) \big| \\
& \to^{\G T^{-1}} \big| F( (h,\si), (g,\si) )(q_e,q_w)\big| \ot \big|F( (h,\si),(g,\si))(q_w,q_e) \big| \, .
\end{split}
\end{equation}
The product of the Fredholm operators appearing is given by
\[
\begin{split}
& F( (h,\si),(g,\si))(q_w,q_e) \cd F( (h,\si), (g,\si) )(q_e,q_w) \\
& \q = \ma{cc}{ (P_{z_1^{m_1}} - P_{z_1^{n_1}}) Q_{z_2^{r_2}} & P_{z_1^{n_1}} Q \\ P_{z_1^{n_1}} Q_{z_2^{r_2}} & 0 } \cd \ma{cc}{ (P_{z_1^{m_1}} - P_{z_1^{n_1}}) Q_{z_2^{r_2}} & P_{z_1^{n_1}} Q_{z_2^{r_2}} \\ P_{z_1^{n_1}} Q & 0 } \\
& \q = \ma{cc}{P_{z_1^{m_1}}Q - (P_{z_1^{m_1}} - P_{z_1^{n_1}})(Q - Q_{z_2^{r_2}}) & 0 \\ 0 & P_{z_1^{n_1}} Q_{z_2^{r_2}}}
\end{split}
\]
acting on the Hilbert space $(P_{z_1^{m_1}} Q)\C H \op (P_{z_1^{n_1}} Q_{z_2^{r_2}}) \C H$. To ease the notation we put $\Ga := \Ga_{n_1,m_1,r_2,0} = (P_{z_1^{m_1}} - P_{z_1^{n_1}})(Q - Q_{z_2^{r_2}})$ and the perturbation isomorphism appearing in Equation \eqref{eq:dualexam} can then be computed using Example \ref{ex:indzero}:
\[
\G P \colon 1 \mapsto \om_{n_1,m_1,r_2,0} \ot \om_{n_1,m_1,r_2,0}^* \in \big( \La^{\T{top}}( \T{Im}(\Ga)), (n_1 - m_1)r_2 \big)
\ot \big( \La^{\T{top}}( \T{Im}(\Ga))^*, (m_1 - n_1)r_2 \big) \, .
\]
Moreover, it can be verified using Lemma \ref{l:kercoker} that the torsion isomorphism appearing in Equation \eqref{eq:dualexam} comes from the six term exact sequence
\[
\xymatrix{ \T{Im}(\Ga) \op \{0\} \ar@{=}[r] & \T{Im}(\Ga) \op \{0\} \ar[r] & \{0\} \ar@{=}[d] \\ 
\T{Im}(\Ga) \op \{0\} \ar[u]^{0} & \T{Im}(\Ga) \op \{0\} \ar@{=}[l] & \{0\} \ar[l]
} 
\]
so that in fact $\G T^{-1}(\om_{n_1,m_1,r_2,0} \ot \om_{n_1,m_1,r_2,0}^*) = \om_{n_1,m_1,r_2,0} \ot \om_{n_1,m_1,r_2,0}^*$, see Definition \ref{d:torfre} and Definition \ref{def:torsion}. This proves the lemma.
\end{proof}

We apply Lemma \ref{l:dualexam} to advance our computation of the number $c(g,h,k) \in \cc^*$, where $g = \mu \cd z_1^{n_1} z_2^{n_2}, h = \nu \cd z_1^{m_1} z_2^{m_2}$ and $k = \ka \cd z_1^{l_1} z_2^{l_2}$ are invertible monomials with $n_1,n_2,m_1,m_2,l_1,l_2 \in \nn_0$. Indeed, we obtain that
\[
\begin{split}
\De_{q_{gh}}( b_{g,hk}) 
& = \om^*_{n_1,0,n_2,0} \ot \varphi(1) \ot \om_{n_1,0,n_2 + m_2 + l_2,0} \\
& = \om^*_{n_1,0,n_2,0} \ot \om_{n_1,0,n_2 + m_2,0} \ot \om_{n_1,0,n_2 + m_2,0}^* \ot \om_{n_1,0,n_2 + m_2 + l_2,0} \\
& = b_{g,h} \ot \om_{n_1,0,n_2 + m_2,0}^* \ot \om_{n_1,0,n_2 + m_2 + l_2,0} \colon (e,\si) \ot (e,\si) \to (g,\si) \ot (g,\si)
\end{split}
\]
as an isomorphism in the category $\G H(q_g,q_{gh}) \ot \G H(q_{gh},q_{ghk})$. The automorphism in Equation \eqref{eq:automorph} yielding the number $c(g,h,k) \in \cc^*$ then reduces to the composition of isomorphisms
\begin{equation}\label{eq:automorphII}
(gh,\si) \to^{b_{gh,k}^{-1}} (e,\si) \to^{ \om_{n_1,0,n_2 + m_2,0}^* \ot \om_{n_1,0,n_2 + m_2 + l_2,0}} (g,\si) \to^{g(b_{h,k})} (gh,\si)
\end{equation}
inside the category $\G H(q_{gh},q_{ghk})$ at least up to the sign $(-1)^{n_1 \cd m_2 \cd l_2 \cd (m_1 + 1)}$. This sign comes from commuting $b_{g,h}$ past $b_{gh,k}^{-1}$ since these isomorphisms lie in $\zz$-graded complex lines with gradings $n_1 \cd m_2$ and $-(n_1 + m_1) \cd l_2$, respectively.

\subsection{The group action}
Our aim is now to compute the group action $\rho(k) \colon \G H(P,Q,\al) \to \G H(P,Q,\al)$ in the case where $k = \ka \cd z_1^{l_1} z_2^{l_2}$ is an invertible monomial with $l_1,l_2 \geq 0$. We are interested in the situation where the elements in the underlying set $X$ are idempotents of the form $q_u, q_v \in X$ for invertible monomials $u = \xi \cd z_1^{t_1} z_2^{t_2}$ and $v = \eta \cd z_1^{s_1} z_2^{s_2}$. This means that we are looking at the isomorphism of categories
\[
\rho(k) \colon \G H(q_u,q_v) \to \G H( q_{ku}, q_{kv}) \, .
\]
For two invertible monomials $g = \mu \cd z_1^{n_1} z_2^{n_2}$ and $h = \nu \cd z_1^{m_1} z_2^{m_2}$ we have the objects $(h,\si)$ and $(g,\si)$ in the category $\G H(q_u,q_v)$ with corresponding $\zz$-graded complex line of morphisms given by
\[
\G H(q_u,q_v)( (h,\si),(g,\si)) = \big| F( (h,\si), (g,\si))(q_u,q_e) \big| \ot \big| F( (h,\si), (g,\si))(q_e,q_v) \big| \, .
\]
Supposing moreover that $n_1 \geq m_1 \geq 0$ and that $s_2, t_2 \geq 0$ we have computed the above graded determinant line explicitly in Lemma \ref{l:kercoker} and we may choose the non-trivial vector
\[
\om_{n_1,m_1,t_2,0}^* \ot \om_{n_1,m_1,s_2,0} \in \G H(q_u,q_v)( (h,\si),(g,\si)) ,
\]
see Equation \eqref{eq:omegavec} for the notation. Recalling the definition of $\rho(k)$ from Equation \eqref{eq:grpmor} and Equation \eqref{eq:funcgroup} and the computation in Example \ref{ex:LR} we obtain that
\begin{equation}\label{eq:grpconcrete}
\begin{split}
\rho(k)\big( \om_{n_1,m_1,t_2,0}^* \ot \om_{n_1,m_1,s_2,0} \big)
& = \G B( q_k, q_e)\big( t_k( \om_{n_1,m_1,t_2,0}^* \ot \om_{n_1,m_1,s_2,0} ) \big) \\
& = \G B( q_k, q_e)\big( \ka^{(n_1 - m_1)(s_2 - t_2)} \cd \om_{n_1 + l_1, m_1 + l_1, t_2 + l_2,l_2}^* \ot \om_{n_1 + l_1,m_1 + l_1,s_2 + l_2 , l_2} \big) \, ,
\end{split}
\end{equation}
where the non-trivial vector 
\[
\begin{split}
& \big( \al(k)(\om_{n_1, m_1, t_2,0}) \big)^* \ot \al(k)(\om_{n_1,m_1,s_2, 0} ) \\
& \q = ( \ka^{(n_1 - m_1)t_2} \cd \om_{n_1 + l_1, m_1 + l_1, t_2 + l_2,l_2})^* \ot \ka^{ (n_1 - m_1) s_2} \cd \om_{n_1 + l_1,m_1 + l_1,s_2 + l_2 , l_2} \\
& \q = \ka^{(n_1 - m_1)(s_2 - t_2)} \cd \om_{n_1 + l_1, m_1 + l_1, t_2 + l_2,l_2}^* \ot \om_{n_1 + l_1,m_1 + l_1,s_2 + l_2 , l_2}
\end{split}
\]
belongs to the graded determinant line
\[
\big| F( (kh,\si), (kg,\si))(q_{ku},q_k) \big| \ot \big| F( (kh,\si), (kg,\si))(q_k,q_{kv}) \big| \, .
\]
To finish our computation of the group action we therefore need to investigate the change-of-basepoint isomorphism more carefully in the present context. 

\begin{lemma}\label{l:baseconcrete}
Let $g = \mu \cd z_1^{n_1} z_2^{n_2}$, $h = \nu \cd z_1^{m_1} z_2^{m_2}$ and $u = \xi \cd z_1^{t_1} z_2^{t_2}$, $v = \eta \cd z_1^{s_1} z_2^{s_2}$, $w = \ze \cd z_1^{r_1} z_2^{r_2}$ be invertible monomials with $n_1 \geq m_1 \geq 0$ and $t_2,s_2 \geq r_2 \geq 0$. The change-of-basepoint isomorphism
\[
\begin{split}
\G B(q_w,q_e) & \colon \big| F( (h,\si), (g,\si))(q_u,q_w) \big| \ot \big| F( (h,\si), (g,\si))(q_w,q_v) \big| \\ 
& \q \to \big| F( (h,\si), (g,\si))(q_u,q_e) \big| \ot \big| F( (h,\si), (g,\si))(q_e,q_v) \big|
\end{split}
\]
is given explicitly by
\[
\G B(q_w,q_e) \colon \om_{n_1,m_1,t_2,r_2}^* \ot \om_{n_1,m_1,s_2,r_2} \mapsto (-1)^{ (n_1 - m_1)(t_2 - s_2) r_2} \cd \om_{n_1,m_1,t_2,0}^* \ot \om_{n_1,m_1,s_2,0} \, .
\]
\end{lemma}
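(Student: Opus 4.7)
The plan is to unpack Definition \ref{def:change} and track the basis vector $\om_{n_1,m_1,t_2,r_2}^* \ot \om_{n_1,m_1,s_2,r_2}$ through each of the isomorphisms $\G T \ci \G S$, $\G S^{-1} \ci \G P \ci \G S$, and $(\G T \ci \G S)^{-1}$ making up $\G B(q_w,q_e)$. The crucial simplification in this concrete setting is that by Equation \eqref{eq:sigmaspecial} each representation $\si_k(q_u)$ equals $P_{z_1^{l_1}} Q_{z_2^{t_2}}$, and the orthogonal projections $P_{z_1^{a}}$ and $Q_{z_2^{b}}$ pairwise commute. Consequently, the Fredholm operators $F^{13}(q_u,q_v,q_w)$, $F^{12}(q_w,q_v,q_u)$, and their counterparts for basepoint $q_e$ admit explicit $3 \times 3$ block-matrix expressions whose kernels and cokernels are finite-dimensional subspaces of the form $\T{Im}(\Ga_{n_1,m_1,\bullet,\bullet})$, by the same commuting-projection arguments as in Lemma \ref{l:kercoker}.

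The stabilisation isomorphisms $\G S$ act as the identity on the chosen basis vectors, since the adjoined middle summand $\pi_{(g,\si)}(q_v) = P_{z_1^{n_1}} Q_{z_2^{s_2}}$ is an identity operator on its range (Definition \ref{def:stabilisation}). The torsion isomorphism $\G T$ then assembles the two factors into the determinant line of $F^{12}(q_w,q_v,q_u) F^{13}(q_u,q_v,q_w)$; an explicit matrix computation shows that this composed operator has kernel and cokernel concentrated on subspaces isomorphic to $\T{Im}(\Ga_{n_1,m_1,s_2,r_2})$ and $\T{Im}(\Ga_{n_1,m_1,t_2,r_2})$ respectively, so the six-term exact sequence of Definition \ref{d:torfre} degenerates and $\G T$ sends our basis vector to an explicit basis vector of the composed determinant line, up to the canonical sign of Definition \ref{def:torsion}. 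The perturbation isomorphism $\G P$ compares the composed operators for basepoints $q_w$ and $q_e$, which by Lemma \ref{l:changepert} differ by a trace class operator (after the compensation on the middle coordinate). In this commuting-projection setting a direct calculation via Example \ref{ex:indzero} shows that the associated Fredholm determinant equals $1$, so no multiplicative scalar arises from $\G P$ itself.

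Finally, applying $(\G T \ci \G S)^{-1}$ with basepoint $q_e$ and expressing the result in the output basis via the canonical decompositions
\[
\om_{n_1,m_1,t_2,0} = \om_{n_1,m_1,r_2,0} \we \om_{n_1,m_1,t_2,r_2}, \qquad \om_{n_1,m_1,s_2,0} = \om_{n_1,m_1,r_2,0} \we \om_{n_1,m_1,s_2,r_2},
\]
yields the claimed sign $(-1)^{(n_1-m_1)(t_2-s_2)r_2}$. The total sign is assembled from the sign contributions of the torsion isomorphisms at each stage (as dictated by Definition \ref{def:torsion}) together with the graded commutativity constraint of Notation \ref{n:picard} used when rearranging dual and non-dual factors across the contraction of the shared $\om_{n_1,m_1,r_2,0}$-factors (which functions as an instance of the duality $\psi$ of Equation \eqref{eq:dualII}). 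The main obstacle is precisely this sign bookkeeping: one must simultaneously track the placement of kernels and cokernels inside the odd/even components of $I(F)$ at each step, the grading shifts of the individual determinant lines, and the asymmetry between $(t_2 - r_2)$ and $(s_2 - r_2)$ in the decompositions above; the cancellations that produce the symmetric factor $(t_2 - s_2) r_2$ in the exponent are delicate and must be verified line by line.
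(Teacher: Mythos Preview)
Your overall strategy---tracking a basis vector through $\G T \G S$, then $\G S^{-1}\G P\G S$, then $(\G T\G S)^{-1}$---matches the paper's, but there is a concrete error in the middle that undermines the rest of the argument.

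You claim that the composed operator $F^{12}(q_w,q_v,q_u)F^{13}(q_u,q_v,q_w)$ has kernel $\cong \T{Im}(\Ga_{n_1,m_1,s_2,r_2})$ and cokernel $\cong \T{Im}(\Ga_{n_1,m_1,t_2,r_2})$, and that the six-term exact sequence ``degenerates''. It does not: the boundary map $\pa\colon \T{Ker}(F^{12})\to\T{Coker}(F^{13})$ is the inclusion $\T{Im}(\Ga_{n_1,m_1,s_2,r_2})\hookrightarrow\T{Im}(\Ga_{n_1,m_1,t_2,r_2})$, which is injective. Consequently (taking $t_2\geq s_2$) the product has \emph{trivial} kernel and cokernel isomorphic to $\T{Im}(\Ga_{n_1,m_1,t_2,s_2})$. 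The paper verifies this by writing out the $3\times 3$ matrix of the product explicitly. This error propagates: your wedge decomposition splits at $r_2$ (producing factors $\om_{n_1,m_1,r_2,0}$), whereas the correct intermediate determinant line is $|\Ga_{n_1,m_1,t_2,s_2}|^*$, so the relevant decomposition is $\om_{n_1,m_1,t_2,r_2}=\om_{n_1,m_1,s_2,r_2}\we\om_{n_1,m_1,t_2,s_2}$ (and similarly with $r_2$ replaced by $0$). There is no ``contraction of shared $\om_{n_1,m_1,r_2,0}$-factors'' and no appeal to the duality $\psi$ is needed.

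You also miss the key simplification for the perturbation step. After adding $e_{23}(\si_g(1-q_w))$ (respectively $e_{23}(\si_g(1-q_e))$), the two stabilised products are not merely trace-class perturbations of one another---they are \emph{equal} as operators. Hence $\G P$ is literally the identity map; no Fredholm determinant via Example~\ref{ex:indzero} needs to be computed. The entire sign therefore comes from comparing the two torsion isomorphisms $\G S\G T\G S$ on the left and right of the diagram, each sending a tensor of the form $\om^*\ot\om$ to a scalar multiple of $\om_{n_1,m_1,t_2,s_2}^*$, with exponents $(n_1-m_1)(t_2-s_2)(s_2-r_2)$ and $(n_1-m_1)(t_2-s_2)s_2$ respectively; their difference gives the stated sign.
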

\begin{proof}
We focus on the case where $t_2 \geq s_2$ since the case where $s_2 \geq t_2$ follows from a similar computation. Applying a slight abuse of notation we write $\si_g$ instead of $(g,\si)$. We are also going to suppress the triple of representations $(\si_h,\si_g,\si_g)$ from our formulae. Recall from Section \ref{s:change} that the change-of-basepoint isomorphism fits in the commutative diagram
\begin{equation}\label{eq:compbase}
\xymatrix{
\big| F( \si_h, \si_g)(q_u,q_w) \big| \ot \big| F( \si_h, \si_g)(q_w,q_v) \big| \ar[r]^{\G B(q_w,q_e)} \ar[d]^{\G S \G T \G S} & \big| F( \si_h, \si_g)(q_u,q_e) \big| \ot \big| F( \si_h, \si_g)(q_e,q_v) \big| \ar[d]^{\G S \G T \G S} \\
\big| F^{12}(q_w, q_v,q_u) F^{13}(q_u,q_v,q_w) + e_{23}( \si_g(1 - q_w)) \big| \ar[r]^{\G P} & 
\big| F^{12}(q_e, q_v,q_u) F^{13}(q_u,q_v,q_e) + e_{23}( \si_g(1 - q_e)) \big| 
}
\end{equation}
where $e_{23}( \si_g(1-q_w))$ refers to the matrix of operators which zeroes everywhere except for the operator $\si_g(1 - q_w)$ in position $(2,3)$. The first Fredholm operator in the lower line is given by
\[
\begin{split}
& F^{12}(q_w, q_v,q_u) F^{13}(q_u,q_v,q_w) + e_{23}( \si_g(1 - q_w)) \\
& \q = \ma{ccc}{ (P_{z_1^{m_1}} - P_{z_1^{n_1}}) Q_{z_2^{s_2}} & P_{z_1^{n_1}} Q_{z_2^{s_2}} & 0 \\
P_{z_1^{n_1}} Q_{z_2^{r_2}} & 0 & 0 \\
0 & 0 & P_{z_1^{n_1}} Q_{z_2^{t_2}}}
\ma{ccc}{ (P_{z_1^{m_1}} - P_{z_1^{n_1}}) Q_{z_2^{t_2}} & 0 & P_{z_1^{n_1}} Q_{z_2^{r_2}} \\
0 & P_{z_1^{n_1}} Q_{z_2^{s_2}} & 0 \\
P_{z_1^{n_1}} Q_{z_2^{t_2}} & 0 & 0 
 } \\ 
& \qqq + e_{23}( \si_g(1 - q_w)) \\
& \q = \ma{ccc}{ (P_{z_1^{m_1}} - P_{z_1^{n_1}}) Q_{z_2^{t_2}} & P_{z_1^{n_1}} Q_{z_2^{s_2}} & 0 \\ 
0 & 0 & P_{z_1^{n_1}} \\
P_{z_1^{n_1}} Q_{z_2^{t_2}} & 0 & 0} \, .
\end{split}
\]
A similar computation shows that the same formula holds for the second Fredholm operator in the lower line and hence that
\[
F^{12}(q_w, q_v,q_u) F^{13}(q_u,q_v,q_w) + e_{23}( \si_g(1 - q_w)) = F^{12}(q_e, q_v,q_u) F^{13}(q_u,q_v,q_e) + e_{23}( \si_g(1 - q_e)) \, .
\]
The perturbation isomorphism appearing in the expression for the change-of-basepoint isomorphism is therefore equal to the identity map and we may focus on computing the vertical isomorphisms in Equation \eqref{eq:compbase}. The stabilisation isomorphisms appearing are invisible in the final expressions since they merely assure that the involved Fredholm operators act on the correct Hilbert spaces without interfering with kernels and cokernels in an essential way. We therefore restrict attention to the torsion isomorphisms. We record that the kernel and cokernel of the Fredholm operator
\[
\begin{split}
& G := \ma{ccc}{ (P_{z_1^{m_1}} - P_{z_1^{n_1}}) Q_{z_2^{t_2}} & P_{z_1^{n_1}} Q_{z_2^{s_2}} & 0 \\ 
0 & 0 & P_{z_1^{n_1}} \\
P_{z_1^{n_1}} Q_{z_2^{t_2}} & 0 & 0} \\
& \q \colon (P_{z_1^{m_1}} Q_{z_2^{t_2}}) \C H \op ( P_{z_1^{n_1}} Q_{z_2^{s_2}}) \C H \op P_{z_1^{n_1}} \C H
\to (P_{z_1^{m_1}} Q_{z_2^{s_2}}) \C H \op P_{z_1^{n_1}} \C H \op ( P_{z_1^{n_1}} Q_{z_2^{t_2}}) \C H 
\end{split}
\]
are given by 
\[
\T{Ker}(G) = \{0\} \q \T{and} \q \T{Coker}(G) \cong \T{Im}\big(  \Ga_{n_1,m_1,t_2,s_2} \big) \op \{0\} \op \{0\} \, .
\]
Combining this computation of the kernel and the cokernel with the expressions given in Lemma \ref{l:kercoker} we see from Definition \ref{d:torfre} that the relevant six term exact sequence for the torsion isomorphism in the left hand side of Equation \eqref{eq:compbase} is given by
\[
\xymatrix{ \{0\} \ar@{=}[r] & \{0\} \ar[r] & \T{Im}\big( \Ga_{n_1,m_1,s_2,r_2}  \big) \ar[d] \\
\{0\} \ar@{=}[u] & \T{Im}\big(  \Ga_{n_1,m_1,t_2,s_2} \big) \ar[l] & 
\T{Im}\big( \Ga_{n_1,m_1,t_2,r_2} \big) \ar[l]^{Q_{z_2^{s_2}}} }
\]
Using that $\om_{n_1,m_1,t_2,r_2} = \om_{n_1,m_1,s_2,r_2} \we \om_{n_1,m_1,t_2,s_2}$ (see Equation \eqref{eq:omegavec}) and comparing with Definition \ref{def:torsion} we obtain that the left hand side of Equation \eqref{eq:compbase} operates as follows:
\[
\G S \G T \G S \colon \om_{n_1,m_1,t_2,r_2}^* \ot \om_{n_1,m_1,s_2,r_2}  \mapsto (-1)^{ (n_1 - m_1)(t_2 - s_2)(s_2 - r_2)} \cd \om_{n_1,m_1,t_2,s_2}^* \, . 
\]
Similarly, we find that the relevant six term exact sequence with regards to the right hand side is given by
\[
\xymatrix{ \{0\} \ar@{=}[r] & \{0\} \ar[r] & \T{Im}\big( \Ga_{n_1,m_1,s_2,0} \big) \ar[d] \\
\{0\} \ar@{=}[u] & \T{Im}\big(  \Ga_{n_1,m_1,t_2,s_2}  \big) \ar[l] & 
\T{Im}\big( \Ga_{n_1,m_1,t_2,0} \big) \ar[l]^{Q_{z_2^{s_2}}} } 
\]
and hence that the right hand side of Equation \eqref{eq:compbase} takes the form
\[
\G S \G T \G S \colon \om_{n_1,m_1,t_2,0}^* \ot \om_{n_1,m_1,s_2,0} \mapsto (-1)^{ (n_1 - m_1)(t_2 - s_2)s_2} \cd \om_{n_1,m_1,t_2,s_2}^* \, .
\]
We conclude that the change-of-basepoint isomorphism is given explicitly by
\[
\G B(q_w,q_e) \colon \om_{n_1,m_1,t_2,r_2}^* \ot \om_{n_1,m_1,s_2,r_2} \mapsto (-1)^{ (n_1 - m_1)(t_2 - s_2) r_2} \cd \om_{n_1,m_1,t_2,0}^* \ot \om_{n_1,m_1,s_2,0} \, . \qedhere
\]
\end{proof}

We now combine the above Lemma \ref{l:baseconcrete} with the computation in Equation \eqref{eq:grpconcrete} and obtain the expression:
\[
\begin{split}
& \rho(k)( \om_{n_1,m_1,t_2,0}^* \ot \om_{n_1,m_1,s_2,0})
= \G B(q_k,q_e)( \ka^{(n_1 - m_1)(s_2 - t_2)} \cd \om_{n_1 + l_1,m_1 + l_1,t_2 + l_2,l_2}^* \ot \om_{n_1+ l_1,m_1 + l_1,s_2 + l_2,l_2}) \\
& \q = (-1)^{(n_1 - m_1)(t_2 - s_2)l_2} \cd \ka^{(n_1 - m_1)(s_2 - t_2)} \cd \om_{n_1 + l_1,m_1 + l_1,t_2 + l_2,0 }^* \ot \om_{n_1+ l_1,m_1 + l_1,s_2 + l_2,0} \, .
\end{split}
\]

To finish our discussion of the group action in this concrete setting we continue our computation of the number $c(g,h,k) \in \cc^*$ in the case where $g = \mu \cd z_1^{n_1} z_2^{n_2}$, $h = \nu \cd z_1^{m_1} z_2^{m_2}$ and $k = \ka \cd z_1^{l_1} z_2^{l_2}$ are invertible monomials with $n_1,n_2,m_1,m_2,l_1,l_2 \geq 0$. Recalling the definition of $b_{h,k}$ from Equation \eqref{eq:bexpdef} we record that
\[
g(b_{h,k}) = g( \om_{m_1,0,m_2,0}^* \ot \om_{m_1,0,m_2 + l_2,0})
= (-1)^{m_1 l_2 n_2} \cd \mu^{m_1 l_2} \cd \om_{m_1 + n_1,n_1,m_2 + n_2,0}^* \ot \om_{m_1 + n_1,n_1,m_2 + l_2 + n_2,0} \, .
\]
Hence, comparing with Equation \eqref{eq:automorphII}, we obtain that the automorphism yielding the number $c(g,h,k) \in \cc^*$ is given by
\begin{equation}\label{eq:automorphIII}
\begin{split}
(gh,\si) \to^{b_{gh,k}^{-1}} (e,\si) & \to^{ \om_{n_1,0,n_2 + m_2,0}^* \ot \om_{n_1,0,n_2 + m_2 + l_2,0}} (g,\si) \\
& \q \to^{ \mu^{m_1 l_2} \cd \om_{n_1 + m_1,n_1,n_2 + m_2,0}^* \ot \om_{n_1 + m_1,n_1,n_2 + m_2 + l_2,0} } (gh,\si)
\end{split}
\end{equation}
up to the sign $(-1)^{l_2 (n_2 m_1 + n_1 m_2 m_1 + n_1 m_2)}$.

\subsection{The composition}
We are now going to describe the composition inside the category $\G H(q_v,q_u)$ in the case where $u = \xi \cd z_1^{t_1} z_2^{t_2}$ and $v = \eta \cd z_1^{s_1} z_2^{s_2}$ are invertible monomials with $s_2,t_2 \geq 0$. We moreover restrict our attention to the composition of morphisms in $\G H(q_v,q_u)((k,\si),(h,\si))$ and in $\G H(q_v,q_u)((h,\si),(g,\si))$ where $g = \mu \cd z_1^{n_1} z_2^{n_2}$, $h = \nu \cd z_1^{m_1} z_2^{m_2}$ and $k = \ka \cd z_1^{l_1} z_2^{l_2}$ are invertible monomials with $n_1 \geq m_1 \geq l_1 \geq 0$. In this situation we know that the morphisms $\G H(q_v,q_u)((k,\si),(h,\si))$ agree with the $\zz$-graded complex line
\[
\G L_{q_v}\big( (k,\si), (h,\si) \big) \ot \G L_{q_u}^\da\big( (k,\si), (h,\si)\big)
= \Big| F\big( (k,\si), (h,\si)\big)(q_v,q_e)  \Big| \ot \Big| F\big( (k,\si), (h,\si)\big)(q_e,q_u) \Big|
\]
Moreover, from Lemma \ref{l:kercoker} we have that
\[
\begin{split}
& \Big| F\big( (k,\si), (h,\si)\big)(q_v,q_e)  \Big| 
\cong \Big( \La^{\T{top}}\big( \T{Im}(\Ga_{m_1,l_1,s_2,0}) \op \{0\} \big)^*, s_2 \cd (l_1 - m_1)  \Big) \\
& \Big| F\big( (k,\si), (h,\si)\big)(q_e,q_u) \Big| 
= \Big( \La^{\T{top}}\big( \T{Im}(\Ga_{m_1,l_1,t_2,0}) \op \{0\} \big), t_2 \cd (m_1 - l_1) \Big)
\end{split}
\]
and we may single out the explicit morphism:
\[
\om_{m_1,l_1,s_2,0}^* \ot \om_{m_1,l_1,t_2,0} 
\in \G L_{q_v}\big( (k,\si), (h,\si) \big) \ot \G L_{q_u}^\da\big( (k,\si), (h,\si)\big) ,
\]
see Equation \eqref{eq:omegavec}. A similar description applies to the morphisms from $(h,\si)$ to $(g,\si)$ and from $(k,\si)$ to $(g,\si)$. The definition of the composition isomorphism appearing in the lemma here below can be found in Definition \ref{d:hopf}.

\begin{lemma}\label{l:expcomp}
Let $u = \xi \cd z_1^{t_1} z_2^{t_2}$, $v = \eta \cd z_1^{s_1} z_2^{s_2}$ and $g = \mu \cd z_1^{n_1} z_2^{n_2}$, $h = \nu \cd z_1^{m_1} z_2^{m_2}$ and $k = \ka \cd z_1^{l_1} z_2^{l_2}$ be invertible monomials with $s_2,t_2 \geq 0$ and $n_1 \geq m_1 \geq l_1 \geq 0$. The composition isomorphism
\[
\G M_{q_v,q_u} \colon \G H(q_v,q_u)\big( (k,\si),(h,\si)\big) \ot \G H(q_v,q_u)\big( (h,\si),(g,\si)\big)
\to \G H(q_v,q_u)\big( (k,\si),(g,\si)\big) 
\]
is given explicitly by
\[
\begin{split}
\G M_{q_v,q_u} & \colon \om_{m_1,l_1,s_2,0}^* \ot \om_{m_1,l_1,t_2,0} \ot \om_{n_1,m_1,s_2,0}^* \ot \om_{n_1,m_1,t_2,0} \\
& \q \mapsto (-1)^{(n_1 -m_1)(m_1 - l_1) t_2 (t_2 - s_2)} \cd ( \om_{m_1,l_1,s_2,0} \we \om_{n_1,m_1,s_2,0} )^* \ot ( \om_{m_1,l_1,t_2,0} \we \om_{n_1,m_1,t_2,0}) .
\end{split}
\]
\end{lemma}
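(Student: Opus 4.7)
\textbf{Proof plan for Lemma \ref{l:expcomp}.} My plan is to split the composition $\G M_{q_v,q_u}$ into the graded tensor product of $\G M_{q_v}$ and $\G M_{q_u}^\da$ and compute each factor separately using the definitions from Subsection \ref{ss:comp}, together with the explicit form of the Fredholm operators and their kernels/cokernels recorded in Lemma \ref{l:kercoker} and Lemma \ref{l:dualexam}. Since $\G H(q_v,q_u) = (\G L_{q_v} \otimes \G L_{q_u}^\da)|_{D_\La}$, the composition of two morphisms $(\al_1 \ot \be_1) \ot (\al_2 \ot \be_2)$ equals $(-1)^{\ep(\be_1) \cd \ep(\al_2)} \cd \G M_{q_v}(\al_1 \ot \al_2) \ot \G M_{q_u}^\da(\be_2 \ot \be_1)$. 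Reading off the degrees from Lemma \ref{l:kercoker} gives a graded-commutativity sign $(-1)^{s_2 t_2 (m_1-l_1)(n_1-m_1)}$ on the input $\om_{m_1,l_1,s_2,0}^* \ot \om_{m_1,l_1,t_2,0} \ot \om_{n_1,m_1,s_2,0}^* \ot \om_{n_1,m_1,t_2,0}$.

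To compute $\G M_{q_v}$ applied to $\om_{m_1,l_1,s_2,0}^* \ot \om_{n_1,m_1,s_2,0}^*$, I would first insert the duality $\varphi \colon (\cc,0) \to \G L_{q_v}^\da((k,\si),(g,\si)) \ot \G L_{q_v}((k,\si),(g,\si))$ from Lemma \ref{l:dualexam}, giving $\om_{n_1,l_1,s_2,0} \ot \om_{n_1,l_1,s_2,0}^*$. It then remains to evaluate the scalar $\mu_{q_v}(\om_{m_1,l_1,s_2,0}^* \ot \om_{n_1,m_1,s_2,0}^* \ot \om_{n_1,l_1,s_2,0}) \in \cc^*$ from the trivialisation \eqref{eq:comp}. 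For this triple at base point $q_e$, the Fredholm operators $F^{12}, F^{23}, F^{13}_\da$ have explicit block form involving only the projections $P_{z_1^a}$ and $Q_{z_2^s}$; their product stabilises to a block operator whose kernel and cokernel can be computed directly (kernels sitting in images of $\Ga$-projections, cokernels trivial or likewise). The perturbation isomorphism to $\Om^{13}(q_e)\Om^{23}(q_e)\Om^{12}(q_e)$ turns out to multiply by $+1$ in our setting: the relevant trace class difference consists of nilpotent operators on finite-dimensional subspaces (compare Example \ref{ex:indzero} and Example \ref{ex:invpert}), whose Fredholm determinants evaluate to $1$. Hence $\mu_{q_v}$ reduces to a pure torsion computation, and the result is determined by the sign exponent from Definition \ref{def:torsion}. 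Comparing with the expected output $(\om_{m_1,l_1,s_2,0} \we \om_{n_1,m_1,s_2,0})^*$ requires identifying $\om_{n_1,l_1,s_2,0}$ with $\om_{m_1,l_1,s_2,0} \we \om_{n_1,m_1,s_2,0}$ up to the permutation sign $(-1)^{(n_1-m_1)(m_1-l_1)\binom{s_2}{2}}$, obtained by counting inversions between the two orderings of the basis $\{[z_1^k z_2^j]\}$.

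The computation of $\G M_{q_u}^\da$ applied to $\om_{n_1,m_1,t_2,0} \ot \om_{m_1,l_1,t_2,0}$ proceeds analogously using Definition \ref{def:dualcomp}, via the dual duality \eqref{eq:dualII} and the trivialisation $\mu_{q_u}^\da$ from \eqref{eq:dualcomp}. The only structural difference is that the roles of $F^{ij}$ and $F^{ij}_\da$ are swapped, and I expect an analogous sign $(-1)^{(n_1-m_1)(m_1-l_1)\binom{t_2}{2}}$ from the permutation-to-wedge comparison plus a possibly different sign contribution from the torsion $\ep(\De)$ factor. Assembling all four signs (graded commutativity, the two torsion signs from $\mu_{q_v}$ and $\mu_{q_u}^\da$, and the two permutation signs), a direct arithmetic verification should yield the claimed total exponent $(n_1-m_1)(m_1-l_1)t_2(t_2-s_2)$.

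The main obstacle will be the careful bookkeeping of signs in the torsion isomorphisms appearing in $\mu_{q_v}$ and $\mu_{q_u}^\da$: the exact triangles underlying these isomorphisms mix several finite-dimensional subspaces sitting inside different summands of $\pi_{(k,\si)}(1)\C H \op \pi_{(h,\si)}(1)\C H \op \pi_{(g,\si)}(1)\C H$, and the sign $\ep(\De)$ from Definition \ref{def:torsion} depends on their dimensions and placements. A convenient way to organise the computation is to factorise the block operators into strictly upper/lower triangular pieces (as in the proof of Lemma \ref{l:reduction}), so that each torsion isomorphism reduces to either a shift of tensor factors, a transposition with explicit parity, or an evaluation at the unit $1 \in \cc$. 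I expect this factorisation strategy to make the final sign count tractable, and in particular to explain why only the combination $t_2(t_2 - s_2)$ (rather than $s_2 t_2$ separately) appears in the answer, as a cancellation between the graded-commutativity sign and the sign from $\G M_{q_u}^\da$.
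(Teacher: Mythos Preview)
Your overall strategy matches the paper's: split into $\G M_{q_v}$ and $\G M_{q_u}^\da$, insert the duality $\varphi$, and evaluate the trivialisations $\mu_{q_v}$ and $\mu_{q_u}^\da$ explicitly using Lemma \ref{l:kercoker}. However, there are two concrete sign errors in your plan that would prevent the final arithmetic from closing.

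First, the graded-commutativity sign is incomplete. In the paper's conventions the composition $M$ in $\G L_{q_u}^\da$ is related to $\G M_{q_u}^\da$ via the \emph{graded} commutativity constraint $\epsilon$, so passing from $\al_1 \ot \be_1 \ot \al_2 \ot \be_2$ to $\G M_{q_v}(\al_1 \ot \al_2) \ot \G M_{q_u}^\da(\be_2 \ot \be_1)$ requires swapping $\be_1$ past \emph{both} $\al_2$ and $\be_2$. The correct sign exponent is therefore $\ep(\be_1)(\ep(\al_2)+\ep(\be_2)) = (m_1-l_1)(n_1-m_1)t_2(t_2-s_2)$, not $(m_1-l_1)(n_1-m_1)s_2 t_2$; this is already the full sign in the statement, and the two trivialisations $\mu_{q_v}$, $\mu_{q_u}^\da$ must each evaluate to $1$ on the nose.

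Second, the perturbation step in $\mu_{q_v}$ is not trivial. The paper computes $F_\da^{13}F^{23}F^{12} + f$ and its inverse against $\Om^{13}(p_0)\Om^{23}(p_0)\Om^{12}(p_0)$ explicitly: the resulting determinant-class operator $\Si^{-1}$ restricts on the finite-dimensional subspace $\T{Im}(\Ga_{n_1,m_1,s_2,0})^{\op 2}$ to the swap $\left(\begin{smallmatrix}0&1\\1&0\end{smallmatrix}\right)$, \emph{not} a nilpotent perturbation of the identity. Hence $\G P(\om_{n_1,m_1,s_2,0} \ot \om_{n_1,m_1,s_2,0}^*) = (-1)^{(n_1-m_1)s_2}$, and this sign is essential for $\mu_{q_v}$ to come out equal to $1$: it cancels against the torsion signs $(-1)^{(n_1-l_1)(n_1-m_1)s_2}$ and $(-1)^{(m_1-l_1)(n_1-m_1)s_2}$ arising from Definition \ref{def:torsion}.

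A minor simplification you can borrow from the paper: rather than identifying $\om_{n_1,l_1,s_2,0}$ with $\om_{m_1,l_1,s_2,0} \we \om_{n_1,m_1,s_2,0}$ via a permutation sign, the paper observes that $\varphi(1)$ can be represented as $v \ot v^*$ for \emph{any} non-zero $v$, so one may take $v = \om_{m_1,l_1,s_2,0} \we \om_{n_1,m_1,s_2,0}$ from the outset and bypass that sign entirely.
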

\begin{proof}
The sign $(-1)^{(n_1 -m_1)(m_1 - l_1) t_2 (t_2 - s_2)}$ comes from the symmetry constraint when passing from the $\zz$-graded complex line
\[
\G L_{q_v}\big( (k,\si), (h,\si) \big) \ot \G L_{q_u}^\da\big( (k,\si), (h,\si)\big) 
\ot \G L_{q_v}\big( (h,\si), (g,\si) \big) \ot \G L_{q_u}^\da\big( (h,\si), (g,\si)\big) 
\]
to the $\zz$-graded complex line
\[
\G L_{q_v}\big( (k,\si), (h,\si) \big) \ot \G L_{q_v}\big( (h,\si), (g,\si) \big) 
\ot \G L_{q_u}^\da\big( (h,\si), (g,\si)\big) \ot \G L_{q_u}^\da\big( (k,\si), (h,\si)\big) \, .
\]
We are thus claiming that 
\[
\begin{split}
& \G M_{q_v}( \om_{m_1,l_1,s_2,0}^* \ot  \om_{n_1,m_1,s_2,0}^* ) = ( \om_{m_1,l_1,s_2,0} \we \om_{n_1,m_1,s_2,0} )^* \q \T{and} \\
& \G M_{q_u}^\da(  \om_{n_1,m_1,t_2,0} \ot \om_{m_1,l_1,t_2,0}) = \om_{m_1,l_1,t_2,0} \we \om_{n_1,m_1,t_2,0} \, .
\end{split}
\]
We shall only establish this claim for the case of the isomorphism 
\[
\G M_{q_v} \colon \G L_{q_v}\big( (k,\si), (h,\si) \big) \ot \G L_{q_v}\big( (h,\si), (g,\si) \big)  \to \G L_{q_v}\big( (k,\si), (g,\si) \big)
\]
since the proof in the case of $\G M_{q_u}^\da$ follows a similar pattern. Alternatively, it is possible to derive the formula for $\G M_{q_u}^\da$ by applying the duality relation from Proposition \ref{p:dualcomp}. For more details on the isomorphism $\G M_{q_v}$ we refer to Definition \ref{def:comp}.

In view of the description of the coproduct from Lemma \ref{l:dualexam}, sending the unit $1 \in \B C$ to the element 
\[
\om_{m_1,l_1,s_2,0} \we \om_{n_1,m_1,s_2,0} \ot ( \om_{m_1,l_1,s_2,0} \we \om_{n_1,m_1,s_2,0} )^*
\in \G L_{q_v}^\da \big( (k,\si), (g,\si) \big) \ot \G L_{q_v} \big( (k,\si), (g,\si) \big)
\]
it suffices to show that
\[
\mu_{q_v}\big( \om_{m_1,l_1,s_2,0}^* \ot  \om_{n_1,m_1,s_2,0}^* \ot \om_{m_1,l_1,s_2,0} \we \om_{n_1,m_1,s_2,0}\big) = 1 \, .
\]

We introduce the Fredholm operators
\[
\begin{split}
F^{12} & := \ma{ccc}{ (P_{z_1^{l_1}} - P_{z_1^{m_1}}) Q_{z_2^{s_2}} & P_{z_1^{m_1}} Q & 0 \\
P_{z_1^{m_1}} Q_{z_2^{s_2}} & 0 & 0 \\
0 & 0 & P_{z_1^{n_1}} Q } \\
& \q \colon P_{z_1^{l_1}} Q_{z_2^{s_2}} \C H \op P_{z_1^{m_1}} Q \C H \op P_{z_1^{n_1}} Q \C H
\to P_{z_1^{l_1}} Q \C H \op P_{z_1^{m_1}} Q_{z_2^{s_2}} \C H \op P_{z_1^{n_1}} Q \C H \\
F^{23} & := \ma{ccc}{ P_{z_1^{l_1}} Q & 0 & 0 \\
0 & (P_{z_1^{m_1}} - P_{z_1^{n_1}}) Q_{z_2^{s_2}} & P_{z_1^{n_1}} Q \\
0 & P_{z_1^{n_1}} Q_{z_2^{s_2}} & 0 } \\
& \q \colon P_{z_1^{l_1}} Q \C H \op P_{z_1^{m_1}} Q_{z_2^{s_2}} \C H \op P_{z_1^{n_1}} Q \C H
\to P_{z_1^{l_1}} Q \C H \op P_{z_1^{m_1}} Q \C H \op P_{z_1^{n_1}} Q_{z_2^{s_2}} \C H \q \T{and} \\
F^{13}_\da & := 
\ma{ccc}{ (P_{z_1^{l_1}} - P_{z_1^{n_1}})Q_{z_2^{s_2}} & 0 & P_{z_1^{n_1}} Q_{z_2^{s_2}} \\
0 & P_{z_1^{m_1}} Q & 0 \\
P_{z_1^{n_1}} Q & 0  & 0 } \\
& \q \colon P_{z_1^{l_1}} Q \C H \op P_{z_1^{m_1}} Q \C H \op P_{z_1^{n_1}} Q_{z_2^{s_2}} \C H
\to P_{z_1^{l_1}} Q_{z_2^{s_2}} \C H \op P_{z_1^{m_1}} Q \C H \op P_{z_1^{n_1}} Q \C H \, .
\end{split}
\]
These Fredholm operators are stabilised versions of the Fredholm operators $F\big( (k,\si),(h,\si)\big)(q_v,q_e)$, \newline $F\big( (h,\si),(g,\si)\big)(q_v,q_e)$ and $F\big( (k,\si),(g,\si)\big)(q_e,q_v)$, respectively. Our first task is to compute the torsion isomorphism
\[
\G T = \G T \ci (\T{id} \ot \G T) 
\colon |F^{12}| \ot |F^{23}| \ot |F^{13}_\da| \to 
|F^{12}| \ot |F^{13}_\da F^{23}| \to 
|F^{13}_\da F^{23} F^{12}| \, .
\]
The various products of Fredholm operators appearing can be computed and are given by
\[
\begin{split}
F^{13}_\da F^{23} & = \ma{ccc}{
(P_{z_1^{l_1}} - P_{z_1^{n_1}})Q_{z_2^{s_2}} & P_{z_1^{n_1}} Q_{z_2^{s_2}} & 0 \\
0 & (P_{z_1^{m_1}} - P_{z_1^{n_1}}) Q_{z_2^{s_2}} & P_{z_1^{n_1}} Q \\
P_{z_1^{n_1}} Q & 0 & 0 
}  \\ 
&\q \colon P_{z_1^{l_1}} Q \C H \op P_{z_1^{m_1}} Q_{z_2^{s_2}} \C H \op P_{z_1^{n_1}} Q \C H
\to P_{z_1^{l_1}} Q_{z_2^{s_2}} \C H \op P_{z_1^{m_1}} Q \C H \op P_{z_1^{n_1}} Q \C H 
\q \T{and} \\
F^{13}_\da F^{23} F^{12} 
& = \ma{ccc}{ 
(P_{z_1^{l_1}} - P_{z_1^{m_1}})Q_{z_2^{s_2}} + P_{z_1^{n_1}} Q_{z_2^{s_2}} & (P_{z_1^{m_1}} - P_{z_1^{n_1}}) Q_{z_2^{s_2}} & 0 \\
(P_{z_1^{m_1}} - P_{z_1^{n_1}}) Q_{z_2^{s_2}} & 0 & P_{z_1^{n_1}} Q \\
0 & P_{z_1^{n_1}} Q & 0 
} \\
& \q \colon P_{z_1^{l_1}} Q_{z_2^{s_2}} \C H \op P_{z_1^{m_1}} Q \C H \op P_{z_1^{n_1}} Q \C H
\to P_{z_1^{l_1}} Q_{z_2^{s_2}} \C H \op P_{z_1^{m_1}} Q \C H \op P_{z_1^{n_1}} Q \C H \, .
\end{split}
\]
Moreover, the respective kernels and cokernels are given by
\[
\begin{split}
& \T{Ker}(F^{12}) = \{0\} = \T{Ker}(F^{23}) = \T{Coker}(F_{\da}^{13}) \\
& \T{Coker}(F^{12}) \cong \T{Im}(\Ga_{m_1,l_1,s_2,0}) \op \{0\} \op \{0\} \\
& \T{Coker}(F^{23}) \cong \{0\} \op \T{Im}(\Ga_{n_1,m_1,s_2,0}) \op \{0\} \cong \T{Coker}(F^{13}_\da F^{23}) \\
& \T{Ker}(F_\da^{13}) = \T{Im}(\Ga_{n_1,l_1,s_2,0}) \op \{0 \} \op \{0\} = \T{Ker}(F^{13}_\da F^{23}) \\
& \T{Ker}( F^{13}_\da F^{23} F^{12} ) = \{0\} \op \T{Im}(\Ga_{n_1,m_1,s_2,0}) \op \{0\} 
\cong \T{Coker}( F^{13}_\da F^{23} F^{12} )
\end{split}
\]
The torsion isomorphism $\G T \colon |F^{23}| \ot |F^{13}_\da| \to |F^{13}_\da F^{23}|$ therefore comes from the six term exact sequence
\[
\xymatrix{ \{0\} \ar[r] & \T{Im}(\Ga_{n_1,l_1,s_2,0}) \ar@{=}[r] & \T{Im}( \Ga_{n_1,l_1,s_2,0} ) \ar[d]^{0} \\
\{0\} \ar@{=}[u] & \T{Im}(  \Ga_{n_1,m_1,s_2,0} ) \ar[l] & 
\T{Im}( \Ga_{n_1,m_1,s_2,0} ) \ar@{=}[l] } 
\]
whereas the torsion isomorphism $\G T \colon |F^{12}| \ot |F^{13}_\da F^{23}| \to |F^{13}_\da F^{23} F^{12}|$ comes from the six term exact sequence
\[
\xymatrix{ \{0\} \ar[r] & \T{Im}(\Ga_{n_1,m_1,s_2,0}) \ar[r]^{\io} & \T{Im}( \Ga_{n_1,l_1,s_2,0} ) \ar[d]^{P_{z_1^{l_1}} - P_{z_1^{m_1}}} \\
\T{Im}(\Ga_{n_1,m_1,s_2,0}) \ar[u] & \T{Im}(  \Ga_{n_1,m_1,s_2,0} ) \ar@{=}[l] & 
\T{Im}( \Ga_{m_1,l_1,s_2,0} ) \ar[l]^{0} } 
\]
where the map $\io$ is the inclusion, see Definition \ref{d:torfre}. The isomorphism $\G T \colon |F^{23}| \ot |F^{13}_\da| \to |F^{13}_\da F^{23}|$ is therefore given explicitly by
\[
\G T \colon \om_{n_1,m_1,s_2,0}^* \ot \om_{n_1,l_1,s_2,0} \mapsto (-1)^{(n_1 -l_1) (n_1 - m_1) s_2} \cd \om_{n_1,l_1,s_2,0} \ot \om_{n_1,m_1,s_2,0}^*
\]
whereas the isomorphism $\G T \colon |F^{12}| \ot |F^{13}_\da F^{23}| \to |F^{13}_\da F^{23} F^{12}|$ is given explicitly by
\[
\G T \colon \om_{m_1,l_1,s_2,0}^* \ot \om_{n_1,m_1,s_2,0} \we \om_{m_1,l_1,s_2,0} \ot \om_{n_1,m_1,s_2,0}^* 
\mapsto \om_{n_1,m_1,s_2,0} \ot \om_{n_1,m_1,s_2,0}^*  \, .
\]
For details on these torsion isomorphisms we refer to Definition \ref{def:torsion}. Combining these computations we conclude that the torsion isomorphism $\G T \colon |F^{12}| \ot |F^{23}| \ot |F^{13}_\da| \to |F^{13}_\da F^{23} F^{12}|$ is described by
\[
\G T \colon \om_{m_1,l_1,s_2,0}^* \ot  \om_{n_1,m_1,s_2,0}^* \ot \om_{n_1,m_1,s_2,0} \we \om_{m_1,l_1,s_2,0}
\mapsto (-1)^{(n_1 -l_1) (n_1 - m_1) s_2} \cd \om_{n_1,m_1,s_2,0} \ot \om_{n_1,m_1,s_2,0}^* \, .
\]

Comparing with Equation \eqref{eq:comp} and Equation \eqref{eq:sigmaspecial} we now introduce the idempotent
\[
f := P_{z_1^{l_1}}(Q - Q_{z_2^{s_2}}) \op 0 \op 0 \, .
\]
We are going to disregard the extra subspace given by the idempotent $P_{z_1^{l_1}}(1 - Q) \op P_{z_1^{m_1}}(1 - Q) \op P_{z_1^{n_1}}(1 - Q)$ since it appears twice in the stabilisation procedure present in Equation \eqref{eq:comp} and therefore does not influence the final result. We apply $f$ to stabilise the Fredholm operator $F^{13}_\da F^{23} F^{12}$ and we thereby obtain the Fredholm operator
\[
F_\da^{13} F^{23} F^{12} + f = \ma{ccc}{ 
(P_{z_1^{n_1}} - P_{z_1^{m_1}})Q_{z_2^{s_2}} + P_{z_1^{l_1}} Q & (P_{z_1^{m_1}} - P_{z_1^{n_1}}) Q_{z_2^{s_2}} & 0 \\
(P_{z_1^{m_1}} - P_{z_1^{n_1}}) Q_{z_2^{s_2}} & 0 & P_{z_1^{n_1}} Q \\
0 & P_{z_1^{n_1}} Q & 0 
}
\]
which acts as an endomorphism of the Hilbert space $P_{z_1^{l_1}} Q \C H \op P_{z_1^{m_1}} Q \C H \op P_{z_1^{n_1}} Q \C H$. We are going to trivialise the $\zz$-graded complex line associated to the above Fredholm operator and to this end we introduce the invertible operators
\[
\begin{split}
\Om^{12} & := \ma{ccc}{ (P_{z_1^{l_1}} - P_{z_1^{m_1}}) Q & P_{z_1^{m_1}} Q & 0 \\
P_{z_1^{m_1}} Q & 0 & 0 \\
0 & 0 & P_{z_1^{n_1}} Q } \\
\Om^{23} & := \ma{ccc}{ P_{z_1^{l_1}} Q & 0 & 0 \\
0 & (P_{z_1^{m_1}} - P_{z_1^{n_1}}) Q & P_{z_1^{n_1}} Q \\
0 & P_{z_1^{n_1}} Q & 0 } \q \T{and} \\
\Om^{13} & := 
\ma{ccc}{ (P_{z_1^{l_1}} - P_{z_1^{n_1}})Q & 0 & P_{z_1^{n_1}} Q \\
0 & P_{z_1^{m_1}} Q & 0 \\
P_{z_1^{n_1}} Q & 0  & 0 } 
\end{split}
\]
all acting as automorphisms of the Hilbert space $P_{z_1^{l_1}} Q \C H \op P_{z_1^{m_1}} Q \C H \op P_{z_1^{n_1}} Q \C H$. The product of these invertible operators is then given by
\[
\Om^{13} \Om^{23} \Om^{12} = \ma{ccc}{ (P_{z_1^{l_1}} - P_{z_1^{m_1}} + P_{z_1^{n_1}}) Q & (P_{z_1^{m_1}} - P_{z_1^{n_1}}) Q & 0 \\
(P_{z_1^{m_1}} - P_{z_1^{n_1}})Q & 0 & P_{z_1^{n_1}} Q  \\
0 & P_{z_1^{n_1}} Q & 0} \, .
\]
We remark that the difference 
\[
\Om^{13} \Om^{23} \Om^{12} - (F_\da^{13} F^{23} F^{12} + f)
= \ma{ccc}{ (P_{z_1^{n_1}} - P_{z_1^{m_1}})(Q - Q_{z_2^{s_2}}) & (P_{z_1^{m_1}} - P_{z_1^{n_1}})(Q - Q_{z_2^{s_2}}) & 0  \\ 
(P_{z_1^{m_1}} - P_{z_1^{n_1}})(Q - Q_{z_2^{s_2}}) & 0 & 0 \\ 0 & 0 & 0
}
\]
makes sense (the operators in question act on the same Hilbert space) and is of trace class (in fact of finite rank). Our task is now to compute the perturbation isomorphism
\[
\G P \colon \big| F_\da^{13} F^{23} F^{12} + f \big| \to \big| \Om^{13} \Om^{23} \Om^{12} \big| = (\cc,0) \, .
\]
We consider the finite rank operator $N := 0 \op \Ga_{n_1,m_1,s_2,0} \op 0$ which induces an isomorphism 
\[
N \colon \T{Ker}( F_\da^{13} F^{23} F^{12} + f ) \to \T{Coker}(F_\da^{13} F^{23} F^{12} + f) \, ,
\]
makes $F_\da^{13} F^{23} F^{12} + f + N$ invertible, and has $\T{Ker}(N)$ as a vector space complement of $\T{Ker}(F_\da^{13} F^{23} F^{12} + f)$. It thus follows from Example \ref{ex:indzero} that
\[
\G P( \om_{n_1,m_1,s_2,0} \ot \om_{n_1,m_1,s_2,0}^*) = \T{det}(\Si)
\]
where $\Si := (\Om^{13} \Om^{23} \Om^{12})( F_\da^{13} F^{23} F^{12} + f + N)^{-1}$. We record that
\[
(\Om^{13} \Om^{23} \Om^{12})^{-1} = \ma{ccc}{ (P_{z_1^{l_1}} - P_{z_1^{m_1}} + P_{z_1^{n_1}}) Q & (P_{z_1^{m_1}} - P_{z_1^{n_1}}) Q & 0 \\
(P_{z_1^{m_1}} - P_{z_1^{n_1}})Q & 0 & P_{z_1^{n_1}} Q  \\
0 & P_{z_1^{n_1}} Q & 0 }
\]
and hence that
\[
\begin{split}
\Si^{-1} 
& = (F_\da^{13} F^{23} F^{12} + f + N)(\Om^{13} \Om^{23} \Om^{12})^{-1} \\
& = \ma{ccc}{ 
(P_{z_1^{n_1}} - P_{z_1^{m_1}})Q_{z_2^{s_2}} + P_{z_1^{l_1}} Q & (P_{z_1^{m_1}} - P_{z_1^{n_1}}) Q_{z_2^{s_2}} & 0 \\
(P_{z_1^{m_1}} - P_{z_1^{n_1}}) Q_{z_2^{s_2}} & \Ga_{n_1,m_1,s_2,0} & P_{z_1^{n_1}} Q \\
0 & P_{z_1^{n_1}} Q & 0 
} \\ 
& \qq \cd \ma{ccc}{ (P_{z_1^{l_1}} - P_{z_1^{m_1}} + P_{z_1^{n_1}}) Q & (P_{z_1^{m_1}} - P_{z_1^{n_1}}) Q & 0 \\
(P_{z_1^{m_1}} - P_{z_1^{n_1}})Q & 0 & P_{z_1^{n_1}} Q  \\
0 & P_{z_1^{n_1}} Q & 0 } \\
& = \ma{ccc}{ P_{z_1^{l_1}}Q - \Ga_{n_1,m_1,s_2,0} & \Ga_{n_1,m_1,s_2,0} & 0 \\
\Ga_{n_1,m_1,s_2,0} &  P_{z_1^{m_1}} Q - \Ga_{n_1,m_1,s_2,0} & 0 \\
0 & 0 & P_{z_1^{n_1}} Q} \, .
\end{split}
\]
The Fredholm determinant of this determinant class operator can be computed and is given by $\T{det}(\Si^{-1}) = (-1)^{(n_1 - m_1)s_2}$.
We thus obtain that $\G P( \om_{n_1,m_1,s_2,0} \ot \om_{n_1,m_1,s_2,0}^*) = (-1)^{(n_1 - m_1) s_2}$.

Combining the above computation of the torsion isomorphism $\G T \colon |F^{12}| \ot |F^{23}| \ot |F^{13}_\da| \to \big| F^{13}_\da F^{23} F^{12}\big|$ and the perturbation isomorphism $\G P \colon \big| F^{13}_\da F^{23} F^{12} + f\big| \to \big| \Om^{13} \Om^{23} \Om^{12}\big| = (\cc,0)$ we obtain that the trivialisation 
\[
\mu_{q_v} = (\G P \G S) \ci (\G T \G S) \colon \big| F\big( (k,\si),(h,\si)\big)(q_v,q_e) \big| \ot \big| F\big( (h,\si),(g,\si)\big)(q_v,q_e)\big| \ot \big| F\big( (k,\si),(g,\si)\big)(q_e,q_v) \big| \to (\cc,0)
\]
is described by the assignment
\[
\begin{split}
& \mu_{q_v}( \om_{m_1,l_1,s_2,0}^* \ot  \om_{n_1,m_1,s_2,0}^* \ot \om_{m_1,l_1,s_2,0} \we \om_{n_1,m_1,s_2,0} ) \\
& \q = (-1)^{(n_1 - l_1)(n_1 - m_1) s_2 + (m_1 - l_1)(n_1 - m_1) s_2} \cd \G P( \om_{n_1,m_1,s_2,0} \ot \om_{n_1,m_1,s_2,0}^*)  \\
& \q = (-1)^{(n_1 - l_1)(n_1 - m_1) s_2 + (m_1 - l_1)(n_1 - m_1) s_2 + (n_1 - m_1) s_2} = 1 \, .
\end{split}
\]
This ends the proof of the present lemma.
\end{proof}

\subsection{Non-triviality}
We now return to the setting where $g = \mu \cd z_1^{n_1} z_2^{n_2}$, $h = \nu \cd z_1^{m_1} z_2^{m_2}$ and $k = \ka \cd z_1^{l_1} z_2^{l_2}$ are invertible monomials with exponents $n_1,n_2,m_1,m_2,l_1,l_2$ in $\nn \cup \{0\}$. According to Equation \eqref{eq:automorphIII} and Lemma \ref{l:expcomp}, the automorphism yielding the number $c(g,h,k) \in \cc^*$ is described by the composition 
\begin{equation}\label{eq:automorphIV}
(gh,\si) \to^{b_{gh,k}^{-1}} (e,\si) \to^{ \mu^{m_1 l_2} \cd (\om_{n_1,0,n_2 + m_2,0} \we \om_{n_1 + m_1,n_1,n_2 + m_2,0})^* \ot ( \om_{n_1,0,n_2 + m_2 + l_2,0} 
\we \om_{n_1 + m_1,n_1,n_2 + m_2 + l_2,0} ) } (gh,\si)
\end{equation}
up to the sign 
\[
(-1)^{l_2 (n_2 m_1 + n_1 m_2 m_1 + n_1 m_2) + n_1 \cd m_1 \cd (n_2 + m_2 + l_2) \cd l_2} = (-1)^{(n_1m_2 + n_1 m_1 + n_2 m_1) l_2 + n_1 n_2 m_1 l_2} \, .
\]

The result of the next proposition relies on the explicit choices made in the beginning of Subsection \ref{s:nontriv} near the statement of Lemma \ref{l:kercoker}.

\begin{prop}\label{p:compmono}
Let $g = \mu \cd z_1^{n_1} z_2^{n_2}$, $h = \nu \cd z_1^{m_1} z_2^{m_2}$ and $k = \ka \cd z_1^{l_1} z_2^{l_2}$ be invertible monomials with exponents $n_1,n_2,m_1,m_2,l_1,l_2$ in $\nn \cup \{0\}$. It holds that 
\[
c(g,h,k) = c(P,Q,\al)(g,h,k) = \mu^{m_1 l_2} 
\cd (-1)^{\ep(g,h,k)} \, ,
\]
where the sign is given by
\[
\begin{split}
\ep(g,h,k)
& := (n_1 m_2 + n_1 m_1 + n_2 m_1) l_2  +  n_1 n_2 m_1 l_2  \\ 
& \q +  n_1 m_1 \cd ( n_2 + m_2 - 1)(n_2 + m_2)/2  +  n_1 m_1 \cd ( n_2 + m_2 + l_2 - 1)(n_2 + m_2 + l_2)/2 \, .
\end{split}
\]
\end{prop}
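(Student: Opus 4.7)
The plan is to pick up exactly where the text leaves off, namely the automorphism of $(gh,\si)$ displayed in Equation \eqref{eq:automorphIV}, and to show that $b_{gh,k}^{-1}$ composed with the ``middle morphism'' produces $\mu^{m_1 l_2}$ times an explicit sign times the identity on $(gh,\si)$. Multiplying that sign by the prefactor $(-1)^{(n_1 m_2 + n_1 m_1 + n_2 m_1) l_2 + n_1 n_2 m_1 l_2}$ already recorded in front of Equation \eqref{eq:automorphIV} will yield precisely the sign $\ep(g,h,k)$ claimed in the statement.

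The crux is the purely combinatorial identity
\[
\om_{n_1,0,N,0} \we \om_{n_1+m_1,n_1,N,0} \; = \; (-1)^{n_1 m_1 N(N-1)/2} \cd \om_{n_1+m_1,0,N,0}
\]
valid for any $N \in \nn\cup\{0\}$. First I would write out both sides using Equation \eqref{eq:omegavec} and then count the number of transpositions needed to sort the concatenated frame $\{(i,j) : i<n_1\}$ followed by $\{(i,j) : n_1\leq i < n_1+m_1\}$ (each with $j$ running slowly from $0$ to $N-1$) into the frame $\{(i,j) : i < n_1+m_1\}$ ordered by $j$ slow, $i$ fast. Grouping the count by the row-index $j$ of each element of the second block, one sees that the block of $m_1$ elements at row $j$ must move past the $(N-j-1)\cd n_1$ first-block elements that sit at higher rows $j'>j$; summing $\sum_{j=0}^{N-1} m_1 n_1 (N-j-1) = m_1 n_1 N(N-1)/2$ gives the asserted exponent. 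Applying this identity once with $N = n_2 + m_2$ and once with $N = n_2 + m_2 + l_2$ lets me rewrite the middle morphism as
\[
\mu^{m_1 l_2} \cd (-1)^{n_1 m_1[(n_2+m_2)(n_2+m_2-1) + (n_2+m_2+l_2)(n_2+m_2+l_2-1)]/2} \cd \om_{n_1+m_1,0,n_2+m_2,0}^* \ot \om_{n_1+m_1,0,n_2+m_2+l_2,0},
\]
and the morphism on the right is, by Equation \eqref{eq:bexpdef} applied to the monomials $gh$ and $k$, exactly $b_{gh,k}$.

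Once this identification is achieved, the composition $b_{gh,k}^{-1}\ci (\text{middle})$ is a scalar multiple of the identity on $(gh,\si)$, and that scalar is $\mu^{m_1 l_2}$ times the sign just produced. Multiplying by the outstanding sign $(-1)^{(n_1 m_2 + n_1 m_1 + n_2 m_1) l_2 + n_1 n_2 m_1 l_2}$ immediately assembles $\ep(g,h,k)$ as stated in the proposition. The main obstacle is therefore a careful bookkeeping of signs: there are already the signs coming from Lemma \ref{l:dualexam}, Lemma \ref{l:baseconcrete}, and Lemma \ref{l:expcomp} built into Equation \eqref{eq:automorphIV}, and one must make sure that the reshuffling sign $(-1)^{n_1 m_1 N(N-1)/2}$ is the \emph{only} additional contribution. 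This can be checked by noting that in Equation \eqref{eq:automorphIV} the tensor factors already appear in the canonical ``$\om^* \otimes \om$'' order (no further commutativity constraint is invoked), so no extra Koszul signs enter, and the rearrangement of wedge factors inside each graded determinant line contributes purely the counted transposition sign.
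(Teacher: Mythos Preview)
Your proposal is correct and follows essentially the same approach as the paper's own proof: both start from Equation \eqref{eq:automorphIV}, establish the combinatorial identity $\om_{n_1,0,N,0}\wedge\om_{n_1+m_1,n_1,N,0}=(-1)^{n_1 m_1 N(N-1)/2}\,\om_{n_1+m_1,0,N,0}$ (the paper writes the exponent as $n_1 m_1\sum_{j=1}^{N-1} j$), apply it with $N=n_2+m_2$ and $N=n_2+m_2+l_2$, identify the resulting morphism with $b_{gh,k}$, and collect the prefactor sign. Your inversion-counting argument for the sign is a bit more explicit than the paper's direct expansion, but the content is the same.
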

\begin{proof}
We recall from Equation \eqref{eq:bexpdef} that $b_{gh,k} = \om^*_{n_1 + m_1,0,n_2 + m_2,0} \ot \om_{n_1 + m_1,0,n_2 + m_2 + l_2,0}$, thus to compute the remaining composition in Equation \eqref{eq:automorphIV}, we consult Equation \eqref{eq:omegavec} and obtain that
\[
\begin{split}
& \om_{n_1,0,n_2 + m_2,0} \bigwedge \om_{n_1 + m_1,n_1,n_2 + m_2,0} \\
& \q = ( [1] \wlw [z_1^{n_1 - 1}] ) \we ( [z_2] \wlw [z_1^{n_1 - 1} z_2] ) 
\wlw (  [z_2^{n_2 + m_2 - 1}] \wlw [z_1^{n_1 - 1} z_2^{n_2 + m_2 - 1}]) \\
& \qq \bigwedge ( [z_1^{n_1}] \wlw [z_1^{n_1 + m_1 - 1}] ) \we ( [z_1^{n_1} z_2] \wlw [z_1^{n_1 + m_1 - 1} z_2] ) \\
& \qqq \wlw (  [z_1^{n_1} z_2^{n_2 + m_2 - 1}] \wlw [z_1^{n_1 + m_1 - 1} z_2^{n_2 + m_2 - 1}]) \\
& \q = (-1)^{n_1 \cd  m_1 \cd \sum_{j = 1}^{n_2 + m_2 - 1} j } \cd ( [1] \wlw [z_1^{n_1 + m_1 - 1}] ) \we ( [z_2] \wlw [z_1^{n_1 + m_1 - 1} z_2] ) \\ 
& \qqq \wlw (  [z_2^{n_2 + m_2 - 1}] \wlw [z_1^{n_1 + m_1 - 1} z_2^{n_2 + m_2 - 1}]) \\
& \q = (-1)^{n_1 \cd  m_1 \cd ( n_2 + m_2 - 1)(n_2 + m_2)/2 } \cd \om_{n_1 + m_1,0,n_2 + m_2,0} \, .
\end{split}
\]
Similarly, we have that
\[
\om_{n_1,0,n_2 + m_2 + l_2,0} \we \om_{n_1 + m_1,n_1,n_2 + m_2 + l_2,0} 
= (-1)^{n_1 \cd  m_1 \cd ( n_2 + m_2 + l_2 - 1)(n_2 + m_2 + l_2)/2 } \cd \om_{n_1 + m_1,0,n_2 + m_2 + l_2,0} \, .
\]
These computations prove the proposition.
\end{proof}

As a consequence of the above Proposition \ref{p:compmono} we may show that our group $3$-cocycle yields a non-trivial cohomology class $[c(P,Q,\al)] \in H^3\big(C^\infty(\B T^2)^*, \cc^*/\{\pm 1 \} \big)$. We recall the definition of the class in group $3$-homology $\{ z_1,z_2,\la\} \in H_3( C^\infty(\B T^2)^*,\zz)$ from Equation \eqref{eq:classgrouphom}. 

\begin{corollary}\label{c:nontriv}
We have the identity 
\[
\binn{[c(P,Q,\al)], \{z_1,z_2,\la\}} = [\la] \in \cc^* / \{\pm 1 \}
\]
for the pairing between the group $3$-cohomology class $[c(P,Q,\al)]$ and the group $3$-homology class $\{z_1,z_2,\la\}$.
\end{corollary}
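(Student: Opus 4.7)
The plan is to evaluate the pairing directly using the explicit formula from Proposition \ref{p:compmono}. Since the pairing
\[
\binn{[c(P,Q,\al)],\{z_1,z_2,\la\}} \in \cc^*/\{\pm 1\}
\]
is computed by substituting the $3$-cycle representative from Equation \eqref{eq:classgrouphom} into $c = c(P,Q,\al)$ and taking the alternating product, the key observation is that every triple appearing in the cycle consists of invertible monomials with \emph{non-negative} exponents in $z_1$ and $z_2$. Consequently, Proposition \ref{p:compmono} applies directly to each of the six terms, and we will never need to extend the formula for $c(g,h,k)$ beyond the range in which it has been computed.

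Writing each factor in the form $\mu\cd z_1^{n_1}z_2^{n_2}$, $\nu\cd z_1^{m_1}z_2^{m_2}$, $\ka\cd z_1^{l_1}z_2^{l_2}$ as in Proposition \ref{p:compmono}, and using the formula $c(g,h,k) = \mu^{m_1 l_2}\cd(-1)^{\ep(g,h,k)}$, I will evaluate the six contributions. The observation which makes the computation trivial is that the factor $\mu^{m_1 l_2}$ is non-trivial precisely when the first entry carries the scalar $\la$ while the second entry carries $z_1$ (so that $m_1 = 1$) and the third entry carries $z_2$ (so that $l_2 = 1$). Among the six triples
\[
(z_1,z_2,\la),\ (z_1,\la,z_2),\ (\la,z_1,z_2),\ (\la,z_2,z_1),\ (z_2,\la,z_1),\ (z_2,z_1,\la),
\]
only $(\la,z_1,z_2)$ satisfies all three conditions; for every other triple either $\mu=1$ or $m_1=0$ or $l_2=0$. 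Hence up to signs,
\[
c(\la,z_1,z_2) = \la, \qquad c(g,h,k) = 1 \ \text{for the five other triples}.
\]

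Combining these values with the alternating signs coming from the bar-differential on the cycle \eqref{eq:classgrouphom} yields
\[
\binn{[c(P,Q,\al)],\{z_1,z_2,\la\}}
= c(z_1,z_2,\la)\cd c(z_1,\la,z_2)^{-1}\cd c(\la,z_1,z_2)\cd c(\la,z_2,z_1)^{-1}\cd c(z_2,\la,z_1)\cd c(z_2,z_1,\la)^{-1},
\]
which, modulo $\{\pm 1\}$, equals $[\la]$, as asserted.

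The only real ``work'' in this proof is the bookkeeping verification that each of the six triples does or does not contribute through the $\mu^{m_1 l_2}$ factor, and I do not expect any serious obstacle: all the analytic content is already absorbed into Proposition \ref{p:compmono}, and the sign exponent $\ep(g,h,k)$ is irrelevant after passing to the quotient $\cc^*/\{\pm 1\}$. The only mild subtlety is ensuring that the specific choices made throughout Section \ref{s:nontriv} (notably the choice of idempotents $E_u$ yielding the representation family $\si$ of Equation \eqref{eq:sigmaspecial} and the choice of isomorphisms $b_{g,h}$ in Equation \eqref{eq:bexpdef}) are applicable to all six triples simultaneously; this holds because those choices were made uniformly for all invertible monomials with non-negative $z_1,z_2$-exponents, which covers every element appearing in the cycle \eqref{eq:classgrouphom}.
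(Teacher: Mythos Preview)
Your proposal is correct and follows essentially the same approach as the paper: apply Proposition \ref{p:compmono} to each of the six triples, observe that only $(\la,z_1,z_2)$ produces a non-trivial factor $\mu^{m_1 l_2}=\la$, and discard the sign exponents $\ep(g,h,k)$ upon passing to $\cc^*/\{\pm 1\}$. The paper's proof is simply a terser version of the same computation.
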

\begin{proof}
The result of Proposition \ref{p:compmono} implies that $c(\la, z_1,z_2) = \la$ whereas
\[
c(\la,z_2,z_1) = c(z_1,\la,z_2) = c(z_1,z_2,\la) = c(z_2, \la,z_1) = c(z_2, z_1,\la) = 1 \, . \qedhere
\]
\end{proof}

We immediately obtain the following:

\begin{corollary}\label{c:gennontriv}
Let $V$ be a finite dimensional Hilbert space, and let $\Gamma$ be a Lie group with a smooth linear action on $V$ such that scalar multiplication $\cc^* \to \sL(V)$ factors through a homomorphism $\cc^* \to \Gamma$. Then the class $[c] \in H^3\big(C^\infty(\B T^2,\Gamma),\cc^*/\{\pm 1 \}\big)$ associated to the representation of $\Ga$ on $V$ as in Theorem \ref{t:bipolbiloop} is non-trivial. 
\end{corollary}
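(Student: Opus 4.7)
The strategy is to pull the class $[c]$ back to the subgroup $C^\infty(\B T^2)^* \su C^\infty(\B T^2,\Ga)$ and apply Corollary \ref{c:nontriv}. Since scalar multiplication $\cc^* \to \sL(V)$ factors through a Lie group homomorphism $\phi \colon \cc^* \to \Ga$, postcomposition gives a group homomorphism $\iota \colon C^\infty(\B T^2)^* \to C^\infty(\B T^2,\Ga)$ defined by $\iota(f) := \phi \ci f$. By the factorisation assumption, $\rho(\phi(\la)) = \la \cd \T{id}_V$ for every $\la \in \cc^*$, and therefore $\al_\rho(\iota(f)) = m(f) \ot \T{id}_V$ for every $f \in C^\infty(\B T^2)^*$, where $m \colon C^\infty(\B T^2) \to \sL(L^2(\B T^2))$ is the multiplication representation.

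Next, I would identify the pullback $\iota^*[c] \in H^3\big(C^\infty(\B T^2)^*, \cc^*/\{\pm 1\}\big)$ with the class associated (as in Theorem \ref{t:grpcocyc}) to the bipolarisation $(P \ot \T{id}_V, Q \ot \T{id}_V)$ of the representation $f \mapsto m(f) \ot \T{id}_V$. Fixing an orthonormal basis $\{e_1,\ldots,e_n\}$ of $V$ with $n := \dim V$ yields a unitary isomorphism $L^2(\B T^2) \ot V \cong \bop_{j=1}^n L^2(\B T^2)$ under which the scalar action becomes diagonal and the bipolarisation becomes $\big(\bop_{j=1}^n P, \bop_{j=1}^n Q\big)$. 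Under this identification, the admissible families of Definition \ref{d:admrep}, and in particular the concrete family $\si$ used in Section \ref{s:nontriv}, can be chosen as the $n$-fold direct sums of their scalar counterparts, so that every Fredholm operator $F((h,\si),(g,\si))(q_u,q_v)$ entering the construction of the cocycle splits as an $n$-fold direct sum of the corresponding scalar Fredholm operator.

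I would then redo the explicit pairing computation of Section \ref{s:nontriv} with this direct-sum replacement. Every kernel and cokernel becomes an $n$-fold direct sum of the scalar kernel/cokernel, and its top exterior power is (up to an irrelevant sign) an $n$-fold wedge of copies of the scalar vectors $\om_{n_1,m_1,t_2,s_2}$ defined in \eqref{eq:omegavec}. The key point is that the concrete trivialisation $\mu^{m_1 l_2}$ appearing in Proposition \ref{p:compmono} comes from the action of $g = \mu z_1^{n_1} z_2^{n_2}$ on the morphism $b_{h,k}$ through its total degree; replacing $b_{h,k}$ by an $n$-fold wedge multiplies that degree by $n$, whereas all torsion and perturbation isomorphisms, being built from Fredholm determinants and kernel/cokernel identifications, behave multiplicatively under direct sums by $\det(T_1 \op \cdots \op T_n) = \prod_j \det(T_j)$. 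Accordingly, Proposition \ref{p:compmono} goes through verbatim with $\mu^{m_1 l_2}$ replaced by $\mu^{n \cd m_1 l_2}$, which combined with the argument of Corollary \ref{c:nontriv} yields
\[
\binn{\iota^*[c],\{z_1,z_2,\la\}} = [\la^n] \in \cc^*/\{\pm 1\}.
\]
Choosing $\la \in \cc^*$ with $\la^n \notin \{\pm 1\}$ (which exists for every $n \geq 1$) forces $\iota^*[c]$ to be non-trivial, and hence so is $[c]$.

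The main obstacle in executing this plan is the bookkeeping in the third step: one must verify that the composition of torsion, perturbation and stabilisation isomorphisms behaves functorially under the direct-sum decomposition, so that the degree-by-$n$ rescaling of each wedge vector propagates through the entire construction of the group $3$-cocycle without introducing extra scalars beyond signs. This reduces, at each stage, to the multiplicativity of Fredholm determinants under direct sums together with the compatibility of the determinant functor with $\zz/2\zz$-graded direct sums established in Section \ref{s:tors}–\ref{s:pert}, and so is essentially a careful but routine diagram chase.
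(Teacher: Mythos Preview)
Your proposal is correct and matches the paper's approach: the paper offers no written proof beyond the phrase ``We immediately obtain the following'', so the intended argument is exactly the pullback along $\iota \colon C^\infty(\B T^2)^* \to C^\infty(\B T^2,\Gamma)$ that you describe. Your direct-sum bookkeeping, yielding the pairing $\langle \iota^*[c], \{z_1,z_2,\la\}\rangle = [\la^n]$, is precisely the detail the paper leaves implicit.
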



\section{Proofs of properties of the composition in $\G L_p$ and its dual $\G L_p^\da$}\label{s:proofcat}
We return to the setup described in Section \ref{s:category}. We will thus work under the conditions in Assumption \ref{a:rep} and $p_0 \in R$ will be a fixed idempotent. Moreover, we will fix an idempotent $p \in R$ such that $p-p_0\in I$.

We are going to prove that $\G L_p$ is indeed a category, thus that the composition is associative and satisfies both left and right unitality conditions. We shall moreover show that the composition $\G L_p^\da$ can be obtained from the composition in $\G L_p$ using the duality isomorphisms $\varphi \colon (\cc,0) \to \G L_p^\da(\la,\mu) \ot \G L_p(\la,\mu)$ and $\psi \colon \G L_p(\la,\mu) \ot \G L_p^\da(\la,\mu) \to (\cc,0)$, for $\la,\mu \in \La$. This result ensures that $\G L_p^\da$ is a category as well.  

Since both of the idempotents $p$ and $p_0$ are fixed in this section we apply the following notation, see Notation \ref{n:invfred} for further information:

\begin{notation}
For any pair of indices $\la,\mu$ in the non-empty index set $\La$ we define
\[
\begin{split}
& F(\la,\mu) := F(\la,\mu)(p,p_0) \colon \pi_\la(p) \C H \op \pi_\mu(p_0) \C H \to \pi_\la(p_0) \C H \op \pi_\mu(p) \C H \q \mbox{and} \\
& F_\da(\la,\mu) := F(\la,\mu)(p_0,p) \colon \pi_\la(p_0) \C H \op \pi_\mu(p) \C H \to \pi_\la(p) \C H \op \pi_\mu(p_0) \C H \, .
\end{split}
\]
\end{notation}

\subsection{Associativity}\label{ss:proofcat}
Throughout this subsection, we fix a quadruple $(\la,\mu,\nu,\tau)$ of indices in $\La$. This quadruple will therefore often be suppressed. We introduce a couple of abbreviations (which can be compared with the notation applied in Section \ref{s:category}):

For $1 \leq i < j \leq 4$ we use the notation $F^{ij}$, $F^{ij}_\da$, meaning that the corresponding Fredholm operator relates to the representations sitting in position $i$ and $j$ in the direct sum $\pi_\la \op \pi_\mu \op \pi_\nu \op \pi_\tau$ and that the base point $p_0 \in \T{Idem}(R)$ is used to stabilise. For example, we have that
\[
\begin{split}
F^{12}_\da & = F_\da(\la,\mu) \op \pi_\nu(p_0) \op \pi_\tau(p_0) \\
& \q \colon \pi_\la(p_0) \C H \op \pi_\mu(p) \C H \op \pi_\nu(p_0) \C H \op \pi_\tau(p_0) \C H \\
& \qq \to \pi_\la(p) \C H \op \pi_\mu(p_0) \C H \op \pi_\nu(p_0) \C H \op \pi_\tau(p_0) \C H \, .
\end{split}
\]
For an extra idempotent $q \in R$ we also apply the notation $\Om^{ij}(q) \in \sL\big( \pi_\la(q) \C H \op \pi_\mu(q) \C H \op \pi_\nu(q) \C H \op \pi_\tau(q) \C H  \big)$, which means that the $\Om(q)$-operator from Notation \ref{n:bigfred} relates to the representations in position $i$ and $j$ and that we have stabilised with the idempotent $q$. 

For $1 \leq i < j < k \leq 4$, we also define
\[
F^{ijk} := F_\da^{ik} F^{jk} F^{ij} \q \mbox{and} \q \Om^{ijk}(q) := \Om^{ik}(q) \Om^{jk}(q) \Om^{ij}(q) \, .
\]

We consider the following idempotent operators on $\C H^{\op 4}$:
\[
\begin{split}
& e_0 := \pi_\la(1 - p_0) \op \pi_\mu(1 - p_0) \op \pi_\nu(1 - p_0) \op \pi_\tau(1 - p_0) \\
& e := \pi_\la(1 - p) \op \pi_\mu(1 - p_0) \op \pi_\nu(1 - p_0) \op \pi_\tau(1 - p_0) \q \T{and} \\ 
& f := \pi_\la(1 - p_0) \op \pi_\mu(1 - p) \op \pi_\nu(1 - p_0) \op \pi_\tau(1 - p_0) \, .
\end{split}
\]
Using the analogue of Lemma \ref{l:pertcomp} for $(4\times 4)$-matrices we obtain the following isomorphism:
\begin{align*}
    \mu_p\colon & | F(\la,\mu) | \ot | F(\mu,\nu) | \ot |F(\nu,\tau)| \ot | F_\da(\la,\tau)| \to^{\G T \circ \G S}
\big| F_\da^{14} F^{34} F^{23} F^{12} \big| \\
& \q \to^{\G S} \big| F_\da^{14} F^{34} F^{23} F^{12} + e \big|
   \to^{\G P} \big|\Om^{14}(p_0) \Om^{34}(p_0) \Om^{23}(p_0) \Om^{12}(p_0) + e_0 \big| = (\Cb,0) \, .
\end{align*}
This isomorphism allows us to define a ternary version of the composition in $\G L_p$.

\begin{definition}\label{def:tern}
The isomorphism of $\zz$-graded complex lines
    \[
        \mathfrak{M}_p \colon \mathfrak{L}_p(\la,\mu) \otimes \mathfrak{L}_p(\mu,\nu)\ot \mathfrak{L}_p(\nu,\tau)\to \mathfrak{L}_p(\la,\tau)
    \]
    is defined as the composition
    \begin{align*}
| F(\la,\mu) | \ot | F(\mu,\nu) | \ot |F(\nu,\tau)|
 &\to^{\T{id}^{\ot 3}\ot\varphi}
| F(\la,\mu) | \ot | F(\mu,\nu) | \ot |F(\nu,\tau)| \ot |F_\da(\la,\tau)| \ot |F(\la,\tau)| \\
        &\to^{\mu_p \ot \T{id}} |F(\la,\tau)| \, .
    \end{align*}
\end{definition}

We are going to use the ternary multiplication operator to prove the associativity of the composition in $\G L_p$. More precisely, we are going to prove Theorem \ref{t:associativity} by showing that each of the two triangles in the following diagram commutes:
\begin{equation}\label{eq:fullasso}
\xymatrix{
&\mathfrak{L}_{p}(\la,\mu)\otimes \mathfrak{L}_{p}(\mu,\nu)\otimes \mathfrak{L}_{p}(\nu,\tau)\ar[dl]_{\mathfrak{M}_{p}\otimes \T{id}}\ar[dr]^{\T{id}\otimes \mathfrak{M}_{p}} \ar[dd]^{\mathfrak{M}_{p}} &\\
\mathfrak{L}_{p}(\la,\nu)\otimes \mathfrak{L}_{p}(\nu,\tau)\ar[dr]_{\mathfrak{M}_{p}}&&\mathfrak{L}_{p}(\la,\mu)\otimes \mathfrak{L}_{p}(\mu,\tau)\ar[dl]^{\mathfrak{M}_{p}}\\
&\mathfrak{L}_{p}(\la,\tau)&
}
\end{equation}
It turns out that the commutativity of the left hand side and the right hand side can not be established by similar methods even though one could believe this to be the case at a first glance. Establishing the commutativity of the diagram on the right is more involved so we start with the diagram on the left:

\begin{prop}\label{p:LHSasso}
The following diagram of isomorphisms of $\zz$-graded complex lines is commutative:
\[
\xymatrix{
\mathfrak{L}_{p}(\la,\mu)\otimes \mathfrak{L}_{p}(\mu,\nu)\otimes \mathfrak{L}_{p}(\nu,\tau)\ar[d]_{\mathfrak{M}_{p}\otimes \T{id}} \ar[dr]^{\mathfrak{M}_{p}} \\
\mathfrak{L}_{p}(\la,\nu)\otimes \mathfrak{L}_{p}(\nu,\tau)\ar[r]_{\mathfrak{M}_{p}}& \mathfrak{L}_{p}(\la,\tau)
}
\]
\end{prop}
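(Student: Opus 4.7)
The plan is to unfold both arrows in the triangle using the definitions of $\G M_p$ (Definition \ref{def:comp}) and the ternary $\G M_p$ (Definition \ref{def:tern}), and then reduce the resulting equality of isomorphisms to a purely formal manipulation governed by the three commutation principles at our disposal: torsion is associative (Proposition \ref{p:assotors}), torsion commutes with perturbation (Theorem \ref{t:percom}), and torsion and perturbation both commute with stabilisation (Proposition \ref{p:torsta} and Proposition \ref{p:persta}).

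First I would rewrite $\mathfrak{M}_p\circ(\mathfrak{M}_p\ot\T{id})$ explicitly. It is the composite obtained by first inserting $\varphi(\la,\nu)\colon(\cc,0)\to\G L_p^\da(\la,\nu)\ot\G L_p(\la,\nu)$ in positions three and four and trivialising $|F(\la,\mu)|\ot|F(\mu,\nu)|\ot|F_\da(\la,\nu)|$ via the ternary trivialisation $\mu_p$ of Equation \eqref{eq:comp} (on the indices $\la,\mu,\nu$, in $3\times3$ matrix form), and then inserting $\varphi(\la,\tau)$ and trivialising $|F(\la,\nu)|\ot|F(\nu,\tau)|\ot|F_\da(\la,\tau)|$ via the ternary trivialisation on the indices $\la,\nu,\tau$. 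By Lemma \ref{l:inverse}, the intermediate pair $\varphi(\la,\nu)$ that is inserted and then immediately paired with the freshly produced $\G L_p(\la,\nu)$-factor may be cancelled. As a consequence the whole composite is equivalent to a single map
\[
|F(\la,\mu)|\ot|F(\mu,\nu)|\ot|F(\nu,\tau)|\ot|F_\da(\la,\tau)|\to(\cc,0)
\]
built out of two consecutive applications of the trivialisation $\mu_p$ of Equation \eqref{eq:comp} glued along the factor $|F(\la,\nu)|$ respectively $|F_\da(\la,\nu)|$.

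Second I would recognise this composite as the $4$-fold trivialisation $\mu_p$ appearing in Definition \ref{def:tern}. This is carried out in three steps, each of which is a bookkeeping application of the commutation principles stated above. Step (a): use Proposition \ref{p:torsta} to stabilise all the Fredholm operators simultaneously into $4\times 4$-block form $F^{12}$, $F^{23}$, $F^{34}$, $F^{14}_\da$, so that the two intermediate torsion maps become torsion maps in the same $4\times 4$ ambient setting. Step (b): use the associativity of torsion (Proposition \ref{p:assotors}) to amalgamate the first torsion isomorphism, which produces $|F^{13}_\da F^{23}F^{12}|$, and the subsequent torsion isomorphism used in the second application of $\mu_p$, into a single torsion isomorphism landing in $|F^{14}_\da F^{34}F^{23}F^{12}|$. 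Step (c): use Theorem \ref{t:percom} to commute the perturbation isomorphism $|F^{13}_\da F^{23}F^{12}+e|\to|\Om^{13}(p_0)\Om^{23}(p_0)\Om^{12}(p_0)+e_0|$ coming from the first application of $\mu_p$ through the stabilisation and torsion maps belonging to the second application, and combine it with the remaining perturbation isomorphism into the single perturbation isomorphism trivialising $|\Om^{14}(p_0)\Om^{34}(p_0)\Om^{23}(p_0)\Om^{12}(p_0)+e_0|$ as in Definition \ref{def:tern}.

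The main obstacle will be step (c): the intermediate trivialisation to $(\cc,0)$ produced by the first application of $\mu_p$ must be "undone" and reabsorbed into the $4\times 4$ perturbation, so one has to verify that $\Om^{13}(p_0)$ fits correctly into the product $\Om^{14}(p_0)\Om^{34}(p_0)\Om^{23}(p_0)\Om^{12}(p_0)$ once Theorem \ref{t:percom} is used to move the first perturbation past the torsions belonging to the $(\la,\nu,\tau)$-factor. This bookkeeping is the only non-mechanical aspect of the proof; everything else is a routine assembly using the three commutation principles. Once this identification is verified, the resulting composite map equals $\mu_p$ of Definition \ref{def:tern}, and the proposition follows by comparing it with the ternary $\G M_p$ after the insertion of $\varphi(\la,\tau)$.
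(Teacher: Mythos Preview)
Your proposal is correct and follows essentially the same route as the paper: reduce to comparing two trivialisations of $|F(\la,\mu)|\ot|F(\mu,\nu)|\ot|F_\da(\la,\nu)|\ot|F(\la,\nu)|\ot|F(\nu,\tau)|\ot|F_\da(\la,\tau)|$, then use associativity of torsion together with the fact that torsion, perturbation and stabilisation all commute. The paper organises your step~(c) by passing through the single six-fold torsion target $|F^{14}_\da F^{34}F^{13}F^{13}_\da F^{23}F^{12}|$ and then perturbing the inner block $F^{13}F^{13}_\da$ to the identity (this is exactly the $\varphi^{-1}$ you cancel); once you write it this way, the identity $(\Om^{13}(p_0))^2=\T{id}$ from Lemma~\ref{l:reduction} makes the $\Om^{13}$-bookkeeping you are worried about disappear automatically, and the transitivity of perturbation (Theorem~\ref{t:pervec}) merges the two perturbation steps into one.
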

\begin{proof}
From the definition of the binary and ternary multiplications, the commutativity of our diagram is equivalent to proving that the trivialisation
\begin{equation}\label{eq:assotrivI}
\begin{split}
& |F(\la,\mu)|\ot |F(\mu,\nu)|\ot |F_\da(\la,\nu)| \ot |F(\la,\nu)| \ot |F(\nu,\tau)| \ot | F_\da(\la,\tau) | \\
& \q \to^{\T{id}^{\ot 2} \ot \varphi^{-1} \ot \T{id}^{\ot 2}} 
|F(\la,\mu)|\ot |F(\mu,\nu)|\ot |F(\nu,\tau)| \ot | F_\da(\la,\tau) |
\to^{\G T \ci \G{S}}
        \big| F_\da^{14} F^{34} F^{23} F^{12}\big| \\
& \q \to^{\G S} \big| F_\da^{14} F^{34}  F^{23}  F^{12} + e \big|
        \to^{\mathfrak{P}}\big|\Om^{14}(p_0) \Om^{34}(p_0) \Om^{23}(p_0) \Om^{12}(p_0) + e_0 \big|= (\Cb,0)
\end{split}
\end{equation}
agrees with the trivialisation
\begin{equation}\label{assoproofII}
\begin{split}
& |F(\la,\mu)|\ot |F(\mu,\nu)|\ot |F_\da(\la,\nu)| \ot |F(\la,\nu)| \ot |F(\nu,\tau)| \ot | F_\da(\la,\tau) | \\
& \q \to^{(\G T \ci \G S ) \ot (\G T \ci \G S) }
|F^{13}_\da  F^{23} F^{12} \big|\ot
\big|F_\da^{14} F^{34} F^{13} \big| 
\to^{\G S \ot \G S} |F^{13}_\da F^{23} F^{12} + e \big|\ot
\big|F_\da^{14} F^{34}  F^{13} + e \big| \\
        & \q \to^{\G{P} \ot \G P }
\big|\Om^{13}(p_0) \Om^{23}(p_0) \Om^{12}(p_0) + e_0 \big|
\ot \big|\Om^{14}(p_0) \Om^{34}(p_0) \Om^{13}(p_0) + e_0 \big|
\cong (\cc ,0) \, .
\end{split}
\end{equation}

    Using that torsion commutes with perturbation and stabilisation (Theorem \ref{t:percom} and Proposition \ref{p:torsta}), the trivialisation in Equation \eqref{assoproofII} becomes equal to
\begin{equation}\label{eq:assoproofIII}
\begin{split}
& |F(\la,\mu)|\ot |F(\mu,\nu)|\ot |F_\da(\la,\nu)|
\ot |F(\la,\nu)| \ot |F(\nu,\tau)| \ot | F_\da(\la,\tau) | \\
& \q \to^{\G T \ci \G S}
\big|F^{14}_\da  F^{34}  F^{13}  F^{13}_\da F^{23}  F^{12}\big| \to^{\G S} 
\big|F^{14}_\da  F^{34}  F^{13}  F^{13}_\da  F^{23}  F^{12} + e \big| \\
& \q \to^{\G P}
\big|\Om^{14}(p_0) \Om^{34}(p_0) \Om^{23}(p_0) \Om^{12}(p_0) + e_0 \big|= (\Cb,0) \, .
\end{split}
\end{equation}

Using the associativity of perturbation (Theorem \ref{t:pervec}) and that perturbation commutes with stabilisation (Proposition \ref{p:persta}) the trivialisation in Equation \eqref{eq:assoproofIII} can be rewritten as
\begin{equation}\label{eq:assotrivII}
\begin{split}
& |F(\la,\mu)|\ot |F(\mu,\nu)|\ot |F_\da(\la,\nu)|
\ot |F(\la,\nu)| \ot |F(\nu,\tau)| \ot | F_\da(\la,\tau) | \\
& \q \to^{\G T \ci \G S}
\big|F^{14}_\da  F^{34}  F^{13} F^{13}_\da  F^{23}  F^{12} \big|
\to^{\G P} \big|F^{14}_\da  F^{34}  F^{23}  F^{12} \big| \\
& \q \to^{\G S}
\big|F^{14}_\da  F^{34}  F^{23}  F^{12} + e \big|
\to^{\G P}
\big|\Om^{14}(p_0) \Om^{34}(p_0) \Om^{23}(p_0) \Om^{12}(p_0) + e_0 \big|= (\Cb,0) \, .
\end{split}
\end{equation}

Since the last half of the trivialisation in Equation \eqref{eq:assotrivI} agrees with the last half of the trivialisation in Equation \eqref{eq:assotrivII} we focus on proving that the following two isomorphisms agree:
\begin{equation}\label{eq:assotwo}
\begin{split}
& |F(\la,\mu)|\ot |F(\mu,\nu)|\ot |F_\da(\la,\nu)|
\ot |F(\la,\nu)| \ot |F(\nu,\tau)| \ot | F_\da(\la,\tau) | \\
& \q \to^{\T{id}^{\ot 2} \ot \varphi^{-1} \ot \T{id}^{\ot 2} }
|F(\la,\mu)|\ot |F(\mu,\nu)|\ot |F(\nu,\tau)| \ot | F_\da(\la,\tau) |
\to^{\G T \ci \G S}
\big|F^{14}_\da  F^{34}  F^{23}  F^{12} \big| \q \T{and} \\
& \\
& |F(\la,\mu)|\ot |F(\mu,\nu)|\ot |F_\da(\la,\nu)|
\ot |F(\la,\nu)| \ot |F(\nu,\tau)| \ot | F_\da(\la,\tau) | \\
& \q \to^{\G T \ci \G S}
\big|F^{14}_\da  F^{34}  F^{13} F^{13}_\da  F^{23}  F^{12} \big|
\to^{\G P} \big|F^{14}_\da  F^{34}  F^{23}  F^{12} \big|  \, .
\end{split}
\end{equation}

Commuting perturbation past torsion (Theorem \ref{t:percom}) and applying the associativity of torsion (Proposition \ref{p:assotors}) yield that the second of the isomorphisms in Equation \eqref{eq:assotwo} is equal to the composition
\[
\begin{split}
& |F(\la,\mu)|\ot |F(\mu,\nu)|\ot |F_\da(\la,\nu)|
\ot |F(\la,\nu)| \ot |F(\nu,\tau)| \ot | F_\da(\la,\tau) | \\
& \q \to^{\G S \ot ( \G T \ci \G S ) \ot \G S }
|F^{12} |\ot |F^{23}|\ot |F^{13} F_\da^{13}| \ot |F^{34}| \ot | F_\da^{14} | \\
& \q \to^{\T{id}^{\ot 2} \ot \G P \ot \T{id}^{\ot 2}}
|F^{12} |\ot |F^{23}|\ot |F^{34}| \ot | F_\da^{14}|
\to^{\G T} \big|F^{14}_\da  F^{34}  F^{23}  F^{12} \big| \, .
\end{split}
\] 
But this isomorphism agrees with the first isomorphism in Equation \eqref{eq:assotwo} and we have proved the proposition.
\end{proof}


\subsubsection{Commutativity of the right hand side of Equation \eqref{eq:fullasso}}
We start with a small lemma whose proof is almost identical to the proof of Lemma \ref{l:pertcomp} (and it will therefore not be repeated here). The notation which we apply was introduced in the beginning of Subsection \ref{ss:proofcat}.

\begin{lemma}\label{l:pertcomp4}
The two bounded operators 
\[
F^{12}_\da  F^{234}  F^{12} + e \q \mbox{and} \q
\Om^{12}(p_0)  \Om^{234}(p_0)  \Om^{12}(p_0) + e_0 
\]
agree up to an element in the trace ideal $\sL^1\big(\pi_\la(1)\C H \op \pi_\mu(1) \C H \op \pi_\nu(1) \C H \op \pi_\tau(1) \C H \big)$.
\end{lemma}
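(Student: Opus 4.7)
The plan is to follow the structure of the proof of Lemma \ref{l:pertcomp} verbatim, with the three-fold product $\Om^{13}(1)\Om^{23}(1)\Om^{12}(1)$ replaced by the five-fold product
\[
\Om^{12}(1)\Om^{24}(1)\Om^{34}(1)\Om^{23}(1)\Om^{12}(1) \, .
\]
Expanding $F^{234} = F^{24}_\da F^{34} F^{23}$ and $\Om^{234}(p_0) = \Om^{24}(p_0)\Om^{34}(p_0)\Om^{23}(p_0)$, the claim becomes
\[
F^{12}_\da F^{24}_\da F^{34} F^{23} F^{12} + e \sim_1 \Om^{12}(p_0)\Om^{24}(p_0)\Om^{34}(p_0)\Om^{23}(p_0)\Om^{12}(p_0) + e_0 \, ,
\]
where $\sim_1$ denotes equality modulo $\sL^1\big(\pi_\la(1)\C H \op \pi_\mu(1)\C H \op \pi_\nu(1)\C H \op \pi_\tau(1)\C H\big)$.

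First I would apply the four-summand analogue of Lemma \ref{l:commut} iteratively to each factor on the left, rewriting
\[
F^{12}_\da F^{24}_\da F^{34} F^{23} F^{12} \sim_1 (\pi_\la(p) \op \pi_\mu(p_0) \op \pi_\nu(p_0) \op \pi_\tau(p_0)) \, \Om^{12}(1)\Om^{24}(1)\Om^{34}(1)\Om^{23}(1)\Om^{12}(1).
\]
Here the various intermediate projections produced by Lemma \ref{l:commut} are collapsed by the idempotent identity $\Om^{ij}(1)^2 = \pi_\bullet(1) \op \pi_\bullet(1)$ of Lemma \ref{l:reduction}, so that only the target projection of the entire composition remains on the left.

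The main obstacle is the four-index analogue of Lemma \ref{l:1}: for every $i \in I$,
\[
(\pi_\la(i) \op 0 \op 0 \op 0) \big[\Om^{12}(1)\Om^{24}(1)\Om^{34}(1)\Om^{23}(1)\Om^{12}(1) - (\pi_\la(1) \op \pi_\mu(1) \op \pi_\nu(1) \op \pi_\tau(1))\big] \in \sL^1 \, .
\]
To establish this, I would compute step by step as in the proof of Lemma \ref{l:1}: at each stage one uses Assumption \ref{a:rep}(1) to shuttle the element $i$ across adjacent pairs of representations, and Assumption \ref{a:rep}(2) to absorb ``wrong'' interior factors $\pi_\bullet(1)$. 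After the five successive multiplications the element $i$ will have travelled through the chain of positions $\la \to \mu \to \tau \to \nu \to \mu \to \la$ and returned to the first summand, yielding an off-diagonal factor of the shape $\pi_\la(1)\pi_\mu(1)\pi_\tau(1)\pi_\nu(1)\pi_\mu(1)\pi_\la(i)$ which, by repeated application of Assumption \ref{a:rep}(2), reduces modulo $\sL^1$ to $\pi_\la(i)$.

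Applying this identity with $i := p - p_0 \in I$ to the left-hand factor $(\pi_\la(p) \op \pi_\mu(p_0) \op \pi_\nu(p_0) \op \pi_\tau(p_0))$ from the first step, the $\pi_\la(p)$ can be replaced by $\pi_\la(p_0)$ modulo $\sL^1$, at the cost of a trace class correction that exactly cancels the defect $e - e_0 = \pi_\la(p_0 - p) \op 0 \op 0 \op 0$. The remaining factor then equals
\[
(\pi_\la(p_0) \op \pi_\mu(p_0) \op \pi_\nu(p_0) \op \pi_\tau(p_0)) \, \Om^{12}(1)\Om^{24}(1)\Om^{34}(1)\Om^{23}(1)\Om^{12}(1),
\]
and five applications of Lemma \ref{l:reduction} convert the $\Om^{ij}(1)$'s to $\Om^{ij}(p_0)$'s modulo $\sL^1$, producing $\Om^{12}(p_0)\Om^{24}(p_0)\Om^{34}(p_0)\Om^{23}(p_0)\Om^{12}(p_0) + e_0$ as desired.
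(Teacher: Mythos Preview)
Your proposal is correct and follows exactly the approach the paper indicates: the proof is declared to be ``almost identical to the proof of Lemma~\ref{l:pertcomp}'' and is therefore omitted. You have correctly identified the one new ingredient---the four-index analogue of Lemma~\ref{l:1}, with the element of $I$ travelling along the chain $\la\to\mu\to\tau\to\nu\to\mu\to\la$---and filled in the details appropriately; one small imprecision is that the collapsing of the intermediate projections in your first step is really achieved by iterated use of Lemma~\ref{l:commut} (each target projection equals the next source projection and can be commuted all the way to the left), rather than by the squaring identity $\Om^{ij}(1)^2=\T{id}$ from Lemma~\ref{l:reduction}.
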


The next proposition is in a way the analogue of Proposition \ref{p:LHSasso} and it shows why proving the commutativity of the right hand side of Equation \eqref{eq:fullasso} is more complicated than proving the commutativity of the left hand side. We notice that Lemma \ref{l:pertcomp} and Lemma \ref{l:pertcomp4} imply that the perturbation isomorphisms appearing in the statement make sense.

\begin{prop}\label{p:RHSassoI}
The right hand side of the diagram in Equation \eqref{eq:fullasso} commutes if and only if the following two trivialisations coincide:
\begin{equation}\label{eq:assoR}
\begin{split}
& \big| F^{12} \big| \otimes \big| F^{234} \big| \ot \big| F^{12}_\da \big| 
\to^{\G T} \big| F^{12}_\da  F^{234}  F^{12} \big| \to^{\G S} 
\big| F^{12}_\da  F^{234}  F^{12} + e \big| \\ 
& \q \to^{\G P} 
\big| \Om^{12}(p_0)  \Om^{234}(p_0)  \Om^{12}(p_0) + e_0 \big| = (\cc,0) \q \mbox{and} \\
& \big| F^{12} \big| \otimes \big| F^{234} \big| \ot \big| F^{12}_\da \big| 
\to^{\T{id} \ot \G S \ot \T{id}} 
\big| F^{12} \big| \otimes \big| F^{234} + f \big| \ot \big| F^{12}_\da \big|
\to^{\T{id} \ot \G P \ot \T{id}}
\big| F^{12} \big| \otimes \big| \Om^{234}(p_0) + e_0 \big| \ot \big| F^{12}_\da \big| \\
& \q \to^{=} \big| F^{12} \big| \ot \big| F^{12}_\da \big| 
\to^{\psi \ci \G S^{-1}} (\cc,0) \, .
\end{split}
\end{equation}
\end{prop}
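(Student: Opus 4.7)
The plan is to show that commutativity of the right-hand triangle in \eqref{eq:fullasso} is equivalent to the equality of two trivialisations on a common $\zz$-graded line, and then to identify these trivialisations with the two displayed in \eqref{eq:assoR}.

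First, by Lemma \ref{l:inverse} the duality $\psi \colon \G L_p(\la,\tau) \ot \G L_p^\da(\la,\tau) \to (\cc,0)$ is injective in the sense that two morphisms $f,g \colon A \to \G L_p(\la,\tau)$ agree if and only if $\psi \ci (f \ot \T{id})$ and $\psi \ci (g \ot \T{id})$ agree on $A \ot \G L_p^\da(\la,\tau)$. Applied with $A := \G L_p(\la,\mu) \ot \G L_p(\mu,\nu) \ot \G L_p(\nu,\tau)$, this reduces the commutativity of the right-hand triangle of \eqref{eq:fullasso} to the equality of two trivialisations of $|F^{12}| \ot |F^{23}| \ot |F^{34}| \ot |F_\da^{14}|$.

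Next, I unfold the ternary side. By Definition \ref{def:tern} the trivialisation coming from $\G M_p \colon \G L_p(\la,\mu) \ot \G L_p(\mu,\nu) \ot \G L_p(\nu,\tau) \to \G L_p(\la,\tau)$ is exactly the composition $\G P \ci \G S \ci \G T \ci \G S$ with the product $F^{14}_\da F^{34} F^{23} F^{12}$ in the middle. Inserting the trivial pair $F^{12} F^{12}_\da$ and using the cocycle property of perturbation (Theorem \ref{t:pervec}) together with Lemma \ref{l:pertcomp4} and the fact that torsion commutes with both perturbation and stabilisation (Theorem \ref{t:percom} and Proposition \ref{p:torsta}), this trivialisation factors through $|F^{12}| \ot |F^{234}| \ot |F^{12}_\da|$ as the first trivialisation of \eqref{eq:assoR}. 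The iterated binary side is similarly unfolded: the inner $\G M_p$ for $(\mu,\nu,\tau)$ uses the trivialisation $\mu_p$ attached to $F^{234}$, while the outer $\G M_p$ contributes the pairing of $|F^{12}|$ with $|F^{12}_\da|$ via the duality $\psi$. Re-organising the insertions of $\varphi$ (Equation \eqref{eq:dualI}) using Lemma \ref{l:inverse} and the associativity of torsion (Proposition \ref{p:assotors}), this side factors through $|F^{12}| \ot |F^{234}| \ot |F^{12}_\da|$ as the second trivialisation of \eqref{eq:assoR}.

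The main obstacle will be the bookkeeping needed in the second step: one must systematically apply the commutations of torsion, perturbation, and stabilisation (Theorem \ref{t:percom}, Proposition \ref{p:torsta}, Proposition \ref{p:persta}) to regroup the long product $F^{14}_\da F^{34} F^{23} F^{12}$ into the form $F^{12}_\da F^{234} F^{12}$, and to peel off the right-hand $\psi$ pairing from the iterated binary composition. Once carried out in parallel for both sides, the equivalence asserted by the proposition follows because the manipulations produce precisely the two trivialisations written in \eqref{eq:assoR}.
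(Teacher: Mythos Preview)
Your overall strategy matches the paper's: reduce the right-hand triangle, via the duality pairing and Lemma~\ref{l:inverse}, to an equality of two trivialisations, and then identify these with the two compositions in \eqref{eq:assoR}. The iterated-binary side of your sketch is essentially what the paper does.

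There is, however, a real gap in your treatment of the ternary side. The ternary trivialisation $\mu_p$ runs through $\big|F^{14}_\da F^{34} F^{23} F^{12}\big|$ and then $\Om^{14}(p_0)\Om^{34}(p_0)\Om^{23}(p_0)\Om^{12}(p_0)$, whereas the first trivialisation in \eqref{eq:assoR} runs through $\big|F^{12}_\da F^{234} F^{12}\big| = \big|F^{12}_\da F^{24}_\da F^{34} F^{23} F^{12}\big|$ and then $\Om^{12}(p_0)\Om^{234}(p_0)\Om^{12}(p_0)$. The two products differ by $F^{14}_\da$ versus $F^{12}_\da F^{24}_\da$, and these are \emph{not} related by ``inserting the trivial pair $F^{12}F^{12}_\da$'' alone: neither Lemma~\ref{l:pertcomp4} nor the torsion/perturbation commutation rules give you this directly. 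What the paper actually does (Equation~\eqref{eq:middle}) is to insert \emph{two} dual pairs, $\varphi(\mu,\tau)$ and $\varphi(\la,\mu)$, yielding eight tensor factors; three of these, namely $|F(\la,\mu)|\ot|F(\mu,\tau)|\ot|F_\da(\la,\tau)|$, are trivialised by the binary $\mu_p$ for the auxiliary triple $(\la,\mu,\tau)$, and an argument parallel to Proposition~\ref{p:LHSasso} identifies the remaining five factors with the $|F^{12}_\da F^{234} F^{12}|$ trivialisation. The iterated-binary side is then expanded (again via Lemma~\ref{l:inverse}) to the \emph{same} eight-factor line, with the same auxiliary $\mu_p(\la,\mu,\tau)$ split off; comparing the two now reduces to an equality on the five remaining factors, and a final application of the associativity of torsion (Proposition~\ref{p:assotors}) rewrites this as \eqref{eq:assoR}. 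Your sketch needs both the second $\varphi$-insertion and the explicit use of the auxiliary binary $\mu_p$ for $(\la,\mu,\tau)$ to close this step.
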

\begin{proof}
First of all, an argument which is similar to the proof of Proposition \ref{p:LHSasso} shows that the ternary trivialisation
\begin{align*}
    \mu_p\colon & | F(\la,\mu) | \ot | F(\mu,\nu) | \ot |F(\nu,\tau)| \ot | F_\da(\la,\tau)| \to^{\G T \ci \G S}
\big| F_\da^{14}  F^{34} F^{23} F^{12} \big|\\
    & \q \to^{\G S} \big| F_\da^{14}  F^{34} F^{23} F^{12} + e \big| 
\to^{\G P} \big|\Om^{14}(p_0)\cd \Om^{34}(p_0)\cd \Om^{23}(p_0)\cd \Om^{12}(p_0) + e_0 \big| = (\Cb,0)
\end{align*}
agrees with the following trivialisation:
\begin{equation}\label{eq:middle}
\begin{split}
& | F(\la,\mu) | \ot | F(\mu,\nu) | \ot |F(\nu,\tau)| \ot | F_\da(\la,\tau)| \\ 
& \q \to^{(\T{id}^{\ot 4} \ot \varphi \ot \T{id})(\T{id}^{\ot 3} \ot \varphi \ot \T{id})} | F(\la,\mu) | \ot | F(\mu,\nu) | \ot |F(\nu,\tau)| \\
& \qqq \qqq \qq \ot |F_\da(\mu,\tau)| \ot 
|F_\da(\la,\mu)| \ot |F(\la,\mu)| \ot 
|F(\mu,\tau)| \ot | F_\da(\la,\tau)| \\
& \q \to^{(\G T \ci \G S) \ot \mu_p}
\big| F^{12}_\da \cd F^{234} \cd F^{12} \big| \to^{\G S}\big| F^{12}_\da \cd F^{234} \cd F^{12} + e \big| \\
& \q \to^{\G P}
\big| \Om^{12}(p_0) \cd \Om^{234}(p_0) \cd \Om^{12}(p_0) + e_0 \big| = (\Cb, 0) \, .
\end{split}
\end{equation}
On the other hand, the trivialisation corresponding to the composition
\[
\mathfrak{L}_{p}(\la,\mu)\otimes \mathfrak{L}_{p}(\mu,\nu)\otimes \mathfrak{L}_{p}(\nu,\tau) \to^{\T{id}\otimes \mathfrak{M}_{p}} 
\mathfrak{L}_{p}(\la,\mu)\otimes \mathfrak{L}_{p}(\mu,\tau)\to^{\mathfrak{M}_{p}} \mathfrak{L}_{p}(\la,\tau)
\]
appearing in Equation \eqref{eq:fullasso} is given by
\begin{equation}\label{eq:middleII}
\begin{split}
& | F(\la,\mu) | \ot | F(\mu,\nu) | \ot |F(\nu,\tau)| \ot | F_\da(\la,\tau)| \\
& \q \to^{\T{id}^{\ot 3} \ot \varphi \ot \T{id}} | F(\la,\mu) | \ot | F(\mu,\nu) | \ot |F(\nu,\tau)| \ot |F_\da(\mu,\tau)| \ot |F(\mu,\tau)| \ot | F_\da(\la,\tau)| \\
& \q \to^{\T{id} \ot \mu_p \ot \T{id}^{\ot 2}} |F(\la,\mu)| \ot |F(\mu,\tau)| \ot | F_\da(\la,\tau)|
\to^{\mu_p} (\cc,0) \, .
\end{split}
\end{equation}
Now, using Lemma \ref{l:inverse} the trivialisation in Equation \eqref{eq:middleII} agrees with the trivialisation
\begin{align*}
& | F(\la,\mu) | \ot | F(\mu,\nu) | \ot |F(\nu,\tau)| \ot | F_\da(\la,\tau)| \\
& \q \to^{(\T{id}^{\ot 4} \ot \varphi \ot \T{id})(\T{id}^{\ot 3} \ot \varphi \ot \T{id})} | F(\la,\mu) | \ot | F(\mu,\nu) | \ot |F(\nu,\tau)| \\
& \qqq \qqq \qq \ot |F_\da(\mu,\tau)| \ot 
|F_\da(\la,\mu)| \ot |F(\la,\mu)| \ot 
|F(\mu,\tau)| \ot | F_\da(\la,\tau)| \\
& \q \to^{\T{id} \ot \mu_p \ot \T{id} \ot \mu_p} |F(\la,\mu)| \ot |F_\da(\la,\mu)| \to^{\psi} (\cc,0) \, .
\end{align*}
Comparing with Equation \eqref{eq:middle} we see that the right hand side of Equation \eqref{eq:fullasso} commutes if and only if the trivialisation
\begin{align*}
& | F(\la,\mu) | \ot | F(\mu,\nu) | \ot |F(\nu,\tau)| \ot |F_\da(\mu,\tau)| \ot |F_\da(\la,\mu)|
\to^{\G T \ci \G S}
\big| F^{12}_\da \cd F^{234} \cd F^{12} \big| \\ 
& \q \to^{\G S} \big| F^{12}_\da \cd F^{234} \cd F^{12} + e \big| \to^{\G P}
\big| \Om^{12}(p_0) \cd \Om^{234}(p_0) \cd \Om^{12}(p_0) + e_0 \big| = (\Cb, 0)
\end{align*}
agrees with the trivialisation
\[
| F(\la,\mu) | \ot | F(\mu,\nu) | \ot |F(\nu,\tau)| \ot |F_\da(\mu,\tau)| \ot |F_\da(\la,\mu)|
\to^{\T{id} \ot \mu_p \ot \T{id}} |F(\la,\mu)| \ot |F_\da(\la,\mu)| \to^{\psi} (\cc,0) \, .
\]
The proposition is now proved since it follows by an application of the associativity of torsion (Proposition \ref{p:assotors}) and the definition of the binary trivialisation $\mu_p$ that this latter identity of trivialisations is equivalent to the identity of the two trivialisations in Equation \eqref{eq:assoR}.
\end{proof}

We are now going to prove that the two trivialisations appearing in Proposition \ref{p:RHSassoI} do in fact agree and this will be accomplished in a few steps. We start by establishing some extra notation together with a crucial lemma.

Define the Fredholm operator
\[
\begin{split}
G(\la,\mu) & := \big( \pi_\la(1 - p_0) \op \pi_\mu(1 - p) \big) \Om(\la,\mu)(1) \big( \pi_\la(1 - p) \op \pi_\mu(1 - p_0) \big) \\
& \q \colon \pi_\la(1 - p) \C H \op \pi_\mu(1 - p_0) \C H \to \pi_\la(1 - p_0) \C H \op \pi_\mu(1 - p) \C H \, .
\end{split}
\]

\begin{lemma}\label{l:addcompl}
The Fredholm operator
\[
F(\la,\mu) + G(\la,\mu) \colon \pi_\la(1) \C H \op \pi_\mu(1) \C H \to \pi_\la(1) \C H \op \pi_\mu(1) \C H
\]
is a trace class perturbation of the invertible operator $\Om(\la,\mu)(1) \in \sL( \pi_\la(1) \C H \op \pi_\mu(1) \C H)$.
\end{lemma}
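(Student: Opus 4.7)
The plan is to show that both $F(\la,\mu)$ and $G(\la,\mu)$ are, modulo trace class, the right halves of a telescope decomposition of $\Om(\la,\mu)(1)$. All of the technology is already available in Lemma \ref{l:commut}, together with the elementary observation that $p - p_0 \in I$ implies also $(1-p) - (1-p_0) \in I$.

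Concretely, writing $\Om := \Om(\la,\mu)(1)$, I would first apply Lemma \ref{l:commut} with $x = p$, $y = p_0$ (whose difference lies in $I$) to obtain
\[
(\pi_\la(p_0) \op \pi_\mu(p)) \,\Om \,-\, \Om\, (\pi_\la(p) \op \pi_\mu(p_0)) \in \sL^1(\C H \op \C H).
\]
Multiplying on the right by $\pi_\la(p) \op \pi_\mu(p_0)$ (a bounded idempotent, hence trace class is preserved) and using $\pi_\la(p)^2 = \pi_\la(p)$, $\pi_\mu(p_0)^2 = \pi_\mu(p_0)$ then yields $F(\la,\mu) \sim_1 \Om\,(\pi_\la(p) \op \pi_\mu(p_0))$.

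Next I would repeat the same manoeuvre for $G(\la,\mu)$, this time applying Lemma \ref{l:commut} with $x = 1 - p$ and $y = 1 - p_0$ (whose difference again lies in $I$), and multiplying on the right by $\pi_\la(1-p) \op \pi_\mu(1-p_0)$. This gives $G(\la,\mu) \sim_1 \Om\,(\pi_\la(1-p) \op \pi_\mu(1-p_0))$. Adding the two congruences and using that $\pi_\la$ and $\pi_\mu$ are ring homomorphisms, so $\pi_\la(p) + \pi_\la(1-p) = \pi_\la(1)$ and similarly for $\pi_\mu$, produces
\[
F(\la,\mu) + G(\la,\mu) \;\sim_1\; \Om\,(\pi_\la(1) \op \pi_\mu(1)) \;=\; \Om,
\]
where the last equality is the convention from Notation \ref{n:invfred} that $\Om(\la,\mu)(1)$ vanishes off the range of $\pi_\la(1) \op \pi_\mu(1)$.

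Invertibility of $\Om(\la,\mu)(1)$ on $\pi_\la(1)\C H \op \pi_\mu(1)\C H$ is immediate from Lemma \ref{l:reduction} specialised to the idempotent $p = 1$, which gives $\Om(\la,\mu)(1)^2 = \pi_\la(1) \op \pi_\mu(1)$. There is no real obstacle here; the only thing to keep straight is the swap between the left-hand and right-hand idempotents in the definitions of $F$ and $G$, which is precisely why the two invocations of Lemma \ref{l:commut} fit together so cleanly.
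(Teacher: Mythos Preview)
Your argument is correct and is essentially the paper's own proof: both apply Lemma \ref{l:commut} (once with $(x,y)=(p,p_0)$ and once with $(1-p,1-p_0)$) to slide the left idempotent through $\Om(\la,\mu)(1)$, then add and use $\pi_\la(p)+\pi_\la(1-p)=\pi_\la(1)$. The paper presents this in a single displayed chain without isolating the right-multiplication step, but the content is identical.
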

\begin{proof}
This is a consequence of Lemma \ref{l:commut}. Indeed,
\[
\begin{split}
F(\la,\mu) + G(\la,\mu) 
& = (\pi_\la(p_0) \op \pi_\mu(p) ) \Om(\la,\mu)(1) (\pi_\la(p) \op \pi_\mu(p_0)) \\
& \q + (\pi_\la(1 - p_0) \op \pi_\mu(1 - p) ) \Om(\la,\mu)(1) (\pi_\la(1 - p) \op \pi_\mu(1- p_0)) \\
& \sim_1 \Om(\la,\mu)(1) (\pi_\la(p) \op \pi_\mu(p_0) + \pi_\la(1 - p) \op \pi_\mu(1- p_0)) 
= \Om(\la,\mu)(1) \, . \qedhere
\end{split}
\]
\end{proof}

Let us choose $n,m \in \nn \cup \{0\}$ such that
\[
\T{Index}( F(\la,\mu) ) = n - m  \, .
\]
It follows from the above lemma that
\[
\T{Index}( G(\la,\mu) ) = - \T{Index}( F(\la,\mu) ) 
\]
and we may thus find a bounded finite rank operator
\[
L \colon \pi_\la(1 - p) \C H \op \pi_\mu(1 - p_0) \C H \op \B C^n \to \pi_\la(1 - p_0) \C H \op \pi_\mu(1 - p) \C H \op \B C^m
\]
such that the bounded operator
\[
\begin{split}
\Ga := G(\la,\mu) + L & \colon \pi_\la(1 - p) \C H \op \pi_\mu(1 - p_0) \C H \op \B C^n  \\
& \q \to \pi_\la(1 - p_0) \C H \op \pi_\mu(1 - p) \C H \op \B C^m
\end{split}
\]
is invertible. We introduce the Hilbert spaces
\[
\begin{split}
& \C H_1 := \pi_\la(1) \C H \op \pi_\mu(1) \C H \op \B C^n \op \pi_\nu(1) \C H \op \pi_\tau(1) \C H \q \T{and} \\
& \C H_2 := \pi_\la(1) \C H \op \pi_\mu(1) \C H \op \B C^m \op \pi_\nu(1) \C H \op \pi_\tau(1) \C H 
\end{split}
\]
together with the Fredholm operators
\[
\begin{split}
& \wih{F}^{12} := (F(\la,\mu) + \Ga) \op \pi_\nu(1) \op \pi_\tau(1) \colon \C H_1 \to \C H_2 \q \T{and} \\
& \wih{F}^{12}_\da := (F_\da(\la,\mu) + \Ga^{-1}) \op \pi_\nu(1) \op \pi_\tau(1) \colon \C H_2 \to \C H_1 \, .
\end{split}
\]
We record from Definition \ref{def:stabilisation} that we have the stabilisation isomorphisms
\[
\G S : |F^{12}| \to |\wih{F}^{12}| \q \T{and} \q \G S : |F^{12}_\da| \to |\wih{F}^{12}_\da| \, .
\]
We also introduce a couple of idempotent operators:
\[
\begin{split}
\wih{e} & := \pi_\la(1 - p) \op \pi_\mu(1 - p_0) \op 1_n \op \pi_\nu(1 - p_0) \op \pi_\tau(1 - p_0) \colon \C H_1 \to \C H_1 \\
\wih{e}_0 & := \pi_\la(1 - p_0) \op \pi_\mu(1 - p_0) \op 1_n \op \pi_\nu(1 - p_0) \op \pi_\tau(1 - p_0) \colon \C H_1 \to \C H_1 \\
\wih{f} & := \pi_\la(1 - p_0) \op \pi_\mu(1 - p) \op 1_m \op \pi_\nu(1 - p_0) \op \pi_\tau(1 - p_0) \colon \C H_2 \to \C H_2 \\
\wih{f}_0 & := \pi_\la(1 - p_0) \op \pi_\mu(1 - p_0) \op 1_m \op \pi_\nu(1 - p_0) \op \pi_\tau(1 - p_0) \colon \C H_2 \to \C H_2 \, .
\end{split}
\]

A straightforward computation shows that
\[
\wih{F}_\da^{12} ( F^{234} + \wih{f} ) \wih{F}^{12} = F_\da^{12} F^{234} F^{12} + \wih{e} \colon \C H_1 \to \C H_1
\]
and hence we obtain from Lemma \ref{l:pertcomp4} that $\wih{F}_\da^{12} ( F^{234} + \wih{f} ) \wih{F}^{12}$ is a trace class perturbation of
\[
\Om^{12}(p_0) \cd \Om^{234}(p_0) \cd \Om^{12}(p_0) + \wih{e}_0 \colon \C H_1 \to \C H_1  \, .
\]

%

The next lemma is now a consequence of Proposition \ref{p:torsta} and Proposition \ref{p:persta}.

\begin{lemma}\label{l:assoRI}
The following two trivialisations agree:
\[
\begin{split}
& \big| F^{12} \big| \otimes \big| F^{234} \big| \ot \big| F^{12}_\da \big| 
\to^{\G S \ot \G S \ot \G S} \big| \wih{F}^{12} \big| \otimes \big| F^{234} + \wih{f} \big| \ot \big| \wih{F}^{12}_\da \big| \\
& \q \to^{\G T} \big| \wih{F}_\da^{12} \cd ( F^{234} + \wih{f} ) \cd \wih{F}^{12} \big|
\to^{\G P} \big| \Om^{12}(p_0) \Om^{234}(p_0) \Om^{12}(p_0) + \wih{e}_0 \big| = (\cc,0) \q \mbox{and} \\
& \big| F^{12} \big| \otimes \big| F^{234} \big| \ot \big| F^{12}_\da \big| 
\to^{\G T} \big| F^{12}_\da F^{234} F^{12} \big| \\
& \q \to^{\G S} \big| F^{12}_\da  F^{234} F^{12} + e \big|
\to^{\G P} \big| \Om^{12}(p_0) \Om^{234}(p_0)  \Om^{12}(p_0) + e_0 \big| = (\cc,0) \, .
\end{split}
\]
\end{lemma}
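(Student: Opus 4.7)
The plan is to morph Trivialisation 2 into Trivialisation 1 using the two propositions cited. First I would apply that torsion commutes with stabilisation (Proposition~\ref{p:torsta}), and then that perturbation commutes with stabilisation (Proposition~\ref{p:persta}).

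For the first step, one records that $\wih F^{12}$, $F^{234} + \wih f$, and $\wih F^{12}_\da$ are stabilisations of $F^{12}$, $F^{234}$, and $F^{12}_\da$, respectively, by invertible operators on the appropriate complementary subspaces (namely $\Ga \op \pi_\nu(1 - p_0) \op \pi_\tau(1 - p_0)$ for $\wih F^{12}$, its inverse for $\wih F^{12}_\da$, and the invertible part of $\wih f$ for $F^{234}$). Iterating Proposition~\ref{p:torsta} twice --- once for the pair $(F^{12}, F^{234})$ and once for the pair $(F^{234} F^{12}, F^{12}_\da)$ --- then yields the identity
\[
\G T \ci (\G S \ot \G S \ot \G S) \, = \, \G S \ci \G T \colon |F^{12}| \ot |F^{234}| \ot |F^{12}_\da| \to |\wih F^{12}_\da (F^{234} + \wih f) \wih F^{12}|,
\]
where the right-hand $\G S$ is the stabilisation of $F^{12}_\da F^{234} F^{12}$ by the composite invertible operator.

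For the second step, a direct computation using $\Ga^{-1} \ci \Ga = \T{id}_{\B C^n}$ together with the fact that $\wih f$ restricts to the identity on the image of $\Ga$ shows that
\[
\wih F^{12}_\da (F^{234} + \wih f) \wih F^{12} \, = \, F^{12}_\da F^{234} F^{12} + \wih e,
\]
where $\wih e = e + 1_n$ extends $e$ by the identity on $\B C^n$. By Proposition~\ref{p:torquis} applied to the composition of inclusion quasi-isomorphisms, the stabilisation $\G S \colon |F^{12}_\da F^{234} F^{12}| \to |F^{12}_\da F^{234} F^{12} + \wih e|$ factors as $\G S_{1_n} \ci \G S_e$, where $\G S_e$ stabilises by $e$ and $\G S_{1_n}$ further stabilises by the identity on $\B C^n$.

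Finally, Proposition~\ref{p:persta} gives a commutative square relating the perturbations from $|F^{12}_\da F^{234} F^{12} + e|$ and from $|F^{12}_\da F^{234} F^{12} + \wih e|$ to $|\Om^{12}(p_0)\Om^{234}(p_0)\Om^{12}(p_0) + e_0|$ and $|\Om^{12}(p_0)\Om^{234}(p_0)\Om^{12}(p_0) + \wih e_0|$, respectively, connected vertically by $\G S_{1_n}$. The right-hand vertical in this square is an isomorphism $(\cc,0) \to (\cc,0)$ between two trivial determinant lines, hence the identity. Combining these observations identifies Trivialisation 1 with Trivialisation 2. The main obstacle lies in the direct verification in the second step: the bookkeeping of how the stabilising summands (in particular the finite dimensional $\B C^n$ and $\B C^m$ introduced via $\Ga$) interact with one another in the triple composition must be handled carefully to ensure the asserted factorisation of the stabilisation isomorphism.
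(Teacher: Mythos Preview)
Your proposal is correct and follows essentially the same approach as the paper, which simply states that the lemma is a consequence of Proposition~\ref{p:torsta} and Proposition~\ref{p:persta}; you have fleshed out the details (the identity $\wih{F}_\da^{12}(F^{234}+\wih{f})\wih{F}^{12} = F_\da^{12}F^{234}F^{12} + \wih{e}$ is also recorded in the paper just before the lemma). Your factorisation of the stabilisation through $\G S_{1_n}\ci\G S_e$ and the observation that the resulting $\G S_{1_n}$ between trivial lines is the identity is a clean way to close the argument.
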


To continue, we introduce the trivialisation
\begin{equation}\label{eq:dualhat}
\wih{\psi} \colon |\wih{F}^{12}| \ot |\wih{F}_\da^{12}| \to^{\G T} |\wih{F}^{12}_\da \cd \wih{F}^{12}| \to^{\G P} |\T{id}_{\C H_1}| = (\B C,0) .
\end{equation}
together with the invertible operator
\[
\De := \Om(\la,\mu)(p_0) + \Om(\la,\mu)(1 - p_0) \colon \pi_\la(1) \C H \op \pi_\mu(1) \C H \to \pi_\la(1) \C H \op \pi_\mu(1) \C H
\]
We remark that Lemma \ref{l:addcompl} and Lemma \ref{l:reduction} show that $F(\la,\mu) + \Ga = F(\la,\mu) + G(\la,\mu) + L$ is a trace class perturbation of $\De + 0_{m,n} \colon \pi_\la(1) \C H \op \pi_\mu(1) \C H \op \cc^n \to \pi_\la(1) \C H \op \pi_\mu(1) \C H \op \cc^m$ and that $F_\da(\la,\mu) + \Ga^{-1}$ is a trace class perturbation of $\De + 0_{n,m} \colon \pi_\la(1) \C H \op \pi_\mu(1) \C H \op \cc^m \to \pi_\la(1) \C H \op \pi_\mu(1) \C H \op \cc^n$. It is also convenient to define the idempotent operators
\[
f_1 := \pi_\la(1) \op \pi_\mu(1) \q \T{and} \q f_2 := \pi_\nu(1) \op \pi_\tau(1) .
\]

\begin{lemma}\label{l:assohat}
The following two trivialisations coincide:
\begin{equation}\label{eq:assohat}
\begin{split}
& \big| \wih{F}^{12}\big| \ot \big| F^{234} + \wih{f} \big| \ot \big| \wih{F}^{12}_\da \big| 
\to^{\G T} \big| \wih{F}^{12}_\da \cd (F^{234} + \wih{f}) \cd \wih{F}^{12} \big| \\
& \q \to^{\G P} \big| \Om^{12}(p_0)  \Om^{234}(p_0) \Om^{12}(p_0) + \wih{e}_0\big| = (\cc,0)  \q \mbox{and} \\
& \big| \wih{F}^{12}\big| \ot \big| F^{234} + \wih{f} \big| \ot \big| \wih{F}^{12}_\da \big|
\to^{\T{id} \ot \G P \ot \T{id}}
\big| \wih{F}^{12} \big| \otimes \big| \Om^{234}(p_0) + \wih{f}_0\big| \ot \big| \wih{F}^{12}_\da \big| \\
& \q \to^{=}
\big| \wih{F}^{12} \big| \otimes \big| \wih{F}^{12}_\da \big|
\to^{\wih{\psi}}
(\cc,0) \, .
\end{split}
\end{equation}
\end{lemma}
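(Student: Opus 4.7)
The plan is to reduce both trivialisations in \eqref{eq:assohat} to a common starting point, and then to identify the two remaining continuations. Setting $B:=\Om^{234}(p_0)+\wih{f}_0$ (which is invertible, and whose graded determinant line is therefore canonically $(\cc,0)$), the first step is to rewrite the first trivialisation in \eqref{eq:assohat} as the composition
\[
\big|\wih{F}^{12}\big|\ot\big|F^{234}+\wih{f}\big|\ot\big|\wih{F}^{12}_\da\big|\to^{\T{id}\ot\G P\ot\T{id}}\big|\wih{F}^{12}\big|\ot\big|B\big|\ot\big|\wih{F}^{12}_\da\big|\to^{\G T}\big|\wih{F}^{12}_\da B\wih{F}^{12}\big|\to^{\G P}(\cc,0),
\]
where the final $\G P$ lands in $\big|\Om^{12}(p_0)\Om^{234}(p_0)\Om^{12}(p_0)+\wih{e}_0\big|=(\cc,0)$. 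This rewriting follows by commuting the outer perturbation isomorphism past the torsion (Theorem \ref{t:percom}) and then combining the resulting two perturbations via the cocycle property (Theorem \ref{t:pervec}). I would also use the identity $\Om^{12}(p_0)\wih{f}_0\Om^{12}(p_0)=0$, valid because $\Om^{12}(p_0)$ is supported on the $p_0$-components of positions $1$ and $2$ where $\wih{f}_0$ vanishes, to ensure that the intermediate perturbation $\G P\colon\big|\wih{F}^{12}_\da B\wih{F}^{12}\big|\to\big|\Om^{12}(p_0)\Om^{234}(p_0)\Om^{12}(p_0)+\wih{e}_0\big|$ is well defined by the analogue of Lemma \ref{l:pertcomp4}.

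Since both trivialisations in \eqref{eq:assohat} now factor through $\T{id}\ot\G P\ot\T{id}$, it suffices to prove the remaining identity on $\big|\wih{F}^{12}\big|\ot\big|B\big|\ot\big|\wih{F}^{12}_\da\big|$: that $\G P\ci\G T$ routed through $\big|\wih{F}^{12}_\da B\wih{F}^{12}\big|$ agrees with the canonical trivialisation of $|B|$ followed by $\wih{\psi}=\G P\ci\G T$ routed through $\big|\wih{F}^{12}_\da\wih{F}^{12}\big|$. For the comparison, I would decompose the torsion in the first continuation using associativity (Proposition \ref{p:assotors}) as $\G T\ci(\T{id}\ot\G T)$; after the canonical trivialisation of $|B|$, the inner torsion $\G T\colon|B|\ot\big|\wih{F}^{12}_\da\big|\to\big|\wih{F}^{12}_\da B\big|$ is identified with the quasi-isomorphism $(B^{-1},\T{id})\colon\wih{F}^{12}_\da\to\wih{F}^{12}_\da B$ via Example \ref{ex:LR}. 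This recasts the first continuation as $\G P\ci\G T\ci(\T{id}\ot R(B))$, and the desired identity becomes the equality $\G P\ci\G T\ci(\T{id}\ot R(B))=\G P\ci\G T$ of two maps $\big|\wih{F}^{12}\big|\ot\big|\wih{F}^{12}_\da\big|\to(\cc,0)$.

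The main obstacle is verifying this last equality, which is a statement about how the insertion of an invertible operator $B$ interacts with the perturbation-torsion formalism, and which will ultimately amount to the vanishing of a specific Fredholm determinant. Following the strategy of the proof of Theorem \ref{t:percom} (in particular Lemma \ref{l:indzero}), I would first reduce to the index-zero case via the stabilisation procedure of Example \ref{ex:general} combined with Proposition \ref{p:persta}, and then apply Example \ref{ex:indzero} to express both perturbation isomorphisms as explicit Fredholm determinants. The structural reason for the cancellation is that the additional factor introduced by $R(B)$ on a kernel basis of $\wih{F}^{12}_\da$ (which contributes the determinant of $B$ on a finite-dimensional subspace chosen to make $\wih{F}^{12}_\da B$ invertible after a finite-rank modification) is precisely compensated by the Fredholm determinant arising when comparing the perturbation of $\wih{F}^{12}_\da B\wih{F}^{12}$ to $\Om^{12}(p_0)\Om^{234}(p_0)\Om^{12}(p_0)+\wih{e}_0$ with that of $\wih{F}^{12}_\da\wih{F}^{12}$ to $\T{id}_{\C H_1}$. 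The multiplicativity and conjugation-invariance of the Fredholm determinant then yield the equality.
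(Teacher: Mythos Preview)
Your first reduction—rewriting the first trivialisation so that it, like the second, begins with $\T{id}\ot\G P\ot\T{id}$—is correct and matches the paper's opening move. The gap is in the final comparison. You claim that
\[
\G P\ci\G T\ci(\T{id}\ot R(B))=\wih{\psi}=\G P\ci\G T
\]
as maps $|\wih{F}^{12}|\ot|\wih{F}^{12}_\da|\to(\cc,0)$, but the two perturbation isomorphisms here have \emph{different} targets: one lands in $|\Om^{12}(p_0)\Om^{234}(p_0)\Om^{12}(p_0)+\wih{e}_0|$, the other in $|\T{id}_{\C H_1}|$. There is no general principle saying that inserting an invertible $B$ via torsion is exactly compensated by this particular change of perturbation target; such a statement depends on how the target $\Om^{12}(p_0)\Om^{234}(p_0)\Om^{12}(p_0)+\wih{e}_0$ is built from $B$. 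Your appeal to ``multiplicativity and conjugation-invariance of the Fredholm determinant'' does not identify which determinants are supposed to cancel, and the index-zero reduction you propose is already in place (both $\wih{F}^{12}_\da\wih{F}^{12}$ and $\wih{F}^{12}_\da B\wih{F}^{12}$ are index-zero endomorphisms), so it does not advance the argument.

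The paper closes exactly this gap by using a structural fact you omit: by Lemma~\ref{l:addcompl}, $\wih{F}^{12}$ and $\wih{F}^{12}_\da$ are trace-class perturbations of the concrete operators $\De+0_{m,n}+f_2$ and $\De+0_{n,m}+f_2$, and a direct computation gives the operator identity
\[
(\De+0_{n,m}+f_2)\,B\,(\De+0_{m,n}+f_2)=\Om^{12}(p_0)\Om^{234}(p_0)\Om^{12}(p_0)+e_0+0_n .
\]
Perturbing both continuations through Theorem~\ref{t:percom} to these explicit operators reduces the remaining comparison to one between torsion and perturbation isomorphisms whose kernels and cokernels are literally $\cc^n$ and $\cc^m$; this is then checked by hand. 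Without this identity (or an equivalent), your argument does not establish the required link between the two perturbation targets.
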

\begin{proof}
Using that perturbation commutes with torsion (Theorem \ref{t:percom}) together with the transitivity of perturbation (Theorem \ref{t:pervec}) the second of the two trivialisations in Equation \eqref{eq:assohat} becomes equal to
\[
\begin{split}
& \big| \wih{F}^{12}\big| \ot \big| F^{234} + \wih{f} \big| \ot \big| \wih{F}^{12}_\da \big|
\to^{\G P \ot \G P \ot \G P}
\big| \De + 0_{m,n} + f_2 \big| \otimes \big| \Om^{234}(p_0) + \wih{f}_0\big| \ot \big| \De + 0_{n,m} + f_2 \big| \\
& \q \to^{=}
\big| \De + 0_{m,n} + f_2 \big| \otimes \big| \De + 0_{n,m} + f_2 \big|
\to^{\G T}
\big| f_1 + 0_n + f_2 \big|
\to^{\G P} \big| \T{id}_{\C H_1} \big| = (\cc,0) \, .
\end{split}
\]

Next, a direct computation reveals that we have the identities of operators on $\C H_1$:
\[
(\De + 0_{n,m} + f_2)(\Om^{234}(p_0) + \wih{f}_0 )(\De + 0_{m,n} + f_2) = \Om^{12}(p_0) \Om^{234}(p_0) \Om^{12}(p_0) + e_0 + 0_n \, .
\]
Hence, using again that perturbation commutes with torsion and the transitivity of perturbation, we can replace the first of the two trivialisations in Equation \eqref{eq:assohat} with
\[
\begin{split}
& \big| \wih{F}^{12}\big| \ot \big| F^{234} + \wih{f} \big| \ot \big| \wih{F}^{12}_\da \big| 
\to^{\G P \ot \G P \ot \G P}
\big| \De + 0_{m,n} + f_2 \big| \ot \big| \Om^{234}(p_0) + \wih{f}_0 \big| \ot \big| \De + 0_{n,m} + f_2 \big| \\
& \q \to^{\G T} 
\big| \Om^{12}(p_0) \Om^{234}(p_0) \Om^{12}(p_0) + e_0 + 0_n \big| 
\to^{\G P} \big| \Om^{12}(p_0) \Om^{234}(p_0) \Om^{12}(p_0) + \wih{e}_0\big| = (\cc,0) \, .
\end{split}
\] 

We thus see that proving the identity of trivialisations claimed in the lemma amounts to verifying that the following two trivialisations agree:
\begin{equation}\label{eq:st2II}
\begin{split}
& \big| \De + 0_{m,n} + f_2 \big| \ot \big| \Om^{234}(p_0) + \wih{f}_0 \big| \ot \big| \De + 0_{n,m} + f_2   \big|
\to^{\G T} 
\big| \Om^{12}(p_0) \Om^{234}(p_0) \Om^{12}(p_0) + e_0 + 0_n \big| \\
& \q \to^{\G P} \big| \Om^{12}(p_0) \Om^{234}(p_0) \Om^{12}(p_0) + \wih{e}_0\big| = (\cc,0) \q \T{and} \\ 
& \big| \De + 0_{m,n} + f_2 \big| \otimes \big| \Om^{234}(p_0) + \wih{f}_0\big| \ot \big| \De + 0_{n,m} + f_2 \big|
\to^{=}
\big| \De + 0_{m,n} + f_2 \big| \otimes \big| \De + 0_{n,m} + f_2 \big| \\
& \q \to^{\G T}
\big| f_1 + 0_n + f_2 \big|
\to^{\G P} \big| \T{id}_{\C H_1} \big| = (\cc,0)  \, .
\end{split}
\end{equation}
That these two latter trivialisations coincide can now be checked by hand using the definition of the torsion isomorphism and the perturbation isomorphism. In fact, one may explicitly compute the kernels and cokernels of all the involved operators and the two trivialisations then both take the form 
\[
|\cc^n| \ot |\cc^m|^* \ot |\cc^m| \ot |\cc^n|^* \to^{\G T} |\cc^n| \ot |\cc^n|^* \to^{\G P} (\cc,0) \, ,
\]
Moreover, it can be seen from Definition \ref{d:torfre}, Definition \ref{def:torsion} and Example \ref{ex:indzero} that both of the trivialisations are given explicitly by the map $\la \cd (s \ot t^* \ot t \ot s^*) \mapsto \la$, for all non-trivial vectors $s \in \La^{\T{top}}(\cc^n)$ and $t \in \La^{\T{top}}(\cc^m)$ and all $\la \in \cc$. Remark however that the torsion and perturbation isomorphisms appearing are not a priori the same since they do in fact depend on different operators as reflected in Equation \eqref{eq:st2II}. This ends the proof of the lemma.
\end{proof}

We observe that the identity $\wih{F}_\da^{12} \cd \wih{F}^{12} = F_\da^{12} \cd F^{12} + \wih{e}$ of operators on $\C H_1$ holds. The following lemma is then a consequence of Proposition \ref{p:torsta} and Proposition \ref{p:persta} where we recall the definitions of the trivialisations $\wih{\psi}$ and $\psi$ from Equation \eqref{eq:dualhat} and Equation \eqref{eq:dualII}. 


\begin{lemma}\label{l:assoRIII}
The diagram here below is commutative:
\[
\xymatrix{
\big| \wih{F}^{12} \big| \ot \big| \wih{F}^{12}_\da \big| \ar[rr]^{\G S^{-1} \ot \G S^{-1}} \ar[dr]_{\wih{\psi}} 
&& \big| F^{12} \big| \otimes \big| F^{12}_\da \big| \ar[dl]^{\psi \ci (\G S^{-1} \ot \G S^{-1})}\\
& (\B C,0) &
}
\]
\end{lemma}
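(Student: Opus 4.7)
The plan is to reduce the claim to successive applications of Proposition \ref{p:torsta} (torsion commutes with stabilisation) and Proposition \ref{p:persta} (perturbation commutes with stabilisation), using the explicit algebraic relationships between the hat- and non-hat versions of the relevant Fredholm operators. First I would verify that $\wih F^{12}$ and $\wih F^{12}_\da$ are genuine stabilisations of $F^{12}$ and $F^{12}_\da$ in the sense of Definition \ref{def:stabilisation}: by construction they are obtained by direct summing $F^{12}$ and $F^{12}_\da$ with the invertible operators $\Ga \oplus \pi_\nu(1-p_0) \oplus \pi_\tau(1-p_0)$ and $\Ga^{-1} \oplus \pi_\nu(1-p_0) \oplus \pi_\tau(1-p_0)$, respectively, acting on the complementary subspaces. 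This identifies the $\G S$'s appearing in the statement with bona fide stabilisation isomorphisms.

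Next, I would exploit the key identity $\wih F^{12}_\da \wih F^{12} = F^{12}_\da F^{12} + \wih e$ noted just before the statement of the lemma. The idempotent $\wih e$ restricted to its range is the identity operator, and since this range is precisely the complement of the subspace on which $F^{12}_\da F^{12}$ acts, $\wih F^{12}_\da \wih F^{12}$ is the stabilisation of $F^{12}_\da F^{12}$ by an invertible (indeed, identity) operator. Proposition \ref{p:torsta} then produces the commutative square
\[
\xymatrix{
|F^{12}| \ot |F^{12}_\da| \ar[rr]^{\G T} \ar[d]_{\G S \ot \G S} && |F^{12}_\da F^{12}| \ar[d]^{\G S} \\
|\wih F^{12}| \ot |\wih F^{12}_\da| \ar[rr]^{\G T} && |\wih F^{12}_\da \wih F^{12}| \, .
}
\]
Analogously, $\T{id}_{\C H_1}$ is the stabilisation by the identity of the idempotent $\pi_\la(p) \oplus \pi_\mu(p_0) \oplus \pi_\nu(p_0) \oplus \pi_\tau(p_0)$ that appears as the target of the perturbation isomorphism in the definition of $\psi$, so Proposition \ref{p:persta} supplies a second commuting square relating the perturbation isomorphism at the $F$-level with the one at the $\wih F$-level.

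Splicing these two squares yields the desired commutative triangle, once one observes the (trivial but essential) fact that the stabilisation isomorphism $(\cc,0) = |\pi_\la(p) \oplus \pi_\mu(p_0) \oplus \pi_\nu(p_0) \oplus \pi_\tau(p_0)| \to |\T{id}_{\C H_1}| = (\cc,0)$ is the identity map on $\cc$, since the kernels and cokernels of both invertible operators are trivial and the inducing quasi-isomorphism is given by the obvious inclusions.

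The main bookkeeping obstacle — and really the only nontrivial point — is the first step: carefully matching the ad hoc operators $F(\la,\mu) + \Ga$, $F_\da(\la,\mu) + \Ga^{-1}$, and $F_\da^{12} F^{12} + \wih e$ with the abstract framework of Definition \ref{def:stabilisation}, so that the hypotheses of Propositions \ref{p:torsta} and \ref{p:persta} are verified. Once this identification is in place, the rest is a diagram chase that invokes only standard properties of the stabilisation, torsion and perturbation isomorphisms established in Section \ref{s:stab}.
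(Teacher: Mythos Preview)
Your proposal is correct and follows exactly the same approach as the paper, which simply records the identity $\wih{F}_\da^{12} \wih{F}^{12} = F_\da^{12} F^{12} + \wih{e}$ and then invokes Propositions \ref{p:torsta} and \ref{p:persta}. One small imprecision: the target of the perturbation in the original $\psi$ is $|\pi_\la(p) \oplus \pi_\mu(p_0)|$ (two summands, not four), so there are really two nested levels of stabilisation to unwind --- from $F(\la,\mu)$ to $F^{12}$ and then to $\wih{F}^{12}$ --- but this only means applying the two propositions twice each, and your argument goes through unchanged.
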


Combining the results obtained so far we may give a full proof of the associativity of the composition in $\G L_p$:

\begin{proof}[Proof of Theorem \ref{t:associativity}]
We need to show that the diagram in Equation \eqref{eq:fullasso} is commutative. The left hand side of this diagram commutes by Proposition \ref{p:LHSasso} so we only need to establish that the right hand side also commutes. By Proposition \ref{p:RHSassoI} this amounts to verifying that the following two trivialisations agree: 
\begin{equation}\label{eq:assoRIII}
\begin{split}
& \big| F^{12} \big| \otimes \big| F^{234} \big| \ot \big| F^{12}_\da \big| 
\to^{\G T} \big| F^{12}_\da  F^{234}  F^{12} \big| \to^{\G S} 
\big| F^{12}_\da  F^{234}  F^{12} + e \big| \\ 
& \q \to^{\G P} 
\big| \Om^{12}(p_0)  \Om^{234}(p_0)  \Om^{12}(p_0) + e_0 \big| = (\cc,0) \q \mbox{and} \\
& \big| F^{12} \big| \otimes \big| F^{234} \big| \ot \big| F^{12}_\da \big| 
\to^{\T{id} \ot \G S \ot \T{id}} 
\big| F^{12} \big| \otimes \big| F^{234} + f \big| \ot \big| F^{12}_\da \big|
\to^{\T{id} \ot \G P \ot \T{id}}
\big| F^{12} \big| \otimes \big| \Om^{234}(p_0) + e_0 \big| \ot \big| F^{12}_\da \big| \\
& \q \to^{=} \big| F^{12} \big| \ot \big| F^{12}_\da \big| 
\to^{\psi \ci \G S^{-1}} (\cc,0) \, .
\end{split}
\end{equation}
However, using Lemma \ref{l:assoRI}, Lemma \ref{l:assohat} and Lemma \ref{l:assoRIII} we see that the first of the two trivialisations in Equation \eqref{eq:assoRIII} coincides with the trivialisation
\[
\begin{split}
& \big| F^{12} \big| \otimes \big| F^{234} \big| \ot \big| F^{12}_\da \big| 
\to^{\G S} \big| F^{12} \big| \otimes \big| F^{234} + \wih{f} \big| \ot \big| F^{12}_\da \big| 
\to^{\G P} \big| F^{12} \big| \otimes \big| \Om^{234}(p_0) + \wih{e}_0 \big| \ot \big| F^{12}_\da \big| \\
& \q \to^{=} \big| F^{12} \big| \otimes \big| F^{12}_\da \big| \to^{\psi \ci \G S^{-1}} (\cc,0) \, .
\end{split}
\]
The fact that the two trivialisations in Equation \eqref{eq:assoRIII} agree is now a consequence of Proposition \ref{p:persta}. This proves the theorem.
\end{proof}

\subsection{Unitality}
We continue by proving the unitality condition in $\G L_p$. Recall that for $\la \in \La$, the unit $\T{id}_\la \colon \la \to \la$ in $\G L_p(\la,\la)$ is given by the unit in $\cc$ under the identification
\[
(\cc,0) = \big| \ma{cc}{0 & \pi_\la(p_0) \\ \pi_\la(p) & 0} \big| = | F(\la,\la)| = \G L_p(\la,\la) \, .
\]

\begin{proof}[Proof of Theorem \ref{thm:unitality}]
Since $\G L_p(\la,\mu)$ is a $\zz$-graded one-dimensional vector space over $\cc$ and since both of the degree $0$ maps $x \mapsto \G M_p(x \ot \T{id}_\mu)$ and $x \mapsto \G M_p(\T{id}_\la \ot x)$ are automorphisms of $\G L_p(\la,\mu)$, it suffices to show that they are both idempotents. However, by the associativity of the composition in $\G L_p$ (Theorem \ref{t:associativity}) this amounts to showing that $\G M_p( \T{id}_\la \ot \T{id}_\la) = \T{id}_\la$ for all $\la \in \La$. Thus, let $\la \in \La$ be given. It suffices to show that the two perturbation isomorphisms
\[
\begin{split}
& \big| F(\la,\la) \cd F_\da(\la,\la) \big| \to^{\G P} \big| \pi_\la(p_0) \op \pi_\la(p) \big| \q \T{and} \\
& \big| F_\da^{13}(\la,\la,\la) F^{23}(\la,\la,\la) F^{12}(\la,\la,\la) + \pi_\la(1 - p) \op \pi_\la(1 - p_0) \op \pi_\la(1 - p_0) \big| \\
& \q \to^{\G P}  \big| \Om^{13}(p_0) \Om^{23}(p_0) \Om^{12}(p_0) + \pi_\la(1 - p_0) \op \pi_\la(1 - p_0) \op \pi_\la(1 - p_0) \big|
\end{split}
\]
both agree with the identity automorphism when identifying all the graded lines with $(\cc,0)$. However, a straightforward computation shows that
\[
\begin{split}
& F(\la,\la) \cd F_\da(\la,\la) = \pi_\la(p_0) \op \pi_\la(p) \q \T{and} \\
& F_\da^{13}(\la,\la,\la) F^{23}(\la,\la,\la) F^{12}(\la,\la,\la) + \pi_\la(1 - p) \op \pi_\la(1 - p_0) \op \pi_\la(1 - p_0) \\
& \q = \Om^{13}(p_0) \Om^{23}(p_0) \Om^{12}(p_0) + \pi_\la(1 - p_0) \op \pi_\la(1 - p_0) \op \pi_\la(1 - p_0) = \pi_\la(1) \op \pi_\la(1) \op \pi_\la(1) \, .
\end{split}
\]
The two relevant perturbation isomorphisms are therefore indeed both equal to the identity automorphism and the result of the theorem follows.
\end{proof}

\subsection{Duality}
We end this section by giving a proof of Proposition \ref{p:dualcomp} regarding the duality relation between the compositions in $\G L_p$ and $\G L_p^\da$. We are working with respect to a fixed triple of indices $(\la,\mu,\nu)$ from the index set $\La$ and this triple will be suppressed from the notation throughout this subsection. More precisely, we apply the notation introduced in the beginning of Subsection \ref{ss:comp}.

\begin{proof}[Proof of Proposition \ref{p:dualcomp}]
Using Lemma \ref{l:inverse}, we notice that it suffices to verify that the trivialisation
\[
\big| F(\la,\mu) \big| \ot \big| F(\mu,\nu) \big| \ot \big| F_\da(\la,\nu) \big| 
\ot \big| F(\la,\nu) \big| \ot \big| F_\da(\mu,\nu) \big| \ot \big| F_\da(\la,\mu) \big|
\to^{\mu_p \ot \mu_p^\da} (\cc,0)
\]
agrees with the trivialisation
\begin{equation}\label{eq:dualityII}
\begin{split}
& \big| F(\la,\mu) \big| \ot \big| F(\mu,\nu) \big| \ot \big| F_\da(\la,\nu) \big| 
\ot \big| F(\la,\nu) \big| \ot \big| F_\da(\mu,\nu) \big| \ot \big| F_\da(\la,\mu) \big|  \\
& \q \to^{\T{id}^{\ot 2} \ot \varphi^{-1} \ot \T{id}^{\ot 2}}
\big| F(\la,\mu) \big| \ot \big| F(\mu,\nu) \big| \ot \big| F_\da(\mu,\nu) \big| \ot \big| F_\da(\la,\mu) \big| \\
& \q \to^{\T{id} \ot \psi \ot \T{id}}
\big| F(\la,\mu) \big| \ot \big| F_\da(\la,\mu) \big| \to^{\psi} (\cc,0) \, .
\end{split}
\end{equation}
Using that torsion and perturbation commute with stabilisation, see Proposition \ref{p:torsta} and Proposition \ref{p:persta}, we may alternatively describe the isomorphism of graded lines $\varphi^{-1} \colon \big| F_\da(\la,\nu)\big| \ot \big| F(\la,\nu) \big| \to (\cc,0)$ as the composition
\[
\big| F_\da(\la,\nu)\big| \ot \big| F(\la,\nu) \big|
\to^{\G S} \big| F_\da^{13}\big| \ot \big| F^{13} \big| \to^{\G T} \big| F^{13} \cd F_\da^{13} \big|
\to^{\G P} \big| \pi_\la(p_0) \op \pi_\mu(p_0) \op \pi_\nu(p) \big| = (\cc,0) \, .
\]
Since similar results apply to $\psi \colon \big| F(\mu,\nu)\big| \ot \big| F_\da(\mu,\nu) \big| \to (\cc,0)$ and $\psi \colon \big| F(\la,\mu)\big| \ot \big| F_\da(\la,\mu) \big| \to (\cc,0)$ we may rewrite the trivialisation in Equation \eqref{eq:dualityII} as the composition
\begin{equation}\label{eq:dualityIV}
\begin{split}
& \big| F(\la,\mu) \big| \ot \big| F(\mu,\nu) \big| \ot \big| F_\da(\la,\nu) \big| 
\ot \big| F(\la,\nu) \big| \ot \big| F_\da(\mu,\nu) \big| \ot \big| F_\da(\la,\mu) \big|  \\
& \q \to^{\G S} \big| F^{12} \big| \ot \big| F^{23} \big| \ot \big| F_\da^{13} \big| 
\ot \big| F^{13} \big| \ot \big| F_\da^{23} \big| \ot \big| F_\da^{12} \big|  \\
& \q \to^{\T{id}^{\ot 2} \ot (\G P \ci \G T) \ot \T{id}^{\ot 2}}
\big| F^{12} \big| \ot \big| F^{23} \big| \ot \big| F_\da^{23} \big| \ot \big| F_\da^{12} \big| \\
& \q \to^{\T{id} \ot (\G P \ci \G T) \ot \T{id}}
\big| F^{12} \big| \ot \big| F_\da^{12} \big| \to^{\G P \ci \G T} (\cc,0) \, .
\end{split}
\end{equation}
Repeated use of the transitivity of perturbation and the fact that torsion commutes with perturbation (Theorem \ref{t:pervec} and Theorem \ref{t:percom}) shows that the trivialisation in Equation \eqref{eq:dualityIV} agrees with the composition
\[
\begin{split}
& \big| F(\la,\mu) \big| \ot \big| F(\mu,\nu) \big| \ot \big| F_\da(\la,\nu) \big| 
\ot \big| F(\la,\nu) \big| \ot \big| F_\da(\mu,\nu) \big| \ot \big| F_\da(\la,\mu) \big|  \\
& \q \to^{\G S} \big| F^{12} \big| \ot \big| F^{23} \big| \ot \big| F_\da^{13} \big| 
\ot \big| F^{13} \big| \ot \big| F_\da^{23} \big| \ot \big| F_\da^{12} \big|  \\
& \q \to^{\G T}
\big| F_\da^{12} F_\da^{23} F^{13} F_\da^{13} F^{23} F^{12} \big| 
\to^{\G P} \big| \pi_\la(p) \op \pi_\mu(p_0) \op \pi_\nu(p_0) \big| = (\cc,0) \, .
\end{split}
\]
Let us define the idempotents $e := \pi_\la(1 - p) \op \pi_\mu(1 - p_0) \op \pi_\nu(1 - p_0)$ and $e_0 := \pi_\la(1 - p_0) \op \pi_\mu(1 - p_0) \op \pi_\nu(1 - p_0)$ together with the Fredholm operators $F^{123} := F_\da^{13} F^{23} F^{12}$, $F^{123}_\da := F_\da^{12} F^{23}_\da F^{13}$ and $\Om^{123}(p_0) := \Om^{13}(p_0) \Om^{23}(p_0) \Om^{12}(p_0)$. Using the associativity of torsion (Proposition \ref{p:assotors}), the result of the present proposition is then a consequence of the commutative diagram here below:
\[
\xymatrix{
\big| F^{123} \big| \ot \big| F_\da^{123} \big| \ar[r]^{\G T} \ar[d]_{\G S \ot \G S} & 
\big| F^{123}_\da F^{123} \big| \ar[d]^{\G S} \ar[dr]_{\G P} & \\  
\big| F_\da^{123} + e \big| \ot \big| F^{123} + e \big| \ar[r]^{\G T} \ar[d]_{\G P \ot \G P} & \big| F^{123} F_\da^{123} + e \big| \ar[d]_{\G P} & \big| \pi_\la(p) \op \pi_\mu(p_0) \op \pi_\nu(p_0) \big| \ar[dl]^{\G S} \\
\big| \big(\Om^{123}(p_0) \big)^{-1} + e_0 \big| \ot \big| \Om^{123}(p_0) + e_0 \big| \ar[r]^>>>>>{\G T} & \big| \pi_\la(1) \op \pi_\mu(1) \op \pi_\nu(1)\big| & 
}
\]
which in turn follows from Theorem \ref{t:percom}, Proposition \ref{p:torsta} and Proposition \ref{p:persta}.
\end{proof}

\section{Proofs of properties of the change of base point}\label{s:propchange}
In this section we give full proofs of the main properties of the change-of-base point isomorphism which we introduced in Section \ref{s:change}. Thus, we shall see that the change-of-base-point isomorphism is an isomorphism of coproduct categories. The main properties needed for establishing this result are the cofunctoriality and the functoriality of the change-of-base-point isomorphism. These properties are stated as Proposition \ref{p:cofunc} and Proposition \ref{p:func} in Section \ref{s:change}. The proof of cofunctoriality is rather straightforward and we take care of it in Subsection \ref{ss:cofunc}, but the proof of functoriality is involved and occupies Subsection \ref{ss:dual}-\ref{ss:func}. The technical core of the proof of functoriality can be found as Lemma \ref{l:crux} in Subsection \ref{ss:func}, where we show that a certain Fredholm determinant is equal to one and this allows us to relate the compositions in the coproduct categories to a sort of higher (ternary) change-of-base-point isomorphism in Proposition \ref{p:crux}. The rest of the proof (Subsection \ref{ss:dual}-\ref{ss:binter}) develops alternative descriptions of the change-of-base-point isomorphism with the aim of expressing the ternary change-of-base-point isomorphism as a tensor product of change-of-base-point isomorphisms, see Proposition \ref{p:mulcha}.

The overall setting is as follows: we are given a unital ring $R$ and an ideal $I \su R$ together with a family of representations $\{\pi_\la\}_{\la \in \La}$ satisfying Assumption \ref{a:rep}. We moreover fix two different base points, thus two idempotents $p_0$ and $p_0'$ in $R$ and we assume that they agree modulo the ideal $I$.

We will apply the results on the commutativity and associativity relations for torsion, perturbation and stabilisation from Section \ref{s:torsfred}, \ref{s:pert} and \ref{s:stab} many times, and we will do so without further notice.

\subsection{Cofunctoriality}\label{ss:cofunc}
We let $p,e$ and $q$ be idempotents in the unital ring $R$, all agreeing with the base points $p_0$ and $p_0'$ modulo the ideal $I$. Moreover, we fix two indices $\la,\mu \in \La$.

For an element $x \in R$ and indices $i,j \in \{2,3,4\}$ we let 
\[
e_{ij}(\pi_\mu(x)) \colon \pi_\la(1) \C H \op (\pi_\mu(1) \C H)^{\op 3} \to \pi_\la(1) \C H \op (\pi_\mu(1) \C H)^{\op 3}
\]
denote the bounded operator represented by the $(4 \ti 4)$-matrix with $\pi_\mu(x)$ in position $(i,j)$ and zeroes elsewhere.

We now present the proof of the cofunctoriality of the change-of-base-point isomorphism:

\begin{proof}[Proof of Proposition \ref{p:cofunc}]
We suppress the tuples of indices $(\la,\mu)$ and $(\la,\mu,\mu,\mu)$ throughout the proof.

We first notice that the isomorphism
\[
\begin{split}
\G B(p_0,p_0') \ot \G B(p_0,p_0') & \colon
\big( | F(p,p_0) | \ot | F(p_0,e)| \big) \ot \big( | F(e,p_0) | \ot | F(p_0,q)|\big) \\
& \q \to
\big( | F(p,p_0') | \ot | F(p_0',e)| \big) \ot \big( | F(e,p_0') | \ot | F(p_0',q)| \big)
\end{split}
\]
agrees with the composition of isomorphisms:
\begin{equation}\label{eq:betabeta}
\begin{split}
& \big( | F(p,p_0) | \ot | F(p_0,e)| \big) \ot \big( | F(e,p_0) | \ot | F(p_0,q)|\big) \\
& \q \to^{ (\G T \G S) \ot (\G T \G S ) }
\big| F^{13}(p_0,q,e,p) \cd F^{14}(p,q,e,p_0) \big| \ot \big| F^{12}(p_0,q,e,p) \cd F^{13}(e,q,p_0,p) \big| \\
& \q \to^{ (\G S^{-1} \G P \G S)\ot (\G S^{-1}\G P \G S)}
\big| F^{13}(p_0',q,e,p) \cd F^{14}(p,q,e,p_0') \big| \ot \big| F^{12}(p_0',q,e,p) \cd F^{13}(e,q,p_0',p) \big| \\
& \q \to^{ (\G T \G S)^{-1} \ot (\G T \G S )^{-1}}
\big( | F(p,p_0') | \ot | F(p_0',e)| \big) \ot \big( | F(e,p_0') | \ot | F(p_0',q)|\big) \, ,
\end{split}
\end{equation}
where the stabilisations in the middle are using the operators $e_{34}( \pi_\mu(1-p_0))$ and $e_{23}(\pi_\mu(1-p_0))$ as well as their primed versions (replacing $p_0$ by $p_0'$).

Likewise, we notice that the isomorphism
\[
\G B(p_0,p_0') \colon | F(p,p_0) | \ot | F(p_0,q)| \to | F(p,p_0') | \ot | F(p_0',q)|
\]
agrees with the composition of isomorphisms:
\begin{equation}\label{eq:beta}
\begin{split}
& | F(p,p_0) | \ot | F(p_0,q)|
\to^{\G T \G S}
| F^{12}(p_0,q,e,p) \cd F^{14}(p,q,e,p_0) | \\
& \q \to^{\G S^{-1} \G P \G S}
| F^{12}(p_0',q,e,p) \cd F^{14}(p,q,e,p_0') |
\to^{(\G T \G S)^{-1}}
| F(p,p_0') | \ot | F(p_0',q)| \, ,
\end{split}
\end{equation}
where the stabilisations in the middle use the operators $e_{24}( \pi_\mu(1-p_0))$ and $e_{24}(\pi_\mu(1-p_0'))$.

Recall now (from Definition \ref{d:hopf}) that the isomorphism
\[
(\De_e')^{-1} \colon
| F(p,p_0') | \ot | F(p_0',e)|  \ot | F(e,p_0') | \ot | F(p_0',q)|
\to |F(p,p_0')| \ot |F(p_0',q)|
\]
is given by the composition $(\De_e')^{-1} = (\T{id} \ot \G P \ot \T{id}) \ci (\T{id} \ot \G T \ot \T{id}) = \T{id} \ot (\varphi')^{-1} \ot \T{id}$.

Using the description of $\G B(p_0,p_0') \ot \G B(p_0,p_0')$ from Equation \eqref{eq:betabeta}, we thus obtain that
\[
\begin{split}
(\De_e')^{-1} \ci \big( \G B(p_0,p_0') \ot \G B(p_0,p_0') \big)
& \colon
\big( | F(p,p_0) | \ot | F(p_0,e)| \big) \ot \big( | F(e,p_0) | \ot | F(p_0,q)|\big) \\
& \q \to
| F(p,p_0') | \ot | F(p_0',q)|
\end{split}
\]
agrees with the composition of isomorphisms:
\[
\begin{split}
& | F(p,p_0) | \ot | F(p_0,e)|  \ot | F(e,p_0) | \ot | F(p_0,q)| \\
& \q \to^{\G T \G S }
|  F^{12}(p_0,q,e,p) \cd F^{13}(e,q,p_0,p) \cd F^{13}(p_0,q,e,p) \cd F^{14}(p,q,e,p_0) | \\
& \q \to^{\G S^{-1} \G P \G S}
| F^{12}(p_0',q,e,p) \cd F^{13}(e,q,p_0',p) \cd F^{13}(p_0',q,e,p) \cd F^{14}(p,q,e,p_0') | \\
& \q \to^{(\G T \G S)^{-1}}
| F(p,p_0') | \ot | F(p_0',e) \cd F(e,p_0') | \ot | F(p_0',q)| \\
& \q \to^{\T{id} \ot \G P \ot \T{id}}
| F(p,p_0') | \ot | F(p_0',q)| \, ,
\end{split}
\]
where the stabilisations in the middle use the invertible operators $e_{24}(\pi_\mu(1-p_0))$ and $e_{24}(\pi_\mu(1-p_0'))$. But this latter composition agrees with the composition:
\begin{equation}\label{eq:almost}
\begin{split}
& | F(p,p_0) | \ot | F(p_0,e)|  \ot | F(e,p_0) | \ot | F(p_0,q)|
\to^{ (\T{id} \ot \G P \ot \T{id}) \ci (\T{id} \ot \G T \ot \T{id}) }
| F(p,p_0) | \ot | F(p_0,q)| \\
& \q \to^{\G T \G S }
| F^{12}(p_0,q,e,p) \cd F^{14}(p,q,e,p_0) |
\to^{\G S^{-1} \G P \G S}
| F^{12}(p_0',q,e,p) \cd F^{14}(p,q,e,p_0')| \\
& \q \to^{  (\G T \G S)^{-1}}
| F(p,p_0') | \ot | F(p_0',q)| \, .
\end{split}
\end{equation}

However, using our alternative description of the change-of-base-point isomorphism
\[
\G B(p_0,p_0') \colon | F(p,p_0) | \ot | F(p_0,q)| \to | F(p,p_0') | \ot | F(p_0',q)|
\]
from Equation \eqref{eq:beta} we see that the composition of isomorphisms in Equation \eqref{eq:almost} agrees with the composition
\[
\G B(p_0,p_0') \ci \De_e^{-1} \colon
| F(p,p_0) | \ot | F(p_0,e)|  \ot | F(e,p_0) | \ot | F(p_0,q)|
\to | F(p,p_0') | \ot | F(p_0',q)| \, .
\]
We thus conclude that
\[
\begin{split}
(\De_e')^{-1} \ci \big( \G B(p_0,p_0') \ot \G B(p_0,p_0') \big)
= \G B(p_0,p_0') \ci \De_e^{-1} & \colon
| F(p,p_0) | \ot | F(p_0,e)|  \ot | F(e,p_0) | \ot | F(p_0,q)| \\
& \q \to | F(p,p_0') | \ot | F(p_0',q)| \, ,
\end{split}
\]
and this ends the proof of the proposition.
\end{proof}

\subsection{The dual change of base point}\label{ss:dual}
Throughout this subsection we fix two indices $\la,\mu \in \La$ together with two idempotents $p$ and $q$ in the unital ring $R$, both agreeing with the base points $p_0$ and $p_0'$ modulo the ideal $I \su R$. 

We develop an alternative version of the change-of-base-point isomorphism which we refer to as the \emph{dual change-of-base-point isomorphism}. It is described by the isomorphism
\[
\G B^\da(p_0,p_0') \colon \G L_q^\da(\la,\mu) \ot \G L_p(\la,\mu) \to (\G L_q')^\da(\la,\mu) \ot \G L_p'(\la,\mu) \, ,
\]
defined as the composition
\[
\begin{split}
\G B^\da(p_0,p_0') & \colon |F(p_0,q)| \ot |F(p,p_0)| \to^{\G T \G S} \big|F^{23}(q,p,p_0) F^{13}(p_0 ,p, q) \big| \\
& \q \to^{\G S^{-1} \G P \G S}
\big|F^{23}(q,p,p_0') F^{13}(p_0',p,q) \big|
\to^{(\G T \G S)^{-1}}
|F(p_0',q)| \ot |F(p,p_0')| \, ,
\end{split}
\]
where we are suppressing the tuples of indices $(\la,\mu)$ and $(\la,\la,\mu)$. The stabilisations appearing in the middle are adding the matrices $e_{21}( \pi_\la(1-p_0))$ and $e_{21}(\pi_\la(1-p_0'))$, respectively (see the beginning of Subsection \ref{ss:cofunc} for an explanation of the notation). Remark that the perturbation isomorphism in the middle exists by the argument given in the proof of Lemma \ref{l:changepert}. We will often suppress the pair of idempotents $(p_0,p_0')$ and simply denote the dual change og base point isomorphism by $\G B^\da$.

On many occasions we are going to diverge a bit from the notation introduced in Notation \ref{n:bigfred} and only refer to the idempotents which are directly involved in the Fredholm operators. So instead of writing $F^{ij}(\la_1,\la_2,\ldots,\la_n)(p_1,p_2,\ldots,p_n)$ we shall apply the notation $F^{ij}(\la_1,\la_2,\ldots,\la_n)(p_i,p_j)$ leaving it to the reader to guess the remaining idempotents (they are hopefully going to be clear from the context).

%
%

\begin{lemma}\label{l:triv}
The automorphisms $c_p$ and $c_p^\da \colon (\cc,0) \to (\cc,0)$ defined by
\[
\begin{split}
& c_p \colon (\cc,0) \to^{\psi^{-1}} \G L_p \ot \G L_p^\da \to^{\G B(p_0,p_0')}
\G L_p' \ot (\G L_p')^\da \to^{\psi'} (\cc,0) \q \mbox{and} \\
& c_p^\da \colon (\cc,0) \to^{\varphi} \G L_p^\da \ot \G L_p \to^{\G B^\da(p_0,p_0')}
(\G L_p')^\da \ot \G L_p' \to^{(\varphi')^{-1}} (\cc,0)
\end{split}
\]
are both equal to the identity.
\end{lemma}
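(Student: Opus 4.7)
I will describe the argument for $c_p$ in detail; the proof that $c_p^\da=\T{id}$ is entirely analogous, with $\varphi$ replacing $\psi$, $\G B^\da$ replacing $\G B$, and the stabilised triple of indices $(\la,\la,\mu)$ used for $\G B^\da$ replacing the triple $(\la,\mu,\mu)$ used for $\G B$.

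The strategy is to unfold $c_p = \psi' \circ \G B(p_0,p_0') \circ \psi^{-1}$ into a long chain of stabilisation, torsion, and perturbation isomorphisms (using the definitions of $\psi,\psi'$ from Equation \eqref{eq:dualII} and of $\G B$ from Definition \ref{def:change}), and then to collapse the chain using the compatibilities between these three operations established in Sections \ref{s:torsfred}--\ref{s:stab}. Starting from $(\cc,0)=|\pi_\la(p) \op \pi_\mu(p_0)|$, one first applies $\G P^{-1}$ to land in $|F(p_0,p) F(p,p_0)|$, then $\G T^{-1}$ to land in $|F(p,p_0)| \ot |F(p_0,p)|$; then runs $\G B(p_0,p_0')$ through the triple $(\la,\mu,\mu)$, stabilising to a three-component Hilbert space, permuting via $\G T$ to the ternary product, perturbing to the primed operator $F^{12}(p_0',p,p)F^{13}(p,p,p_0')+e_{23}(\pi_\mu(1-p_0'))$, de-stabilising, and applying $\G T^{-1}$ back to $|F(p,p_0')|\ot|F(p_0',p)|$; and finally applies $\G T$ and $\G P$ from $\psi'$ to reach $(\cc,0)=|\pi_\la(p) \op \pi_\mu(p_0')|$.

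The next step is to reorganise the chain. Using that torsion commutes with stabilisation (Proposition \ref{p:torsta}) and with perturbation (Theorem \ref{t:percom}), one pushes the outermost $\G T$ and $\G T^{-1}$ (coming from $\psi'$ and $\psi^{-1}$) through the stabilisation and perturbation layers of $\G B$, so that together with the internal $\G T$ and $\G T^{-1}$ of $\G B$ they combine via the associativity of torsion (Proposition \ref{p:assotors}) into a single torsion that is subsequently cancelled by its inverse. What remains is a composition of three perturbation isomorphisms, sandwiched between stabilisations; passing the stabilisations through the perturbations via Proposition \ref{p:persta} and applying the transitivity $\G P(T_1,T_2)\circ\G P(T_2,T_3)=\G P(T_1,T_3)$ of Theorem \ref{t:pervec} collapses the three perturbations into a single perturbation isomorphism of $|\pi_\la(p)\op \pi_\mu(p_0)|$ with itself along the trivial trace class path. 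Since $\G P(T,T)=\T{id}$ (a direct consequence of $\G P(T_1,T_2)=\G P(T_2,T_1)^{-1}$ in Theorem \ref{t:pervec}), this composite is the identity, hence $c_p=\T{id}$.

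The main difficulty is the bookkeeping of the auxiliary stabilisations by $e_{23}(\pi_\mu(1-p_0))$ and $e_{23}(\pi_\mu(1-p_0'))$ introduced by the change-of-base-point. These have to play no role in the final Fredholm determinant, and this is ensured by an observation analogous to Lemma \ref{l:pertcomp}: both stabilised operators $F^{12}(p_0,p,p)F^{13}(p,p,p_0)+e_{23}(\pi_\mu(1-p_0))$ and $F^{12}(p_0',p,p)F^{13}(p,p,p_0')+e_{23}(\pi_\mu(1-p_0'))$ differ from the corresponding operators $\Omega^{12}(p_0)\Omega^{13}(p_0)+(\text{complement})$ and its primed version only by trace class terms, and after the stabilisations have been commuted past the perturbations they contribute only the identity on the auxiliary $\pi_\mu$-summand. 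This is the one place where the hypothesis $p_0-p_0'\in I$ is essential, since it guarantees that the relevant difference of operators is trace class.
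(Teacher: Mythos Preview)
Your approach differs substantially from the paper's, and there is a gap at the final step. The paper does not attempt a direct computation of $c_p$; instead it uses an idempotent trick: since $c_p\in\cc^*$, it suffices to prove $c_p^2=c_p$. Via Lemma \ref{l:inverse} this reduces to the commutativity of the square
\[
\xymatrix{
\G L_p \ot \G L_p^\da \ot \G L_p \ot \G L_p^\da \ar[rr]^{\G B \ot \G B} \ar[d]_{\T{id} \ot \varphi^{-1} \ot \T{id}} &&
\G L_p' \ot (\G L_p')^\da \ot \G L_p' \ot ( \G L_p')^\da \ar[d]^{\T{id} \ot (\varphi')^{-1} \ot \T{id}} \\
\G L_p \ot \G L_p^\da \ar[rr]^{\G B} && \G L_p' \ot (\G L_p')^\da
}
\]
which is exactly an instance of the already-proved cofunctoriality of $\G B$ (Proposition \ref{p:cofunc}, with $e=q=p$). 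For $c_p^\da$ the paper likewise shows $(c_p^\da)^2=c_p^\da$, but since no analogue of Proposition \ref{p:cofunc} is available for $\G B^\da$, the relevant square is verified by hand via commutative diagrams in the $(\la,\la,\la,\mu)$ picture. So the two cases are \emph{not} handled symmetrically in the paper, contrary to your opening remark.

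The gap in your direct computation is the claim that everything collapses to $\G P(T,T)$ for a single operator $T=\pi_\la(p)\op\pi_\mu(p_0)$. In fact the source $(\cc,0)$ is identified via $\psi^{-1}$ with $|\pi_\la(p)\op\pi_\mu(p_0)|$, whereas the target $(\cc,0)$ is identified via $\psi'$ with $|\pi_\la(p)\op\pi_\mu(p_0')|$; these are identity operators on \emph{different} Hilbert spaces. After the stabilisations in $\G B$ lift both to the common three-component ambient space, the endpoints become two distinct invertible operators differing exactly in the $p_0$-versus-$p_0'$ data, and the composite perturbation is multiplication by a Fredholm determinant (Example \ref{ex:invpert}). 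You have not shown this determinant equals $1$; doing so would require a computation of the flavour of Lemma \ref{l:crux}. The idempotent trick bypasses this entirely: one never needs to know the value, only that it squares to itself.
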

\begin{proof}
The case of $c_p$ follows by combining Lemma \ref{l:inverse} and Proposition \ref{p:cofunc}. Indeed, it suffices to show that $c_p^2 = c_p$ and an application of Lemma \ref{l:inverse} implies that this amounts to verifying the commutativity of the diagram
\[
\xymatrix{
\G L_p \ot \G L_p^\da \ot \G L_p \ot \G L_p^\da \ar[rrr]_<<<<<<<<<<<<<<{\G B(p_0,p_0') \ot \G B(p_0,p_0')} \ar[d]^{\T{id} \ot \varphi^{-1} \ot \T{id}} &&&
\G L_p' \ot (\G L_p')^\da \ot \G L_p' \ot ( \G L_p')^\da \ar[d]_{\T{id} \ot (\varphi')^{-1} \ot \T{id}} \\
\G L_p \ot \G L_p^\da  \ar[rrr]^{\G B(p_0,p_0')} &&& \G L_p' \ot (\G L_p')^\da
}
\]
But the commutativity of this diagram follows immediately from Proposition \ref{p:cofunc}. We therefore focus on $c_p^\da \colon (\cc,0) \to (\cc,0)$. 

We show that $( c_p^\da)^2 = c_p^\da$. By Lemma \ref{l:inverse}, this amounts to verifying that the diagram
\begin{equation}\label{eq:betasqua}
\xymatrix{
\G L_p^\da \ot \G L_p \ot \G L_p^\da \ot \G L_p \ar[rr]_{\G B^\da \ot \G B^\da} \ar[d]^{\T{id} \ot \psi \ot \T{id}} &&
(\G L_p')^\da \ot \G L_p' \ot (\G L_p')^\da \ot \G L_p' \ar[d]_{\T{id} \ot \psi' \ot \T{id}} \\
\G L_p^\da \ot \G L_p  \ar[rr]^{\G B^\da} && (\G L_p')^\da \ot \G L_p'
}
\end{equation}
is commutative. To prove that this holds, we notice that the diagram
\begin{equation}\label{eq:betasquaI}
\xymatrix{
\G L_p^\da \ot \G L_p \ot \G L_p^\da \ot \G L_p
\ar[rr]_{\G B^\da \ot \G B^\da} \ar[d]^{\G T \G S} && (\G L_p')^\da \ot \G L_p' \ot (\G L_p')^\da \ot \G L_p' \ar[d]_{\G T \G S} \\
 \big|F^{34}(p,p_0) F^{24}(p_0,p) F^{24}(p,p_0) F^{14}(p_0,p) \big| \ar[rr]^{\G S^{-1}\G P \G S} && 
\big| F^{34}(p,p_0') F^{24}(p_0',p) F^{24}(p,p_0') F^{14}(p_0',p) \big| 
}
\end{equation}
commutes, where we are suppressing the tuple of indices $(\la,\la,\la,\mu)$. 
Remark that the stabilisations appearing in the lower row are adding the matrices $e_{31}(\pi_\la(1 - p_0))$ and $e_{31}(\pi_\la(1 - p_0'))$. But the commutativity of the diagram in Equation \eqref{eq:betasquaI} implies that the diagram
\[
\xymatrix{
\G L_p^\da \ot \G L_p \ot \G L_p^\da \ot \G L_p \ar[rr]_{\G B^\da \ot \G B^\da} \ar[d]^{(\G T \G S) \ci (\T{id}\ot \psi \ot \T{id})} 
& & (\G L_p')^\da \ot \G L_p' \ot (\G L_p')^\da \ot \G L_p' \ar[d]_{(\G T \G S) \ci (\T{id}\ot \psi' \ot \T{id})} \\
\big|F^{34}(p,p_0) F^{14}(p_0,p) \big| \ar[rr]^{\G S^{-1}\G P \G S} & & \big|F^{34}(p,p_0')F^{14}(p_0',p) \big|
}
\]
is commutative as well. The commutativity of the diagram in Equation \eqref{eq:betasqua} now follows.
\end{proof}


The next result does to some extent explain the relationship between the dual change-of-base-point isomorphism and the change-of-base-point isomorphism introduced in Section \ref{s:change}. As usual we suppress the pair of indices $(\la,\mu)$ and the pair of idempotents $(p_0,p_0')$ from the notation. We recall that $\epsilon$ denotes the commutativity constraint in the Picard category of $\zz$-graded complex lines, see Notation \ref{n:picard}.

\begin{prop}\label{p:dagger}
The following diagram is commutative:
\begin{equation}\label{eq:bebedag}
\xymatrix{
\G L_q^\da \ot \G L_p \ot \G L_q \ot \G L_p^\da \ar[rr]^{\G B^\da \ot \G B} \ar[d]_{\T{id} \ot \epsilon} &&
(\G L_q')^\da \ot \G L_p' \ot \G L_q' \ot (\G L_p')^\da \ar[d]^{\T{id} \ot \epsilon} \\
\G L_q^\da \ot \G L_q \ot \G L_p^\da  \ot \G L_p  \ar[rr]^{\varphi' \varphi^{-1} \ot \varphi' \varphi^{-1}} & &
(\G L_q')^\da \ot \G L_q' \ot (\G L_p')^\da  \ot \G L_p' }
\end{equation}
\end{prop}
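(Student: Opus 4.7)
The plan is to reduce both compositions in the diagram to a single perturbation isomorphism on a carefully chosen four-fold direct sum Hilbert space. The main input will be Lemma \ref{l:triv}, which identifies $(\varphi'_p) \circ \varphi_p^{-1}$ with the dual change-of-base-point isomorphism $\G B^\da(p_0,p_0')$ specialised to the diagonal pair $(p,p)$, and similarly for $q$. The principal tools are the associativity of torsion (Proposition \ref{p:assotors}), the commutativity of torsion with both perturbation (Theorem \ref{t:percom}) and stabilisation (Proposition \ref{p:torsta} and Proposition \ref{p:persta}), the transitivity of perturbation (Theorem \ref{t:pervec}), and Proposition \ref{p:torsign} which governs how the commutativity constraint $\epsilon$ interacts with torsion via direct sum decompositions.

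First I would apply Lemma \ref{l:triv} to each factor appearing on the bottom arrow, replacing $\varphi'_p \circ \varphi_p^{-1}$ and $\varphi'_q \circ \varphi_q^{-1}$ by the corresponding dual change-of-base-point isomorphisms attached to the pairs $(p,p)$ and $(q,q)$. The target equality thereby becomes the identity of isomorphisms
\[
(\T{id} \ot \epsilon) \circ (\G B^\da \ot \G B) = \big( \G B^\da|_{(q,q)} \ot \G B^\da|_{(p,p)} \big) \circ (\T{id} \ot \epsilon)
\]
from $\G L_q^\da \ot \G L_p \ot \G L_q \ot \G L_p^\da$ to $(\G L_q')^\da \ot \G L_q' \ot (\G L_p')^\da \ot \G L_p'$. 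Both sides now consist solely of change-of-base-point isomorphisms, each of which, by Definition \ref{def:change}, is the explicit composition $(\G T \G S)^{-1} \circ (\G S^{-1} \G P \G S) \circ (\G T \G S)$.

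Expanding these compositions and combining the resulting torsion, stabilisation, and perturbation isomorphisms by repeated applications of Theorem \ref{t:percom}, Proposition \ref{p:torsta}, Proposition \ref{p:persta}, and Proposition \ref{p:assotors}, each side can be collapsed into a single perturbation isomorphism on a stabilised composite Fredholm operator acting on the four-fold direct sum $\pi_\la(q)\C H \op \pi_\mu(p)\C H \op \pi_\la(p_0)\C H \op \pi_\mu(p_0)\C H$ (with target the analogous sum involving $p_0'$). On the top-then-right path the four Fredholm factors arise naturally from the pair $(\G B^\da, \G B)$ associated to $(q,p)$; on the down-then-right path they arise from the two diagonal dual change-of-base-point maps. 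The commutativity constraint $\epsilon$ is processed on each side using Proposition \ref{p:torsign}, which realises the swap as a torsion identity coming from two exact triangles associated to a single direct sum decomposition.

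The main obstacle will be matching the orderings of factors and verifying that the signs introduced by $\epsilon$ agree with the signs generated when reordering the tensor decomposition of the torsion isomorphisms. Once both sides are exhibited as the perturbation isomorphism between the same pair of stabilised Fredholm operators on the same four-fold direct sum, the equality is immediate from the transitivity of perturbation (Theorem \ref{t:pervec} $(1)$). Importantly, no explicit Fredholm determinant computation is required in this proof: the whole argument is a bookkeeping exercise in the torsion/perturbation/stabilisation calculus developed in Sections \ref{s:torsfred}--\ref{s:stab}, together with the single non-formal input provided by Lemma \ref{l:triv}.
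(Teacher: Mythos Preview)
Your opening reduction via Lemma \ref{l:triv}, replacing each $\varphi'\varphi^{-1}$ on the bottom row by a diagonal dual change-of-base-point isomorphism $\G B^\da|_{(q,q)}$ or $\G B^\da|_{(p,p)}$, is correct and is essentially how the paper also begins (the paper applies the $c_p$-part of Lemma \ref{l:triv} rather than $c_p^\da$, so its first intermediate target is $\varphi'\varphi^{-1} \ot (\psi')^{-1}\psi$ rather than $\varphi'\varphi^{-1} \ot \varphi'\varphi^{-1}$, but this is a minor variant). The torsion/perturbation/stabilisation calculus you invoke, together with Proposition \ref{p:torsign} to handle the $\epsilon$, is likewise the right toolkit.

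The gap is your central assertion that, after collapsing, both paths become the perturbation isomorphism \emph{between the same pair of stabilised Fredholm operators}. They do not. Working with the index tuple $(\la,\la,\mu,\mu)$ as in the paper's proof, the $\G B^\da \ot \G B$ side collapses (after the $\epsilon$-reordering via Proposition \ref{p:torsign}) to a perturbation isomorphism built on the operator $F^{13}(p_0,p) F^{23}(p,p_0) F^{14}(q,p_0) F^{13}(p_0,q)$, whereas the diagonal side $\G B^\da|_{(q,q)} \ot \G B^\da|_{(p,p)}$ collapses to the analogous operator with the idempotent $q$ in the last two factors replaced by $p$ (or conversely). These two composite operators act on different Hilbert spaces---one direct summand is $\pi_\mu(q)\C H$ in one case and $\pi_\mu(p)\C H$ in the other---and after stabilising both to a common domain they still differ, though only by a trace class operator. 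Your claim that they are equal is therefore false as stated. What is needed is an additional perturbation step connecting the two operators, together with the verification (via Theorem \ref{t:pervec} and Proposition \ref{p:persta}) that this connecting perturbation is compatible with the $p_0 \to p_0'$ perturbations on each side. This is exactly the content of the paper's intermediate diagram \eqref{eq:lastcom}, which your proposal omits. The paper then still has to do further work after combining everything into \eqref{eq:baselr}: the resulting vertical isomorphism is identified with a change-of-base-point map $\G B(q,p)$ (with $q,p$ now playing the role of base points), and Lemma \ref{l:inverse} is used to finish. None of this is pure bookkeeping, and without the connecting perturbation your argument asserts an identity of operators that does not hold.
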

\begin{proof}
We start by noticing that the diagram
\[
\xymatrix{
\G L_q^\da \ot \G L_p \ot \G L_q \ot \G L_p^\da \ar[rr]^{\G B^\da \ot \G B} \ar[d]^{\G T \G S} & & (\G L_q')^\da \ot \G L_p' \ot \G L_q' \ot (\G L_p')^\da \ar[d]^{\G T \G S} \\
\big|F^{13}(p_0,p) F^{14}(q,p_0) F^{23}(p,p_0) F^{13}(p_0,q)\big| \ar[rr]^{\G S^{-1}\G P \G S} & & 
\big|F^{13}(p_0',p) F^{14}(q,p_0') F^{23}(p,p_0') F^{13}(p_0',q)\big|
}
\]
is commutative, where we suppress the tuple of indices $(\la,\la,\mu,\mu)$ and where the stabilisations appearing in the lower row are adding the matrices $e_{21}(\pi_\la(1 - p_0)) + e_{34}(\pi_\mu(1 - p_0))$ and $e_{21}(\pi_\la(1 - p_0')) + e_{34}(\pi_\mu(1 - p_0'))$, respectively. Using now that $F^{14}(q,p_0)$ and $F^{23}(p,p_0)$ commute since they operate on different direct summands (and similarly with $p_0$ replaced by $p_0'$) we achieve from Proposition \ref{p:torsign} that the diagram
\begin{equation}\label{eq:fircom}
\xymatrix{
\G L_q^\da  \ot \G L_p \ot \G L_q \ot \G L_p^\da  \ar[rr]^{\G B^\da \ot \G B} \ar[d]^{\G T \G S (\T{id} \ot \epsilon \ot \T{id})} & & (\G L_q')^\da  \ot \G L_p' \ot \G L_q' \ot (\G L_p')^\da \ar[d]^{\G T \G S (\T{id} \ot \epsilon \ot \T{id})} \\
\big|F^{13}(p_0,p) F^{23}(p,p_0) F^{14}(q,p_0) F^{13}(p_0,q) \big| \ar[rr]^{\G S^{-1}\G P \G S} & & 
\big|F^{13}(p_0',p) F^{23}(p,p_0') F^{14}(q,p_0') F^{13}(p_0',q) \big|
}
\end{equation}
is commutative as well. In particular, when $p = q$ we obtain from Lemma \ref{l:triv} that the diagram
\begin{equation}\label{eq:midcom}
\xymatrix{
\G L_p^\da  \ot \G L_p \ot \G L_p \ot \G L_p^\da \ar[rr]^{\varphi' \varphi^{-1} \ot (\psi')^{-1} \psi} \ar[d]^{\G T \G S (\T{id} \ot \epsilon \ot \T{id})} & & 
(\G L_p')^\da  \ot \G L_p' \ot \G L_p' \ot (\G L_p')^\da \ar[d]^{\G T \G S (\T{id} \ot \epsilon \ot \T{id})} \\
\big|F^{13}(p_0,p) F^{23}(p,p_0) F^{14}(p,p_0) F^{13}(p_0,p) \big| \ar[rr]^{\G S^{-1}\G P \G S} & & 
\big|F^{13}(p_0',p) F^{23}(p,p_0') F^{14}(p,p_0') F^{13}(p_0',p) \big|
}
\end{equation}
is commutative. To continue, we remark that the diagram
\begin{equation}\label{eq:lastcom}
\xymatrix{
\big|F^{13}(p_0,p) F^{23}(p,p_0) F^{14}(q,p_0) F^{13}(p_0,q) \big| \ar[rr]^{\G S^{-1}\G P \G S} \ar[d]^{\G S^{-1}\G P \G S} & & 
\big|F^{13}(p_0',p) F^{23}(p,p_0') F^{14}(q,p_0') F^{13}(p_0',q) \big| \ar[d]^{\G S^{-1}\G P \G S} \\
\big|F^{13}(p_0,p) F^{23}(p,p_0) F^{14}(p,p_0) F^{13}(p_0,p) \big| \ar[rr]^{\G S^{-1}\G P \G S} & & 
\big|F^{13}(p_0',p) F^{23}(p,p_0') F^{14}(p,p_0') F^{13}(p_0',p) \big|
}
\end{equation}
is commutative, where the two stabilisations appearing in each of the two columns are adding the matrices $e_{43}(\pi_\mu(1-q))$ and $e_{43}(\pi_\mu(1-p))$, respectively. By combining the commutative diagrams in Equation \eqref{eq:fircom}-\eqref{eq:lastcom} we obtain the commutative diagram
\begin{equation}\label{eq:baselr}
\xymatrix{
\G L_q^\da \ot \G L_p \ot \G L_q \ot \G L_p^\da \ar[rrr]^{\G B^\da \ot \G B} \ar[d]_{\epsilon (\G T \G S)^{-1}(\G S^{-1} \G P \G S )(\G T \G S)\epsilon } & & & (\G L_q')^\da \ot \G L_p' \ot \G L_q' \ot (\G L_p')^\da \ar[d]^{\epsilon (\G T \G S)^{-1}(\G S^{-1} \G P \G S )(\G T \G S) \epsilon } \\
\G L_p^\da  \ot \G L_p \ot \G L_p \ot \G L_p^\da \ar[rrr]^{\varphi' \varphi^{-1} \ot (\psi')^{-1} \psi} & & &
(\G L_p')^\da  \ot \G L_p' \ot \G L_p' \ot (\G L_p')^\da
}
\end{equation}
where we clarify that the vertical commutativity constraints are really of the form $\T{id} \ot \epsilon \ot \T{id}$. We now observe that the left vertical isomorphism in the above diagram agrees with the composition of isomorphisms
\[ 
\begin{split}
& |F(p_0,q)| \ot |F(p,p_0)| \ot |F(q,p_0)| \ot |F(p_0,p)|
\to^{\T{id} \ot \epsilon \ot \T{id}} |F(p_0,q)| \ot |F(q,p_0)| \ot |F(p,p_0)| \ot |F(p_0,p)| \\
& \q \to^{\G B(q,p) \ot \T{id}^{\ot 2}}
|F(p_0,p)| \ot |F(p,p_0)| \ot |F(p,p_0)| \ot |F(p_0,p)| \\
& \q \to^{\T{id} \ot \epsilon \ot \T{id}}
|F(p_0,p)| \ot |F(p,p_0)| \ot |F(p,p_0)| \ot |F(p_0,p)|
\end{split}
\]
and hence, using Lemma \ref{l:triv}, it does in fact agree with the composition
\[ 
\begin{split}
& |F(p_0,q)| \ot |F(p,p_0)| \ot |F(q,p_0)| \ot |F(p_0,p)|
\to^{\T{id} \ot \epsilon \ot \T{id}} |F(p_0,q)| \ot |F(q,p_0)| \ot |F(p,p_0)| \ot |F(p_0,p)| \\
& \q \to^{\psi_q \ot \T{id}^{\ot 2}} |F(p,p_0)| \ot |F(p_0,p)|
\to^{\psi_p^{-1} \ot \T{id}^{\ot 2}}
|F(p_0,p)| \ot |F(p,p_0)| \ot |F(p,p_0)| \ot |F(p_0,p)| \\
& \q \to^{\T{id} \ot \epsilon \ot \T{id}}
|F(p_0,p)| \ot |F(p,p_0)| \ot |F(p,p_0)| \ot |F(p_0,p)| \, ,
\end{split}
\]
where the subscripts $p$ and $q$ on the duality isomorphism $\psi$ indicate that these idempotents function as base points. Remark that, changing the point of view and using $p_0$ as a base point, we have that $\psi_q = \varphi^{-1} \colon \G L_q^\da \ot \G L_q \to (\cc,0)$ and similarly $\psi_p = \varphi^{-1} \colon \G L_p^\da \ot \G L_p \to (\cc,0)$. Using Lemma \ref{l:inverse} we then notice that the composition of isomorphisms
\[
\begin{split}
& |F(p,p_0)| \ot |F(p_0,p)|\to^{\psi_p^{-1} \ot \T{id}^{\ot 2}}
|F(p_0,p)| \ot |F(p,p_0)| \ot |F(p,p_0)| \ot |F(p_0,p)| \\
& \q \to^{\T{id} \ot \epsilon \ot \T{id}}
|F(p_0,p)| \ot |F(p,p_0)| \ot |F(p,p_0)| \ot |F(p_0,p)|
\to^{\varphi^{-1} \ot \psi} (\cc,0)
\end{split}
\]
agrees with the composition of isomorphisms
\[
|F(p,p_0)| \ot |F(p_0,p)| \to^{\epsilon} |F(p_0,p)| \ot |F(p,p_0)| \to^{\varphi^{-1}} (\cc,0) \, .
\]
Combining these results we obtain that the composition of the left vertical isomorphism in Equation \eqref{eq:baselr} with the trivialisation
$\varphi^{-1} \ot \psi \colon \G L_p^\da \ot \G L_p \ot \G L_p \ot \G L_p^\da \to (\cc,0)$ agrees with the trivialisation
\[
\G L_q^\da \ot \G L_p \ot \G L_q \ot \G L_p^\da \to^{\T{id} \ot \epsilon}
\G L_q^\da \ot \G L_q \ot \G L_p^\da \ot \G L_p \to^{\varphi^{-1} \ot \varphi^{-1}} (\cc,0) \, .
\]
Since a similar description applies when $p_0$ is replaced by $p_0'$, the result of the present proposition follows from the commutativity of the diagram in Equation \eqref{eq:baselr}.
\end{proof}

%

\subsection{The symmetrised change of base point}
Let us fix two indices $\la$ and $\mu$ in the index set $\La$. We now derive one more alternative version of the change-of-base-point isomorphism. This alternative version is more symmetric in the indices $\la$ and $\mu$ and we refer to it as the \emph{symmetrised change-of-base-point isomorphism}. We are applying notational conventions similar to those described in the beginning of Subsection \ref{ss:cofunc} and \ref{ss:dual}.

We need a small lemma (which can be compared with Lemma \ref{l:changepert}). Remark that the invertible operator $\Om^{14}(p_0)$ appearing in the statement agrees with $\Om(\la,\mu)(p_0)$ up to stabilisation with the idempotent $\pi_\la(p) \op \pi_\mu(q)$ (and similarly with $p_0$ replaced by $p_0'$), see Notation \ref{n:bigfred}. We are also omitting the tuples of indices $(\la,\la,\mu,\mu)$ and $(\la,\mu)$ from the notation.  

\begin{lemma}\label{l:bigdif}
The difference of the two Fredholm operators
\[
\begin{split}
&  F^{13}(p_0,q) F^{24}(p,p_0) \Om^{14}(p_0) + e_{21}(\pi_\la(1 - p_0) ) + e_{34}( \pi_\mu(1 - p_0) ) 
\q \mbox{and} \\
& F^{13}(p_0',q) F^{24}(p,p_0') \Om^{14}(p_0') + e_{21}(\pi_\la(1 - p_0') ) + e_{34}( \pi_\mu(1 - p_0') ) \, ,
\end{split}
\]
with domain $\pi_\la(1)H \op \pi_\la(p) H \op \pi_\mu(q) H \op \pi_\mu(1)H$ and codomain $\pi_\la(q) H \op \pi_\la(1)H \op \pi_\mu(1)H \op \pi_\mu(p) H$, is of trace class.
\end{lemma}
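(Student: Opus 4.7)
The plan is to imitate the strategy used for Lemma \ref{l:pertcomp} and Lemma \ref{l:changepert}: reduce each of the two Fredholm operators modulo the trace ideal to a normal form which manifestly does not depend on whether the chosen base point is $p_0$ or $p_0'$. Throughout, write $\sim_1$ for equivalence modulo $\sL^1$ of the appropriate Hilbert space, and use that $p_0 - p_0' \in I$ together with Assumption~\ref{a:rep}(1)-(2).

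First I would expand the product $F^{13}(p_0,q) F^{24}(p,p_0)\Om^{14}(p_0)$ using the definitions in Notation~\ref{n:invfred} and Notation~\ref{n:bigfred}. The three factors act on disjoint pairs of coordinates (positions $(1,3)$, $(2,4)$ and $(1,4)$ respectively), so the only obstruction to rearranging and simplifying them is the presence of the projections $\pi_\la(p_0), \pi_\mu(p_0)$ sandwiched between the various $\pi_\la(\cdot), \pi_\mu(\cdot)$ arising from $\Om(\la,\mu)(1)$. Applying Lemma~\ref{l:commut} (with pairs like $(p_0,q)$ or $(p,p_0)$ whose difference lies in $I$) allows us to slide these projections past the $\Om$-entries modulo $\sL^1$; in every term where a projection labelled by $p_0$ is produced, it may be replaced by a projection labelled by $1$ at the cost of a trace class error, since differences of the form $\pi_\mu(p_0)\pi_\la(1) - \pi_\mu(p_0)\pi_\la(p_0)\pi_\la(1)$ lie in $\sL^1$ by Assumption~\ref{a:rep}(2). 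Adding the stabilising $e_{21}$- and $e_{34}$-blocks fills in the remaining diagonal slots.

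After these reductions I expect the operator to become
\[
F^{13}(p_0,q) F^{24}(p,p_0) \Om^{14}(p_0) + e_{21}(\pi_\la(1 - p_0)) + e_{34}(\pi_\mu(1 - p_0))
\sim_1 T,
\]
where $T$ is a fixed matrix of operators built only from $\Om(\la,\mu)(1)$, the idempotents $p$ and $q$, and the constant $1 \in R$. The key point is that the $p_0$-dependence has been absorbed into trace class remainders; any residual block involving $\pi_\la(p_0)$ or $\pi_\mu(p_0)$ can be written as $\pi_\la(p_0) - \pi_\la(p)$ or $\pi_\mu(p_0) - \pi_\mu(q)$ plus a projection from $\{p,q\}$, and the former differences are trace class perturbations of one another when $p_0$ is swapped for $p_0'$, again by Assumption~\ref{a:rep}(2). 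Performing exactly the same reduction with $p_0$ replaced by $p_0'$ yields the same normal form $T$, and taking the difference proves the lemma.

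The main obstacle will be bookkeeping: the expression $F^{13}(p_0,q)F^{24}(p,p_0)\Om^{14}(p_0)$ is a product of $4\times 4$ operator matrices whose entries involve triple products of $\pi_\la(\cdot), \pi_\mu(\cdot)$ with varying arguments, and one must verify that every off-diagonal entry produced by the commutation steps is genuinely trace class (not merely compact). This is precisely where the bipolarisation-style Assumption~\ref{a:rep} is used, mirroring the role it plays in the proof of Lemma~\ref{l:1} and Lemma~\ref{l:pertcomp}. Once the bookkeeping is carried out, the invariance of $T$ under $p_0 \leftrightarrow p_0'$ is automatic.
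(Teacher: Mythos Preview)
Your overall strategy---reduce using Lemma~\ref{l:commut} and then invoke Assumption~\ref{a:rep}---is the same as the paper's, but your execution has a gap at the critical step, and the paper's route is sharper.

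The gap is in your claim that ``differences of the form $\pi_\mu(p_0)\pi_\la(1) - \pi_\mu(p_0)\pi_\la(p_0)\pi_\la(1)$ lie in $\sL^1$ by Assumption~\ref{a:rep}(2)'' and that ``$\pi_\la(p_0) - \pi_\la(p)$ and $\pi_\la(p_0') - \pi_\la(p)$ are trace class perturbations of one another''. Neither follows from the assumptions: Assumption~\ref{a:rep}(2) says $\pi_\la(i)\pi_\mu(1)\pi_\nu(1) - \pi_\la(i)\pi_\nu(1) \in \sL^1$ for $i\in I$, which requires an element of the ideal in the \emph{first} slot and does not assert that $\pi_\la(p_0 - p_0')$ is itself trace class. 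So you cannot simply strip the $p_0$'s from each operator separately to reach a common normal form $T$; the $p_0$-dependence is genuinely there, and the point is that the \emph{difference} cancels against the stabilising $e_{21}$ and $e_{34}$ blocks.

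The paper exploits this directly. After using Lemma~\ref{l:commut} and Lemma~\ref{l:reduction} to write each operator modulo $\sL^1$ as a projected version of $\Om^{13}(1)\Om^{24}(1)\Om^{14}(1)$, the difference of the two Fredholm operators becomes $\Om^{13}(1)\Om^{24}(1)\Om^{14}(1)\cdot\big(\pi_\la(p_0-p_0')\oplus 0\oplus 0\oplus\pi_\mu(p_0-p_0')\big)$ modulo $\sL^1$, plus the difference of the stabilising blocks. Now $p_0 - p_0' \in I$, so one may propagate this ideal element leftwards through each $\Om$-factor, replacing each $\Om^{ij}(1)$ by its single ``off-diagonal'' entry exactly as in the proof of Lemma~\ref{l:1}. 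The explicit matrix product then collapses to $e_{21}(\pi_\la(p_0-p_0')) + e_{34}(\pi_\mu(p_0-p_0'))$, which is precisely what cancels the stabilising terms. This is why taking the difference first is the efficient move: it manufactures the ideal element needed to trigger Assumption~\ref{a:rep}(2), rather than trying to eliminate $p_0$ from each operator on its own.
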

\begin{proof}
This is a consequence of Lemma \ref{l:commut} and Lemma \ref{l:reduction} together with the following computation modulo $\sL^1(\C H^{\op 4})$:
\[
\begin{split}
& \Om^{13}(1) \Om^{24}(1) \Om^{14}(1) \cd ( \pi_\la(p_0 - p_0') \op 0 \op 0 \op \pi_\mu(p_0 - p_0') ) \\
& \q \sim_1
\ma{cccc}{0 & 0 & 0  & 0 \\ 0 & \pi_\la(1) & 0 & 0 \\ \pi_\mu(1) \pi_\la(1) & 0 & 0 & 0 \\ 0 & 0 & 0 & 0}
\cd \ma{cccc}{\pi_\la(1) & 0 & 0 & 0 \\ 0 & 0 & 0 & \pi_\la(1) \pi_\mu(1) \\ 0 & 0 & 0 & 0 \\ 0 & 0 & 0 & 0} \\
& \qqq \cd \ma{cccc}{0 & 0 & 0 & \pi_\la(1) \pi_\mu(1) \\ 0 & 0 & 0 & 0 \\ 0 & 0 & 0 & 0 \\ \pi_\mu(1) \pi_\la(1) & 0 & 0 & 0}
\cd ( \pi_\la(p_0 - p_0') \op 0 \op 0 \op \pi_\mu(p_0 - p_0') ) \\
& \q \sim_1 \ma{cc}{0 & 0 \\ \pi_\la(p_0 - p_0') & 0} \op \ma{cc}{0 & \pi_\mu(p_0 - p_0') \\ 0 & 0} \, . \qedhere
\end{split}
\]
\end{proof}

We recall the notation $L(\cd)$ and $R(\cd)$ from Example \ref{ex:LR} and apply Lemma \ref{l:bigdif} to define the \emph{symmetrised change-of-base-point isomorphism}
\[
\wit{\G B}(p_0,p_0') \colon \G L_p \ot \G L_q^\da \ot \G L_q \ot \G L_q^\da
\to \G L_p' \ot (\G L_q')^\da \ot \G L_q' \ot (\G L_q')^\da
\]
as the composition
\begin{equation}\label{eq:tilbet}
\begin{split}
\wit{\G B}(p_0,p_0') & \colon |F(p,p_0)| \ot |F(p_0,q)| \ot |F(q,p_0)| \ot |F(p_0,q)| \\
& \q \to^{R(\Om^{14}(p_0))(\G T \G S) \ot L(\Om^{14}(p_0))(\G T \G S)}
\big| F^{13}(p_0,q) F^{24}(p,p_0) \Om^{14}(p_0)  \big|
\ot \big| \Om^{14}(p_0) F^{24}(p_0,q) F^{13}(q,p_0)  \big| \\
& \q \to^{\G S^{-1}\G P \G S \ot \G S^{-1}\G P \G S}
\big|F^{13}(p_0',q) F^{24}(p,p_0') \Om^{14}(p_0')  \big|
\ot \big|\Om^{14}(p_0') F^{24}(p_0',q) F^{13}(q,p_0')  \big| \\
& \q \to^{(\G T  \G S)^{-1} R(\Om^{14}(p_0')) \ot (\G T \G S)^{-1} L(\Om^{14}(p_0'))}
|F(p,p_0')| \ot |F(p_0',q)| \ot |F(q,p_0')| \ot |F(p_0',q)| \, .
\end{split}
\end{equation}
The stabilisations surrounding the two perturbation isomorphisms are adding the matrices $e_{21}(\pi_\la(1 - p_0) ) + e_{34}( \pi_\mu(1 - p_0) )$ and $e_{12}(\pi_\la(1-p_0)) + e_{43}( \pi_\mu(1-p_0))$ and similarly with $p_0$ replaced by $p_0'$.

The next result explains the relationship between the symmetrised change-of-base-point isomorphism and the change-of-base-point isomorphism introduced in Section \ref{s:change}.

\begin{prop}\label{p:witbet}
The following diagram is commutative:
\[
\xymatrix{
\G L_p \ot \G L_q^\da \ot \G L_q \ot \G L_q^\da \ar[rr]^>>>>>>>>>>{\wit{\G B}(p_0,p_0')} \ar[d]_{\T{id} \ot \varphi^{-1} \ot \T{id}} && \G L_p' \ot (\G L_q')^\da \ot \G L_q' \ot (\G L_q')^\da \ar[d]^{\T{id} \ot (\varphi')^{-1} \ot \T{id}} \\
\G L_p \ot \G L_q^\da \ar[rr]^{\G B(p_0,p_0')} && \G L_p' \ot (\G L_q')^\da
}
\]
\end{prop}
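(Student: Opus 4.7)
The plan is to unpack both paths around the diagram into chains of torsion, stabilisation, and perturbation isomorphisms, and then invoke the compatibility results established earlier (Theorems \ref{t:percom} and \ref{t:pervec}, Propositions \ref{p:torsta} and \ref{p:persta}) to reduce them to a common composition that can be identified with $\G B(p_0,p_0')$ via the reformulation given in Equation \eqref{eq:beta} in the proof of Proposition \ref{p:cofunc}.

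First I would expand the upper path. By definition of $\wit{\G B}(p_0,p_0')$ in Equation \eqref{eq:tilbet}, the two outer slots acquire the dressings $R(\Om^{14}(p_0))$ and $L(\Om^{14}(p_0))$, respectively. When we subsequently contract the central pair $|F(p_0',q)| \ot |F(q,p_0')|$ via $(\varphi')^{-1}$ — which by Equation \eqref{eq:dualI} is itself a torsion followed by a perturbation isomorphism — the two copies of $\Om^{14}(p_0')$ sandwiched against each other appear in the combination $\Om^{14}(p_0') \cd \Om^{14}(p_0')$. By Lemma \ref{l:reduction} this equals the identity on the appropriate subspace, so by the cocycle property of perturbation (Theorem \ref{t:pervec}) these dressings disappear after contraction. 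A symmetric statement holds on the $p_0$-side.

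Next I would apply Theorem \ref{t:percom} and Propositions \ref{p:torsta}, \ref{p:persta} repeatedly to gather the surviving torsion, stabilisation, and perturbation isomorphisms in both paths into the single form
\[
|F(p,p_0)| \ot |F(p_0,q)| \xrightarrow{\G T \G S} |H(p_0)| \xrightarrow{\G S^{-1}\G P \G S} |H(p_0')| \xrightarrow{(\G T \G S)^{-1}} |F(p,p_0')| \ot |F(p_0',q)|,
\]
where $H(p_0)$ is a suitably stabilised version of $F^{13}(p_0,q) F^{24}(p,p_0)$ and $H(p_0')$ its $p_0'$-analogue. Lemma \ref{l:bigdif} ensures that $H(p_0) - H(p_0')$ is of trace class so that the central perturbation isomorphism is well defined. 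By the alternative description of $\G B(p_0,p_0')$ recorded in Equation \eqref{eq:beta} (applied with $e = p$ there playing the role of our auxiliary slot), this composition coincides with $\G B(p_0,p_0')$ precomposed with the $\varphi^{-1}$-contraction. This establishes the commutativity of the diagram.

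The main obstacle is the bookkeeping of the stabilising idempotents $e_{21}(\pi_\la(1-p_0))$, $e_{34}(\pi_\mu(1-p_0))$, $e_{12}(\pi_\la(1-p_0))$, $e_{43}(\pi_\mu(1-p_0))$, and their $p_0'$-counterparts, which enter the definition of $\wit{\G B}(p_0,p_0')$ in asymmetric positions compared to those entering $\G B(p_0,p_0')$. Reconciling them requires the careful use of Proposition \ref{p:torsta} and Proposition \ref{p:persta} to commute stabilisations past the relevant torsion and perturbation isomorphisms, and Lemma \ref{l:commut} and Lemma \ref{l:reduction} to identify the resulting operators modulo the trace ideal. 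Once this bookkeeping is completed, the cancellation of the $\Om^{14}(p_0)$ and $\Om^{14}(p_0')$ factors described above is the conceptual content that makes the diagram commute.
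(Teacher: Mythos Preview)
Your overall strategy --- unpack both paths into chains of torsion, stabilisation and perturbation isomorphisms and then invoke the compatibility results --- is sound, and it is also how the paper proceeds. The gap is in your cancellation step for the $\Om^{14}$-dressings.

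You claim that after contracting the central pair, the two copies of $\Om^{14}(p_0')$ meet and cancel via $\Om^{14}(p_0')^2=\T{id}$, and that ``a symmetric statement holds on the $p_0$-side''. But the dressings $R(\Om^{14}(p_0))$, $L(\Om^{14}(p_0))$ sit on one side of the perturbation step in $\wit{\G B}$, while $R(\Om^{14}(p_0'))$, $L(\Om^{14}(p_0'))$ sit on the other. After gathering everything into a single determinant line one is comparing $\G B$ with a version of $\G B$ conjugated by $\Om$-dressings of \emph{different} base points. Removing these mismatched dressings is not a matter of $\Om^2=1$; it produces a Fredholm determinant correction. Concretely, the paper shows (via the reductions in Equations \eqref{eq:symmbase}--\eqref{eq:symmbaseII}, which factor the stabilised operator as a product $(\Om^{14}(p_0)+\Om^{14}(1-p_0))\cdot(\ldots)\cdot(\Om^{13}(p_0)+\Om^{13}(1-p_0))$) that the upper path equals the lower path times
\[
\det\Big( \big(\Om(\la,\mu)(p_0') + \Om(\la,\mu)(1-p_0')\big)\big(\Om(\la,\mu)(p_0) + \Om(\la,\mu)(1-p_0)\big) \Big)^2,
\]
and then observes that this square equals $1$ because both factors are involutions. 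This determinant identity is the actual content of the proposition; your argument needs it. A smaller point: the alternative description of $\G B$ in Equation \eqref{eq:beta} lives in the $(\la,\mu,\mu,\mu)$-setting, not the $(\la,\la,\mu,\mu)$-setting used here, so it does not directly give the comparison you want.
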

\begin{proof}
We start by noticing that the symmetrised change-of-base-point isomorphism agrees with the following composition of isomorphisms:
\begin{equation}\label{eq:symmbase}
\begin{split}
& \G L_p \ot \G L_q^\da \ot \G L_q \ot \G L_q^\da
\to^{L(\Om^{15}(p_0)) R(\Om^{14}(p_0)) \G T \G S} \big| \Om^{15}(p_0) F^{25}(p_0,q) F^{13}(q,p_0) F^{13}(p_0,q) F^{24}(p,p_0) \Om^{14}(p_0) \big| \\
& \q \to^{\G S^{-1} \G P \G S}
\big| \Om^{15}(p_0') F^{25}(p_0',q) F^{13}(q,p_0') F^{13}(p_0',q) F^{24}(p,p_0') \Om^{14}(p_0') \big| \\
& \q \to^{(\G T \G S)^{-1} L(\Om^{15}(p_0')) R(\Om^{14}(p_0'))}
\G L_p' \ot (\G L_q')^\da \ot \G L_q' \ot (\G L_q')^\da \, ,
\end{split}
\end{equation}
where the stabilisations surrounding the perturbation isomorphism are adding the matrix $e_{11}(\pi_\la(1-p_0)) + e_{54}(\pi_\mu(1-p_0))$ (and similarly with $p_0$ replaced by $p_0'$) and where the tuple of indices $(\la,\la,\mu,\mu,\mu)$ has been suppressed from the notation. We then remark that the composition of isomorphisms in Equation \eqref{eq:symmbase} agrees with the following composition of isomorphisms:
\[
\begin{split}
& \G L_p \ot \G L_q^\da \ot \G L_q \ot \G L_q^\da \to^{\T{id} \ot \varphi^{-1} \T{id}}
\G L_p \ot \G L_q^\da
\to^{L(\Om^{14}(p_0)) R(\Om^{13}(p_0)) \G T \G S} \big| \Om^{14}(p_0) F^{24}(p_0,q) F^{23}(p,p_0) \Om^{13}(p_0) \big| \\
& \q \to^{\G S^{-1} \G P \G S}
\big| \Om^{14}(p_0') F^{24}(p_0',q) F^{23}(p,p_0') \Om^{13}(p_0') \big| \\
& \q \to^{(\G T \G S)^{-1} L(\Om^{14}(p_0')) R(\Om^{13}(p_0'))}
\G L_p' \ot (\G L_q')^\da \to^{\T{id} \ot \varphi' \ot \T{id}} \G L_p' \ot (\G L_q')^\da \ot \G L_q' \ot (\G L_q')^\da \, ,
\end{split}
\]
where the non-obvious stabilisations are adding the matrices $e_{11}(\pi_\la(1-p_0)) + e_{43}(\pi_\mu(1-p_0))$ and $e_{11}(\pi_\la(1-p_0')) + e_{43}(\pi_\mu(1-p_0'))$, respectively. Combining these results, we see that it suffices to show that the change-of-base-point isomorphism $\G B \colon \G L_p \ot \G L_q^\da \to \G L_p' \ot (\G L_q')^\da$ agrees with the composition of isomorphisms
\begin{equation}\label{eq:symmbaseI}
\begin{split}
& \G L_p \ot \G L_q^\da \to^{L(\Om^{14}(p_0)) R(\Om^{13}(p_0)) \G T \G S}
\big| \Om^{14}(p_0) F^{24}(p_0,q) F^{23}(p,p_0) \Om^{13}(p_0) \big| \\
& \q \to^{\G S^{-1} \G P \G S} \big| \Om^{14}(p_0') F^{24}(p_0',q) F^{23}(p,p_0') \Om^{13}(p_0') \big|
\to^{(\G T \G S)^{-1}L(\Om^{14}(p_0')) R(\Om^{13}(p_0'))} 
\G L_p' \ot (\G L_q')^\da \, .
\end{split}
\end{equation}
To prove this result we let 
\[
\begin{split}
& \Om^{14}(1-p_0) \in \sL\big( \pi_\la(1) \C H \op \pi_\la(q) \C H \op \pi_\mu(p) \C H \op \pi_\mu(1) \C H\big) \q \T{and} \\
& \Om^{13}(1 - p_0) \in \sL\big( \pi_\la(1) \C H \op \pi_\la(p) \C H \op \pi_\mu(1) \C H \op \pi_\mu(q) \C H\big)
\end{split}
\]
be stabilized by $0$ and remark that 
\[
\begin{split}
& \Om^{14}(1 - p_0)  \cd \big( F^{24}(1,p_0,p,q) F^{23}(1,p,p_0,q) + e_{43}(\pi_\mu(1-p_0)) \big) \Om^{13}(1 - p_0) \\
& \q = \Om^{14}(1 - p_0) \big( e_{11}( \pi_\la(1-p_0)) + e_{43}(\pi_\mu(1-p_0)) \big) \Om^{13}(1 - p_0)  \\
& \q = e_{11}(\pi_\la(1-p_0)) + e_{43}(\pi_\mu(1-p_0)) \, ,
\end{split}
\]
where the second identity follows from Lemma \ref{l:reduction}. We thus obtain that
\[
\begin{split}
& \Om^{14}(p_0) F^{24}(p_0,q) F^{23}(p,p_0) \Om^{13}(p_0) + e_{11}(\pi_\la(1-p_0)) + e_{43}(\pi_\mu(1-p_0)) \\
& \q = ( \Om^{14}(p_0) + \Om^{14}(1 - p_0) ) \cd \big( F^{24}(1,p_0,p,q) F^{23}(1,p,p_0,q) + e_{43}( \pi_\mu(1-p_0))\big) \\ 
& \qq \cd ( \Om^{13}(p_0) + \Om^{13}(1 - p_0) ) \, .
\end{split}
\]
A similar result applies to the case where $p_0$ has been replaced by $p_0'$. Using this computation one may verify that the composition of isomorphisms in Equation \eqref{eq:symmbaseI} agrees with the following composition of isomorphisms:
\begin{equation}\label{eq:symmbaseII}
\begin{split}
& \G L_p \ot \G L_q^\da \to^{\G S \G T \G S}
\big| F^{24}(1,p_0,p,q) F^{23}(1,p,p_0,q) + e_{43}( \pi_\mu(1-p_0)) \big|  \\
& \q \to^{L(\Om^{14}(p_0) + \Om^{14}(1 - p_0))R(\Om^{13}(p_0) + \Om^{13}(1-p_0))} 
\big| \Om^{14}(p_0) F^{24}(p_0,q) F^{23}(p,p_0) \Om^{13}(p_0)  \\
& \qqq \qqq \qqq \qqq + e_{11}(\pi_\la(1-p_0)) + e_{43}(\pi_\mu(1-p_0)) \big| \\
& \q \to^{\G P}
\big| \Om^{14}(p_0') F^{24}(p_0',q) F^{23}(p,p_0') \Om^{13}(p_0') + 
e_{11}(\pi_\la(1-p_0')) + e_{43}(\pi_\mu(1-p_0')) \big| \\
& \q \to^{L(\Om^{14}(p_0') + \Om^{14}(1 - p_0'))R(\Om^{13}(p_0') + \Om^{13}(1-p_0'))}
\big| F^{24}(1, p_0',p,q) F^{23}(1,p,p_0',q) + e_{43}( \pi_\mu(1-p_0')) \big| \\
& \q \to^{(\G S \G T \G S)^{-1}} 
\G L_p' \ot (\G L_q')^\da \, .
\end{split}
\end{equation}
This latter composition of isomorphisms can be seen to coincide with the change-of-base-point isomorphism $\G B \colon \G L_p \ot \G L_q^\da \to \G L_p' \ot (\G L_q')^\da$ multiplied with the square of the Fredholm determinant $\det\big(  \big( \Om(\la,\mu)(p_0') + \Om(\la,\mu)(1-p_0') \big) \big( \Om(\la,\mu)(p_0) + \Om(\la,\mu)(1-p_0) \big)  \big)$. The result of the proposition therefore follows since 
\[
\det\big(  \big( \Om(\la,\mu)(p_0') + \Om(\la,\mu)(1-p_0') \big) \big( \Om(\la,\mu)(p_0) + \Om(\la,\mu)(1-p_0) \big) \big)^2 = 1 \, . \qedhere
\]
\end{proof}

\subsection{Binary and ternary versions of the change of base point}\label{ss:binter}
Throughout this subsection we fix three elements in the index set $\la,\mu,\nu \in \La$ together with two idempotents $p$ and $q$ in $R$, both agreeing with the base points $p_0$ and $p_0'$ modulo the ideal $I \su R$.

As an extra technical step towards establishing the functoriality of the change-of-base-point isomorphism, we now develop binary and ternary versions of this isomorphism. Moreover, these higher analogues of the change-of-base-point isomorphism will be related to tensor products of the original change-of-base-point isomorphisms.

We begin with two lemmas asserting the existence of perturbation isomorphisms. In the first lemma we are suppressing the tuple of indices $(\la,\la,\mu,\nu)$ and in the second lemma we are suppressing the tuple of indices $(\la,\la,\la,\mu,\nu)$.

\begin{lemma}\label{l:merbetl}
The difference of the following two Fredholm operators
\[
\begin{split}
&  F^{34}(p_0,q) \cd F^{14}(q,p_0) \cd F^{24}(p_0,p) \cd F^{34}(p,p_0) \\
& \qq + e_{12}(\pi_\la(1-p_0)) + e_{44}(\pi_\nu(1-p_0)) \q \mbox{and} \\
& F^{34}(p_0',q) \cd F^{14}(q,p_0') \cd F^{24}(p_0',p) \cd F^{34}(p,p_0') \\
& \qq + e_{12}(\pi_\la(1-p_0')) + e_{44}(\pi_\nu(1-p_0')) \, ,
\end{split}
\]
both acting from the Hilbert space $\pi_\la(q) \C H \op \pi_\la(1)\C H \op \pi_\mu(p) \C H \op \pi_\nu(1)\C H$ to the Hilbert space $\pi_\la(1)\C H \op \pi_\la(p) \C H \op \pi_\mu(q) \C H \op \pi_\nu(1)\C H$, is of trace class.
\end{lemma}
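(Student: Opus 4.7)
The plan is to follow the pattern already established in the proofs of Lemma \ref{l:changepert} and Lemma \ref{l:bigdif}, extended to handle a longer string of four Fredholm factors rather than two or three. The overall strategy is: (i) approximate each of the two Fredholm operators modulo the trace class ideal $\sL^1(\C H^{\oplus 4})$ by a product of the invertible operators $\Om^{ij}(1)$ multiplied by appropriate $\pi$-projections, and (ii) then reduce the desired trace class statement to a matrix computation of the type carried out in Lemma \ref{l:1}.

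First I would apply Lemma \ref{l:commut} repeatedly in order to pull the idempotents $p$, $q$, $p_0$ sitting inside the various $F^{ij}$-factors past the subsequent $\Om$-operators. After a finite number of such moves one arrives, modulo $\sL^1(\C H^{\oplus 4})$, at an identity of the form
\[
F^{34}(p_0,q) \cdot F^{14}(q,p_0) \cdot F^{24}(p_0,p) \cdot F^{34}(p,p_0) \sim_1 E \cdot \Om^{34}(1) \Om^{14}(1) \Om^{24}(1) \Om^{34}(1) \cdot E' ,
\]
where $E$ and $E'$ are direct sums of the form $\pi_\la(\cdot) \oplus \pi_\la(\cdot) \oplus \pi_\mu(\cdot) \oplus \pi_\nu(\cdot)$ built from the entries $1, p, q$ (and independent of the particular base point, once we work modulo trace class, since $p_0 - p_0' \in I$). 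Adding the stabilising terms $e_{12}(\pi_\la(1-p_0)) + e_{44}(\pi_\nu(1-p_0))$ and forming the difference with the corresponding expression for $p_0'$, the claim then reduces to showing that
\[
\Big( \Om^{34}(1)\Om^{14}(1)\Om^{24}(1)\Om^{34}(1) - \pi_\la(1) \oplus \pi_\la(1) \oplus \pi_\mu(1) \oplus \pi_\nu(1) \Big) \cdot \big( \pi_\la(p_0-p_0') \oplus 0 \oplus 0 \oplus \pi_\nu(p_0-p_0') \big)
\]
is of trace class, together with the analogous sided multiplication on the other side. Since $p_0 - p_0' \in I_G$, this is exactly the type of statement treated in Lemma \ref{l:1}: each $\Om^{ij}(1)$ is an explicit $4 \times 4$ block matrix whose off-diagonal entries only connect the $i$-th and $j$-th components, and one multiplies the factors out and observes that every term in the resulting block matrix is a product in which one factor is of the form $\pi_\star(i)$ with $i \in I$, so that Assumption \ref{a:rep}(2) produces a trace class operator.

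The main obstacle I expect is the matrix bookkeeping in step (i): unlike the three-factor situation of Lemma \ref{l:1}, here one has four $\Om^{ij}$ factors arranged in a $34$–$14$–$24$–$34$ pattern, and one must carefully check that the intermediate stabilising matrices $e_{12}(\pi_\la(1-p_0))$ and $e_{44}(\pi_\nu(1-p_0))$ combine correctly with the main product to recover (up to $\sL^1$) the expected expression $E \cdot \Om^{34}(1)\Om^{14}(1)\Om^{24}(1)\Om^{34}(1) \cdot E'$. This is a purely computational task: it amounts to writing each $\Om^{ij}(1)$ explicitly (as in the proof of Lemma \ref{l:reduction}), expanding the product, and absorbing into $\sL^1$ any entry containing a factor $\pi_\ast(p_0) - \pi_\ast(p_0 p_0)$ or $\pi_\ast(p)\pi_\star(1) - \pi_\ast(1)\pi_\star(p)$, which are trace class by Assumption \ref{a:rep}(1)–(2). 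Once the reduction step is in place, the differentiation in the base point and the final appeal to Lemma \ref{l:1}-type reasoning are immediate.
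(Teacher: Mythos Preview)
Your overall strategy is the paper's: use Lemma \ref{l:commut} to push the idempotents in the $F^{ij}$-factors to one side of the $\Om$-product, take the difference in the base point, and then use $p_0-p_0'\in I$ together with Assumption \ref{a:rep} to reduce each $\Om^{ij}(1)$ to a single off-diagonal block (exactly as in Lemma \ref{l:1}) before multiplying out.

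There is, however, a genuine slip in your reduction. The assertion that $E$ and $E'$ are ``independent of the particular base point, once we work modulo trace class'' is false: $\pi_\la(p_0)-\pi_\la(p_0')$ is not trace class in general, since Assumption \ref{a:rep} only gives $\sL^1$-estimates after multiplying by a further $\pi$-factor. In the paper's proof one pulls all idempotents to the \emph{left}, obtaining modulo $\sL^1$ an expression $E(p_0)\cdot \Om^{34}(1)\Om^{14}(1)\Om^{24}(1)\Om^{34}(1)$ with $E(p_0)=\pi_\la(p_0)\oplus\pi_\la(p)\oplus\pi_\mu(q)\oplus\pi_\nu(p_0)$ genuinely $p_0$-dependent; there is no surviving right factor $E'$. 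The base-point difference is then $(\pi_\la(p_0-p_0')\oplus 0^{\oplus 2}\oplus\pi_\nu(p_0-p_0'))$ on the \emph{left} of the $\Om$-product.

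Consequently your displayed target ``$(\Om\cdots\Om-\T{id})\cdot(\text{idempotent})\in\sL^1$'' is not what one must prove. The correct target is
\[
(\pi_\la(p_0-p_0')\oplus 0^{\oplus 2}\oplus\pi_\nu(p_0-p_0'))\cdot \Om^{34}(1)\Om^{14}(1)\Om^{24}(1)\Om^{34}(1)\ \sim_1\ e_{12}(\pi_\la(p_0-p_0'))+e_{44}(\pi_\nu(p_0-p_0')),
\]
which then cancels exactly against the difference of the stabilising terms. The right-hand side is the specific off-diagonal pattern dictated by the stabilisers, not the identity. The paper checks this by the block-matrix multiplication you describe in your last paragraph; once the reduction is stated correctly, your plan and the paper's proof coincide.
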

\begin{proof}
We compute modulo $\sL^1(\C H^{\op 4})$, using Lemma \ref{l:commut} and Assumption \ref{a:rep} together with the fact that $p_0 - p_0' \in I$:
\begin{align*}
& F^{34}(p_0,q) \cd F^{14}(q,p_0) \cd F^{24}(p_0,p) \cd F^{34}(p,p_0) \\
& \qq - F^{34}(p_0',q) \cd F^{14}(q,p_0') \cd F^{24}(p_0',p) \cd F^{34}(p,p_0') \\
& \q \sim_1
\big( \pi_\la(p_0 - p_0') \op 0^{\op 2} \op \pi_\nu(p_0 - p_0') \big)
\cd \ma{cccc}{\pi_\la(1) & 0 & 0 & 0 \\ 0 & 0 & 0 & 0 \\ 0 & 0 & 0 & 0 \\ 0 & 0 & \pi_\nu(1) \pi_\mu(1) & 0} \\
& \qq \cd
\ma{cccc}{0 & 0 & 0 & \pi_\la(1) \pi_\nu(1) \\ 0 & 0  & 0 & 0 \\ 0 & 0 & \pi_\mu(1) & 0 \\ 0 & 0 & 0 & 0}
\cd
\ma{cccc}{0 & 0 & 0 & 0 \\ 0 & 0 & 0 & 0 \\ 0 & 0 & \pi_\mu(1) & 0 \\ 0 & \pi_\nu(1) \pi_\la(1) & 0  & 0} \\
& \qqqq \cd
\ma{cccc}{0 & 0 & 0 & 0 \\ 0 & \pi_\la(1) & 0 & 0 \\ 0 & 0 & 0 & \pi_\mu(1) \pi_\nu(1) \\ 0 & 0 & 0 & 0} \\
& \q \sim_1
e_{12}( \pi_\la(p_0 - p_0')) + e_{44}(\pi_\nu(p_0 - p_0')) \, . \qedhere
\end{align*}
\end{proof}

\begin{lemma}\label{l:multrace}
The difference of the following two Fredholm operators
\[
\begin{split}
& \Om^{14}(p_0) \cd F^{24}(p_0,q) F^{45}(p_0,q) F^{25}(q,p_0) \\
& \qqq \cd F^{35}(p_0, p) F^{45}(p,p_0) F^{34}(p,p_0) \cd \Om^{14}(p_0) \\
& \q + \pi_\la(1 - p_0)  \op 0 \op 0 \op \pi_\mu(1 - p_0) \op \pi_\nu(1 -p_0) \q \mbox{and} \\
& \Om^{14}(p_0') \cd F^{24}(p_0',q) F^{45}(p_0',q) F^{25}(q,p_0') \\
& \qqq \cd F^{35}(p_0', p) F^{45}(p,p_0') F^{34}(p,p_0') \cd \Om^{14}(p_0') \\
& \q + \pi_\la(1 - p_0')  \op 0 \op 0 \op \pi_\mu(1 - p_0') \op \pi_\nu(1 -p_0') \, ,
\end{split}
\]
both acting on the Hilbert space $\pi_\la(1)\C H \op \pi_\la(q) \C H \op \pi_\la(p) \C H \op \pi_\mu(1)\C H \op \pi_\nu(1) \C H$, is of trace class.
\end{lemma}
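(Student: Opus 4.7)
The plan is to mimic the proof of Lemma \ref{l:merbetl}, but with the additional technical subtleties coming from the presence of the outer factors $\Omega^{14}(p_0)$ and the longer product of Fredholm operators in the middle. Throughout, I will work modulo the trace ideal $\sL^1(\C H^{\oplus 5})$, writing $\sim_1$ for the corresponding equivalence relation, and systematically invoke Assumption \ref{a:rep}, Lemma \ref{l:commut}, and Lemma \ref{l:reduction}.

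First, I would use Lemma \ref{l:reduction} to rewrite each Fredholm operator $F^{ij}(\cdot)$ appearing in the central product as the corresponding $\Omega^{ij}$ operator modulo $\sL^1$, after applying Lemma \ref{l:commut} to absorb the projection prefactors (e.g.\ $\pi_\mu(p)$, $\pi_\nu(p_0)$) with trace class error. This reduces the central six-fold product to
\[
\Omega^{24}(p_0)\,\Omega^{45}(p_0)\,\Omega^{25}(p_0)\,\Omega^{35}(p_0)\,\Omega^{45}(p_0)\,\Omega^{34}(p_0)
\]
plus a trace class perturbation (independent of $p_0$ vs $p_0'$ up to a further trace class error, by Assumption \ref{a:rep} applied to $p_0 - p_0' \in I$ exactly as in Lemma \ref{l:merbetl}).

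Next, I would exploit the self-inverse property $\Omega^{ij}(p_0)^2 = \pi_{\la_i}(p_0) \oplus \pi_{\la_j}(p_0)$ from Lemma \ref{l:reduction}, together with the fact that $\Omega^{ij}(p_0)$ and $\Omega^{kl}(p_0)$ commute when $\{i,j\} \cap \{k,l\} = \emptyset$. The repeated $\Omega^{45}(p_0)$ factor then squares out, and what remains collapses (modulo $\sL^1$) to a product that can be written as
\[
\Omega^{14}(p_0) \cdot \bigl[\text{algebraic simplification supported on the relevant subspace}\bigr] \cdot \Omega^{14}(p_0),
\]
and the two outer $\Omega^{14}(p_0)$ factors square to $\pi_\la(p_0) \oplus \pi_\mu(p_0)$, again by Lemma \ref{l:reduction}. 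The net effect is that the whole long product becomes, modulo $\sL^1$, a bounded operator whose explicit matrix form depends on $p_0$ only through elements of the form $\pi_{\la_i}(p_0)\pi_{\la_j}(1) - \pi_{\la_i}(p_0)\pi_{\la_j}(p_0)$ and the like, all of which differ trace class between the $p_0$ and $p_0'$ versions by Assumption \ref{a:rep}.

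Finally, the stabilization terms $\pi_\la(1-p_0) \oplus 0 \oplus 0 \oplus \pi_\mu(1-p_0) \oplus \pi_\nu(1-p_0)$ and their primed counterparts differ by $e_{11}(\pi_\la(p_0-p_0')) + e_{44}(\pi_\mu(p_0 - p_0')) + e_{55}(\pi_\nu(p_0-p_0'))$, which lies in $\sL^1$ since $p_0 - p_0' \in I$ and by the same trace class manipulations as in Lemma \ref{l:merbetl}. Combining the two trace class contributions yields the claim.

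The main obstacle will be bookkeeping: unlike Lemma \ref{l:merbetl}, here one has six $F$-operators sandwiched between two $\Omega^{14}(p_0)$'s spread over a five-fold direct sum, and the commutation relations required to bring the product into a form where Lemma \ref{l:reduction} applies produce many intermediate trace class errors that must each be checked against Assumption \ref{a:rep}. The key identity driving the whole computation is that the middle product, when multiplied on both sides by $\Omega^{14}(p_0)$ and truncated to the appropriate subspace, simplifies modulo $\sL^1$ to something intrinsic to the triple of representations and the pair of idempotents $(p,q)$, with the $p_0$-dependence entering only through combinations of the form $\pi_{\la_i}(x)\pi_{\la_j}(1) - \pi_{\la_i}(1)\pi_{\la_j}(x)$ controlled by Assumption \ref{a:rep}(1) and (2).
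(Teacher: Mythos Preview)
There is a genuine gap in your argument. You claim that the difference of the stabilisation terms
\[
e_{11}(\pi_\la(p_0-p_0')) + e_{44}(\pi_\mu(p_0 - p_0')) + e_{55}(\pi_\nu(p_0-p_0'))
\]
lies in $\sL^1$ ``since $p_0 - p_0' \in I$''. This is false: Assumption~\ref{a:rep} nowhere asserts that $\pi_\la(i)$ is trace class for $i \in I$; it only controls certain commutators and triple products involving ideal elements. Consequently your strategy of showing that the operator part and the stabilisation part are \emph{separately} trace-class perturbations of each other cannot succeed. In the paper's approach (Lemma~\ref{l:merbetl}) the whole point is that the difference of the operator parts is computed, modulo $\sL^1$, to be \emph{exactly} $e_{11}(\pi_\la(p_0-p_0')) + e_{44}(\pi_\mu(p_0-p_0')) + e_{55}(\pi_\nu(p_0-p_0'))$, and this then cancels the difference of the stabilisation terms. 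Neither piece is trace class on its own.

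A related issue: your proposed simplification of the middle product does not work as stated. You write that ``the repeated $\Om^{45}(p_0)$ factor then squares out'', but the two occurrences of $\Om^{45}(p_0)$ are separated by $\Om^{25}(p_0)\Om^{35}(p_0)$, each of which shares the index $5$ with $\Om^{45}$; they do not commute, so no squaring is available. The correct computation, as in Lemma~\ref{l:merbetl}, is to use Lemma~\ref{l:commut} to pull the factor $\tau(p_0 - p_0')$ through the long product, then replace each $\Om^{ij}(1)$ by its dominant off-diagonal block (since multiplication by an ideal element makes the remaining blocks trace class via Assumption~\ref{a:rep}), and finally verify by explicit matrix multiplication that the result is exactly the difference of the stabilisation terms.
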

\begin{proof}
The argument is similar to the argument given in the proof of Lemma \ref{l:merbetl} and will therefore not be repeated here.
\end{proof}

As a consequence of Lemma \ref{l:merbetl}, we may define the following {\it binary change-of-base-point isomorphism}
\[
\begin{split}
\G N(p_0,p_0') & \colon
\G L_p(\mu,\nu) \ot \G L_p^\da(\la,\nu) \ot \G L_q(\la,\nu) \ot \G L_q^\da(\mu,\nu) \\
& \q \to
\G L_p'(\mu,\nu) \ot (\G L_p')^\da(\la,\nu) \ot \G L_q'(\la,\nu) \ot (\G L_q')^\da(\mu,\nu)
\end{split}
\]
as the composition
\begin{equation}\label{eq:ncha}
\begin{split}
& | F(\mu,\nu)(p,p_0) | \ot | F(\la,\nu)(p_0,p) |
\ot | F(\la,\nu)(q,p_0) |  \ot | F(\mu,\nu)(p_0,q) | \\
& \q \to^{\G T \G S}
\big|  F^{34}(p_0,q) \cd F^{14}(q,p_0) \cd F^{24}(p_0,p) \cd F^{34}(p,p_0) \big| \\
& \q \to^{\G S^{-1} \G P \G S}
\big|  F^{34}(p_0',q) \cd F^{14}(q,p_0') \cd F^{24}(p_0',p) \cd F^{34}(p,p_0') \big| \\
& \q \to^{(\G T \G S)^{-1}}
| F(\mu,\nu)(p,p_0') | \ot | F(\la,\nu)(p_0',p) |
\ot | F(\la,\nu)(q,p_0') |  \ot | F(\mu,\nu)(p_0',q) | \, ,
\end{split}
\end{equation}
where we are suppressing the tuple of indices $(\la,\la,\mu,\nu)$ and where the stabilisations in the middle add the invertible operators $e_{12}(\pi_\la(1-p_0)) + e_{44}(\pi_\nu(1-p_0))$ and $e_{12}(\pi_\la(1-p_0')) + e_{44}(\pi_\nu(1-p_0'))$, respectively.
\medskip

Similarly, using Lemma \ref{l:multrace}, we define the \emph{ternary change-of-base-point isomorphism}
\[
\begin{split}
\G M(p_0,p_0') & \colon \G L_p(\la,\mu) \ot \G L_p(\mu,\nu) \ot \G L_p^\da(\la,\nu)
\ot \G L_q(\la,\nu) \ot \G L_q^\da(\mu,\nu) \ot \G L_q^\da(\la,\mu) \\
& \q \to \G L_p'(\la,\mu) \ot \G L_p'(\mu,\nu) \ot (\G L_p')^\da(\la,\nu)
\ot \G L_q'(\la,\nu) \ot (\G L_q')^\da(\mu,\nu) \ot (\G L_q')^\da(\la,\mu)
\end{split}
\]
as the composition
\begin{equation}\label{eq:terbase}
\begin{split}
& \G M(p_0,p_0') \colon |F(\la,\mu)(p,p_0)| \ot |F(\mu,\nu)(p,p_0)| \ot |F(\la,\nu)(p_0,p)| \\
& \qqq \qq \ot |F(\la,\nu)(q,p_0)| \ot |F(\mu,\nu)(p_0,q)| \ot |F(\la,\mu)(p_0,q)| \\
& \q \to^{R(\Om^{14}(p_0))L(\Om^{14}(p_0))\G T \G S}
\big| \Om^{14}(p_0) \cd F^{24}(p_0,q) F^{45}(p_0,q) F^{25}(q,p_0) \\
& \qqq \qq \qq \cd F^{35}(p_0,p) F^{45}(p,p_0) F^{34}(p,p_0) \cd \Om^{14}(p_0) \big| \\
& \q \to^{\G S^{-1}\G P\G S}
\big| \Om^{14}(p_0') \cd F^{24}(p_0',q) F^{45}(p_0',q) F^{25}(q,p_0') \\
& \qqq \qq \cd F^{35}(p_0',p) F^{45}(p,p_0') F^{34}(p,p_0') \cd \Om^{14}(p_0') \big| \\
& \q \to^{(\G T \G S)^{-1} R(\Om^{14}(p_0'))L(\Om^{14}(p_0'))}
|F(\la,\mu)(p,p_0')| \ot |F(\mu,\nu)(p,p_0')| \ot |F(\la,\nu)(p_0',p)| \\
& \qqq \qq \qqq \ot |F(\la,\nu)(q,p_0')| \ot |F(\mu,\nu)(p_0',q)| \ot |F(\la,\mu)(p_0',q)| \, ,
\end{split}
\end{equation}
where we are suppressing the tuple of indices $(\la,\la,\la,\mu,\nu)$ from the notation and where the stabilisations in the middle are now adding the invertible operators $e_{11}(\pi_\la(1 - p_0)) + e_{44}( \pi_\mu(1 - p_0)) + e_{55}( \pi_\nu(1-p_0))$ and  $e_{11}(\pi_\la(1 - p_0')) + e_{44}( \pi_\mu(1 - p_0')) + e_{55}( \pi_\nu(1-p_0'))$, respectively.
\medskip

In the next proposition we express the binary change-of-base-point isomorphism in terms of the change-of-base-point isomorphism and the dual change-of-base-point isomorphism (see Definition \ref{def:change} and the beginning of Subsection \ref{ss:dual}):

\begin{prop}\label{p:merbet}
The isomorphism of $\zz$-graded complex lines
\[
\begin{split}
& \G B(p_0,p_0') \ot \G B^\da(p_0,p_0') \\ 
& \q \colon
\G L_p(\mu,\nu) \ot \G L_q^\da(\mu,\nu) \ot \G L_p^\da(\la,\nu) \ot \G L_q(\la,\nu) \to
\G L_p'(\mu,\nu) \ot (\G L_q')^\da(\mu,\nu) \ot (\G L_p')^\da(\la,\nu) \ot \G L_q'(\la,\nu)
\end{split}
\]
agrees with the composition of isomorphisms of $\zz$-graded complex lines:
\begin{equation}\label{eq:merbet}
\begin{split}
& \G L_p(\mu,\nu) \ot \G L_q^\da(\mu,\nu) \ot \G L_p^\da(\la,\nu) \ot \G L_q(\la,\nu)
\to^{\epsilon}
\G L_p(\mu,\nu) \ot \G L_p^\da(\la,\nu) \ot \G L_q(\la,\nu) \ot \G L_q^\da(\mu,\nu) \\
& \q \to^{\G N(p_0,p_0')}
\G L_p'(\mu,\nu) \ot (\G L_p')^\da(\la,\nu) \ot \G L_q'(\la,\nu) \ot (\G L_q')^\da(\mu,\nu) \\
& \q \to^{\epsilon}
\G L_p'(\mu,\nu) \ot (\G L_q')^\da(\mu,\nu) \ot (\G L_p')^\da(\la,\nu) \ot \G L_q'(\la,\nu) \, .
\end{split}
\end{equation}
\end{prop}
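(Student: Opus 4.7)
The plan is to expand both sides of the claimed identity as explicit compositions of torsion, stabilisation and perturbation isomorphisms acting on determinant lines of Fredholm operators, and then to reduce them to a common normal form by systematic use of the commutativity relations from Sections \ref{s:torsfred}--\ref{s:stab}. On the left-hand side, the tensor product $\G B(p_0,p_0') \otimes \G B^\da(p_0,p_0')$ is built from two parallel perturbation isomorphisms, each wrapped in a torsion-stabilisation pair. On the right-hand side, the binary change-of-base-point isomorphism $\G N(p_0,p_0')$ is built from a single perturbation isomorphism applied to the four-fold composition $F^{34}(p_0,q)\,F^{14}(q,p_0)\,F^{24}(p_0,p)\,F^{34}(p,p_0)$, again wrapped in torsion-stabilisation, with the commutativity constraints $\epsilon$ serving only to reorder the tensor factors.

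The core step is to convert the left-hand side into a single perturbation isomorphism applied to a four-fold Fredholm composition. This I would do by invoking Theorem \ref{t:percom} (perturbation commutes with torsion) and Proposition \ref{p:torsta}/Proposition \ref{p:persta} (stabilisation commutes with torsion and perturbation) to transport the two perturbation isomorphisms of $\G B$ and $\G B^\da$ through the outer torsion-stabilisation morphisms, and then to fuse them into a single perturbation via the transitivity property from Theorem \ref{t:pervec}. The associativity of torsion (Proposition \ref{p:assotors}) then lets me regroup the resulting tensor product of four determinant lines into the determinant of a single composition of Fredholm operators.

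It remains to identify the resulting four-fold Fredholm composition (up to suitable stabilisations) with the one appearing in the definition of $\G N(p_0,p_0')$. The key observation is that $F^{24}(p_0,p)$ (involving indices $\la$ and $\nu$) and $F^{34}(p,p_0)$ (involving indices $\mu$ and $\nu$) act on overlapping but structured direct summands; operators on genuinely disjoint summands commute strictly, and the stabilisations introduced in the definitions of $\G B$ and $\G B^\da$ are exactly such as to ensure that the appropriate matching holds after stabilising along position~1 with $\pi_\la(1-p_0)$ and along position~4 with $\pi_\nu(1-p_0)$, as in Lemma \ref{l:merbetl}. Once the Fredholm operators are identified, the two perturbation isomorphisms (the left-hand composite and the one from $\G N$) agree by transitivity.

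The main obstacle will be the bookkeeping: keeping track of the positions $1,2,3,4$ occupied by the indices $(\la,\la,\mu,\nu)$, the pattern of idempotent stabilisations needed so that the composition of Fredholm operators at each step is well defined between the correct Hilbert subspaces, and the signs arising from the commutativity constraints $\epsilon$. The sign contributions must be shown to cancel, which is a matter of verifying that the two applications of $\epsilon$ on the right-hand side produce a net sign which matches the signs picked up when reordering the torsion factors on the left-hand side via Proposition \ref{p:torsign}. Once this bookkeeping is carried out carefully, no new analytical input is needed beyond the compatibility theorems already established for torsion, perturbation and stabilisation.
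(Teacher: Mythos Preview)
Your general strategy---reducing both sides to a single perturbation on a composite Fredholm operator via the compatibility theorems for torsion, perturbation and stabilisation---is the right one, and it is also what the paper does. However, there is a genuine gap at the step you flag as ``routine bookkeeping''.

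In the four-index setup $(\la,\la,\mu,\nu)$ used to define $\G N(p_0,p_0')$, all four Fredholm operators $F^{34}(p_0,q)$, $F^{14}(q,p_0)$, $F^{24}(p_0,p)$, $F^{34}(p,p_0)$ act through the fourth summand $\pi_\nu(\cdot)\C H$. They therefore do \emph{not} act on disjoint summands, and Proposition~\ref{p:torsign} cannot be invoked to commute them past one another. Your assertion that ``the stabilisations introduced in the definitions of $\G B$ and $\G B^\da$ are exactly such as to ensure that the appropriate matching holds'' is not justified: the stabilisations $e_{12}(\pi_\la(1-p_0))+e_{44}(\pi_\nu(1-p_0))$ from Lemma~\ref{l:merbetl} do nothing to separate the $\nu$-slot.

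The paper resolves this by passing to a \emph{five}-index setup $(\la,\la,\mu,\nu,\nu)$. The trick is to use Lemma~\ref{l:triv} (the identity $c_q(\mu,\nu)=\T{id}$) to insert an extra factor $\psi^{-1}$ followed by $\psi'$, which amounts to multiplying in $F^{35}(q,p_0)F^{35}(p_0,q)$ and its primed counterpart; after cancellation via Lemma~\ref{l:inverse}, one obtains an equivalent description of $\G N(p_0,p_0')$ in which the two ``halves'' are $F^{34}(p_0,q)$ (positions $3,4$) and $F^{15}(q,p_0)F^{25}(p_0,p)$ (positions $1,2,5$), now genuinely disjoint. Only at this point does Proposition~\ref{p:torsign} apply to identify the reordered composite with $\G B\ot\G B^\da$. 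You should incorporate this duplication-of-$\nu$ step explicitly; without it the argument does not go through.
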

\begin{proof}
We suppress the tuple of indices $(\la,\la,\mu,\nu,\nu)$ from the notation.

Our first claim is that the binary change-of-base-point isomorphism $\G N(p_0,p_0')$ agrees with the composition of isomorphisms
\begin{equation}\label{eq:altenn}
\begin{split}
& \big| F(\mu,\nu)(p,p_0) \big| \ot \big| F(\la,\nu)(p_0,p) \big|
\ot \big| F(\la,\nu)(q,p_0) \big|  \ot \big| F(\mu,\nu)(p_0,q) \big| \\
& \q \to^{\G T \G S}
\big|  F^{34}(p_0,q) \cd F^{15}(q,p_0) \cd F^{25}(p_0,p) \cd F^{35}(p,p_0) \big| \\
& \q \to^{\G S^{-1} \G P \G S}
\big|  F^{34}(p_0',q) \cd F^{15}(q,p_0') \cd F^{25}(p_0',p) \cd F^{35}(p,p_0') \big| \\
& \q \to^{(\G T \G S)^{-1}}
\big| F(\mu,\nu)(p,p_0') \big| \ot \big| F(\la,\nu)(p_0',p) \big|
\ot \big| F(\la,\nu)(q,p_0') \big|  \ot \big| F(\mu,\nu)(p_0',q) \big| \, ,
\end{split}
\end{equation}
where the stabilisations in the middle use the invertible operators $e_{12}(\pi_\la(1-p_0)) + e_{45}(\pi_\nu(1-p_0))$ and $e_{12}(\pi_\la(1-p_0')) + e_{45}(\pi_\nu(1-p_0'))$, respectively.

To prove the above claim, we recall from Lemma \ref{l:triv} that the automorphism $c_q(\mu,\nu) \colon (\cc,0) \to (\cc,0)$, defined as the composition
\[
\begin{split}
& (\cc,0) \to^{\psi^{-1}} |F(\mu,\nu)(q,p_0)| \ot |F(\mu,\nu)(p_0,q)| \to^{\G T \G S}
\big| F^{34}(p_0,q) \cd F^{35}(q,p_0) \big| \\
& \q \to^{\G S^{-1}\G P \G S}  \big| F^{34}(p_0',q) \cd F^{35}(q,p_0') \big| \\
& \q \to^{(\G T \G S)^{-1}} |F(\mu,\nu)(q,p_0')| \ot |F(\mu,\nu)(p_0',q)|
\to^{\psi'} (\cc,0) \, ,
\end{split}
\]
is equal to the identity map. We are here considering $F^{34}(p_0,q) \cd F^{35}(q,p_0)$ as a Fredholm operator acting on the Hilbert space $\pi_\la(p_0) \C H \op \pi_\la(p) \C H \op \pi_\mu(q) \C H \op \pi_\nu(q)\C H \op \pi_\nu(p_0) \C H$ and we apply the invertible operator $e_{11}(\pi_\la(1-p_0)) + e_{45}(\pi_\nu(1-p_0))$ for the (non-obvious) stabilisation procedure (a similar comment applies to the primed version). This observation allows us to replace $\G N(p_0,p_0')$ with $\G N(p_0,p_0') \cd c_q(\mu,\nu)$. However, this latter isomorphism can now be described as the composition of isomorphisms
\[
\begin{split}
& | F(\mu,\nu)(p,p_0) | \ot | F(\la,\nu)(p_0,p) |
\ot | F(\la,\nu)(q,p_0) |  \ot | F(\mu,\nu)(p_0,q) | \\
& \q \to^{\T{id}^{\ot 4} \ot \psi^{-1}}
| F(\mu,\nu)(p,p_0) | \ot | F(\la,\nu)(p_0,p) |
\ot | F(\la,\nu)(q,p_0) | \\
& \qqq \qq \ot | F(\mu,\nu)(p_0,q) | \ot |F(\mu,\nu)(q,p_0)| \ot |F(\mu,\nu)(p_0,q)| \\
& \q \to^{\G T \G S}
\big| F^{34}(p_0,q) \cd F^{35}(q,p_0) \cd F^{35}(p_0,q) \\
& \qqq \qq \cd F^{15}(q,p_0) \cd F^{25}(p_0,p) \cd F^{35}(p,p_0) \big| \\
& \q \to^{\G S^{-1}\G P \G S}
\big| F^{34}(p_0',q) \cd F^{35}(q,p_0') \cd F^{35}(p_0',q) \\
& \qqq \qq \cd F^{15}(q,p_0') \cd F^{25}(p_0',p) \cd F^{35}(p,p_0') \big| \\
& \q \to^{(\G T \G S)^{-1}}
| F(\mu,\nu)(p,p_0') | \ot | F(\la,\nu)(p_0',p) |
\ot | F(\la,\nu)(q,p_0') |  \\
& \qqq \qq \ot | F(\mu,\nu)(p_0',q) | \ot |F(\mu,\nu)(q,p_0')| \ot |F(\mu,\nu)(p_0',q)| \\
& \q \to^{\T{id}^{\ot 4} \ot \psi'}
| F(\mu,\nu)(p,p_0') | \ot | F(\la,\nu)(p_0',p) |
\ot | F(\la,\nu)(q,p_0') |  \ot | F(\mu,\nu)(p_0',q) | \, ,
\end{split}
\]
where the stabilisation appearing in the middle adds the invertible operator $e_{12}(\pi_\la(1-p_0)) + e_{45}(\pi_\nu(1-p_0))$ (and similarly for the primed case). Using Lemma \ref{l:inverse} we may cancel out $F^{35}(q,p_0) \cd F^{35}(p_0,q)$ and $F^{35}(q,p_0') \cd F^{35}(p_0',q)$ from the above expression and we arrive exactly at the composition of isomorphisms given in Equation \eqref{eq:altenn}. This establishes our first claim.

The point of the alternative description of the binary change-of-base-point isomorphism given in Equation \eqref{eq:altenn} is that the Fredholm operators $F^{34}(p_0,q)$ and $F^{15}(q,p_0) \cd F^{25}(p_0, p)$ operate on different direct summands (and similarly for the primed versions). Hence, by the commutativity property of the torsion isomorphism (Proposition \ref{p:torsign}) we obtain that the isomorphism
\[
\begin{split}
& \G L_p(\mu,\nu) \ot \G L_q^\da(\mu,\nu) \ot \G L_p^\da(\la,\nu) \ot \G L_q(\la,\nu)
\to^{\epsilon}
\G L_p(\mu,\nu) \ot \G L_p^\da(\la,\nu) \ot \G L_q(\la,\nu) \ot \G L_q^\da(\mu,\nu) \\
& \q \to^{\G N(p_0,p_0')}
\G L_p'(\mu,\nu) \ot (\G L_p')^\da(\la,\nu) \ot \G L_q'(\la,\nu) \ot (\G L_q')^\da(\mu,\nu) \\
& \q \to^{\epsilon}
\G L_p'(\mu,\nu) \ot (\G L_q')^\da(\mu,\nu) \ot (\G L_p')^\da(\la,\nu) \ot \G L_q'(\la,\nu)
\end{split}
\]
agrees with the composition
\[
\begin{split}
& | F(\mu,\nu)(p,p_0)| \ot |F(\mu,\nu)(p_0,q)| \ot | F(\la,\nu)(p_0,p)| \ot | F(\la,\nu)(q,p_0)| \\
& \q \to^{\G T \G S}
\big| F^{15}(q,p_0) \cd F^{25}(p_0,p) \cd F^{34}(p_0,q) \cd F^{35}(p,p_0)\big| \\
& \q \to^{\G S^{-1} \G P \G S}
\big| F^{15}(q,p_0') \cd F^{25}(p_0',p) \cd F^{34}(p_0',q) \cd F^{35}(p,p_0') \big| \\
& \q \to^{(\G T \G S)^{-1}}
| F(\mu,\nu)(p,p_0')| \ot |F(\mu,\nu)(p_0',q)| \ot | F(\la,\nu)(p_0',p)| \ot | F(\la,\nu)(q,p_0')| \, ,
\end{split}
\]
where the stabilisations appearing in the middle add the invertible operators $e_{12}(\pi_\la(1-p_0)) + e_{45}(\pi_\nu(1-p_0))$ and $e_{12}(\pi_\la(1-p_0')) + e_{45}(\pi_\nu(1-p_0'))$, respectively. But this latter isomorphism can be seen to agree with the isomorphism $\G B(p_0,p_0') \ot \G B^\da(p_0,p_0')$ and the proposition is therefore proved.
\end{proof}

We end this subsection by providing a description of the ternary change-of-base-point isomorphism in terms of tensor products of the change-of-base-point isomorphism and the dual change-of-base-point isomorphism:

\begin{prop}\label{p:mulcha}
The isomorphism of $\zz$-graded complex lines
\[
\begin{split}
& \G B(p_0,p_0') \ot \G B(p_0,p_0') \ot \G B^\da(p_0,p_0') \\
& \q \colon \G L_p(\la,\mu) \ot \G L_q^\da(\la,\mu) \ot \G L_p(\mu,\nu) \ot \G L_q^\da(\mu,\nu) \ot \G L_p^\da(\la,\nu) \ot \G L_q(\la,\nu) \\
& \qq \to \G L_p'(\la,\mu) \ot (\G L_q')^\da(\la,\mu) \ot \G L_p'(\mu,\nu) \ot (\G L_q')^\da(\mu,\nu) \ot (\G L_p')^\da(\la,\nu) \ot \G L_q'(\la,\nu)
\end{split}
\]
agrees with the composition of isomorphisms of $\zz$-graded complex lines
\[
\begin{split}
& \G L_p(\la,\mu) \ot \G L_q^\da(\la,\mu) \ot \G L_p(\mu,\nu) \ot \G L_q^\da(\mu,\nu)
\ot \G L_p^\da(\la,\nu) \ot \G L_q(\la,\nu) \\
& \q \to^{\epsilon}
\G L_p(\la,\mu) \ot \G L_p(\mu,\nu) \ot \G L_p^\da(\la,\nu) \ot \G L_q(\la,\nu) \ot \G L_q^\da(\mu,\nu) \ot \G L_q^\da(\la,\mu)  \\
& \q \to^{\G M(p_0,p_0')}
\G L_p'(\la,\mu) \ot \G L_p'(\mu,\nu) \ot (\G L_p')^\da(\la,\nu)
\ot \G L_q'(\la,\nu) \ot (\G L_q')^\da(\mu,\nu) \ot (\G L_q')^\da(\la,\mu)  \\
& \q \to^{\epsilon}
\G L_p'(\la,\mu) \ot (\G L_q')^\da(\la,\mu) \ot \G L_p'(\mu,\nu) \ot (\G L_q')^\da(\mu,\nu)
\ot (\G L_p')^\da(\la,\nu) \ot \G L_q'(\la,\nu) \, .
\end{split}
\]
\end{prop}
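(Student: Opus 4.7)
The plan is to proceed in the same spirit as Proposition \ref{p:merbet}, but now with three independent change-of-base-point pieces to extract from the seven-fold product of Fredholm operators defining $\G M(p_0,p_0')$. The overall strategy is to rewrite $\G M(p_0,p_0')$, via trivial-automorphism insertions and torsion commutativity, into a form where the chain of Fredholm operators inside the determinant line splits, up to stabilisation, into three subchains that act on essentially disjoint direct summands. Those subchains will then be identifiable with $\wit{\G B}(p_0,p_0')$ applied to the $(\la,\mu)$-factor, $\G N(p_0,p_0')$ (or another $\G B\otimes\G B^\da$-type composition) applied to the $(\mu,\nu)$ and $(\la,\nu)$-factors, at which point Proposition \ref{p:witbet} (converting $\wit{\G B}$ to $\G B\otimes(\ldots)$) and Proposition \ref{p:merbet} (converting $\G N$ to $\G B\otimes\G B^\da$) finish the job.

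Concretely, I would first use Lemma \ref{l:triv} exactly as in the proof of Proposition \ref{p:merbet}: insert the identity automorphisms $c_q(\mu,\nu)$, $c_q(\la,\nu)$, and possibly $c_q(\la,\mu)^\da$ between the factors $\G L_p \otimes \G L_q^\da$, thereby gaining cancelling pairs of the form $\psi^{-1}\psi$ and $\varphi\varphi^{-1}$. After bringing everything inside a single torsion-perturbation-stabilisation sandwich by the associativity results, one can cancel the spurious pairs using Lemma \ref{l:inverse} and end up with a determinant line of the form $\big|\Om^{14}(p_0)\cdot (\text{long product})\cdot \Om^{14}(p_0)\big|$ whose internal product can be freely reorganised.

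The next step is to apply Proposition \ref{p:torsign} repeatedly to commute the various $F^{ij}$'s past one another whenever they act on disjoint direct summands of the ambient Hilbert space $\pi_\la(1)\C H \op \pi_\la(q)\C H \op \pi_\la(p)\C H \op \pi_\mu(1)\C H \op \pi_\nu(1)\C H$. The rearrangement should group the factors $F^{24}(p_0,q)$, $\Om^{14}(p_0)$ (and their $p,p_0$-counterparts) onto the $(\la,\mu)$-block, the factors $F^{45}$ onto the $(\mu,\nu)$-block, and the factors $F^{25}(q,p_0)$, $F^{35}(p_0,p)$ onto the $(\la,\nu)$-block. After this rearrangement, the perturbation isomorphism from the big operator splits (again via Theorem \ref{t:percom} and Theorem \ref{t:pervec}) into a tensor product of three perturbation isomorphisms, which I then recognise as the symmetrised change-of-base-point on $(\la,\mu)$, a standard change-of-base-point on $(\mu,\nu)$, and a dual change-of-base-point on $(\la,\nu)$. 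Proposition \ref{p:witbet} converts the first piece to a standard $\G B(p_0,p_0')$, yielding the asserted formula.

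The main obstacle will be the bookkeeping: carefully tracking which stabilisation operators $e_{ij}(\pi_?(1-p_0))$ are used at each stage, and making sure all the sign factors from the commutativity constraints $\epsilon$ and from Proposition \ref{p:torsign} match the $\epsilon$'s appearing in the statement. The $\Om^{14}(p_0)$ on the two ends couples the $\la$- and $\mu$-sectors and obstructs a naive separation; this is exactly the reason why the $(\la,\mu)$-piece comes out initially as the symmetrised change-of-base-point $\wit{\G B}$ rather than $\G B$, forcing the use of Proposition \ref{p:witbet} at the end. Otherwise, the argument is a direct generalisation of the one already carried out for the binary case and no fundamentally new analytic input is needed.
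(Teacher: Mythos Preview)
Your plan is essentially the paper's proof: pass to an enlarged index tuple, use Proposition \ref{p:torsign} to reorganise the chain of Fredholm operators into blocks on disjoint summands, recognise the resulting factorisation as $\wit{\G B}(p_0,p_0')\otimes\G N(p_0,p_0')$, and then invoke Propositions \ref{p:witbet} and \ref{p:merbet}. The paper carries this out with one minor tactical simplification worth noting: rather than inserting several $c_q$-type identities, it inserts a single $\varphi$ for $\G L_q(\la,\mu)$, enlarging the index tuple from $(\la,\la,\la,\mu,\nu)$ to $(\la,\la,\la,\mu,\mu,\nu)$; a single application of Proposition \ref{p:torsign} (commuting $F^{14}(p_0,q)$ past the block on summands $\{2,3,5,6\}$) then already exhibits the two-fold split $\wit{\G B}\otimes\G N$, and the three-fold split you describe appears only after Proposition \ref{p:merbet} has been applied to $\G N$.
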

\begin{proof}
We suppress the tuple of indices $(\la,\la,\la,\mu,\mu,\nu)$ from the notation.

Observe first that $\G M(p_0,p_0')$ agrees with the composition of isomorphisms
\begin{equation}\label{eq:mulbetI}
\begin{split}
& \G L_p(\la,\mu) \ot \G L_p(\mu,\nu) \ot \G L_p^\da(\la,\nu)
\ot \G L_q(\la,\nu) \ot \G L_q^\da(\mu,\nu)  \ot \G L_q^\da(\la,\mu) \\
& \q \to^{\T{id}^{\ot 5} \ot \varphi \ot \T{id}}
\G L_p(\la,\mu) \ot \G L_p(\mu,\nu) \ot \G L_p^\da(\la,\nu)
\ot \G L_q(\la,\nu) \ot \G L_q^\da(\mu,\nu) \\
& \qqq \qq \ot \G L_q^\da(\la,\mu) \ot \G L_q(\la,\mu) \ot \G L_q^\da(\la,\mu) \\
& \q \to^{L(\Om^{15}(p_0)) R(\Om^{15}(p_0))\G T \G S}
\big| \Om^{15}(p_0) \cd F^{25}(p_0,q) F^{14}(q,p_0) F^{14}(p_0,q) F^{56}(p_0,q) F^{26}(q,p_0) \\
& \qqq \qq \qq \cd F^{36}(p_0,p) F^{56}(p,p_0) F^{35}(p,p_0) \cd \Om^{15}(p_0) \big| \\
& \q \to^{\G S^{-1}\G P \G S}
\big| \Om^{15}(p_0') \cd F^{25}(p_0',q) F^{14}(q,p_0') F^{14}(p_0',q) F^{56}(p_0',q) F^{26}(q,p_0') \\
& \qqq \qq \cd F^{36}(p_0',p) F^{56}(p,p_0') F^{35}(p,p_0') \cd \Om^{15}(p_0') \big| \\
& \q \to^{(\G T \G S)^{-1} L(\Om^{15}(p_0')) R(\Om^{15}(p_0'))}
\G L_p'(\la,\mu) \ot \G L_p'(\mu,\nu) \ot (\G L_p')^\da(\la,\nu)
\ot \G L_q'(\la,\nu) \ot (\G L_q')^\da(\mu,\nu) \\
& \qqq \qq \qqq \ot (\G L_q')^\da(\la,\mu) \ot \G L_q'(\la,\mu) \ot (\G L_q')^\da(\la,\mu) \\
& \q \to^{\T{id}^{\ot 5} \ot (\varphi')^{-1} \ot \T{id}}
\G L_p'(\la,\mu) \ot \G L_p'(\mu,\nu) \ot (\G L_p')^\da(\la,\nu)
\ot \G L_q'(\la,\nu) \ot (\G L_q')^\da(\mu,\nu)  \ot (\G L_q')^\da(\la,\mu) \, ,
\end{split}
\end{equation}
where the stabilisations in the middle are adding the invertible operators $e_{11}(\pi_\la(1-p_0)) + e_{55}(\pi_\mu(1-p_0)) + e_{66}(\pi_\nu(1-p_0))$ and 
$e_{11}(\pi_\la(1-p_0')) + e_{55}(\pi_\mu(1-p_0')) + e_{66}(\pi_\nu(1-p_0'))$, respectively. Next, using that $F^{14}(p_0,q)$ and $F^{56}(p_0,q) F^{26}(q,p_0) \cd F^{36}(p_0,p) F^{56}(p,p_0)$ operate on different direct summands (and similarly for the primed version), we obtain from Proposition \ref{p:torsign} that the composition of isomorphisms in Equation \eqref{eq:mulbetI} agrees with the composition
\begin{equation}\label{eq:mulbetII}
\begin{split}
& \G L_p(\la,\mu) \ot \G L_p(\mu,\nu) \ot \G L_p^\da(\la,\nu)
\ot \G L_q(\la,\nu) \ot \G L_q^\da(\mu,\nu)  \ot \G L_q^\da(\la,\mu) \\
& \q \to^{\T{id}^{\ot 5} \ot \varphi \ot \T{id}}
\G L_p(\la,\mu) \ot \G L_p(\mu,\nu) \ot \G L_p^\da(\la,\nu)
\ot \G L_q(\la,\nu) \ot \G L_q^\da(\mu,\nu) \\
& \qqq \qq \ot \G L_q^\da(\la,\mu) \ot \G L_q(\la,\mu) \ot \G L_q^\da(\la,\mu) \\
& \q \to^{\epsilon}
\G L_p(\la,\mu) \ot \G L_q^\da(\la,\mu) \ot \G L_p(\mu,\nu) \ot \G L_p^\da(\la,\nu) \\
& \qqq \ot \G L_q(\la,\nu) \ot \G L_q^\da(\mu,\nu) \ot \G L_q(\la,\mu) \ot \G L_q^\da(\la,\mu) \\
& \q \to^{L(\Om^{15}(p_0)) R(\Om^{15}(p_0))\G T \G S}
\big| \Om^{15}(p_0) \cd F^{25}(p_0,q) F^{14}(q,p_0) F^{56}(p_0,q) F^{26}(q,p_0) \\
& \qqq \qq \qq \cd F^{36}(p_0,p) F^{56}(p,p_0) F^{14}(p_0,q) F^{35}(p,p_0) \cd \Om^{15}(p_0) \big| \\
& \q \to^{\G S^{-1}\G P \G S}
\big| \Om^{15}(p_0') \cd F^{25}(p_0',q) F^{14}(q,p_0') F^{56}(p_0',q) F^{26}(q,p_0') \\
& \qqq \q \cd F^{36}(p_0',p) F^{56}(p,p_0') F^{14}(p_0',q) F^{35}(p,p_0') \cd \Om^{15}(p_0') \big| \\
& \q \to^{(\G T \G S)^{-1}L(\Om^{15}(p_0')) R(\Om^{15}(p_0'))}
\G L_p'(\la,\mu) \ot (\G L_q')^\da(\la,\mu) \ot \G L_p'(\mu,\nu) \ot (\G L_p')^\da(\la,\nu) \\
& \qqq \qq \qqq \ot \G L_q'(\la,\nu) \ot (\G L_q')^\da(\mu,\nu) \ot \G L_q'(\la,\mu) \ot (\G L_q')^\da(\la,\mu) \\
& \q \to^{\epsilon}
\G L_p'(\la,\mu) \ot \G L_p'(\mu,\nu) \ot (\G L_p')^\da(\la,\nu)
\ot \G L_q'(\la,\nu) \\
& \qqq \ot (\G L_q')^\da(\mu,\nu) \ot (\G L_q')^\da(\la,\mu) \ot \G L_q'(\la,\mu) \ot (\G L_q')^\da(\la,\mu) \\
& \q \to^{\T{id}^{\ot 5} \ot (\varphi')^{-1} \ot \T{id}}
\G L_p'(\la,\mu) \ot \G L_p'(\mu,\nu) \ot (\G L_p')^\da(\la,\nu)
\ot \G L_q'(\la,\nu) \ot (\G L_q')^\da(\mu,\nu)  \ot (\G L_q')^\da(\la,\mu) \, .
\end{split}
\end{equation}
We then notice that the composition in Equation \eqref{eq:mulbetII} can be rewritten using the symmetrised and the binary change-of-base-point isomorphisms
\[
\begin{split}
\wit{\G B}(p_0,p_0') & \colon \G L_p(\la,\mu) \ot \G L_q^\da(\la,\mu) \ot \G L_q(\la,\mu) \ot \G L_q^\da(\la,\mu) \\
& \q \to \G L_p'(\la,\mu) \ot (\G L_q')^\da(\la,\mu) \ot \G L_q'(\la,\mu) \ot (\G L_q')^\da(\la,\mu) \q \T{and} \\
\G N(p_0,p_0') & \colon \G L_p(\mu,\nu) \ot \G L_p^\da(\la,\nu) \ot \G L_q(\la,\nu) \ot \G L_q^\da(\mu,\nu) \\
& \q \to \G L_p'(\mu,\nu) \ot (\G L_p')^\da(\la,\nu) \ot \G L_q'(\la,\nu) \ot (\G L_q')^\da(\mu,\nu)
\end{split}
\]
defined in Equation \eqref{eq:tilbet} and Equation \eqref{eq:ncha}. Indeed, using that $\wit{\G B}(p_0,p_0') = \wit{\G B}_+(p_0,p_0') \ot \wit{\G B}_-(p_0,p_0')$ already factorises as a tensor product of two isomorphisms, we obtain the following alternative description of the isomorphism in Equation \eqref{eq:mulbetII}:
\begin{equation}\label{eq:mulbetIII}
\begin{split}
& \G L_p(\la,\mu) \ot \G L_p(\mu,\nu) \ot \G L_p^\da(\la,\nu)
\ot \G L_q(\la,\nu) \ot \G L_q^\da(\mu,\nu)  \ot \G L_q^\da(\la,\mu) \\
& \q \to^{\T{id}^{\ot 5} \ot \varphi \ot \T{id}}
\G L_p(\la,\mu) \ot \G L_p(\mu,\nu) \ot \G L_p^\da(\la,\nu)
\ot \G L_q(\la,\nu) \ot \G L_q^\da(\mu,\nu) \\
& \qqq \qq \ot \G L_q^\da(\la,\mu) \ot \G L_q(\la,\mu) \ot \G L_q^\da(\la,\mu) \\
& \q \to^{\epsilon}
\G L_p(\la,\mu) \ot \G L_q^\da(\la,\mu) \ot \G L_q(\la,\mu) \ot \G L_q^\da(\la,\mu)
\ot \G L_p(\mu,\nu) \ot \G L_p^\da(\la,\nu) \ot \G L_q(\la,\nu) \ot \G L_q^\da(\mu,\nu) \\
& \q \to^{\wit{\G B}(p_0,p_0') \ot \G N(p_0,p_0')}
\G L_p'(\la,\mu) \ot (\G L_q')^\da(\la,\mu) \ot \G L_q'(\la,\mu) \ot (\G L_q')^\da(\la,\mu) \\
& \qqq \qqq \ot \G L_p'(\mu,\nu) \ot (\G L_p')^\da(\la,\nu) \ot \G L_q'(\la,\nu) \ot (\G L_q')^\da(\mu,\nu) \\
& \q \to^{\epsilon}
\G L_p'(\la,\mu) \ot \G L_p'(\mu,\nu) \ot (\G L_p')^\da(\la,\nu)
\ot \G L_q'(\la,\nu) \ot (\G L_q')^\da(\mu,\nu) \\
& \qqq \ot (\G L_q')^\da(\la,\mu) \ot \G L_q'(\la,\mu) \ot (\G L_q')^\da(\la,\mu) \\
& \q \to^{\T{id}^{\ot 5} \ot (\varphi')^{-1} \ot \T{id}}
\G L_p'(\la,\mu) \ot \G L_p'(\mu,\nu) \ot (\G L_p')^\da(\la,\nu)
\ot \G L_q'(\la,\nu) \ot (\G L_q')^\da(\mu,\nu)  \ot (\G L_q')^\da(\la,\mu) \, .
\end{split}
\end{equation}
The result of the present proposition now follows by an application of Proposition \ref{p:witbet} and Proposition \ref{p:merbet}. 
\end{proof}

\subsection{Functoriality}\label{ss:func}
Throughout this subsection, we let $\la,\mu,\nu \in \La$ be three indices and $p,q \in R$ be idempotents agreeing with the base points $p_0$ and $p_0'$ modulo the ideal $I \subseteq R$.

We are going to prove that the change-of-base-point isomorphism $\G B(p_0,p_0') \colon \G H(p,q) \to \G H'(p,q)$ respects the composition laws in the categories $\G H(p,q)$ and $\G H'(p,q)$. The main reason for the validity of this functoriality result is that a certain Fredholm determinant is equal to one and this is the statement of the next lemma. We introduce the representation
\[
\tau := \pi_\la \op \pi_\la \op \pi_\la \op \pi_\mu \op \pi_\nu \colon R \to \sL( \C H^{\op 5})
\]
and suppress the tuple of indices $(\la,\la,\la,\mu,\nu)$ from the notation.

\begin{lemma}\label{l:crux}
The invertible operator
\begin{equation}\label{eq:crux}
\begin{split}
& \Big( \Om^{14}(p_0')\Om^{24}(p_0') \Om^{45}(p_0') \Om^{25}(p_0') \Om^{35}(p_0') \Om^{45}(p_0') \Om^{34}(p_0')\Om^{14}(p_0') + \tau(1 - p_0') \Big) \\
& \q \cd
\Big( \Om^{14}(p_0) \Om^{24}(p_0) \Om^{45}(p_0) \Om^{25}(p_0) \Om^{35}(p_0) \Om^{45}(p_0) \Om^{34}(p_0) \Om^{14}(p_0) + \tau(1 - p_0) \Big)^{-1} \\
& \qq \colon \tau(1)\C H^{\op 5} \to \tau(1)\C H^{\op 5}
\end{split}
\end{equation}
is of determinant class and the determinant is equal to one.
\end{lemma}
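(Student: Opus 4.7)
The plan is to reformulate the operator in \eqref{eq:crux} as a ratio of products of involutions. By Lemma \ref{l:reduction}, for any idempotent $p$ such that $p - p_0 \in I$, the operator
\[
S^{ij}(p) := \Omega^{ij}(p) + \tau(1-p)
\]
(where $\tau(1-p)$ is understood to fill in the complement on each position) satisfies $(S^{ij}(p))^2 = \tau(p) + \tau(1-p) = \tau(1)$, so each $S^{ij}(p)$ is an involution on $\tau(1)\C H^{\op 5}$. A direct check using the fact that $\Omega^{ij}(p)$ acts as $\pi_{\la_k}(p)$ on each position $k\notin\{i,j\}$ and that cross-terms between $\tau(p)$ and $\tau(1-p)$ vanish shows that the first parentheses in \eqref{eq:crux} equals
\[
\Pi(p_0') := S^{14}(p_0')\,S^{24}(p_0')\,S^{45}(p_0')\,S^{25}(p_0')\,S^{35}(p_0')\,S^{45}(p_0')\,S^{34}(p_0')\,S^{14}(p_0'),
\]
and the second parentheses equals the analogous $\Pi(p_0)$.

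Next I would establish two claims. First, using Lemma \ref{l:commut} together with Assumption \ref{a:rep}, each difference $S^{ij}(p_0) - S^{ij}(p_0')$ is a trace class operator on $\tau(1)\C H^{\op 5}$, so that $\Pi(p_0')\Pi(p_0)^{-1} - \tau(1)$ is trace class and the Fredholm determinant makes sense. Second, and more crucially, I would show that $\Pi(p) = \tau(1)$ exactly for any admissible $p$. The motivating combinatorial fact is that the word $(1,4)(2,4)(4,5)(2,5)(3,5)(4,5)(3,4)(1,4)$ is the identity in the symmetric group $S_5$, which one verifies by tracking each of the positions $1,\ldots,5$ through the composition read right to left. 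To promote this to an operator identity, one verifies that the involutions $S^{ij}(p)$ satisfy the defining relations of $S_5$: $(S^{ij}(p))^2 = \tau(1)$ (already established), the commutation $S^{ij}(p)S^{kl}(p) = S^{kl}(p)S^{ij}(p)$ when $\{i,j\} \cap \{k,l\} = \emptyset$ (immediate from the block structure of $\Omega^{ij}$), and the braid relation $S^{ij}(p)S^{jk}(p)S^{ij}(p) = S^{jk}(p)S^{ij}(p)S^{jk}(p)$.

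Granting these relations, $\Pi(p_0)$ and $\Pi(p_0')$ both equal $\tau(1)$, so the operator in \eqref{eq:crux} is $\tau(1)\cdot\tau(1)^{-1}$, the identity on its range, and therefore has Fredholm determinant $1$.

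The hard part will be verifying the braid relation $S^{ij}\,S^{jk}\,S^{ij} = S^{jk}\,S^{ij}\,S^{jk}$, which requires a direct matrix computation starting from the explicit formula for $\Omega(\la,\mu)(p)$ in Notation \ref{n:invfred}; after restricting to the appropriate three-fold direct summand this amounts to identities among products of three idempotents $\pi_{\la_i}(p)$, $\pi_{\la_j}(p)$, $\pi_{\la_k}(p)$ in all orderings. Should the braid relation turn out to hold only up to a trace class correction (rather than exactly), a refinement of the first claim is needed: one then tracks the trace class corrections throughout the reduction of the 8-fold product and shows, using the multiplicativity of the Fredholm determinant together with the involution property $(S^{ij})^2 = \tau(1)$ (which forces $\det(S^{ij}(p_0')S^{ij}(p_0))^2 = 1$), that all such corrections contribute determinant $1$ to the final product.
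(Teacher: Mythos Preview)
Your approach has a genuine gap: the claim that $\Pi(p)=\tau(1)$ is false, and the reason is that the relations you list do \emph{not} present $S_5$ on the generating set of all transpositions. A presentation of $S_n$ by all transpositions requires, beyond $T_{ij}^2=1$, disjoint commutation, and the braid relation, the stronger conjugation relation $T_{ij}T_{jk}T_{ij}=T_{ik}$. This last relation fails here. With the tuple of representations $(\lambda,\lambda,\lambda,\mu,\nu)$, one can take, for instance, $\pi_\lambda(p_0),\pi_\mu(p_0),\pi_\nu(p_0)$ to be commuting idempotents on a finite-dimensional space (Assumption~\ref{a:rep} is then vacuous) and compute directly that $S^{14}S^{24}S^{14}\neq S^{12}$; carrying the computation through shows that $\Omega^{1245}(p_0):=S^{14}S^{45}S^{25}S^{45}S^{24}S^{14}$ and $\Omega^{1345}(p_0):=S^{14}S^{45}S^{35}S^{45}S^{34}S^{14}$ are each nontrivial (they act as the swaps of positions $1\leftrightarrow 2$ and $1\leftrightarrow 3$ respectively in a suitable example), and hence $\Pi(p_0)=(\Omega^{1245}(p_0))^{-1}\Omega^{1345}(p_0)$ is a nontrivial $3$-cycle. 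So the operator identity you want simply does not hold, and the fallback does not help either: the obstruction is not a trace-class defect in the braid relation but the absence of the relation $S^{ij}S^{jk}S^{ij}=S^{ik}$, without which the word cannot be reduced at all.

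The paper's proof exploits exactly the factorisation $\Pi(p_0)=(\Omega^{1245}(p_0))^{-1}\Omega^{1345}(p_0)$, obtained from your involution relation $(S^{ij})^2=\tau(1)$ alone. The point is then not that either factor is the identity, but that positions $2$ and $3$ carry the \emph{same} representation $\pi_\lambda$, so $\Omega^{1345}(p_0)=\Sigma\,\Omega^{1245}(p_0)\,\Sigma$ for the genuine swap $\Sigma$ of those summands. One then shows $\Omega^{1345}(p_0)-\Omega^{1345}(p_0')\in\sL^1$ (hence also for $\Omega^{1245}$ by conjugation), which gives determinant class, and multiplicativity plus conjugation-invariance of the Fredholm determinant yield
\[
\det\!\big(\Pi(p_0')\Pi(p_0)^{-1}\big)=\det\!\big(\Omega^{1345}(p_0')\Omega^{1345}(p_0)^{-1}\big)\cdot\det\!\big(\Sigma\,\Omega^{1345}(p_0)\Omega^{1345}(p_0')^{-1}\,\Sigma\big)=1.
\]
The mechanism is thus a symmetry between the two halves of the word, not a collapse of the word itself.
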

\begin{proof}
We introduce the invertible operators acting on the Hilbert space $\tau(1)\C H^{\op 5}$:
\[
\begin{split}
& \Om^{1345}(p_0) := \Om^{14}(p_0)\Om^{45}(p_0) \Om^{35}(p_0) \Om^{45}(p_0) \Om^{34}(p_0)\Om^{14}(p_0) + \tau(1 - p_0)  \q \T{and} \\
& \Om^{1245}(p_0) := \Om^{14}(p_0)\Om^{45}(p_0) \Om^{25}(p_0) \Om^{45}(p_0) \Om^{24}(p_0)\Om^{14}(p_0) + \tau(1 - p_0) \, .
\end{split}
\]
A similar notation applies when $p_0$ is replaced by $p_0'$ or by the unit $1 \in R$. Using Lemma \ref{l:reduction} we then observe that
\[
\begin{split}
& \Big( \Om^{14}(p_0')\Om^{24}(p_0') \Om^{45}(p_0') \Om^{25}(p_0') \Om^{35}(p_0') \Om^{45}(p_0') \Om^{34}(p_0')\Om^{14}(p_0') + \tau(1 - p_0') \Big) \\
& \q \cd
\Big( \Om^{14}(p_0) \Om^{24}(p_0) \Om^{45}(p_0) \Om^{25}(p_0) \Om^{35}(p_0) \Om^{45}(p_0) \Om^{34}(p_0) \Om^{14}(p_0) + \tau(1 - p_0) \Big)^{-1} \\
& \q = \big( \Om^{1245}(p_0') \big)^{-1} \cd \Om^{1345}(p_0') \cd \big( \Om^{1345}(p_0) \big)^{-1} \cd \Om^{1245}(p_0) \, .
\end{split}
\]
To show that the invertible operator in Equation \eqref{eq:crux} is of determinant class it thus suffices to check that
\[
\Om^{1345}(p_0) - \Om^{1345}(p_0') \, \, \T{and} \, \, \, \Om^{1245}(p_0) - \Om^{1245}(p_0') \in \sL^1\big(\tau(1)\C H^{\op 5}\big) \, .
\]

Since $\Om^{1345}$ agrees with $\Om^{1245}$ up to conjugation by the permutation matrix 
\[
\Si := \pi_\la(1) \op \ma{cc}{0 & \pi_\la(1) \\ \pi_\la(1) & 0} \op \pi_\mu(1) \op \pi_\nu(1)
\]
we may focus on the first of these two differences. We compute modulo trace class operators, using Assumption \ref{a:rep}, Lemma \ref{l:commut} and Lemma \ref{l:reduction} together with the fact that $p_0 - p_0' \in I$:
\[
\begin{split}
& \Om^{1345}(p_0) - \Om^{1345}(p_0')
\sim_1 \tau(p_0' - p_0) + \tau(p_0 - p_0') \cd \Om^{1345}(1)  \\
& \q \sim_1
\tau(p_0' - p_0) \\
& \qq +
\tau(p_0 - p_0') \cd \Om^{14}(1) \left( \big( \pi_\la(1)^{\op 3} \op \ma{cc}{  0 & \pi_\mu(1) \pi_\nu(1) \\ \pi_\nu(1) \pi_\mu(1) & 0} \big) \right. \\
& \qqq \cd \big( \pi_\la(1)^{\op 2} \op \ma{ccc}{0 & 0 & \pi_\la(1) \pi_\nu(1) \\ 0 & \pi_\mu(1) & 0 \\ \pi_\nu(1) \pi_\la(1) & 0 & 0} \big) \\
& \qqqq \cd \big( \pi_\la(1)^{\op 3} \op \ma{cc}{0 & \pi_\mu(1) \pi_\nu(1) \\ \pi_\nu(1) \pi_\mu(1) & 0} \big) \\
& \qqqq \q \left. \cd \big( \pi_\la(1)^{\op 2} \op \ma{cc}{0 & \pi_\la(1) \pi_\mu(1) \\ \pi_\mu(1) \pi_\la(1) & 0} \op \pi_\nu(1) \big) \right) \Om^{14}(1)  \\
& \q \sim_1
\tau(p_0' - p_0) +
\tau(p_0 - p_0') \cd \Om^{14}(1) \cd \big(\pi_\la(1)^{\op 3} \op \pi_\mu(1) \op \pi_\nu(1) \big) \cd \Om^{14}(1)
= 0 \, .
\end{split}
\]

Now, to show that the determinant of the invertible operator in Equation \eqref{eq:crux} is equal to one, we use that the Fredholm determinant is multiplicative and invariant under conjugation, to compute that
\[
\begin{split}
& \det\Big(  \big( \Om^{1245}(p_0') \big)^{-1} \cd \Om^{1345}(p_0') \cd \big( \Om^{1345}(p_0) \big)^{-1} \cd \Om^{1245}(p_0) \Big) \\
& \q = \det\Big( \Om^{1345}(p_0') \cd \big( \Om^{1345}(p_0) \big)^{-1} \Big)
\cd \det\Big( \Om^{1245}(p_0) \cd \big( \Om^{1245}(p_0') \big)^{-1} \Big) \\
& \q =  \det\Big( \Om^{1345}(p_0') \cd \big( \Om^{1345}(p_0) \big)^{-1} \Big)
\cd \det\Big( \Si \cd \Om^{1345}(p_0) \cd \big( \Om^{1345}(p_0') \big)^{-1} \cd \Si  \Big) \\
& \q = 1 \, . \qedhere
\end{split}
\]
\end{proof}

To ease the notation, we define the $\zz$-graded complex lines
\[
\begin{split}
& \G A_p(\la,\mu,\nu) := \G L_p(\la,\mu) \ot \G L_p(\mu,\nu)  \ot \G L_p^\da(\la,\nu) \q \T{and} \\
& \G A_q^\da(\la,\mu,\nu) := \G L_q(\la,\nu) \ot \G L_q^\da(\mu,\nu)  \ot \G L_q^\da(\la,\mu) \, .
\end{split}
\]
A similar notation applies with $p_0$ replaced by $p_0'$ and hence $\G L$ replaced by $\G L'$ and $\G A$ replaced by $\G A'$.

Recall the construction of the trivialisations $\mu_p \colon \G A_p(\la,\mu,\nu) \to (\cc,0)$ and $\mu_q^\da \colon \G A_q^\da(\la,\mu,\nu) \to (\cc,0)$ from Equation \eqref{eq:comp} and Equation \eqref{eq:dualcomp}. Recall also the construction of the ternary change-of-base-point isomorphism $\G M(p_0,p_0') \colon \G A_p(\la,\mu,\nu) \ot \G A_q^\da(\la,\mu,\nu) \to \G A_p'(\la,\mu,\nu) \ot (\G A_q')^\da(\la,\mu,\nu)$ from Equation \eqref{eq:terbase}.

The next result relates the ternary change-of-base-point isomorphism to the compositions in the categories $\G H(p,q)$ and $\G H'(p,q)$.

\begin{prop}\label{p:crux}
We have the identity
\[
\G M(p_0,p_0') = \big( \mu_p' \ot (\mu_q')^\da \big)^{-1} \ci ( \mu_p \ot \mu_q^\da )
\colon \G A_p(\la,\mu,\nu) \ot \G A_q^\da(\la,\mu,\nu) \to \G A_p'(\la,\mu,\nu) \ot (\G A_q')^\da(\la,\mu,\nu) \, .
\]
of isomorphisms of $\zz$-graded complex lines.
\end{prop}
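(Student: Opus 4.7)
The plan is to expand both sides using the explicit definitions \eqref{eq:terbase}, \eqref{eq:comp} and \eqref{eq:dualcomp}, and then to collapse the resulting compositions using the compatibility properties of torsion, perturbation and stabilisation established earlier in the paper. First I would unfold $\mu_p \ot \mu_q^\da$ into the explicit chain $\G P \ci \G S \ci \G T \ci \G S$ on each tensor factor, landing in $(\cc,0) \ot (\cc,0) = (\cc,0)$, and similarly $(\mu_p' \ot (\mu_q')^\da)^{-1}$ going the other way; the right-hand side then contains, in its middle, two consecutive perturbation isomorphisms that by the cocycle property (Theorem \ref{t:pervec}) combine into a single perturbation isomorphism between the product of $\Om$-operators at $p_0$ and the analogous product at $p_0'$.

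Next I would push every perturbation past every torsion and stabilisation using Theorem \ref{t:percom} together with Propositions \ref{p:torsta} and \ref{p:persta}, and use the associativity of torsion (Proposition \ref{p:assotors}) together with transitivity of perturbation (Theorem \ref{t:pervec}) to gather the torsion and stabilisation operations on the outside. The outcome is that the right-hand side is presented in exactly the shape $(\G T\,\G S)^{-1} L(\Om^{14}(p_0'))R(\Om^{14}(p_0')) \ci \G P \ci L(\Om^{14}(p_0))R(\Om^{14}(p_0)) (\G T\,\G S)$ appearing on the outside of \eqref{eq:terbase}, but with the perturbation isomorphism in the middle going through the full product of $\Om(p_0)$-operators coming from $\mu_p$ and $\mu_q^\da$ (stabilised by $\Om^{14}(p_0)$), rather than through the product of $F$-operators used in \eqref{eq:terbase}.

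At this point the identity reduces to comparing two perturbation isomorphisms between the same pair of operators on the five-fold Hilbert space direct sum: one route goes through the $F$-product (the defining perturbation of $\G M(p_0,p_0')$), the other through the $\Om$-product (coming from the right-hand side). By Lemma \ref{l:reduction} the $F$-operators differ from the corresponding $\Om$-operators by trace class terms, so Theorem \ref{t:percom} and transitivity of perturbation (Theorem \ref{t:pervec}) let us compose them into a single perturbation isomorphism between two invertible operators whose difference is of trace class. By Example \ref{ex:invpert} this is multiplication by the Fredholm determinant of their ratio, and after matching factors under the permutation that identifies the two $\Om$-products (from $\mu_p$ and $\mu_q^\da$) with the symmetric eight-fold product $\Om^{14}\Om^{24}\Om^{45}\Om^{25}\Om^{35}\Om^{45}\Om^{34}\Om^{14}$ appearing in Lemma \ref{l:crux}, this Fredholm determinant is precisely the one computed there to equal $1$.

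The main obstacle will be the stabilisation and index bookkeeping: the two $\Om$-triples coming from $\mu_p$ and $\mu_q^\da$ live on different triples of factors inside a five-fold direct sum, and recognising the combined eightfold product as the one in Lemma \ref{l:crux} requires carefully tracking the permutation $(\la,\mu,\nu) \mapsto (\la,\la,\la,\mu,\nu)$ together with the correct placement of the outer $\Om^{14}(p_0)$, $\Om^{14}(p_0')$ stabilisations. Provided this alignment is carried out consistently using Proposition \ref{p:torsta} and Proposition \ref{p:persta}, the commutativity properties already proved reduce the statement entirely to the single Fredholm-determinant computation of Lemma \ref{l:crux}.
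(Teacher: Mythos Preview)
Your proposal is correct and follows essentially the same route as the paper's proof: rewrite $\mu_p \ot \mu_q^\da$ (and its primed version) on the five-fold direct sum so that the outer layers match those of \eqref{eq:terbase}, then use transitivity of perturbation and Example \ref{ex:invpert} to reduce the comparison to a single Fredholm determinant, which is exactly the one computed in Lemma \ref{l:crux}. The paper compresses your intermediate bookkeeping into one step, but the argument is the same.
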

\begin{proof}
We suppress the tuple of indices $(\la,\la,\la,\mu,\nu)$.

Notice first that the trivialisation $\mu_p \ot \mu_q^\da$ can be rewritten as the composition
\[
\begin{split}
& \G A_p(\la,\mu,\nu) \ot \G A_q^\da(\la,\mu,\nu) \\
& \q \to^{L(\Om^{14}(p_0))R(\Om^{14}(p_0))\G T \G S}
\big| \Om^{14}(p_0) \cd F^{24}(p_0,q) F^{45}(p_0,q) F^{25}(q,p_0) \\
& \qqq \qqq \qq \cd F^{35}(p_0,p) F^{45}(p,p_0) F^{34}(p,p_0) \cd \Om^{14}(p_0)\big| \\
& \q \to^{\G P \G S}
\big| \Om^{14}(p_0) \cd \Om^{24}(p_0) \Om^{45}(p_0) \Om^{25}(p_0) \cd \Om^{35}(p_0) \Om^{45}(p_0) \Om^{34}(p_0) \cd \Om^{14}(p_0) + \tau(1 - p_0) \big| = (\cc,0) \, ,
\end{split}
\]
where the last stabilisation uses the invertible operator $\pi_\la(1 - p_0) \op \pi_\la(1 - q) \op \pi_\la(1 - p) \op \pi_\mu(1 - p_0) \op \pi_\nu(1 - p_0)$. Since a similar description holds for the trivialisation $\mu_p' \ot (\mu_q')^\da$, we see that the ternary change-of-base-point isomorphism
\[
\G M(p_0,p_0') \colon \G A_p(\la,\mu,\nu) \ot \G A_q^\da(\la,\mu,\nu) \to \G A_p'(\la,\mu,\nu) \ot (\G A_q')^\da(\la,\mu,\nu)
\]
agrees with the composition
\[
\big( \mu_p' \ot (\mu_q')^\da \big)^{-1} \ci ( \mu_p \ot \mu_q^\da ) \colon
\G A_p(\la,\mu,\nu) \ot \G A_q^\da(\la,\mu,\nu) \to \G A_p'(\la,\mu,\nu) \ot (\G A_q')^\da(\la,\mu,\nu)
\]
up to the Fredholm determinant of the quotient
\[
\begin{split}
& \Big( \Om^{14}(p_0')\Om^{24}(p_0') \Om^{45}(p_0') \Om^{25}(p_0') \Om^{35}(p_0') \Om^{45}(p_0') \Om^{34}(p_0')\Om^{14}(p_0') + \tau(1 - p_0') \Big) \\
& \q \cd
\Big( \Om^{14}(p_0) \Om^{24}(p_0) \Om^{45}(p_0) \Om^{25}(p_0) \Om^{35}(p_0) \Om^{45}(p_0) \Om^{34}(p_0) \Om^{14}(p_0) + \tau(1 - p_0) \Big)^{-1}
\end{split}
\]
see the description of the perturbation isomorphism in Example \ref{ex:invpert}. But this Fredholm determinant is equal to one by Lemma \ref{l:crux}.
\end{proof}

We may now combine the results achieved in Subsection \ref{ss:binter} with the results of this subsection and obtain a proof of the functoriality of the change-of-base-point isomorphism (notice that we already established the unitality condition in Section \ref{s:change}).

\begin{proof}[Proof of Proposition \ref{p:func}]
Recall from Definition \ref{d:hopf} that the multiplication operator
\[
\G M_{p,q} \colon \G L_p(\la,\mu) \ot \G L_q^\da(\la,\mu) \ot \G L_p(\mu,\nu) \ot \G L_q^\da(\mu,\nu) \to \G L_p(\la,\nu) \ot \G L_q^\da(\la,\nu)
\]
is defined as the composition
\[
\begin{split}
\G M_{p,q} & \colon \G L_p(\la,\mu) \ot \G L_q^\da(\la,\mu) \ot \G L_p(\mu,\nu) \ot \G L_q^\da(\mu,\nu) \\
& \q \to^{\T{id}^{\ot 4} \ot \varphi \ot \varphi}
\G L_p(\la,\mu) \ot \G L_q^\da(\la,\mu) \ot \G L_p(\mu,\nu) \ot \G L_q^\da(\mu,\nu)
\ot \G L_p^\da(\la,\nu) \ot \G L_p(\la,\nu) \ot \G L_q^\da(\la,\nu) \ot \G L_q(\la,\nu) \\
& \q \to^{\epsilon}
\G A_p(\la,\mu,\nu) \ot \G A_q^\da(\la,\mu,\nu) \ot \G L_p(\la,\nu) \ot \G L_q^\da(\la,\nu)
\to^{\mu_p \ot \mu_q^\da \ot \T{id}^{\ot 2}} \G L_p(\la,\nu) \ot \G L_q^\da(\la,\nu) \, .
\end{split}
\]
Using Proposition \ref{p:crux} and Proposition \ref{p:mulcha}, we may rewrite this multiplication operator as the composition
\[
\begin{split}
& \G L_p(\la,\mu) \ot \G L_q^\da(\la,\mu) \ot \G L_p(\mu,\nu) \ot \G L_q^\da(\mu,\nu) \\
& \q \to^{\T{id}^{\ot 4} \ot \varphi \ot \varphi}
\G L_p(\la,\mu) \ot \G L_q^\da(\la,\mu) \ot \G L_p(\mu,\nu) \ot \G L_q^\da(\mu,\nu)
\ot \G L_p^\da(\la,\nu) \ot \G L_p(\la,\nu) \ot \G L_q^\da(\la,\nu) \ot \G L_q(\la,\nu) \\
& \q \to^{\epsilon}
\G L_p(\la,\mu) \ot \G L_q^\da(\la,\mu) \ot \G L_p(\mu,\nu) \ot \G L_q^\da(\mu,\nu)
\ot \G L_p^\da(\la,\nu) \ot \G L_q(\la,\nu) \ot \G L_p(\la,\nu) \ot \G L_q^\da(\la,\nu) \\
& \q \to^{\G B \ot \G B \ot \G B^\da \ot \T{id}^{\ot 2}}
\G L_p'(\la,\mu) \ot (\G L_q')^\da(\la,\mu) \ot \G L_p'(\mu,\nu) \ot (\G L_q')^\da(\mu,\nu)
\ot (\G L_p')^\da(\la,\nu) \ot \G L_q'(\la,\nu) \\ 
& \qqq \qqq \ot \G L_p(\la,\nu) \ot \G L_q^\da(\la,\nu) \\
& \q \to^{\epsilon}
\G A_p'(\la,\mu,\nu) \ot (\G A_q')^\da(\la,\mu,\nu) \ot \G L_p(\la,\nu) \ot \G L_q^\da(\la,\nu)
\to^{\mu_p' \ot (\mu_q')^\da \ot \T{id}^{\ot 2}} \G L_p(\la,\nu) \ot \G L_q^\da(\la,\nu) \, .
\end{split}
\]
To continue, we combine the above description of the multiplication operator with Proposition \ref{p:dagger} to see that the composition
\[
\G B(p_0,p_0') \ci \G M_{p,q} \colon \G L_p(\la,\mu) \ot \G L_q^\da(\la,\mu) \ot \G L_p(\mu,\nu) \ot \G L_q^\da(\mu,\nu)
\to \G L_p'(\la,\nu) \ot (\G L_q')^\da(\la,\nu)
\]
can be rewritten as the composition
\[
\begin{split}
& \G L_p(\la,\mu) \ot \G L_q^\da(\la,\mu) \ot \G L_p(\mu,\nu) \ot \G L_q^\da(\mu,\nu) \\
& \q \to^{\G B(p_0,p_0') \ot \G B(p_0,p_0')}
\G L_p'(\la,\mu) \ot (\G L_q')^\da(\la,\mu) \ot \G L_p'(\mu,\nu) \ot (\G L_q')^\da(\mu,\nu) \\
& \q \to^{\T{id}^{\ot 4} \ot \varphi' \ot \varphi'} \G L_p'(\la,\mu) \ot (\G L_q')^\da(\la,\mu) \ot \G L_p'(\mu,\nu) \ot (\G L_q')^\da(\mu,\nu) \\
& \qqq \qqq \ot (\G L_p')^\da(\la,\nu) \ot \G L_p'(\la,\nu) \ot (\G L_q')^\da(\la,\nu) \ot \G L_q'(\la,\nu) \\
& \q \to^{\epsilon}
\G A_p'(\la,\mu,\nu) \ot (\G A_q^\da)'(\la,\mu,\nu) \ot \G L_p'(\la,\nu) \ot (\G L_q')^\da(\la,\nu)  \\
& \q \to^{\mu_p' \ot (\mu_q')^\da \ot \T{id}^{\ot 2}} \G L_p'(\la,\nu) \ot (\G L_q')^\da(\la,\nu) \, .
\end{split}
\]
But this is exactly the composition $\G{M}'_{p,q}\ci (\G B(p_0,p_0') \ot \G B(p_0,p_0'))$ and the theorem is therefore proved.
\end{proof}

\bibliographystyle{amsalpha-lmp}

\providecommand{\bysame}{\leavevmode\hbox to3em{\hrulefill}\thinspace}
\providecommand{\MR}{\relax\ifhmode\unskip\space\fi MR }
\providecommand{\MRhref}[2]{%
  \href{http://www.ams.org/mathscinet-getitem?mr=#1}{#2}
}
\providecommand{\href}[2]{#2}

\end{document}